\def\sideremark#1{\ifvmode\leavevmode\fi\vadjust{\vbox to0pt{\vss 
			\hbox to 0pt{\hskip\hsize\hskip1em          
				\vbox{\hsize3cm\tiny\raggedright\pretolerance10000%
					\noindent #1\hfill}\hss}\vbox to8pt{\vfil}\vss}}}%
\newtheorem{Thm}{Theorem}{\bfseries}{\itshape}
\newtheorem*{Thm*}{Theorem}{\bfseries}{\itshape}
\newtheorem{Cor}{Corollary}{\bfseries}{\itshape}
\newtheorem{Prop}[Cor]{Proposition}{\bfseries}{\itshape}
\newtheorem{Lem}[Cor]{Lemma}{\bfseries}{\itshape}
\newtheorem*{Lem*}{Lemma}{\bfseries}{\itshape}
\newtheorem{Fact}[Cor]{Fact}{\bfseries}{\itshape}
{\bfseries}{\itshape}
\newtheorem{Def}[Cor]{Definition}{\bfseries}{\rmfamily}
\newtheorem{Ex}[Cor]{Example}{\scshape}{\rmfamily}
\newtheorem{Rem}[Cor]{Remark}{\scshape}{\rmfamily}
{\bfseries}{\itshape}
\renewcommand\ge{\geqslant} \renewcommand\le{\leqslant}
\let\tildeaccent=\~ \let\hataccent=\^
\renewcommand\~[1]{\widetilde{#1}}
\def\<{\left<} \def\>{\right>} \def\({\left(} \def\){\right)}
\def\abs#1{\left\vert #1 \right\vert} \def\norm#1{\left\Vert #1
  \right\Vert} 
\let\parasymbol=\S \def\secref#1{\parasymbol\ref{#1}}
\def\pd#1#2{\frac{\partial#1}{\partial#2}}
 \def\arg{\operatorname{Arg}}
\let\polishL=l \def\Zoladek.{\.Zol\c adek}
 \def\const{\operatorname{const}}
\def\Re{\operatorname{Re}} \def\Im{\operatorname{Im}}
\def\Arg{\operatorname{Arg}} \def\dist{\operatorname{dist}}
\def\length{\operatorname{length}}
\def\diam{\operatorname{diam}} 
 \def\SL{\operatorname{SL}}
\def\etc.{\emph{etc}.}
\def\sign{\operatorname{sgn}}
\def\:{\colon} \def\R{{\mathbb R}} \def\C{{\mathbb C}} \def\Z{{\mathbb
    Z}} \def\N{{\mathbb N}} \def\Q{{\mathbb Q}} \def\P{{\mathbb P}}
\def\H{{\mathbb H}}
\def\D{{\mathbb D}}
\def\A{{\mathbb A}}
\let\PolishL=\L 
\def\L{{\mathbb L}}
 \def\e{\varepsilon} \def\S{\varSigma}
\def\poly{\operatorname{poly}}
 \def\d{\,\mathrm d}
 \def\Lojas.{\PolishL ojasiewicz}
\def\cP{{\mathcal P}} \def\cR{{\mathcal R}}
\def\cF{{\mathcal F}}  \def\cR{{\mathcal R}}
\def\cI{{\mathcal I}}  
\def\cT{{\mathcal T}} \def\cD{{\mathcal D}} \def\cC{{\mathcal C}}
\def\cS{{\mathcal S}} 
\def\cA{{\mathcal A}}
\def\cO{{\mathcal O}}
\def\cX{{\mathcal X}}
\def\cY{{\mathcal Y}}
 \def\trans{\pitchfork}
\def\rest#1{{\vert_{#1}}}
\def\supp{\operatorname{supp}}
\def\id{\operatorname{id}}
\def\spec{\operatorname{Spec}}
\def\O{\mathcal{O}}
\def\LT{\operatorname{LT}}
\def\vf{{\mathbf f}}
\def\vg{{\mathbf g}}
\def\vx{{\mathbf x}}
\def\vz{{\mathbf z}}
\def\vw{{\mathbf w}}
\def\vy{{\mathbf y}}
\def\vu{{\mathbf u}}
\def\vp{{\mathbf p}}
\def\vsigma{{\boldsymbol\sigma}}
\def\valpha{{\boldsymbol\alpha}}
\def\vgamma{{\boldsymbol\gamma}}
\def\vdelta{{\boldsymbol\delta}}
\def\vrho{{\boldsymbol\rho}}
\def\vsigma{{\boldsymbol\sigma}}
\def\vnu{{\boldsymbol\nu}}
\def\Exp{\operatorname{Exp}}
\def\Qua{\mathcal{Q}}
\def\an{{\text{an}}}
\def\he#1{{\{#1\}}}
\def\hrho{{\he\rho}}
\def\hvrho{{\he\vrho}}
\def\hsigma{{\he\sigma}}
\def\alg{\mathrm{alg}}
\def\trans{\mathrm{trans}}
\def\RE{\mathrm{RE}}
\def\sec{{\mathfrak{S}}}
\DeclareMathOperator{\lcm}{lcm}
\DeclareMathOperator{\pos}{pos}
\begin{document}

\title{Complex Cellular Structures}

\author{Gal Binyamini}
\thanks{G.B. is the incumbent of the Dr. A. Edward Friedmann career development chair in mathematics.}
\address{Weizmann Institute of Science, Rehovot, Israel}
\email{gal.binyamini@weizmann.ac.il}

\author{Dmitry Novikov}
\address{Weizmann Institute of Science, Rehovot, Israel}
\email{dmitry.novikov@weizmann.ac.il}
\thanks{This research was supported by the ISRAEL SCIENCE FOUNDATION
	(grant No. 1167/17) and by funding received from the MINERVA
	Stiftung with the funds from the BMBF of the Federal Republic of
	Germany. This project has received funding from the European Research Council (ERC) under the European Union’s Horizon 2020 research and innovation programme (grant agreement No 802107)}

\subjclass[2010]{14P10,37B40,03C64,30C99}
\keywords{Gromov-Yomdin reparametrization, Cellular decomposition,
  Topological entropy, Diophantine geometry}

\date{\today}

\begin{abstract}
  We introduce the notion of a \emph{complex cell}, a complexification
  of the cells/cylinders used in real tame geometry. For
  $\delta\in(0,1)$ and a complex cell $\cC$ we define its holomorphic
  extension $\cC\subset\cC^\delta$, which is again a complex cell. The
  hyperbolic geometry of $\cC$ within $\cC^\delta$ provides the class
  of complex cells with a rich geometric function theory absent in the
  real case. We use this to prove a complex analog of the cellular
  decomposition theorem of real tame geometry. In the algebraic case
  we show that the complexity of such decompositions depends
  polynomially on the degrees of the equations involved.

  Using this theory, we refine the Yomdin-Gromov algebraic lemma on
  $C^r$-smooth parametrizations of semialgebraic sets: we show that
  the number of $C^r$ charts can be taken to be polynomial in the
  smoothness order $r$ and in the complexity of the set. The algebraic
  lemma was initially invented in the work of Yomdin and Gromov to
  produce estimates for the topological entropy of $C^\infty$
  maps. For \emph{analytic} maps our refined version, combined with
  work of Burguet, Liao and Yang, establishes an optimal refinement of
  these estimates in the form of tight bounds on the tail entropy and
  volume growth. This settles a conjecture of Yomdin who proved the
  same result in dimension two in 1991. A self-contained proof of
  these estimates using the refined algebraic lemma is given in an
  appendix by Yomdin.

  The algebraic lemma has more recently been used in the study of
  rational points on algebraic and transcendental varieties. We use
  the theory of complex cells in these two directions. In the
  algebraic context we refine a result of Heath-Brown on interpolating
  rational points in algebraic varieties. In the transcendental
  context we prove an interpolation result for (unrestricted)
  logarithmic images of subanalytic sets.
\end{abstract}
\maketitle
\date{\today}

\setcounter{tocdepth}{1}
{\small \tableofcontents}

\section{Introduction}
\label{sec:intro}

In this section we aim to provide an intuitive motivation for the
formal notion of complex cells introduced
in~\secref{sec:complex-cells}. We discuss the smooth parametrization
problem, particularly the Yomdin-Gromov algebraic lemma and its
sharper forms established in this paper. We then discuss the hyperbolic
obstruction to a Gromov-Yomdin type lemma in the holomorphic category
and motivate the notion of a complex cell as a way of overcoming this
obstruction. Finally we briefly discuss the applications of our
parametrization results in smooth dynamics and in diophantine
geometry.

\subsection{Smooth parametrizations}
\label{sec:intro-smooth}

\subsubsection{The Yomdin-Gromov Algebraic Lemma}
\label{sec:intro-algebraic-lemma}

In \cite{yomdin:entropy,yomdin:gy} Yomdin gave a proof of Shub's
entropy conjecture for $C^\infty$-maps. A key step in this proof was
the construction of $C^r$-smooth parametrizations of semialgebraic
sets with the number of charts depending only on the combinatorial
data. More precisely, for a $C^r$-smooth function $f:U\to\R^n$ on a
domain $U\subset\R^m$ we denote by $\norm{f}=\norm{f}_U$ the maximum
norm on $U$ and
\begin{equation}\label{eq:r-norm-def}
  \norm{f}_r := \max_{|\valpha|\le r} \frac{\norm{D^\valpha f}}{\valpha!}.
\end{equation}
The original parametrization theorem of
\cite{yomdin:entropy,yomdin:gy} involved a technical condition on the
removal of a small piece from the semialgebraic set being
parametrized. The following refinement by Gromov \cite{gromov:gy},
see also \cite{burguet:alg-lemma}, is known as the Yomdin-Gromov
Algebraic Lemma.

\begin{Thm*}[\protect{\cite[3.3.~Algebraic Lemma]{gromov:gy}}]
  Let $X\subset[0,1]^n$ be a semialgebraic set of dimension $\mu$
  defined by conditions $p_j(\vx)=0$ or $p_j(\vx)<0$ where $p_j$ are
  polynomials and $\sum\deg p_j=\beta$. Let $r\in\N$. There exists a
  constant $C=C(n,\mu,r,\beta)$ and semialgebraic maps
  $\phi_1,\ldots,\phi_C:(0,1)^\mu\to X$ such that their images cover
  $X$ and $\norm{\phi_j}_r\le 1$ for $j=1,\ldots,C$.
\end{Thm*}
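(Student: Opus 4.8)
The plan is to prove the lemma by induction on the dimension $n$ of the ambient cube, reducing each step to a one-variable situation where explicit rescaling and finitely-many-critical-values arguments apply. First I would stratify $X$ by smooth cells using a cylindrical-type (Fubini) decomposition adapted to the polynomials $p_j$: this produces a cell decomposition of $[0,1]^n$ with $O_{n,\beta}(1)$ cells, each of which is a graph over, or an interval bundle over, a cell in $[0,1]^{n-1}$, and on each cell the $p_j$ have constant sign and the projection $\pi\colon\R^n\to\R^{n-1}$ is either a diffeomorphism onto its image or has fibers that are open intervals whose endpoints are algebraic functions of the base. The cell boundaries are cut out by polynomials of controlled degree (resultants, discriminants), so all combinatorial quantities remain bounded in terms of $n,\mu,\beta$ only.

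Next I would set up the inductive step. Assume the theorem for $n-1$: a cell $Y\subset[0,1]^{n-1}$ of dimension $\nu$ admits $C(n-1,\nu,r',\beta')$ charts $\psi_i\colon(0,1)^\nu\to Y$ with $\|\psi_i\|_{r'}\le 1$, where I will take $r'$ a bit larger than $r$ (to absorb the loss of derivatives in the next paragraph — this is the standard trick of proving the statement with an extra margin of smoothness). For a cell $\cC$ over $Y$ that is a graph $x_n=g(\vx')$, one pulls back: $\vx'\mapsto(\psi_i(\vx'),g(\psi_i(\vx')))$ is already a chart, but its $r$-norm need not be $\le 1$ because $g\circ\psi_i$ can have large derivatives. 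For a cell $\cC$ over $Y$ that is an interval bundle $a(\vx')<x_n<b(\vx')$, one composes with the affine-in-$x_n$ reparametrization $x_n=a+t(b-a)$, $t\in(0,1)$, obtaining a map $(0,1)^{\nu+1}\to\cC$; again the $r$-norm is not yet controlled because $a,b,g$ are algebraic functions whose derivatives blow up near the cell boundary.

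The heart of the argument — and the main obstacle — is then the one-dimensional rescaling lemma that tames these algebraic functions: given an algebraic function $h\colon(0,1)\to[0,1]$ (or finitely many such, together with their relevant derivatives up to order $r$) of degree bounded by $\beta$, the interval $(0,1)$ can be covered by $C(r,\beta)$ subintervals $I_k$ such that, after the affine reparametrization of $I_k$ onto $(0,1)$, the rescaled function has $r$-norm $\le 1$. This is proved by controlling the number of zeros of $h', h'', \ldots, h^{(r)}$ (each is again algebraic of controlled degree, so has $O_{r,\beta}(1)$ zeros), subdividing at these zeros plus a dyadic-type further subdivision so that on each piece each derivative is monotone and the mean value theorem lets one bound $h^{(k)}$ on $I_k$ by $|I_k|^{-k}$ times its variation; choosing the subdivision fine enough (but still $O_{r,\beta}(1)$ many pieces, crucially using that after rescaling a length-$\ell$ interval the $k$-th derivative scales by $\ell^k$) forces $\|\cdot\|_r\le 1$. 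Applying this fiberwise and combining with the inductive charts for the base — here one must check that rescaling in the base variables and in $x_n$ interact well, which is why the extra smoothness margin $r'>r$ was reserved, since differentiating $g\circ\psi_i$ by the chain rule mixes base derivatives of $\psi_i$ up to order $r$ with derivatives of $g$ — yields the desired $C(n,\mu,r,\beta)$ charts with $r$-norm $\le 1$, completing the induction. The final constant is obtained by multiplying the per-cell chart counts by the number of cells, all of which are bounded purely in terms of $n,\mu,r,\beta$.
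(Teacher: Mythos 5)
Your overall skeleton (cylindrical cell decomposition with degree control, induction on dimension, a one-variable rescaling lemma applied fiberwise) is the classical Gromov-style route, but the one-dimensional lemma as you state it is false, and this is exactly the heart of the matter. You propose to cover $(0,1)$ by $C(r,\beta)$ subintervals and use only \emph{affine} reparametrizations of each $I_k$ onto $(0,1)$. Take $h(x)=\sqrt{x}$, an algebraic function of degree $2$: any finite cover of $(0,1)$ by subintervals must contain a piece $I=(0,a)$ accumulating at $0$, and the affine rescaling of $h$ on $I$ is $t\mapsto\sqrt{a}\,\sqrt{t}$, whose first derivative is already unbounded near $t=0$. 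No zero-counting of $h',\dots,h^{(r)}$ helps, because $\sqrt{x}$ is monotone with monotone derivatives and is affinely self-similar; the blow-up at the endpoint simply cannot be removed by finitely many affine changes of variable. (Your appeal to a ``dyadic-type'' subdivision is precisely the symptom: dyadic subdivision does tame $\sqrt{x}$, but it needs countably many pieces, which the lemma forbids.) The classical proofs repair this with genuinely non-affine, power-type substitutions such as $t\mapsto t^{2}$, combined with an induction on the smoothness order $r$ (not only on dimension): after arranging, via cell decomposition, that $h^{(r+1)}$ is monotone and $\norm{h}_r\le 1$, one shows $|h^{(r+1)}(x)|=O(1/x)$ and checks that precomposition with $t\mapsto t^{2}$ then bounds the $(r+1)$-st derivative. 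Also, your bound of $h^{(k)}$ by $|I_k|^{-k}$ times its variation via the mean value theorem goes in the wrong direction (MVT bounds increments of lower derivatives by sup of higher ones); the correct one-variable mechanism is the monotonicity argument just described. A secondary soft spot is the base/fiber interaction: an ``extra margin'' $r'>r$ of smoothness does not by itself control $g\circ\psi_i$; the classical argument needs the inner maps to have \emph{unit} norms so that Fa\`a di Bruno compositions stay bounded, which is why the induction is organized around unit-norm charts rather than a loss of derivatives.

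For comparison, the paper does not prove the lemma by this real-variable induction at all: it first builds a complex-cellular parametrization, then sectorial parametrizations (charts extending holomorphically to products of sectors with bounded $C^1$-norm), and finally obtains the $C^r$ bounds by the explicit power substitution $\vw\mapsto(\vw_1^r,\dots,\vw_\mu^r)$ together with Cauchy estimates on the sectors; this yields the sharper count $\poly_n(\beta)\cdot r^\mu$. Note that even there the non-affine power substitution is indispensable, which is a good sanity check that your affine-only one-dimensional step cannot suffice.
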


Understanding the behavior of the constant $C(n,\mu,r,\beta)$ has been
the key difficulty in establishing a conjectural sharpening of
Yomdin's results for analytic maps, as we discuss
in~\secref{sec:intro-dynamics}. More precisely, what is needed is an
estimate $C(n,\mu,r,\beta)=\poly_n(r,\beta)$
(see~\secref{sec:asymptotics} for this asymptotic notation). Such an
estimate is our first main result.

\begin{Thm}\label{thm:refined-alg-lemma}
  In the Yomdin-Gromov algebraic lemma one may take
  $C(n,\mu,r,\beta)=\poly_n(\beta)\cdot r^\mu$. Moreover the maps
  $\phi_j$ can be chosen to be semialgebraic of complexity
  $\poly_n(\beta,r)$.
\end{Thm}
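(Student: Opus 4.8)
The plan is to derive Theorem~\ref{thm:refined-alg-lemma} as a consequence of the complex cellular decomposition theorem announced in the abstract, exploiting the fact that a holomorphic function on a complex cell $\cC$ which extends to the larger cell $\cC^\delta$ automatically has all its derivatives controlled on $\cC$ by the Cauchy estimates coming from the hyperbolic distance between $\cC$ and $\partial\cC^\delta$. Concretely, one first records a one-variable model computation: if $g$ is holomorphic and bounded on a disc (or annulus) of a definite hyperbolic size and we restrict to a sub-disc (sub-annulus) whose boundary is at hyperbolic distance $\gtrsim\log(1/\delta)$, then $\norm{D^k g}\le C^k k!\,\delta^{-O(k)}\norm g$ on the smaller set, and after a power substitution $z\mapsto z^s$ (equivalently, passing to a cell where the radial coordinate is reparametrized) one can absorb the $\delta$-loss; iterating this coordinate by coordinate in a cell $\cC$ of dimension $\mu$ gives, after composing with a change of variables of polynomially bounded complexity, a map from the standard polydisc/cube into $\cC$ whose $r$-norm is $\le 1$. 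The number of substitutions needed to kill $r$ derivatives in one coordinate is $O(r)$, and since there are $\mu$ coordinates this is where the factor $r^\mu$ will come from; the number of cells in the decomposition of the complexification of $X$ is, by the algebraic part of the cellular decomposition theorem, $\poly_n(\beta)$, independent of $r$, which yields the stated form $C=\poly_n(\beta)\cdot r^\mu$.

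The steps, in order, are: (i) complexify $X\subset[0,1]^n$ to a bounded definable subset of $\C^n$ and apply the complex cellular decomposition theorem to obtain $\poly_n(\beta)$ complex cells $\cC_i$ covering (the relevant part of) the complexification, each presented together with a holomorphic extension $\cC_i\subset\cC_i^\delta$ for a suitable fixed $\delta\in(0,1)$; (ii) intersect with $\R^n$ to recover a cover of $X$ by the real points of these cells, and reduce to parametrizing a single real cell $\cL=\cC\cap\R^n$ of dimension $\mu$; (iii) prove the one-variable Cauchy/Schwarz-lemma estimate described above, controlling all derivatives up to order $r$ on $\cC$ in terms of sup-norms on $\cC^\delta$, with an explicit $\delta$-dependence; (iv) perform the iterated power-substitution (Gromov-Yomdin "prepare the cell" step, now with explicit bookkeeping) in each of the $\mu$ fibered coordinates, splitting each coordinate into $O(r)$ pieces so that on each piece the composed map has $r$-norm $\le1$, giving $O(r^\mu)$ charts per cell; (v) check that every coordinate change used is algebraic (rational in the coordinates, with exponents bounded by $O(r)$ and coefficients of controlled height), hence semialgebraic of complexity $\poly_n(\beta,r)$, and that the composition of $r$-bounded maps with such changes remains $r$-bounded after rescaling; (vi) collect: $\poly_n(\beta)$ cells times $O(r^\mu)$ charts each gives $C=\poly_n(\beta)\cdot r^\mu$ and complexity $\poly_n(\beta,r)$.

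The main obstacle is step~(iv) combined with the uniformity in step~(iii): in the real Yomdin-Gromov argument one has no a priori control on how the number of preparatory subdivisions grows with $r$, because each differentiation can a priori worsen the geometry of the cell, and iterating $r$ times could blow up super-polynomially. The whole point of working with \emph{complex} cells is that the holomorphic extension $\cC^\delta$ furnishes, via hyperbolic geometry, a \emph{fixed} amount of "room" that does not degrade as we differentiate — a single Cauchy estimate on $\cC^\delta$ bounds \emph{all} derivatives at once — so that the only $r$-dependence is the cost of the power substitutions needed to convert the $\delta^{-O(r)}$ factor into a bounded one, and that cost is merely $O(r)$ per coordinate. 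Making this precise requires carefully tracking how the extension domain $\cC^\delta$ behaves under the fiber-by-fiber substitutions (one needs that the substituted cell still admits a $\delta'$-extension with $\delta'$ controlled), which is exactly the content of the cellular formalism; granting the properties of complex cells and their holomorphic extensions stated in the excerpt, the remaining work is the explicit one-variable estimate and the combinatorial count, both routine. A secondary point to handle is the passage from complex cells back to real charts on $(0,1)^\mu$ (reconciling the polydisc/annulus coordinates of a complex cell with the cube coordinates in the statement), which is a standard change of variables absorbed into the complexity bound.
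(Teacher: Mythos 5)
Your high-level architecture (a cellular cover of size $\poly_n(\beta)$ independent of $r$, followed by per-chart power-type reparametrizations accounting for the $r^\mu$) is indeed the paper's strategy, but the proposal has a genuine gap exactly at steps (iii)--(iv), the places you label ``routine'' and ``standard''. The functions whose derivatives must be bounded are not the cellular maps on $\cC$ themselves but their compositions with a change of variables taking $(0,1)^\mu$ onto the positive real part of the cell, and for fibers of type $A(r_1,1)$ or $D_\circ$ the inner radius $r_1$ is a holomorphic function of the base which degenerates to $0$ within the family (the hyperbolas $xy=\e$ are the model case). Near the inner circle the Euclidean distance from the fiber to the boundary of $\cC^\delta$ is $\asymp(1-\delta)|r_1|$, not a power of $\delta$, so the Cauchy estimate on $\cC^\delta$ does \emph{not} give $\norm{D^k g}\le C^k k!\,\delta^{-O(k)}\norm{g}$ uniformly over the family, and a substitution $z\mapsto z^s$ cannot absorb a loss governed by $|r_1|$ rather than by $\delta$. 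This is precisely the hyperbolic obstruction the paper isolates (\secref{sec:parametrization-uniformity}, Proposition~\ref{prop:hyp-param}): no fixed polydisc-type extension of the chart domain can furnish uniform bounds, so ``a single Cauchy estimate on $\cC^\delta$ bounds all derivatives at once'' fails for the maps you actually need to control. (A smaller inaccuracy: the paper does not complexify and intersect with $\R^n$; it runs the real CPT directly, covering $\R_+\cC$ by positive real parts of real cells.)

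What closes this gap in the paper is the machinery your sketch omits: pass to quadric cells (Proposition~\ref{prop:quadric-cover}), define the straightening map $h_i(\vu)=\vu_i+\sqrt{r_i\circ h_{1..i-1}}$ of \secref{sec:quadrant-straighten}, and prove via the logarithmic-derivative bound (Proposition~\ref{prop:log-derivative-bound}) that the composed chart extends holomorphically to a \emph{sector} $B(\e)$ with $1/\e=\poly_n(\beta)$ and unit $C^1$-norm there (Proposition~\ref{prop:h-extension}, Corollary~\ref{cor:Phi-C1-norm}, Theorem~\ref{thm:sectorial-param}); the domain of holomorphy necessarily pinches at the corner $\vu_i=0$. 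Then a \emph{single} substitution $P_r(\vw)=(\vw_1^r,\dots,\vw_\mu^r)$ per chart, Cauchy estimates on polydiscs of radius $\propto\vw_i\sin(\e/r)$, and the decomposition of Proposition~\ref{prop:Phi-decomp} --- whose whole point is that the summands vanish at the corner fast enough to produce the compensating factor $\vw_{\min}^{\,r-|\valpha|}$ --- yield $\norm{D^\valpha\phi}\le\valpha!\,(Ar)^{|\valpha|}$ for $|\valpha|\le r$ with the number of charts still $\poly_n(\beta)$ (Theorem~\ref{thm:Cr}). The factor $r^\mu$ then comes from the trivial linear subdivision of the cube into $(Ar)^\mu$ subcubes of side $1/(Ar)$ and rescaling, not from ``$O(r)$ power substitutions per coordinate''; your step (iv) conflates the corner-handling substitution with the rescaling subdivision and gives no argument that your iterated-substitution scheme terminates with the claimed count or preserves a usable extension domain at each stage.
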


\subsubsection{Pila-Wilkie's generalization to o-minimal structures}

Parametrizations by $C^r$-smooth functions have also been used to
great effect in the seemingly unrelated study of rational points on
algebraic and transcendental sets. The algebraic lemma was first
introduced to this subject by Pila and Wilkie \cite{pila-wilkie}, who
proved its far-reaching generalization for arbitrary o-minimal
structures. We refer the reader to \cite{vdd:book} for general
background on o-minimal geometry.

\begin{Thm*}[\protect{\cite[Theorem~2.3]{pila-wilkie}}]\label{thm:yg-pw}
  Let $X=\{X_p\subset[0,1]^n\}$ be a family of sets definable in an
  o-minimal structure, with $\dim X_p\le\mu$. There exists a constant
  $C=C(X,r)$ such that for any $p$ there exist definable maps
  $\phi_1,\ldots,\phi_C:(0,1)^\mu\to X_p$ such that their images cover
  $X_p$ and $\norm{\phi_j}_r\le 1$ for $j=1,\ldots,C$.
\end{Thm*}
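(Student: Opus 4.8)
The plan is to follow Gromov's proof of the semialgebraic algebraic lemma, which transplants to the o-minimal setting with no essential change, using only three soft tools: $C^r$-cell decomposition with parameters, the monotonicity theorem, and uniform finiteness / definable choice. (Although this result generalizes the semialgebraic theorem above, one cannot simply reduce to it, since a general o-minimal structure is not semialgebraic; and the constant $C=C(X,r)$ will be produced by a finite induction and carry no effective information, in contrast with Theorem~\ref{thm:refined-alg-lemma}.)

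The first step is to reduce the statement about sets to a \emph{reparametrization lemma} about maps: for every definable family $\{f_p\colon(0,1)^d\to[0,1]^n\}$ there is $N=N(f,r)$, independent of $p$, and definable maps $\rho_1,\dots,\rho_N\colon(0,1)^d\to(0,1)^d$ whose images cover $(0,1)^d$ and satisfy $\norm{f_p\circ\rho_j}_r\le1$ (one imposes no bound on the $\rho_j$ themselves — it is cleaner not to). Granting this, apply $C^r$-cell decomposition with parameters to $\{X_p\}$: each $X_p$ is a disjoint union of a uniformly bounded number of bounded $C^r$-cells, and a bounded $C^r$-cell of dimension $d\le\mu$ is the image of an explicit definable $C^r$ map $\theta\colon(0,1)^d\to[0,1]^n$ obtained by rescaling the successive graphs and bands in the tower defining the cell; reparametrizing the family of these $\theta$ and precomposing with a coordinate projection $(0,1)^\mu\to(0,1)^d$ yields the required $\phi_j\colon(0,1)^\mu\to X_p$ with $\norm{\phi_j}_r\le1$, since precomposition with a coordinate projection does not increase the $C^r$-norm. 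The reparametrization lemma is then proved by induction on $d$.

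The base case $d=1$ is where the work is, and it is proved by a secondary induction on $r$ (nothing is needed for $r=0$). For the step, the monotonicity theorem with parameters provides a uniformly bounded subdivision of $(0,1)$ into intervals on which every component of $f_p$ and each of its first $r$ derivatives is $C^r$, monotone and of constant sign; after an affine rescaling of each piece we may assume $\norm{f_p}_{r-1}\le1$ and that $f_p^{(r)}$ is monotone, hence integrable (its integral being the bounded total variation of $f_p^{(r-1)}$). If $f_p^{(r)}$ is already bounded we are done; otherwise monotonicity and integrability force $s\,\abs{f_p^{(r)}(s)}\to0$ at the endpoint where it blows up, and the substitution $s\mapsto s^2$ near $0$ (respectively $s\mapsto 2s-s^2$ near $1$, composing the two to cover both endpoints and all components) makes the order-$r$ derivative of $f_p\circ\rho$ bounded: by the higher chain rule it equals $2^r s^r f_p^{(r)}(s^2)=s^{r-2}\cdot(s^2 f_p^{(r)}(s^2))\to0$ for $r\ge2$, plus terms bounded through $\norm{f_p}_{r-1}\le1$, while the lower derivatives stay bounded; a further bounded affine subdivision normalizes the $C^r$-norm to $\le1$. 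The case $r=1$, which no power substitution handles, is done directly: on a monotonicity piece form the definable auxiliary function $h(u)=u+\sum_\nu\abs{f_{p,\nu}(u)-f_{p,\nu}(u_0)}$, where $u_0$ is an endpoint of the piece chosen so each summand is monotone; then $h$ is a $C^1$-diffeomorphism onto an interval of length $\le n+1$ with $h'\ge\max(1,\max_\nu\abs{f_{p,\nu}'})$, so reparametrizing by a rescaled inverse of $h$ forces $\abs{(f_{p,\nu}\circ\rho)'}\le1$ for all $\nu$ simultaneously, and an affine subdivision normalizes.

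For the inductive step $d-1\to d$, write $(0,1)^d=(0,1)^{d-1}\times(0,1)$ with coordinates $(x,t)$ and apply the one-variable lemma fiberwise to $\{t\mapsto f_p(x,t)\}$, regarded as a definable family indexed also by $x$: this yields finitely many reparametrizations $\psi(x,t)$, jointly definable in $(x,t)$, for which every $t$-derivative of $F_p(x,t):=f_p(x,\psi(x,t))$ of order $\le r$ is bounded by $1$. One then tames the base directions by applying the $(d-1)$-dimensional hypothesis to the definable family $\{\,x\mapsto\partial_t^\beta F_p(x,t):\abs{\beta}\le r,\ t\in(0,1)\,\}$, obtaining reparametrizations $\chi(y)$ of $(0,1)^{d-1}$ that are independent of $t$ and $\beta$; then $\Phi(y,t):=(\chi(y),\psi(\chi(y),t))$ does the job, because $\chi(y)$ does not depend on $t$, so every mixed partial $\partial_y^\alpha\partial_t^\beta(f_p\circ\Phi)$ with $\abs{\alpha}+\abs{\beta}\le r$ is just $\partial_y^\alpha$ applied to $y\mapsto(\partial_t^\beta F_p)(\chi(y),t)$, whose $y$-derivatives up to order $r$ were arranged to be bounded — so $\norm{f_p\circ\Phi}_r$ is bounded by a constant depending only on $d$ and $r$, and is normalized to $\le1$ as before. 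I expect the main obstacle to be exactly this coupling: the fiberwise $\psi$ genuinely depends on the base point, so differentiating $f_p\circ\Phi$ in the base directions brings in $x$-derivatives of $\psi$; the way around it is precisely to apply the $(d-1)$-dimensional lemma not to a single map but to the whole $t$-indexed family of $t$-derivatives of $F_p$, and to feed the cell parametrizations themselves back through the reparametrization lemma rather than use them as given. Threading the with-parameters versions of cell decomposition, monotonicity and the one-variable lemma through everything, so that the final chart count $C(X,r)$ depends only on the family and on $r$ and not on $p$, is the bookkeeping heart of the argument.
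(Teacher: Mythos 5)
A remark on provenance first: this statement is quoted by the paper from Pila--Wilkie and is not proved in the paper at all; the paper's own contribution is the refined version for $\R_\an$ (and the semialgebraic refinement), established by a completely different complex-analytic route (cellular parametrization, sectorial extensions, Cauchy estimates). Your proposal instead reconstructs the classical real-variable Gromov/Pila--Wilkie induction, so it must be judged on its own terms. Your reduction to a reparametrization lemma via $C^r$-cell decomposition, and your one-variable case (monotonicity subdivision, the estimate $s\abs{f_p^{(r)}(s)}\to 0$, the substitutions $s\mapsto s^2$ and $s\mapsto 2s-s^2$, and the arclength-type trick for $r=1$) are essentially the standard arguments and are fine.

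The gap is in the inductive step $d-1\to d$, exactly at the point you flag. After the fiberwise reparametrization you need finitely many base reparametrizations $\chi$, \emph{independent of $t$ and $\beta$}, such that $\norm{(\partial_t^\beta F_p)(\chi(\cdot),t)}_r\le 1$ for all $t,\beta$ simultaneously. But the reparametrization lemma you are inducting on only provides, for each value of its parameter, a uniformly bounded \emph{number} of reparametrizations; the maps themselves depend on the parameter. Applied to the family indexed by $(p,t,\beta)$ it yields maps depending on $t$, and the strengthening you actually use --- one finite set of reparametrizations working uniformly over a whole definable family --- is false in general: for the semialgebraic family $g_t(x)=\sqrt{\abs{x-t}}$ on $(0,1)$, any $\chi$ with $\abs{(g_t\circ\chi)'(y)}\le 1$ for all $t\neq\chi(y)$ must satisfy $\chi'\equiv 0$ (let $t\to\chi(y)$), and finitely many such maps cannot cover $(0,1)$. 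So the step as written does not follow from the inductive hypothesis; one must exploit the special structure of the family $\partial_t^\beta F_p$ (slices of a single map already tamed in $t$, together with the fiber reparametrization data), and managing precisely this coupling is the delicate heart of the Gromov/Pila--Wilkie/Burguet proofs: there the objects reparametrized in the base include the fiber reparametrizations themselves with their $t$-derivatives, the reparametrizing maps are required to have unit $C^r$-norm so that they can be composed, and the induction is organized so that regularity is gained one variable at a time without ever invoking a uniform-in-$t$ base reparametrization of an arbitrary $t$-indexed family. You have identified the obstacle, but the proposed resolution does not yet overcome it; moreover, dropping the norm bound on the $\rho_j$ discards exactly the handle the classical argument uses when feeding reparametrizations back through the induction.
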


In the diophantine direction, understanding the behavior of $C(X,r)$
with respect to the geometry of the family $X$ and the smoothness
order $r$ is also a question of great importance as we discuss
in~\secref{sec:intro-rational}. In the recent paper \cite{cpw:params}
Cluckers, Pila and Wilkie made significant progress in this direction
by proving that for subanalytic sets (and also sets in the larger
structure $\R_\an^{\text{pow}}$) one has $C(X,r)=\poly_X(r)$. For
subanalytic sets we obtain a similar result with a precise control
over the degree.

\begin{Thm}
  In the Pila-Wilkie algebraic lemma for $\R_\an$ one may take
  $C(X,r)=O_X(r^\mu)$ where $\mu:=\max_p(\dim X_p)$.
\end{Thm}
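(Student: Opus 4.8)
The plan is to derive this from the complex cellular decomposition theorem in its general (non-algebraic) form together with the $C^r$-reparametrization machinery underlying Theorem~\ref{thm:refined-alg-lemma}; the genuinely new ingredient is uniformity over the family $X$, which comes for free from compactness in $\R_\an$. First I would put the family in a normal form: by the structure theory of $\R_\an$ (cell decomposition, finiteness of the number of pieces in a definable family, uniform bounds on radii of convergence), after partitioning the parameter space into finitely many definable pieces and subdividing $[0,1]^n$, one may assume each fibre $X_p$ is cut out inside a closed polydisc by conditions $p_{j,p}(\vx)=0$ and $p_{j,p}(\vx)>0$, where the $p_{j,p}$ are restrictions of analytic functions extending holomorphically to a \emph{fixed} complex polydisc neighborhood with a sup-norm bound there uniform in $p$. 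This is exactly the setting to which the complex cellular decomposition theorem applies, the analog of the degree parameter $\beta$ now being bounded by a constant depending only on $X$.

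Next, apply the complex cellular decomposition theorem to this complexified family of defining functions. It produces a decomposition of the ambient complex domain into complex cells $\cC$ compatible with the complexified $p_{j,p}$, whose number is $O_X(1)$ --- uniform in $p$ and, crucially, independent of $r$ (in the algebraic case one would additionally have the polynomial-in-$\beta$ bound, which plays no role here). Restricting each cell to its real points yields a cell decomposition of $[0,1]^n$ adapted to $X_p$, the cells appearing having real dimension $\le\mu$.

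On each complex cell $\cC$ one then runs the $C^r$-reparametrization argument of Theorem~\ref{thm:refined-alg-lemma}: using the holomorphic extension $\cC\subset\cC^\delta$ and the Schwarz--Pick / Cauchy-type estimates it supplies, a function holomorphic and bounded on $\cC^\delta$ has its derivatives on $\cC$ decaying geometrically, and one subdivides $\cC$ into $O_X(r^{\dim\cC})$ sub-cells on each of which a monomial-type reparametrization $\phi$ satisfies $\norm{\phi}_r\le1$ and pulls the restricted defining functions back to functions of $C^r$-norm $\le1$. Passing to real points, precomposing with a surjection $(0,1)^\mu\to(0,1)^{\dim\cC}$ to normalize the domains, and summing over the $O_X(1)$ cells and over $\dim\cC\le\mu$ gives a cover of $X_p$ by $O_X(1)\cdot O_X(r^\mu)=O_X(r^\mu)$ maps with the required properties.

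The main obstacle is the same one that drives Theorem~\ref{thm:refined-alg-lemma}: parametrizing a single complex cell by $O(r^{\dim})$ charts of bounded $C^r$-norm, which hinges on the hyperbolic geometry of $\cC$ inside $\cC^\delta$ to convert boundedness on the fattened cell into geometric derivative decay, and on choosing the reparametrizations so as to tame simultaneously $\norm{\phi}_r$ and the $C^r$-norms of the pulled-back defining functions. Since that machinery is already established, the residual work specific to the present statement lies in the first two steps --- checking that the preparatory reduction and the complex cellular decomposition can be carried out with all constants depending only on the family $X$ --- which is routine given the compactness and definability properties of the o-minimal structure $\R_\an$.
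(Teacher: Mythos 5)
Your global architecture coincides with the paper's --- a definable-family cellular decomposition with $O_X(1)$ cells followed by an $r$-dependent reparametrization contributing the factor $r^\mu$, with uniformity in $p$ ultimately coming from the cellular structure --- but the step where all the work lives is wrong as you describe it. It is not true that a function holomorphic and bounded on $\cC^\delta$ has geometrically decaying (or even bounded) derivatives on $\cC$: for fibers of type $D_\circ$ or $A$ the Euclidean distance from a point $\vz$ of the cell to $\partial\cC^\delta$ is only of order $|\vz_{\ell+1}|$ near the puncture or the inner circle, so the Cauchy estimate gives $|f^{(k)}|=O(k!\,|\vz_{\ell+1}|^{-k})$, which degenerates as the inner radius shrinks within the family. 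For the same reason you cannot subdivide the cell itself into $O_X(r^{\dim\cC})$ sub-cells each carrying a chart of unit $C^r$-norm uniformly over the family: for an annulus $A(r_1,1)$ the number of such pieces must grow like the logarithmic width $|\log r_1|$, which is unbounded --- exactly the hyperbolic obstruction discussed in \secref{sec:parametrization-uniformity} (see Remark~\ref{rem:Adelta-diameter} and Proposition~\ref{prop:hyp-param}). Nor does citing Theorem~\ref{thm:refined-alg-lemma} as established machinery close the gap: that statement is for semialgebraic sets, whereas the per-cell input you need is precisely the subanalytic statement being proved.

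What actually closes this gap in the paper is a different mechanism. After the CPT one passes to quadric cells and straightens the positive quadrant, obtaining charts $\phi_j$ that extend holomorphically to a sectorial cube $B_j(\e)$ with only a $C^1$ bound (Theorem~\ref{thm:sectorial-param}); higher derivatives are tamed not on the cell but after the non-cellular substitution $P_r(\vw)=(\vw_1^r,\ldots,\vw_\ell^r)$, where the decomposition of Proposition~\ref{prop:Phi-decomp} (vanishing along the coordinate hyperplanes) compensates the shrinking Cauchy radii $\sim\vw_i\sin(\e/r)$ and yields $\norm{D^\valpha\phi_j}\le\valpha!\,(Ar)^{|\valpha|}$ for $|\valpha|\le r$ (Theorem~\ref{thm:Cr}). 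The factor $r^\mu$ then comes from subdividing the parametrizing cube into $(Ar)^\mu$ subcubes and rescaling, and uniformity over $p$ is read off from the cellular structure and injectivity of the $\phi_j$ by restricting to fibers --- no preparatory ``uniform polydisc'' normal form for the family is required. If you replace your third step by this sectorial-plus-power-map argument (or invoke Theorem~\ref{thm:Cr} directly), the rest of your outline goes through.
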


\subsubsection{Mild parametrizations}

It is natural to inquire whether one can in fact replace the
$C^r$-charts in the algebraic lemma with $C^\infty$ charts with
appropriate control over the derivatives. In \cite{pila:mild} Pila
introduced a notion of this type called \emph{mild parametrization}
and investigated its diophantine applications related to the Wilkie
conjecture.

\begin{Def}[Mild parametrization]\label{def:mild}
  A smooth map $f:U\to[0,1]^n$ on a domain $U\subset\R^m$ is said
  to be $(A,K)$-mild if
  \begin{equation}\label{eq:mild-def}
    \norm{ D^\valpha f } \le \valpha! (A|\valpha|^K)^{|\valpha|}, \qquad \forall\valpha\in\N^m.
  \end{equation}
\end{Def}

In \cite{jmt:mild} it is shown that every subanalytic set
$X\subset [0,1]^n$ admits $(A,0)$-mild (i.e. analytic)
parametrizations, but this result is not uniform over families and its
diophantine applications are restricted to the case of curves
\cite{jmt:mild} and surfaces \cite{jt:pfaff-surfaces}. We prove the
following uniform version. We refer the reader to
\cite{bm:subanalytic} for general background on subanalytic geometry.

\begin{Thm}
  Let $X=\{X_p\subset[0,1]^n\}$ be a subanalytic family of sets, with
  $\dim X_p\le\mu$. There exist constants $A=A(X)$ and $C=C(X)$ such
  that for any $p$ there exist $(A,2)$-mild maps
  $\phi_1,\ldots,\phi_C:(0,1)^\mu\to X_p$ whose images cover $X_p$.  If
  $\{X_p\}$ is semialgebraic as in the Yomdin-Gromov algebraic lemma
  then one may take $A,C=\poly_n(\beta)$.
\end{Thm}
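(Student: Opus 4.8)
The plan is to follow the same route as Theorem~\ref{thm:refined-alg-lemma}: reduce to a single complex cell by the complex cellular decomposition theorem established in this paper, and then extract the required smoothness from the hyperbolic geometry of a cell inside its $\delta$-extension. Fix once and for all $\delta=\tfrac12$. First I would choose, uniformly over the family, a complex(-subanalytic) complexification $X_p^{\C}$ of $X_p$ lying in a fixed bounded neighbourhood of $[0,1]^n$ in $\C^n$ and of complex dimension $\le\mu$, and apply the cellular decomposition to write $X_p^{\C}$ as a finite union of complex cells $\cC_1,\dots,\cC_N$ of dimension $\le\mu$, each together with its holomorphic extension $\cC_i\subset\cC_i^\delta$ contained in a fixed polydisc. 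The number $N$ of cells is $O_X(1)$ in general, and $\poly_n(\beta)$ in the semialgebraic case, where moreover the cells have complexity $\poly_n(\beta)$. Since $X_p\subset X_p^{\C}\cap\R^n=\bigcup_i\RE(\cC_i)$, it is enough to produce an $(A,2)$-mild parametrization of the real locus $\RE(\cC)$ of one complex cell $\cC$ of dimension $\ell\le\mu$ by $(0,1)^\ell$ (composing with the coordinate projection $(0,1)^\mu\to(0,1)^\ell$ then gives a map on $(0,1)^\mu$); taking the union over the at most $N\cdot 2^{\mu}$ resulting pieces yields the statement with $A,C$ of the claimed size.

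Next I would write down the standard cellular parametrization of $\RE(\cC)$. Recall that $\cC$ is built fibrewise: once $z_1,\dots,z_{j-1}$ are constructed, the $j$-th fibre over $(z_1,\dots,z_{j-1})$ is either a graph $z_j=\theta_j(z_{<j})$ or a (possibly punctured) disc or annulus $\{r_{1,j}(z_{<j})<|z_j-c_j(z_{<j})|<r_{2,j}(z_{<j})\}$, where the centre $c_j$, the radii $r_{i,j}$ and the graphs $\theta_j$ are holomorphic on $\cC_{j-1}^\delta$ and bounded there by a fixed constant. Splitting $\RE(\cC)$ into the at most $2^\ell$ pieces recording the sign of each real fibre coordinate, I would parametrize a disc fibre affinely, $z_j=c_j+(2u_j-1)r_{2,j}$, a graph fibre trivially, $z_j=\theta_j$, and an annular fibre \emph{logarithmically}, $z_j=c_j+\eta_j\,r_{1,j}^{\,1-u_j}r_{2,j}^{\,u_j}$ with $\eta_j\in\{\pm1\}$ — the logarithmic reparametrization being forced by the multiplicative structure of the $\delta$-extension of an annulus. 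This defines a map $\Psi\colon(0,1)^\ell\to\RE(\cC)\subset[0,1]^n$, each of whose coordinates extends holomorphically into a complex neighbourhood of $(0,1)^\ell$, with image in $\cC$, whose size is controlled by the uniform hyperbolic collar of $\cC$ inside $\cC^\delta$ — the geometric function theory of complex cells absent in the real setting, which is what makes all the bounds below uniform.

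The remaining, and main, step is the derivative estimate. The holomorphic coefficient functions $\theta_j,c_j,r_{i,j}$, restricted from $\cC_{j-1}^\delta$ to $\cC_{j-1}$, satisfy Cauchy estimates of the form $\norm{D^\valpha(\cdot)}\le\valpha!\,B^{|\valpha|}$ with $B=B(n,\delta)$, and $B=\poly_n(\beta)$ in the algebraic case once one checks that the radii are bounded below by $\exp(-\poly_n(\beta))$, so that the logarithmic change of variables is itself of polynomial size. Feeding these into the $\ell$-fold nested construction of $\Psi$ and expanding $D^\valpha\Psi$ by the multivariate Fa\`a di Bruno formula should give $\norm{D^\valpha\Psi}\le\valpha!\,(A|\valpha|^2)^{|\valpha|}$ for all $\valpha\in\N^\ell$, i.e.\ $(A,2)$-mildness, with $A=A(X)$ and $A=\poly_n(\beta)$ in the algebraic case. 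I expect the genuine difficulty to be concentrated precisely here: one cannot argue by iterated composition of mild maps, since the smoothness exponent of a composition of two $(A,K)$-mild maps degrades, so the Fa\`a di Bruno sum for the whole nested construction has to be controlled at once, using crucially that at each fibre level $\Psi$ is an \emph{exponential} (or affine function) of holomorphic functions of the earlier coordinates rather than an arbitrary smooth one — this is what keeps the exponent equal to $2$ independently of the fibre depth $\ell$ — and that the constant $A$ must be kept uniform over the family despite arbitrarily thin annular fibres. Granting this, $\Psi$ precomposed with the coordinate projection is the desired $(A,2)$-mild parametrization, and the polynomial bookkeeping in the semialgebraic case yields $A,C=\poly_n(\beta)$.
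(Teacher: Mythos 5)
Your fibrewise ``logarithmic'' chart for an annular fibre, $z_j=c_j+\eta_j\,r_{1,j}^{1-u_j}r_{2,j}^{u_j}$, already has first derivative of size $\left|\log(r_{2,j}/r_{1,j})\right|\cdot|z_j|$, which near the outer boundary is comparable to $\left|\log (r_{1,j}/r_{2,j})\right|$; hence any $(A,K)$-mild bound for this chart forces $A\gtrsim\left|\log(r_{1,j}/r_{2,j})\right|$, whatever $K$ is. This is not uniform in the sense required by the theorem: in a family such as $X_\e=\{xy=\e\}$ any cellular decomposition of the fibres contains annuli of inner radius comparable to $\e$, so your $A$ blows up as $\e\to0$, and your claim that in the algebraic case the radii are bounded below by $\exp(-\poly_n(\beta))$ is false --- the complexity $\beta$ controls degrees and the number of polynomials but not coefficients, so even a single semialgebraic set of bounded complexity (again $xy=\e$ with $\e$ tiny) defeats it. The same unbounded factors $\log r_{i,j}$ reappear in every mixed derivative with respect to the base variables, so the Fa\`a di Bruno bookkeeping you defer (``Granting this\ldots'') cannot be carried out: the proposal fails already at the level of uniform $C^1$ bounds, which is exactly the hyperbolic obstruction (Proposition~\ref{prop:hyp-param}) that the theorem is designed to circumvent.

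The paper never interpolates logarithmically across an annulus. It covers the annulus by the positive quadrant $A(\sqrt r,1)$ of a quadric cell together with its image under the inversion $z\mapsto r/z$ (Proposition~\ref{prop:quadric-cover}), straightens the quadrant \emph{additively} by $h_i(\vu)=\vu_i+R_i(\vu_{1..i-1})$, and shows via the logarithmic-derivative and hyperbolic estimates (Propositions~\ref{prop:log-derivative-bound} and~\ref{prop:h-extension}, Corollary~\ref{cor:Phi-C1-norm}) that the composed cellular map extends holomorphically to a polysector $B(\e)$ with $C^1$-norm $O_\ell(1)$ and $1/\e=\poly_n(\beta)$; this sectorial parametrization (Theorem~\ref{thm:sectorial-param}) is uniform over the family no matter how thin the annuli become. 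Mildness with exponent exactly $2$ is then produced in a single stroke rather than by composing charts: one substitutes $\vu_i=\Exp(\vw_i)=\exp(1-1/\vw_i)$ and applies the Cauchy formula on polydiscs of radius $\tfrac12\e\vw_i^2$ inside $\Exp^{-1}B(\e)$, using the decomposition of Proposition~\ref{prop:Phi-decomp} (each piece vanishes on $\{\vu_i=0\}$, hence contributes a factor $e^{-1/\vw_{\min}}$ that absorbs $\vw_{\min}^{-2|\valpha|}$), giving $A=O_\ell(1/\e)=\poly_n(\beta)$, with family uniformity following from the cellular structure of the maps as in Theorem~\ref{thm:mild}. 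To repair your argument you would have to replace the logarithmic fibre chart by this sector-plus-$\Exp$ mechanism (or an equivalent one); no nested elementary chart whose derivatives carry $\log r$ factors can yield a uniform mildness constant.
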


The construction of a mild parametrization is a-priori more delicate
than its $C^r$ counterpart since it requires one to control
derivatives of all orders at once. In fact, Thomas \cite{thomas:mild}
has shown that there exist o-minimal structures without definable mild
parametrizations. In our results there is also a key difference
between the $C^r$ algebraic lemma and its mild counterpart. Namely, in
the $C^r$ version the parametrizing maps are themselves subanalytic,
and can be chosen to depend subanalytically on the parameters of the
family. We make no such guarantee in the mild version. In fact, we show
that any parametrization of a family of hyperbolas by a definable
family of $\R_\an$ maps must have unbounded $C^r$-norms for some
finite $r$. More precisely we have the following sharp estimate.

\begin{Prop}\label{prop:hyp-param}
  Let
  \begin{equation}
    F=\{F_\e=(x_\e,y_\e):[0,1]\to[0,1]^2\}, \qquad \e\in[0,1],
  \end{equation}
  be an $\R_\an$-definable family of maps satisfying
  $x_\e(t)\cdot y_\e(t)=\e$. Suppose that for each $r$ there exists a
  constant $M_r$ such that
  \begin{align}\label{eq:derivatives bounded}
    \left|\partial_t^r x_\e(t)\right|&<M_r & \left|\partial_t^r y_\e(t)\right|&<M_r
  \end{align}
  whenever the derivatives are defined. Then the Euclidean length of
  $\log F_\e([0,1])$ is bounded by a constant independent of $\e\neq0$
  (here $\log$ is applied coordinate-wise).
\end{Prop}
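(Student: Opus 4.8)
The plan is to reduce the statement to a one‑variable oscillation bound and then use tameness of the family to control it. Since $x_\e(t)y_\e(t)=\e$ for all $t$, taking logarithms gives $\log x_\e(t)+\log y_\e(t)=\log\e$, so the curve $\log F_\e([0,1])$ lies on the affine line $\{u+v=\log\e\}\subset\R^2$; hence its Euclidean length is at most $\sqrt2$ times the total variation $\Var_{[0,1]}(\log x_\e)$. Note also that $x_\e(t)=\e/y_\e(t)\in[\e,1]$ and, symmetrically, $y_\e(t)\in[\e,1]$, so $\log x_\e$ never reaches $-\infty$ and is of bounded variation. It therefore suffices to bound $\Var_{[0,1]}(\log x_\e)$ by a constant depending only on $F$.

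To control this variation I would invoke a parametrized cell decomposition together with the preparation theorem for subanalytic families: since $F$ is $\R_\an$-definable there is an integer $N=N(F)$ so that, for each $\e\neq0$, the interval $[0,1]$ splits into at most $N$ subintervals $I_j(\e)$ on each of which $x_\e$ is real-analytic and admits a prepared form
\[
x_\e(t)=c_j(\e)\,\bigl|t-\theta_j(\e)\bigr|^{\,q_j}\,U_j(t,\e),\qquad t\in I_j(\e),
\]
with $q_j\ge0$, $\theta_j(\e)\notin I_j(\e)$, and $U_j(\cdot,\e)$ a unit --- bounded above and below by positive constants uniformly in $\e$, and (being built from restricted analytic functions evaluated at bounded arguments) extending holomorphically in $t$ to a complex strip around $I_j(\e)$ of width independent of $\e$. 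On $I_j(\e)$ we have $\log x_\e(t)=\log c_j(\e)+q_j\log|t-\theta_j(\e)|+\log U_j(t,\e)$; the first term is constant in $t$ and the last has oscillation $O(1)$, so everything comes down to bounding $\Var_{I_j(\e)}\!\bigl(q_j\log|t-\theta_j(\e)|\bigr)$, which is $O(1)$ as soon as $\dist\bigl(\theta_j(\e),I_j(\e)\bigr)$ is bounded below uniformly in $\e$.

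The crux is thus to rule out a singularity $\theta_j(\e)$ of the prepared form of $x_\e$ migrating toward the domain $[0,1]$ with $q_j>0$, and this is precisely where the hypothesis \eqref{eq:derivatives bounded} enters. If $\theta_j(\e)\in[0,1]$ and $q_j>0$ then $y_\e=\e/x_\e$ blows up as $t\to\theta_j(\e)$ inside $I_j(\e)$, violating $y_\e\le1$; so $\theta_j(\e)\notin[0,1]$ unless $q_j=0$. It remains to exclude the case $\theta_j(\e)$ outside $[0,1]$ with $d_j(\e):=\dist(\theta_j(\e),[0,1])\to0$ along some sequence $\e\to0$ and $q_j>0$. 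Here $y_\e=(\e/c_j(\e))\,|t-\theta_j(\e)|^{-q_j}U_j(t,\e)^{-1}$ has a singularity of order $q_j$ at distance $d_j(\e)$ from the relevant endpoint of $I_j(\e)$, while $U_j^{-1}$ is holomorphic in a fixed complex strip; a Cauchy-type estimate then gives, near that endpoint,
\[
\bigl|\partial_t^r y_\e\bigr|\ \gtrsim_{\,q_j}\ \frac{\e}{c_j(\e)}\,\frac{\Gamma(q_j+r)}{\Gamma(q_j)}\,d_j(\e)^{-q_j-r}
\]
for all large $r$, the contribution of the derivatives of $U_j^{-1}$ being subordinate once $d_j(\e)$ is small relative to the strip width. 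Since $y_\e\le1$ forces $\e/c_j(\e)\gtrsim d_j(\e)^{q_j}$ up to the unit, the prefactor is not too small; substituting into the bounds $|\partial_t^r y_\e|\le M_r$ --- valid for \emph{every} $r$ --- and optimizing over $r$ forces $d_j(\e)\ge\delta_0>0$ with $\delta_0$ depending only on $F$, a contradiction. In short, a definable $C^\infty$ family cannot carry a singularity approaching its domain while keeping all derivatives uniformly bounded; this is the concrete incarnation here of the hyperbolic obstruction, and making it rigorous --- passing from the mere finiteness of each $M_r$, via the uniform complex analyticity forced by $\R_\an$-definability, to a uniform lower bound on $d_j(\e)$ --- is the step I expect to be the main difficulty.

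Granting this, on each of the at most $N$ subintervals $\dist(\theta_j(\e),I_j(\e))$ is bounded below by a constant depending only on $F$, so $\Var_{I_j(\e)}(\log x_\e)=O(1)$ uniformly in $\e$; summing over $j$ gives $\Var_{[0,1]}(\log x_\e)=O_F(1)$, and hence the Euclidean length of $\log F_\e([0,1])$ is bounded independently of $\e\neq0$.
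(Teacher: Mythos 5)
Your reduction to bounding $\Var_{[0,1]}(\log x_\e)$ is fine, and your identification of the crux---ruling out a prepared center $\theta_j(\e)$ approaching the interval while all $t$-derivatives of $y_\e$ stay bounded---is indeed where the real content lies (the paper's proof also spends essentially all of its effort on the analogous degenerate case). But the step you lean on to do this has a genuine gap: the claim that the prepared unit $U_j(\cdot,\e)$ extends holomorphically in $t$ to a strip of width independent of $\e$. The subanalytic preparation theorem gives $U_j=V(\psi)$ where $\psi$ contains terms of the form $(t-\theta_j)^{1/p}/a(\e)$ and $b(\e)/(t-\theta_j)^{1/p}$; the natural domain of $t$-holomorphy of such a unit is a multiplicative (annulus-like) neighbourhood that degenerates with $b(\e)$, not a fixed Euclidean strip---this is precisely the phenomenon the paper's $\delta$-extensions of complex cells are designed to encode, and it is why the hard case in the paper is an annulus fiber $A(r(\e),1)$ with $r(\e)\to0$ rather than a single migrating pole. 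Consequently your Cauchy-type lower bound $\bigl|\partial_t^r y_\e\bigr|\gtrsim (\e/c_j)\,\frac{\Gamma(q_j+r)}{\Gamma(q_j)}\,d_j(\e)^{-q_j-r}$, and in particular the assertion that the derivatives of $U_j^{-1}$ are ``subordinate'', are not established: the unit's own $t$-derivatives can be of the same order $d_j(\e)^{-r}$ as the pole term, so cancellations must be excluded, and that non-cancellation statement is the actual difficulty. The paper handles it by passing to the cell $D_\circ(1)\odot A(r,1)$, splitting $\eta(\e,\tau)=a(\e,\tau)+b(\e,r/\tau)$ via Corollary~\ref{cor:laurent-disc-decmp}, and showing---by expanding $b$ in powers of $\e$ and using that the operator $(v^2\partial_v)^k$ annihilates no nonconstant holomorphic function---that uniform boundedness of $\partial_\tau^k$ forces $b$ to be constant in its second argument; the remaining disc cases are then handled by the $p$-valent Koebe estimate of Lemma~\ref{lem:log-length}. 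Your sketch contains no substitute for this step, so the key contradiction forcing $d_j(\e)\ge\delta_0$ is not proved.

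A smaller point: boundedness of $U_j$ above and below gives only an oscillation bound for $\log U_j$, whereas your length estimate needs a bound on its total variation in $t$; this is easily repaired by uniform piecewise monotonicity of the definable family (o-minimality), but as written it is an additional gap. Likewise the normal form $x_\e=c_j|t-\theta_j|^{q_j}U_j$ with $q_j\ge0$ is an oversimplification of the prepared form (exponents may be negative rationals and the unit genuinely involves both $(t-\theta_j)^{1/p}/a$ and $b/(t-\theta_j)^{1/p}$), though this is secondary to the main issue above.
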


This result is easily seen to be essentially tight: one can indeed
construct parametrizations $F$ as in Proposition~\ref{prop:hyp-param}
to cover any definable family of intervals of constant logarithmic
length in the hyperbolas. Somewhat surprisingly, our proof goes via
reduction using complex cells to the following simple statement from
geometric function theory, which is proved using a $p$-valent version
of the Koebe 1/4-theorem.

We introduce the following notation to be used in this section. A more
general form of this notation is defined
in~\secref{sec:complex-cells}.  For $r>0$ (resp. $r_2>r_1>0$) and
$\delta\in(0,1)$ we denote
\begin{align}
  D(r)&:=\{|z|<r\} & D_\circ(r)&:=\{0<|z|<r\} & A(r_1,r_2)&:=\{r_1<|z|<r_2\} \\
  D^\delta(r)&:=D(\delta^{-1}r) & D_\circ^\delta(r)&:=D_\circ(\delta^{-1}r) & A^\delta(r_1,r_2)&:=A(\delta r_1,\delta^{-1}r_2).
\end{align}

\begin{Lem}\label{lem:log-length}
  Let $\xi:D(2)\to\C\setminus\{0\}$ be holomorphic and suppose that
  $\xi$ is $p$-valent. Then the length of $\log\xi([-1,1])$ does not
  exceed $8\pi p^2$.
\end{Lem}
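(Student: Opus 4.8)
The plan is to bound the length pointwise. Since $\xi$ omits $0$ on the simply connected disc $D(2)$, I fix a holomorphic branch $\eta:=\log\xi$ on $D(2)$; then $\log\xi([-1,1])=\eta([-1,1])$ has length $\int_{-1}^1|\eta'(t)|\,\d t=\int_{-1}^1|\xi'(t)/\xi(t)|\,\d t$. So it suffices to establish a $p$-valent analogue of the Koebe $1/4$-theorem in the form $|\xi'(t)/\xi(t)|\le Cp$ for all $t\in[-1,1]$ with an absolute constant $C$; integrating over the length-$2$ segment then gives a bound comfortably below $8\pi p^2$.

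To prove the pointwise estimate, fix $t\in[-1,1]$ and assume $\xi'(t)\ne0$. Since $t+\D\subset D(2)$, the function $h(z):=\xi(t+z)/\xi(t)$ is $p$-valent on $\D$, maps into $\C\setminus\{0\}$, and satisfies $h(0)=1$, $h'(0)=\xi'(t)/\xi(t)$. Write $h=e^g$ with $g$ holomorphic on $\D$, $g(0)=0$, so $g'(0)=h'(0)$. The one place $p$-valence enters: the fibre $h^{-1}(1)=\{z\in\D:g(z)\in2\pi\iu\Z\}$ has at most $p$ points counted with multiplicity, and it already contains $z=0$ with multiplicity $1$. Hence $g$ cannot take all of the $p$ values $2\pi\iu,4\pi\iu,\dots,2\pi\iu p$, nor all of $-2\pi\iu,\dots,-2\pi\iu p$; picking omitted ones yields $a=2\pi\iu k_+$ and $b=2\pi\iu k_-$, with $1\le k_+\le p$ and $-p\le k_-\le -1$, omitted by $g$. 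Thus $g\colon\D\to\C\setminus\{a,b\}$, $g(0)=0$, and $|a-b|\le 4\pi p$.

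Now $g$ maps into the hyperbolic domain $\C\setminus\{a,b\}$, so by the Schwarz–Pick inequality — equivalently, lifting $g$ through the universal covering $\D\to\C\setminus\{a,b\}$ taking $0\mapsto0$ and applying the Schwarz lemma — one gets $|g'(0)|\le 1/\lambda_{\C\setminus\{a,b\}}(0)$, where $\lambda_\Omega$ is the hyperbolic density normalized by $\lambda_{\D}(0)=1$. The affine map $w\mapsto(w-a)/(b-a)$ carries $\C\setminus\{a,b\}$ to $\C\setminus\{0,1\}$ and $0$ to $w_0=k_+/(k_+-k_-)\in(0,1)$, whence $\lambda_{\C\setminus\{a,b\}}(0)=|b-a|^{-1}\lambda_{\C\setminus\{0,1\}}(w_0)\ge (4\pi p)^{-1}\lambda_0$, where $\lambda_0:=\min_{(0,1)}\lambda_{\C\setminus\{0,1\}}>0$ is a positive absolute constant (attained in the interior, since the density blows up at the punctures $0$ and $1$). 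Therefore $|\xi'(t)/\xi(t)|=|g'(0)|\le 4\pi p/\lambda_0$, and the length is at most $8\pi p/\lambda_0<8\pi p^2$.

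I expect the only real difficulty to be the middle step, where $p$-valence is converted into the existence of two omitted values of $\log\xi$ lying within distance $4\pi p$ of $\log\xi(t)$; everything after that is Schwarz–Pick plus elementary bookkeeping of the hyperbolic density of the twice-punctured plane. As a sanity check, for $p=1$ the argument degenerates to the classical Koebe $1/4$-theorem applied to the non-vanishing univalent function $\xi$, which gives $|\xi'(t)/\xi(t)|\le4$ directly.
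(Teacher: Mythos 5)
Your core construction is correct and genuinely different from the paper's argument. The paper bounds $|(\log\xi)'|$ pointwise by applying Hayman's $p$-valent Koebe theorem to $\log\xi$ itself: the image of a unit disc contains a disc of radius $|(\log\xi)'(z_0)|/(4p)$, but can contain no disc of radius exceeding $\pi p$ (such a disc would contain $p+1$ points congruent modulo $2\pi\iu$, contradicting $p$-valence of $\xi=e^{\log\xi}$), giving the pointwise bound $4\pi p^2$ and hence $8\pi p^2$ for the length. You instead convert $p$-valence into two omitted values $a=2\pi\iu k_+$, $b=2\pi\iu k_-$ of $g$ with $|a-b|\le4\pi p$ (this step is fine: if all of $2\pi\iu,\dots,2\pi\iu p$ were attained, the fibre $h^{-1}(1)$ would contain at least $p+1$ points), and then use Schwarz--Pick (Lemma~\ref{lem:schwarz-pick}) through $\C\setminus\{a,b\}$. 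This is self-contained, avoids the citation to Hayman, and actually yields the asymptotically stronger pointwise bound $O(p)$ rather than $O(p^2)$.

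The gap is in the very last line: ``$8\pi p/\lambda_0<8\pi p^2$'' is equivalent to $\lambda_0>1/p$, and you prove no lower bound on $\lambda_0$ whatsoever; for the inequality to hold for all $p$ you would need $\lambda_0\ge1$, which is false. In your normalization ($\lambda_\D(0)=1$) the density of $\C\setminus\{0,1\}$ at $1/2$ equals $2\Gamma(3/4)^4/\pi^2\approx0.46$, so $\lambda_0<1/2$. Consequently your bound fails to give $8\pi p^2$ for $p=1$ and $p=2$: the $p=1$ case is rescued by your separate Koebe remark, but for $p=2$ your estimate gives length $\le 8\pi\cdot2/\lambda_0\approx110$, which exceeds $32\pi\approx100.5$, and nothing in the write-up covers it. Moreover, with only the qualitative statement ``$\lambda_0>0$'' the argument proves length $\le Cp$ for an unspecified absolute $C$, which yields the stated bound only for $p$ beyond an unspecified threshold. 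To repair it you must inject quantitative information: either the precise minimum of the density (the explicit lower bound~\eqref{eq:hyperb} quoted in the paper certifies only $\lambda_0\gtrsim0.1$, which would leave roughly $p\le10$ uncovered), possibly combined with the sharper observation that the least omitted indices satisfy $k_+ + |k_-|\le p+1$ rather than $\le 2p$, and a separate argument for the remaining small $p$ (e.g.\ the paper's lattice argument, which handles all $p$ uniformly); or else settle for the weaker conclusion ``length $=O(p)$'', which suffices for the application in Proposition~\ref{prop:hyp-param} but is not the statement of Lemma~\ref{lem:log-length}.
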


The proof of Proposition~\ref{prop:hyp-param} and
Lemma~\ref{lem:log-length} is given
in~\secref{sec:sec:uniform-param-Ran}.

\subsection{Holomorphic parametrizations}

In the preceding section we discussed the problem of constructing
smooth parametrizations for semialgebraic and subanalytic sets. It is
natural to ask whether such constructions can be analytically
continued to give holomorphic parametrizations in some suitable
sense. In this section we demonstrate an obstruction of a
hyperbolic-geometric nature to a naive formulation, and motivate the
notion of a complex cell as a possible way of overcoming this
obstruction.

\subsubsection{Parametrizations of analytic curves and questions of
  uniformity}
\label{sec:parametrization-uniformity}

Let $D:=D(1)$ denote the unit disc. If $X$ is a holomorphic curve and
$K\subset X$ is compact then one can always cover $K$ by images of
analytic charts $f_j:D\to X$. To avoid bad behavior near the boundary,
we can also require that each $f_j$ extends as a holomorphic map to
$D^{1/2}$ (this is an arbitrary choice which could be replaced by any
fixed neighborhood of $\bar D$). Parametrizations of this type are
useful in analysis: they allow one to study holomorphic structures on
$X$ in terms of their pullback to $D\subset D^{1/2}$, where a rich
geometric function theory is available. Moreover, the theory of
resolution of singularities shows that this type of parametrization
can be extended to parametrizations of analytic sets of any
dimension. A notion of this type was introduced by Yomdin
\cite{yomdin:entropy-analytic} under the name \emph{analytic
  complexity unit (acu)}. A similar notion was considered under the
name ``doubling coverings'' in \cite{yf:doubling}.

Ideally one would like to prove that holomorphic parametrizations of
the type described above can be made with a uniform number of charts
when $X$ is replaced by an analytic family $X_\e$ of curves (or higher
dimensional sets) and $K$ by its relatively compact subfamily. Indeed, this would be an appropriate analog of the Yomdin-Gromov algebraic lemma in the
holomorphic category. Unfortunately this is impossible, even for
simple families of algebraic curves. Consider the family $X_\e$ of
hyperbolas restricted to the polydisc of radius 2 and $K_\e$ their
restriction to a polydisc of radius 1,
\begin{align}
  X_\e &:= \{(x,y): xy=\e,\ x,y\in D^{1/2}\} & K_\e &:= X_\e\cap (D\times D).
\end{align}
The projection to the $x$-axis gives biholomorphisms
$K_\e\simeq A_\e:=A(\e,1)$ and $X_\e\simeq A_\e^{1/2}=A(\e/2,2)$. Let
$f:D^{1/2}\to X_\e\simeq A_\e^{1/2}$. If we equip $D^{1/2}$ and
$A_\e^{1/2}$ with their respective hyperbolic metrics then the
Schwarz-Pick lemma (Lemma~\ref{lem:schwarz-pick}) implies that $f$ is
distance-contracting (see~\secref{sec:fund-lemmas} for a brief
reminder on this subject). In particular the conformal diameter of
$f(D)$ in $A_\e^{1/2}$ is bounded by the conformal diameter of $D$ in
$D^{1/2}$, which is a constant independent of $\e$. On the other hand,
the conformal diameter of $K_\e\simeq A_\e$ in $A_\e^{1/2}$ tends to
infinity as $\e\to0$ (in fact with order $\log|\log\e|$, see
Remark~\ref{rem:Adelta-diameter}). It follows that to cover $K_\e$ by
images of maps as above at least $\log|\log\e|$ charts are required,
and this is easily seen to be a tight asymptotic. We note that Yomdin
\cite{yomdin:param-dim2} obtains a similar result with $|\log\e|$
under the assumption that the maps are $p$-valent for some fixed $p$.

\subsubsection{A uniform parametrization result with discs and annuli}

It turns out that in the one-dimensional case, the hyperbolic
restriction described above is essentially the only obstacle to
uniform parametrization. Namely, suppose that in addition to covering
$K_\e$ by images $f(D)$ of holomorphic maps $f:D^{1/2}\to X_\e$ we
allow also images $f(A)$ of holomorphic maps $f:A^{1/2}\to X_\e$ for
any annulus $A$. Note that the conformal diameter of $A$ in $A^{1/2}$
is not uniformly bounded: it depends on the conformal modulus of
$A$. With this added freedom one can indeed construct a
parametrization with a uniformly bounded number of charts for a family
of curves. We proceed to explain the elementary geometric
considerations that lead to this result.

To simplify our presentation we suppose that
$X=\{X_\e\}\subset\C^2\times\C_\e$ is a family of algebraic plane
curves which project properly under $\pi:(x,y)\to x$ (although a
similar local argument works in the analytic case without the
assumption of properness). We will parametrize the fibers
$K_\e:=X_\e\cap(D\times D)$. Fix $\e\in\C$. Let $\Sigma_\e\subset\C$
denote the finite set of critical values of $\pi\rest{X_\e}$. Denote
by $\nu$ the order of the monodromy group of $\pi$. Note that
$\#\Sigma_\e$ and $\nu$ are bounded in terms of $\deg X_\e$ and in
particular uniformly in $\e$. Suppose that $D$ is covered by finitely
many sets $\cC\subset\C$ of the following types:
\begin{enumerate}
\item A point $p\in\Sigma_\e$.
\item A disc $p+D(r)$ such that $p+D(2r)$ does not meet $\Sigma_\e$.
\item A punctured disc $p+D_\circ(r)$ such that $p+D_\circ(2^\nu r)$
  does not meet $\Sigma_\e$.
\item An annulus $p+A(r_1,r_2)$ such that $p+A(r_1/2^\nu,2^\nu r_2)$ does not
  meet $\Sigma_\e$.
\end{enumerate}
In case 2 one can construct a map $f:D\to p+D(r)\to X_\e$ as a lift of
$\pi\rest{X_\e}$. In cases 3,4 the lift might be multivalued with a
monodromy of finite order dividing $\nu$, but precomposing with the
map $z\to z^\nu$ we similarly obtain univalued charts $f:D\to X_\e$ or
$f:A_r\to X_\e$ (the removable singularity theorem is used to extend
from $D_\circ$ to $D$ in case 3). Moreover the assumptions on $\cC$
ensure that $f$ extends holomorphically to $D^{1/2}$ or
$A_r^{1/2}$. Taking at most $\deg X_\e$ such lifts $f$ we obtain
charts covering $X_\e\cap\pi^{-1}(\cC)$.

It remains to show that $D$ can be covered by finitely many sets $\cC$
as above, with their number depending only on the number of points in
$\Sigma_\e$ (but not their positions). This is a simple exercise in
plane geometry which we urge the reader to attempt for themselves. It
is also instructive to check that a similar statement would not hold
had we not allowed annuli as in item 4 above.  In
Figure~\ref{fig:diphyp} we illustrate such a decomposition for the
critical points $\Sigma_\e=\{\pm\sqrt\e\}$ of the hyperbola
$y^2=x^2-\e$ (this is just our original example $xy=\e$ rotated to
satisfy our assumption of proper projection to the $x$-axis).
\begin{figure}
  \centering
  \includegraphics[width=\textwidth]{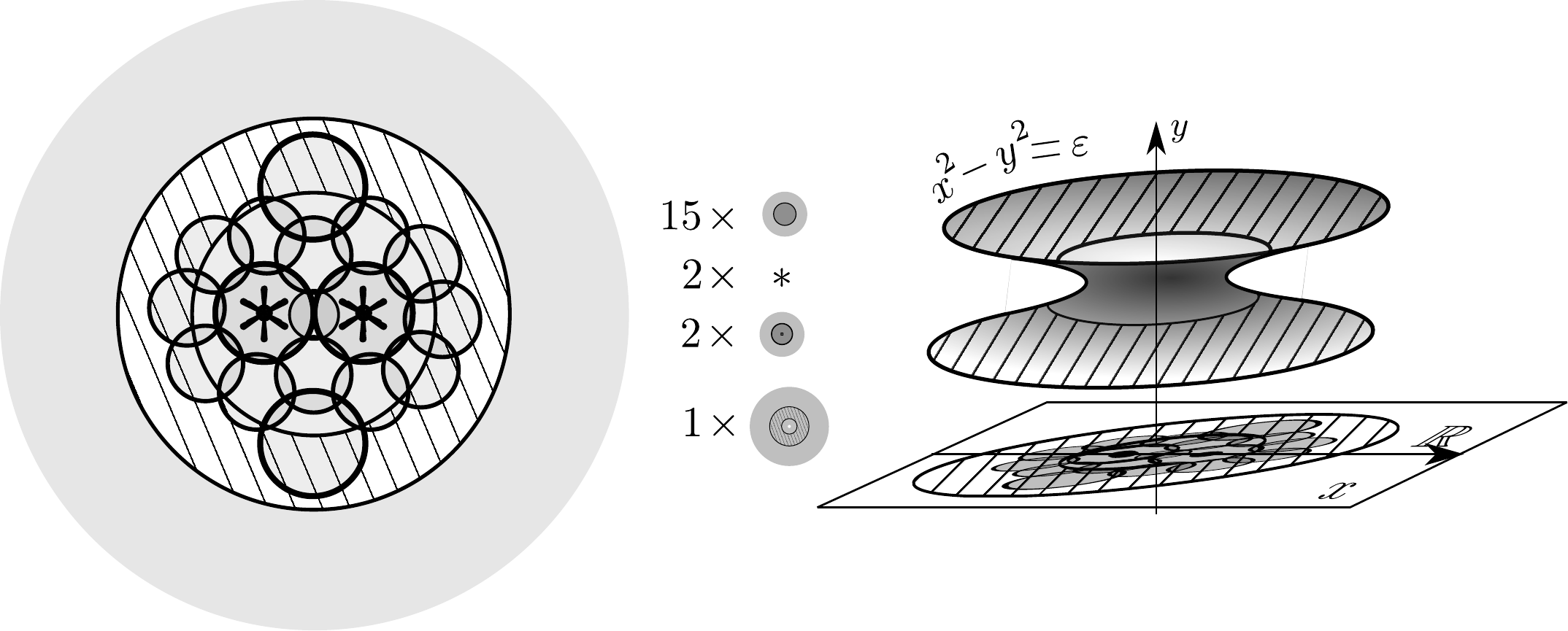}
  \caption{Covering of the hyperbola with discs and annuli, where $*$
    corresponds to points of $\Sigma_\e=\{\pm\sqrt{\e}\}$.}
  \label{fig:diphyp}
\end{figure}

Decompositions of a similar type appeared under the name
``Swiss-cheese decompositions'' in \cite{hs:variations} where they
were used in the study of Green functions on Riemann surfaces. They
also appeared in \cite{me:inf16} where they were used in the study of
collisions of singular points of Fuchsian differential equations.

\subsubsection{Parametrizing higher dimensional sets}

It is natural to ask whether a similar ``uniform parametrization''
statement might hold in higher dimensions. A naive attempt might be to
use products of discs and annuli as the domains of
definition. However, since complex annuli (unlike discs) have
conformal moduli, it turns out to be more natural to allow domains
consisting of families of annuli with varying moduli. As an
illustrative example, in the two-dimensional case we allow domains of
the form
\begin{equation}
  \cC=D_\circ(1)\odot A(\vz_1,2) := \{(\vz_1,\vz_2) : 0<|\vz_1|<1,\ |\vz_1|<|\vz_2|<2\}.
\end{equation}
Note that we plug a complex number $\vz_2$ as the radius of the
annulus $A(\vz_2,2)$, so that the radius is the absolute value of a
holomorphic function on the base. This allows us to define a kind of
analytic extension of cells as we show below. The notation $\odot$
above is an example of a more general notation introduced
in~\secref{sec:complex-cells}, and the domain above is an example of a
\emph{complex cell}. The definition closely resembles the notion of
cells/cylinders in semialgebraic/subanalytic geometry, with the order
relation $x<y$ replaced by the complex $|x|<|y|$.

To generalize the parametrization result from curves to higher
dimensional sets we require not only an analog of the domains
$D,D_\circ$ and $A$, but also an analog of their extensions
$D^{1/2},D_\circ^{1/2},A^{1/2}$. Correspondingly our complex cells
$\cC$ are endowed with a natural notion of $\delta$-extension
$\cC\subset\cC^\delta$. In the example above
\begin{multline}
  \cC^\delta = D_\circ^\delta(1)\odot A^\delta(\vz_1,2) = D_\circ(\delta^{-1})\odot A(\delta\vz_1,2\delta^{-1}) :=\\
  \{(\vz_1,\vz_2) : 0<|\vz_1|<\delta^{-1},\ \delta|\vz_1|<|\vz_2|<2\delta^{-1}\}.
\end{multline}
This is defined for any $1/2<\delta<1$: we require the original fiber
$A(\vz_1,2)$ to remain an annulus over the extension $D^\delta(1)$,
and with $\delta<1/2$ the fiber $A(\vz_1,2)$ becomes empty for
$\vz_1\in D^\delta(1)\setminus D^{1/2}(1)$.

Our main result, Theorem~\ref{thm:cpt}, shows in particular that one
can uniformize any family of analytic sets uniformly in the parameters
of the family if one is permitted to use general complex cells as the
domains for the charts.  More generally, Theorem~\ref{thm:cpt}
provides a cellular analog of the constructions of local resolution of
singularities (LRS), see e.g. \cite{bm:subanalytic}. Loosely speaking
it allows one to transform a collection of analytic functions into
normal crossings uniformly over families - using complex cells as the
domains of the charts. In the algebraic category,
Theorem~\ref{thm:cpt} gives effective control (polynomial in the
degrees) on the number and complexity of the charts in terms of the
complexity of the functions being transformed into normal
crossings. This is where the most important step toward improving the
asymptotics of the Yomdin-Gromov algebraic lemma takes place, and the
proof makes extensive use of the hyperbolic properties of a cell $\cC$
viewed as a subset of its extension $\cC^\delta$.

\subsubsection{Sectorial parametrizations}

In~\secref{sec:parametrization-uniformity} we discussed the problem of
establishing an analog of the algebraic lemma with holomorphic
functions admitting extensions to whole discs, and showed that (unless
one also allows annuli and more general complex cells) such an analog
is impossible. A more modest goal might be to establish an analog of
the algebraic lemma where the parametrizing maps admit analytic
continuation to some suitable large domain (though not a full
disc). If the domain is sufficiently large, one could then hope to
control the derivatives using complex-analytic methods, for instance
the Cauchy estimates.

It turns out that the correct domains for analytic continuation are
complex sectors. More specifically, let $\sec(\e)$ denote the sector
$\sec(\e)=\{|\Arg z|<\e,|z|<2\}$, and for $B=(0,1)^\mu$ let $B(\e)$ denote
the direct product of $\mu$ copies of $\sec(\e)$. All of our refinements
of the algebraic lemma follow from the following statement about
parametrizations with complex-analytic continuation to sectors.

\begin{Thm}
  Let $X=\{X_p\subset[0,1]^n\}$ be a subanalytic family of sets, with
  $\dim X_p\le\mu$. Set $B=(0,1)^\mu$. There exist constants $C=C(X)$
  and $\e=\e(X)$ such that for any $p$ there exist maps
  $\phi_1,\ldots,\phi_C:B\to X_p$ whose images cover $X_p$. Moreover
  each $\phi_j$ extends holomorphically to $B(\e)$ and has unit
  $C^1$-norm there. If $\{X_p\}$ is semialgebraic as in the
  Yomdin-Gromov algebraic lemma then one may take
  $C,\e^{-1}=\poly_n(\beta)$.
\end{Thm}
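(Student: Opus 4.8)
The plan is to deduce this sectorial parametrization theorem from the complex cellular decomposition theorem (Theorem~\ref{thm:cpt}) applied to a suitable complexification of the subanalytic family $\{X_p\}$, and then to explain how all the other stated refinements of the algebraic lemma follow from it. First I would reduce to the case where $X_p$ is (the graph of) a cell in the real subanalytic sense, parametrized by a box, by invoking the usual cell decomposition of subanalytic sets; this costs only a bounded (resp. $\poly_n(\beta)$ in the algebraic case) number of pieces. After such a reduction, $X_p$ is the image of $B=(0,1)^\mu$ under a subanalytic map, and the analytic functions defining this map extend holomorphically to some neighbourhood of $B$ in $\C^\mu$. The key point is then to arrange, using the complex cellular structures developed in the body of the paper, that after a further decomposition and reparametrization these defining functions become \emph{monomial} (normal crossings) on each chart — this is exactly the ``LRS uniformly over families'' statement attributed to Theorem~\ref{thm:cpt}. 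On a chart where the map is given by monomials in the complex cell coordinates, one has explicit control of the holomorphic extension: a complex cell $\cC$ sits inside its $\delta$-extension $\cC^\delta$, and the real points $B=(0,1)^\mu$ correspond to a real slice that is compactly contained (in the hyperbolic sense) in $\cC^\delta$. Restricting a monomial chart to a real sector $\sec(\e)^\mu$ of sufficiently small opening $\e$ keeps one inside $\cC^\delta$, giving the desired holomorphic continuation to $B(\e)$.

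With the holomorphic extension to $B(\e')$ in hand for some $\e'=\e'(X)$ (resp. $\e'^{-1}=\poly_n(\beta)$), the unit $C^1$-norm on the slightly smaller sector $B(\e)$ follows from the Cauchy estimates: the modulus of $\phi_j$ is bounded (the image lies in $[0,1]^n$, so after extension it lies in a bounded polydisc), and shrinking the opening from $\e'$ to $\e$ leaves a definite hyperbolic gap, so $|\partial_t\phi_j|$ is bounded by a constant depending only on that gap. Rescaling $\phi_j$ by this constant, and rescaling $\e$ correspondingly, yields unit $C^1$-norm as stated; in the algebraic case every constant that enters is $\poly_n(\beta)$, so $C$ and $\e^{-1}$ are $\poly_n(\beta)$. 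This establishes the theorem.

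Finally I would indicate how the other refinements follow. Theorem~\ref{thm:refined-alg-lemma}: given a map $\phi_j$ holomorphic on $\sec(\e)^\mu$ with unit $C^1$-norm, the Cauchy estimates give $\norm{D^\valpha\phi_j}\le \valpha!\,(c/\e)^{|\valpha|}$ on the real box, where $c$ is absolute. To get $\norm{\phi_j}_r\le 1$ one subdivides each of the $\mu$ coordinate intervals into $O(r/\e)$ equal subintervals and rescales affinely; this multiplies the number of charts by $O(r/\e)^\mu = \poly_n(\beta)\cdot r^\mu$ in the algebraic case, and the subdivided-rescaled maps are semialgebraic of complexity $\poly_n(\beta,r)$, matching the statement. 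The Pila--Wilkie refinement for $\R_\an$ is the same argument with $\e=\e(X)$ a fixed constant, giving $O_X(r^\mu)$ charts. For the mild parametrization theorem, note that holomorphy on a full sector of opening $\e$ already gives, via Cauchy estimates on discs of radius $\asymp \e\,t$ around a real point $t$ at distance $t$ from the sector vertex, bounds of the form $\norm{D^\valpha\phi_j}\le \valpha!\,(A|\valpha|)^{|\valpha|}$ after one more affine rescaling to kill the loss near $t=0$; sharpening the bookkeeping (the sector, not a disc, is the natural domain near $0$) yields exponent $K=2$, i.e. $(A,2)$-mildness, which is exactly what Proposition~\ref{prop:hyp-param} shows to be optimal.

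The main obstacle is the middle step: obtaining the uniform normal-crossings / LRS statement over the family with complex cells as charts, i.e. Theorem~\ref{thm:cpt} itself and in particular its effective (polynomial-in-degree) algebraic version. That is where the hyperbolic geometry of $\cC\subset\cC^\delta$ is essential and where the real work of the paper lies; once that is available, the passage to sectors and the Cauchy-estimate bookkeeping described above are routine.
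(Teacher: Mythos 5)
You correctly identify the CPT (via Corollaries~\ref{cor:cpt-semialg} and~\ref{cor:cpt-subanalytic}) as the engine, but the passage from cellular charts to maps on the standard cube $B$ with holomorphic extension to $B(\e)$ and \emph{unit $C^1$-norm} is not the routine Cauchy bookkeeping you describe, and both of your claims at that step have genuine gaps. First, the domain of a CPT chart is a complex cell whose positive real part is a curved box $\{|r_j(\vz_{1..j-1})|<\vz_j<1\}$, not $(0,1)^\mu$, and you never construct the map from the cube onto it. This is exactly where the paper's remaining machinery lives: one first passes to \emph{quadric} cells (radii made into squares by a $2$-power cover and covered by quadrants via inversions, Propositions~\ref{prop:quadric-nu-cover} and~\ref{prop:quadric-cover}), and then straightens the positive quadrant by $h_i(\vu)=\vu_i+\sqrt{r_i\circ h_{1..i-1}}$; proving that $h$ extends to the polysector $\sec_\ell(\e)$ with image in $\Qua^{1/4}\cC$ requires the cellular logarithmic-derivative bound (Proposition~\ref{prop:log-derivative-bound}) and the inductive argument of Proposition~\ref{prop:h-extension}. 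Even for a single monomial cell such as $D_\circ(1)\odot A(\vz_1^k,1)$ this is not automatic, so ``the chart is monomial, hence sectors stay inside $\cC^\delta$'' does not substitute for it. Second, your $C^1$ step is false as stated: a bounded holomorphic function on $B(\e')$ need not have bounded derivative on $B(\e)$, as $u\mapsto u^{1/2}$ shows; both the Cauchy and the Schwarz--Pick estimates degenerate like $1/|u_i|$ at the vertex of the sector, so no ``hyperbolic gap'' yields a uniform Euclidean derivative bound. In the paper the $C^1$ bound is not a consequence of the sectorial extension alone but of the special form $\phi_j=F\circ h$: derivatives of $F\in\cO_b(\cC^\delta)$ are $O_\ell(\delta)$ on the positive quadrant by Lemma~\ref{lem:QC-derivative-bound} (using the decomposition of Corollary~\ref{cor:laurent-disc-decmp} and the quadric inequality $\abs{r_j/\vz_j^2}\le1$ on $\Qua\cC$), and the Jacobian of $h$ is close to the identity (Corollary~\ref{cor:h-jacobian-bound}); these combine in Corollary~\ref{cor:Phi-C1-norm}.

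A smaller remark on your closing paragraph (outside the statement under review): the same vertex phenomenon undermines the claim that affine subdivision of each interval into $O(r/\e)$ pieces yields unit $C^r$-norms --- on the subcube touching $\vu_i=0$ the rescaled derivatives are still of order $\valpha!/(t\sin\e)^{|\valpha|}$ and blow up. The paper instead substitutes $P_r(\vw)=\vw^r$ (respectively $\Exp$ for mildness) and controls the vertex using the decomposition of Proposition~\ref{prop:Phi-decomp}, with the affine subdivision applied only afterwards, once bounds of the form $\valpha!(Ar)^{|\valpha|}$ hold up to the boundary.
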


This theorem is proved (in a more general form)
in~\secref{sec:sectorial-param}. We start by constructing a
complex-cellular parametrization for $X_p$, and then show how to
construct maps from $B$ into a complex cell which extend to $B(\e)$
with bounded $C^1$-norms. The $C^r$ and mild statements of the
algebraic lemma are deduced from the sectorial version by a simple
(and self-contained) argument based on the Cauchy estimates
in~\secref{sec:param-proofs}.

\subsection{Applications in dynamics}
\label{sec:intro-dynamics}

The Yomdin-Gromov algebraic lemma was first invented for the purpose
of studying the properties of the topological entropy of smooth maps.
We recall the basic notions of topological entropy theory, Shub's
entropy conjecture and Yomdin's result for $C^\infty$-maps. We then
discuss a refinement of these results in the analytic category
culminating in Theorem~\ref{thm:analytic-entropy} confirming in
arbitrary dimension a conjecture of Yomdin that was previously known
only in dimension two. Theorem~\ref{thm:analytic-entropy} follows
immediately from the refined algebraic lemma in combination with the
work of Burguet, Liao and Yang \cite{bly}. A self-contained proof of
Theorem~\ref{thm:analytic-entropy} using the refined algebraic lemma
is given in~Appendix~\ref{appendix:yomdin} (by Yomdin). Below we
mostly follow the notations and presentation of \cite{bly}.

\subsubsection{Topological entropy}

Let $M$ be a compact metric space with metric $d:M^2\to\R_{\ge0}$ and
let $f:M\to M$ be a continuous map. For $n\in\N$ define the $n$-th
iterated metric by
\begin{equation}
  d_n:M\times M\to\R_{\ge0}, \qquad d_n(x,y)=\max_{i=0,\ldots,n-1} d(f^{\circ i}(x),f^{\circ i}(y)).
\end{equation}
If $M$ is a metric space and $\Lambda\subset M$, a set $K\subset M$ is
called $\e$-spanning for $\Lambda$ if the $\e$-neighborhood of $K$
contains $\Lambda$. For any subset $\Lambda\subset M$, $\e>0$ and
$n\in\N$ let $r_n(f,\Lambda,\e)$ denote the minimal cardinality of an
$\e$-spanning set of $\Lambda$ with respect to the $d_n$-metric.

The \emph{$\e$-topological} and \emph{topological} entropies of
$\Lambda$ are defined by
\begin{align}
  h(f,\Lambda,\e) &= \limsup_{n\to\infty} \frac1n \log r_n(f,\Lambda,\e), & h(f,\Lambda) &= \lim_{\e\to0} h(f,\Lambda,\e).
\end{align}
We set $h(f,\e):=h(f,M,\e)$ and $h(f):=h(f,M)$. This latter quantity
is called the \emph{topological entropy} of $f$. As its name implies,
the topological entropy is independent of the choice of metric $d$. It
was first defined by Adler, Konheim and McAndrew \cite{akm:entropy} in
a purely topological fashion, and later shown by Bowen
\cite{bowen:entropy-group} to be equivalent to the metric definition
presented above.

\subsubsection{Tail entropy}

For $x\in M$ we define the \emph{infinite dynamical ball}
$B_\infty(x,\e)$ by
\begin{equation}
  B_\infty(x,\e) := \{y\in M: d_n(x,y)<\e\quad \forall n\in\N\}.
\end{equation}
Following Bowen \cite{bowen:entropy-expansive}, the \emph{$\e$-tail}
and \emph{tail} entropies of $f$ are defined by
\begin{align}
  h^*(f,\e) &= \sup_{x\in M} h(f,B_\infty(x,\e)), & h^*(f)=\lim_{\e\to0} h^*(f,\e).
\end{align}
Bowen \cite{bowen:entropy-expansive} has shown that the $\e$-tail
entropy bounds the difference between the $\e$-entropy and the
entropy, that is
\begin{equation}\label{eq:entropy-v-tail}
  |h(f)-h(f,\e)|\le h^*(f,\e).
\end{equation}
If $h^*(f,\e)=0$ for some $\e>0$ then the system $(f,M)$ is called
entropy-expansive ($h$-expansive); if $h^*(f)=0$ then it is called
\emph{asymptotically entropy expansive} (asymptotically
$h$-expansive). This notion of tail entropy was first introduced (with
a different definition) by Misiurewicz \cite{misiurewicz:cond-entropy}
under the name \emph{conditional entropy}. Misiurewicz showed that for
asymptotically $h$-expansive systems the measure-theoretical entropy
is upper-semicontinuous, and such systems therefore always admit an
invariant measure of maximal entropy.

\subsubsection{Yomdin's results for smooth maps}
\label{sec:intro-yomdin-thm}

Assume from now on that $M$ is a compact $C^\infty$-smooth manifold
equipped with a Riemannian metric. In \cite{yomdin:entropy} Yomdin
proved Shub's entropy conjecture for $C^\infty$ maps. This conjecture
states that the logarithm of the spectral radius $\spec f$ of
$f_*:H_*(M,\R)\to H_*(M,\R)$ is a lower bound for $h(f)$. In fact it
is relatively easy to show (by comparing volumes) that
\begin{equation}\label{eq:spec-vs-h}
  \log\spec f\le h(f)+\max_{l=1,\ldots,\dim M} v_l^0(f)
\end{equation}
where $v_l^0(f)$ is the $l$-dimensional \emph{local volume growth}, a
quantity related to $h^*(f)$ whose precise definition we postpone to
Appendix~\ref{appendix:sec:volume-growth}. Yomdin's fundamental result
was that if $f$ is a $C^r$-map then
\begin{equation}
  v_l^0(f) \le \frac{l R(f)}{r}, \qquad
  R(f) = \lim_{n\to\infty} \frac1n \sup_{x\in M} \log \norm{D_x f^{\circ n}}
\end{equation}
For $C^\infty$-maps this implies $v_l^0(f)=0$, and in combination
with~\eqref{eq:spec-vs-h} yields Shub's entropy conjecture. Buzzi
\cite{buzzi} later observed that Yomdin's argument also implied the
same estimate $h^*(f)\le (\dim M/r) R(f)$ for the tail entropy. For
$C^\infty$-maps this implies $h^*(f)=0$, i.e. that $C^\infty$-maps are
asymptotically $h$-expansive.

\subsubsection{Controlling $h^*(f,\e)$ for analytic maps}

Bounding $h^*(f,\e)$ explicitly as a function of $\e$ is an important
problem with consequences for the computation of $h(f)$ (for instance
using~\eqref{eq:entropy-v-tail}) and for the study of the
semicontinuity properties of the entropy. However, for arbitrary
$C^\infty$ maps $f:M\to M$ the rate of convergence of $h^*(f,\e)$ to
zero as $\e\to0$ can be arbitrarily slow, as shown by the examples of
Burguet, Liao and Yang \cite[Theorem~L]{bly}.

As before, the asymptotic study of $h^*(f,\e)$ can be related to the
asymptotic study of the local volume growth $v_l^0(f,\e)$, whose
precise definition we again postpone
to Appendix~\ref{appendix:sec:volume-growth}. In
\cite{yomdin:entropy-analytic} Yomdin considered the case of a
real-analytic map $f:M\to M$ of a compact analytic surface $M$. In
this context, using a holomorphic variant of the algebraic lemma based
on the Bernstein inequality for polynomials, he was able to prove that
\begin{equation}
  v_1^0(f,\e)\le C(f)\cdot \frac{\log|\log\e|}{|\log\e|}.
\end{equation}
Yomdin conjectured \cite[Conjecture~6.1]{yomdin:entropy-analytic} that
a similar estimate should hold (for $v^0_l$) for $M$ of arbitrary
dimension, but the limitation of the holomorphic parametrization
technique to dimension one prevented such a generalization (see
\cite{yomdin:param-dim2} for some results in this direction in
dimension two). In \cite[Theorem~N]{bly} it is shown that the bound in
Yomdin's conjecture is essentially sharp (for a class of functions
slightly larger than the analytic class).

In \cite{bly} Burguet, Liao and Yang revisited the problem of the rate
of convergence of the tail entropy for analytic, and more general,
maps. The precise statement of their main result is technical and
depends on the behavior of the algebraic lemma's constant
$C(n,\mu,r,\beta)$. However, under the hypothesis
$C(n,\mu,r,\beta)=\poly_n(r,\beta)$ the results of
\cite[Corollary~B]{bly} take a particularly simple form: namely, they
imply the generalization of Yomdin's result (both for $v^0_l$ and
$h^*$) to arbitrary dimension. It is also shown in \cite{bly} that
$C(n,1,r,\beta)=\poly_n(r,\beta)$ and this recovers Yomdin's result
for analytic surfaces.

To summarize, following the work of \cite{bly} it has been clear that
proving that $C(n,\mu,r,\beta)=\poly_n(r,\beta)$ is the missing step
to establishing Yomdin's conjecture. Combined with the refined
algebraic lemma proved in this paper this is now a theorem.

\begin{Thm}\label{thm:analytic-entropy}
  Let $M$ be a compact analytic manifold and $f:M\to M$ be an analytic
  map. Then
  \begin{align}\label{eq:analytic-entropy}
    h^*(f,\e) &\le C(f)\cdot\frac{\log|\log\e|}{|\log\e|}, &
    v_l^0(f,\e)&\le C(f)\cdot\frac{\log|\log\e|}{|\log\e|}
  \end{align}
  for $l=1,\ldots,\dim M$.
\end{Thm}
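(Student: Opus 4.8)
The plan is to derive Theorem~\ref{thm:analytic-entropy} as a formal consequence of the refined algebraic lemma (Theorem~\ref{thm:refined-alg-lemma}) combined with the machinery of \cite{bly}, keeping the geometry of entropy theory as a black box. First I would reduce the global statement on the manifold $M$ to a local one: cover $M$ by finitely many coordinate charts $U_i\subset\R^{\dim M}$ with analytic transition maps, and observe that the rate functions $h^*(f,\e)$ and $v_l^0(f,\e)$ are, up to bounded multiplicative and additive constants depending only on $f$ and the chosen atlas, controlled by local volume growth quantities computed on compact pieces of the charts. Since we only care about the asymptotic shape $\log|\log\e|/|\log\e|$ as $\e\to0$, all these bounded distortions are harmless. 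This reduces everything to bounding, for each iterate $f^{\circ n}$ restricted to a cube, the number of $C^r$-charts needed to parametrize its graph with unit $C^r$-norm, where $r$ will eventually be chosen as a function of $\e$ and $n$.

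Next I would invoke the quantitative input. The graph of $f^{\circ n}$ restricted to a fixed semialgebraic cube is not semialgebraic, but by analyticity it is subanalytic, and more importantly \cite{bly} set up their estimates precisely so that what enters is the growth of the algebraic-lemma constant $C(n_{\mathrm{amb}},\mu,r,\beta)$ as a function of $r$ (with $\beta$, $n_{\mathrm{amb}}$, $\mu$ fixed by $f$ and $\dim M$). Here I would cite \cite[Corollary~B]{bly}: under the hypothesis $C(n_{\mathrm{amb}},\mu,r,\beta)=\poly(r)$ (the polynomial dependence on $r$ being the only feature that matters, the dependence on $\beta$ being irrelevant once $f$ is fixed), their corollary yields exactly the bounds
\begin{equation*}
  h^*(f,\e)\le C(f)\cdot\frac{\log|\log\e|}{|\log\e|},\qquad
  v_l^0(f,\e)\le C(f)\cdot\frac{\log|\log\e|}{|\log\e|}.
\end{equation*}
Theorem~\ref{thm:refined-alg-lemma} supplies precisely this hypothesis: it gives $C(n_{\mathrm{amb}},\mu,r,\beta)=\poly_{n_{\mathrm{amb}}}(\beta)\cdot r^\mu$, which is polynomial in $r$ of degree $\mu\le\dim M$. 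So the logical structure is: refined algebraic lemma $\Rightarrow$ hypothesis of \cite[Corollary~B]{bly} $\Rightarrow$ the stated entropy and volume-growth bounds. One should double-check that \cite{bly} indeed allow the semialgebraic complexity $\beta$ to depend on $n$ (the iterate count) in at most a controlled way — but since $f$ analytic is approximated on each scale by its Taylor polynomials and the relevant $\beta$ grows at most polynomially in $n$, the extra $\poly_{n_{\mathrm{amb}}}(\beta)$ factor only contributes a $\poly(n)$ term which is absorbed when one divides by $n$ and takes $\limsup_{n\to\infty}$ in the definition of entropy; this is exactly the bookkeeping that \cite{bly} carry out.

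The main obstacle I anticipate is not any new mathematics but correctly matching the hypotheses and normalizations of \cite{bly} with the statement of Theorem~\ref{thm:refined-alg-lemma}: in particular verifying that "polynomial in $r$" in their sense is literally what our theorem delivers (it is, since $r^\mu$ is polynomial in $r$), and that the dependence of their constant $C(f)$ on the data is the one claimed. Because a self-contained treatment is desirable, I would also, in parallel, set up the direct argument carried out in Appendix~\ref{appendix:yomdin}: parametrize the graph of $f^{\circ n}$ on a grid at scale $\e$, use Theorem~\ref{thm:refined-alg-lemma} with $r\asymp|\log\e|/\log|\log\e|$ so that each $C^r$-chart, restricted to a subcube of size comparable to $\e^{1/r}$, has image of $d_n$-diameter $\lesssim\e$ (this is the standard Yomdin covering estimate), count that the number of such subcubes per chart is $\e^{-\mu/r}=\poly(|\log\e|)$ while the number of charts is $C(n_{\mathrm{amb}},\mu,r,\beta)=\poly(n)\cdot r^\mu=\poly(|\log\e|)\cdot\poly(n)$, and conclude that the $d_n$-balls of radius $\e$ needed to cover an infinite dynamical ball grow like $\exp(o(n))$ times $\poly(|\log\e|)$, giving tail entropy $\lesssim\log|\log\e|/|\log\e|$ after taking $\frac1n\log$ and $\limsup$. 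The volume-growth estimate follows the same way by bounding $l$-dimensional volumes of images of the $C^r$-charts. Either route closes the proof; I would present the reduction to \cite[Corollary~B]{bly} as the official argument and refer to Appendix~\ref{appendix:yomdin} for the self-contained version.
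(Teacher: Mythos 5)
Your ``official'' argument is in essence the paper's own proof: Section~\ref{sec:analytic-entropy-proof} does exactly this reduction, using Theorem~\ref{thm:refined-alg-lemma} to get $C_{r,l,m}=\poly_{m,l}(r)$ for the parametrization constant entering \cite{bly}, concluding that $M_k:=k^{k^2}$ is admissible, that \cite[Corollary~B]{bly} applies to analytic maps (which satisfy $\norm{f}_r<Cr^{r^2}$), and that $G_M(t)=\Theta(t/\log t)$ yields the stated rate; the self-contained version you sketch in parallel is the content of Appendix~\ref{appendix:yomdin}.

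However, the way you propose to verify the hypothesis of \cite[Corollary~B]{bly} contains a genuine misstep. Neither \cite{bly} nor the appendix ever applies the algebraic lemma to the graph of $f^{\circ n}$, and your worry about ``$\beta$ depending on $n$'' (resolved by claiming $\beta=\poly(n)$ gets absorbed in the $\limsup$) is not how the argument goes and would not be justifiable: the graph of $f^{\circ n}$ is not semialgebraic, so Theorem~\ref{thm:refined-alg-lemma} does not apply to it, and the subanalytic version carries no complexity bound that is uniform (or polynomially growing) over the iterates, since $\{f^{\circ n}\}_n$ is not a definable family of bounded complexity. The lemma is applied only to the graphs of the degree-$(r-1)$ Taylor polynomials of a single $\delta$-rescaled map (the polynomial map $P$ of \cite[Section~2.1.2]{bly}, respectively $P_j$ in Theorem~\ref{thm:non.dyn}), so $\beta=\poly(r)$ with no $n$-dependence at all; the composition is then handled one factor at a time via the Fa\`a di Bruno estimates of Corollary~\ref{cor:non.dyn}, and the per-step chart count $\poly_{l,m}(r)L^{l/r}$ multiplies over $n$ steps (Proposition~\ref{prop:bound.hk}), which is where the exponential rate and the extra $\log\poly_m(k)/q$ term come from before one passes to the iterate $f^{\circ q}$ and uses the Cauchy estimate $\norm{d^s f^{\circ q}}\le Ds!(L^q/\rho)^s$ to choose $\delta\le 1/M_{q(k)}=2^{-O_m(k\log k)}$, i.e. $k=\Omega_m(|\log\e|/\log|\log\e|)$. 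Your self-contained sketch, which counts $\poly(n)\cdot r^\mu$ charts for $f^{\circ n}$ directly, therefore does not close as stated; to make it rigorous you must replace it by the Taylor-polynomial-plus-composition induction of the appendix (or simply cite \cite[Corollary~B]{bly}, as the paper does for the official proof).
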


An independent proof of the volume-growth part of
Theorem~\ref{thm:analytic-entropy} is given
in~\secref{sec:analytic-entropy-proof}.

\subsection{Applications in diophantine geometry}
\label{sec:intro-rational}

For $p\in\P^\ell(\Q)$ we define $H(p)$ to be $\max_i |\vp_i|$ where
$\vp\in\Z^{\ell+1}$ is a projective representative of $p$ with
\begin{equation}
  \gcd(\vp_0,\ldots,\vp_\ell)=1.
\end{equation}
For $\vx\in\A^\ell(\Q)$ we define its height to be the height of
$\iota(\vx)$ for the standard embedding
\begin{equation}
  \iota:\A^\ell\to\P^\ell,\qquad \iota(\vx_{1..\ell}) = (1:\vx_1:\cdots:\vx_\ell).
\end{equation}
For a set $X\subset\P^\ell(\R)$ we denote
\begin{equation}
 X(\Q,H) := \{ \vx\in X\cap \P(\Q)^\ell : H(x)\le H\},
\end{equation}
and similarly for $X\subset\R^\ell$.

In \cite{bombieri-pila} Bombieri and Pila introduced the interpolation
determinant method for estimating the quantity $\#X(\Q,H)$ as a
function of $H$ when $X$ is the graph of a $C^r$ (or $C^\infty$)
smooth function $f:[0,1]\to\R$. It turns out that two very different
asymptotic behaviors are obtained depending on whether the graph of
$f$ belongs to an algebraic plane curve. The algebraic lemma has been
used in both of these directions, to generalize from graphs of
functions to more general sets. In Appendix~\ref{sec:bp-complex} we
give a complex-cellular analog of the Bombieri-Pila interpolation
determinant method and use it to deduce some applications for both the
algebraic and transcendental contexts. We briefly describe the main
results below.

\subsubsection{A result for algebraic varieties}

We prove the following result.

\begin{Thm}\label{thm:improved-marmon}
  Let $X\subset\P(\C)^\ell$ be an irreducible algebraic variety of
  dimension $m$ and degree $d$. Then $X(\Q,H)$ is contained in $N$
  hypersurfaces of degree $k$, none of which contain $X$, where
  \begin{equation}
    N = \poly_\ell(d,k,\log H)\cdot H^{(m+1)d^{-1/m}(1+\poly_\ell(d)/k)}.
  \end{equation}
\end{Thm}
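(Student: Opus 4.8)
The plan is to follow the classical Bombieri–Pila / Heath-Brown strategy of covering $X(\Q,H)$ by interpolation hypersurfaces, but to execute the local analytic parametrization step using complex cells rather than the real Yomdin–Gromov charts. The starting point is the determinant method: if one has an analytic map $\phi\colon B\to X$ with $B=(0,1)^m$ whose image contains ``many'' rational points of bounded height, then by choosing a monomial basis of the degree-$k$ polynomials on $\P^\ell$ and forming the interpolation determinant on a well-spaced set of such points, the determinant is a nonzero integer of controlled archimedean size; if there are more points than the dimension $\binom{\ell+k}{k}$ of the space, and if $\phi$ together with its derivatives is sufficiently bounded (here the sectorial/mild parametrization estimates, or equivalently the $C^r$-estimates of Theorem~\ref{thm:refined-alg-lemma}, enter), then the archimedean bound forces the determinant to vanish, contradiction — hence all the points on $\phi(B)$ lie on a single hypersurface of degree $k$ not containing $X$. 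One then counts how many charts $\phi$ are needed and sums the per-chart point counts.

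**First** I would reduce to the affine setting and set up the determinant estimate carefully: the key quantity is the exponent with which the number of points $T$ on a single good chart grows before one is forced onto a hypersurface. Heath-Brown's optimization gives, for a smooth piece of an $m$-dimensional degree-$d$ variety, that $T$ points on a chart lie on a degree-$k$ hypersurface once $T$ exceeds roughly $H^{c(m,d,k)}$ with $c(m,d,k) = (m+1)d^{-1/m}(1+O_\ell(d)/k) + o(1)$; this is exactly the exponent appearing in the statement, and it comes from balancing the number of monomials against the product of heights of the chosen interpolation points, where the heights are controlled by the $C^r$-norm bounds on $\phi$. The subtlety is that a naive bound on $\phi$ in terms of $H$ would introduce a spurious $\log H$ loss in the exponent; this is where one takes $r$ (equivalently the number of derivatives controlled) growing like $\log H$, and here it is essential that Theorem~\ref{thm:refined-alg-lemma} gives only polynomial — in fact $r^\mu$ — dependence of the number of charts on $r$, so that choosing $r\asymp\log H$ costs only a $\poly_\ell(d,k,\log H)$ factor rather than an exponential one.

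**Next** I would assemble the global count. Apply the complex-cellular parametrization (Theorem~\ref{thm:cpt}, via the sectorial/$C^r$ version) to the affine chart of $X$: this produces $\poly_\ell(d,r) = \poly_\ell(d,k,\log H)$ analytic charts $\phi_j\colon (0,1)^m\to X$ with unit $C^r$-norms covering $X$. On each chart, either $\phi_j(B)\cap X(\Q,H)$ has fewer than $H^{c(m,d,k)}$ points, or the determinant argument places all of them on one degree-$k$ hypersurface $Y_j$ not containing $X$; but for the first alternative I still need to cover the ``sparse'' charts by hypersurfaces — here I subdivide each chart into $\poly_\ell(d,k,\log H)\cdot H^{c(m,d,k)}$ subcharts via the Yomdin–Gromov rescaling so that each subchart carries at most the critical number of points, and on each nonempty subchart the determinant argument again yields a single degree-$k$ hypersurface (this is the standard device of iterating the determinant method down to one point per piece). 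Collecting over all subcharts gives the bound $N = \poly_\ell(d,k,\log H)\cdot H^{(m+1)d^{-1/m}(1+\poly_\ell(d)/k)}$, with the polynomial factors absorbing the chart count and the $o(1)$ in the exponent being cleared by the choice $r\asymp\log H$.

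**The main obstacle** will be bookkeeping the exponent precisely: ensuring that the error term $\poly_\ell(d)/k$ in the exponent, rather than something worse like $\poly_\ell(d,k)/k$, survives the combination of (i) Heath-Brown's determinant optimization over the choice of $k$ and the number of interpolation points, (ii) the height bound on the points of a chart in terms of $\|\phi_j\|_r$ and $H$, and (iii) the subdivision into subcharts. The delicate point is that the number of subcharts must be kept down to $H^{c}\cdot\poly$, which requires the $r^\mu$ (not $r^{C(n)}$ or worse) dependence in Theorem~\ref{thm:refined-alg-lemma} and a careful choice of $r$; getting a clean statement also uses the complex-cellular version of the determinant method developed in Appendix~\ref{sec:bp-complex}, which handles the analytic continuation to sectors needed to make the Cauchy estimates on the derivatives of $\phi_j$ uniform. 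Once those three ingredients are aligned, the rest is the by-now-standard Bombieri–Pila–Heath-Brown machinery.
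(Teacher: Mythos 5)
Your outline follows the Marmon-style route (real $C^r$ charts plus the Bombieri--Pila determinant), which the paper itself notes in \secref{sec:alg-density-proof} could probably be made to work, but as written it has a genuine gap at the heart of the statement: you form the interpolation determinant from \emph{all} monomials of degree $\le k$ on $\P^\ell$ (``more points than the dimension $\binom{\ell+k}{k}$''). As soon as $k$ is large enough that some degree-$k$ form vanishes on $X$, that determinant vanishes identically for trivial reasons, and the polynomial you extract by linear algebra may well lie in the ideal of $X$ --- so you cannot conclude that the hypersurfaces do not contain $X$; moreover, with $\mu\sim k^\ell$ monomials the balancing of the determinant upper bound against the lower bound $H^{-\mu k}$ does not produce the exponent $(m+1)d^{-1/m}$, and your appeal to ``Heath-Brown's optimization'' never identifies where the factor $d^{-1/m}$ comes from. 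The missing ingredient, which carries both the exponent and the non-containment claim, is to interpolate only with the monomials $\Lambda(k)=\N^\ell\setminus\LT(I)$, a basis of the degree-$\le k$ part of the coordinate ring of $X_0$: any nonzero $P$ with $\supp P\subset\Lambda(k)$ automatically does not vanish on $X$, and $\#\Lambda(k)$ is the Hilbert function, bounded below by $dk^m/m!+\poly_m(d)k^{m-1}$ (Nesterenko). It is precisely this extra factor $d$ in $\mu^{1/m}$ that converts the per-cell threshold into $\log(1/\delta)\gtrsim \tfrac{m+1}{m}d^{-1/m}\log H\,(1+\poly_\ell(d)/k)$ and hence the stated exponent.

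Two further points. First, your subdivision logic is inverted: one cannot arrange ``at most the critical number of points per subchart''; instead one fixes the piece size $\delta\sim H^{-\frac{m+1}{m}d^{-1/m}(1+\poly_\ell(d)/k)}$ so that the archimedean upper bound beats $H^{-\mu k}$, forcing \emph{every} $\mu$-tuple determinant on the piece to vanish; then all rational points of height $\le H$ on that piece lie on one admissible hypersurface regardless of how many there are, and $N$ is just the number of pieces, $\delta^{-m}$ up to $\poly_\ell(d,k,\log H)$ factors. Second, the paper's actual proof uses no $C^r$ parametrization and no choice of $r$ (in particular $r\asymp\log H$ plays no role): it runs the determinant method directly on complex cells via Laurent expansions (Lemmas~\ref{lem:bp-upper-complex} and~\ref{lem:bp-lower}), covers $\R(\cP_\ell^{1/2}\cap X_0)$ by $\poly_\ell(d)$ cells using the CPT, and refines them by Lemma~\ref{lem:refinement-special}; the gain of the cellular version is that punctured-disc and annulus directions receive exponentially wide extensions and so only the disc directions (at most $m$ of them) enter the exponent. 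Your skeleton can likely be repaired along Marmon's lines, but only after inserting the $\Lambda(k)$/Hilbert-function ingredient above.
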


We make note of two particular choices for $k$, namely
\begin{align}
  k&=\poly_\ell(d)/\e & &\implies & N &= \poly_\ell(d,1/\e)\cdot H^{(m+1)d^{-1/m}+\e} \\
  k&=\poly_\ell(d)\cdot\log H & &\implies & N &= \poly_\ell(d,\log H)\cdot H^{(m+1)d^{-1/m}}.
\end{align}
The first choice improves the dependence on the degree $d$ to
polynomial in Heath-Brown's result \cite{heath-brown:density} for
hypersurfaces and Broberg's result \cite[Theorem~1]{broberg:note} (see
also Marmon \cite{marmon}). The second choice replaces an $H^\e$
factor by a power of $\log H$, similar to the various results
established for curves by Pila \cite{pila:pems} and Salberger
\cite{salberger:density}. We remark that in the case of curves, Walsh
\cite{walsh:boundd-rational} recently proved a result eliminating the
$\log H$ factor altogether and it would be interesting to study
whether this can be generalized to arbitrary dimension. We briefly
survey the history of these results
in Appendix~\ref{sec:alg-density-history} and prove
Theorem~\ref{thm:improved-marmon} in Appendix~\ref{sec:alg-density-proof}.

\subsubsection{The Pila-Wilkie theorem}
\label{sec:intro-pw}

Let $f:I\to\R^2$ and suppose that $X_f:=f(I)$ is
transcendental. Bombieri and Pila \cite{bombieri-pila} and Pila
\cite{pila:density-Q} used the interpolation determinant method to show that
$\#X(\Q,H)=O_{X,\e}(H^\e)$ for any $\e>0$. To generalize this result
to higher dimensions, let $X\subset\R^\ell$ and denote by $X^\alg$ the
union of all positive dimensional connected semialgebraic sets
contained in $X$ and by $X^\trans:=X\setminus X^\alg$. Pila and Wilkie
\cite{pila-wilkie} proved that for any $X$ definable in an o-minimal
expansion of $\R$ we have
\begin{equation}
  \#X^\trans(\Q,H)=O_{X,\e}(H^\e) \qquad \forall\e>0.
\end{equation}
Moreover, if $X$ varies over a definable family then the asymptotic
constants can be taken uniform over the family. The o-minimal version
of the Yomdin-Gromov algebraic lemma plays the central role in the
proof of the Pila-Wilkie theorem.

\subsubsection{The Wilkie conjecture}
Wilkie \cite{pila-wilkie} has conjectured that for sets definable in
$\R_{\exp}$ the asymptotic in the Pila-Wilkie theorem can be improved
to $\poly_X(\log H)$, and it is natural to make similar conjectures
for other ``natural'' geometric structures (although such a result
does not hold for general subanalytic curves, see
\cite[Example~7.5]{pila:subanalytic-dilation}). One of the key
obstacles to proving the Wilkie conjecture, at least following the
Pila-Wilkie strategy, is to obtain an estimate on $C(X,r)$ which is
polynomial in $r$ and the complexity of $X$. Some one-dimensional and
some restricted two-dimensional cases of the Wilkie conjecture have
been established using methods involving Pfaffian functions and mild
parametrizations
\cite{pila:pfaff,jt:pfaff-surfaces,pila:exp-alg-surface,butler}.  In
\cite{me:analytic-interpolation} we developed a complex-analytic
approach to the Pila-Wilkie theorem in the subanalytic case, and in
\cite{me:rest-wilkie} we used this approach to prove the Wilkie
conjecture for the structure $\R^\RE$ generated by the exponential and
trigonometric functions \emph{restricted} to some finite interval.

\subsubsection{Applications to unlikely intersections in diophantine geometry}
The Pila-Wilkie theorem has been applied to great effect in the study
of unlikely intersections in diophantine geometry. We briefly mention
Pila-Zannier's proof the the Manin-Mumford conjecture
\cite{pila-zannier}, Pila's proof of the Andr\'e-Oort conjecture for
modular curves \cite{pila:andre-oort} and Masser-Zannier's work on
simultaneous torsion points in elliptic families \cite{mz:torsion}. We
refer the reader to \cite{zannier:book,scanlon:survey} for a
survey. Some of the most striking diophantine applications,
particularly around the study of modular curves and Shimura varieties,
require the generality of the Pila-Wilkie theorem for $\R_{\an,\exp}$,
i.e. beyond the subanalytic category. Obtaining polylogarithmic
asymptotics for such unrestricted sets is the natural challenge going
beyond the scope of \cite{me:rest-wilkie}.

\subsubsection{Interpolating rational points on log-sets}

For definiteness let $\log_e:\C\setminus\{0\}\to\C$ denote the
principal branch of the standard complex logarithm. Below $\log$ can
denote $\lambda\log_e(\cdot)$ for any $\lambda\in\C\setminus\{0\}$,
i.e.  logarithm with respect to any (fixed) base. We extend $\log$
coordinate-wise as a mapping $\log:(\C\setminus\{0\})^\ell\to\C^\ell$.

\begin{Def}[Log-set]\label{def:log-set}
  If $A\subset(\C\setminus\{0\})^\ell$ is a bounded subanalytic set
  (where $\C^\ell$ is identified with $\R^{2\ell}$) then we call
  $\log A\subset\C^\ell$ a log-set.
\end{Def}

Any bounded subanalytic set is a log-set, but log-sets are of course
more general as they involve application of an \emph{unrestricted}
logarithm. In Appendix~\ref{sec:log-sets-pila} we show that log-sets
appear naturally in the application of the Pila-Wilkie theorem to the
Andr\`e-Oort conjecture for modular curves in Pila's work
\cite{pila:andre-oort}. They are therefore perhaps the most natural
candidate to consider looking for results in the direction of the
Wilkie conjecture for unrestricted sets (with an eye to the
diophantine applications). We prove the following result in this
direction.

\begin{Prop}\label{prop:log-set-pw}
  Let $X=\{X_\lambda\subset[0,1]^\ell\}$ be an $\R_\an$-definable
  family and  let $n$ denote the maximal dimension of the fibers of
  $X$. Fix $k\in\N$ and assume $k=\Omega_\ell(1)$. There exists an
  $\R_\an$-definable family $Y=\{Y_{\lambda,\mu}\subset X_\lambda\}$
  with maximal fiber dimension strictly smaller than $n$ and
  $\kappa={O_X(k^{-1/n})}$ such that the following holds. For any
  parameter $\lambda$ and any integer $H>1$ there is a collection of
  $N=\poly_\ell(k)\cdot H^\kappa$ parameters $\{\mu_j\}$ satisfying
  \begin{equation}
    (\log X_\lambda)^\trans(\Q,H)\subset \bigcup_{j=1}^N \log Y_{\lambda,\mu_j}
  \end{equation}
  Each $Y_{\lambda,\mu}$ is defined by intersecting $X_\lambda$ with a
  collection of polynomial equations of degree $k$ in the variables
  $\log \vx_1,\ldots,\log \vx_\ell$.
\end{Prop}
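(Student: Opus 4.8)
The plan is to run the Bombieri--Pila interpolation-determinant method in the complex-cellular setting, using the machinery this paper develops. The first step is to reduce to a local, cellular problem: by Theorem~\ref{thm:cpt} (applied to the family $A_\lambda$ with $\log A_\lambda = X_\lambda$, or rather to the coordinate functions exhibiting $X_\lambda$ as the base of a suitable system of charts), cover each $X_\lambda$ by the images of finitely many complex cells, uniformly over the family, so that the coordinate projections become normal crossings. The point of passing to $\log$-coordinates is that rational points of height $\le H$ in $\log X_\lambda$ pull back, along the logarithmic charts, to points whose $\log\vx_i$ lie in the given lattice, so a monomial of degree $k$ in the $\log\vx_i$ evaluated at such a point has controlled size; this is exactly the input the determinant method needs.

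The heart of the argument is then the complex-cellular Bombieri--Pila estimate of Appendix~\ref{sec:bp-complex}: on each chart one forms the $D\times D$ interpolation determinant in the monomials of degree $\le k$ in $\log\vx_1,\dots,\log\vx_\ell$, where $D=\binom{\ell+k}{\ell}$. Using the hyperbolic-geometric control of a cell $\cC$ inside its extension $\cC^\delta$ (the Schwarz--Pick/conformal-diameter estimates recalled in the introduction), together with the $p$-valent Koebe-type bounds underlying Lemma~\ref{lem:log-length}, one gets that the determinant, as a holomorphic function on the chart, is either identically zero on the cell or has absolute value bounded below by a quantity like $H^{-c\,D\,d^{-1/n}}$ on points of height $\le H$, while the arithmetic (product formula) lower bound is of the form $H^{-c'D}$ unless the determinant vanishes. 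Balancing: any $D+1$ rational points of height $\le H$ on a fixed chart, no more than $H^{(m+1)d^{-1/m}(1+\poly(d)/k)}$ of them, lie on a common hypersurface of degree $k$; the exponent $(m+1)d^{-1/m}$ comes from the Heath-Brown-type optimization $D^{1/\ell}/$(number of points) with $D\sim k^\ell/\ell!$ and $n$ the fiber dimension, which also accounts for the stated $\kappa=O_X(k^{-1/n})$.

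To globalize, collect the hypersurfaces produced on all charts: each one pulls back (via the chart, which is algebraic in $\log$-coordinates of controlled complexity by Theorem~\ref{thm:cpt}) to a polynomial equation of degree $k$ in $\log\vx_1,\dots,\log\vx_\ell$ cutting out a proper subvariety, and we put $Y_{\lambda,\mu}$ equal to the intersection of $X_\lambda$ with such an equation. Since each chart contributes $\poly_\ell(k)\cdot H^\kappa$ such equations and the number of charts is $O_X(1)$, the total count is $N=\poly_\ell(k)\cdot H^\kappa$, and the condition $k=\Omega_\ell(1)$ is what makes $\poly_\ell(d)/k$ small enough for the exponent estimate and also ensures $Y_{\lambda,\mu}$ is a proper subfamily, so its fiber dimension drops below $n$; definability of the family $\{Y_{\lambda,\mu}\}$ in $\R_\an$ is automatic since everything in sight is subanalytic in the base coordinates.

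The main obstacle I anticipate is the lower bound on the interpolation determinant: one must show that when it is not identically zero on the cell, its modulus at the relevant rational points is not too small, and the naive Cauchy/maximum-modulus bound is useless because the cell sits at positive hyperbolic distance inside its extension only in a way that degenerates as the cell becomes "thin" (the hyperbola phenomenon of \secref{sec:parametrization-uniformity}). Handling this is precisely where one needs the quantitative geometric function theory of complex cells — the conformal-diameter growth estimate (Remark~\ref{rem:Adelta-diameter}) and the $p$-valence bounds of Lemma~\ref{lem:log-length} — to convert "the determinant vanishes to high order / is small on the cell" into "the determinant vanishes identically on the cell", which then forces the rational points onto a proper hypersurface. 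Getting the exponents in this dichotomy to match the arithmetic side sharply enough to produce the clean $H^\kappa$ with $\kappa=O_X(k^{-1/n})$, rather than something lossy, is the delicate point.
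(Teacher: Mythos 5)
Your overall shape (cellular decomposition, interpolation determinants in $\log$-coordinates, refinement into cells with wide extensions, one hypersurface per refined cell) matches the paper, but the step at the heart of the argument is missing and, as written, fails. You propose forming the determinant in all monomials of degree $\le k$ in $\log\vx_1,\ldots,\log\vx_\ell$ pulled back to a cell. But Lemma~\ref{lem:bp-upper-complex} applies only to functions in $\cO_b(\cC^\vdelta)$ of bounded diameter, and after the CPT each pullback $f^*\vx_i$ is $\vz^{\valpha}U$ with $\log U$ bounded; hence $\log f^*\vx_i=\langle\valpha,\log\vz\rangle+\log U$ is multivalued on $D_\circ,A$ coordinates and unbounded on $D_\circ$ fibers, so the determinant estimate simply does not apply to these functions. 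The paper's resolution is precisely the content of Proposition~\ref{prop:bp-logs}: one passes to the $m+1$ $\Z$-linear combinations $\vy_j=\log\vg_j$ with $\valpha(\vg_j)=0$ ($m$ = number of $D$-fibers), which are univalued and of bounded diameter by the monomialization lemma, and runs the determinant with $\mu\sim k^{m+1}$ monomials in these. This choice is also what produces the exponent: balancing $H^{-\mu k}$ against $\delta^{E_m\mu^{1+1/m}}$ with $\mu\sim k^{m+1}$ gives $\log\delta\sim -k^{-1/m}\log H$ and hence, via Lemma~\ref{lem:refinement-special}, $N=\poly_\ell(k)H^{O_X(k^{-1/n})}$. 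Your heuristic "$D\sim k^\ell$ and Heath-Brown optimization" does not yield $\kappa=O_X(k^{-1/n})$ by any argument available here, and your "main obstacle" (a lower bound for the determinant via conformal diameter and $p$-valence) misreads the method: the lower bound is the trivial common-denominator bound of Lemma~\ref{lem:bp-lower}, and the analytic side is an \emph{upper} bound via normalized Laurent expansions; the exponent $(m+1)d^{-1/m}$ you quote belongs to the algebraic application, not to this one.

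Two further claims you make "for free" are exactly the delicate points of the statement. First, definability of $\{Y_{\lambda,\mu}\}$ is not automatic: $\log$ of a definable family of units is not $\R_\an$-definable (unrestricted logarithm), and the paper must normalize, replacing $\vy_{\lambda,\theta}$ by $\tilde\vy_{\lambda,\theta}$ as in Remark~\ref{rem:logs-in-family}, and rewrite the hypersurface in the normalized variables. Second, the drop in fiber dimension is not a consequence of the hypersurface being "proper": a degree-$k$ polynomial in $\log\vx$ may vanish identically on $X_\lambda$ even though $X_\lambda$ is transcendental in $\vx$. The paper handles this by repeating the construction for every $(n+1)$-tuple of coordinates, intersecting to form $\hat Y_{\lambda,\mu}$, and observing that if the intersection still has dimension $n$ then $\log X_\lambda$ lies in an $n$-dimensional algebraic variety, so $(\log X_\lambda)^\trans=\emptyset$ and those fibers can be discarded; this is the only place the transcendental part enters, and your proposal has no mechanism to replace it.
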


Proposition~\ref{prop:log-set-pw} is reminiscent of the main inductive
step in the proof of the Pila-Wilkie theorem. In particular, choosing
$k$ such that $\kappa<\e$ and using induction over the fiber
dimension, the proposition immediately yields a proof of the
Pila-Wilkie theorem for log-sets. On the other hand, setting
$k=(\log H)^n$ yields an interpolation result using a constant number
of hypersurfaces of polylogarithmic degree. This can be seen as the
first inductive step toward the Wilkie conjecture. This is similar to
the main result of \cite{cpw:params} for sets definable in the
o-minimal structure $\R_{\an}^{\text{pow}}$ obtained by adjoining all
unrestricted power functions $x^\lambda:\R_+\to\R_+$ for
$\lambda\in(0,\infty)$ to $\R_\an$.

The proof of Proposition~\ref{prop:log-set-pw} is given in
Appendix~\ref{sec:log-set-pw-proof}. It relies heavily on the theory
of complex cells, and is effective in the following sense: \emph{a
  generalization of Theorem~\ref{thm:cpt}, with polynomial complexity
  bounds, to a reduct of $\R_\an$ which includes the \emph{restricted}
  logarithm would automatically yield a proof of the Wilkie conjecture
  for log-sets $\log A$ with $A$ definable in the reduct}. While we do
not prove such a generalization in this paper (and there appear to be
some technical obstacles to doing so), this seems to offer a plausible
approach to proving the Wilkie conjecture for a large class of
unrestricted sets that are important in applications.

Even for the case that $k$ is a constant independent of $H$,
Proposition~\ref{prop:log-set-pw} yields additional information
compared to the standard proof of the Pila-Wilkie theorem. Namely, in
the standard proof the sets $Y_{\lambda,\mu}$ are also defined by
intersecting $X_\lambda$ with a collection of polynomial equations of
degree $k$ in the variables $\log \vx_1,\ldots,\log \vx_\ell$. In
general such sets would not be subanalytic. In our version certain
cancellations in the logarithmic terms allow one to find suitable
equations which are \emph{subanalytic} on $X_\lambda$, thus proving
that $Y_{\lambda,\mu}$ are also subanalytic. This eventually implies
that the part of $X_\lambda^\alg$ responsible for the presence of many
points of height $H$ is also a union of log-sets.

\subsection{A remark on asymptotic notations}
\label{sec:asymptotics}

In this paper each appearance of an expression $\alpha=O(\beta)$
should be interpreted as shorthand notation for $\alpha\le C\cdot\beta$
where $C$ is a universally fixed positive constant (which may be
different at each occurrence). Similarly, $\alpha=O_X(\beta)$ is a
shorthand for $\alpha\le C_X\cdot\beta$ where $X\to C_X$ is a
universally fixed, positive valued real function. We similarly write
$\alpha=\Omega_X(\beta)$ as shorthand for $\alpha\ge C_X\cdot\beta$
and $\alpha=\Theta_X(\beta)$ as shorthand for $\alpha=O_X(\beta)$ and
$\alpha=\Omega_X(\beta)$. Finally we write $\alpha=\poly_X(\beta)$ as
a shorthand for $\alpha\le P_X(\beta)$ where $X\to P_X$ is a
universally fixed mapping whose values are univariate polynomials with
positive coefficients.

To avoid confusion, we stress that our notation serve as mere
placeholders for specific constants that are left unspecified for
brevity, rather than expressing asymptotic behavior as parameters tend
to infinity or zero. For example, the condition
$1/\rho=\poly_\ell(\nu)$ in Lemma~\ref{lem:admissible-collision} means
that there is a specific polynomial $P_\ell(\cdot)$ depending on
$\ell$, such that for any $\rho$ satisfying $1/\rho<P_\ell(\nu)$ the
conclusion of the Lemma holds.

\subsection{Acknowledgements}

We wish to thank Yosef Yomdin for introducing us to the Yomdin-Gromov
algebraic lemma and encouraging us to pursue its refinements. We also
wish to thank Pierre Milman and Andrei Gabrielov for many helpful
discussions before and during the preparation of this paper. Finally
we wish to thank the anonymous referees for exceptionally detailed and
invaluable comments improving many aspects of the paper.

\section{Complex cellular structures}
\label{sec:complex-cells}

\subsection{Discs and annuli in the complex plane}

For $r\in\C$ (resp. $r_1,r_2\in\C$) with $|r|>0$
(resp. $|r_2|>|r_1|>0$) we denote
\begin{equation}\label{eq:basic-fibers}
  \begin{aligned}
    D(r)&:=\{|z|<|r|\} & D_\circ(r)&:=\{0<|z|<|r|\} \\
    A(r_1,r_2)&:=\{|r_1|<|z|<|r_2|\} & *&:=\{0\}.
  \end{aligned}
\end{equation}
We also set $S(r):=\partial D(r)$. For any $0<\delta<1$ we define
the $\delta$-extensions, denoted by superscript $\delta$, by
\begin{equation}\label{eq:fiber-delta-ext}
  \begin{aligned}
    D^\delta(r)&:=D(\delta^{-1}r) & D^\delta_\circ(r)&:=D_\circ(\delta^{-1}r) \\
    A^\delta(r_1,r_2)&:=A(\delta r_1,\delta^{-1}r_2) & *^\delta&:=*.
  \end{aligned}
\end{equation}
We also set $S^\delta(r)=A(\delta r,\delta^{-1}r)$. The notion of
$\delta$-extension is naturally associated with the Euclidean geometry
of the complex plane. However, in many cases it is more convenient to
use a different normalization associated with the hyperbolic geometry
of our domains. For any $0<\rho<\infty$ we define the
$\hrho$-extension $\cF^\hrho$ of $\cF$ to be $\cF^\delta$ where
$\delta$ satisfies the equations
\begin{equation}\label{eq:rho-ext-def}
  \begin{aligned}
    \rho &= \frac{2\pi\delta}{1-\delta^2} && \text{for $\cF$ of type
      $D$,} \\
    \rho &= \frac{\pi^2}{2|\log\delta|} && \text{for $\cF$ of type
      $D_\circ,A$}.
  \end{aligned}
\end{equation}

The motivation for this notation comes from the following fact,
describing the hyperbolic-metric properties of a domain $\cF$ within
its $\hrho$-extension. For a reminder on the hyperbolic metric
associated with a planar domain see~\secref{sec:fund-lemmas}.
Fact~\ref{fact:boundary-length} is proved by explicit computation
in~\secref{sec:explicit-consts}.

\begin{Fact}\label{fact:boundary-length}
  Let $\cF$ be a domain of type $A,D,D_\circ$ and let $S$ be a component
  of the boundary of $\cF$ in $\cF^\hrho$. Then the length of $S$ in
  $\cF^\hrho$ is at most $\rho$.
\end{Fact}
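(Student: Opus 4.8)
The plan is to prove Fact~\ref{fact:boundary-length} by direct computation, exploiting the conformal invariance of the hyperbolic metric to reduce each case to a single model domain. Recall that for any of the basic fibers $\cF$ (of type $D$, $D_\circ$, or $A$), the $\hrho$-extension $\cF^\hrho$ is again a domain of the same type, and the inclusion $\cF\hookrightarrow\cF^\hrho$ is a conformal map onto its image. The boundary components of $\cF$ inside $\cF^\hrho$ are circles $\{|z|=c\}$ (one for type $D$, two for types $D_\circ$ and $A$ — remembering that for $D_\circ$ the puncture is also a boundary point of $\cF^\hrho$ but contributes no circle). So the length to be estimated is simply the hyperbolic length, computed in the metric of $\cF^\hrho$, of such a circle. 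By a Möbius normalization I would place $\cF^\hrho$ into a standard position: the unit disc $\D$ for type $D$, the punctured disc $\D_\circ=\D\setminus\{0\}$ for type $D_\circ$, and a round annulus $A(1,R)$ for type $A$ (or better, pass to the logarithmic coordinate and realize the annulus as a horizontal strip, where the hyperbolic metric is explicit).

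First I would handle type $D$. After scaling, $\cF^\hrho=D(1)$ and $\cF=D(\delta)$, so the boundary circle is $\{|z|=\delta\}$. The hyperbolic metric on $\D$ is $\frac{2|dz|}{1-|z|^2}$, so the hyperbolic length of $\{|z|=\delta\}$ is $\int_0^{2\pi}\frac{2\delta}{1-\delta^2}\,d\theta=\frac{4\pi\delta}{1-\delta^2}$. Comparing with the defining relation $\rho=\frac{2\pi\delta}{1-\delta^2}$ in~\eqref{eq:rho-ext-def}, this length equals $2\rho$, not $\rho$ — so at this point I would double-check the normalization of the hyperbolic metric being used in the paper (curvature $-1$ vs.\ $-4$, i.e.\ metric $\frac{|dz|}{1-|z|^2}$); with the density $\frac{|dz|}{1-|z|^2}$ the length comes out exactly $\rho$, which must be the paper's convention. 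Granting that convention, type $D$ is done.

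Next, types $D_\circ$ and $A$. Here I would move to the logarithmic chart. The punctured disc $\D_\circ$ is conformally the half-plane $\{\Re w<0\}$ quotiented by $2\pi i\Z$ (via $w=\log z$), and its hyperbolic metric pulls back to $\frac{|dw|}{2|\Re w|}$ (the standard cusp metric). A round annulus $A(1,R)=\{1<|z|<R\}$ becomes, under $w=\log z$, the strip $\{0<\Re w<\log R\}$ mod $2\pi i\Z$, whose hyperbolic metric is $\frac{\pi}{\log R}\cdot\frac{|dw|}{\sin(\pi\Re w/\log R)}$. In both cases a boundary component of $\cF$ inside $\cF^\hrho$ is a circle $\{|z|=c\}$, i.e.\ a vertical segment $\{\Re w=\text{const}\}$ of height $2\pi$ in the strip picture. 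For the annulus $A(r_1,r_2)$ inside $A^\delta(r_1,r_2)=A(\delta r_1,\delta^{-1}r_2)$, after normalizing so that $\cF^\hrho$ is the strip of width $2|\log\delta|+(\log r_2-\log r_1)$, the inner boundary circle sits at distance $|\log\delta|$ from the left edge; its hyperbolic length is $2\pi$ times the metric density there, which one bounds by the value at the worst admissible position. Carrying out the elementary estimate $\frac{\pi}{\text{width}}\cdot\frac{1}{\sin(\pi\,|\log\delta|/\text{width})}\cdot 2\pi \le \frac{\pi^2}{2|\log\delta|}=\rho$ — using $\sin x\ge \frac{2}{\pi}x$ on $[0,\pi/2]$ and the fact that the core of the annulus only makes the width larger — gives the bound $\rho$ for type $A$; the type $D_\circ$ case is the limiting case as the outer radius $\to\infty$ (strip $\to$ half-plane), where the density at distance $|\log\delta|$ from the edge is exactly $\frac{1}{2|\log\delta|}$, giving length $\frac{2\pi}{2|\log\delta|}=\frac{\pi}{|\log\delta|}\le\frac{\pi^2}{2|\log\delta|}=\rho$.

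The main obstacle I anticipate is purely bookkeeping rather than conceptual: getting all the normalization constants consistent — the curvature normalization of the hyperbolic metric, the correspondence between the Euclidean parameter $\delta$ and the hyperbolic parameter $\rho$ in~\eqref{eq:rho-ext-def}, and the fact that for type $A$ the bound must hold uniformly over \emph{all} annular cores (the extension $A^\delta$ of an annulus $A(r_1,r_2)$ adds the same factor $\delta$ on both sides regardless of the modulus of the original annulus), so one needs an estimate that is monotone or at least controlled in the width parameter. Once the strip model and the convex inequality $\sin x\ge\frac{2x}{\pi}$ are in hand, each case reduces to a one-line calculation; the work is entirely in~\secref{sec:explicit-consts} as the paper indicates.
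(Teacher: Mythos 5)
Your proposal is correct, and your guess about the normalization is right: the paper works with the curvature $-4$ metric, $\lambda_D(z)=(1-|z|^2)^{-1}$ on the disc, so your type-$D$ computation gives exactly $\rho$. For the types $D_\circ$ and $A$, however, your route differs from the paper's. You compute the length of the boundary circle directly in the metric of $\cF^\hrho$ itself, via the logarithmic strip (resp.\ cusp) model, and then control the dependence on the modulus with Jordan's inequality $\sin x\ge \tfrac{2}{\pi}x$ together with the observation that the core of the annulus only widens the strip. The paper instead avoids ever looking at the metric of $\cF^\hrho$: it notes that each boundary circle $S(r)$ satisfies $S(r)\subset S^\delta(r)=A(\delta r,\delta^{-1}r)\subset\cF^\delta$, so by the Schwarz--Pick lemma (Lemma~\ref{lem:schwarz-pick}, applied to the inclusion) the length of $S(r)$ in $\cF^\hrho$ is at most its length in the \emph{symmetric} annulus $S^\delta(r)$, where $S(r)$ is the core circle and the length is computed exactly to be $\tfrac{\pi^2}{2|\log\delta|}=\rho$ (Lemma~\ref{lem:hyperbcircle}). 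This buys a single exact computation covering both $D_\circ$ and $A$ uniformly, with no inequality over the width parameter and no separate cusp case; your version buys a self-contained direct estimate (and in the $D_\circ$ case even the sharper value $\pi/|\log\delta|$), at the cost of the extra bookkeeping you anticipated. One small caution: in your displayed annulus estimate you wrote the strip density as $\tfrac{\pi}{W\sin(\pi x/W)}$, which is the curvature $-1$ normalization and would only give the bound $2\rho$; with the curvature $-4$ density $\tfrac{\pi}{2W\sin(\pi x/W)}$ (the convention you correctly adopted in the $D$ and $D_\circ$ cases) Jordan's inequality closes the argument with the stated bound $\rho$.
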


\subsection{Complex cells}

\subsubsection{The general setting}
\label{sec:cells-general-setting}

We introduce a notation that will be used throughout the paper. Let
$\cX,\cY$ be sets and $\cF:\cX\to2^\cY$ be a map taking points of
$\cX$ to subsets of $\cY$. Then we denote
\begin{equation}\label{eq:odot-def}
  \cX\odot\cF := \{(x,y) : x\in\cX, y\in\cF(x)\}.
\end{equation}
In this paper $\cX$ will be a subset of $\C^n$ and $\cY$ will be
$\C$. If $r:\cX\to\C\setminus\{0\}$ then for the purpose of this
notation we understand $D(r)$ to denote the map assigning to each
$x\in\cX$ the disc $D(r(x))$, and similarly for $D_\circ,A$.

We now introduce the central notion of this paper, namely the notion
of a complex cell of \emph{length} $\ell\in\Z_{\ge0}$ and \emph{type}
$\cT(\cC)\subset\{*,D,D_\circ,A\}^\ell$. If $U$ is a complex manifold
we denote by $\cO(U)$ the space of holomorphic functions on $U$, and
by $\cO_b(U)\subset\cO(U)$ the subspace of bounded functions. As a
shorthand we denote $\vz_{1..\ell}=(\vz_1,\ldots,\vz_\ell)$.

\begin{Def}[Complex cells]\label{def:cells}
  A complex cell $\cC$ of \emph{length} zero is the point $\C^0$. The
  \emph{type} of $\cC$ is the empty word.  A complex cell of length
  $\ell+1$ has the form $\cC_{1..\ell}\odot\cF$ where the \emph{base}
  $\cC_{1..\ell}$ is a cell of length $\ell$, and the \emph{fiber}
  $\cF$ is one of $*,D(r),D_\circ(r),A(r_1,r_2)$ where
  $r\in\cO_b(\cC_{1..\ell})$ satisfies $|r(\vz_{1..\ell})|>0$ for
  $\vz_{1..\ell}\in\cC_{1..\ell}$; and
  $r_1,r_2\in\cO_b(\cC_{1..\ell})$ satisfy
  $0<|r_1(\vz_{1..\ell})|<|r_2(\vz_{1..\ell})|$ for
  $\vz_{1..\ell}\in\cC_{1..\ell}$. The type $\cT(\cC)$ is
  $\cT(\cC_{1..\ell})$ followed by the type of the fiber $\cF$.
\end{Def}
Next, we define the notion of a $\vdelta$-extension (resp.
$\hvrho$-extension) of a cell of length $\ell$ where
$\vdelta\in(0,1)^\ell$ (resp. $\vrho\in(0,\infty)^\ell$).
\begin{Def}\label{def:cell-ext}
  The cell of length zero is defined to be its own
  $\vdelta$-extension. A cell $\cC$ of length $\ell+1$ admits a
  $\vdelta$-extension
  $\cC^\vdelta:=\cC_{1..\ell}^{\vdelta_{1..\ell}}\odot\cF^{\vdelta_{\ell+1}}$
  if $\cC_{1..\ell}$ admits a $\vdelta_{1..\ell}$-extension, and if
  the function $r$ (resp. $r_1,r_2$) involved in $\cF$ admits
  holomorphic continuation to $\cC_{1..\ell}^{\vdelta_{1..\ell}}$ and
  satisfies $|r(\vz_{1..\ell})|>0$
  (resp. $0<|r_1(\vz_{1..\ell})|<|r_2(\vz_{1..\ell})|$) in this larger
  domain. The $\hvrho$-extension $\cC^\hvrho$ is defined in an
  analogous manner.
\end{Def}

Note that for $0<\delta_1<\delta_2<1$ we have
$\cC^{\delta_2}\subset \cC^{\delta_1}$ and similarly for
$0<\rho_1<\rho_2$ we have
$\cC^{\he{\rho_2}}\subset\cC^{\he{\rho_1}}$. Thus a cell admitting
$\delta_1$-extension (resp. $\he{\rho_1}$-extension) automatically
admits $\delta_2$-extension (resp $\he{\rho_2}$-extension), but not
vice versa. However see Theorem~\ref{thm:cell-refinement} for a
construction that enables a partial converse.

As a shorthand, when say that $\cC^\vdelta$ is a complex cell
(resp. $\cC^\hvrho$) we mean that $\cC$ is a complex cell admitting a
$\vdelta$ (resp $\hvrho$) extension. We will usually speak about
$\delta$-extensions where $\delta\in(0,1)$ by identifying $\delta$
with $\vdelta:=(\delta,\ldots,\delta)$ and similarly with
$\rho\in(0,\infty)$.

\begin{Rem}\label{rem:repeated-ext}
  It is sometimes convenient to consider repeated extensions. We
  denote
  $\cC^{\he{\rho_1}\he{\rho_2}}:=(\cC^{\he{\rho_1}})^{\he{\rho_2}}$. A
  simple computation shows that
  $\cC^{\he{\rho_1}\he{\rho_2}}\subset\cC^\rho$ where
  $\rho^{-1}=\poly(\rho_1^{-1},\rho_2^{-1})$.
\end{Rem}

The \emph{dimension} of a cell denoted $\dim\cC$ is its length minus
the number of symbols $*$ in its type. This clearly agrees with the dimension
of $\cC$ as a complex manifold.

\subsubsection{The real setting}

We introduce the notion of a \emph{real} complex cell $\cC$, which we
refer to in this paper simply as \emph{real cells} (but note that
these are subsets of $\C^\ell$). We also define the notion of
\emph{real part} of a real cell $\cC$ (which lies in $\R^\ell$), and
of a \emph{real} holomorphic function on a real cell. Below we let
$\R_+$ denote the set of positive real numbers.

\begin{Def}[Real complex cells]\label{def:real-cells}
  The cell of length zero is real and equals its real part. A cell
  $\cC:=\cC_{1..\ell}\odot\cF$ is real if $\cC_{1..\ell}$ is real and
  the radii involved in $\cF$ can be chosen to be real holomorphic
  functions on $\cC_{1..\ell}$; The real part $\R\cC$ (resp. positive
  real part $\R_+\cC$) of $\cC$ is defined to be
  $\R\cC_{1..\ell}\odot\R\cF$ (resp. $\R_+\cC_{1..\ell}\odot\R_+\cF$)
  where $\R\cF:=\cF\cap\R$ (resp.  $\R_+\cF:=\cF\cap\R_+$) except the
  case $\cF=*$, where we set $\R*=\R_+*=*$; A holomorphic function on
  $\cC$ is said to be real if it is real on $\R\cC$.
\end{Def}
A simple induction shows that a real cell is invariant under the
conjugation $\vz\to\bar\vz$. Note that the positive real part of a
real cell is connected, while the real part is disconnected if $\cC$
has fibers of type $D_\circ,A$. We remark that for a function
$f:\cC\to\C$ to be real it is enough to require that it is real on
$\R_+\cC$. Indeed, in this case $f(z)=\overline{f(\bar z)}$ on
$\R_+\cC$ and it follows by holomorphicity the equality holds over
$\cC$.

\subsubsection{Algebraicity}

For a pure-dimensional algebraic variety $X\subset\C^\ell$ we define
the \emph{degree} $\deg X$ to be the number of intersections between
$X$ and a generic affine-linear hyperplane of complementary dimension
(this is the same as the degree of the projective closure of $X$ in
$\C P^\ell$). We extend this by linearity to arbitrary varieties, not
necessarily pure dimensional. We remind the reader that
$\deg(X\times Y)=\deg X\cdot\deg Y$ and by the Bezout theorem
$\deg(X\cap Y)\le \deg X\cdot \deg Y$.

For a domain $U\subset\C^\ell$, we say that a holomorphic function
$f:U\to\C$ is \emph{algebraic} if its graph
$G_f\subset\C^\ell\times\C$ is an analytic component of
$(U\times\C)\cap X$, where $X\subset\C^\ell\times\C$ is an algebraic
variety. The minimal degree $\beta$ of a variety $X$ satisfying this
condition is called the \emph{degree}, or \emph{complexity}, of
$f$. For $F:U\to\C^k$ we say that $F$ is algebraic if each of its
components is, and we define its complexity to be the maximum among
the complexities of the components. As an easy consequence of the
Bezout theorem we have for any pair of algebraic maps
$F:U\to\C^k,G:V\to\C^d$ where $F(U)\subset V$ the estimate
\begin{equation}
  \deg F\circ G \le \poly_{\ell,k}(\deg F,\deg G) .
\end{equation}

We define the notion of an \emph{algebraic} complex cell of complexity
$\beta$ by induction as follows: a cell of length $0$ is algebraic of
complexity $1$; A cell $\cC=\cC_{1..\ell}\odot\cF$ is algebraic if
$\cC_{1..\ell}$ is algebraic and the radii involved in $\cF$ are
algebraic, and the complexity of $\cC$ is the maximum among the
complexity of $\cC_{1..\ell}$ and the complexities of the radii defining
$\cF$.

\subsection{Cellular maps}

We equip the category of complex cells with \emph{cellular maps}
defined as follows.

\begin{Def}[Cellular map]\label{def:cell-maps}
  Let $\cC,\hat\cC$ be two cells of length $\ell$. We say that a
  holomorphic map $f:\cC\to\hat\cC$ is \emph{cellular} if it takes the
  form $\vw_{j}=\phi_j(\vz_{1..j})$ where $\phi_j\in\cO_b(\cC_{1..j})$
  for $j=1,\ldots,\ell$ and moreover $\phi_j$ is a monic polynomial of
  positive degree in $\vz_j$. We say that a cellular map $f$ is
  \emph{prepared} (resp. \emph{a translate}) in $\vz_j$ if
  $\phi_j(\vz_{1..j})=\vz_j^{q_j}+\tilde\phi_j(\vz_{1..j-1})$ for some
  $q_j\in\N_{\ge1}$ (resp. $q=1$) and $\tilde\phi_j$ is holomorphic on
  $\cC_{1..j-1}$. We say that $f$ is \emph{prepared} (resp. a
  translate) if it is prepared (resp. a translate) in
  $\vz_1,\ldots,\vz_\ell$. We say that $f$ is \emph{real} if
  $\cC,\hat\cC$ are real and the components of $f$ are real.
\end{Def}

Cellular maps preserve the length and dimension of cells. The
composition of two cellular maps is cellular, but note that the
composition of two prepared cellular maps is not necessarily
prepared. We will often be interested in covering a cell by cellular
images of other cells. Toward this end we introduce the following
definition.

\begin{Def}\label{def:cell-cover}
  Let $\cC^\vdelta$ be a cell and
  $\{f_j:\cC_j^{\vdelta'}\to\cC^\vdelta\}$ be a finite collection of
  cellular maps. We say that this collection is a
  \emph{$(\vdelta',\vdelta)$-cellular cover} of $\cC$ if
  $\cC\subset\cup_j(f_j(\cC_j))$. If $(\vdelta',\vdelta)$ are clear
  from the context we will speak simply of cellular covers.
\end{Def}

The number of maps $f_j$ in a cellular cover is called the \emph{size}
of the cover. If the maps $f_j$ are all algebraic of complexity
$\beta$ we say that the cover has complexity $\beta$.

A real cellular cover is defined as follows.

\begin{Def}\label{def:real-cell-cover}
  Let $\cC^\vdelta$ be a real cell and
  $\{f_j:\cC_j^{\vdelta'}\to\cC^\vdelta\}$ be a finite collection of
  real cellular maps. We say that this collection is a \emph{real
    $(\vdelta',\vdelta)$-cellular cover} of $\cC$ if
  $\R_+\cC\subset\cup_j(f_j(\R_+\cC_j))$. If $(\vdelta',\vdelta)$ are
  clear from the context we will speak simply of real cellular covers.
\end{Def}

The restriction to positive real parts in the definition of real
cellular covers is a notational convenience. One can cover the
remaining components of $\R\cC$, for instance using the signed
covering maps introduced in~\secref{sec:nu-cover}.

\begin{Rem}\label{rem:cell-cover-composition}
  We remark that if $\{f_j:\cC_j^{\vdelta'}\to\cC^{\vdelta}\}$ is a
  cellular cover of $\cC$ and
  $\{f_{jk}:\cC_{jk}^{\vdelta''}\to\cC_j^{\vdelta'}\}$ is a cellular cover
  of $\cC_j$ then $\{f_j\circ f_{jk}\}$ is a $(\vdelta'',\vdelta)$-cover
  of $\cC$. We will often use this basic principle without further
  reference.
\end{Rem}

In this paper when we say that a family of sets or functions is
\emph{definable} we mean that it is definable in $\R_\an$. The reader
unfamiliar with this terminology can think instead of a family whose
total space is a bounded subanalytic set.

The following theorem implies in particular, by the remark above, that
a cellular cover can always be replaced by a cellular cover consisting
of prepared maps.

\begin{Thm}[Cellular Preparation Theorem, CPrT]\label{thm:cprt}
  Let $f:\cC^\hrho\to\hat\cC$ be a (real) cellular map. Then there
  exists a (real) cellular cover $\{g_j:\cC_j^\hrho\to\cC^\hrho\}$ such that each $f\circ g_j$ is prepared.

  If $\cC^\hrho,\hat\cC,f$ vary in a definable family $\Lambda$ then the size of the
  cover is $\poly_{\Lambda}(\rho)$, and the maps $g_j$
  can be chosen from a single definable family.  If $\cC^\hrho,\hat\cC,f$
  are algebraic of complexity $\beta$ then the cover has size
  $\poly_\ell(\beta,\rho)$ and complexity $\poly_\ell(\beta)$.
\end{Thm}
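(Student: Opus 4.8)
The plan is to prove the theorem by induction on the length $\ell$ of the cells, following the structure of the definition of a cellular map $f:\vw_j=\phi_j(\vz_{1..j})$, $\phi_j$ monic of positive degree $q_j$ in $\vz_j$. In the inductive step we process the top coordinate: we must replace $\phi_\ell(\vz_{1..\ell-1})[\vz_\ell]$, a monic polynomial of degree $q_\ell$ in $\vz_\ell$, by something of the form $\vz_\ell^{q}+\tilde\phi(\vz_{1..\ell-1})$ on each piece of a suitable cellular cover. First I would treat the three nontrivial fiber types of $\cF_\ell$ separately. For a fiber of type $D$ (a disc $D(r)$ over the base), the root locus of the polynomial $\phi_\ell$ in $\vz_\ell$ varies holomorphically (after possibly passing to a branched cover of the base to separate roots), and the classical idea is to recenter at a root and rescale; the obstacle is that the roots may collide or escape the disc as we move in the base. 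This is exactly where we invoke the hyperbolic geometry: we cover the base-disc by smaller discs and annuli (as in the Swiss-cheese decomposition sketched in the introduction) on which the configuration of roots is ``rigid'' in the sense that no collision happens over the $\hrho$-extension. The number of pieces needed is controlled by $\rho$ and by the number of roots $q_\ell\le\deg f$, which in the algebraic case is $\poly_\ell(\beta)$, and in the definable case is bounded uniformly over $\Lambda$, yielding the claimed $\poly(\rho)$ and $\poly_\ell(\beta,\rho)$ bounds on the size.

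Second, for the recentering step itself: once we have localized so that one distinguished root $a(\vz_{1..\ell-1})$ of $\phi_\ell$ is holomorphic and bounded on the (extended) base, the translate cellular map $\vz_\ell\mapsto\vz_\ell+a$ in the top coordinate (composed with translates/rescalings already arranged on the base by induction) reduces us to the case where $0$ is a root, i.e. $\phi_\ell=\vz_\ell\cdot\psi(\vz_{1..\ell})$. Iterating this over all $q_\ell$ roots and using that a cellular map prepared in all lower coordinates composed with a translate in $\vz_\ell$ keeps lower-coordinate preparedness (translates compose with translates; the issue is only that a general prepared map composed with a prepared map need not be prepared, which is why we must be careful to arrange everything in one pass from the bottom up), we reduce $\phi_\ell$ to the monomial $\vz_\ell^{q_\ell}$ plus lower-order corrections — but to actually reach the normal form $\vz_\ell^{q}+\tilde\phi(\vz_{1..\ell-1})$ with $\tilde\phi$ independent of $\vz_\ell$, we need a genuine Weierstrass-type preparation: after ensuring all $q_\ell$ roots are accounted for, $\phi_\ell$ factors as a product of monic degree-one factors $\bigl(\vz_\ell-a_i(\vz_{1..\ell-1})\bigr)$, and one shows that on a suitable cellular refinement the symmetric functions of the $a_i$ can be absorbed, leaving the pure power. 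For the fibers of type $D_\circ$ and $A$ one additionally uses the substitution $\vz_\ell\mapsto\vz_\ell^{\nu}$ (cf. the monodromy argument in the introduction) to trivialize the finite monodromy of the roots around the puncture or the annulus, again at a cost polynomial in $\nu$, which is polynomial in $\beta$ (algebraic case) or bounded over $\Lambda$ (definable case).

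Third, I would assemble the pieces: by Remark~\ref{rem:cell-cover-composition} the composition of the base cover (from the inductive hypothesis applied to the base cell and the induced cellular map on it) with the top-coordinate covers constructed above is again a cellular cover, and the composed maps $g_j$ are prepared because preparedness in $\vz_1,\ldots,\vz_{\ell-1}$ is inherited from the base and preparedness in $\vz_\ell$ was arranged in the last step. The complexity bookkeeping is routine: each elementary operation (passing to a branched cover $\vz\mapsto\vz^\nu$, translating by a root, rescaling by a radius) changes the complexity by a controlled polynomial amount and the composition estimate $\deg(F\circ G)\le\poly_{\ell,k}(\deg F,\deg G)$ keeps the total complexity at $\poly_\ell(\beta)$ through the bounded number of coordinates; the size multiplies across the $\ell$ coordinates but each factor is $\poly(\beta,\rho)$, so the total is still $\poly_\ell(\beta,\rho)$. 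The definability of the family $\{g_j\}$ follows because every construction — root separation, the Swiss-cheese decomposition, branched covers — is first-order definable in $\R_\an$ and can be carried out uniformly over the parameter space $\Lambda$.

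The step I expect to be the main obstacle is the localization that guarantees a holomorphic, bounded choice of root over the \emph{$\hrho$-extension} of each piece of the cover — that is, proving that the base disc can be covered by $\poly(\rho, q_\ell)$ discs/annuli over each of which the relevant root stays holomorphic and bounded away from collisions, even after passing to the $\hrho$-extension. This is precisely the quantitative heart of the theory: it is where the Schwarz–Pick/conformal-diameter estimates (Fact~\ref{fact:boundary-length}) and the combinatorics of the Swiss-cheese decomposition must be made effective with a polynomial dependence on $\rho$. Everything else — the Weierstrass factorization, the $\vz\mapsto\vz^\nu$ trick, the complexity arithmetic — is comparatively formal once this geometric input is in hand.
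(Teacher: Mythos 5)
There is a genuine gap at the heart of your sketch: the step ``the symmetric functions of the $a_i$ can be absorbed, leaving the pure power'' is not a construction, and no cellular refinement does this. Recentering at a root only gives $\phi_\ell=\vz_\ell\cdot\psi$, and even in the best case $\phi_\ell=\vz_\ell^{q}\cdot u$ with $u$ a unit you cannot absorb $u$ by a cellular change of variable, since by Definition~\ref{def:cell-maps} the maps $g_j$ must be monic polynomials in their last variable, and $\vz_\ell\mapsto\vz_\ell u^{1/q}$ is not of this form. What preparation in the last variable really requires is to \emph{invert} $P:=f_{\ell+1}$ fiberwise: one needs a new fiber coordinate $w$ and a holomorphic section $\zeta(\vz_{1..\ell},w)$ of $\{P(\vz_{1..\ell},\zeta)=w^{\nu}+\phi(\vz_{1..\ell})\}$, i.e.\ uniform control of \emph{all} level sets $\{P=c\}$, not just of the root locus $\{P=0\}$ that your Swiss-cheese/root-separation argument addresses. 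The paper gets exactly this (Lemma~\ref{lem:cprt-step}) by adjoining an auxiliary bounded variable $w$, forming the graph $\Gamma=\{w=P\}$ inside the enlarged cell $\cC_{1..\ell}\odot D(N)\odot\cF$, applying the cellular Weierstrass Preparation Theorem (Theorem~\ref{thm:wpt}) to $w-P$ after first using the CPT to remove the critical locus of $P$, and then making the resulting sections univalued by $\nu$-covers; because the WPT maps are translates in the $w$-variable (Remark~\ref{rem:wpt-cpt-translates}), the section composed with $R_{\nu}$ lands in $\Gamma$ over a point whose $w$-coordinate is $w^{\nu}+\phi(\vz_{1..\ell})$, which is precisely the prepared form of $f$ in the new coordinates. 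Your proposal never introduces this graph/inversion device, so even granting all the root-separation geometry you defer, you would still not reach the normal form $\vz_\ell^{q}+\tilde\phi(\vz_{1..\ell-1})$.

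A second, related miscalibration: the quantitative localization you single out as ``the main obstacle'' (holomorphic, collision-free choice of root over the $\hrho$-extension, with polynomially many pieces) is not something the CPrT proof should redo from scratch — it is the content of the WPT and the clustering constructions, which at this stage of the simultaneous induction (WPT and weak CPT for cells of the same or smaller length/dimension) are available as black boxes and are what the paper's argument invokes. Your bottom-up assembly also glosses a real point you yourself flag: a cover preparing the fiber can destroy preparedness already arranged in the base unless the fiber-preparing maps are translates in the lower variables; the paper sidesteps this by preparing the \emph{last} variable first (via Lemma~\ref{lem:cprt-step}, whose maps are translates below by Remark~\ref{rem:wpt-cpt-translates}) and only then preparing the base and composing with the identity in the fiber, which manifestly preserves the form $\vz_{\ell+1}^{q}+\tilde\phi$. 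So the correct repair of your outline is: reduce to preparing the top variable only, and obtain that preparation not by root recentering but by the WPT applied to the graph $\{w=P\}$.
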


In Theorem~\ref{thm:cprt}, if $\cC^\hrho,\hat\cC,f$ vary in a definable
family $\Lambda=\{\cC^{\he{\rho_\alpha}}_\alpha,\hat\cC_\alpha,f_\alpha\}_\alpha$ then we mean,
more explicitly, that there exists a single definable family
$\{G_\beta:\cC_\beta\to\hat\cC_\beta\}_\beta$ of maps such that for
every $f=f_\alpha$ and $\rho=\rho_\alpha$, every $g_j$ in the conclusion of the
theorem can be chosen as $g_j=G_{\beta_j}$ for a suitable parameter
value $\beta_j$. We use this shorthand formulation freely below.

The proof of Theorem~\ref{thm:cprt} is given
in~\secref{sec:proof-cprt}.

\subsection{The Cellular Parametrization Theorem}

Recall that in semialgebraic geometry, a cell is said to be
\emph{compatible} with a function if the function vanishes either
identically or nowhere on the cell. We introduce a complex analog
below.

\begin{Def}\label{def:compatible}
  For a complex cell $\cC$ and $F\in\cO_b(\cC)$ we say that $F$ is
  \emph{compatible} with $\cC$ if $F$ vanishes either identically or
  nowhere on $\cC$. For a cellular map $f:\hat\cC\to\cC$ we say that
  $f$ is compatible with $F$ if $f^*F$ is compatible with $\hat\cC$.
\end{Def}

The following is our main result.

\begin{Thm}[Cellular Parametrization Theorem, CPT]\label{thm:cpt}
  Let $\rho,\sigma\in(0,\infty)$. Let $\cC^\hrho$ be a (real) cell and
  $F_1,\ldots,F_M\in\cO_b(\cC^\hrho)$ (real) holomorphic
  functions. Then there exists a (real) cellular cover
  $\{f_j:\cC^\hsigma_j\to\cC^\hrho\}$ such that each $f_j$ is
  prepared and compatible with each $F_k$.

  If $\cC^\hrho,F_1,\ldots,F_M$ vary in a definable family $\Lambda$ then the cover
  has size $\poly_{\Lambda}(\rho,1/\sigma)$ and the maps
  $f_j$ can be chosen from a single definable family. If
  $\cC^\hrho,F_1,\ldots,F_M$ are algebraic of complexity $\beta$ then the
  cover has size $\poly_\ell(\beta,M,\rho,1/\sigma)$ and complexity
  $\poly_\ell(M,\beta)$.
\end{Thm}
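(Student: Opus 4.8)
The plan is to prove the Cellular Parametrization Theorem by induction on the length $\ell$ of the cell $\cC$, reducing at each stage to a one-dimensional problem on the top fiber that is handled by a combination of the cellular preparation already available (Theorem~\ref{thm:cprt}) and a ``Weierstrass-type'' analysis of the functions $F_k$ along the fiber. For $\ell=0$ there is nothing to do. For the inductive step, write $\cC^\hrho=\cC_{1..\ell}^{\he{\rho_{1..\ell}}}\odot\cF^{\he{\rho_{\ell+1}}}$ and think of each $F_k$ as a holomorphic function on the fiber $\cF^{\he{\rho_{\ell+1}}}$ (a disc, punctured disc, or annulus) with holomorphic dependence on the base $\cC_{1..\ell}$. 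The key one-dimensional fact, which is where the hyperbolic geometry enters, is that a bounded holomorphic function on a disc/annulus has a controlled number of zeros in the $\hsigma$-shrinking of its $\hrho$-extension (this is the kind of Jensen/Schwarz-type estimate that Fact~\ref{fact:boundary-length} is designed to feed into, and it is what makes the bound depend polynomially on $\rho$, $1/\sigma$, $M$ and, in the algebraic case, on $\beta$ via the complexity of the $F_k$ restricted to fibers, which is polynomial in $\beta$ by Bezout).

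The main steps I would carry out, in order, are the following. First, using a cellular cover of the base $\cC_{1..\ell}$ furnished by the inductive hypothesis applied to a suitable auxiliary family of functions (the coefficients, leading terms, and discriminant-type data arising from the $F_k$ on the fiber), arrange that over each piece of the base the zero locus of each $F_k$ in the fiber consists of holomorphically-varying ``roots'' whose number and multiplicity structure are locally constant. Second, on each such base piece, perform a fiberwise decomposition of the (shrunk) fiber into complex cells of one lower complexity in the fiber variable, separating the roots from one another and from the boundary components: this is exactly the ``Swiss-cheese'' covering of a disc/annulus by points, discs, punctured discs and annuli described in the introduction, now carried out uniformly in the base with the root locations playing the role of the finite set $\Sigma_\e$. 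Third, on each resulting sub-fiber the functions $F_k$ are nonvanishing or vanish identically, i.e.\ compatibility is achieved; composing with the $z\mapsto z^q$ maps needed to prepare the new fibers (and appealing to the removable singularity theorem to pass from $D_\circ$ to $D$ where a root sits at the center) produces prepared cellular maps. Fourth, apply Theorem~\ref{thm:cprt} once more to each composed map to force global preparedness in all coordinates, absorbing the extra factor $\poly_\ell(\beta,\rho)$ into the size bound and leaving the complexity bound $\poly_\ell(M,\beta)$ intact. Multiplying the sizes through the $\ell$ inductive stages keeps everything polynomial in the stated parameters, and the definability / algebraicity bookkeeping is inherited stage by stage because every construction (root extraction, Swiss-cheese covering, preparation) was produced from a single definable, or degree-controlled algebraic, family.

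The hard part will be the uniform Swiss-cheese covering of the fiber over a family: one must cover a disc or annulus by the four allowed cell types while keeping, simultaneously, (i) the number of pieces bounded by a polynomial in $\rho$, $1/\sigma$ and the number of roots, (ii) each piece equipped with an honest $\hsigma$-extension inside the original $\hrho$-extension, so that the new cells are genuine complex cells admitting the required extension, and (iii) all radii being bounded holomorphic (indeed algebraic, of controlled degree) functions of the base. Controlling (ii) is delicate precisely because, as the introduction emphasizes, the conformal diameter of an annulus within its extension is not uniformly bounded; the passage from $\hrho$-normalization to $\hsigma$-normalization has to be done with enough room that shrinking the extension still separates roots, and this is the step that genuinely needs the hyperbolic estimates of Fact~\ref{fact:boundary-length} and Remark~\ref{rem:repeated-ext} rather than naive Euclidean geometry. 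A secondary obstacle is ensuring that the roots of $F_k$ in the fiber depend holomorphically and boundedly on the base after the initial base-cover: roots can collide or escape to the boundary, and it is the preliminary inductive application of CPT to the discriminant and boundary-distance data that tames this, so the logical structure of the induction must be set up carefully to avoid circularity (one applies the inductive hypothesis on the base, of strictly smaller length, to functions manufactured from the $F_k$).
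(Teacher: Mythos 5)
Your skeleton does track the paper's: reduce on the base by an inductive application of the CPT to discriminant-type data, decompose each fiber into points, discs, punctured discs and annuli uniformly over the base, use $z\mapsto z^q$-type covers for ramification, and invoke the CPrT at the end. But the step you yourself flag as ``the hard part'' is precisely where the proof's real content lies, and your proposal contains no mechanism for it. To cover the fiber minus the roots $y_1,\dots,y_\nu$ by cells whose radii are bounded holomorphic functions of the base and which admit $\hsigma$-extensions avoiding the roots, one must guarantee that the \emph{combinatorial pattern} of relative distances among the roots (which roots cluster together, which annular gaps are wide enough to insert an extended annulus) is constant over the whole cell. Fact~\ref{fact:boundary-length} and Remark~\ref{rem:repeated-ext} give no such rigidity: the ratios $|y_j-y_i|/|y_k-y_i|$ could a priori vary by many orders of magnitude over the cell, in which case no single Swiss-cheese pattern works. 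The paper's solution is to apply the fundamental lemma for $\C\setminus\{0,1\}$ (Lemma~\ref{lem:fund-C01}) to the affine invariants $s_{i,j,k}=(y_i-y_j)/(y_i-y_k)$: after refining so that $\rho=O_\ell(1/\nu^3)$, each $s_{i,j,k}$ either stays exponentially close to $\{0,1,\infty\}$ or has hyperbolic diameter $O_\ell(\nu^3\rho)$, and this dichotomy is what allows one to fix the clustering of roots into annuli at a single base point $p$ and have it persist uniformly (Lemma~\ref{lem:yj-v-yk}, Proposition~\ref{prop:clusters}). Note also that only \emph{triples} of roots may be used here; invariants of larger configurations would destroy the polynomial dependence on $\beta$. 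Without an ingredient of this strength your uniform fiberwise decomposition does not go through.

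Two further issues. First, the roots $y_j$ are multivalued over a base whose type contains $D_\circ$ or $A$; making them univalued on the $\nu!$-cover costs complexity exponential in $\nu=\poly_\ell(\beta)$, so one needs Lemma~\ref{lem:y_j-monodromy} (exploiting that $\pi_1(\cC)$ is abelian) to place each $y_j$, and each triple invariant, on a cover of degree at most $\nu$ (resp.\ $\nu^3$); your proposal never addresses monodromy of the roots over the base. Second, your ``key one-dimensional fact'' --- a Jensen/Schwarz-type zero count fed by Fact~\ref{fact:boundary-length} --- is not available uniformly over families (Jensen requires an interior lower bound on $|F|$); the actual sources of the uniform zero count are Bezout in the algebraic case and definability (Voorhoeve index and Laurent domination, Lemma~\ref{lem:voorhoeve-estimate}, Corollary~\ref{cor:uniform-indices}) in the subanalytic case, and the confinement of the zeros to the fiber, so that they form a proper cover of the base at all, is itself a theorem --- the cellular WPT --- proved by simultaneous induction with the CPT. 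Relatedly, quoting the CPrT as ``already available'' is mildly circular: in the paper the CPT is first proved with arbitrary cellular maps, the CPrT is then deduced from it together with the WPT, and only afterwards does the prepared form of the CPT follow; and your induction must run on dimension as well as length, since the cells landing in the discriminant locus (and the reduction from $M$ functions to one) produce cells of the same length but smaller dimension.
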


The proof of Theorem~\ref{thm:cpt} is given in~\secref{sec:proof-cpt}.

In~\secref{sec:uniform-families} we show that the cellular structure
of the maps essentially implies automatic uniformity over families in
the statements of the CPT and CPrT. We state the family versions of
these theorems for convenience of use, to avoid having to make such
reductions on numerous occasions.

\subsection{Topology and hyperbolic geometry of complex cells}
\label{sec:cell-topology-geometry}

A complex cell $\cC$ is homotopy equivalent to a product of points
(for fibers $*,D$) and circles (for fibers $D_\circ,A$) by the map
$\vz_i\to\Arg\vz_i$. Thus $\pi_1(\cC)\simeq\prod G_i$ where $G_i$ is
trivial for $*,D$ and $\Z$ for $D_\circ,A$. We let $\gamma_i$ denote
the generator of $G_i$ chosen with positive complex orientation for
$G_i=\Z$ and $\gamma_i=e$ otherwise.

\begin{Def}\label{def:assoc-monom}
  Let $f:\cC\to\C\setminus\{0\}$ be continuous. We define the monomial
  associated with $f$ to be $\vz^{\valpha(f)}$ where
  \begin{equation}
    \valpha_i(f) = f_*\gamma_i \in \Z\simeq \pi_1(\C\setminus\{0\}).
  \end{equation}
\end{Def}

It is easy to verify that $f\mapsto z^{\valpha(f)}$ is a group homomorphism
from the multiplicative group of continuous maps
$f:\cC\to\C\setminus\{0\}$ to the multiplicative group of monomials,
which sends each monomial to itself.

For any hyperbolic Riemann surface $X$ we denote by
$\dist(\cdot,\cdot;X)$ the hyperbolic distance on $X$
(see~\secref{sec:fund-lemmas} for a reminder on this topic). We use
the same notation when $X=\C$ and $X=\R$ to denote the usual Euclidean
distance, and when $X=\C P^1$ to denote the Fubini-Study metric
normalized to have diameter $1$. For $x\in X$ and $r>0$ we denote by
$B(x,r;X)$ the open $r$-ball centered $x$ in $X$. For $A\subset X$ we
denote by $B(A,r;X)$ the union of $r$-balls centered at all points of
$A$.

If $f:\cC^\hrho\to\C\setminus\{0\}$ is a bounded holomorphic map then
we may decompose it as $f=\vz^{\valpha(f)}\cdot U(\vz)$, where
$U:\cC^\hrho\to\C\setminus\{0\}$ is a holomorphic map and
$\valpha(U)=0$. It follows that the branches of
$\log U:\cC^\hrho\to\C$ are univalued. The following lemma shows that
$U$ enjoys strong boundedness properties when restricted to $\cC$.

\begin{Lem}[Monomialization lemma]\label{lem:monomial}
  Let $0<\rho<\infty$ and let $f:\cC^\hrho\to\C\setminus\{0\}$ be a
  bounded holomorphic map. Then $f=\vz^{\valpha(f)}\cdot U(\vz)$ 
   where $\log U:\cC\to\C$ is univalued and bounded.

  If $\cC^\hrho$ and $f$ vary in a definable family $\Lambda$ then $|\valpha(f)|=O_\Lambda(1)$  and
  \begin{align}\label{eq:monom-lem-analytic}
  \diam(\log U(\cC);\C) &= O_\Lambda(\rho), & \diam(\Im\log U(\cC);\R) &= O_\Lambda(1).
  \end{align}
 If $\cC^\hrho,f$ are algebraic of
  complexity $\beta$ then $|\valpha(f)|=\poly_\ell(\beta)$ and
  \begin{align}
    \diam(\log U(\cC);\C) &< \poly_\ell(\beta)\cdot\rho, & \diam(\Im\log U(\cC);\R) &< \poly_\ell(\beta).
  \end{align}
\end{Lem}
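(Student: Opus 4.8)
\emph{Outline of the argument I would give.} The factorization $f=\vz^{\valpha(f)}\cdot U$ with $\valpha(U)=0$ is immediate from the homomorphism property recorded just before the lemma. Since $\valpha(U)=0$, the monodromy of the branches of $\log U$ around each generator $\gamma_i$ of $\pi_1(\cC^\hrho)\simeq\prod G_i$ is trivial, so $\log U\colon\cC^\hrho\to\C$ is univalued. For boundedness of $\log U$ on $\cC$: along fibers of type $D,A$ the closure of the corresponding fiber of $\cC$ is compactly contained in that of $\cC^\hrho$, so $\cC$ is relatively compact in $\cC^\hrho$ away from the punctures of the $D_\circ$-fibers; near such a puncture $\vz_i=0$, boundedness of $f$ and the removable singularity theorem show that $f$ extends holomorphically in $\vz_i$ across $\vz_i=0$ with a zero of order exactly $\valpha_i$ (the winding number equals the order of the zero), so $\vz_i^{-\valpha_i}f$, and hence $U$, extends holomorphically and non-vanishingly across the puncture. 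Thus $\log U$ extends continuously to the compact ``puncture-completed'' closure of $\cC$ and is bounded there.

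Next comes the bound on $|\valpha(f)|$. In a definable family each $\valpha_i(f)$ is an integer-valued definable function of the parameter (computed e.g.\ by a winding-number integral), hence by o-minimality takes finitely many values, giving $|\valpha(f)|=O_\Lambda(1)$. In the algebraic case $\valpha_i$ is, for a $D_\circ$-fiber, the order of vanishing at $0$ of the restriction of $f$ to a generic $\vz_i$-line, and for an $A$-fiber the net number of zeros of such a restriction inside its inner boundary circle; in either case a B\'ezout/resultant estimate bounds it by $\poly_\ell(\beta)$.

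For the quantitative estimates I would induct on the length $\ell$, treating one fiber at a time, so that the inductive step reduces matters to the one-variable situation in which $\cF\in\{D,D_\circ,A\}$ and $g:=\log U|_{\cF^\hrho}$ is a single holomorphic function (a definable family of such in the family version). The geometric input is that $\cF$ has hyperbolic diameter $O(\rho)$ inside $\cF^\hrho$ --- this is precisely what the normalization of the $\hrho$-extension in \eqref{eq:rho-ext-def} is designed to give, and it follows from Fact~\ref{fact:boundary-length} and the explicit computations of \secref{sec:explicit-consts}. Since $f$ is bounded and the monomial $\vz^{\valpha(f)}$ is bounded away from $0$ and $\infty$ on $\cF^\hrho$ in the cases that matter (using $\valpha_i\ge0$ for $D_\circ$-fibers, where the zero of $f$ at the puncture compensates), one has $\Re g=\log|U|\le B$ on $\cF^\hrho$ for a finite $B$, so $g$ maps $\cF^\hrho$ into the half-plane $\{\Re w<B\}$. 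By the Schwarz--Pick lemma (Lemma~\ref{lem:schwarz-pick}) the hyperbolic displacement of $g(\cF)$ in this half-plane is at most the hyperbolic diameter of $\cF$ in $\cF^\hrho$, i.e.\ $O_\Lambda(\rho)$. Converting this to the Euclidean estimate $\diam(g(\cF);\C)=O_\Lambda(\rho)$ uses that $g(\cF)$ lies in a sub-region of the half-plane on which the Poincar\'e density is bounded below, of width the oscillation of $\log|U|$ on $\cF$; equivalently one runs a Borel--Carath\'eodory estimate on $\cF$ inside $\cF^\hrho$. Summing the $O(\rho)$ contributions over the $\ell=O_\Lambda(1)$ coordinates gives $\diam(\log U(\cC);\C)=O_\Lambda(\rho)$, and $\poly_\ell(\beta)\cdot\rho$ in the algebraic case (the $\poly_\ell(\beta)$ arising from $|\valpha|$ and from the oscillation bound via degree estimates). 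Finally $\diam(\Im\log U(\cC);\R)=O_\Lambda(1)$ (resp.\ $\poly_\ell(\beta)$) needs a separate argument, valid also for large $\rho$: along each boundary circle of a fiber of $\cC$ the function $\arg U$ returns to its initial value because $\valpha(U)=0$, so its total variation there is at most $\pi$ times the number of critical points of $\log|U|$ on the circle, which is $O_\Lambda(1)$ (resp.\ $\poly_\ell(\beta)$) by a valence/B\'ezout count; summing over coordinates gives the bound.

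The step I expect to be the main obstacle is the conversion, in the one-variable case, from the Schwarz--Pick hyperbolic-displacement bound to the Euclidean bound on $\diam(g(\cF);\C)$: a priori $g$ could push $\cF$ deep into the half-plane $\{\Re w<B\}$ where the Poincar\'e metric degenerates, and controlling how far it goes --- equivalently, bounding the ratio $\sup_{\cF^\hrho}|U|/\inf_{\cF}|U|$ --- is where definability (resp.\ the degree bounds) must be used essentially; pure boundedness of $f$ is insufficient, as non-definable one-parameter families show. A secondary technical point is that $\cC$ is not relatively compact in $\cC^\hrho$ along $D_\circ$-fibers, and that a coordinate-wise path joining two points of $\cC$ must be kept inside $\cC^\hrho$; both are handled using the removable-singularity observation above and the boundedness of the radii defining the cell.
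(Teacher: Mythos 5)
Your factorization step, the univaluedness of $\log U$, and the overall plan (induct on the length, reduce to a one--variable statement, triangle inequality) do match the paper's structure; the paper's induction fixes the last coordinate at the outer radius and then monomializes the radius itself, but that is a detail. The genuine gap is in the one--variable quantitative step, and it is twofold. First, your stated geometric input --- that $\cF$ has hyperbolic diameter $O(\rho)$ inside $\cF^\hrho$ --- is false for fibers of type $D_\circ$ and $A$: Fact~\ref{fact:boundary-length} bounds only the length of each \emph{boundary circle} by $\rho$, whereas the diameter of $A(r,1)$ inside its $\delta$-extension grows like $\log\abs{\log r}$ (Remark~\ref{rem:Adelta-diameter}). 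So Schwarz--Pick applied to $g=\log U$ (or to $B-g$ into a half-plane) does not give an $O(\rho)$ displacement bound; bounding the image of the whole fiber requires first bounding the image of the skeleton (Lemma~\ref{lem:fS-hyperbolic-diam}) and then using $\partial f(\cC)\subset f(\cS(\cC))$ together with triviality of the induced map on $\pi_1$ (Lemmas~\ref{lem:boundary-vs-skeleton} and~\ref{lem:fC-hyperbolic-diam}). Second --- the obstacle you flag yourself but leave unresolved --- is precisely the heart of the lemma: one cannot stay in the half-plane $\{\Re w<B\}$, since $\log\abs{U}$ may a priori be very negative somewhere, the Poincar\'e density degenerates there, and a Borel--Carath\'eodory argument would need exactly the missing uniform control of $\sup\abs{U}/\inf\abs{U}$; moreover Borel--Carath\'eodory lives on discs and does not handle the annulus/punctured-disc topology.

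The paper closes this gap with the Voorhoeve index (Lemma~\ref{lem:voorhoeve-estimate}): any two points of $\cF^{\he{2\rho}}$ are joined by a radial segment and a concentric circle, along which the index of $U$ is bounded by some $v=O_\Lambda(1)$ (resp.\ $\poly(\beta)$ by Bezout). Normalizing $U(p)=1$ and passing to the root $W=iU^{1/(4v)}$ makes the total variation of $\arg W$ along such paths at most $\pi/2$, so $W$ maps $\cF^{\he{2\rho}}$ into $\H$; since $\H$ is simply connected, Lemma~\ref{lem:fC-hyperbolic-diam} applies and gives $\diam(W(\cF);\H)=O_\Lambda(\rho)$, and because $\log$ carries $\H$ isometrically onto a strip of width $\pi$ whose hyperbolic metric dominates a fixed multiple of the Euclidean one, this converts directly into $\diam(\log W(\cF);\C)=O_\Lambda(\rho)$ and $\diam(\Im\log W(\cF);\R)<\pi$; the bounds for $\log U=4v\,\log W+\const$ follow with the factor $v$ absorbed into the constants. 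This trick simultaneously handles the imaginary part, for which your separate boundary-circle variation count would not control the interior of an annulus. A smaller issue: your bound $\abs{\valpha(f)}=O_\Lambda(1)$ via ``the winding number is an integer-valued definable function of the parameter'' is shaky (the winding-number integral is not obviously $\R_\an$-definable, and continuity in the parameter needs an argument); the paper instead bounds $\abs{\valpha_i(f)}$ by the Voorhoeve index along a concentric circle, i.e.\ by counting solutions of $\arg f=\alpha$, which o-minimality (resp.\ Bezout) bounds uniformly.
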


The proof of Lemma~\ref{lem:monomial} is given
in~\secref{sec:proof-monom}.

In light of the monomialization lemma, the CPT can be viewed as a
cellular analog of the monomialization of functions/ideals in the
theory of resolution of singularities. Indeed, if $\cC^\hrho$ is a
cell compatible with $F$ then either $F$ vanishes identically or
$F:\cC^\delta\to\C\setminus\{0\}$, in which case $F$ is equivalent to
$\vz^{\valpha(f)}$ up to a unit on $\cC$; hence the cells constructed
in the CPT may be viewed as ``cellular charts'' where $F_1,\ldots,F_M$
are monomialized.

We now give several results on the geometry of holomorphic maps from
cells to hyperbolic Riemann surfaces. We begin with the following
\emph{domination lemma}, which is valid for arbitrary cellular
extensions. In this paper we never use this lemma directly: instead,
we use the finer \emph{fundamental lemmas} stated later, which are
valid only for extensions with a sufficiently small $\rho$. However we
still state and prove the domination lemma to stress another line of
close analogy between complex cells and resolution of singularities.

\begin{Lem}[Domination Lemma]\label{lem:domination}
  Let $\cC^\hrho$ be a complex cell and suppose\footnote{Note that if
    $\cC$ admits a $\hrho$-extension for $\rho<2e$ then it also admits
    $\he{2e}$ extension.} that $\rho\ge2e$. Let
  $f:\cC^\hrho\to\C\setminus\{0,1\}$ be holomorphic. Then on $\cC$ one
  of the following holds:
  \begin{align}\label{eq:dom-lemma}
    |f| &= O_\ell(\log\log\rho) & &\text{or} & |1/f| &= O_\ell(\log\log\rho).
  \end{align}
\end{Lem}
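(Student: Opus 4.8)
The plan is to exploit that $f$ takes values in the thrice-punctured sphere $Y:=\C\setminus\{0,1\}$, to endow it with its hyperbolic metric $\mu$, and to combine the Schwarz--Pick lemma with the hyperbolic control on $\cC$ inside $\cC^\hrho$ furnished by Fact~\ref{fact:boundary-length}. The relevant input about $Y$ is classical: near each of its three punctures $\{0,1,\infty\}$ the metric $\mu$ is comparable, in a local holomorphic coordinate $w$ vanishing at that puncture, to the punctured-disc metric $|dw|/(|w|\,\log(1/|w|))$. Fixing a compact core $Y_0\subset Y$ lying in the thick part and bounded away from $0,1,\infty$, this gives that the $\mu$-distance from $Y_0$ to the level $\{|w|=t\}$ of a puncture coordinate is $\Theta(\log\log(1/t))$; in particular the ``neck'' joining any two of the three cusps has $\mu$-length tending to infinity, so a connected subset of $Y$ of bounded $\mu$-diameter cannot approach two distinct punctures.

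First I would estimate the hyperbolic size of $\cC$ inside $\cC^\hrho$. Since $\rho\ge 2e$ the $\hrho$-extension exists, and by induction on the length $\ell$, using the fibered structure of a cell, it suffices to treat one fiber at a time: for a fiber of type $D$ the conformal diameter of $\cF$ inside $\cF^\hrho$ is, by the explicit computation behind Fact~\ref{fact:boundary-length} (see~\secref{sec:explicit-consts}), of order $\log\rho$; fibers of type $*$ contribute nothing; and for fibers of type $D_\circ,A$ --- where $\cF$ is not relatively compact in $\cF^\hrho$ --- one first uses the great Picard theorem to see that $f$ extends (meromorphically) across the puncture, reducing to the previous cases, or else argues directly with the boundary loop, whose $\mu$-image has length $\le\rho$ by Fact~\ref{fact:boundary-length}. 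This bounds the hyperbolic diameter of $\cC$ in $\cC^\hrho$ by some $D=O_\ell(\log\rho)$. By the Schwarz--Pick lemma (Lemma~\ref{lem:schwarz-pick}), $f$ followed by the inclusion $\cC\hookrightarrow\cC^\hrho$ is $\mu$-nonexpanding on $\cC$, so $f(\cC)$ is a connected subset of $Y$ of $\mu$-diameter at most $D$.

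Finally I would run a dichotomy on the position of the connected set $f(\cC)$. By the first paragraph it cannot approach two of the punctures, hence it lies within $\mu$-distance $D+O(1)$ of a single puncture $\pi$ or inside $B(Y_0,D+O(1);Y)$. Replacing $f$ by one of the six anharmonic substitutions $f,\,1-f,\,1/f,\,\ldots$ --- each again a holomorphic map $\cC^\hrho\to Y$, and permuting the cusps --- we may assume $\pi\in\{0,\infty\}$ or that $f(\cC)$ meets the core. When $\pi=0$ the punctured-disc asymptotics force $|f|$ to stay below the level at $\mu$-distance $D$ from $Y_0$, so $|f|\le\exp(-e^{-O(D)})\le 1$, which is certainly $O_\ell(\log\log\rho)$ since $\rho\ge 2e$; the cusp $\infty$ gives the symmetric bound on $|1/f|$. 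The heart of the matter --- and the step I expect to be the main obstacle --- is the remaining quantitative bookkeeping: to keep the bound at the claimed order $O_\ell(\log\log\rho)$ one must carefully exploit that the $\mu$-length element near a cusp is \emph{logarithmic}, so that $\mu$-distances are double logarithms of the puncture coordinate, and one must dispose of the ``straddling'' configuration in which $f(\cC)$ runs from the core out toward a cusp; this is also where the care needed for the $D_\circ,A$ fibers re-enters.
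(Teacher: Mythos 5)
The central gap is in your first step. For a fiber of type $D$ the hyperbolic diameter of $\cF$ inside $\cF^\hrho$ is indeed of order $\log\rho$, but for fibers of type $D_\circ$ and $A$ it is infinite, respectively unbounded in terms of $\rho$: by Remark~\ref{rem:Adelta-diameter} the diameter of an annulus $A(r,1)$ inside its extension grows like $\log|\log r|$, so for fixed $\rho$ it can be arbitrarily large. Hence your claim that $\cC$ has hyperbolic diameter $O_\ell(\log\rho)$ in $\cC^\hrho$, and with it the Schwarz--Pick bound on $\diam(f(\cC);Y)$, fails exactly in the cases that make the lemma nontrivial; and neither of your fixes repairs this. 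The great Picard theorem removes punctures but says nothing about annulus fibers, while ``arguing directly with the boundary loop'' is not by itself an argument: a boundary whose image has hyperbolic length at most $\rho$ does not control the interior image unless one first shows $f_*\pi_1(\cC)=\{e\}$ (e.g.\ the inclusion of a thick annulus $A(r,1)$ into a slightly larger punctured disc has short boundary image but image of diameter about $\log|\log r|$). This is precisely where the paper's proof does its work: $\partial f(\cC)\subset f(\cS(\cC))$ by an open-mapping argument (Lemma~\ref{lem:boundary-vs-skeleton}); each skeleton component has image of diameter at most $n\rho$ by Fact~\ref{fact:boundary-length} plus Schwarz--Pick (Lemma~\ref{lem:fS-hyperbolic-diam}) --- a bound linear in $\rho$ coming from boundary length, not $\log\rho$ from interior points; then a dichotomy: either $f(\cC)$ avoids a deep neighborhood of $0$ or of $\infty$ (done), or it meets deep neighborhoods of two distinct cusps, in which case every loop of $\cC$ maps into two different cusp neighborhoods and $f_*\pi_1(\cC)=\{e\}$ (Lemma~\ref{lem:trivial-homotopy}), after which one lifts to the disc and gets $\diam(f(\cC))\le 2^n n\rho$ (Lemma~\ref{lem:fC-hyperbolic-diam}). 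Fibers of type $D_\circ$ are handled at the end by exhausting them with annuli, with no Picard input.

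Second, the ``quantitative bookkeeping'' you postpone cannot come out at the order you are aiming for, so it is not bookkeeping. Since the cusp level $\{|w|=t\}$ lies at hyperbolic distance $\Theta(\log\log(1/t))$ from the thick part (this is the computation of~\secref{sec:explicit-consts}), a diameter bound $D$ localizes $|f|$ only up to a factor $e^{e^{O(D)}}$ (your displayed estimate $\exp(-e^{-O(D)})$ inverts this relation); with $D=O_\ell(\rho)$, or even your hoped-for $O_\ell(\log\rho)$, the dichotomy one obtains has threshold $e^{e^{O_\ell(\rho)}}$, not $O_\ell(\log\log\rho)$. The straddling configuration you worry about is real: for $\cC=D(1)$, $\cC^\hrho=D(1/\delta)$ with $\rho=2\pi\delta/(1-\delta^2)$, the map $f(z)=-\bigl(\tfrac{1+\delta z}{1-\delta z}\bigr)^2$ is holomorphic on $\cC^\hrho$ and omits $0,1$, yet $\sup_\cC|f|$ and $\sup_\cC|1/f|$ both equal $\left(\tfrac{1+\delta}{1-\delta}\right)^2\sim 4\rho^2/\pi^2$, which dwarfs $\log\log\rho$ for large $\rho$. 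So the literal bound $O_\ell(\log\log\rho)$ is not attainable (the values of $s,\hat s$ quoted in the paper's proof, and hence the constant in the statement, appear to invert the correct relation from~\secref{sec:explicit-consts}); the statement you should target, and what the skeleton-plus-homotopy argument actually proves, is that on $\cC$ either $|f|\le e^{e^{O_\ell(\rho)}}$ or $|1/f|\le e^{e^{O_\ell(\rho)}}$.
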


The proof of the domination lemma is given
in~\secref{sec:fund-lemmas}.

The domination lemma can be seen as a cellular analog of a standard
argument from the theory of resolution of singularities
\cite[Lemma~4.7]{bm:subanalytic}: \emph{suppose $f,g$ and $f-g$ are
  monomials (up to a unit). Then either $f$ divides $g$ or $g$ divides
  $f$}. This allows one to principalize an ideal by monomializing its
generators and their pairwise differences. To see the analogy with the
domination lemma, suppose $\cC^\hrho$ is a cell compatible with
$f,g,f-g$. Then $f/g:\cC^\hrho\to\C\setminus\{0,1\}$ and the
domination lemma implies that either $f/g$ or $g/f$ is bounded from
above (i.e. one divides the other in $\cO_b(\cC)$). Moreover, the
bound depends only on $\ell$ and $\rho$.

\begin{Rem}
  The domination lemma for one-dimensional discs immediately implies
  the Little Picard Theorem. Indeed, suppose
  $f:\C\to\C\setminus\{0,1\}$ is entire. Applying the domination lemma
  to $f\rest{D(r)}$ for every $r>0$ we see that $f$ is bounded away
  from $0$ or $\infty$ uniformly in $\C$ and therefore constant by
  Liouville's theorem.

  Similarly, the domination lemma for one-dimensional punctured discs
  implies the Great Picard Theorem. Indeed, suppose
  $f:D_\circ(1)\to\C\setminus\{0,1\}$ has an essential singularity at
  $0$.  Applying the domination lemma we see that $f$ is bounded away
  from $0$ or $\infty$ uniformly in $D_\circ(1/2)$ which is
  impossible, for instance by the removable singularity theorem.
\end{Rem}

Next, we state the more refined \emph{fundamental lemmas} on maps
$f:\cC^\hrho\to X$ into a hyperbolic Riemann surface $X$. By contrast
with the domination lemma, these lemmas yield more precise asymptotic
estimates on the image of a complex cell as $\rho$ tends to zero. The
proofs of the fundamental lemmas rely on the (hyperbolic) geometry and
topology of $X$. Rather than formulate the most general possible form,
we prefer to give three separate formulations for the most useful
cases $X=\D,\D\setminus\{0\},\C\setminus\{0,1\}$ where $\D$ denotes
the unit disc.

\begin{Lem}[Fundamental Lemma for $\D$]\label{lem:fund-D}
  Let $\cC^\hrho$ be a complex cell. Let $f:\cC^\hrho\to\D$ be
  holomorphic. Then
  \begin{equation}
    \diam(f(\cC);\D)=O_\ell(\rho).
  \end{equation}
\end{Lem}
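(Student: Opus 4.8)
The strategy is induction on the length $\ell$ of the cell $\cC$, exploiting the fact that each fiber of $\cC^\hrho$ over its base is one of the four basic domains $*, D, D_\circ, A$ sitting inside its own $\hrho$-extension, and that the hyperbolic diameter of such a fiber inside its extension is $O(\rho)$ by Fact~\ref{fact:boundary-length} (or rather, by the explicit computations behind it together with the definition~\eqref{eq:rho-ext-def} of the $\hrho$-extension). First I would fix a basepoint $z^0 = (z^0_{1..\ell})\in\cC$ and a target point $w = (w_{1..\ell})\in\cC$, and bound $\dist(f(z^0),f(w);\D)$. I would interpolate along a path that changes one coordinate at a time: move from $z^0$ to $(w_{1..\ell-1}, z^0_\ell)$ within the slice $\cC_{1..\ell-1}\times\{z^0_\ell\}$, then move the last coordinate. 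The second leg is a path inside a single fiber $\cF(w_{1..\ell-1})$, whose hyperbolic diameter inside $\cF^{\hrho}(w_{1..\ell-1})$ is $O(\rho)$; restricting $f$ to the one-dimensional slice $\{w_{1..\ell-1}\}\times\cF^\hrho(w_{1..\ell-1})\to\D$ and applying the Schwarz--Pick lemma (Lemma~\ref{lem:schwarz-pick}), this contributes $O(\rho)$ to the hyperbolic distance in $\D$.

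The subtlety is that the slice $\cC_{1..\ell-1}\times\{z^0_\ell\}$ need not itself be (the real/positive part of) a cell of length $\ell-1$ — the fiber data of $\cC$ depend on $\vz_{1..\ell-1}$, so freezing $\vz_\ell$ does not obviously land us in the inductive setup. The clean way around this is to phrase the induction differently: I would prove the statement for the map $F := f$ restricted to the base $\cC_{1..\ell-1}^{\he{\rho}}$, where for each $\vz_{1..\ell-1}$ we do not freeze $\vz_\ell$ but instead compose $f$ with a holomorphic section of the fiber $\cF$. Concretely, using that $\cF$ is $D(r), D_\circ(r)$ or $A(r_1,r_2)$ with holomorphic bounded radii, there is a holomorphic map $\sigma:\cC_{1..\ell-1}^\hrho\to\cC^\hrho$ (for instance $\vz_\ell \mapsto c\cdot r(\vz_{1..\ell-1})$ for a suitable fixed constant $c$, and likewise in the annular case) with $\sigma(\cC_{1..\ell-1})\subset\cC$; then $f\circ\sigma:\cC_{1..\ell-1}^\hrho\to\D$ is a holomorphic map from a cell of length $\ell-1$ into $\D$, so by induction $\diam(f\circ\sigma(\cC_{1..\ell-1});\D)=O_{\ell-1}(\rho)$. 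Combined with the fiberwise Schwarz--Pick estimate above — which shows that for each fixed $\vz_{1..\ell-1}\in\cC_{1..\ell-1}$ the image of the whole fiber $f(\{\vz_{1..\ell-1}\}\times\cF(\vz_{1..\ell-1}))$ lies within hyperbolic distance $O(\rho)$ of $f(\sigma(\vz_{1..\ell-1}))$ — the triangle inequality in $\D$ gives $\diam(f(\cC);\D)=O_\ell(\rho)$, and tracking the additive constant through the induction keeps the bound of the form $O_\ell(\rho)$.

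The base case $\ell=0$ is trivial ($\cC$ is a point, diameter $0$). The one-dimensional fiberwise input — that each of $D\subset D^\hrho$, $D_\circ\subset D_\circ^\hrho$, $A\subset A^\hrho$ has hyperbolic diameter $O(\rho)$ in its extension — is exactly what the choice of normalization~\eqref{eq:rho-ext-def} is designed to provide, and is verified by the explicit computations of~\secref{sec:explicit-consts}; for $D\subset D^\hrho$ one computes the hyperbolic diameter of a disc of radius $\delta$ inside the unit disc, and for the annular and punctured-disc cases one uses the logarithmic coordinate turning them into a strip/half-strip where the computation is elementary. I expect the main obstacle to be purely organizational: setting up the induction so that the section $\sigma$ exists with holomorphic bounded radii on the \emph{extension} $\cC_{1..\ell-1}^\hrho$ (not just on $\cC_{1..\ell-1}$), and checking that the two estimates — the inductive one along the base and the Schwarz--Pick one along the fiber — patch together uniformly, i.e. the $O(\rho)$ from the fiber is genuinely independent of the base point. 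Once the bookkeeping is arranged, each individual step is short.
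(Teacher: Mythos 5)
There is a genuine gap, and it sits at the heart of your argument: the claim that each fiber has hyperbolic diameter $O(\rho)$ inside its $\hrho$-extension is false except for fibers of type $D$ (and trivially $*$). Fact~\ref{fact:boundary-length} bounds only the hyperbolic \emph{length of a boundary circle} of $\cF$ in $\cF^\hrho$, not the diameter of $\cF$ itself. For an annulus fiber $A=A(r_1,r_2)$ the diameter of $A$ in $A^\hrho$ grows like $\log\abs{\log (r_1/r_2)}$ (this is exactly Remark~\ref{rem:Adelta-diameter}, and it is the hyperbolic obstruction discussed in the introduction), and for a punctured-disc fiber the diameter of $D_\circ(r)$ in $D_\circ^\hrho(r)$ is infinite, since the puncture lies at infinite hyperbolic distance. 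Consequently your fiberwise Schwarz--Pick step only yields $\diam\bigl(f(\{\vz_{1..\ell-1}\}\times\cF);\D\bigr)\le\diam(\cF;\cF^\hrho)$, which is unbounded in precisely the cases that carry the content of the lemma; the induction-plus-section scheme along the base is fine, but the fiber estimate it is meant to be patched with does not exist. Your plan would prove the statement only for cells whose type consists of $*$ and $D$ fibers.

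The mechanism that actually makes the lemma true uses the target $\D$ in an essential way, not just Schwarz--Pick on slices. One first bounds the image of the \emph{skeleton} $\cS(\cC)$ (the product of boundary circles): each boundary circle has hyperbolic length at most $\rho$ in $\cF^\hrho$ by Fact~\ref{fact:boundary-length}, so by Schwarz--Pick and an induction over coordinates each skeleton component has image of diameter $O_\ell(\rho)$ (Lemma~\ref{lem:fS-hyperbolic-diam}). Then an open-mapping-theorem argument shows $\partial f(\cC)\subset f(\cS(\cC))$ (Lemma~\ref{lem:boundary-vs-skeleton}), and a plane-topology/geodesic argument (connectedness of the relevant boundary curve via Mayer--Vietoris, then bounding the distance of two interior points by the diameter of the boundary they must cross) shows that $\diam(f(\cC);\D)$ is controlled by the diameter of $f(\cS(\cC))$, up to a factor $2^\ell$ accounting for the number of skeleton components (Lemma~\ref{lem:fC-hyperbolic-diam}, whose monodromy hypothesis is vacuous here since $\pi_1(\D)$ is trivial); fibers of type $D_\circ$ are handled by exhausting them with annuli $A(\e r,r)$. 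In other words, the image of a fat annulus fiber is small not because the fiber is hyperbolically small, but because the image's boundary is trapped in the small images of the two boundary circles and the image is then ``filled in''. Without this boundary/filling step your triangle-inequality patching cannot close.
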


\begin{Lem}[Fundamental Lemma for $\D\setminus\{0\}$]\label{lem:fund-Dcirc}
  Let $\cC^\hrho$ be a complex cell and $0<\rho<1$. Let
  $f:\cC^\hrho\to\D\setminus\{0\}$ be holomorphic. Then one of the
  following holds:
  \begin{align}
    f(\cC)&\subset B(0,e^{-\Omega_\ell(1/\rho)};\C) & &\text{or} & \diam(f(\cC);\D\setminus\{0\})&=O_\ell(\rho).
  \end{align}
  In particular, one of the following holds:
  \begin{align}
    \log |f(\cC)| &\subset (-\infty,-\Omega_\ell(1/\rho)) & &\text{or} & \diam(\log|\log|f(\cC)||;\R)&=O_\ell(\rho).
  \end{align}
\end{Lem}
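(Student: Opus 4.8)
The plan is to reduce the statement to the Fundamental Lemma for $\D$ (Lemma~\ref{lem:fund-D}) by passing to the universal cover. Recall that the universal cover of $\D\setminus\{0\}$ is the half-plane, realized conformally as $\H:=\{\Re w<0\}$ via the covering map $\pi\colon w\mapsto e^{w}$, so that $|e^w|=e^{\Re w}$ and $\H$ is equipped with the hyperbolic metric $|dw|/|\Re w|$. Given $f\colon\cC^\hrho\to\D\setminus\{0\}$, I would first lift it to a holomorphic map $\tilde f\colon\cC^\hrho\to\H$ with $\pi\circ\tilde f=f$. This lift exists and is single-valued because the monomial $\vz^{\valpha(f)}$ carries all the monodromy of $f$ around the $D_\circ,A$-fibers: by the Monomialization Lemma (Lemma~\ref{lem:monomial}) we write $f=\vz^{\valpha(f)}\cdot U$ with $\log U\colon\cC^\hrho\to\C$ univalued, and then $\log f=\langle\valpha(f),\log\vz\rangle+\log U$; the only remaining obstruction is that $\log\vz_i$ is multivalued on fibers of type $D_\circ,A$. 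To kill this I would precompose with a cellular cover unwinding each such fiber — more precisely, since $|\valpha(f)|=O_\ell(1)$ (or $\poly_\ell(\beta)$), I can pass to a bounded-size cellular cover on which $\vz^{\valpha(f)}$ (hence $f$) becomes monodromy-free, so that $\log f$ and therefore $\tilde f=\log f$ (composed with the conformal identification of $\C_{<0}$-valued maps) is single-valued. On each piece of this cover the $\hrho$-extension is preserved up to the usual polynomial loss in $\rho^{-1}$.

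Next I would apply Lemma~\ref{lem:fund-D} to $\tilde f$, but $\H$ is not the unit disc, so one composes with a Möbius uniformization $\psi\colon\H\to\D$; this is an isometry for the respective hyperbolic metrics, so Lemma~\ref{lem:fund-D} gives $\diam(\tilde f(\cC);\H)=O_\ell(\rho)$. Now the dichotomy emerges from the geometry of $\H$: fix a point $w_0=\tilde f(z_0)\in\tilde f(\cC)$. Either $\Re w_0<-c/\rho$ for a suitable constant $c=c(\ell)$, or $\Re w_0\ge -c/\rho$.

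\begin{itemize}
\item In the first case, every point $w$ of $\tilde f(\cC)$ lies within hyperbolic distance $O_\ell(\rho)$ of $w_0$; since the hyperbolic ball of radius $R$ about a point with $\Re w_0=-t$ is contained in the Euclidean region $\{\Re w<-t e^{-R}\}$ (the vertical coordinate scales by at most $e^{R}$), with $R=O_\ell(\rho)$ we get $\Re w<-\tfrac12 \cdot c/\rho$ say, i.e. $|f(\cC)|=|e^{\tilde f(\cC)}|<e^{-\Omega_\ell(1/\rho)}$, which is the first alternative $f(\cC)\subset B(0,e^{-\Omega_\ell(1/\rho)};\C)$.
\item In the second case $\Re w_0$ is bounded below by $-c/\rho$, and combined with the hyperbolic-diameter bound this confines $\tilde f(\cC)$ to a region where the hyperbolic and ``cylindrical'' metrics on $\D\setminus\{0\}\cong\H/2\pi i\Z$ are comparable; projecting back down, $\diam(f(\cC);\D\setminus\{0\})=O_\ell(\rho)$, the second alternative.
\end{itemize}

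Finally, the ``in particular'' clause is obtained by unwinding definitions: $\log|f|=\Re\tilde f$, so in the first alternative $\log|f(\cC)|\subset(-\infty,-\Omega_\ell(1/\rho))$, while in the second, the hyperbolic metric on $\D\setminus\{0\}$ near the puncture behaves (up to bounded factors) like $|d\zeta|/(|\zeta|\,|\log|\zeta||)$, so bounded hyperbolic diameter translates into $\diam(\log|\log|f(\cC)||;\R)=O_\ell(\rho)$ after the change of variables $\zeta\mapsto\log|\log|\zeta||$; this is precisely the elementary estimate already used in Remark~\ref{rem:Adelta-diameter} and in the construction of $\hrho$-extensions via \eqref{eq:rho-ext-def}, second line.

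\textbf{Expected main obstacle.} The conceptual core — reducing to Lemma~\ref{lem:fund-D} via the half-plane cover — is clean. The delicate point is making the passage to a single-valued lift genuinely uniform: one must ensure that the cellular cover used to unwind the $D_\circ,A$ monodromy of $\vz^{\valpha(f)}$ has size and complexity bounded only in terms of $\ell$ (resp. $\beta$) and does not degrade the $\hrho$-extension beyond a polynomial loss in $\rho^{-1}$, so that the constants in $O_\ell(\rho)$ and $\Omega_\ell(1/\rho)$ remain of the asserted form after Remark~\ref{rem:repeated-ext}. Keeping track of how the hyperbolic ball of controlled radius sits inside the Euclidean half-plane — i.e. the quantitative comparison between $\diam(\cdot;\H)$ and the Euclidean $\Re$-coordinate — is the one estimate that must be done carefully rather than cited, but it is an elementary computation with the explicit metric $|dw|/|\Re w|$.
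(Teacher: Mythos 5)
Your reduction hinges on producing a single-valued lift $\tilde f:\cC^\hrho\to\H$ of $f$ through the universal cover of $\D\setminus\{0\}$, and this is exactly where the argument breaks. Such a lift exists if and only if $f_*\pi_1(\cC)$ is trivial, i.e. $\valpha(f)=0$. When $\valpha(f)\neq0$ you propose to remove the monodromy by precomposing with a bounded-size cellular cover, but no such cover exists: pulling back by the covering maps $R_\nu$ multiplies the associated monomial (it becomes $\vz^{\nu\valpha}$ in the new coordinates) rather than killing it, and killing it outright would force the covering cells to be simply connected over the $D_\circ,A$ coordinates, i.e. would amount to covering an annulus $A(\e,1)$ by discs with controlled extensions, which needs on the order of $\log|\log\e|$ pieces --- precisely the hyperbolic obstruction described in~\secref{sec:parametrization-uniformity}. (Note also that the bound $|\valpha(f)|=O_\ell(1)$ you quote from Lemma~\ref{lem:monomial} is only available for definable families or algebraic data, whereas the present lemma assumes neither.) The failure is not cosmetic: the small-ball alternative occurs exactly when the winding is nontrivial. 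For instance $\cC=A(\e,1)$ with $f(\vz_1)=\delta\vz_1/2$ admits no lift, its image has hyperbolic diameter in $\D\setminus\{0\}$ tending to infinity as $\e\to0$, and it satisfies the lemma only through the first alternative; your scheme produces no conclusion at all in this case, which is the only case where the first alternative is needed.

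The paper closes this gap by a quantitative topological step rather than a lift: take the point $z$ of maximal modulus in $f(\bar\cC)$, which by Lemma~\ref{lem:boundary-vs-skeleton} lies in the image $f(S)$ of a skeleton component; by Lemma~\ref{lem:fS-hyperbolic-diam}, $\diam(f(S);\D\setminus\{0\})=O_\ell(\rho)$; compare this with the length $\Theta(1/|\log|z||)$ of the shortest non-contractible loop through $z$ (Lemma~\ref{lem:shortest-loop}). Either $\log|z|=-\Omega_\ell(1/\rho)$, and then all of $f(\cC)$ lies in the small ball by maximality of $|z|$; or $f(S)$ cannot carry a non-contractible loop, and since $\pi_1(S)\to\pi_1(\cC)$ is onto, $f_*\pi_1(\cC)=\{e\}$, so the lift to the universal cover does exist and Lemma~\ref{lem:fC-hyperbolic-diam} (essentially your liftable case) gives the diameter bound. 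Your handling of the liftable case and of the ``in particular'' clause is fine; what is missing is the argument deciding when the lift exists, and that is the heart of the lemma.
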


\begin{Lem}[Fundamental Lemma for $\C\setminus\{0,1\}$]\label{lem:fund-C01}
  Let $\cC^\hrho$ be a complex cell and let
  $f:\cC^\hrho\to\C\setminus\{0,1\}$ be holomorphic. Then one of the
  following holds:
  \begin{align}\label{eq:fund-C01}
    f(\cC)&\subset B(\{0,1,\infty\},e^{-\Omega_\ell(1/\rho)};\C P^1)& &\text{or} & \diam(f(\cC);\C\setminus\{0,1\})&=O_\ell(\rho).
  \end{align}
\end{Lem}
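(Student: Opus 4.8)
The plan is to exploit that $X:=\C\setminus\{0,1\}$ is a hyperbolic Riemann surface with universal cover $\D$ and exactly three cusps, at $0,1,\infty$, and to reduce the statement to the already-established Fundamental Lemmas for $\D$ and $\D\setminus\{0\}$ (Lemmas~\ref{lem:fund-D} and~\ref{lem:fund-Dcirc}). The conclusion is vacuous once $\rho$ exceeds a fixed universal constant, since then $B(\{0,1,\infty\},e^{-\Omega_\ell(1/\rho)};\C P^1)$ already contains all of $X$; so I may assume throughout that $\rho$ is smaller than any prescribed universal constant. Fix once and for all a universal constant $\mu_0>0$ lying below both the length of the shortest closed geodesic of $X$ and the Margulis constant of $\H$. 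Then any loop in $X$ of length $<\mu_0$ is either null-homotopic or peripheral, that is, freely homotopic to a small loop surrounding one of the three cusps: the $\mu_0$-thin part of $X$ contains no Margulis tubes (there are no short geodesics), hence reduces to the three cuspidal ends.

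The first and main step is to control the monodromy homomorphism $f_*\colon\pi_1(\cC^\hrho)\to\pi_1(X)$. Since $\pi_1(\cC^\hrho)\simeq\prod_i G_i$ is abelian and $\pi_1(X)$ is free of rank two, the image $f_*(\pi_1(\cC^\hrho))$ is a cyclic subgroup $\langle\gamma\rangle$. I would then show that, once $\rho<\mu_0$, every generator $\gamma_i$ attached to a fiber of $\cC$ of type $D_\circ$ or $A$ is freely homotopic inside $\cC^\hrho$ to a loop of length $<\mu_0$: for a $D_\circ$-fiber one takes a circle lying deep in the punctured fiber $\cF^\hrho$, whose hyperbolic length tends to $0$; for an $A$-fiber one takes the core geodesic of the wide annular fiber $\cF^\hrho$, whose length is $\le 2\rho$. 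In both cases Fact~\ref{fact:boundary-length}, together with the product structure of $\cC^\hrho$, bounds the length of a representative of $\gamma_i$ in $\cC^\hrho$. As $f$ does not increase distances (for the Kobayashi metric on $\cC^\hrho$ and the hyperbolic metric on $X$), the loop $f(\gamma_i)$ then has length $<\mu_0$ in $X$, hence is null-homotopic or peripheral. Since $\langle\gamma\rangle$ is cyclic and generated by the elements $f_*\gamma_i$, and peripheral elements about distinct cusps of a free Fuchsian group do not commute, $\gamma$ is itself either trivial or peripheral about a single cusp $p$.

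Now I split into two cases. If $\gamma=e$, then $f$ lifts to a holomorphic map $\hat f\colon\cC^\hrho\to\D$; Lemma~\ref{lem:fund-D} gives $\diam(\hat f(\cC);\D)=O_\ell(\rho)$, and since $\D\to X$ is a local isometry, hence distance non-increasing, $\diam(f(\cC);X)\le\diam(\hat f(\cC);\D)=O_\ell(\rho)$, which is the second alternative. If $\gamma$ is peripheral about $p$, I may assume $p=0$ after composing $f$ with a M\"obius transformation permuting $\{0,1,\infty\}$ (an isometry of $X$). Writing $\gamma=\pi^k$ with $\pi$ the primitive parabolic fixing the cusp, the intermediate cover $X_\gamma:=\D/\langle\gamma\rangle$ is isometric to $\D\setminus\{0\}$ with its cusp at $0$, and near the cusp of $X_\gamma$ the covering $X_\gamma\to X$ takes the form $q\mapsto q^k$ onto a cuspidal neighborhood of $0$ in $X$. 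The map $f$ lifts to $\hat f\colon\cC^\hrho\to X_\gamma\cong\D\setminus\{0\}$, and Lemma~\ref{lem:fund-Dcirc} (applicable since $\rho<1$) gives two possibilities. Either $\hat f(\cC)\subset B(0,e^{-\Omega_\ell(1/\rho)};\C)$, in which case the $k$-fold map only shrinks the radius and places $f(\cC)$ within $e^{-\Omega_\ell(1/\rho)}$ of the cusp $0$; comparing the cuspidal coordinate with the ambient one and absorbing the resulting universal factors into $\Omega_\ell$ (using that $\rho$ is small) puts $f(\cC)$ inside $B(\{0,1,\infty\},e^{-\Omega_\ell(1/\rho)};\C P^1)$ --- the first alternative. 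Or $\diam(\hat f(\cC);\D\setminus\{0\})=O_\ell(\rho)$, which equals $\diam(\hat f(\cC);X_\gamma)$ under the isometry and transfers through the covering $X_\gamma\to X$ to $\diam(f(\cC);X)=O_\ell(\rho)$ --- the second alternative.

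I expect the main obstacle to be the monodromy step: proving that, for small $\rho$, the holonomy of $f$ around the circular directions of $\cC^\hrho$ is trivial or peripheral. This is exactly where the hyperbolic geometry of the pair $\cC\subset\cC^\hrho$ enters through Fact~\ref{fact:boundary-length}, and where one must keep careful track of how loops in individual fibers sit inside the multi-dimensional cell $\cC^\hrho$. It is also the only point that genuinely uses that the three punctures of $X$ are separated by closed geodesics of definite length, so that no annular target --- for which no Fundamental Lemma is available --- can ever be forced; this is precisely why the statement does not reduce naively to Lemmas~\ref{lem:fund-D} and~\ref{lem:fund-Dcirc} alone. Once the monodromy is pinned down, the rest is the bookkeeping reduction to those two lemmas together with the elementary geometry of cusp neighborhoods described above.
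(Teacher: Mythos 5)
Your overall strategy is viable and genuinely different in execution from the paper's: you pin down the monodromy first (abelian image in a free group is cyclic; short loops in the thin part of $\C\setminus\{0,1\}$ are trivial or peripheral, using the positive systole), then formally lift to $\D$ or to the intermediate cover $\D/\langle\gamma\rangle\simeq\D\setminus\{0\}$ and quote Lemmas~\ref{lem:fund-D} and~\ref{lem:fund-Dcirc} as black boxes. The paper instead runs a case analysis on how the images of skeleton components sit relative to the neighborhoods $U_0,U_1,U_\infty$, using Lemma~\ref{lem:fS-hyperbolic-diam}, the surjectivity of $\pi_1(S)\to\pi_1(\cC)$ and Lemma~\ref{lem:fC-hyperbolic-diam}, and in the cuspidal case it redoes the $\D\setminus\{0\}$ argument in place via the metric comparison~\eqref{eq:hyperb} rather than lifting; your transfer of the first alternative through the cusp ultimately needs that same comparison, as you acknowledge. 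Your remaining steps (roots of parabolics are parabolic in a discrete torsion-free group, covering maps are distance non-increasing, the $q\mapsto q^k$ behaviour near the cusp only improves the bound) are fine.

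There is, however, a genuine gap exactly where you anticipated trouble: the claim that each standard generator $\gamma_i$ is freely homotopic in $\cC^\hrho$ to a loop of length $<\mu_0$, justified by ``Fact~\ref{fact:boundary-length} together with the product structure of $\cC^\hrho$''. A complex cell is not a product: a loop going around the $i$-th fiber with the later coordinates frozen need not stay in the cell, since the later radii $r_j(\vz_{1..j-1})$ move with $\vz_i$. The short loops one actually gets (image of the boundary circle of $\cF_i$ under the holomorphic section $\vz_i\mapsto(\vz_{1..i-1},\vz_i,\e_{i+1}r_{i+1},\dots)$ used in the proof of Lemma~\ref{lem:fS-hyperbolic-diam}, followed by Schwarz--Pick and Fact~\ref{fact:boundary-length}) represent the twisted class $\gamma_i\prod_{j>i}\gamma_j^{\valpha_i(r_j)}$, not $\gamma_i$ itself; for the pure generator a short representative can fail to exist (e.g.\ for $A(1,R)\odot A(\vz_1/10,\vz_1)$ with $R$ large, any non-winding choice of $\vz_2$ over a wide sub-annulus of the base contradicts the flux of $\log|\vz_1|$, so no wide annulus in $\cC^\hrho$ has core in the class $\gamma_1$). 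The repair is easy and preserves your structure: the twisted classes form a triangular, hence equally good, generating set of $\pi_1(\cC)$, and knowing that each of them maps to a trivial or parabolic element already forces the cyclic image $f_*\pi_1(\cC)$ to be trivial or generated by a parabolic, which is all your two cases require. With that modification, and with the cusp-coordinate comparison made explicit via~\eqref{eq:hyperb}, your proof goes through.
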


The proofs of the three fundamental lemmas are given
in~\secref{sec:fund-lemmas}.

\subsection{The $\nu$-cover of a cell}
\label{sec:nu-cover}

A key difference between the classical notion of real cellular
decompositions and the complex counterpart is the presence of a
non-trivial fundamental group, and with it the existence of
non-trivial covering maps. Recall that for a cell $\cC$ of length
$\ell$ we identify $\pi_1(\cC)\simeq\prod G_i$ where $G_i$ is trivial
for $*,D$ and $\Z$ for $D_\circ,A$.

\begin{Def}[The $\vnu$-cover of a cell]\label{def:nu-cover}
  Let $\cC$ be a cell of length $1$.  For $\cC=D_\circ,A$ and
  $\nu\in\Z$ we define the $\nu$-cover $\cC_{\times\nu}$ by
  \begin{align}
    D_\circ(r)_{\times\nu}&:=D_\circ(r^{1/\nu}) & A(r_1,r_2)_{\times\nu}&:=A(r_1^{1/\nu},r_2^{1/\nu})
  \end{align}
  For $\cC=D(r),*$ the cover $\cC_{\times\nu}$ is defined only for
  $\nu=1$. In all cases we define $R_\nu:\cC_{\times\nu}\to\cC$ by
  $R_\nu(z)=z^\nu$.
  
  Let $\cC$ be a cell of length $\ell$ and let
  $\vnu=(\vnu_1,\ldots,\vnu_\ell)\in\pi_1(\cC)$ be such that
  $\vnu_j\vert\vnu_k$ whenever $j>k$ and $G_j=G_k=\Z$. We define the
  \emph{$\vnu$-cover} $\cC_{\times\vnu}$ of $\cC$ and the associated
  \emph{cellular map} $R_\vnu:\cC_{\times\vnu}\to\cC$ by induction on
  $\ell$. For $\cC=\cC_{1..\ell-1}\odot\cF$ we let
  \begin{equation}
    \cC_{\times\vnu}:=(\cC_{1..\ell-1})_{\times\vnu_{1..\ell-1}}
    \odot (R_{\vnu_{1..\ell-1}}^*\cF_{\times\vnu_{\ell}})
  \end{equation}
  We define $R_\vnu(\vz_{1..\ell}):=\vz^\vnu$.
\end{Def}

Note that the $D_\circ(r)_{\times\vnu_\ell}$ fiber (and similarly with
$A$) does not conform, a-priori, with the definition of a complex cell
since $r^{1/\vnu_\ell}$ may in general be multivalued (with cyclic
monodromy of order dividing $\vnu_\ell$). However the divisibility
conditions on $\vnu$ guarantee that
$R_\vnu:(\cC_{1..\ell-1})_{\times\vnu_{1..\ell-1}}\to\cC_{1..\ell-1}$
maps $\pi_1(\cC_{1..\ell-1})_{\times\vnu}$ into
$\vnu_\ell\pi_1(\cC_{1..\ell-1})$ and the pullback
$R_{\vnu_\ell}^*(D_\circ(r))_{\times\vnu_\ell}$ is indeed univalued and well
defined up to a root of unity. We will usually consider the
$\nu$-cover with $\nu\in\N$, meaning that we take $\vnu$ with $\vnu_i=\nu$
when $G_i=\Z$ and $\vnu_i=1$ otherwise.

\begin{Ex}[On the condition $\vnu_j\vert\vnu_k$ in Definition~\ref{def:nu-cover}]
  Consider the cell $\cC=D_\circ(1)\odot A(\vz_1,1)$. If we try to
  compute a cellular cover $\cC_{\times(1,2)}$ this should formally be
  given by
  \begin{equation}
    \cC_{\times(1,2)} = D_\circ(1)\odot A(\sqrt{\vz_1},1)=D_\circ(1)\odot A(\sqrt{\vw_1},1)
  \end{equation}
  where $\vw$ denotes the coordinates on $\cC_{\times(1,2)}$. This is
  not a cell in our sense since $\sqrt{\vw_1}$ is not a univalued
  function on $D_\circ(1)$. Computing $\cC_{\times(2,2)}$ on the other
  hand we have
  \begin{equation}
    \cC_{\times(2,2)} = D_\circ(1)\odot A(\sqrt{\vz_1},1)=D_\circ(1)\odot A(\vw_1,1)
  \end{equation}
  because $\vz_1=\vw_1^2$, and we do obtain a cell.
\end{Ex}

A minor technicality arises in the real setting. Namely, the real part
$\R\cC_{\times\vnu}$ does not always cover $\R\cC$: if $\vnu$ is even
then only the positive real part is covered. To cover the remaining
components of the real part we introduce the notion of a \emph{signed}
cover. Namely, for a sequence $\vsigma\in\{\pm1\}^\ell$ we define
$\cC_{\times\vnu,\vsigma}$ and $R_{\vnu,\vsigma}$ by induction as
above but taking
$R_{\vnu,\vsigma}(\vz_{1..\ell})_k:=\vsigma_k\cdot\vz_{k}^{\vnu_k}$. It
is then clear that $\R\cC_{\times\vnu,\vsigma}$ cover $\R\cC$ when
$\vsigma$ ranges over all possible signs.

The pullback to a $\vnu$-cover will be used in our treatment to
resolve the ramification of multivalued cellular maps. We record a
simple proposition concerning the interaction between extensions and
$\nu$-covers. 

\begin{Prop}\label{prop:ext-v-cover}
  Let $\cC$ be a complex cell and $\nu\in\N$.
  \begin{enumerate}
  \item If $\cC$ admits a $\delta$-extension then $\cC_{\times\nu}$
    admits a $\delta^{1/\nu}$-extension and the covering map $R_\nu$
    extends to
    $R_\nu:(\cC_{\times\nu})^{\delta^{1/\nu}}\to\cC^\delta$.
  \item If $\cC$ admits a $\hrho$-extension then $\cC_{\times\nu}$
    admits an $\he{\nu\rho}$-extension and the covering map $R_\nu$
    extends to $R_\nu:(\cC_{\times\nu})^{\he{\nu\rho}}\to\cC^\hrho$.
  \end{enumerate}
  If $\cC$ is algebraic of complexity $\beta$ then $C_{\times\nu}$ is
  algebraic of complexity $\poly_\ell(\beta,\nu)$.
\end{Prop}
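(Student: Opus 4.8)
The plan is to prove all three assertions by induction on the length $\ell$ of the cell, with the one-dimensional fiber computations forming the base-case mechanism. First I would treat the one-dimensional statements directly. For a fiber $\cF = D_\circ(r)$ or $A(r_1,r_2)$ over a point, one has $\cF_{\times\nu} = D_\circ(r^{1/\nu})$ or $A(r_1^{1/\nu},r_2^{1/\nu})$ and $R_\nu(z)=z^\nu$. If $\cF$ admits a $\delta$-extension, then $\cF^\delta = A(\delta r_1, \delta^{-1}r_2)$ (and analogously for $D_\circ$), and I would observe that $(\cF_{\times\nu})^{\delta^{1/\nu}} = A(\delta^{1/\nu}r_1^{1/\nu}, \delta^{-1/\nu}r_2^{1/\nu}) = A((\delta r_1)^{1/\nu}, (\delta^{-1}r_2)^{1/\nu})$, which is precisely the preimage of $\cF^\delta$ under $z\mapsto z^\nu$; this gives the extension of $R_\nu$ in part~(1). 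For part~(2), I would convert via~\eqref{eq:rho-ext-def}: a $\hrho$-extension of a $D_\circ$- or $A$-type fiber is the $\delta$-extension with $\rho = \pi^2/(2|\log\delta|)$, so replacing $\delta$ by $\delta^{1/\nu}$ replaces $|\log\delta|$ by $|\log\delta|/\nu$ and hence $\rho$ by $\nu\rho$, giving the claimed $\he{\nu\rho}$-extension of $\cC_{\times\nu}$. (For fibers of type $D$ or $*$ we take $\nu=1$ and there is nothing to prove; note also that for the $D$-type normalization the formula $\rho = 2\pi\delta/(1-\delta^2)$ does not arise here since $\nu$-covers are only defined for $\nu>1$ on $D_\circ,A$ fibers.)

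Next I would carry out the induction. Write $\cC = \cC_{1..\ell-1}\odot\cF$ with $\cF$ one of $*,D(r),D_\circ(r),A(r_1,r_2)$, and recall from Definition~\ref{def:nu-cover} that $\cC_{\times\vnu} = (\cC_{1..\ell-1})_{\times\vnu_{1..\ell-1}} \odot (R_{\vnu_{1..\ell-1}}^*\cF_{\times\vnu_\ell})$. Assuming inductively that $(\cC_{1..\ell-1})_{\times\vnu_{1..\ell-1}}$ admits a $\delta^{1/\nu}$-extension (resp. $\he{\nu\rho}$-extension) onto which $R_{\vnu_{1..\ell-1}}$ extends as a cellular map into $\cC_{1..\ell-1}^\delta$ (resp. $\cC_{1..\ell-1}^\hrho$), I would then note that the radii $r$ (resp. $r_1,r_2$) of $\cF$, which continue holomorphically to $\cC_{1..\ell-1}^\delta$ by hypothesis, pull back under $R_{\vnu_{1..\ell-1}}$ to holomorphic functions on $(\cC_{1..\ell-1})_{\times\vnu_{1..\ell-1}}^{\delta^{1/\nu}}$, and taking their appropriate $\nu_\ell$-th roots (using the divisibility condition $\vnu_j\vert\vnu_k$ exactly as in the discussion after Definition~\ref{def:nu-cover} to guarantee univaluedness) gives bounded nonvanishing holomorphic functions defining the extended fiber. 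Applying the one-dimensional computation above fiberwise, the new fiber extends correspondingly and $R_{\vnu_\ell}$ on the last coordinate combines with the extended $R_{\vnu_{1..\ell-1}}$ to give the extension of $R_\vnu$. The inequalities $\cC^{\delta_2}\subset\cC^{\delta_1}$ and $\cC^{\he{\rho_2}}\subset\cC^{\he{\rho_1}}$ for $\delta_1<\delta_2$, noted after Definition~\ref{def:cell-ext}, let one pass freely from $\vnu$-covers with varying $\vnu_i$ to the uniform choice $\vnu_i=\nu$.

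Finally, for the complexity bound: $R_\nu:(z_1,\ldots,z_\ell)\mapsto(z_1^\nu,\ldots,z_\ell^\nu)$ has graph cut out by equations of degree $\le\nu$, so composing the algebraic radii of $\cC$ (of complexity $\le\beta$) with $R_\nu$ yields functions of complexity $\poly_\ell(\beta,\nu)$ by the Bezout-type composition estimate $\deg F\circ G\le\poly_{\ell,k}(\deg F,\deg G)$ recorded earlier; the $\nu_\ell$-th roots appearing as the new radii are then algebraic of degree multiplied by at most $\nu_\ell$, since the graph of $w\mapsto w^{1/\nu_\ell}$ is the algebraic set $\{w = u^{\nu_\ell}\}$ of degree $\nu_\ell$ intersected with the relevant cell, and intersecting graphs multiplies degrees by Bezout. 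Running the induction, the complexity of $\cC_{\times\nu}$ stays $\poly_\ell(\beta,\nu)$.

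\textbf{Main obstacle.} I expect the only genuinely delicate point to be the univaluedness bookkeeping in the inductive step: one must verify that $R_{\vnu_{1..\ell-1}}$ sends $\pi_1$ of the covered base into $\vnu_\ell\cdot\pi_1(\cC_{1..\ell-1})$ so that the $\vnu_\ell$-th root of the pulled-back radius is a single-valued holomorphic function on the extended cover, and that this persists on the $\delta^{1/\nu}$-extension rather than just on the cell itself — i.e. that extending the cell does not introduce new loops obstructing the root. This is handled by the same divisibility argument already sketched after Definition~\ref{def:nu-cover}, combined with the observation that $\cC^\delta$ is homotopy equivalent to $\cC$ (the extension does not change the fundamental group), but it requires care to phrase cleanly in the inductive framework.
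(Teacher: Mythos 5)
Your proof is correct and is precisely the ``simple inductive proof'' that the paper leaves to the reader: the one-dimensional computation $\delta\mapsto\delta^{1/\nu}$ for fibers of type $D_\circ,A$, the conversion $\rho=\pi^2/(2|\log\delta|)$ giving part (2) from part (1), the divisibility/monodromy argument (using that $\cC\subset\cC^\delta$ is a homotopy equivalence) for univaluedness of the pulled-back roots on the extension, and the Bezout-type estimate for the complexity bound. In particular you correctly isolate the point the paper itself flags as crucial for part (2): the covering map is the identity on fibers of type $D$, so the $D$-type normalization $\rho=2\pi\delta/(1-\delta^2)$ never enters and only the trivial inclusion of the narrower extension is needed there.
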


We leave the simple inductive proof to the reader. We remark that for
the part 2 it is crucial that covering maps are defined to be the
identity on fibers of type $D$.

\subsection{Overview of the proof of the CPT (Theorem~\ref{thm:cpt})}
\label{sec:cpt-proof-sketch}

In this section we aim to give an intuitive overview of the proof of
the CPT. We will consider only the algebraic case, which requires the
most delicate arguments to control the complexity with respect to
$\beta$. We will construct a covering using general cellular maps
instead of prepared maps (one can later use the CPrT to obtain a
prepared cover).

The proof will proceed by induction on the length and dimension of
$\cC$: we will works with cells of length $\ell+1$, assuming that the
CPT is already established for cells of length at most $\ell$, and for
cells of length $\ell+1$ and smaller dimension. The cases $\ell=0$ or
$\dim\cC=0$ are trivial.

\subsubsection{Reduction to the case of one function ($M=1$)}

We first note that the CPT can be easily reduced to the case of one
function $F$. Indeed, suppose we wish to perform a cellular
decomposition for the functions $F_1,\ldots,F_M$. Let $F$ be the
product of all the $F_k$, except those that vanish identically on
$\cC$. We construct a cellular decomposition
$f_j:\cC_j^\hsigma\to\cC^\hrho$ compatible with $F$. Each of the maps
$f_j$ whose image is disjoint from the zeros of $F$ is already
compatible with each $F_k$. Each of the remaining cells $\cC_j$ has
dimension strictly smaller than $\cC$ since cellular maps preserve
dimension. Thus we can proceed by induction over the dimension.

\subsubsection{Reduction to arbitrary $\rho,\sigma$}

We will allow ourselves to assume that $\rho$ is as small as we wish
as long as $1/\rho=\poly_\ell(\beta)$. We will also allow ourselves to
assume that $\sigma$ is as large as we wish as long as
$\sigma=\poly_\ell(\beta)$. Both of these assumptions are justified by
the \emph{refinement theorem} (Theorem~\ref{thm:cell-refinement})
which shows that cells with a given extension can be ``refined'' into
cells with a wider extension. This theorem generalizes the idea of
covering a disc or annulus by several smaller discs or annuli. The
proof of this theorem is independent of the CPT and the reader can
safely skip the details for now.

\subsubsection{Reduction to a proper covering map}
Let $\cC:=\cC_{1..\ell}\odot\cF$. Let $\cD\in\C[\vz_{1..\ell}]$ denote
the discriminant of $F$ and construct a cellular decomposition
$f_j:\cC_j^\hrho\to\cC_{1..\ell}^\hrho$ compatible with $\cD$. Each of
the cells $\cC_j$ with $f_j(\cC_j)$ contained in the zeros of $\cD$
has dimension strictly smaller than $\cC_{1..\ell}$, and we can
therefore obtain a cellular cover for the cell $\cC_j\odot(f_j^*\cF)$
and the function $f_j^*F$ by induction over dimension. Composing this
cover with $(f_j,\id)$, see Remark~\ref{rem:cell-cover-composition},
we obtain a covering for the part of $\cC$ lying over the zeros of
$\cD$.

It remains to cover the part lying over the images $f_j(\cC_j)$ that
are disjoint from the zeros of $\cD$. For each such cell $\cC_j$ we
can reduce again to the cell $\cC_j\odot(f_j^*\cF)$ and the function
$f_j^*F$. We return to the original notation, replacing
$\cC_j\odot(f_j^*\cF),f_j^*F$ by $\cC\odot\cF,F$. What we have gained
is that the projection
\begin{equation}
  \pi: (\cC\odot\cF)^{\hrho}\cap\{F=0\}\to\cC, \qquad \pi(\vz_{1..\ell+1})=\vz_{1..\ell}
\end{equation}
is now a proper covering map. We denote the degree of $\pi$ by $\nu$,
and note that $\nu=\poly_\ell(\beta)$.

\subsubsection{Covering the zeros}

If we set $\hat\cC:=\cC_{\times\nu!}$ then one can choose $\nu$
univalued sections $y_j:\hat\cC\to\C$ of $\pi$ which are pointwise
disjoint and cover the fiber of $\pi$. However, the covering of order
$\nu!$ has complexity exponential in $\nu$ and therefore in $\beta$,
and thus cannot be used explicitly in our construction.  In fact, one
can show that $y_j$ descends to a univalued function
$y_j:\cC_{\times\nu_j}\to\C$ where $\nu_j\le\nu$ is the size of the
$\pi_1(\cC)$-orbit of $y_j$, see Lemma~\ref{lem:y_j-monodromy}. This
is a simple statement about permutation groups, using crucially the
fact that $\pi_1(\cC)$ is abelian. We denote
$\hat\cC_j:=\cC_{\times\nu_j}$.

Using the sections $y_j$ we construct cellular maps
$(R_{\nu_j},\id):\hat\cC_j\odot*\to\cC\odot\cF$ whose images are
contained in the zeros of $F$ and hence compatible with $F$. Moreover
Proposition~\ref{prop:ext-v-cover} shows that these maps admit
$\he{\rho\cdot\nu_j}$-extensions, and choosing $\rho\cdot\nu_j<\sigma$
gives the required extension property.

\subsubsection{Covering the complement of the zeros}

We now make a simplifying assumption, namely we replace $\cF$ by
$\C$. This simplifies the presentation by allowing us to avoid minor
technicalities related to $\partial\cF$. Since $\C$ is unbounded we
will formally have to allow unbounded cells in the covering that we
construct, but the difference will be minor. The goal is therefore to
cover each fiber $\C\setminus\{y_1,\ldots,y_\nu\}$ by a finite
collection of discs, punctured discs and annuli admitting
$\hsigma$-extensions. The challenge is that this covering should
depend holomorphically on the base point $\vz_{1..\ell}\in\cC$, and
should be of size $\poly_\ell(\beta)$. The remainder
of~\secref{sec:cpt-proof-sketch} is devoted to addressing this
challenge.

\subsubsection{The affine invariants $s_{i,j,k}$}

To achieve our goal we will study the relative positions of the points
$y_j\in\C$. Toward this end we introduce the following notation. Let
$y_i,y_j,y_k$ be three distinct sections and denote
$\hat\cC_{i,j,k}:=\cC_{\times\nu_i\nu_j\nu_k}$. We define a map
$s_{i,j,k}$ as follows,
\begin{equation}
  s_{i,j,k}:\hat\cC_{i,j,k}\to\C\setminus\{0,1\}, \qquad s_{i,j,k}=\frac{y_i-y_j}{y_i-y_k}.
\end{equation}
By the fundamental lemma for maps into $\C\setminus\{0,1\}$
(Lemma~\ref{lem:fund-C01}) one of the following holds:
\begin{equation}\label{eq:intro-sijk-fund}
  \begin{gathered}
    s_{i,j,k}(\hat\cC_{i,j,k})\subset B(\{0,1,\infty\},e^{-\Omega_\ell(1/(\nu^3\rho))};\C P^1) \quad\text {or}\\
    \diam(s_{i,j,k}(\hat\cC_{i,j,k});\C\setminus\{0,1\})=O_\ell(\nu^3\rho).
  \end{gathered}
\end{equation}
We remark that equivalently~\eqref{eq:intro-sijk-fund} holds if we
replace $\hat\cC_{i,j,k}$ by $\hat\cC$, since $s_{i,j,k}$ on $\hat\cC$
factors through $\hat\cC_{i,j,k}$. We note that $s_{i,j,k}$ is
invariant under affine transformations of $\C$, and is in fact the
only affine invariant of $(y_i,y_j,y_k)$. We think
of~\eqref{eq:intro-sijk-fund} as implying that the relative positions
of any three sections change very little in $\hat\cC$ for
$\rho\ll1$. This is the crucial ingredient from hyperbolic geometry
that will enable us to carry out the decomposition into discs and
annuli uniformly over the base $\hat\cC_{i,j,k}$.

\begin{Rem}[Fulton-MacPherson compactification]
  In \cite{fm:compact} Fulton and MacPherson construct a
  compactification $X[\nu]$ for the configuration space
  $X^\nu\setminus\Delta$ of $\nu$ distinct labeled points in an
  algebraic variety $X$ (where $\Delta$ is the union of all
  diagonals). In the case $X=\C$, the construction using
  \emph{screens} is as follows. Write $y_1,\ldots,y_\nu$ for the
  coordinates on $\C^\nu\setminus\Delta$. For every subset
  $S\subset\{1,\ldots,\nu\}$ the \emph{screen} $\iota_S$ is defined as
  the map
  \begin{equation}
    \iota_S:\C^\nu\setminus\Delta\to\P(\C^{|S|}/\C), \qquad i_S(y_1,\ldots,y_\nu)=(y_\alpha)_{\alpha\in S}
  \end{equation}
  where $\C\subset\C^{|S|}$ is the diagonally embedded subspace. The
  map $\iota_S$ remembers the relative positions of
  $\{y_\alpha\}_{\alpha\in S}$ up to translation (corresponding to the
  diagonally embedded $\C$) and homothety (corresponding to the
  projectivization). Consider the map
  \begin{equation}\label{eq:iota-def}
    \iota:\C^\nu\setminus\Delta\to(\P^1)^{\binom\nu3}, \qquad \iota = \prod_{|S|=3} \iota_S.
  \end{equation}
  Then $\C[\nu]$ is the Zariski closure of the graph of $\iota$. It is
  an important fact that only the screens $\iota_S$ for $S$ of size
  three are required to fully determine the compactification: if one
  defines $\iota$ by a product over all $S\subset\{1,\ldots,\nu\}$ one
  obtains an isomorphic compactification.

  In our setting, we have a natural map
  $\vy=(y_1,\ldots,y_\nu):\hat\cC\to\C^\nu\setminus\Delta$. For an
  appropriate choice of a rational coordinate on each $\P^1$ factor
  in~\eqref{eq:iota-def}, the coordinates of $\iota$ can be identified
  with our maps $s_{i,j,k}$ which are indeed invariant under
  translation and homothety. The equations~\eqref{eq:intro-sijk-fund}
  imply in particular that the image $\vy(\hat\cC)$ is of small
  diameter, not only in the metric induced from $\C^\nu$, but also in
  the finer metric induced from the compactification $\C[\nu]$. It is
  crucial here that one only needs to use the screens $S$ of size
  three. If we were to write equations similar
  to~\eqref{eq:intro-sijk-fund} for screens of arbitrary size the
  factor $\nu^3$ would be replaced by $\nu!$, which is exponential in
  $\beta$ and hence too large for our purposes.

  As we proceed to describe the cellular cover for
  $\C\setminus\{y_1,\ldots,y_\nu\}$ we will make no explicit reference
  to the Fulton-MacPherson compactification, working instead directly
  with~\eqref{eq:intro-sijk-fund}. However the reader may find it
  helpful to interpret these inequalities as estimates on the metric
  induced from $\C[\nu]$.
\end{Rem}

\subsubsection{Clustering around a center $y_i$}

We fix a section $y_i$. We will cluster the remaining sections into
annuli centered at $y_i$. Since our constructions are invariant under
affine transformations we may assume after an appropriate
transformation that $y_i=0$, and then $s_{i,j,k}=y_j/y_k$.

Pick an arbitrary base point $p\in\hat\cC$. We will say that $y_j$ is
close to $y_k$ if they satisfy $\abs{\log|y_k(p)/y_j(p)|}<1/\nu$, and
define the \emph{clusters} around $y_i$ to be the equivalence classes
of the transitive closure of this relation. Then for any $y_j,y_k$, if
they are in the same cluster we have $\abs{\log|y_k(p)/y_j(p)|}<1$ and
otherwise we have $\abs{\log|y_k(p)/y_j(p)|}\ge1/\nu$. See
Figure~\ref{fig:sketch-yi-cluster} for an illustration.
  
\subsubsection{The Voronoi cells associated with $y_i$}

We will use the clusters around $y_i$ to construct a collection of
\emph{Voronoi cells} and their mappings into $\cC\odot\C$. Their
defining property, which motivates our choice of naming, is the
following: the Voronoi cells associated with $y_i$ cover every point
$z\in\C\setminus\{y_1,\ldots,y_\nu\}$ such that
$\dist(z,y_i)\le 2\dist(z,y_j)$ for any $j\neq i$. In other words, for
every section $y_i$ we will construct cells that cover every point in
$\C$ except those that are (twice) closer to some other section
$y_j$. Clearly then the union of these cells for every $y_i$ will
cover $\C\setminus\{y_1,\ldots,y_\nu\}$, which is our goal.

We begin by covering the empty area between two clusters. This is
fairly straightforward. Suppose that $y_j$ is a section with
$|y_j(p)|$ maximal within its cluster, and $y_k$ is a section with
$|y_k(p)|$ minimal within the next cluster (see
Figure~\ref{fig:sketch-yi-cluster}). Then the annulus
$A(y_je^\e,y_ke^{-\e})$ for say $\e=1/\nu^2$ does not meet any of the
sections $y_j$ over $p$, and assuming $\rho$ is sufficiently small
this remains true uniformly over $\hat\cC$
by~\eqref{eq:intro-sijk-fund}. We similarly cover the area between
$y_i=0$ and the first cluster by a punctured disc, and outside the
last cluster by $A(\cdot,\infty)$.

\begin{figure}
  \centering
  \includegraphics[width=\textwidth]{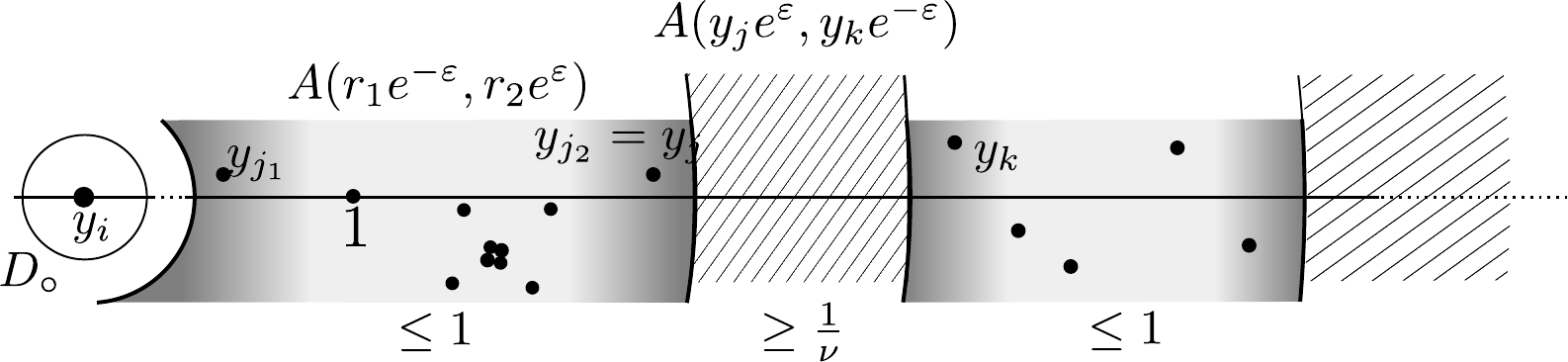}
  \caption{Clustering around $y_i$; distances are in $\log$-scale.}
  \label{fig:sketch-yi-cluster}
\end{figure}

\subsubsection{Voronoi cells in a cluster}

It remains to cover the part of $\C$ that remains outside the annuli
above, i.e. near one of the clusters. Choose one of the clusters and
let $y_j$ be a section in the cluster. Since our constructions are
invariant under affine transformations we may assume after an
appropriate transformation that $y_i=0$ and also $y_j=1$. With this
choice of coordinates $s_{i,k,j}=y_k$ and~\eqref{eq:intro-sijk-fund}
implies that $y_k$ does not change much over the cell $\hat\cC$. We will
construct our cells over the base
$\hat\cC_{i,j}:=\cC_{\times\nu_i\nu_j}$. Note that the complexity of
this cell is $\poly_\ell(\beta)$ as required.

Suppose $y_{j_1}$ (resp. $y_{j_2}$) is the section with
$r_1:=|y_{j_1}(p)|$ minimal (resp. $r_2:=|y_{j_2}(p)|$ maximal) within
its cluster. Then $1/e<r_1\le r_2<e$. Moreover the sections in the
cluster uniformly remain within $\cA:=A(r_1e^{-\e},r_2e^\e)$ while the
sections belonging to other clusters remain outside
$\cA':=\cA^{\Theta(1-1/\nu)}$.

Let $U$ be the set obtained from $\cA$ by removing discs of radius
$1/10$ centered at each of the points $y_k(p)$ for $y_k$ belonging to the
cluster. Note that any point in these discs is much closer to $y_k(p)$
than to $y_i(p)=0$, so we do not need to cover them with the Voronoi
cells of $y_i$. Moreover, the same remains true uniformly over
$\hat\cC$ since by~\eqref{eq:intro-sijk-fund} the points $y_k$ change
very little. To finish the construction we must therefore cover $U$
using discs whose $\hsigma$-extensions remain inside $\cA'$ and away
from the $1/20$-discs centered at the points $y_k(p)$ (this way even as $p$
varies over $\hat\cC$ the discs and their extensions will not meet
them). This is clearly possible to achieve with
$\poly_\ell(\nu)=\poly_\ell(\beta)$ discs as required. See
Figure~\ref{fig:sketch-clusters-voronoi} for an illustration of this
construction.

\begin{figure}
  \centering
  \includegraphics[width=0.6\textwidth]{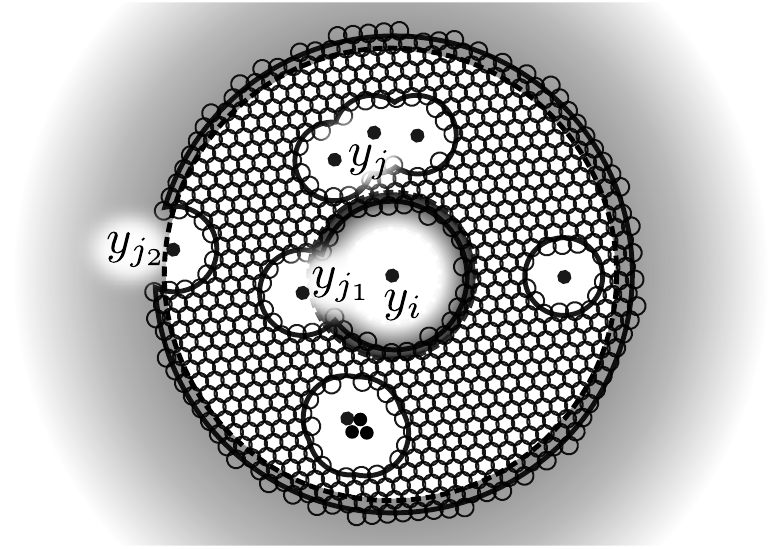}
  \caption{Voronoi cells in a cluster}
  \label{fig:sketch-clusters-voronoi}
\end{figure}

\section{Generalities on complex cells}

In this section we cover some generalities on complex cells, their
subanalytic structure and some uniformity results for families.

\subsection{Uniformity in families}
\label{sec:uniform-families}

In this section we show how the cellular structure of our maps implies
automatic uniformity over families, for instance in the statement of
the CPT. 

For $p\in\cC_{1..j}$ we denote
\begin{equation}
  \cC_p = \{\vz_{j+1..\ell} : (p,\vz_{j+1..\ell})\in\cC\}.
\end{equation}
If $\cC$ has type $\cF_1\odot\cdots\odot\cF_\ell$ then $\cC_p$ is a
cell of type $\cF_{j+1}\odot\cdots\odot\cF_\ell$. If $\cC$ admits a
$\delta$-extension then so does $\cC_p$.

By definition cellular map $f:\cC\to\hat\cC$ induces a cellular map
$f_{1..j}:\cC_{1..j}\to\hat\cC_{1..j}$ for $j=1,\ldots,\ell$ by
restriction to the first $j$ coordinates. The following proposition
follows directly from the definitions.

\begin{Prop}\label{prop:map-fiber-bound}
  Let $f:\cC\to\hat\cC$ be a cellular map between cells of length
  $\ell$ and $j=1,\ldots,\ell$. Then the number of points in
  $f_{1..j}^{-1}(p)$ for $p\in\hat\cC_{1..j}$ is bounded by a constant
  $\nu(f,j)$ independent of $p$. Explicitly, one may take
  \begin{equation}
    \nu(f,j) = \prod_{k=1}^j \deg_{\vz_k}\phi_k
  \end{equation}
  in the notations of Definition~\ref{def:cell-maps}.
\end{Prop}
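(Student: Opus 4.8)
The plan is a straightforward induction on $j$. Recall from Definition~\ref{def:cell-maps} that $f$ has the form $\vw_k=\phi_k(\vz_{1..k})$ with $\phi_k\in\cO_b(\cC_{1..k})$ a monic polynomial of positive degree $d_k:=\deg_{\vz_k}\phi_k$ in $\vz_k$, so that $f_{1..j}(\vz_{1..j})=(\phi_1(\vz_1),\ldots,\phi_j(\vz_{1..j}))$ and the claimed bound is $\nu(f,j)=\prod_{k=1}^j d_k$.

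For the base case $j=1$ I would observe that, given $p=\vw_1\in\hat\cC_1$, every preimage $\vz_1\in f_1^{-1}(p)$ is a root of the univariate polynomial $\phi_1(\vz_1)-\vw_1$, which is monic of degree $d_1$; hence the number of points in $f_1^{-1}(p)$ is at most $d_1=\nu(f,1)$.

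For the inductive step, assuming the bound for $j-1$, I would fix $p=(\vw_1,\ldots,\vw_j)\in\hat\cC_{1..j}$ and let $p'=(\vw_1,\ldots,\vw_{j-1})$ be its image under the coordinate projection $\hat\cC_{1..j}\to\hat\cC_{1..j-1}$ (well defined since $\hat\cC_{1..j}$ has base $\hat\cC_{1..j-1}$). The corresponding projection on the source intertwines $f_{1..j}$ with $f_{1..j-1}$, so $f_{1..j}^{-1}(p)$ maps into $f_{1..j-1}^{-1}(p')$, a set of at most $\nu(f,j-1)$ points by the inductive hypothesis. It then remains to bound, for each $q\in f_{1..j-1}^{-1}(p')$, the number of $\vz_j$ with $(q,\vz_j)\in\cC_{1..j}$ and $\phi_j(q,\vz_j)=\vw_j$: since $\phi_j$ is monic of degree $d_j$ in $\vz_j$, the map $\vz_j\mapsto\phi_j(q,\vz_j)-\vw_j$ is a univariate polynomial of degree exactly $d_j$ (in particular not identically zero), so there are at most $d_j$ such $\vz_j$. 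Multiplying the two bounds yields that $f_{1..j}^{-1}(p)$ has at most $\nu(f,j-1)\cdot d_j=\nu(f,j)$ points, independently of $p$.

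I do not anticipate any real obstacle; the only point worth recording is that monicity of $\phi_j$ is what guarantees that its degree in $\vz_j$ does not drop upon specializing the lower coordinates to $q$, keeping the per-fibre count uniformly at most $d_j$. Even without monicity one would only ever lose roots under specialization, so the stated product remains a valid upper bound in any case.
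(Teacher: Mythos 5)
Your induction is correct and is exactly the argument the paper intends (the paper simply asserts the proposition ``follows directly from the definitions,'' and your root-counting with monicity guaranteeing that the specialized polynomial $\phi_j(q,\cdot)-\vw_j$ is nonzero of degree $d_j$ is that argument). One caveat about your closing aside: \emph{without} monicity the bound can fail, since the specialized polynomial could vanish identically (e.g.\ $\phi_2=\vz_1\vz_2$ at $\vz_1=0$, $\vw_2=0$) and the fiber would be infinite—but as monicity is built into Definition~\ref{def:cell-maps}, this does not affect your proof.
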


The following remark illustrates how the cellular structure of the
maps in the CPT automatically implies uniformity over families.

\begin{Rem}\label{rem:uniform-cpt}
  Suppose $\ell=n+m$. We view $\cC$ in the statement of the CPT as a
  family of cells $\cC_p$ of length $m$ parametrized over
  $p\in\cC_{1..n}$. Let $f_j:\cC_j^\hsigma\to\cC^\hrho$ be the maps
  constructed in the CPT. By Proposition~\ref{prop:map-fiber-bound}
  the sets
  \begin{equation}
    P_j = \{p_{j,k}\} := (f_j)^{-1}(p)
  \end{equation}
  are finite with the number of points uniformly bounded over $p$. For
  each $p_{j,k}$ restriction of $f_j$ to the fiber gives a prepared
  cellular map $f_{j,k}:(\cC_j)^\hsigma_{p_{j,k}}\to\cC^\hrho_p$ which
  is compatible with the restrictions of $F_1,\ldots,F_m$ to $\cC_p$,
  and such that $f_{j,k}((\cC_j)_{p_{j,k}})$ cover $\cC_p$. In other words
  we obtain a cellular decomposition as in the CPT for the fibers
  $\cC_p$ with the number of cells uniformly bounded over $p$.
\end{Rem}

\subsection{Laurent expansion in a complex cell}
\label{sec:laurent}

Let $\cC=\cF_1\odot\dots\odot\cF_\ell$ be a complex cell. In this
section we show that any holomorphic function on $\cC$ can be expanded
into a series analogous to the classical Laurent series. We begin by
introducing the notion of \emph{normalized monomials}. Let
$\valpha\in\Z^\ell$. We define the $\cC$-normalized monomial
$\vz^{[\valpha]}:=\vz_1^{[\valpha_1]}\cdots\vz_\ell^{[\valpha_\ell]}$
where: if $\cF_j=D(r_j)$ or $\cF_j=D_\circ(r_j)$ then
\begin{equation}
  \vz_j^{[\valpha_j]} = \
  \begin{cases}
    (\vz_j/r_j)^{\valpha_j} & \valpha_j\ge 0 \\
    0 & \text{otherwise};
  \end{cases}
\end{equation}
if $\cF_j=A(r_{1,j},r_{2,j})$ then
\begin{equation}
  \vz_j^{[\valpha_j]} = \
  \begin{cases}
    (\vz_j/r_{1,j})^{\valpha_j} & \valpha_j\ge 0 \\
    (\vz_j/r_{2,j})^{\valpha_j} & \text{otherwise};
  \end{cases}
\end{equation}
and if $\cF_j=*$ then
\begin{equation}
  \vz_j^{[\valpha_j]} = \
  \begin{cases}
    1 & \valpha_j=0 \\
    0 & \text{otherwise}.
  \end{cases}
\end{equation}
The normalization is such that $|\vz^{[\valpha]}|$ is bounded by $1$ in
$\cC$ and achieves the bound only when $\valpha=0$.

Below we write $\pos\valpha$ for the vector whose $i$-th coordinate is
$|\valpha_i|$.
\begin{Prop}\label{prop:norm-Laurent}
  Let $f\in\cO_b(\cC)$. Then $f$ has a series expansion, absolutely
  convergent on compacts in $\cC$, as follows:
  \begin{equation}\label{eq:norm-Laurent}
    f(\vz) = \sum_{\valpha\in\Z^\ell} c_\valpha \vz^{[\valpha]}
  \end{equation}
  where $|c_\valpha|\le\norm{f}$. If $\cC$ admits a $\vdelta$-extension
  and $f\in\cO_b(\cC^\vdelta)$ then also
  $|c_\valpha|<\vdelta^{\pos\valpha} \norm{f}_{\cC^\delta}$.
\end{Prop}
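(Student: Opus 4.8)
The statement is an iterated version of the classical Taylor/Laurent expansion, and the plan is to prove it by induction on the length $\ell$ of $\cC$, peeling off the last fiber coordinate at each step. For $\ell=0$ the cell is a point and $f$ is a constant, so there is nothing to do. For the inductive step write $\cC=\cC_{1..\ell-1}\odot\cF_\ell$ and regard $f\in\cO_b(\cC)$ as a holomorphic function which, for each fixed $\vz_{1..\ell-1}\in\cC_{1..\ell-1}$, restricts to a bounded holomorphic function on the one-dimensional fiber $\cF_\ell(\vz_{1..\ell-1})$.

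First I would expand in the fiber variable. If $\cF_\ell$ has type $D_\circ$, the fiberwise restriction extends across the puncture by the removable singularity theorem, so this case reduces to type $D$; if $\cF_\ell=*$ there is a single term $c_0:=f$ and nothing to do. For type $D$ (resp.\ $A$) the restriction has a Taylor (resp.\ Laurent) expansion $f=\sum_k a_k(\vz_{1..\ell-1})\,\vz_\ell^k$, where $a_k$ is given by Cauchy's integral formula over a circle $|\vz_\ell|=\rho'$ lying inside the fiber; differentiating under the integral sign (and noting that $a_k$ is independent of the contour) shows $a_k\in\cO(\cC_{1..\ell-1})$. Absorbing radius factors, i.e.\ setting $c_k:=a_k\cdot r^{\valpha_\ell}$ where $r$ is the radius of $\cF_\ell$ appearing in the normalized monomial $\vz_\ell^{[\valpha_\ell]}$ (so that $f=\sum_k c_k(\vz_{1..\ell-1})\,\vz_\ell^{[\valpha_\ell]}$, the terms with $\valpha_\ell<0$ being absent when $\cF_\ell$ is of type $D,D_\circ$), the elementary estimate $|a_k|\le\norm{f}/(\rho')^k$ applied with $\rho'$ tending to the appropriate boundary component of the fiber yields $\norm{c_k}_{\cC_{1..\ell-1}}\le\norm{f}$.

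Next I would apply the inductive hypothesis to each $c_k\in\cO_b(\cC_{1..\ell-1})$, obtaining a normalized expansion $c_k=\sum_{\valpha'\in\Z^{\ell-1}}c_{\valpha',k}\,\vz^{[\valpha']}$ with $|c_{\valpha',k}|\le\norm{c_k}_{\cC_{1..\ell-1}}\le\norm{f}$, absolutely convergent on compacts. Re-indexing by $\valpha=(\valpha',\valpha_\ell)$, and setting $c_\valpha:=0$ in the absent cases, produces the expansion \eqref{eq:norm-Laurent} with $|c_\valpha|\le\norm{f}$. Absolute convergence on compacts of $\cC$ follows since on any compact $K\subset\cC$ one has $|\vz^{[\valpha]}|\le\theta^{|\valpha_1|+\cdots+|\valpha_\ell|}$ for some $\theta=\theta(K)<1$, because in each coordinate the point stays a definite factor away from the relevant boundary component of its fiber; together with the uniform bound $|c_\valpha|\le\norm{f}$ this gives uniform convergence on $K$.

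For the $\vdelta$-extension statement I would run exactly the same induction, but over the extended base $\cC_{1..\ell-1}^{\vdelta_{1..\ell-1}}$. By Definition~\ref{def:cell-ext} the radii defining $\cF_\ell$ extend holomorphically there, and over each point of it the corresponding fiber inside $\cC^\vdelta$ is the $\vdelta_\ell$-extension $\cF_\ell^{\vdelta_\ell}$ of the original fiber, so $c_k=a_k\cdot r^{\valpha_\ell}\in\cO_b(\cC_{1..\ell-1}^{\vdelta_{1..\ell-1}})$. Running the Cauchy estimate on circles approaching the boundary of the \emph{extended} fiber multiplies the radius occurring in the bound by $\vdelta_\ell^{\pm1}$, which after the normalization produces a gain of exactly $\vdelta_\ell^{\pos\valpha_\ell}$, i.e.\ $\norm{c_k}_{\cC_{1..\ell-1}^{\vdelta_{1..\ell-1}}}\le\vdelta_\ell^{\pos\valpha_\ell}\norm{f}_{\cC^\vdelta}$; applying the inductive hypothesis to each $c_k$ over the extended base then gives $|c_{\valpha',k}|\le\vdelta_{1..\ell-1}^{\pos\valpha'}\norm{c_k}\le\vdelta^{\pos\valpha}\norm{f}_{\cC^\vdelta}$. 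There is no essential obstacle here; the only point needing care is the bookkeeping verification that each coefficient function is again holomorphic and bounded on the (possibly extended) base, so that the induction can be applied to it, which is precisely where one invokes that $\cC^\vdelta$ is itself a complex cell with holomorphically extended radii, together with checking that the per-level normalizing factors multiply to exactly $\vdelta^{\pos\valpha}$.
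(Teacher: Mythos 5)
Your proof is correct and follows essentially the same route as the paper: induction on the length, a fiberwise Laurent/Taylor expansion with Cauchy estimates giving $\norm{c_{\valpha_\ell}}\le\norm{f}$, induction applied to the coefficient functions, and a geometric majorant for absolute convergence on compacts. For the $\vdelta$-statement the paper phrases the gain as rewriting $\cC^\vdelta$-normalized monomials as $\cC$-normalized ones, but this is the same computation as your extended-fiber Cauchy estimates, so the two arguments coincide in substance.
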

\begin{proof}
  We proceed by induction on $\ell$. The case $\cF_\ell=*$ reduces
  immediately to the claim for $\cC_{1..\ell-1}$. Otherwise the
  standard formula for Laurent expansions in $\cF_\ell$ gives
  \begin{equation}
    f(\vz) = \sum_{\valpha_\ell=-\infty}^\infty c_{\valpha_\ell}(\vz_{1..\ell-1}) \vz_\ell^{[\valpha_\ell]}
  \end{equation}
  where
  \begin{equation}
    c_{\valpha_\ell}(\vz_{1..\ell-1}) = \frac{\vz_\ell^{\valpha_\ell}}{\vz_\ell^{[\valpha_\ell]}}
    (2\pi i)^{-1} \oint \frac{f(\vz_{1..\ell-1},\zeta)}{\zeta^{\valpha_\ell+1}}\d\zeta
  \end{equation}
  and the integral is over a simple positively oriented curve in
  $\cF_\ell$. The Cauchy estimate now implies
  $\norm{c_{\valpha_\ell}}_{\cC_{1.\ell-1}}\le\norm{f}$ and we proceed
  to expand $c_{\valpha_\ell}$ by induction on $\ell$ to obtain a
  series~\eqref{eq:norm-Laurent} with $|c_\valpha|<\norm{f}$. To see
  that this multivariate series is absolutely convergent on compacts
  note that for every $\vz\in\cC$ and $\valpha\in\Z^\ell$ we have
  \begin{equation}
     |\vz^{[\valpha]}|\le\rho^{|\valpha|}\quad \text{where}\quad \rho:= \max_{\vsigma\in\{-1,1\}^\ell} |\vz^{[\vsigma]}| < 1
  \end{equation}
  and~\eqref{eq:norm-Laurent} is thus majorated by
  $\sum_{\valpha}(\rho+\e)^{|\valpha|}$ in a neighborhood of
  $\vz$. For the final claim we write a Laurent expansion in
  $\cC^\vdelta$ and rewrite the $\cC^\vdelta$-normalized monomials as
  $\cC$-normalized monomials, gaining an extra factor of
  $\vdelta^{\pos\valpha}$.
\end{proof}

Let $\cP_\ell:=D(1)^{\times\ell}$ denote the standard unit
polydisc. We write simply $\cP$ when $\ell$ is clear from the context.

\begin{Cor}\label{cor:laurent-disc-decmp}
  Let $f\in\cO_b(\cC)$. Then there is a decomposition
  \begin{equation}
    f(\vz) = \sum_{\vsigma\in\{-1,1\}^\ell} f_\vsigma(\vz^{[\vsigma]})
  \end{equation}
  where $f_\vsigma\in\cO(\cP)$. If $\cC$ admits a $\delta$-extension
  and $f\in\cO_b(\cC^\delta)$ then $f_\sigma\in\cO(\cP^\delta)$, and
  moreover for any $\delta<\e<1$ we have
  \begin{equation}
    \norm{f_\vsigma}_{\cP^\e}\le(1-\delta/\e)^{-\ell}\norm{f}_{\cC^\delta}.
  \end{equation}
\end{Cor}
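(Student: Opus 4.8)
The plan is to obtain the decomposition directly from the normalized Laurent expansion of Proposition~\ref{prop:norm-Laurent}, by grouping the multi-indices $\valpha\in\Z^\ell$ according to the sign pattern of their coordinates. Given $f\in\cO_b(\cC)$, write $f(\vz)=\sum_{\valpha\in\Z^\ell}c_\valpha\vz^{[\valpha]}$ with $|c_\valpha|\le\norm f$. For a sign vector $\vsigma\in\{-1,1\}^\ell$ let me say that $\valpha$ is \emph{compatible} with $\vsigma$ if $\vsigma_i\valpha_i\ge0$ for every $i$; note that each $\valpha$ is compatible with at least one $\vsigma$ (and with several when some $\valpha_i=0$). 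To avoid double counting I would fix, for each $\valpha$, a single $\vsigma(\valpha)$ — say the one obtained by the rule $\vsigma_i=1$ if $\valpha_i\ge0$ and $\vsigma_i=-1$ otherwise — and set
\begin{equation}
  f_\vsigma(\vw) := \sum_{\valpha:\,\vsigma(\valpha)=\vsigma} c_\valpha\, \vw^{\pos\valpha}, \qquad \vw\in\cP_\ell.
\end{equation}
The key observation is that for such $\valpha$ one has $\vz^{[\valpha]}=(\vz^{[\vsigma]})^{\pos\valpha}$ as functions on $\cC$: indeed, in each coordinate, the definition of $\vz_i^{[\valpha_i]}$ uses exactly the radius (namely $r_{1,i}$ when $\valpha_i\ge0$, $r_{2,i}$ when $\valpha_i<0$, and similarly the $D,D_\circ,*$ cases) that makes $\vz_i^{[\valpha_i]}=(\vz_i^{[\vsigma_i]})^{|\valpha_i|}$ when $\vsigma_i$ matches the sign of $\valpha_i$. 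Substituting this identity termwise regroups the Laurent series into $f(\vz)=\sum_\vsigma f_\vsigma(\vz^{[\vsigma]})$, which is the desired decomposition; convergence of each $f_\vsigma$ on $\cP_\ell$ follows since $|c_\valpha|\le\norm f$ forces the power series $\sum c_\valpha\vw^{\pos\valpha}$ to converge (absolutely, on compacts) on the unit polydisc, exactly as in the proof of Proposition~\ref{prop:norm-Laurent}.

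For the extension statement, suppose $\cC$ admits a $\delta$-extension and $f\in\cO_b(\cC^\delta)$. By the last part of Proposition~\ref{prop:norm-Laurent} we have the sharper bound $|c_\valpha|\le\delta^{|\valpha|}\norm{f}_{\cC^\delta}$ (using $\vdelta=(\delta,\dots,\delta)$, so $\vdelta^{\pos\valpha}=\delta^{|\valpha|}$). Hence for $\vw$ in the slightly larger polydisc $\cP^\delta_\ell=D(\delta^{-1})^{\times\ell}$ the series $\sum_{\vsigma(\valpha)=\vsigma}c_\valpha\vw^{\pos\valpha}$ is majorized by $\norm{f}_{\cC^\delta}\sum_{\valpha}\delta^{|\valpha|}\delta^{-|\valpha|}=\norm{f}_{\cC^\delta}\sum_\valpha 1$ on the \emph{boundary} $|\vw_i|=\delta^{-1}$ — that sum diverges, so I must not push all the way to the boundary; instead I would estimate on $\cP^\e_\ell$ for $\delta<\e<1$. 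There, $|\vw^{\pos\valpha}|\le\e^{-|\valpha|}$ and we get
\begin{equation}
  \norm{f_\vsigma}_{\cP^\e}\le\norm{f}_{\cC^\delta}\sum_{\valpha\in\Z^\ell}(\delta/\e)^{|\valpha|}
  =\norm{f}_{\cC^\delta}\prod_{i=1}^\ell\sum_{k\in\Z}(\delta/\e)^{|k|},
\end{equation}
and each geometric sum $\sum_{k\in\Z}(\delta/\e)^{|k|}=(1+\delta/\e)/(1-\delta/\e)\le(1-\delta/\e)^{-1}$, yielding the claimed bound $(1-\delta/\e)^{-\ell}\norm{f}_{\cC^\delta}$. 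That each $f_\vsigma$ is holomorphic on $\cP^\e_\ell$ for every $\e<1$, hence on $\cP^\delta_\ell=\bigcup_{\delta<\e<1}\cP^\e_\ell$, follows from local uniform convergence of its power series there.

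The only mildly delicate point — and the step I would be most careful about — is the bookkeeping of which $\valpha$ contributes to which $f_\vsigma$ when some coordinates $\valpha_i$ vanish, so that the regrouping of the Laurent series is a genuine partition of the index set and no term is counted twice or dropped. This is handled cleanly by the tie-breaking rule $\vsigma_i=1$ for $\valpha_i\ge0$ fixed above (any fixed rule works); with it, the map $\valpha\mapsto\vsigma(\valpha)$ is well-defined and the fibers partition $\Z^\ell$, so the identity $\vz^{[\valpha]}=(\vz^{[\vsigma(\valpha)]})^{\pos\valpha}$ together with Proposition~\ref{prop:norm-Laurent} gives the result with no further work. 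Everything else is the routine geometric-series estimate carried out above.
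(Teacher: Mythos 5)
Your argument is essentially the paper's own proof: group the terms of the normalized Laurent expansion of Proposition~\ref{prop:norm-Laurent} according to the sign pattern of $\valpha$ (with the tie-breaking rule treating $\valpha_i=0$ as positive), note that $\vz^{[\valpha]}=(\vz^{[\vsigma(\valpha)]})^{\pos\valpha}$, and bound $f_\vsigma$ on $\cP^\e$ by a geometric series using $|c_\valpha|\le\delta^{|\valpha|}\norm{f}_{\cC^\delta}$. One slip in your final estimate should be fixed: the inequality $\sum_{k\in\Z}(\delta/\e)^{|k|}=(1+\delta/\e)/(1-\delta/\e)\le(1-\delta/\e)^{-1}$ is false (the left-hand side is the larger quantity), and enlarging the sum to all of $\Z^\ell$ would only give the weaker bound $\bigl((1+\delta/\e)/(1-\delta/\e)\bigr)^\ell$, not the stated one. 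The repair is already built into your own definition of $f_\vsigma$: the sum defining $f_\vsigma$ runs only over those $\valpha$ with $\vsigma(\valpha)=\vsigma$, so in each coordinate the exponent $|\valpha_i|$ ranges over $\Z_{\ge0}$ (indeed over $\Z_{\ge1}$ when $\vsigma_i=-1$), whence each coordinate contributes at most $\sum_{k\ge0}(\delta/\e)^k=(1-\delta/\e)^{-1}$ and the product gives exactly $(1-\delta/\e)^{-\ell}\norm{f}_{\cC^\delta}$, which is precisely the computation in the paper.
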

\begin{proof}
  For the first statement we simply collect all summands with index
  $\valpha$ of sign $\vsigma$ in~\eqref{eq:norm-Laurent} into
  $f_\vsigma$ (if $\valpha_j=0$ we arbitrarily treat it as positive
  for this purpose). For the second part, we have
  \begin{equation}
    \norm{f_{1,\ldots,1}(\vw)}_{\cP^\e}  \le \sum_{\valpha\in(\Z_{\ge0})^\ell} |c_\valpha| \norm{\vw^\valpha}_{\cP^\e}\le
    \sum_{\valpha\in(\Z_{\ge0})^\ell} (\delta/\e)^{|\valpha|}=(1-\delta/\e)^{-\ell},
  \end{equation}
  and similarly for the other choices of the signs.
\end{proof}

\subsection{Subanalyticity}

We say that a function is subanalytic on a domain $U\subset\C^n$ if it
is defined there and its graph over $U$ forms a subanalytic set. Our
goal in this section is to prove the following proposition.

\begin{Prop}\label{prop:cell-subanalytic}
  Let $0<\delta<1$. A complex cell $\cC$ admitting a
  $\delta$-extension is a subanalytic set. If $f\in\cO_b(\cC^\delta)$
  then $f$ is subanalytic on $\cC$.
\end{Prop}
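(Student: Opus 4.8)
The plan is to prove both assertions together, by induction on the length $\ell$ of the cell: \emph{for every $\delta\in(0,1)$ and every complex cell $\cC$ of length $\ell$ admitting a $\delta$-extension, (a) $\cC$ is subanalytic as a subset of $\C^\ell\cong\R^{2\ell}$, and (b) every $f\in\cO_b(\cC^\delta)$ is subanalytic on $\cC$.} The case $\ell=0$ is trivial. Two observations make the induction close. First, if $\cC=\cC_{1..\ell-1}\odot\cF$ admits a $\delta$-extension then, unwinding Definitions~\ref{def:cells} and~\ref{def:cell-ext}, each radius entering $\cF$ is a \emph{bounded} holomorphic function on the extended base $\cC_{1..\ell-1}^\delta$, and more generally every radius occurring at a lower level lives over an extended base of strictly smaller length; so assertion (b) is available for all the radii. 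Second, since every radius is a bounded function, every cell in sight is a bounded subset of a complex space and every relevant function is bounded, so the whole discussion takes place inside the category of bounded subanalytic sets, where finite unions, differences, and images under coordinate projections with bounded fibres all preserve subanalyticity.

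For the inductive step of (a), write $\cC=\cC_{1..\ell-1}\odot\cF$. By (a) at length $\ell-1$ the base $\cC_{1..\ell-1}$ is subanalytic, and by (b) at length $\ell-1$ each radius $\rho$ defining $\cF$ is subanalytic on $\cC_{1..\ell-1}$, hence so is the bounded real function $|\rho|^2$. Then $\cC$ is carved out of the bounded subanalytic set $\cC_{1..\ell-1}\times\C$ by the fibrewise subanalytic conditions $|z_\ell|^2<|r|^2$ (type $D$), the latter together with $z_\ell\ne0$ (type $D_\circ$), $|r_1|^2<|z_\ell|^2<|r_2|^2$ (type $A$), or $z_\ell=0$ (type $*$); so $\cC$ is subanalytic.

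For the inductive step of (b), let $f\in\cO_b(\cC^\delta)$. Apply Corollary~\ref{cor:laurent-disc-decmp} to write $f=\sum_{\vsigma\in\{-1,1\}^\ell}f_\vsigma\circ m_\vsigma$, where $m_\vsigma(\vz)=\vz^{[\vsigma]}$ is the normalized monomial map into the standard polydisc $\cP=D(1)^{\times\ell}$ (see~\secref{sec:laurent}), and $f_\vsigma\in\cO(\cP^\delta)$, $\cP^\delta=D(\delta^{-1})^{\times\ell}$, with $\norm{f_\vsigma}_{\cP^\e}<\infty$ for each $\e\in(\delta,1)$. Fix such an $\e$: since $1<\e^{-1}<\delta^{-1}$, the closed polydisc $\overline{\cP}$ is a compact subset of the open polydisc $\cP^\e$ on which $f_\vsigma$ is holomorphic and bounded, so, read as a function of $2\ell$ real variables, $f_\vsigma$ is real-analytic on a neighbourhood of the compact box $\overline{\cP}\subset\R^{2\ell}$; hence $f_\vsigma\rest{\overline{\cP}}$ is a restricted analytic function, its graph is definable in $\R_\an$, and in particular $f_\vsigma$ is subanalytic on $\cP$. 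On the other hand every component of $m_\vsigma$ is, by construction (\secref{sec:laurent}), either a constant or a quotient of the coordinate $\vz_j$ by one of the nonvanishing bounded radii of $\cF_j$, bounded by $1$ on $\cC$; that radius is subanalytic on $\cC_{1..j-1}$ by (b) at length $j-1<\ell$, so the component --- a bounded semialgebraic relation between $\vz_j$ and the value of the radius --- is subanalytic on $\cC$, whence $m_\vsigma\colon\cC\to\cP$ is a subanalytic map. Therefore each $f_\vsigma\circ m_\vsigma$ is subanalytic on $\cC$ (it is the image of the fibre product of the graphs of $m_\vsigma$ and $f_\vsigma$ under the projection forgetting the intermediate $\cP$-coordinate, which is proper because $\cP$ is bounded), and so is the finite sum $f$. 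This completes the induction.

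I expect the main obstacle to be that $\cC$ is \emph{not} relatively compact in $\cC^\delta$ whenever $\cF$ carries a fibre of type $D_\circ$ or $A$ --- for instance the puncture of $D_\circ^\delta$ is a boundary point of $\cC$ that is missing from $\cC^\delta$ --- so one cannot simply argue ``$f$ is holomorphic near $\overline{\cC}$, hence restricted-analytic''. The normalized Laurent decomposition (Proposition~\ref{prop:norm-Laurent}, Corollary~\ref{cor:laurent-disc-decmp}) is precisely the tool that circumvents this: it presents an arbitrary $f\in\cO_b(\cC^\delta)$ as a finite sum of \emph{honestly} restricted analytic functions on one fixed compact polydisc, precomposed with the explicit semialgebraic monomial maps $m_\vsigma$, thereby pushing all the noncompact behaviour of $\cC$ into the tame monomial factors.
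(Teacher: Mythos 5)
Your proof is correct and follows essentially the same route as the paper's: induction on the length, subanalyticity of the radii giving subanalyticity of the cell, and the normalized Laurent decomposition of Corollary~\ref{cor:laurent-disc-decmp} together with the observation that all divisions occurring in $\vz^{[\vsigma]}$ are restricted (bounded by $1$ on $\cC$). The only difference is cosmetic: where the paper delegates the last step to the subanalytic language of Denef--van den Dries, you spell it out via restricted-analytic functions on $\overline{\cP}$ and a proper projection, which is fine (just note that for $\vsigma_j=-1$ on an annulus fiber the monomial component is the radius divided by the coordinate rather than the reverse, which does not affect your argument).
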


\begin{proof}
  We prove both claims by induction on the length of $\cC$, the case
  of length zero being trivial. Let $\cC=\cC_{1..\ell}\odot\cF$.  Then
  the radii $r(z)$ or $r_1(z),r_2(z)$ of $\cF$ are subanalytic by
  induction and it easily follows that $\cC$ is subanalytic. Now let
  $f\in\cO_b(\cC^\delta)$. By Corollary~\ref{cor:laurent-disc-decmp}
  we may write $f$ as a sum of $2^{\ell+1}$ summands
  $f_\vsigma(\vz^{[\vsigma]})$. In the definition of $\vz^{[\vsigma]}$
  each of the radii involved are subanalytic on $\cC_{1..\ell}$ by
  induction, and each of the divisions involved are ``restricted'' in
  the sense of \cite{dvdd:subanalytic}, i.e. we always have
  $|\vz^{[\vsigma]}|\le 1$ for $\vz\in\cC$. It then follows, for
  instance using the subanalytic language of \cite{dvdd:subanalytic},
  that the graph of each $f_\vsigma$ and hence of $f$ is indeed
  subanalytic over $\cC$.
\end{proof}

\begin{Rem}
  In Proposition~\ref{prop:cell-subanalytic} one cannot replace
  subanalyticity by the stronger condition of
  \emph{semianalyticity}. For example let
  $\cC:=D_\circ(1)\odot D(\vz_1)$ and
  $f(\vz_1,\vz_2):=\vz_2 e^{\vz_2/\vz_1}$. Then $f\in\cO_b(\cC^{1/2})$
  but the graph of $f\rest\cC$ is not semianalytic (see Osgood's
  example \cite[Example~2.14]{bm:subanalytic}).

  The statement of Proposition~\ref{prop:cell-subanalytic} also fails
  if we do not assume $\delta$-extendability of $\cC$, as one can see
  in the example $\cC=D(1)$ and $f=\exp(\frac{z+1}{z-1})$.
\end{Rem}

\section{Semialgebraic and subanalytic cells}

In our terminology, a cell in an o-minimal structure (below
\emph{cylindrical cell}, see \cite{vdd:book}) is the $\odot$-product
of a sequence of open intervals and points with the endpoints/points
given by continuous definable functions. The image $f(\R_+\cC)$ of a
real prepared cellular map is itself a cylindrical cell. More
explicitly if $f(\vz_{1..\ell})_j=\vz_j^{q_j}+\phi_j(\vz_{1..j-1})$
and $\cC=\cF_1\odot\cdots\odot\cF_\ell$ then
\begin{equation}
  f(\R_+\cC) = \cI_1\odot\cdots\odot\cI_\ell, \quad I_j :=
  \begin{cases}
    \{\phi_j\} & \cF_j = * \\
    (\phi_j,\phi_j+|r^{q_j}|) & \cF_j = D(r),D_\circ(r) \\
    (\phi_j+|r_1^{q_j}|,\phi_j+|r_2^{q_j}|) & \cF_j = A(r_1,r_2).
  \end{cases}
\end{equation}
Furthermore, we note that $f$ restricts to a real-analytic
diffeomorphism from $\R_+\cC$ onto its image.

A cylindrical cell is called compatible with a continuous function $F$
if $\sign(F)$ is constant on the cell. Since $\R_+\cC$ is connected,
if $f$ is compatible with $F$ in the sense of
Definition~\ref{def:compatible} then cylindrical cell $f(\R_+\cC)$ is
compatible with $F$.

\subsection{Cellular parametrizations for semialgebraic and
  subanalytic sets}

\begin{Def}\label{def:semialg-complexity}
  A semialgebraic set $S\subset\R^n$ has \emph{complexity} $(\ell,\beta)$
  if $S=\pi_{1..n}(\tilde S)$ where $\tilde S\subset\R^\ell$ is given
  by a sign condition
  \begin{equation}
    \tilde S = \{ \sign(P_1)=\sigma_1,\ldots,\sign(P_N)=\sigma_N \}, \qquad \sigma_1,\ldots,\sigma_N\in\{-1,0,1\}
  \end{equation}
  where $P_1,\ldots,P_N\in\R[\vx_{1..\ell}]$ have degrees at most
  $\beta$ and $N\le\beta$. If $\ell=n$ we will say simply that $S$ has
  complexity $\beta$.
\end{Def}

The following semialgebraic parametrization result is a simple
consequence of the CPT.

\begin{Cor}\label{cor:cpt-semialg}
  Let $\rho,\sigma\in\R_+$ and let $S\subset(0,1)^n$ be semialgebraic
  of complexity $(\ell,\beta)$. Then there exist
  $\poly_\ell(\beta,\rho,1/\sigma)$ real cellular maps
  $f_j:\cC^\hsigma_j\to\cP_n^\hrho$, each of complexity
  $\poly_\ell(\beta)$, such that $f_j(\R_+\cC_j^\hsigma)\subset S$ and
  $\cup_j f_j(\R_+\cC_j)=S$.
\end{Cor}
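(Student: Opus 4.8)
The plan is to deduce Corollary~\ref{cor:cpt-semialg} from the CPT (Theorem~\ref{thm:cpt}) by a standard reduction: present the semialgebraic set $S$ as (a projection of) a set cut out by sign conditions on polynomials, apply the complex cellular parametrization to these polynomials on a suitable polydisc cell, and then read off the real cylindrical cells from the real parts of the resulting prepared cellular maps.

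First I would handle the case $\ell=n$, so that $S\subset(0,1)^n$ is itself given by a sign condition $\sign(P_k)=\sigma_k$, $k=1,\ldots,N$, with $\deg P_k\le\beta$ and $N\le\beta$. Viewing $(0,1)^n=\R_+\cP_n$ as the positive real part of the standard polydisc cell, and noting that $\cP_n$ admits a $\hrho$-extension for every $\rho$, I would apply the real version of Theorem~\ref{thm:cpt} to the cell $\cP_n^\hrho$ and the holomorphic functions $F_k:=P_k$ (these are algebraic of complexity $\poly(\beta)$, bounded on $\cP_n^\hrho$). This produces a real cellular cover $\{f_j:\cC_j^\hsigma\to\cP_n^\hrho\}$ of size $\poly_\ell(\beta,\rho,1/\sigma)$ and complexity $\poly_\ell(\beta)$, with each $f_j$ prepared and compatible with every $P_k$. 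Compatibility means $f_j^*P_k$ vanishes either identically or nowhere on $\cC_j$; since $\R_+\cC_j$ is connected, $\sign(P_k)$ is constant on $f_j(\R_+\cC_j)$, as recorded in the discussion of cylindrical cells preceding the corollary. I would then discard those $f_j$ for which the (constant) sign pattern on $f_j(\R_+\cC_j)$ fails to match $(\sigma_1,\ldots,\sigma_N)$, and keep the rest. Since $\{f_j(\R_+\cC_j)\}$ covers $\R_+\cP_n=(0,1)^n\supset S$, and $S$ is precisely the locus where the sign pattern matches, the retained maps satisfy $f_j(\R_+\cC_j^\hsigma)\subset S$ (using that $f_j$ restricts to a diffeomorphism of $\R_+\cC_j^\hsigma$ onto a cylindrical cell, hence the sign condition persists on the $\hsigma$-extension as well — here one may need $f_j$ itself to extend compatibly, which follows since $\cC_j^\hsigma$ is connected and $f_j^*P_k$ is nonvanishing there) and $\bigcup_j f_j(\R_+\cC_j)=S$.

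For the general case, where $S=\pi_{1..n}(\tilde S)$ with $\tilde S\subset\R^\ell$ of complexity $(\ell,\beta)$, I would run the same argument with $\cP_\ell^\hrho$ in place of $\cP_n^\hrho$ and the polynomials $P_1,\ldots,P_N\in\R[\vx_{1..\ell}]$, obtaining real prepared cellular maps $f_j:\cC_j^\hsigma\to\cP_\ell^\hrho$ whose images $f_j(\R_+\cC_j)$ are cylindrical cells covering $\tilde S$. Post-composing with the coordinate projection $\pi_{1..n}$ gives maps into $(0,1)^n$ whose images cover $S$; the point is that $\pi_{1..n}$ restricted to a real prepared cellular map's image is again expressible in cellular form after possibly reindexing coordinates, so that $\pi_{1..n}\circ f_j$ can be taken to be a real cellular map onto its image in $\cP_n^\hrho$ (here one uses that the first $n$ coordinates of a prepared cellular map depend only on the first $n$ variables, so restriction to coordinates $1,\ldots,n$ is itself a prepared cellular map $\cC_j^\hsigma\to\cP_n^\hrho$ after relabeling). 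The complexity and size bounds are inherited unchanged from Theorem~\ref{thm:cpt} since $\poly_\ell=\poly_\ell$, and when $\ell=n$ this is the simpler statement.

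The main obstacle, I expect, is the bookkeeping around projections and extensions: one must check that discarding the ``wrong sign'' cells is legitimate and that the inclusion $f_j(\R_+\cC_j^\hsigma)\subset S$ holds on the \emph{extension} (not just on $\cC_j$ itself), which requires knowing the relevant $f_j^*P_k$ is nonvanishing on all of $\cC_j^\hsigma$ — this is exactly what compatibility plus connectedness of $\cC_j^\hsigma$ supplies, but it must be stated carefully. A secondary (routine) point is verifying that the coordinate-projection post-composition stays within the cellular-map formalism; this is immediate from the triangular structure $\vw_j=\phi_j(\vz_{1..j})$ of cellular maps but should be spelled out. Everything else is a direct invocation of the CPT.
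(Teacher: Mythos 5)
Your argument in the case $\ell=n$ is essentially the paper's proof: apply the real CPT to $\cP_n^\hrho$ with the defining polynomials, use compatibility together with connectedness of $\R_+\cC_j$ (and of $\R_+\cC_j^\hsigma$) to get a constant sign pattern, discard the cells whose pattern does not match, and in the projected case truncate to the first $n$ coordinates exactly as the paper does with $\cC_j:=(\tilde\cC_j)_{1..n}$, $f_j:=(\tilde f_j)_{1..n}$.

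However, there is a genuine gap in your treatment of the general case $\ell>n$. You propose to ``run the same argument with $\cP_\ell^\hrho$'' and the polynomials $P_1,\ldots,P_N\in\R[\vx_{1..\ell}]$, obtaining cells whose positive real images cover $\tilde S$. But $\tilde S$ is a subset of $\R^\ell$, not of $(0,1)^\ell=\R_+\cP_\ell$: only the first $n$ coordinates of points of $\tilde S$ are constrained to $(0,1)$, while the auxiliary variables $\vx_{n+1},\ldots,\vx_\ell$ that get projected away range over all of $\R$. The CPT applied to $\cP_\ell^\hrho$ can only cover $\R_+\cP_\ell$, so the union of the images $f_j(\R_+\cC_j)$ covers $\tilde S\cap(0,1)^\ell$, whose projection may be a proper subset of $S$; the claimed covering of $S$ then fails. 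The paper closes this gap as a first step by pulling back the variables $\vx_{n+1},\ldots,\vx_\ell$ under the bijection $t\mapsto(2t-1)/(t-t^2)$ between $(0,1)$ and $\R$ (the denominators are positive on $(0,1)$, so after clearing them the sign conditions become polynomial conditions of degree $\poly(\beta)$ on a set $\tilde S\subset(0,1)^\ell$), and only then applies the real CPT to $\cP_\ell^\hrho$. With this reduction inserted, the rest of your argument (sign selection, persistence of the sign on the $\hsigma$-extension via nonvanishing of $f_j^*P_k$, truncation of the prepared cellular maps to the first $n$ coordinates on the truncated cells $(\tilde\cC_j)_{1..n}$) goes through and coincides with the paper's proof, with the stated size and complexity bounds inherited from the CPT.
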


\begin{proof}
  We first reduce to the case $\tilde S\subset(0,1)^\ell$ in the
  notations of Definition~\ref{def:semialg-complexity} by pulling back
  the variables $x_{n+1},\ldots,x_\ell$ by the bijection
  $t\to (2t-1)/(t-t^2)$ between $(0,1)$ and $\R$ (which increases the
  degrees at most polynomially).

  Now apply the real CPT to the cell $\cP_\ell^\hrho$ with the
  collection of polynomials defining $\tilde S$. We obtain a
  collection of real prepared maps
  $\tilde f_j:\tilde \cC^\hsigma_j\to\cP_\ell^\hrho$ with the required
  complexity estimates such that
  $\tilde f_j(\R_+\tilde\cC_j^\hsigma)\subset \tilde S$ and
  $\cup_j \tilde f_j(\R_+\tilde\cC_j)=\tilde S$. The cellular
  structure of $\tilde f_j$ implies that if we now take
  $\cC_j:=(\tilde C_j)_{1..n}$ and $f_j:=(\tilde f_j)_{1..n}$ we have
  indeed $f_j(\R_+\cC_j^\hsigma)\subset S$ and
  $\cup_j f_j(\R_+\cC_j)=S$.
\end{proof}

In an analogous manner one obtains the following subanalytic version.

\begin{Cor}\label{cor:cpt-subanalytic}
  Let $\rho,\sigma\in\R_+$ and let $S\subset(0,1)^n$ be
  subanalytic. Then there exist $\poly_S(\rho,1/\sigma)$ real
  cellular maps $f_j:\cC^\hsigma_j\to\cP_n^\hrho$ such that
  $f_j(\R_+\cC_j^\hsigma)\subset S$ and $\cup_j f_j(\R_+\cC_j)=S$.
\end{Cor}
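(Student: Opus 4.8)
The plan is to follow the proof of Corollary~\ref{cor:cpt-semialg} essentially verbatim, inserting one extra step to handle the one genuine difference between the two settings: a subanalytic set is cut out by \emph{restricted}-analytic functions, which are bounded and holomorphic only on a complex neighbourhood of a compact real box, and in particular do \emph{not} extend to $\cO_b(\cP_\ell^\hrho)$ --- the polydisc $\cP_\ell^\hrho=D(\delta(\rho)^{-1})^{\times\ell}$, with $\delta(\rho)\in(0,1)$ determined by~\eqref{eq:rho-ext-def}, always contains points far from $[0,1]^\ell$. Hence one cannot feed such functions to the real CPT on $\cP_\ell^\hrho$ as in the semialgebraic case, and must first localise.

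First I would reduce, as before, to $S=\pi_{1..n}(\tilde S)$ with $\tilde S$ relatively compact; by the structure theory of $\R_\an$ (\cite{dvdd:subanalytic}, see also \cite{bm:subanalytic}) one may take $\tilde S\subset(0,1)^\ell$ to coincide on $(0,1)^\ell$ with a Boolean combination of sign conditions on real holomorphic functions $g_1,\dots,g_N$, each bounded and holomorphic on a complex neighbourhood of $[0,1]^\ell$ of some uniform width $\eta=\eta(S)>0$. Fix $r_0=r_0(S)>0$ small enough that every polydisc of radius $2r_0$ centred in $[0,1]^\ell$ lies in that neighbourhood, shrinking $r_0$ further to $\le\tfrac13(\delta(\rho)^{-1}-1)$ when $\rho$ is large. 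Cover $[0,1]^\ell$ by $\poly_S(\rho)$ corner boxes $Q_k:=\prod_i(c_{k,i},c_{k,i}+r_0)$ with $c_k\in[0,1-r_0]^\ell$, and for each $k$ let $\tau_k(\vz)=\vz+c_k$ be the corresponding shift. Setting $\cC_0:=D(r_0)^{\times\ell}$ and fixing $\rho_0$ by $\delta(\rho_0)^{-1}=2$, the map $\tau_k$ sends $\R_+\cC_0$ onto $Q_k$ and $\cC_0^{\he{\rho_0}}=D(2r_0)^{\times\ell}$ onto the polydisc $\Delta_k:=\prod_i\bigl(c_{k,i}+D(2r_0)\bigr)$, which by the choice of $r_0$ lies inside the neighbourhood where the $g_i$ are holomorphic and (for large $\rho$) inside $\cP_\ell^\hrho$. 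Thus $\tilde g_{k,i}:=\tau_k^*g_i\in\cO_b(\cC_0^{\he{\rho_0}})$, and as $k$ varies (equivalently $c_k$ varies, and when $\rho$ is large also $r_0$) these lie in a single definable family depending only on $S$.

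Next I would apply the real CPT, in its definable-family form, to $\cC_0^{\he{\rho_0}}$ with the functions $\tilde g_{k,i}$ and output parameter $\sigma$: for each $k$ this yields real prepared cellular maps $\{h_{k,m}:\cC_{k,m}^\hsigma\to\cC_0^{\he{\rho_0}}\}$, each compatible with every $\tilde g_{k,i}$, with $\bigcup_m h_{k,m}(\R_+\cC_{k,m})=\R_+\cC_0$, and $\poly_S(1/\sigma)$ of them in all. Since $\R_+\cC_{k,m}^\hsigma$ is connected, each $g_i\circ\tau_k\circ h_{k,m}$ has constant sign there, so $\tau_k(h_{k,m}(\R_+\cC_{k,m}^\hsigma))\subset\Delta_k\cap\R^\ell$ is contained in, or disjoint from, $\tilde S$; I retain the maps of the first kind, which still cover $\tilde S$. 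For a retained $(k,m)$ set $f_{k,m}:=(\tau_k\circ h_{k,m})_{1..n}$, the restriction to the first $n$ coordinates: since $h_{k,m}$ is prepared cellular and $\tau_k$ is a shift, $f_{k,m}$ is a real prepared cellular map on $(\cC_{k,m})_{1..n}^\hsigma$, and its image lies in $\pi_{1..n}(\Delta_k)\subset D(\delta(\rho)^{-1})^{\times n}=\cP_n^\hrho$, so it is a map into $\cP_n^\hrho$. The cellular structure gives $f_{k,m}(\R_+(\cC_{k,m})_{1..n}^\hsigma)=\pi_{1..n}\bigl(\tau_k(h_{k,m}(\R_+\cC_{k,m}^\hsigma))\bigr)\subset S$, and since the $Q_k$ cover $\tilde S$ while the $h_{k,m}(\R_+\cC_{k,m})$ cover $\R_+\cC_0$, the sets $f_{k,m}(\R_+(\cC_{k,m})_{1..n})$ cover $S$. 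The total count is $\poly_S(\rho)\cdot\poly_S(1/\sigma)=\poly_S(\rho,1/\sigma)$, as required; the remaining components of $\R\cC$ play no role, since the statement concerns only $\R_+\cC_j$.

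The only real departure from Corollary~\ref{cor:cpt-semialg} --- hence the main point to be checked --- is the localisation: one must present the subanalytic set through functions bounded holomorphic on $\he{\rho_0}$-extensions of cells, which is what forces the grid of corner boxes and the shifts $\tau_k$. The two facts requiring verification there are that the grid can be taken of \emph{uniform} mesh (this uses precisely that the describing analytic functions are restricted, hence holomorphic on a uniform complex neighbourhood of a closed box) and that, when $\rho$ is large, refining the mesh so that the translated polydiscs remain inside $\cP_n^\hrho$ costs only a factor $\poly_S(\rho)$. Everything else is the bookkeeping already carried out in the semialgebraic proof.
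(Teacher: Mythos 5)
Your proposal is correct, and at the top level it is the route the paper intends: present $S$ as a projection of a sign-condition set, run the real (definable-family) CPT, use compatibility together with connectedness of positive real parts to sort the cells into those inside and those outside $\tilde S$, and truncate to the first $n$ coordinates exactly as in the proof of Corollary~\ref{cor:cpt-semialg}. The paper itself disposes of the subanalytic case with the single sentence ``in an analogous manner,'' so the one substantive thing you add --- the grid of shifted small polydisc cells $c_k+D(r_0)^{\times\ell}$ with a fixed extension $\he{\rho_0}$, fed to the family version of the CPT so that the per-box count is $\poly_S(1/\sigma)$ uniformly in the shift (and in the mesh $r_0\sim\min(\eta,1/\rho)$ when $\rho$ is large, giving the $\poly_S(\rho)$ factor) --- is precisely the right way to make ``analogous'' rigorous: unlike polynomials, the restricted-analytic data defining $S$ lives in $\cO_b$ of a neighbourhood of $[0,1]^\ell$ only, not of $\cP_\ell^\hrho$, so one cannot apply the CPT to $\cP_\ell^\hrho$ directly. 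Two small points should be tightened. First, the structure theory does not hand you a single Boolean description of $\tilde S$ by finitely many functions analytic on a uniform neighbourhood of all of $[0,1]^\ell$; what it gives directly is a finite union of projections of relatively compact pieces, each so described near its own closed box. This costs only an $O_S(1)$ factor and is in any case absorbed by your localisation, but it should be stated that way. Second, the inference ``constant sign on $\R_+\cC_{k,m}^\hsigma$, hence the image is contained in or disjoint from $\tilde S$'' is not literal as written, because the extended image may leave $(0,1)^\ell$, where your description of $\tilde S$ is no longer asserted; the fix is to include the box inequalities (via the entire functions $x_i$ and $1-x_i$, so no holomorphy issue) among the functions given to the CPT, or equivalently to build them into the Boolean description so that it is valid on all of the translated extended box. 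With these adjustments the argument is complete and matches the intended proof.
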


\begin{Rem}\label{rem:cpt-semi-extra}
  We remark that from the proof it is clear that in
  Corollaries~\ref{cor:cpt-semialg} and~\ref{cor:cpt-subanalytic} one
  can also require the maps $f_j$ to be compatible with an additional
  collection of functions $F_j\in\cO_b(\cP_n^\hrho)$. We also remark
  that by rescaling one can replace the domain $(0,1)^n$ in
  Corollaries~\ref{cor:cpt-semialg} and~\ref{cor:cpt-subanalytic} by
  any other bounded semialgebraic/subanalytic ambient set. We will
  sometimes use $[0,1]^n$.
\end{Rem}

\subsection{Preparation theorems}

The real CPT implies the preparation theorem for subanalytic functions
of Parusinski \cite{parusinski:preparation} and Lion-Rolin
\cite{lion-rolin}, as we illustrate below. We illustrate the algebraic
case here (where we get more effective information) but the
subanalytic case follows in a similar manner. Let $F:(-1,1)^n\to\R$ be
a bounded semialgebraic function and let $G_F\subset\R^n_x\times\R_y$
be its graph. We aim to cover $(0,1)^n$ by cylinders where $F$ admits
a simple expansion.

We apply Corollary~\ref{cor:cpt-semialg} for the set $G_F$, requiring
also that the maps $f_j$ be compatible with $y$ (see
Remark~\ref{rem:cpt-semi-extra}). We obtain maps
$f_j:\cC_j^\hsigma\to \cP_n^{1/2}\times\cP^{1/2}$ such that
$f_j(\R_+\cC_j^\hsigma)\subset G_F$ and $f_j(\R_+\cC_j)$ cover $G_F$,
and moreover each map is compatible with $y$.

Let $f:\cC^\hsigma\to\cP_n^{1/2}\times\cP^{1/2}$ be one of the maps
$f_j$. Since $G_F$ is a graph the type of $\cC$ must end with $*$. It
follows from the monomialization lemma (Lemma~\ref{lem:monomial}) that
on each $\cC^\hsigma$ we have either $f^*y\equiv0$ or
\begin{equation}
  f^* y = \vz^{\valpha(j)} U_j(\vz)
\end{equation}
where $U$ is a holomorphic map bounded away from zero and infinity on
$\cC$. To rewrite this expansion in the $\vx$-coordinates recall that
\begin{equation}\label{eq:vz-v-vx}
  \vz_j = (\vx_j - \phi_j(\vz_{1..j-1}))^{1/\nu_j}
\end{equation}
where we restrict $\vz_j$ to the positive real part $\R_+\cC_j$ and
take the positive branch. Since $F(x)\equiv y$ on $G_F$ we have on
the cylinder $f(\R_+\cC_j)$ the expansion
\begin{equation}
  F(\vx) = \vz^\valpha U(\vz)
\end{equation}
where $\vz$ is given by~\eqref{eq:vz-v-vx}. In other words we have
obtained cylinders where $F$ expands as a monomial with fractional
powers times a unit. This implies the preparation theorem of
\cite{lion-rolin} (in the bounded semialgebraic case). Indeed, in
\cite{lion-rolin} the unit is required to be bounded away from zero
and infinity and satisfy an analytic expansion of the form
$U(\vz) = V(\psi(\vx_{1..n-1},\vx_n))$ where
\begin{multline}\label{eq:lion-rolin-expansion}
  \psi(\vx_{1..n-1},\vx_n) = (\psi_1(\vx_{1..n-1}),\ldots,\psi_s(\vx_{1..n-1}),\\
  \vx_n^{1/p}/a_1(\vx_{1..n-1}),b_1(\vx_{1..n-1})/\vx_n^{1/p}),
\end{multline}
with $p$ a positive integer and $\psi_i,a_1,b_1$ are bounded
subanalytic functions, and $V$ is a non-zero analytic function on the
compact closure of the image of $\psi$. In our case this expansion is
the Laurent expansion of $U_j(\vz)$ with respect to $\vz_n$.

\begin{Rem}
  Note that in comparison to the preparation theorem of
  \cite{lion-rolin} we obtain explicit bounds on the number of
  cylinders and their complexity in the CPT, and effective estimates
  on the monomial $\vz^\alpha$ and the unit $U(\vz)$ in the
  monomialization lemma (Lemma~\ref{lem:monomial}).

  The existence of holomorphic continuations to $\delta$-extensions of
  complex cells also greatly simplifies the fundamental definitions of
  \cite{lion-rolin}. For instance, while the notion of a function
  \emph{reducible} in a cylinder requires a careful inductive
  definition in \cite{lion-rolin}, any holomorphic function compatible
  with a complex cell is \emph{automatically} reducible
  there. Similarly, while the definition of a unit involves a delicate
  analytic expansion~\eqref{eq:lion-rolin-expansion} in
  \cite{lion-rolin}, a unit in a complex cell is a holomorphic
  function satisfying a purely topological definition (the associated
  monomial being equal to zero), which implies the real condition.
\end{Rem}

\section{The principal lemmas}
\label{sec:fund-lemmas}

Let $\cC$ be a complex cell of length $\ell$. For the results
discussed in this section any coordinate with type $*$ can be removed
without loss of generality, so we assume that the type does not
contain $*$. Then $n:=\dim\cC=\ell$.

\subsection{Hyperbolic geometry of complex cells}

For the proofs of the domination, fundamental and monomialization
lemmas of~\secref{sec:cell-topology-geometry} we need some basic
notions of hyperbolic geometry. Recall that the upper half-plane $\H$
admits a unique hyperbolic metric of constant curvature $-4$ given by
$|\d z|/2y$. A Riemann surface $U$ is called \emph{hyperbolic} if its
universal cover is the upper half-plane $\H$. In this case $U$
inherits from $\H$ a unique metric of constant curvature $-4$ which we
denote by $\dist(\cdot,\cdot;U)$ (we sometimes omit $U$ from this
notation if it is clear from the context). In particular a domain
$U\subset\C$ is hyperbolic if its complement contains at least two
points.

\begin{Lem}[\protect{Schwarz-Pick \cite[Theorem~2.11]{milnor:dynamics}}]\label{lem:schwarz-pick}
  If $f:S\to S'$ is a holomorphic map between hyperbolic surfaces
  $S,S'$ then
  \begin{equation}
    \dist(f(p),f(q);S') \le \dist(p,q;S) \qquad \forall p,q\in S.
  \end{equation}
\end{Lem}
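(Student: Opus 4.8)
The plan is to reduce the statement to the model case $S=S'=\D$, where $\D$ carries the hyperbolic metric of constant curvature $-4$, namely $|\d z|/(1-|z|^2)$; this is isometric to $\H$ with the metric $|\d z|/2y$ via a Möbius transformation, so the two models are interchangeable. Since $S$ is hyperbolic, uniformization gives a holomorphic covering map $\pi_S\colon\D\to S$, and likewise $\pi_{S'}\colon\D\to S'$; by the very definition of the hyperbolic metric on $S$ and $S'$, these covering maps are local isometries for the curvature $-4$ metrics. Because $\D$ is simply connected, $f\circ\pi_S\colon\D\to S'$ lifts through $\pi_{S'}$ to a holomorphic map $\tilde f\colon\D\to\D$ with $\pi_{S'}\circ\tilde f=f\circ\pi_S$.

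The heart of the argument is the model case: any holomorphic self-map $g\colon\D\to\D$ is distance non-increasing. First I would prove the infinitesimal form. Fix $z_0\in\D$, put $w_0=g(z_0)$, and choose Möbius automorphisms $\alpha,\beta\in\Aut(\D)$ with $\alpha(0)=z_0$ and $\beta(w_0)=0$; then $h:=\beta\circ g\circ\alpha$ is a holomorphic self-map of $\D$ fixing $0$, so the classical Schwarz lemma gives $|h'(0)|\le1$. Since $\alpha'(0)=1-|z_0|^2$ and $\beta'(w_0)=(1-|w_0|^2)^{-1}$, unwinding this inequality yields
\[
  \frac{|g'(z_0)|}{1-|g(z_0)|^2}\le\frac1{1-|z_0|^2},
\]
i.e. $g$ is infinitesimally contracting for the curvature $-4$ metric. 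Integrating this along a length-minimizing geodesic from $p$ to $q$ shows that the hyperbolic length of its $g$-image is at most $\dist(p,q;\D)$, hence $\dist(g(p),g(q);\D)\le\dist(p,q;\D)$. Applying this with $g=\tilde f$ settles the case of maps $\D\to\D$.

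Finally I would transfer the inequality back to $S,S'$. For a holomorphic covering $\pi\colon\D\to S$ and points $p,q\in S$ one has, for any fixed lift $\tilde p$ of $p$, the identity
\[
  \dist(p,q;S)=\inf\{\dist(\tilde p,\tilde q;\D):\pi(\tilde q)=q\},
\]
which follows by lifting and projecting paths and using that $\pi$ is a local isometry. Fix such a $\tilde p$ and any lift $\tilde q$ of $q$. Then $\tilde f(\tilde p)$ and $\tilde f(\tilde q)$ are lifts through $\pi_{S'}$ of $f(p)$ and $f(q)$ respectively, so using that $\pi_{S'}$ does not increase distances and then the model case applied to $\tilde f$,
\[
  \dist(f(p),f(q);S')\le\dist(\tilde f(\tilde p),\tilde f(\tilde q);\D)\le\dist(\tilde p,\tilde q;\D).
\]
Taking the infimum over $\tilde q$ and invoking the displayed identity for $\pi=\pi_S$ gives $\dist(f(p),f(q);S')\le\dist(p,q;S)$.

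I do not expect any genuine obstacle: the argument is entirely classical, and the only points requiring care are consistency of the curvature normalization (so that the covering maps are honest local isometries, not isometries up to scale), existence of the lift $\tilde f$ (which is where simple connectivity of $\D$ enters), and the infimum-over-fibers formula for distances in a covering space. Equivalently, one may simply cite \cite[Theorem~2.11]{milnor:dynamics}.
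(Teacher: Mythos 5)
Your argument is correct and is exactly the classical proof (Schwarz lemma at the origin via Möbius normalization, integration along geodesics, then uniformization, lifting, and the infimum-over-fibers distance formula for coverings); the paper itself supplies no proof and simply cites Milnor, whose argument is the same standard one you reconstruct. The curvature normalization you use ($|\d z|/(1-|z|^2)$ on $\D$, equivalent to $|\d z|/2y$ on $\H$) matches the paper's convention, so nothing further is needed.
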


\subsection{Maps from cells into hyperbolic Riemann surfaces}

We need the following notion of \emph{skeleton} of a cell.

\begin{Def}[Skeleton of a cell]\label{def:skeleton}
  If $\cC$ is a cell whose type does not include $D_\circ$ then we
  define its \emph{skeleton} $\cS(\cC)$ as follows: the skeleton of
  the cell of length zero is the singleton $\C^0$; The skeleton of
  $\cC_{1..\ell}\odot \cF$ is $\cS(\cC_{1..\ell})\odot \partial\cF$
  where
  \begin{align}
    \partial*&:=* & \partial D(r) &:= S(r) & \partial A(r_1,r_2)&:=
                                                                  S(r_1)\cup S(r_2).
  \end{align}
  Each connected component of $\cS(\cC)$ is a product of $\ell$
  circles and points, and the number of connected components is equal
  to $2^\alpha$, where $\alpha$ is the number of symbols $A$ in the
  type of $\cC$.
\end{Def}

In this section we assume that the type of $\cC$ does not contain
$D_\circ$. We assume that $\cC$ admits a $\hrho$ extension for some
$\hrho>0$ and that $f:\cC^\hrho\to X$ is a holomorphic map to a
hyperbolic Riemann surface $X$. We begin by studying the hyperbolic
behavior of $f$ on the skeleton $\cS(\cC)$.

\begin{Lem}\label{lem:fS-hyperbolic-diam}
  Let $S$ be a component of the skeleton $\cS(\cC)$. Then
  \begin{equation}
    \diam(f(S);X) \le n\rho.
  \end{equation}
\end{Lem}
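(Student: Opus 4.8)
The plan is to prove the bound by induction on the length $n=\dim\cC$ (recall that throughout this subsection the type of $\cC$ contains neither $*$ nor $D_\circ$, so every fiber is of type $D$ or $A$). For $n=0$ the skeleton is a single point and $\diam(f(S);X)=0$, so assume $n\ge1$ and write $\cC=\cC_{1..n-1}\odot\cF$, hence $\cC^\hrho=\cC_{1..n-1}^\hrho\odot\cF^\hrho$ and $\cS(\cC)=\cS(\cC_{1..n-1})\odot\partial\cF$. Unwinding the definition of the skeleton, a component $S$ of $\cS(\cC)$ has the form $S=\{(\vz_{1..n-1},w):\vz_{1..n-1}\in S_0,\ |w|=|r_0(\vz_{1..n-1})|\}$, where $S_0$ is a component of $\cS(\cC_{1..n-1})$ and $r_0$ is one of the bounded holomorphic radii defining the fiber $\cF$ (which, since $\cC^\hrho$ is a cell, continue holomorphically to $\cC_{1..n-1}^\hrho$).

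The one point that needs an idea is that the skeleton is cut out using the moduli $|r_j|$ and is therefore only a real-analytic object, so one cannot restrict $f$ to it holomorphically. I would get around this using unit-modulus holomorphic sections. Given $p=(\vz^0_{1..n-1},w^0)\in S$, set $\mu:=w^0/r_0(\vz^0_{1..n-1})$, which satisfies $|\mu|=1$ since $p\in S$, and define $\sigma_\mu:\cC_{1..n-1}^\hrho\to\cC^\hrho$ by $\sigma_\mu(\vz_{1..n-1})=(\vz_{1..n-1},\mu\,r_0(\vz_{1..n-1}))$. A short check using the defining inequalities that $\cC^\hrho$ satisfies (namely $|r|>0$ for a disc fiber, and $0<|r_1|<|r_2|$ for an annulus fiber, valid on all of $\cC_{1..n-1}^\hrho$) shows that $\mu\,r_0(\vz_{1..n-1})\in\cF^\hrho(\vz_{1..n-1})$, so $\sigma_\mu$ is a well-defined holomorphic map, and by construction $\sigma_\mu(\vz^0_{1..n-1})=p$. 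Consequently $f\circ\sigma_\mu:\cC_{1..n-1}^\hrho\to X$ is holomorphic, and the inductive hypothesis applied to it and to the skeleton component $S_0$ yields $\diam\big((f\circ\sigma_\mu)(S_0);X\big)\le(n-1)\rho$.

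With this in hand, fix a second point $q=(\vz'_{1..n-1},w')\in S$ and bound $\dist(f(p),f(q);X)$ in two legs. First, $f(p)=(f\circ\sigma_\mu)(\vz^0_{1..n-1})$ and $f(p_2)$, where $p_2:=\sigma_\mu(\vz'_{1..n-1})=(\vz'_{1..n-1},\mu\,r_0(\vz'_{1..n-1}))$, lie at distance at most $(n-1)\rho$ by the previous paragraph. Second, $p_2$ and $q$ lie over the same base point $\vz'_{1..n-1}$ and their last coordinates both have modulus $|r_0(\vz'_{1..n-1})|$, i.e.\ they both lie on one boundary circle of the fiber $\cF(\vz'_{1..n-1})$ inside $\cF^\hrho(\vz'_{1..n-1})$. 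The restriction of $f$ to this fiber is a holomorphic map $\cF^\hrho(\vz'_{1..n-1})\to X$ between hyperbolic Riemann surfaces, hence length-nonincreasing by Schwarz--Pick (Lemma~\ref{lem:schwarz-pick}); since by Fact~\ref{fact:boundary-length} that boundary circle has length at most $\rho$ in $\cF^\hrho(\vz'_{1..n-1})$, travelling along it from $p_2$ to $q$ maps to a path in $X$ of length at most $\rho$, so $\dist(f(p_2),f(q);X)\le\rho$. The triangle inequality gives $\dist(f(p),f(q);X)\le(n-1)\rho+\rho=n\rho$, and taking the supremum over $p,q\in S$ completes the induction.

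The main obstacle is exactly the one flagged above: feeding the induction a genuine holomorphic map rather than the real-analytic skeleton. The device of choosing the section $\sigma_\mu$ so that it passes through the prescribed point $p$ — rather than some fixed section — is also what produces the sharp constant $n\rho$: a section not through $p$ would force a second ``angular'' leg along a fiber circle and cost an extra $\rho$. Everything else is routine bookkeeping with the definitions of $\cS(\cC)$ and of the $\hrho$-extension, together with Schwarz--Pick and Fact~\ref{fact:boundary-length}.
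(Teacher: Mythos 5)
Your proof is correct and follows essentially the same strategy as the paper's: an induction on dimension in which a holomorphic section through the prescribed skeleton point (built by multiplying a radius function by a unit-modulus constant) reduces one leg to the inductive hypothesis, while the other leg is handled by Schwarz--Pick together with Fact~\ref{fact:boundary-length}, and the two are combined by the triangle inequality. The only difference is bookkeeping: you peel off the last fiber and apply the induction to $f\circ\sigma_\mu$ over the base $\cC_{1..n-1}$, whereas the paper parametrizes a curve by the first coordinate and applies the induction to the restriction of $f$ to the fiber cell over $\vz_1'$ --- mirror images of the same argument.
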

\begin{proof}
  Note that the case $n=1$ follows
  immediately from Fact~\ref{fact:boundary-length} and the
  Schwarz-Pick lemma.

  We proceed by induction and suppose the claim is proved for cells of
  dimension smaller than $n$. Let $\vz,\vz''\in S$. We will construct
  a point $\vz'\in S$ such that $\dist(f(\vz),f(\vz'))\le\rho$ and
  such that $\vz'_1=\vz''_1$. Then $\vz'_{2..n},\vz''_{2..n}$ both
  belong to the $n-1$ dimensional cell $\cC_{\vz_1'}$ and applying the
  inductive hypothesis to the restriction of $f$ to this fiber we
  conclude that $\dist(f(\vz'),f(\vz''))\le(n-1)\rho$ thus completing
  the proof.

  Let us construct $\vz'$. By definition of the skeleton, we have
  \begin{align}
    \vz_1&=\e_1r_1 & \vz_2&=\e_2r_2(\vz_1) & &\dots & \vz_n&=\e_n 
    r_n(\vz_{1..n-1}), \quad \abs{\e_i}=1.
  \end{align}
  where $r_1>0$ and $r_2,\ldots,r_n$ are holomorphic functions on
  $\cC^\hrho$. Let $\cF$ denote the first fiber in $\cC$. We define a map
  $\gamma(\vz_1):\cF^\hrho\to\cC^\hrho$ by $\gamma(\vz_1)=(\vz_{1..n})$ where
  \begin{align}
    \vz_2 &= \e_2 r_2(\vz_1) & &\dots & \vz_n =\e_n r_n(\vz_{1..n-1}).
  \end{align}
  Then $\gamma$ is holomorphic in $\cF^\hrho$ and satisfies
  $\gamma(\vz_1)=\vz$ and $\gamma(\{|\vz_1|=r_1\})\subset S$. We take
  $\vz':=\gamma(\vz''_1)\in S$ and note that
  \begin{equation}
    \dist(f(\vz),f(\vz');X) =
    \dist(f\circ\gamma(\vz_1),f\circ\gamma(\vz''_1);X)\le \dist(\vz_1,\vz''_1;\cF^\hrho)\le\rho
  \end{equation}
  where the first inequality follows from the Schwarz-Pick lemma for
  $f\circ\gamma$ and the second inequality follows since
  $\vz_1,\vz_1''$ belong to the boundary of $\cF$ in $\cF^\hrho$.
\end{proof}

Next, we show (essentially by the open mapping theorem) that the
boundary of $f(\cC)$ is controlled by the skeleton.

\begin{Lem}\label{lem:boundary-vs-skeleton}
  We have the inclusion $\partial f(\cC) \subset f(\cS(\cC))$.
\end{Lem}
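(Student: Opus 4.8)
The plan is to argue by induction on the dimension $n=\dim\cC$, using at each step the open mapping theorem to locate where a boundary point of $f(\cC)$ can come from. The base case $n=0$ is trivial since $\cC$ is a point and equals its own skeleton. For the inductive step, let $w\in\partial f(\cC)$, so there is a sequence $\vz^{(k)}\in\cC$ with $f(\vz^{(k)})\to w$. First I would pass to the projection $\pi:\cC\to\cC_{1..n-1}$ onto the base cell $\cC'=\cC_{1..n-1}$, writing $\cC=\cC'\odot\cF$ for the top fiber $\cF\in\{D(r),A(r_1,r_2)\}$ (the case $\cF=*$ reduces immediately to the base). After passing to a subsequence we may assume $\vz'^{(k)}:=\pi(\vz^{(k)})$ converges in $\overline{\cC'}$; the two cases to analyze are whether the limit lies in $\cC'$ or in $\partial\cC'$.

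If $\vz'^{(k)}\to p\in\cC'$, then consider the restriction $f_p:=f|_{\{p\}\odot\cF_p}$, a holomorphic function on the one-dimensional fiber $\cF_p$ (a disc or annulus), and compare $f(\vz^{(k)})$ with $f_p$. By continuity of $f$ up to the extension and a normal-families/limiting argument, $w\in\overline{f_p(\cF_p)}$, hence either $w\in f_p(\cF_p)\subset f(\cC)$ — impossible since $w$ is a genuine boundary point — or $w\in\partial f_p(\cF_p)$. By the open mapping theorem applied to the non-constant holomorphic map $f_p$ on the one-dimensional domain $\cF_p$, a boundary value of $f_p$ is attained only as a limit along $\partial\cF_p$, i.e.\ $w\in f_p(\partial\cF_p)\subset f(\cS(\cC))$ (if $f_p$ is constant the image is a single point already in $f(\cS(\cC))$ via any skeleton point over $p$). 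If instead $\vz'^{(k)}\to p\in\partial\cC'$, then, again after refining the subsequence, $w$ is an accumulation value of $f$ along points whose base projections approach $\partial\cC'$; comparing with the base cell one gets $w\in\overline{f(\cS(\cC'))\odot\cF}$ restricted appropriately, and applying the inductive hypothesis to the base together with the one-dimensional open mapping argument in the fiber places $w\in f(\cS(\cC'))$-over-$\partial\cF$ $=f(\cS(\cC))$.

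The cleanest way to organize both cases uniformly is: cover $\cC$ by the projection $\pi$, note $f(\cC)=\bigcup_{p\in\cC'}f(\{p\}\odot\cF_p)$, and observe $\partial f(\cC)\subset \overline{\bigcup_p f_p(\partial\cF_p)}\cup\overline{\bigcup_{p\to\partial\cC'}f_p(\cF_p)}$, the first term because of the open mapping theorem fiberwise and the second because a boundary point of a union can only arise from the closure; the first term is contained in $f(\cS(\cC))$ directly, and the second is handled by the inductive hypothesis applied to $\cC'$ (carrying the fiber along). The main obstacle I anticipate is making the ``limiting'' step rigorous: one must ensure that when $\vz'^{(k)}\to p$ the images $f(\vz^{(k)})$ really do accumulate only on $\overline{f_p(\cF_p)}$ and not on some larger set, which requires using the holomorphic continuation of $f$ to the extension $\cC^\hrho$ (so that $f$ is continuous on a neighborhood of $\overline\cC$) rather than just on $\cC$ — this is exactly why the hypothesis $f:\cC^\hrho\to X$ with an honest extension is used. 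Once continuity on $\overline\cC$ is in hand, the rest is a routine compactness and open-mapping argument.
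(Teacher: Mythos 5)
Your reduction controls only the \emph{last} coordinate, and the step where you then land in the skeleton is false. In Case 1 you argue $w\in f_p(\partial\cF_p)$ and assert $f_p(\partial\cF_p)\subset f(\cS(\cC))$; likewise in your ``cleanest'' reorganization you claim the term $\bigcup_p f_p(\partial\cF_p)$ is ``contained in $f(\cS(\cC))$ directly''. But by Definition~\ref{def:skeleton} the skeleton is built recursively, $\cS(\cC)=\cS(\cC_{1..n-1})\odot\partial\cF$: a point $(p,\zeta)$ with $\zeta\in\partial\cF_p$ lies in $\cS(\cC)$ only if $p$ itself lies in $\cS(\cC_{1..n-1})$, and over an interior base point $p$ there are \emph{no} skeleton points at all. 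So after your open-mapping step in the fiber you have only placed the preimage on the fiber boundary; the actual content of the lemma --- pushing \emph{all} coordinates onto boundary circles --- is untouched. The same defect sinks the parenthetical for constant $f_p$ (``via any skeleton point over $p$''), and it also undermines your Case 2: the inductive hypothesis is a statement about a single holomorphic map on $\cC_{1..n-1}^{\hrho}$, but you never produce such a map (the fiber coordinate is not a function of the base), nor do you check that $w$ is a boundary value of its image. A smaller point: ``impossible since $w$ is a genuine boundary point'' needs $f$ non-constant on $\cC$ (so $f(\cC)$ is open); the constant case must be dispatched separately, though it is trivial.

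The missing idea is exactly the device the paper uses. Given a preimage $\vz\in\bar\cC$ of the boundary value, let $k$ be the largest index at which $\vz$ is strictly inside its fiber, chosen so that $k$ is minimal among all preimages, and define the one-dimensional holomorphic family $\gamma(t)$ that varies the $k$-th coordinate over the whole fiber while \emph{rescaling} each later coordinate by $r_{j,i(j)}(z_{1..j-1})/r_{j,i(j)}(\vz_{1..j-1})$ so that it stays on its boundary circle. Then the dichotomy does the work: if $f\circ\gamma$ is non-constant it is open, contradicting that $p$ is a boundary value; if it is constant one slides $t$ to $\partial A$, producing a new preimage with strictly smaller $k$, contradicting minimality (or landing in $\cS(\cC)$, finishing the proof). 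This ``carry the later coordinates along holomorphically'' construction is what lets one trade an interior coordinate for a boundary one without leaving $\bar\cC$ or changing the value $p$; your sequence-and-projection scheme has no substitute for it, so as written the proof does not go through.
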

\begin{proof}
  We assume for simplicity that the type of $\cC$ is $A\cdots A$
  (other cases are treated similarly). Let $p\in\partial f(\cC)$ and
  fix a point $\vz\in\cC$ such that $f(\vz)=p$. If $\vz\in\cS(\cC)$ then we
  are done. Otherwise one of the following holds
  \begin{equation}\label{eq:k-index-def}
    \begin{aligned}
      r_{1,1} < &|\vz_1|< r_{1,2}  \\
      |r_{2,1}(\vz_1)| < &|\vz_2| <|r_{2,2}(\vz_1)| \\
      &\vdots \\
      |r_{n,1}(\vz_{1..n-1})| < &|\vz_n| < |r_{n,2}(\vz_{1..n-1})|.
    \end{aligned}
  \end{equation}
  Let $k$ be the largest index for which such inequalities hold. Let
  \begin{equation}
    A := A(r_{k,1}(\vz_{1..k-1}),r_{k,2}(\vz_{1..k-1})).
  \end{equation}
  We suppose further that $\vz$ is chosen such that $k$ is the minimal
  possible. We define a map $\gamma(t):A^\delta\to\cC$ by
  $\gamma(t) =(z_{1..n})$ where $z_{1..k-1}=\vz_{1..k-1},z_k=t$ and
  \begin{equation}
    z_{j+1} = \frac{\vz_{j+1}}{r_{j+1,i(j+1)}(\vz_{1..j})} r_{j+1,i(j+1)}(z_{1..j}),  \qquad
    j\ge k
  \end{equation}
  where $i(j)$ is the index, either 1 or 2, such that equality instead
  of inequality holds in the $j$th line
  of~\eqref{eq:k-index-def}. By definition we have $\gamma(\vz_k)=\vz$.

  If $f\circ\gamma$ is non-constant then by the open mapping theorem
  $p=f\circ\gamma(\vz_k)$ is an interior point of
  $f\circ\gamma(A)\subset f(\cC)$ contrary to our
  assumption. Otherwise we have $p=f(\vz')$ where $\vz'=\gamma(t)$ and
  $t$ is any point on $\partial A$. The index $k$ obtained for $\vz'$
  is by definition smaller than that obtained for $\vz$, which
  contradicts our choice of $\vz$.
\end{proof}

Finally, we show that under a suitable topological condition the
hyperbolic diameter of $f(\cC)$ itself can be bounded.

\begin{Lem}\label{lem:fC-hyperbolic-diam}
  Suppose that $f_*(\pi_1(\cC))=\{e\}\subset\pi_1(X)$. Then
  \begin{equation}
    \diam(f(\cC);X)<2^nn\rho.
  \end{equation}
\end{Lem}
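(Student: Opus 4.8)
The point of the hypothesis $f_*(\pi_1(\cC))=\{e\}$ is to permit lifting to the universal cover, after which the target is simply connected. If $f$ is constant the claim is trivial, so assume not. Since the inclusion $\cC\hookrightarrow\cC^\hrho$ is a homotopy equivalence (each fibre $\cF\hookrightarrow\cF^\hrho$ is one), we get $f_*(\pi_1(\cC^\hrho))=\{e\}$, so $f$ lifts through the universal covering $q\colon\tilde X\to X$ to a holomorphic $\tilde f\colon\cC^\hrho\to\tilde X$. As $X$ is hyperbolic, $\tilde X$ is conformally $\D$ with its hyperbolic metric, and $q$ is a local isometry, hence $1$-Lipschitz; therefore $\diam(f(\cC);X)\le\diam(\tilde f(\cC);\tilde X)$. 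It thus suffices to prove: \emph{for every holomorphic $g\colon\cC^\hrho\to\D$ one has $\diam(g(\cC);\D)<2^n n\rho$} (the $\pi_1$-condition now being automatic). Deleting coordinates of type $*$, we may assume the length of $\cC$ equals $n$ and every fibre has type $D$ or $A$, and we argue by induction on $n$.

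Two observations drive the argument. First, by Lemma~\ref{lem:boundary-vs-skeleton} (applied to $g$, with $\D$ a hyperbolic Riemann surface) $\partial g(\cC)\subseteq g(\cS(\cC))$; since $\cC$ is relatively compact in $\cC^\hrho$, the set $g(\cC)$ is open, connected and relatively compact in $\D$, and one deduces (examining the unbounded component of the complement of $\overline{g(\cC)}$) that $\overline{g(\cC)}$ is contained in the \emph{hull} of $g(\cS(\cC))$ — the union of $g(\cS(\cC))$ with the bounded components of its complement in $\D$ — whose topological boundary is a subcontinuum of $g(\cS(\cC))$. Second, $\cS(\cC)$ has $2^\alpha\le 2^n$ connected components $S_j$ (one per choice of inner/outer circle over the $\alpha$ annulus-fibres), each $g(S_j)$ connected of hyperbolic diameter $\le n\rho$ by Lemma~\ref{lem:fS-hyperbolic-diam}. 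So the whole problem reduces to controlling how these $\le 2^n$ small connected sets sit in $\D$, which is done inductively on the fibres.

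For the inductive step write $\cC=\cC_{1..n-1}\odot\cF$. If $\cF=D(r)$, rescaling the last coordinate provides, through any prescribed point of $\cC$, a holomorphic section $\sigma\colon\cC_{1..n-1}^\hrho\to\cC^\hrho$; the inductive hypothesis applied to $g\circ\sigma$ moves the first $n-1$ coordinates at cost $<2^{n-1}(n-1)\rho$, and moving the last coordinate inside the one-dimensional extended disc fibre costs $\le\rho$ by the Schwarz–Pick estimate (the fibre $D(r)$ having conformal diameter $\le\rho$ in $D^\hrho(r)$, cf.\ Fact~\ref{fact:boundary-length}), giving a total $<2^n n\rho$. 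If $\cF=A(r_1,r_2)$ the fibre cannot be crossed cheaply — its conformal diameter in $A^\hrho(r_1,r_2)$ is unbounded — so instead we use that $g(\cC)$ is \emph{sandwiched} between the two sheet images $g(\cC_{1..n-1}\odot S(r_1))$ and $g(\cC_{1..n-1}\odot S(r_2))$, each point of $\cC$ lying on a radial arc in the last coordinate joining one sheet to the other. By the same rescaling argument (now sliding the last coordinate along a boundary circle, of length $\le\rho$ in $A^\hrho$ by Fact~\ref{fact:boundary-length}) each sheet image has diameter $<2^{n-1}(n-1)\rho+\rho$; since $\partial g(\cC)$ lies in the closures of these two connected sheet images, the planar-topology observation of the preceding paragraph confines $\overline{g(\cC)}$ to the hull of their union, of diameter $<2^n n\rho$. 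Strictness is inherited from the strict one-dimensional estimates (Fact~\ref{fact:boundary-length} being strict for $r_1<r_2$) together with $g$ non-constant.

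The main obstacle is exactly the annulus-fibre case: because traversing an annulus of large conformal modulus is hyperbolically expensive, one cannot simply imitate the coordinate-by-coordinate argument of Lemma~\ref{lem:fS-hyperbolic-diam}. The resolution rests on (i) the confinement of $\partial g(\cC)$ to the skeleton (Lemma~\ref{lem:boundary-vs-skeleton}), (ii) the shortness of each skeleton boundary circle inside the extension (Fact~\ref{fact:boundary-length}), and (iii) the elementary but essential planar fact that a relatively compact region whose boundary is confined to a bounded continuum lies in the hull of that continuum. Carefully tracking the constants in the hull estimate — in particular ensuring the two sheet images contribute only through their union rather than additively — is the delicate point where one genuinely uses the geometry of $\D$ rather than the mere combinatorics of the cell.
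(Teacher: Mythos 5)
Your overall architecture (lift to the universal cover, control the image of the skeleton via Lemma~\ref{lem:fS-hyperbolic-diam}, confine $\partial g(\cC)$ to $g(\cS(\cC))$ via Lemma~\ref{lem:boundary-vs-skeleton}, then finish by plane topology) is the paper's architecture, and your numerical bookkeeping for the sheet images is fine. But the topological step that is supposed to close the argument has a genuine gap. You assert that the ``hull'' of $g(\cS(\cC))$ (and, in the annulus step, of the union of the two sheet images) has topological boundary a subcontinuum of that set, and hence diameter $<2^nn\rho$. For a \emph{disconnected} compactum this is simply false: two small closed discs far apart in $\D$ have no bounded complementary components to fill, so the hull is the two discs themselves and its boundary is disconnected; its diameter is the (a priori uncontrolled) distance between the discs. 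Nothing you have established at that point prevents the two sheet images $g(\cC_{1..n-1}\odot S(r_1))$ and $g(\cC_{1..n-1}\odot S(r_2))$ from being far apart in $\D$ --- each is small, but their mutual distance is exactly what is at stake (indeed their closeness is a \emph{consequence} of the lemma, not an admissible input). So the claim ``the hull of their union, of diameter $<2^nn\rho$'' is a non sequitur, and your closing remark that the sheets should ``contribute only through their union rather than additively'' has the difficulty backwards: an additive (chaining) bound is harmless and already gives $2(2^{n-1}(n-1)\rho+\rho)\le 2^nn\rho$; the danger one must exclude is a union of mutually distant pieces.

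The correct continuum is not the boundary of the hull of the (possibly disconnected) skeleton image, but $\Gamma:=\partial U$, where $U$ is the unbounded component of $\D\setminus\overline{F(\cC)}$ --- the outer boundary of the image itself. Its connectedness uses that $F(\cC)$ is the image of the \emph{connected} set $\cC$: the paper proves it by a Mayer--Vietoris argument for reduced homology (invoking subanalyticity of $F(\cC)$ to triangulate and avoid plane-topology pathologies), then bounds $\diam(F(\cC);\D)\le\diam(\Gamma;\D)$ by extending geodesics until they cross $\Gamma$ on both sides, and finally bounds $\diam\Gamma$ by chaining: $\Gamma$ is connected and covered by the at most $2^n$ sets $F(S_j)$, each connected of diameter $\le n\rho$, whence $\diam\Gamma\le 2^nn\rho$. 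Note that once this is in place no induction over the fibers is needed at all --- Lemma~\ref{lem:fS-hyperbolic-diam} already handles annulus fibers by sliding along boundary circles, so your disc-fiber section argument is redundant and your annulus-fiber ``sandwich'' is subsumed. To repair your write-up you would have to replace the hull-of-skeleton-image step by the connectedness of $\Gamma$ for $\overline{g(\cC)}$ (or some equivalent statement exploiting connectedness of $g(\cC)$) and then chain; as written, the key inequality is asserted rather than proved.
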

\begin{proof}
  Denote by $\pi:\D\to X$ the universal covering map. By our
  assumption we may lift $f$ to a map $F:\cC^\hrho\to\D$ satisfying
  $f=\pi\circ F$. Since $\pi$ is non-expanding it is enough to prove
  the claim for the hyperbolic diameter of $F(\cC)$. By
  Lemma~\ref{lem:fS-hyperbolic-diam} the hyperbolic diameter of
  $F(S)$, where $S$ is any component of the skeleton $\cS(\cC)$, is
  bounded by $n\rho$. By Lemma~\ref{lem:boundary-vs-skeleton}
  we also have the inclusion $\partial F(\cC) \subset F(\cS(\cC))$.

  Recall that we assume that the type of $\cC$ does not contain
  $D_\circ$ and hence $\bar\cC\subset\cC^\delta$. In particular it
  follows that $Z:=F(\cC)$ is relatively compact, and hence bounded,
  in $\D$. Let $U$ denote the unbounded component of
  $\D\setminus\bar Z$ and set $U^c:=\D\setminus U$ and $\Gamma:=\partial U$. To avoid pathologies
  of plane topology we remark that $Z$ is subanalytic, hence $Z,U$ and
  their boundaries can be triangulated and the Mayer-Vietoris sequence
  for reduced homology
  \begin{equation}
    0=H_1(\D)\to \tilde H_0(\Gamma)\to \tilde H_0(\bar U)\oplus \tilde H_0(U^c)\to \tilde H_0(\D)\to0
  \end{equation}
  is exact. Since $\tilde H_0(\D)=\tilde H_0(\bar U)=0$ we have
  $\tilde H_0(\Gamma)\simeq \tilde H_0(U^c)$. We claim that $U^c$ is
  connected. Assume otherwise and write $U^c=V_1\cup V_2$. Since
  $\bar Z$ is connected, without loss of generality $\bar Z$ is
  contained in $V_1$ and therefore disjoint from $V_2$. In particular
  $\partial V_2$ is disjoint from $\bar Z$, contradicting
  \begin{equation}
    \partial V_2\subset\partial U^c=\partial U\subset\partial Z.
  \end{equation}
  In conclusion we have $\tilde H_0(\Gamma)\simeq \tilde H_0(U^c)=0$,
  i.e. $\Gamma$ is connected.

  We claim that the hyperbolic diameter of $F(\cC)$ is bounded by the
  hyperbolic diameter of $\Gamma$. Indeed, for any two points
  $p,q\in F(\cC)$ let $\ell$ denote the geodesic line connecting them
  and denote by $p'\in\ell\cap\Gamma$ some point on $\ell$ before $p$
  and by $q'\in\ell\cap\Gamma$ some point on $\ell$ after $q$. Then
  $\dist(p,q)\le\dist(p',q')$ which is bounded by the diameter of
  $\Gamma$. Finally, recall that $\Gamma$ is connected and contained
  in the union of $F(S_j)$ where $S_j$ run over the components of
  $\cS(\cC)$ (whose number is at most $2^n$), and the diameter of each
  $F(S_j)$ is bounded by $n\rho$. From this it follows easily
  that the diameter of $\Gamma$ is at most $2^nn\rho$.
  \begin{figure}
    \centering
    \includegraphics[width=0.6\textwidth]{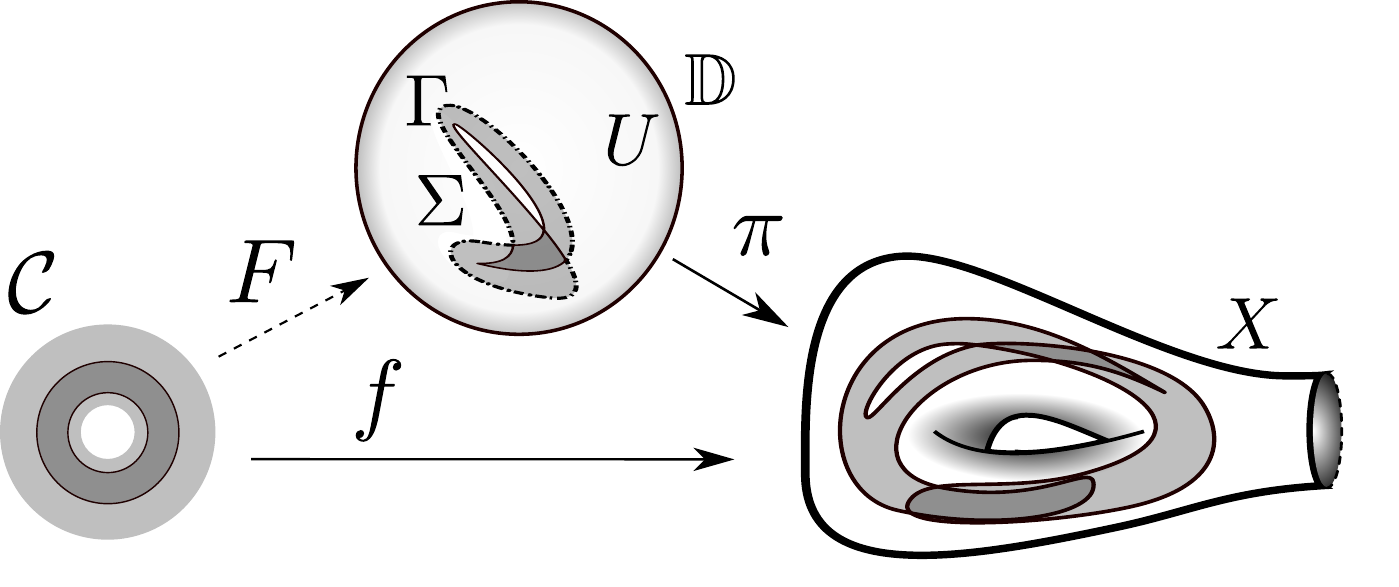}
    \caption{Proof of Lemma~\ref{lem:fC-hyperbolic-diam}.}
    \label{fig:DominationAnn}
  \end{figure}
\end{proof}

\subsection{Proof of the domination lemma
  (Lemma~\ref{lem:domination})}
\label{sec:domination-proof}

We begin by assuming that the type of $\cC$ does not contain
$D_\circ$. Let
\begin{align}\label{eq:U01infty}
  U_0 &:= \{|z|<\tfrac12\} & U_1&:= \{|z-1|<\tfrac12\} & U_\infty = \{|z|>2\}.
\end{align}
We choose $s>1$ such that the hyperbolic distance in
$\C\setminus\{0,1\}$ between $U_q^s$ and $\partial U_q$ is greater
than $n\rho$ for $q=0,1,\infty$, where
\begin{align}
  U_0^s &:= \{|z|<\tfrac1{2s}\} & U_1^s&:= \{|z-1|<\tfrac1{2s}\} & U_\infty^s = \{|z|>2s\}.
\end{align}
From the explicit computations in~\secref{sec:explicit-consts} we can
take $s=O(\log|\log(n\rho)|)$. If $f(\cC)$ does not meet $U_0^s$ or
$U_\infty^s$ then the proof of the domination lemma is
completed. Henceforth we assume that $f(\cC)$ meets both $U_0^s$ and
$U_\infty^s$.

\begin{Lem}\label{lem:trivial-homotopy}
  Suppose $f(\cC)$ meets both $U_0^s$ and $U_\infty^s$. Then
  \begin{equation}
    f_*\pi_1(\cC)=\{e\}\subset\pi_1(\C\setminus\{0,1\}).
  \end{equation}
\end{Lem}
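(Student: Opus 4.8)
The goal is to show that if $f:\cC^\hrho\to\C\setminus\{0,1\}$ has image meeting both the neighborhood $U_0^s$ of $0$ and the neighborhood $U_\infty^s$ of $\infty$, then $f_*$ kills $\pi_1(\cC)$. The plan is to argue by contradiction: suppose some loop $\gamma$ in $\cC$ has $f_*[\gamma]\neq e$ in $\pi_1(\C\setminus\{0,1\})$. Since $\pi_1(\cC)$ is generated by the circle generators $\gamma_1,\dots,\gamma_n$ (using that each fiber is of type $A$ or $D$, $D_\circ$ being excluded in this section), it suffices to rule out $f_*[\gamma_i]\neq e$ for each $i$. Fixing such a $\gamma_i$, I would restrict $f$ to the one-dimensional complex cell obtained by freezing all coordinates except the $i$-th along a suitable skeleton-type slice (the curve $\gamma$ constructed as in the proof of Lemma~\ref{lem:fS-hyperbolic-diam}), reducing to the case where $\cC$ is (essentially) a single annulus $A^\hrho$ and $f_*$ sends its generating loop to a nontrivial element $w\in\pi_1(\C\setminus\{0,1\})$.

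The key step is then a hyperbolic-geometric one. Consider the generating circle $S=\partial\cC$ — a component of the skeleton $\cS(\cC)$. By Lemma~\ref{lem:fS-hyperbolic-diam} its image $f(S)$ has hyperbolic diameter at most $n\rho$ in $\C\setminus\{0,1\}$, so $f(S)$ is contained in a hyperbolic ball of radius $n\rho$ around any of its points. On the other hand, $f|_S$ is a loop representing (a conjugate of) the nontrivial element $w$; a nontrivial loop in $\C\setminus\{0,1\}$ cannot be contained in a set of small hyperbolic diameter unless that set "wraps around" one of the punctures, which forces it to come hyperbolically close to a puncture — more precisely, any essential loop in $\C\setminus\{0,1\}$ must leave every compact set, i.e. approach $\{0,1,\infty\}$, once we ask for small hyperbolic diameter. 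Since $f(\cC)$ meets both $U_0^s$ and $U_\infty^s$, and these points are connected through $f(\cC)$ whose boundary lies in $f(\cS(\cC))$ (Lemma~\ref{lem:boundary-vs-skeleton}), the image $f(\cC)$ spans a hyperbolic distance comparable to the hyperbolic distance between $U_0^s$ and $U_\infty^s$ in $\C\setminus\{0,1\}$. This distance is by construction (the choice of $s=O(\log|\log(n\rho)|)$, via the explicit estimates of \secref{sec:explicit-consts}) larger than $n\rho$ by a definite amount; combined with the bound $\diam f(S)\le n\rho$ on each skeleton component, this contradicts the possibility that the boundary of $f(\cC)$ wraps essentially around a puncture. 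Concretely: if $f_*[\gamma_i]\ne e$, then the loop $f(S)$ wraps around $0$ or $1$ (or $\infty$), hence must enter the corresponding neighborhood $U_q^s$ \emph{and} stay within hyperbolic distance $n\rho$ of it, forcing $f(S)\subset U_q$; but then, tracking how the fibers over $S$ attach to the interior, the whole of $f(\cC)$ is confined near $U_q$ and cannot meet both $U_0^s$ and $U_\infty^s$, a contradiction.

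I expect the main obstacle to be making precise the dichotomy "$f(S)$ is either hyperbolically small and null-homotopic, or it wraps a puncture and is therefore confined to a small neighborhood of that puncture." This is where the specific choice $s=O(\log|\log(n\rho)|)$ and the explicit hyperbolic-distance computations near the punctures of $\C\setminus\{0,1\}$ (deferred to \secref{sec:explicit-consts}) are needed: one must know quantitatively that a hyperbolic ball of radius $n\rho$ around a point at hyperbolic distance $\ge n\rho$ from $\partial U_q$ cannot contain an essential loop. Once that quantitative statement is in hand, the topological bookkeeping — propagating the confinement from the skeleton $\cS(\cC)$ to all of $f(\cC)$ via Lemma~\ref{lem:boundary-vs-skeleton}, and reducing from a general loop in $\pi_1(\cC)$ to a single annular generator — is routine, following the same inductive slicing used in the proof of Lemma~\ref{lem:fS-hyperbolic-diam}.
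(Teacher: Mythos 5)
Your sketch assembles the right ingredients (the skeleton diameter bound of Lemma~\ref{lem:fS-hyperbolic-diam}, the inclusion $\partial f(\cC)\subset f(\cS(\cC))$ of Lemma~\ref{lem:boundary-vs-skeleton}, and the fact that hyperbolically small essential loops in $\C\setminus\{0,1\}$ live near the punctures), but the step that is supposed to produce the contradiction is a genuine gap. You assume $f_*[\gamma_i]\neq e$, place the image of \emph{one} skeleton component $S$ deep in a cusp neighborhood $U_q$, and then assert that ``tracking how the fibers over $S$ attach to the interior, the whole of $f(\cC)$ is confined near $U_q$.'' Nothing in the available lemmas gives this: confinement of the full image is exactly the content of Lemma~\ref{lem:fC-hyperbolic-diam}, whose hypothesis is the very triviality $f_*\pi_1(\cC)=\{e\}$ you are trying to prove, so invoking any such confinement here is circular unless you supply an independent argument. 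Concretely, $\partial f(\cC)$ is contained in the union of the images of \emph{all} (up to $2^n$) skeleton components, and your argument controls only one of them; moreover, after your reduction to a one-dimensional slice the confinement you could hope for concerns only the slice's image, while the hypothesis ``$f(\cC)$ meets both $U_0^s$ and $U_\infty^s$'' is about the full image, so no contradiction is reached. To repair this route you would need (i) to observe that every skeleton component carries loops representing the same nontrivial free homotopy class, hence all skeleton images lie deep in the \emph{same} cusp (distinct cusps give non-conjugate peripheral classes), and (ii) to run a compactness-plus-connectedness argument (boundary in $U_q$, connected complement of $U_q$, $\overline{f(\cC)}$ compact in $\C\setminus\{0,1\}$) to confine $f(\cC)$ and contradict the hypothesis. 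Neither step appears in your sketch, and the quantitative input you would need (essential loops of diameter $n\rho$ lie deeper than $U_q^s$) is different from, and stronger than, what the paper's choice of $s$ is designed for (namely that $U_q^s$ and $\partial U_q$ are at hyperbolic distance greater than $n\rho$).

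The paper's own proof avoids all of this and does not argue by contradiction or confine the image. Since the type of $\cC$ contains no $D_\circ$, $f$ extends continuously to $\bar\cC$, so $f(\bar\cC)$ is compact and $\partial f(\cC)$ must meet both $U_0^s$ and $U_\infty^s$; by Lemma~\ref{lem:boundary-vs-skeleton} there are two skeleton components $S_0,S_\infty$ with $f(S_0)$ meeting $U_0^s$ and $f(S_\infty)$ meeting $U_\infty^s$, and the diameter bound of Lemma~\ref{lem:fS-hyperbolic-diam} together with the choice of $s$ forces $f(S_0)\subset U_0$ and $f(S_\infty)\subset U_\infty$. Since $\pi_1(S)\to\pi_1(\cC)$ is onto for every skeleton component, any class $f_*(\gamma)$ is freely homotopic in $\C\setminus\{0,1\}$ both to a loop contained in $U_0$ and to a loop contained in $U_\infty$; in $\pi_1(\C\setminus\{0,1\})$ these two kinds of loops (powers of the peripheral loop at $0$, respectively at $\infty$) are freely homotopic only when both are contractible, whence $f_*\pi_1(\cC)=\{e\}$. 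Note that this uses the hypothesis positively, needs no cusp-depth estimates beyond the choice of $s$, and never requires controlling $f(\cC)$ itself. If you want to salvage your approach, the conjugacy-class comparison between loops near two different punctures is precisely the missing idea; once you use it, you will find you no longer need the confinement step at all.
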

\begin{proof}
  Recall that we assume that the type of $\cC$ does not contain
  $D_\circ$, and in particular $\bar\cC\subset\cC^\delta$. Thus $f$
  extends to a continuous function on $\bar\cC$, so
  $f(\bar\cC)\subset\C\setminus\{0,1\}$ is compact and it follows that
  $\partial f(\cC)$ meets both $U_0^s$ and $U_\infty^s$. By
  Lemma~\ref{lem:boundary-vs-skeleton} we conclude that there exist
  two components $S_0,S_\infty$ of the skeleton $\cS(\cC)$ such that
  $f(S_0)$ meets $U_0^s$ and $f(S_\infty)$ meets $U_\infty^s$. From
  Lemma~\ref{lem:fS-hyperbolic-diam} and the choice of $s$ we conclude
  that $f(S_0)$ does not meet the boundary of $U_0$,
  i.e. $f(S_0)\subset U_0$ and similarly
  $f(S_\infty)\subset U_\infty$.

  It is easy to verify (it is enough to check this for the standard
  polyannulus) that $\pi_1(S)\to\pi_1(\cC)$ is epimorphic. In
  particular, if $\gamma\in\pi_1(\cC)$ denotes any loop then this loop
  is free-homotopy equivalent to a loop contained in $S_0$ and to a
  loop contained in $S_\infty$. Consequently $f_*(\gamma)$ is
  free-homotopy equivalent to a loop contained in $U_0$ and to a loop
  contained in $U_\infty$. However two such loops cannot be
  homotopically equivalent in $\C\setminus\{0,1\}$ unless they are
  both contractible, hence proving the claim.
\end{proof}

Lemma~\ref{lem:trivial-homotopy} implies the condition of
Lemma~\ref{lem:fC-hyperbolic-diam}, and we conclude that the
hyperbolic diameter of $f(\cC)$ is bounded by $2^nn\rho$. We now
choose $\hat s>1$ such that the hyperbolic distance between
$U_0^{\hat s}$ and $U^{\hat s}_\infty$ is greater than $2^nn\rho$ (see
Figure~\ref{fig:domination-disc} for an
illustration). By~\secref{sec:explicit-consts} we may take
$\hat s=O(\log(n+\log\rho))$. Then $f(\cC)$ cannot meet both
$U_0^{\hat s},U^{\hat s}_\infty$ and the domination lemma is proved.

\begin{figure}
  \centering
  \includegraphics[width=\textwidth]{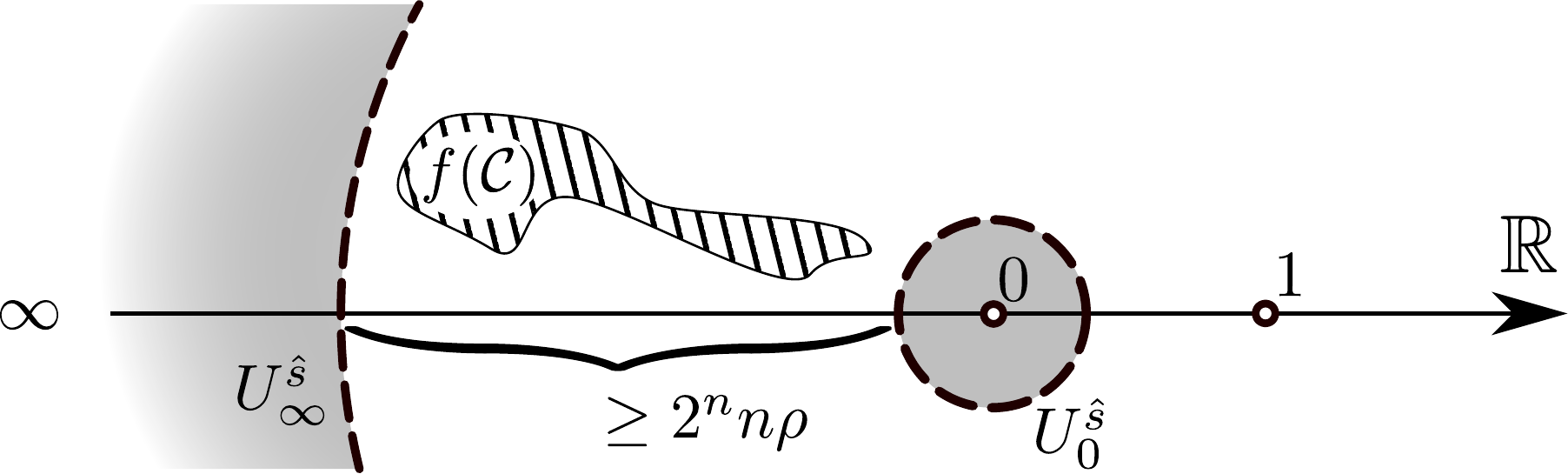}
  \caption{Proof of the domination lemma.}
  \label{fig:domination-disc}
\end{figure}

It remains to consider the case that the type of $\cC$ contains
$D_\circ$. Let $0<\e<1$ and let $\cC_\e$ be the cell obtained from
$\cC$ by replacing each occurrence of a fiber $D_\circ(r)$ by
$A(\e r,r)$. It is clear that $\cC_\e$ admits a $\hrho$-extension and
$\cup_{\e>0}\cC_\e=\cC$. The domination lemma for $\cC$ thus follows
immediately from the domination lemma for $\cC_\e$, which was already
established.

\subsection{Proofs of the fundamental lemmas
  (Lemmas~\ref{lem:fund-D}--~\ref{lem:fund-C01})}

In this section we may assume that the type of $\cC$ does not contain
$D_\circ$. The general case can be reduced to this case as in the end
of~\secref{sec:domination-proof}. The fundamental lemma for $\D$
(Lemma~\ref{lem:fund-D}) is already proved as a consequence of
Lemma~\ref{lem:fC-hyperbolic-diam}. The proof of the fundamental lemma
for $\D\setminus\{0\}$ (Lemma~\ref{lem:fund-Dcirc}) is based on the
following simple geometric lemma.

\begin{Lem}\label{lem:shortest-loop}
  Let $z\in\D\setminus\{0\}$ and let $\gamma_z$ denote the shortest
  non-contractible loop passing through $z$. Then (i)
  $\length(\gamma_z)\to\infty$ as $|z|\to1$; and (ii) for every
  $0<r<1$ and $|z|<r$, we have $\length(\gamma_z)=\Theta_r(1/|\log|z||)$.
\end{Lem}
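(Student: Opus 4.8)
The plan is to pass to the universal cover of $\D\setminus\{0\}=D_\circ(1)$ and reduce the whole statement to an explicit formula for the translation length of the generator of the deck group. First I would recall that $D_\circ(1)$ is uniformized by $\H$ via $w\mapsto e^{2\pi\iu w}$, whose deck group is generated by the translation $T\colon w\mapsto w+1$, and that the hyperbolic (curvature $-4$) metric of $D_\circ(1)$ is the pushforward of $|\d w|/(2\Im w)$; under this covering a point of modulus $|z|$ lifts to a point $w$ of height $\Im w=|\log|z||/(2\pi)$. Fixing a lift $w_0$ of $z$, a loop through $z$ in the free homotopy class $n\in\Z\simeq\pi_1(D_\circ(1))$ lifts to a path in $\H$ from $w_0$ to $w_0+n$, so its hyperbolic length is at least $\dist(w_0,w_0+n;\H)$, with equality attained by the projection of the geodesic segment; hence $\length(\gamma_z)=\min_{n\ne0}\dist(w_0,w_0+n;\H)$. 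I would then use the standard formula $\cosh\!\big(2\dist(p,q;\H)\big)=1+|p-q|^2/(2\Im p\,\Im q)$ (the leading factor $2$ reflecting the curvature $-4$ normalization), which gives $\dist(w_0,w_0+n;\H)=\tfrac12\operatorname{arccosh}\!\big(1+n^2/(2y_0^2)\big)$ with $y_0=\Im w_0$; this is increasing in $|n|$, so the minimum occurs at $|n|=1$, and after substituting $y_0=|\log|z||/(2\pi)$ we get
\begin{equation*}
  \length(\gamma_z)=\tfrac12\operatorname{arccosh}\!\Big(1+\frac{2\pi^2}{(\log|z|)^2}\Big).
\end{equation*}

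From this closed form both assertions are routine. For (i): as $|z|\to1$ the argument of $\operatorname{arccosh}$ tends to $+\infty$, so $\length(\gamma_z)\to\infty$. For (ii): fix $0<r<1$; for $0<|z|<r$ the quantity $t:=2\pi^2/(\log|z|)^2$ ranges over a bounded interval $(0,t_r]$, and since $\operatorname{arccosh}(1+t)$ is continuous and positive on $(0,t_r]$ with $\operatorname{arccosh}(1+t)/\sqrt t\to\sqrt2$ as $t\to0^+$, it is comparable to $\sqrt t$ there; hence $\length(\gamma_z)=\Theta_r(\sqrt t)=\Theta_r(1/|\log|z||)$.

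The computation is almost entirely bookkeeping, and the only points needing a little care --- rather than a genuine obstacle --- lie in the first paragraph: checking that the infimum of lengths in a fixed homotopy class is attained by the projected geodesic (a standard property of the covering $\H\to D_\circ(1)$), that $|n|=1$ beats all the other classes, and pinning down the numerical constants in the uniformization (the $2\pi$ in the height, the curvature $-4$ convention). As a consistency check one may verify that the induced metric on $D_\circ(1)$ equals $|\d z|/(2|z|\,|\log|z||)$, in agreement with the parameter $\rho=\pi^2/(2|\log\delta|)$ attached to $D_\circ$-type extensions and with Fact~\ref{fact:boundary-length}. One could also bypass the uniformization and estimate lengths directly against that density, but working in $\H$ makes the exact minimizer and the distance formula transparent.
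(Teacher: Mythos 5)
Your proof is correct and follows essentially the same route as the paper: lift to the universal cover $\H$, identify $\length(\gamma_z)$ with the hyperbolic distance between a lift of $z$ and its image under the deck translation, and read off both assertions from the standard explicit distance formula (the paper quotes the logarithmic form from Katok rather than the $\operatorname{arccosh}$ form, and writes the translation as $\zeta\mapsto\zeta+2\pi$ under $\zeta\mapsto e^{\iu\zeta}$, which is the same computation). Your extra care in checking that the minimum over homotopy classes occurs at the primitive class and that the infimum is attained by the projected geodesic only makes explicit what the paper leaves implicit.
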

\begin{proof}
  Let $\zeta=i^{-1}\log z$. Lifting to the universal cover
  $\exp(i\zeta):\H\to\D\setminus\{0\}$ we must calculate
  $\dist(\zeta,\zeta+2\pi;\H)$. By a standard formula for the
  hyperbolic distance \cite[Theorem~1.2.6]{katok:fuchsian-groups} we
  have
  \begin{equation}
    \dist(\zeta,\zeta+2\pi;\H) = 2\ln\left(\frac{\pi+\sqrt{\pi^2+\Im^2\zeta}}{\Im\zeta}\right)
  \end{equation}
  and the right hand side tends to infinity as $|z|\to1$ and is
  $\Theta_r(1/|\log|z||)$ when $|z|<r$.
\end{proof}

Let $f:\cC^\hrho\to\D\setminus\{0\}$ be holomorphic and let
$z\in\D\setminus\{0\}$ be the point of $f(\bar\cC)$ with maximum
absolute value, which by Lemma~\ref{lem:boundary-vs-skeleton} belongs
to the image $f(S)$ of a component $S$ of the skeleton of $\cC$. By
Lemma~\ref{lem:fS-hyperbolic-diam} we have
$\diam(f(S);\D\setminus\{0\})=O_\ell(\rho)$.

By Lemma~\ref{lem:shortest-loop} part (i) using $\rho<1$ there exists
a constant $0<r(\ell)<1$ such that if $|z|\ge r(\ell)$ then $f(S)$ cannot
contain a non-contractible loop in $\D\setminus\{0\}$. Then, since
$\pi_1(S)\to\pi_1(\cC)$ is epimorphic we have
\begin{equation}
  f_*(\pi_1(\cC))=f_*(\pi_1(S))=\{e\}\subset\pi_1(\D\setminus\{0\}).
\end{equation}
In this case by Lemma~\ref{lem:fC-hyperbolic-diam} we have
$\diam(f(\cC);\D\setminus\{0\})=O_\ell(\rho)$. Supposing now that
$|z|<r(\ell)$ and using Lemma~\ref{lem:shortest-loop} part (ii), we
similarly see that if $1/|\log|z||=\Omega_\ell(\rho)$ then $f(S)$
cannot contain a non-contractible loop in $\D\setminus\{0\}$ and
finish the proof in the same way. In the remaining case we have
$\log|z|=-\Omega_\ell(1/\rho)$, which concludes the proof of the first
statement. The second statement follows using
\begin{equation}
  \dist(\zeta_1,\zeta_2;\H) \ge \dist(\log\Im\zeta_1,\log\Im\zeta_2;\R).
\end{equation}

We now pass to the proof of the fundamental lemma for
$\C\setminus\{0,1\}$ (Lemma~\ref{lem:fund-C01}). It will suffice to
prove the statement for $\rho$ smaller than some constant $O_\ell(1)$
to be chosen later. Indeed, for $\rho$ larger than such a constant we
can make the first condition in~\eqref{eq:fund-C01} trivial with an
appropriate choice of the asymptotic constant, since every point in
$\C P^1$ lies at distance strictly smaller than $1$ from
$\{0,1,\infty\}$.

Let $S$ be a component of the skeleton of $\cC$. By
Lemma~\ref{lem:fS-hyperbolic-diam} we have
$\diam(f(S);C\setminus\{0,1\})=O_\ell(\rho)$. In particular, for
$\rho=O_\ell(1)$ we see that $f(S)$ cannot meet more than one of the
sets $U_0,U_1,U_\infty$ in the notation~\eqref{eq:U01infty}. Suppose
first that the $f(S)$ meets none of these sets. Let $\rho_0$ denote
the length of the shortest non-contractible loop in the compact set
$(U_0\cup U_1\cup U_\infty)^c\subset\C\setminus\{0,1\}$. Then for
$\rho=O_\ell(\rho_0)=O_\ell(1)$ we get
$\diam(f(\cC);\C\setminus\{0,1\})=O_\ell(\rho)$ as in the proof of the
case $\D\setminus\{0\}$.

Next, suppose that for two different components $S,S'$ the images
$f(S),f(S')$ meet two of the sets $U_0,U_1,U_\infty$, say $U_0$ and
$U_1$ respectively. Then for $\rho=O_\ell(1)$ the images $f_*(\pi_1(S))$
(resp. $f_*(\pi_1(S'))$ can only be a power of the fundamental loop
around $0$ (resp. $1$) and as in the proof of the domination lemma we
conclude that $f_*(\pi_1(\cC))=\{e\}$ and
$\diam(f(\cC);\C\setminus\{0,1\})=O_\ell(\rho)$.

Finally, suppose all skeleton components $S$ meet one of the sets
$U_0,U_1,U_\infty$, say $U_0$. Then for $\rho=O_\ell(1)$ we see that
$f(S)\subset\D\setminus\{0\}$ for each skeleton component. In this
case we can finish the proof as in the case of $\D\setminus\{0\}$. We
need only the estimate for the length of the shortest geodesic in
$\C\setminus\{0,1\}$ passing through a given point $z$ close to $0$:
this is asymptotically the same as in the metric of
$\D\setminus\{0\}$, for instance by the estimate~\eqref{eq:hyperb} of
\cite{bp:poincare}, given explicitly in~\secref{sec:explicit-consts}.

\subsection{Proof of the monomialization lemma (Lemma~\ref{lem:monomial})}
\label{sec:proof-monom}

The \emph{Voorhoeve index} \cite{ky:rolle} of a holomorphic function
$f:U\to\C$ along a subanalytic curve $\Gamma\subset U$ is defined by
\begin{equation}
  V_{\Gamma}(f):= \frac1{2\pi} \int_{\Gamma}|\d\Arg f(z)|.
\end{equation}
We need the following basic fact about Voorhoeve indices.
\begin{Lem}\label{lem:voorhoeve-estimate}
  Fix $\rho>0$. Let $\cF_\lambda$ be a definable family of
  one-dimensional cells, let $f_\lambda:\cF_\lambda^\rho\to\C$ be a definable
  family of holomorphic functions, and let
  $\Gamma_\lambda\subset\cF_\lambda$ be a definable family of curves.
  Then $V_{\Gamma_\lambda}(f_\lambda)$ is uniformly bounded over
  $\lambda$.

  If $f,\Gamma$ are algebraic of complexity $\beta$ then
  $V_\Gamma(f)=\poly(\beta)$.
\end{Lem}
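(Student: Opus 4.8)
The plan is to reinterpret the Voorhoeve index as the $\theta$-average of a zero count via the Banach indicatrix (Cauchy--Crofton) identity, and then bound that zero count uniformly --- by o-minimality in the definable case and by the Bézout theorem in the algebraic case. Note first that $2\pi V_\Gamma(f)$ is exactly the total variation along $\Gamma$ of a continuous branch $\phi$ of $\Arg f$, which is finite because $f'/f$ is bounded on the compact curve $\Gamma$ away from the (finitely many) zeros of $f$ there; excising small arcs around those zeros changes the variation by a finite amount, so we may assume $f$ is nonvanishing on $\Gamma$. Parametrising $\Gamma$ and applying Banach's theorem to the $\mathrm{BV}$ function $\phi$,
\[
  2\pi V_\Gamma(f)=\int_{\R}\#\phi^{-1}(y)\,\d y=\int_0^{2\pi} M(\theta)\,\d\theta,\qquad M(\theta):=\#\{z\in\Gamma:\Arg f(z)=\theta\ \text{in}\ \R/2\pi\Z\}.
\]
Since $\Arg f(z)=\theta$ forces $\Im(e^{-\iu\theta}f(z))=0$, we have $M(\theta)\le\#\{z\in\Gamma:\Im(e^{-\iu\theta}f(z))=0\}$, and it remains to bound the right-hand side uniformly in $\theta$.

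For the definable statement, $\Im(e^{-\iu\theta}f_\lambda)$ restricted to $\Gamma_\lambda$ is a real-analytic function on the curve $\Gamma_\lambda$ forming a definable family in the parameters $(\lambda,\theta)$ (the restriction $f_\lambda|_{\cF_\lambda}$ is subanalytic by Proposition~\ref{prop:cell-subanalytic}, since $\cF_\lambda^\rho$ is a $\delta$-extension for a suitable $\delta$). Unless this function vanishes identically --- which happens only when $f_\lambda(\Gamma_\lambda)$ lies on the line $e^{\iu\theta}\R$, in which case $\Arg f_\lambda$ is constant on $\Gamma_\lambda$ and contributes nothing to the integral --- it has finitely many zeros, and by the uniform finiteness of $\R_\an$ (see \cite{vdd:book}) their number is bounded by a constant depending only on the family, uniformly in $\lambda$ and $\theta$. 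Hence $M(\theta)$ is uniformly bounded and therefore so is $V_{\Gamma_\lambda}(f_\lambda)$.

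For the algebraic statement, put $\tilde\Gamma:=\{(z,f(z)):z\in\Gamma\}\subset\C^2\cong\R^4$. Since $f$ satisfies $P(z,f(z))=0$ for a polynomial $P$ of degree $\le\beta$, and since writing $z=a+b\iu$, $w=c+d\iu$ turns such a $P$ into a real polynomial of degree $\le\beta$ in $(a,b,c,d)$, the locus $\{P=0\}$ is a real algebraic hypersurface of degree $\le\beta$ in $\R^4$; intersecting it with the semialgebraic cylinder $\Gamma\times\C$ (of complexity $\poly(\beta)$) and keeping the relevant components, $\tilde\Gamma$ is a one-dimensional semialgebraic set whose Zariski closure is a real algebraic curve of degree $\poly(\beta)$. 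The condition $\Im(e^{-\iu\theta}w)=0$ cuts out a real affine hyperplane $H_\theta\subset\R^4$, and $M(\theta)\le\#(\tilde\Gamma\cap H_\theta)$, which by the Bézout theorem is $\poly(\beta)$ whenever it is finite; the finitely many remaining $\theta$ (where a piece of $f(\Gamma)$ lies on a ray) do not affect the integral. Integrating over $\theta$ gives $V_\Gamma(f)=\poly(\beta)$.

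The argument is conceptually short once the indicatrix reduction is in place; the only substantive choice is to bound the variation via the Crofton average rather than by counting the monotonicity intervals of $\Arg f$ along $\Gamma$ --- the latter is also uniformly controlled by o-minimality, but the increment of $\Arg f$ over a single monotone interval need not be bounded (e.g.\ $f=z^{n}$ on a circle), so it would not yield the theorem. The remaining points are pure bookkeeping: in the definable case, checking that $\{\Im(e^{-\iu\theta}f_\lambda)|_{\Gamma_\lambda}\}$ is a definable family and that the degenerate directions are negligible; in the algebraic case, checking that every degree stays $\poly(\beta)$ under the real substitution above and under intersection of semialgebraic sets (Bézout). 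I expect the only mild difficulty to be notational: fixing the convention for $V_\Gamma(f)$ when $\Gamma$ is a loop or has endpoints, and confirming that Banach's identity is applied to a genuinely finite-variation branch of $\Arg f$.
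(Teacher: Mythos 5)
Your proposal is correct and is essentially the paper's own argument: the paper likewise expresses $2\pi V_\Gamma(f)$ as the length of $(f/|f|)(\Gamma)$ on the unit circle, converts it by integral geometry (your Banach/Crofton step) into the average over $\theta$ of the number of isolated solutions of $\arg f=\theta$ on $\Gamma$, and bounds that count uniformly by o-minimal uniform finiteness in the definable case and by B\'ezout in the algebraic case. Your write-up just carries out the same reduction with more bookkeeping (excising zeros of $f$, the degenerate directions, the realification for B\'ezout), so there is nothing substantively different to compare.
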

\begin{proof}
  Note that $2\pi V_\Gamma(f)$ is the total length of the curve
  $(\frac f{|f|})(\Gamma)\subset S(1)$, which by standard integral
  geometry is given by the average number of intersections between the
  curve and a ray through the origin, i.e. the average number of
  (isolated) solutions of the equation $\arg f(z)=\alpha$ for
  $z\in\Gamma$ $\alpha\in[0,2\pi)$. By o-minimality the \emph{maximal}
  number (and in particular the average number) of (isolated)
  solutions for the pair $f_\lambda,\Gamma_\lambda$ is bounded by a
  constant independent of $\lambda$. In the algebraic case the number
  of solutions is bounded by $\poly(\beta)$ by the Bezout theorem.
\end{proof}

The basic ingredient in the proof of the monomialization lemma is the
following one-dimensional version, proved using the Voorhoeve index.

\begin{Lem}\label{lem:monom-dim1}
  Let $\cF$ be a one-dimensional cell and
  $f:\cF^\hrho\to\C\setminus\{0\}$. Then $f=z^{\alpha(f)}\cdot U(z)$ where $\log U:\cF\to\C$ is univalued and bounded.

  If $\cF^\hrho,f$ vary in a definable family $\Lambda$ then $|\alpha(f)|=O_\Lambda(1)$
  and 
    \begin{align*}
  \diam(\log U(\cF);\C) &=O_\Lambda(\rho), & \diam(\Im\log U(\cF);\R) &= O_\Lambda(1).
  \end{align*}
 If $f$ is algebraic of complexity $\beta$ then
  $|\valpha(f)|=\poly(\beta)$ and
  \begin{align*}
    \diam(\log U(\cF);\C) &< \poly(\beta)\cdot\rho, & \diam(\Im\log U(\cF);\R) &< \poly(\beta).
  \end{align*}
\end{Lem}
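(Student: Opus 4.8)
The plan is to decompose $f$ into its topological (monomial) part and a unit part, and then to bound the unit using the Voorhoeve index estimate of Lemma~\ref{lem:voorhoeve-estimate} together with the hyperbolic geometry of the one-dimensional cell $\cF$ inside $\cF^\hrho$. First, since $\cF^\hrho\to\C\setminus\{0\}$ and $\pi_1(\C\setminus\{0\})\simeq\Z$, the integer $\alpha(f):=f_*\gamma_1$ is well-defined (it is $0$ unless $\cF$ is of type $D_\circ$ or $A$); set $U(z):=f(z)/z^{\alpha(f)}$, which is a nowhere-vanishing holomorphic function on $\cF^\hrho$ with $\alpha(U)=0$, so that $\log U$ has univalued branches on $\cF^\hrho$ and in particular on $\cF$. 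The bound $|\alpha(f)|=O_\Lambda(1)$ (resp. $\poly(\beta)$) follows immediately because $\alpha(f)$ equals the Voorhoeve index $V_{\gamma_1}(f)$ up to sign along the generating loop $\gamma_1$ of $\cF$ (a circle in the skeleton), which is uniformly bounded by Lemma~\ref{lem:voorhoeve-estimate}; equivalently one can read $\alpha(f)$ off the winding number of $f$ along a circle $|z|=\text{const}$ inside $\cF^\hrho$, bounded by the number of preimages of a ray, which is $O_\Lambda(1)$ (resp. $\poly(\beta)$) by o-minimality (resp. Bezout).

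Next I would bound $\diam(\Im\log U(\cF);\R)$. The key point is that $\Im\log U$ is a harmonic function on $\cF^\hrho$, and along the skeleton $\cS(\cF)$ — which consists of one or two circles $S(r)$ — its total variation is controlled by the Voorhoeve index: $\mathrm{Var}_{S}(\Arg U)\le 2\pi V_{S}(U)=O_\Lambda(1)$ (resp. $\poly(\beta)$) by Lemma~\ref{lem:voorhoeve-estimate}, using that $\alpha(U)=0$ so $\Arg U$ returns to itself around $S$. Hence $\Im\log U$ oscillates by at most $O_\Lambda(1)$ on each skeleton component. Since by Fact~\ref{fact:boundary-length} each such component has hyperbolic length at most $\rho$ in $\cF^\hrho$, and $\rho$ may be taken bounded (we only care about small $\rho$, and for large $\rho$ the statement is content-free up to the asymptotic constant), the images of the skeleton components are hyperbolically close together in $\cF^\hrho$; but more directly, $\Im\log U$ extends continuously to $\bar\cF\subset\cF^\hrho$ (when $\cF$ has no $D_\circ$ fiber; the $D_\circ$ case is handled by exhausting $D_\circ$ by annuli $A(\e r,r)$ as in the proof of the domination lemma), and by the maximum principle for the harmonic function $\Im\log U$ on $\cF$ its range on $\cF$ is contained in its range on $\partial\cF\subset\cS(\cF)$. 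So $\diam(\Im\log U(\cF);\R)$ is bounded by the total oscillation over the (one or two) skeleton components, which is $O_\Lambda(1)$ (resp. $\poly(\beta)$).

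Finally, for $\diam(\log U(\cF);\C)=O_\Lambda(\rho)$ (resp. $<\poly(\beta)\cdot\rho$) I would pass through the fundamental lemma for $\D$. Since $\log U(\cF^\hrho)\subset\C$ and $U$ is bounded (because $f$ is bounded and $|z^{\alpha(f)}|$ is bounded above and below on $\cF$ by the cell inequalities, after absorbing constants into the definable family / complexity bookkeeping), $\Re\log U$ lies in a fixed bounded interval depending only on $\Lambda$ (resp. $\beta$); together with the bound on $\Im\log U$ already obtained, $\log U$ maps $\cF^\hrho$ into a bounded region of $\C$, hence — after an affine rescaling by a factor $O_\Lambda(1)$ (resp. $\poly(\beta)$) — into the unit disc $\D$. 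Applying Lemma~\ref{lem:fund-D} to this rescaled map gives $\diam(\log U(\cF);\C)=O_\ell(\rho)$ times the rescaling factor, i.e. $O_\Lambda(\rho)$ (resp. $\poly(\beta)\cdot\rho$). The main obstacle I anticipate is the bookkeeping in the last step: one must verify that the rescaling factor converting ``$\log U$ has bounded image'' into ``$\log U$ lands in $\D$'' is genuinely polynomial in $\beta$ (not merely bounded), which requires the uniform boundedness of $U$ from above \emph{and below} with explicit polynomial control — this is where the algebraic bounds on $f$ being bounded, combined with the cell radii being algebraic of complexity $\beta$ (so $|z^{\alpha(f)}|$ is pinched between algebraic functions), must be tracked carefully. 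The $D_\circ$ reduction and the interchange of ``image of skeleton'' with ``image of $\cF$'' via the maximum principle are routine by comparison.
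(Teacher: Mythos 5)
Your first step (defining $\alpha(f)$ and $U$, and bounding $|\alpha(f)|$ by the Voorhoeve index along a concentric circle via Lemma~\ref{lem:voorhoeve-estimate}) coincides with the paper's. The main estimate, however, has a genuine gap: to prove $\diam(\log U(\cF);\C)=O_\Lambda(\rho)$ you assume that $\Re\log U$ lies in an interval of length $O_\Lambda(1)$ (resp.\ $\poly(\beta)$) on $\cF^\hrho$, justified by the claim that ``$f$ is bounded and $|z^{\alpha(f)}|$ is bounded above and below on $\cF$ by the cell inequalities''. This is false as stated and circular in substance. The inner radius of an annulus fiber (or the puncture of a $D_\circ$ fiber) is not bounded away from $0$ uniformly over the family, so $|z^{\alpha(f)}|$ has no uniform lower bound; nor is $|f|$ bounded below, since $f$ merely avoids $0$. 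Indeed no uniform absolute bound on $\log|U|$ can hold (multiplying $f$ by a small constant changes nothing in the statement); only the \emph{diameter} of $\log U$ is uniformly controlled, and a diameter bound on an extension of $\cF$ is exactly what you would need before recentering, rescaling into $\D$ and invoking Lemma~\ref{lem:fund-D}. So the crucial step presupposes essentially the conclusion of the lemma, and there is no easy repair along these lines, since the Voorhoeve index controls only the variation of $\Arg U$, not of $\log|U|$.

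The paper avoids any a priori bound on $|U|$: it bounds the Voorhoeve index of $U$ along a radial ray plus a concentric circle joining any two points of $\cF^{\he{2\rho}}$ by some $v=O_\Lambda(1)$ (resp.\ $\poly(\beta)$), takes the root $W=iU^{1/(4v)}$ normalized at a base point, observes that the resulting argument variation at most $\pi/2$ forces $W$ to map $\cF^{\he{2\rho}}$ into the unbounded hyperbolic target $\H$, and then applies Lemma~\ref{lem:fC-hyperbolic-diam} to get $\diam(W(\cF);\H)=O_\Lambda(\rho)$, hence the bounds for $\log U$ after multiplying by $4v$. A secondary, fixable, gap in your argument for $\diam(\Im\log U(\cF);\R)$: you control the oscillation of $\Arg U$ on each skeleton circle but not the difference of its values between the two boundary circles of an annulus, and the assertion that the two skeleton components are ``hyperbolically close'' in $\cF^\hrho$ is wrong --- their distance grows like $\log|\log r|$ (cf.\ Remark~\ref{rem:Adelta-diameter}). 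This is repaired exactly by the paper's radial segments, along which $\Arg z^{\alpha(f)}$ is constant and the Voorhoeve index of $f$ is again uniformly bounded by Lemma~\ref{lem:voorhoeve-estimate}.
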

\begin{proof}
  We follow the idea of \cite[Section 4.5]{ky:rolle}. By definition
  $\alpha(f)$ is equal to the total winding number of $f$ along a
  concentric circle contained in $\cF$ for types $D_\circ,A$ and zero
  in type $D$. In particular it is bounded by $V_\Gamma(f)$ where
  $\Gamma$ is such a circle, and the statements about $\alpha(f)$ then
  follow from Lemma~\ref{lem:voorhoeve-estimate}. In the algebraic
  case we see also that the complexity of $U$ is $\poly(\beta)$.

  Any two points in $\cF^{\he{2\rho}}$ can be joined by two consecutive algebraic
  curves: a radial ray and a circle concentric with $\cF$. By
  Lemma~\ref{lem:voorhoeve-estimate} the Voorhoeve index of $f$ along
  these curves is uniformly bounded (resp.  bounded by $\poly(\beta)$
  in the algebraic case). Having already established that
  $|\alpha(f)|$ is uniformly bounded (resp. bounded by $\poly(\beta)$
  in the algebraic case) we conclude that the Voorhoeve index of $U$
  along any two such curves is similarly bounded by 
  $v=O_\Lambda(1)$ (and $v=\poly(\beta)$ in the algebraic case).

  Let $p\in\cF$ be an arbitrary point. Since the claim is invariant
  under scalar multiplication of $f$ we may assume without loss of
  generality that $U(p)=1$. Since $U_*\pi_1(\cF)=\{e\}$ we have a
  well-defined root $W:\cF^{\he{\rho}}\to\C\setminus\{0\}$ satisfying
  $W=iU^{1/(4v)}$ and $W(p)=i$. Connecting $p$ to any other point
  $q\in\cF^{\he{2\rho}}$ by two curves as above, the Voorhoeve index of $W$ along
  the curves is bounded by $1/4$, hence the total variation of
  argument is bounded by $\pi/2$. Since $\arg W(p)=\pi/2$ we conclude that
  in fact $W:\cF^{\he{2\rho}}\to\H$. By Lemma~\ref{lem:fC-hyperbolic-diam} we
  then have $\diam(W(\cF);\H)=O_\Lambda(\rho)$. In particular this implies
  \begin{align}
    \diam(\log W(\cF);\C) &= O_\Lambda(\rho) & \diam(\Im\log W(\cF);\R) &< \pi
  \end{align}
  and the claim for $U$ follows immediately (using the fact that
  $v=\poly(\beta)$ in the algebraic case).
\end{proof}

We are now ready to finish the proof of the monomialization lemma by
induction on $\ell$. Let $\cC=\cC_{1..\ell}\odot\cF$. The case $\cF=*$
reduces trivially to the claim for $\cC_{1..\ell}$ and the case
$\ell=0$ is proved in Lemma~\ref{lem:monom-dim1}. Assume first that
the outer radius of $\cF$ is $1$.  Let
$\hat f:\cC_{1..\ell}^\hrho\to\C\setminus\{0\}$ be defined by
$\hat f:=f(\vz_{1..\ell},1)$. If $\cC,f$ are algebraic of complexity
$\beta$ then $\hat f$ is algebraic of complexity $\poly_\ell(\beta)$.

By definition we have $\valpha(\hat f)=\valpha_{1..\ell}(f)$, so
$\hat U:=U(\vz_{1..\ell},1)$ is equal to
$\hat f/\vz^{\alpha(\hat f)}$. By the inductive hypothesis we have
\begin{align}
  \diam(\log \hat U(\cC_{1..\ell});\C) &< O_\Lambda(\rho), & \diam(\Im\log \hat U(\cC_{1..\ell});\R) &< O_\Lambda(1).
\end{align}
 In the
algebraic case $|\alpha(\hat f)|=\poly_\ell(\beta)$ and
\begin{align}
  \diam(\log \hat U(\cC_{1..\ell});\C) &< \poly_\ell(\beta)\cdot\rho, & \diam(\Im\log \hat U(\cC_{1..\ell});\R) &< \poly_\ell(\beta).
\end{align}
Also, for each fixed $p\in\cC_{1..\ell}$ we have by
Lemma~\ref{lem:monom-dim1} for the fiber $\cC_p$
\begin{align}
  \diam(\log U(\cC_p);\C) &< O_\Lambda(\rho), & \diam(\Im\log U(\cC_p);\R) &< O_\Lambda(1).
\end{align} In the
algebraic case $|\alpha_{\ell+1}(f)|=\poly_\ell(\beta)$ and
\begin{align}
  \diam(\log U(\cC_p);\C) &< \poly_\ell(\beta)\cdot\rho, & \diam(\Im\log U(\cC_p);\R) &< \poly_\ell(\beta).
\end{align}
The triangle inequality now finishes the proof.

For $\cF$ with an arbitrary outer radius $r$, let $\cF'$ denote the
fiber with outer radius normalized to $1$ and
$\cC'=\cC_{1..\ell}\odot\cF'$. There is a natural biholomorphism
$\cC'\to\cC$ given by
$\vw_{1..\ell+1}\to(\vw_{1..\ell},r\vw_{\ell+1})$. By what was already
proved we obtain a decompostion
\begin{equation}\label{eq:monom-proof-general-r}
  f= \vw^{\valpha'}\cdot U' = \vz_1^{\valpha'_1}\cdots\vz_\ell^{\valpha'_\ell}\cdot(\vz_{\ell+1}/r)^{\valpha'_{\ell+1}} U'
\end{equation}
with bounds on $\log U'$ as above. Finally, we monomialize $r$ by induction
over $\ell$, and plugging into~\eqref{eq:monom-proof-general-r} we
obtain a monomialization of $f$ in the $\vz$-coordinates.

\subsection{Appendix: computations of hyperbolic lengths}
\label{sec:explicit-consts}

We fix the hyperbolic metric, $\lambda_D(z) |dz|$ with
$\lambda_D(z)=(1-|z|^2)^{-1}$, of constant curvature $-4$ on the unit
disc $D$. An explicit lower bound for the hyperbolic metric
$\lambda_{0,1}(z)|dz|$ on $\C\setminus\{0,1\}$ in $U_0$ is given in
\cite{bp:poincare},
\begin{equation}\label{eq:hyperb}
  \lambda_{0,1}(z)\ge 
  \frac{1}{2|z|\sqrt{2}\left[4+\log(3+2\sqrt{2})-\log|z|\right]},\quad |z|\le  
  \frac{1}{2}.
\end{equation}
Integrating \eqref{eq:hyperb}, we see that the hyperbolic distance from 
$\{|z|=r\}$ to $\{|z|=1/2\}$ is greater than $s$ for $r<\tilde{\rho}(s)<\frac 1 
2$, where
\begin{equation*}
\log\tilde{\rho}(s)=-e^{\Theta(s)}+O(1).
\end{equation*}
As $z\to z^{-1}$ is an isometry of $\C\setminus\{0,1\}$, the 
hyperbolic distance from $\{|z|=r\}$ to $\{|z|=2\}$ is greater than $s$ for 
$r>\tilde{\rho}(s)^{-1}$.

\subsubsection{Proof of Fact~\ref{fact:boundary-length}.}

Let $\cF$ be a domain of type $A,D,D_\circ$ and let $S$ be a component
of the boundary of $\cF$ in $\cF^\hrho$. In this section we prove that
the length of $S$ in $\cF^\hrho$ is at most $\rho$.

First, consider the case of $\mathcal{F}$ of type $D$. Up to rescaling, 
$\mathcal{F}^\delta = D(1)$ and $\partial \mathcal{F}=\{\abs{z}=\delta\}$. Then 
$\lambda_D(z)\abs{dz}\equiv(1-\delta^2)^{-1}\abs{dz}$ on $\partial 
\mathcal{F}$, so 
the length of $\partial \mathcal{F}$ in $D$ is equal to 
$\frac{2\pi\delta}{1-\delta^2}$.

Second, consider the cases $\mathcal{F}$ of type $D_\circ, A$. If
$S(r)$ is (a component of) $\partial\mathcal{F}$, then
$S(r)\subset S^\delta(r)\subset\cF^\delta$. Therefore, by
Schwartz-Pick lemma (Lemma~\ref{lem:schwarz-pick}), it is enough to
prove the following.

\begin{Lem}\label{lem:hyperbcircle}
  The length  of $S(1)$ in the hyperbolic metric of the annulus
  $S^\delta(1)$ is equal to $\frac{\pi^2}{2\abs{\log\delta}}$.
\end{Lem}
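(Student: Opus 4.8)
The plan is to uniformize the annulus by a logarithm and then compute the hyperbolic metric on a strip explicitly. Set $a := |\log\delta|$ and let $\Pi := \{w\in\C : |\Re w| < a\}$ be the strip of half-width $a$. Since $S^\delta(1) = A(\delta,\delta^{-1})$ and $\Re\log z = \log|z|$, the map $\exp\colon\Pi\to A(\delta,\delta^{-1})$ is surjective, a local biholomorphism, and $2\pi i\Z$-periodic; as $\Pi$ is simply connected (being a proper simply connected subdomain of $\C$, hence hyperbolic), this exhibits $\exp\colon\Pi\to A(\delta,\delta^{-1})$ as the universal covering map. A holomorphic covering map between hyperbolic Riemann surfaces is a local isometry for the hyperbolic metrics of curvature $-4$, so the length of $S(1)$ in $S^\delta(1)$ equals the hyperbolic length in $\Pi$ of any lift of the loop $\theta\mapsto e^{i\theta}$, $\theta\in[0,2\pi]$; one such lift is the vertical segment $w = i\theta$, $\theta\in[0,2\pi]$.

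Next I would write down the hyperbolic metric of $\Pi$ via an explicit conformal map to the upper half-plane. The affine map $w\mapsto\frac{i\pi}{2a}(w+a)$ carries $\Pi$ biholomorphically onto the standard strip $\{0<\Im\zeta<\pi\}$, which $\zeta\mapsto e^\zeta$ carries biholomorphically onto $\H$; hence
\begin{equation*}
  \Phi\colon\Pi\to\H, \qquad \Phi(w) := \exp\!\left(\frac{i\pi}{2a}(w+a)\right),
\end{equation*}
is a biholomorphism. Pulling back the curvature $-4$ metric $|dz|/(2\,\Im z)$ of $\H$ --- the one compatible with the normalization $\lambda_D=(1-|z|^2)^{-1}$ fixed in~\secref{sec:explicit-consts} --- and using $|\Phi'(w)| = \tfrac{\pi}{2a}|\Phi(w)|$ together with $\Arg\Phi(w) = \tfrac{\pi(\Re w + a)}{2a}$, so that $\Im\Phi(w) = |\Phi(w)|\cos\!\big(\tfrac{\pi\Re w}{2a}\big)$, one obtains the hyperbolic metric of $\Pi$:
\begin{equation*}
  \lambda_\Pi(w)\,|dw| = \frac{\pi}{4a}\cdot\frac{|dw|}{\cos\!\left(\frac{\pi\Re w}{2a}\right)}.
\end{equation*}

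Along the imaginary axis $\Re w = 0$ one has $\cos 0 = 1$, so $\lambda_\Pi\equiv\pi/(4a)$ there, and integrating over the lifted segment gives
\begin{equation*}
  \length\big(S(1);S^\delta(1)\big) = \int_0^{2\pi}\frac{\pi}{4a}\,d\theta = \frac{\pi^2}{2a} = \frac{\pi^2}{2|\log\delta|},
\end{equation*}
as claimed. There is no real obstacle in this argument; the two points deserving attention are the curvature normalization --- one must use the curvature $-4$ metric $|dz|/(2\,\Im z)$ on $\H$ (equivalently $|d\zeta|/(2\sin\Im\zeta)$ on the standard strip), matching the convention $\lambda_D=(1-|z|^2)^{-1}$ --- and the observation that $\exp$ is a covering map and therefore a local isometry, which is exactly what permits transporting the computation from $S(1)$ to the vertical segment in $\Pi$.
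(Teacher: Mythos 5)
Your proof is correct and follows essentially the same route as the paper: uniformize the annulus by the logarithm onto a strip, identify the lift of $S(1)$ with a segment on the strip's center line, and compute the strip's hyperbolic density of curvature $-4$ there (you derive the full density $\frac{\pi}{4a}\cos(\tfrac{\pi\Re w}{2a})^{-1}$ via an explicit map to $\H$, while the paper rescales to the standard strip and evaluates the density at a single point via $w\mapsto\frac{e^w-1}{e^w+1}$). The normalization checks out, so the computation matches the paper's.
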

\begin{proof}
  Indeed, the mapping $\phi(z)=\frac{\pi i}{2\abs{\log\delta}}\log z$ maps
  the universal cover of $S^\delta(1)$ to the strip
  $\Pi=\{|\Im w|\le \frac{\pi}{2}\}$, with
  $\phi^{-1}([0,\frac{\pi^2}{\abs{\log\delta}}])=S^1$. This map preserves
  hyperbolic distances, so it is enough to find the hyperbolic length
  of $[0, \frac{\pi^2}{\abs{\log\delta}}]$ in the hyperbolic metric 
  $\lambda_{\Pi}(w)\abs{dw}$ of $\Pi$. As $\Pi$ is invariant
  under shifts by reals, it is enough to find the $\lambda_{\Pi}(0)$.  The
  map $\psi(w)=\frac{e^w-1}{e^w+1}$ sends $\Pi$ isometrically to the
  unit disc, with $\psi(0)=0$, so $\lambda_{\Pi}(0)=\psi'(0)=1/2$ and the
  length of $[0, \frac{\pi^2}{\abs{\log\delta}}]$ in $\Pi$ is equal to
  $\frac{\pi^2}{2\abs{\log\delta}}$.
\end{proof}

\begin{Rem}\label{rem:Adelta-diameter}
  The same chain of conformal mappings allows to compute explicitly
  the distance between $r>0$ and $1$ in the hyperbolic metric of
  $A^\delta=A(\delta r, \delta^{-1})$:
  \begin{equation}
    \dist_{A^\delta}(r,1)=\log\frac{1+\tan\phi}{1-\tan\phi}, \quad 
    \text{where}\quad
    \phi=\frac\pi4\left(1-\frac{\log \delta}{\log(\delta \sqrt{r})}\right).
  \end{equation}
  As $r\to 0$, we get
  \begin{equation}
    \dist_{A^\delta}(r,1)=\log\left(\frac{16}{\pi}\frac{\log(\delta 
        \sqrt{r})}{\log 
        \delta}\right)+O(1)=\log\abs{\log r} +O(1).
  \end{equation}
  From the inequalities
  $\dist_{A^\delta}(r,1)<\diam_{A^\delta}A< \dist_{A^\delta}(r,1)+\frac{\pi^2}{2\abs{\log\delta}}$,
  we see that
  \begin{equation}
    \diam_{A^\delta}A=\log\abs{\log r} +O(1)\quad \text{as} \quad r\to0.
  \end{equation}
\end{Rem}

\section{Geometric constructions with cells}

In this section we develop the two key geometric constructions used in
the proofs of our main theorems: cellular refinement and clustering in
fibers of proper covers.

\subsection{Refinement of cells}

In this section we show how cells with a $\hrho$-extension can be
\emph{refined} into cells with a $\hsigma$-extension for
$0<\sigma<\rho$. We remark that while the statement appears innocuous,
the proof actually requires the full strength of the fundamental lemma
for $\D\setminus\{0\}$ (Lemma~\ref{lem:fund-Dcirc}). The key
difficulty is to construct the refinement of an annulus fiber in a
manner that depends holomorphically on the base.

\begin{Thm}[Refinement theorem]\label{thm:cell-refinement}
  Let $\cC^\hrho$ be a (real) cell and $0<\sigma<\rho$. Then there
  exists a (real) cellular cover $\{f_j:\cC_j^\hsigma\to\cC^\hrho\}$
  of size $\poly_\ell(\rho,1/\sigma)$ where each $f_j$ is a cellular
  translate map.

  If $\cC^\hrho$ varies in a definable family (and $\sigma,\rho$ vary under
  the condition $0<\sigma<\rho<\infty$) then the cells $\cC_j$ and
  maps $f_j$ can also be chosen from a single definable family. If
  $\cC$ is algebraic of complexity $\beta$ then $\cC_j,f_j$ are
  algebraic of complexity $\poly_\ell(\beta)$.
\end{Thm}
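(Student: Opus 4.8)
The plan is to proceed by induction on the length $\ell$. For $\ell=0$ there is nothing to prove. For the inductive step, write $\cC=\cC_{1..\ell}\odot\cF$. First I would apply the inductive hypothesis to the base: cover $\cC_{1..\ell}^\hrho$ by cellular translates $\{g_k:(\cC_k)^\hsigma_{1..\ell}\to\cC_{1..\ell}^\hrho\}$ of size $\poly_\ell(\rho,1/\sigma)$. By Remark~\ref{rem:cell-cover-composition} and the fact that composition of translates is still manageable, it then suffices to refine a single fiber $\cF$ over a base cell that already has a $\hsigma$-extension --- i.e. to construct, for a fiber $\cF$ of type $*,D(r),D_\circ(r)$ or $A(r_1,r_2)$ sitting over $\cC_{1..\ell}^\hrho$, finitely many cells $\cF_j$ (over $\cC_{1..\ell}$) with $\hsigma$-extensions and cellular translate maps in the last coordinate whose images cover $\cF$. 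For $\cF=*$ there is nothing to do; for $\cF=D(r)$ and $\cF=D_\circ(r)$ one covers a disc (resp. punctured disc) of one conformal radius by finitely many concentric subdiscs of smaller radius, the number depending only on $\rho/\sigma$ via the explicit relation~\eqref{eq:rho-ext-def}, and each inclusion $z\mapsto z$ (possibly after a translate of the center in the $D$ case) is a cellular translate. The delicate case, as the statement warns, is $\cF=A(r_1,r_2)$.

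For the annulus fiber the difficulty is that the conformal modulus of $A(r_1(\vz),r_2(\vz))$ in its $\hrho$-extension varies with the base point $\vz\in\cC_{1..\ell}$: the ``logarithmic width'' $\log|r_2/r_1|$ is a non-constant positive function, and a naive subdivision into subannuli of fixed logarithmic width would require intermediate radii that are absolute values of holomorphic functions --- which is exactly the structure a complex cell demands, but one must produce these functions holomorphically on the base. Here is where I would invoke the Fundamental Lemma for $\D\setminus\{0\}$ (Lemma~\ref{lem:fund-Dcirc}). Consider the ratio $u:=r_1/r_2:\cC_{1..\ell}^\hrho\to\D\setminus\{0\}$ (it maps into $\D$ since $|r_1|<|r_2|$). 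Applying Lemma~\ref{lem:fund-Dcirc}, either $u(\cC_{1..\ell})$ lies in a tiny disc $B(0,e^{-\Omega_\ell(1/\rho)};\C)$ --- meaning $\log|r_2/r_1|$ is uniformly bounded below by $\Omega_\ell(1/\rho)$ over $\cC_{1..\ell}$, so the annulus is uniformly ``wide'' and can be subdivided into $O_\ell(\rho/\sigma)$ subannuli with intermediate radii of the form $r_1^{1-t}r_2^{t}$ for finitely many rational $t$ (each such radius is the absolute value of a well-defined branch of a holomorphic function once one passes, if necessary, to a $\nu$-cover to clear roots, but the exponents $t$ can be chosen so that $r_1^{1-t}r_2^t$ is genuinely holomorphic on the base after such a cover, or one simply works with the holomorphic functions $r_1,r_2$ directly and interpolates their logarithms) --- or else $\diam(\log|\log|u(\cC)||;\R)=O_\ell(\rho)$, which says the logarithmic width $\log|r_2/r_1|$ varies by a bounded multiplicative factor over $\cC_{1..\ell}$, so that a single ``cut'' at a radius like $\sqrt{r_1 r_2}$ (the geometric mean, a ratio of holomorphic functions up to a square root absorbed by a $\nu$-cover) splits $A(r_1,r_2)$ into two fibers each of which, after the cut, has its own extension controlled --- and one iterates $O(\log(\rho/\sigma))$ times, or more carefully $\poly_\ell(\rho,1/\sigma)$ times, to reach the desired $\hsigma$-extension. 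In both cases the number of resulting fibers is $\poly_\ell(\rho,1/\sigma)$.

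The main obstacle is precisely this holomorphic-dependence issue for the annulus: one cannot simply ``choose intermediate radii'' pointwise, and the two alternatives of Lemma~\ref{lem:fund-Dcirc} are what make the construction possible --- the ``wide'' case lets one subdivide with powers $r_1^a r_2^b$, and the ``almost-constant-modulus'' case lets one cut at a controlled geometric-mean radius and recurse. Once this is done, the real case follows because all the maps constructed (subdisc inclusions, center translates, geometric-mean cuts) are real when $r,r_1,r_2$ are real holomorphic, so they restrict to real cellular translate covers of $\R_+\cC$; the definable-family uniformity is automatic from the cellular structure as in~\secref{sec:uniform-families} (Remark~\ref{rem:uniform-cpt}); and in the algebraic case each intermediate radius is an algebraic function of complexity $\poly_\ell(\beta)$ --- powers, products and bounded-degree roots of the algebraic radii $r,r_1,r_2$, with the $\nu$-covers needed to clear roots having $\nu=\poly_\ell(\beta)$ by Proposition~\ref{prop:ext-v-cover} --- so the resulting cells and maps have complexity $\poly_\ell(\beta)$, while the number of cuts, being $\poly_\ell(\rho,1/\sigma)$, does not affect the complexity bound (only the size).
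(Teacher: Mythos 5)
Your overall skeleton — induction on the length, refining the base first, isolating the annulus fiber as the only delicate case, applying the Fundamental Lemma for $\D\setminus\{0\}$ (Lemma~\ref{lem:fund-Dcirc}) to the ratio $r_1/r_2$, and splitting into a ``uniformly wide'' case and a ``nearly constant modulus'' case — is exactly the paper's. But the actual construction you propose for the annulus fiber has a genuine gap, in two places. First, covering $A(r_1,r_2)$ by concentric subannuli cannot work near the boundary circles: the $\hsigma$-extension of \emph{any} subannulus is a fixed multiplicative margin $e^{\pi^2/(2\sigma)}$ on each side, independent of the subannulus' logarithmic width, and since $\sigma<\rho$ this margin is \emph{larger} than the ambient margin $e^{\pi^2/(2\rho)}$ of $\cC^\hrho$. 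So any subannulus whose boundary approaches $S(r_1)$ or $S(r_2)$ has an extension escaping $\cC^\hrho$; making the subannuli thinner, or iterating geometric-mean cuts ``until the desired $\hsigma$-extension is reached,'' does not help, because narrowing an annulus does not shrink its extension margin. (It also makes the number of pieces depend on the actual modulus $\log|r_2/r_1|$, which is unbounded over a family, so the size bound $\poly_\ell(\rho,1/\sigma)$ — and $\poly_\ell(\beta,\rho,1/\sigma)$ in the algebraic case — would not follow.) The missing idea is that the two boundary collars, which have logarithmic width only $O(1)$ or $O(1/\sigma)$, must be covered by \emph{discs} fibered over the base — with constant radius near the outer boundary and radius proportional to $r$ near the inner boundary (this is the content of Lemmas~\ref{lem:annulus-cover-v1} and~\ref{lem:annulus-cover-v2}) — while a \emph{single} middle annulus, inserted at a fixed multiplicative distance from the ends, absorbs the unbounded-modulus part. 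The dichotomy of Lemma~\ref{lem:fund-Dcirc} is then used to decide whether such a middle annulus exists uniformly over the base, or whether the whole annulus has bounded logarithmic width, in which case two overlapping annuli built from the constant $\log|r_0|$ (the value at one base point) and from $r$ itself cover it, using the near-constancy of $\log|r|$.

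Second, your intermediate radii $r_1^{1-t}r_2^{t}$ and $\sqrt{r_1r_2}$ are in general multivalued, and your fix — passing to $\nu$-covers — produces maps $z\mapsto z^\nu$ which are prepared but \emph{not} cellular translates, contradicting the very clause of the theorem you are proving (and breaking its later uses, e.g.\ Remark~\ref{rem:refinement-vs-monom}, which relies on the refinement maps inducing the natural injection on $\pi_1$); in the real case this would additionally force signed covers. The paper avoids introducing any new radius functions altogether: after rescaling $r_2$ to $1$, every cell in the cover has radii that are constants or constant multiples of the existing holomorphic radius $r$, so all maps are translates, the real case follows by choosing real centers, and the algebraic complexity bound $\poly_\ell(\beta)$ is immediate. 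As a smaller instance of the same boundary issue, covering $D(1)$ by \emph{concentric} subdiscs also fails (a concentric disc whose $\hsigma$-extension stays inside $D^\hrho(1)$ has radius bounded away from $1$); one needs $\poly(\rho)$ or $\poly(1/\sigma)$ small discs with translated centers spread over $D(1)$, as in the paper.
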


The proof of the refinement theorem will occupy the remainder of this
subsection. We begin with the complex case, and indicate the necessary
modifications for the real case at the end. We proceed by induction on
$\ell$ (the case $\ell=0$ is trivial). Consider a cell
$\cC\odot\cF$. By applying the inductive hypothesis to $\cC$ and
replacing $\cC\odot\cF$ by each $\cC_j\odot f_j^*\cF$ we may assume
without loss of generality that the base $\cC$ already admits
$E$-extension, where $E$ will be chosen later. The case $\cF=*$
reduces to the inductive hypothesis directly with $E=\hsigma$.

As a notational convenience we allow ourselves to rescale the fiber
$\cF$ by a non-vanishing holomorphic function
$s\in\cO_b(\cC^\hsigma)$, where it is understood that the covering
cells $\cC_j$ that we construct will eventually be rescaled to cover
the original $\cF$. When we describe the covering of $\cF$ we allow
ourselves to use discs centered at a point $p\in\cO_b(\cC^\hsigma)$
where it is understood that such discs will be centered at the origin
in the covering cells $\cC_j$ that we construct, and the maps $f_j$
will translate the origin to $p$. We note that rescaling has the
effect of eventually rescaling the centers of the covering discs that
we construct below by a factor of $s$. In the algebraic case we will
always choose $s$ and $p$ to be algebraic of complexity $\poly_\ell(\beta)$
so that the maps that we eventually construct will indeed be of
complexity $\poly_\ell(\beta)$.

We will consider two separate cases: $\rho>1,\sigma=1$ and
$\rho=1,\sigma<1$. The general case can be reduced to a composition of
these two cases, first refining $\hrho$-extensions into
$\he1$-extensions and then refining $\he1$-extensions into
$\hsigma$-extensions

\subsubsection{The case $\rho>1$, $\sigma=1$}
In this case regardless of the type of $\cF$ we have
$\cF^\hrho=\cF^\delta$ where $\delta=1-\Theta(\rho^{-1})$. We will
consider the different fiber types separately.

Suppose $\cF$ is of type $D$. Up to rescaling $\cF=D(1)$.  Then we
take $E=\he1$. Our goal to cover $D(1)$ by discs whose
$\he1$-extensions remain in
$D(\delta^{-1})=D(1+\Theta(\rho^{-1}))$. One can use for example any
disc of radius $O(\rho^{-1})$ centered in $D(1)$; clearly
$\poly(\rho)$ such discs suffice to cover $D(1)$.

Suppose $\cF$ is of type $D_\circ$. Up to rescaling
$\cF=D_\circ(1)$. Again we take $E=\he1$. Inside $D_\circ(1)$ we
choose the disc $D_\circ$ such that $D_\circ^{\he1}$ remains in
$D_\circ(1)$. It remains to cover the annulus
$D_\circ(1)\setminus D_\circ$, which has inner radius $\Omega(1)$,
using $\poly(\rho)$ discs whose $\he1$-extensions remain in
$D_\circ(1)^{\he\rho}$. This can be done as in the case $\cF=D(r)$.

Suppose $\cF$ is of type $A$. Up to rescaling $\cF=A(r,1)$. We take
$E=\he1\he{\hat\rho}$ where $\hat\rho$ will be chosen later. By the
fundamental lemma for $\D\setminus\{0\}$ (Lemma~\ref{lem:fund-Dcirc})
applied to $r$ on the cell $\cC^{\he1}$ one of the following holds
\begin{align}\label{eq:fund-r-v1}
  \log |r(\cC^{\he1})| &\subset (-\infty,-\Omega_\ell(1/\hat\rho)) & &\text{or} & \diam(\log|\log|r(\cC^{\he1})||;\R)&=O_\ell(\hat\rho).
\end{align}
We will use the following simple lemma (see Figure~\ref{fig:circle}).

\begin{Lem}\label{lem:annulus-cover-v1}
  Let $\alpha\in(0,1]$ (resp. $\beta\in[1,\infty)$) and suppose
  $r<\alpha$ (resp. $r\beta<1$) uniformly in $\cC^{\he1}$. Then one
  can cover $\cC\odot S^{1-\Omega(1/\rho)}(\alpha)$
  (resp. $\cC\odot S^{1-\Omega(1/\rho)}(\beta r)$) by $\poly(\rho)$
  cells whose $\he1$-extensions remain in $(\cC\odot\cF)^\hrho$.
\end{Lem}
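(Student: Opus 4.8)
The plan is to reduce the lemma to an elementary covering of a circle by $\poly(\rho)$ small discs, handling the $\alpha$- and $\beta$-versions together. First I would dispose of bounded $\rho$ (where $\poly(\rho)=\Theta(1)$ and any finite cover works), so I may take $\rho$ as large as needed; write $\cF^\hrho=A(\delta r,\delta^{-1})$ with $1-\delta=\Theta(1/\rho)$. Set $R:=\alpha$ (a positive constant) in the first version and $R:=\beta r$ (holomorphic and nonvanishing on $\cC^{\he1}$) in the second. The entire force of the hypotheses $\alpha\le1,\ r<\alpha$ (resp.\ $\beta\ge1,\ r\beta<1$) is that $|R|\le1$ and $|r|\le|R|$ on $\cC^{\he1}$; consequently the circle $\{|w|=|R(z)|\}$ lies in the fiber $A(\delta|r(z)|,\delta^{-1})$ of $\cF^\hrho$ with \emph{multiplicative} clearance at least $\delta^{-1}=1+\Theta(1/\rho)$ from each of its two boundary circles. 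This fractional (rather than absolute), two-sided clearance is the only place the hypotheses enter, and it is what will keep the number of discs $\poly(\rho)$ even when $1/\alpha$ or $\beta$ is large.

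For the construction I would fix an absolute constant $c_0\in(0,\delta_1)$, where $\delta_1\in(0,1)$ is the constant for which the $\he1$-extension of a disc $D(s)$ equals $D(\delta_1^{-1}s)$ (explicitly $\delta_1=\sqrt{\pi^2+1}-\pi$), put $\epsilon:=c_0(1-\delta)R\in\cO_b(\cC^{\he1})$ (nonvanishing), choose $N:=\lceil C_0\rho\rceil$ for a large absolute constant $C_0$, and take the cells $\cC\odot D(\epsilon)$ with the cellular translate maps $f_j\colon(z,w)\mapsto(z,\ w+R(z)e^{2\pi\iu j/N})$ for $j=0,\dots,N-1$. These are translates and $N=\poly(\rho)$, so it remains to verify two properties. \emph{Covering:} a point $w$ of the thin annulus $S^{1-\kappa/\rho}(\alpha)$ (resp.\ $S^{1-\kappa/\rho}(\beta r)$) over $z$ satisfies $\bigl||w|-|R(z)|\bigr|\le 2\kappa|R(z)|/\rho$ and lies within angle $\pi/N$ of some center, so by $1-\cos x\le x^2/2$ its distance to that center is at most $|R(z)|\sqrt{(2\kappa/\rho)^2+2(\pi/N)^2}$, which is $<|\epsilon(z)|=c_0(1-\delta)|R(z)|$ once $C_0$ is large and then $\kappa$ is small — in terms only of $c_0$ and the implied constant in $1-\delta=\Theta(1/\rho)$; this choice of $\kappa$ fixes the exponent $1-\Omega(1/\rho)$. \emph{Extension:} the $\he1$-extension $\cC^{\he1}\odot D(\delta_1^{-1}\epsilon)$, translated by $f_j$, is a disc whose points have $|w|$ ranging over $|R(z)|\bigl(1\mp\delta_1^{-1}c_0(1-\delta)\bigr)$; since $c_0<\delta_1$ gives $\delta_1^{-1}c_0(1-\delta)<1-\delta$, the inner end exceeds $\delta|R(z)|\ge\delta|r(z)|$ and the outer end stays below $|R(z)|/\delta\le\delta^{-1}$ (using $|R(z)|\le1$), so the extended fiber remains in $A(\delta r,\delta^{-1})=\cF^\hrho$. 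The base part of these extensions is automatic, since $\cC$ already carries the ambient extension produced earlier in the proof of the refinement theorem, which contains $\cC^{\he1}$.

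It remains to note the variants. In the real case only the positive real part must be covered, where $R$ is real and positive, so a single disc $\cC\odot D(\epsilon)$ translated by $(z,w)\mapsto(z,w+R(z))$ covers the relevant interval, with the identical extension estimate. In the algebraic case, replacing $c_0$, $1-\delta$, $e^{2\pi\iu j/N}$ by algebraic numbers of bounded degree makes the centers and the radius $\epsilon$ algebraic of complexity polynomial in that of $r$, so the cells are of the required complexity; uniformity over definable families is then immediate. I expect no genuine obstacle — the argument is elementary — but the one point requiring care is exactly the tension resolved by the first paragraph: a single holomorphic radius $\epsilon$, necessarily proportional to $|R(z)|$, must simultaneously be wide enough to cover a thin annulus of width $\Theta(|R(z)|/\rho)$ and narrow enough that its fixed-factor $\he1$-blowup still fits inside the $\Theta(1/\rho)$-fractional collar of $\{|w|=|R(z)|\}$ in $\cF^\hrho$.
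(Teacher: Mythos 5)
Your proof is correct and follows essentially the same route as the paper: cover the thin annulus $S^{1-\Omega(1/\rho)}(\alpha)$ (resp.\ $S^{1-\Omega(1/\rho)}(\beta r)$) by $O(\rho)$ translated discs of radius proportional to $R/\rho$ centered on $S(R)$, and check that their $\he1$-extensions stay in $\cF^\hrho$ using exactly $|r|\le|R|\le1$. Your version merely makes the constants and the equally spaced centers explicit, which the paper leaves implicit.
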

\begin{proof}
  The $\he1$-extension of a disc of radius $O(\alpha/\rho)$ centered at a
  point of $S(\alpha)$ remains in
  $S^{1-\Omega(1/\rho)}(\alpha)$. Moreover for any point in
  $\cC^{\he1}$ this remains in $A^\hrho(r,1)$ since $r<\alpha$. We
  choose a collection of $\poly(\rho)$ such discs to cover
  $S^{1-\Omega(1/\rho)}(\alpha)$. Then $\cC\odot D_i$ gives the
  required covering. The respective case is similar, with $S(\alpha)$
  replaced by $S(\beta r)$.
\end{proof}

\begin{figure}
  \centering
  \includegraphics[width=.6\textwidth]{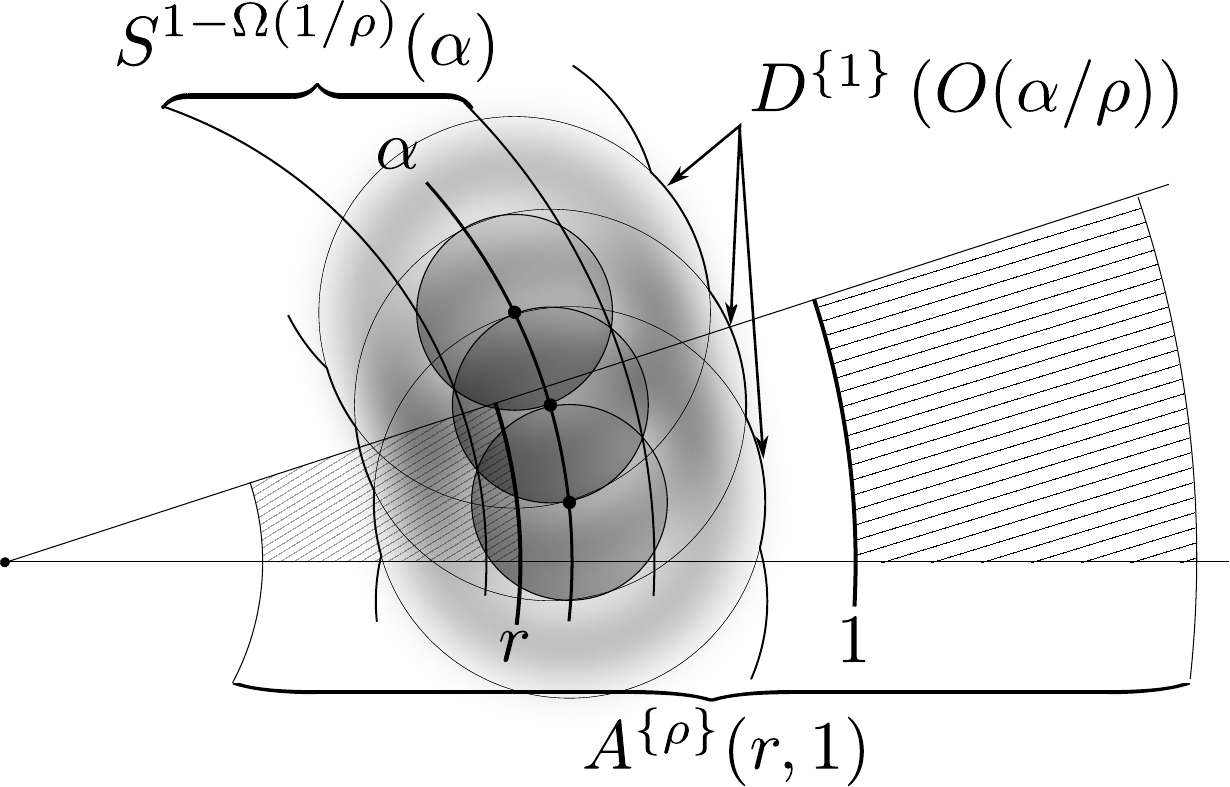}
  \caption{Proof of Lemma~\ref{lem:annulus-cover-v1}.}
  \label{fig:circle}
\end{figure}

Below we refer to the map $z\to\log|z|$ as the \emph{logarithmic
  scale}. An annulus $A=A(r_1,r_2)$ corresponds in this scale to an
interval $(\log \abs{r_1},\log \abs{r_2})$ whose length we refer to as the
\emph{logarithmic width} of $A$.

Suppose that we are in the first case of~\eqref{eq:fund-r-v1}. Inside
$A(r,1)$ we choose the annulus $A$ such that $A^{\he1}=A(r,1)$: this
can be done with $\hat\rho=\Omega_\ell(1)$. It remains to cover the two
annuli components of $A(r,1)\setminus A$. Each of these has
logarithmic width $O(1)$. Lemma~\ref{lem:annulus-cover-v1} allows us
to cover subannuli of logarithmic width $\Omega(1/\rho)$: with
$\alpha$ for the outer component and with $\beta$ for the inner
component. Clearly $\poly(\rho)$ applications of the lemma suffice to
cover the two components. See Figure~\ref{fig:refining} for an
illustration.

\begin{figure}
  \centering
  \includegraphics[width=.6\textwidth]{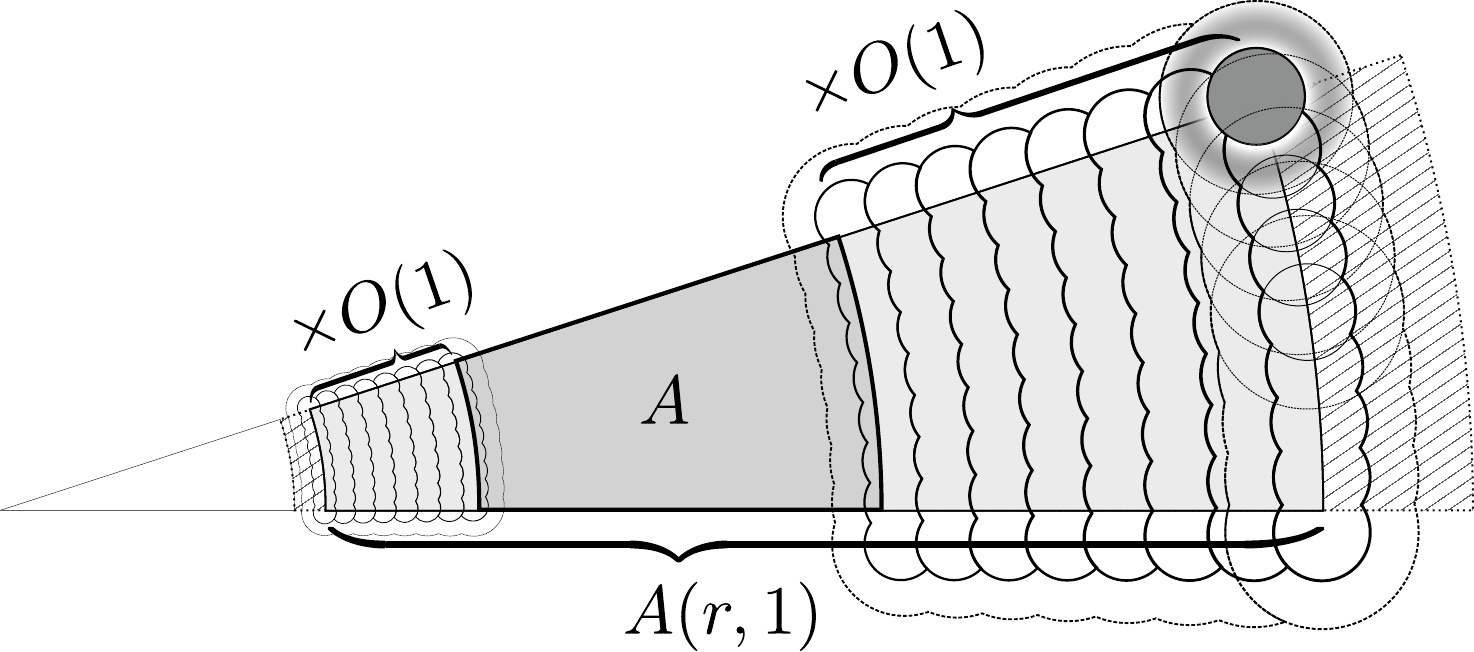}
  \caption{Refinement of an annulus.}
  \label{fig:refining}
\end{figure}

Suppose that we are in the second case of~\eqref{eq:fund-r-v1}.  We
may choose $\hat\rho=\Omega_\ell(1)$ such that the ratio of $\log |r|$
between any two points of $\cC^{\he1}$ is in $(9/10,10/9)$. We again
distinguish two cases: first, suppose that we can uniformly over
$\cC^{\he1}$ choose inside $A(r,1)$ the annulus $A$ such that
$A^{\he1}=A(r,1)$. In this case we can proceed as above. Otherwise,
over some point in $\cC^{\he1}$ we have $r(\vz_{1..\ell})=r_0$ with
$\log|r_0|=-O(1)$. We now apply Lemma~\ref{lem:annulus-cover-v1} to
cover the annulus given in the logarithmic scale by $(2/3\log|r_0|,0)$
(using $\alpha$) and the annulus given in the logarithmic scale by
$(\log|r|,\log|r|-2/3\log|r_0|)$ (using $\beta$). Crucially, the ratio
condition on $\log|r|$ in $\cC^{\he1}$ ensures that these two annuli
remain in $A(r,1)$ and cover it uniformly over $\cC^{\he1}$. Since the
width of each of these annuli is $O(1)$ we see as above that
$\poly(\rho)$ applications of the lemma suffice to cover them.

\subsubsection{The case $\rho=1$, $\sigma<1$}
In this case we have $\cF^\hsigma=\cF^\e$ where
$\e=\Theta(\sigma)$ in type $D$ and $\e= e^{-\Theta(1/\sigma)}$ in types
$D_\circ,A$. We will consider the different fiber types separately.

Suppose $\cF$ is of type $D$. Up to rescaling $\cF=D(1)$. Then we take
$E=\hsigma$. Our goal to cover $D(1)$ by discs whose
$\he\sigma$-extensions remain in $D(1)^{\he1}$. One can use for
example any disc of radius $O(\sigma)$ centered in $D(1)$; clearly
$\poly(1/\sigma)$ such discs suffice to cover $D(1)$.

Suppose now that $\cF$ is of type $D_\circ$ or $A$. Up to rescaling
$\cF=D_\circ(1)$ or $\cF=A(r,1)$. We will use the following analog of
Lemma~\ref{lem:annulus-cover-v1}. The $\beta$-case of the lemma is
valid only for type $A$.
\begin{Lem}\label{lem:annulus-cover-v2}
  Let $\alpha\in(0,1]$ (resp. $\beta\in[1,\infty)$) and suppose
  $r<\alpha$ (resp. $r\beta<1$) uniformly in $\cC^\hsigma$. Then one
  can cover $\cC\odot S^{\Omega(1)}(\alpha)$
  (resp. $\cC\odot S^{\Omega(1)}(\beta r)$) by $\poly(1/\sigma)$
  cells whose $\hsigma$-extensions remain in $(\cC\odot\cF)^{\he1}$.
\end{Lem}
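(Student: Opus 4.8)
The plan is to mimic the proof of Lemma~\ref{lem:annulus-cover-v1}, but now tracking the geometry of $\hsigma$-extensions rather than $\he1$-extensions. Recall that in the regime $\rho=1,\sigma<1$ we have $(\cC\odot\cF)^{\he1}=(\cC\odot\cF)^\delta$ where $\delta=\Theta(1)$, and the $\hsigma$-extension of an annular fiber $A(r_1,r_2)$ is $A^\e(r_1,r_2)=A(\e r_1,\e^{-1}r_2)$ with $\e=e^{-\Theta(1/\sigma)}$. So in the logarithmic scale, passing from a circle $S(\alpha)$ to its $\hsigma$-extension $S^\e(\alpha)$ thickens by a band of logarithmic width $\Theta(1/\sigma)$ on each side, while the ambient extension $(\cC\odot\cF)^{\he1}$ corresponds in the base $\cC^\hsigma$ to a fixed $\delta$-neighborhood $A(\delta^{\pm1}\cdot)$, i.e.\ of logarithmic width $\Theta(1)$.

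First I would reduce, by rescaling (as permitted in the surrounding text), to the case $\cF=D_\circ(1)$ or $\cF=A(r,1)$, and I would prove the $\alpha$-statement and the $\beta$-statement (the latter only for type $A$) separately; by the $z\mapsto \text{(outer radius)}^2/z$ symmetry the $\beta$-case near the inner radius $r$ is handled identically to the $\alpha$-case once one knows $r\beta<1$ holds uniformly. So concentrate on covering $\cC\odot S^{\Omega(1)}(\alpha)$ with $\alpha\in(0,1]$ under the hypothesis $r<\alpha$ uniformly on $\cC^\hsigma$. The key point is a covering of the single circle $S(\alpha)\subset\C$ by $\poly(1/\sigma)$ small discs $D_i$ such that $D_i^\hsigma$ stays inside the annular neighborhood $S^{\Omega(1)}(\alpha)$ of logarithmic width $\Omega(1)$; this is possible because a disc of Euclidean radius $\Theta(\alpha\cdot\sigma)$ — equivalently of logarithmic width $\Theta(\sigma)$ — centered at a point of $S(\alpha)$ has $\hsigma$-extension $D^\e$ with $\e=e^{-\Theta(1/\sigma)}$, hence of multiplicative radius $e^{\Theta(1/\sigma)}\cdot\Theta(\alpha\sigma)$. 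One must check this extension has logarithmic width $O(1)$; this requires choosing the disc radius $\Theta(\alpha\sigma)$ with a \emph{small enough} implied constant so that $e^{\Theta(1/\sigma)}\sigma\cdot(\text{const})\le\alpha\cdot e^{O(1)}$ — in other words the constants must be arranged so that the exponential blowup $e^{\Theta(1/\sigma)}$ coming from the $\hsigma$-extension is absorbed into the $e^{\Omega(1)}$ room we have in the target band. This is where the delicate part of the bookkeeping lies.

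Second, having covered $S(\alpha)$ by discs $D_i$ with $D_i^\hsigma\subset S^{\Omega(1)}(\alpha)$ as Euclidean objects, I would lift this to a cellular cover of $\cC\odot S^{\Omega(1)}(\alpha)$: take the cells $\cC_i:=\cC\odot D_i$ (with the disc $D_i$ centered at the appropriate point of $\cC^\hsigma$ lying on $S(\alpha)$, as in the surrounding convention on centers and rescalings), with the identity-on-the-last-coordinate-up-to-translation maps $f_i$. Since $\cC$ already admits an $E$-extension with $E$ as large as we need (the reduction at the start of the proof of Theorem~\ref{thm:cell-refinement} lets us take $E=\hsigma$, and in fact $E$ even wider for this sublemma), and since each $D_i^\hsigma$ has been arranged to lie in the $S^{\Omega(1)}(\alpha)$-band, which in turn lies in $\cF^{\he1}=A^{\he1}(r,1)\supset A(\alpha,1)$ uniformly over $\cC^\hsigma$ precisely because $r<\alpha$ there, we get $\cC_i^\hsigma\subset(\cC\odot\cF)^{\he1}$. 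The number of discs needed to cover the circle $S(\alpha)$ by discs of relative width $\Theta(\sigma)$ is $\Theta(1/\sigma)$, so the cover has size $\poly(1/\sigma)$ as claimed, and in the algebraic/definable setting the centers can clearly be chosen of complexity $\poly_\ell(\beta)$ (the center radii are $\alpha$ up to roots of unity, which is already algebraic of the right complexity since $\alpha$ is).

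The main obstacle, as flagged above, is purely the constant-chasing in the $\hsigma$-extension estimate: one must verify that a disc of logarithmic width $c\sigma$ has $\hsigma$-extension of logarithmic width at most some fixed $O(1)$, and that this $O(1)$ is smaller than the width of the band $S^{\Omega(1)}(\alpha)$ we are allowed to use. Concretely, from $\e=e^{-\Theta(1/\sigma)}$ and Definition~\ref{def:cell-ext}, the $\hsigma$-extension of $D(a)$ centered at $q$ with $|q|=\alpha$, $a=c\alpha\sigma$, is contained in $D(|q|+\e^{-1}a)\setminus D(|q|-\e^{-1}a)=D(\alpha(1+\e^{-1}c\sigma))\setminus D(\alpha(1-\e^{-1}c\sigma))$ — wait, this is the wrong normalization for a disc not centered at the origin. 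The correct statement: $D(a)^\hsigma=D(\e^{-1}a)$ is still a disc of radius $\e^{-1}a=e^{\Theta(1/\sigma)}c\alpha\sigma$ centered at $q$; for this to stay in $S^{K}(\alpha)=A(e^{-K}\alpha,e^{K}\alpha)$ one needs $e^{\Theta(1/\sigma)}c\sigma\le e^K-1$, which holds for $K=\Omega(1)$ provided $\sigma$ is bounded below — but $\sigma$ is \emph{not} bounded below. This is the real subtlety and it means one cannot take the disc radius proportional to $\sigma$; instead the disc radius must be taken proportional to $\e=e^{-\Theta(1/\sigma)}$ times $\alpha$, so that $\e^{-1}a=\Theta(\alpha)$ is comparable to $\alpha$ and the extension has logarithmic width $O(1)$. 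Then the number of such discs needed to cover $S(\alpha)$ is $\Theta(1/\e)=e^{\Theta(1/\sigma)}$, which is \emph{not} $\poly(1/\sigma)$. Resolving this — by covering $S(\alpha)$ with a chain of $\poly(1/\sigma)$ \emph{annuli} of logarithmic width $\Theta(\sigma)$ rather than discs, whose $\hsigma$-extensions are annuli of logarithmic width $O(1)$, exactly as annuli (not discs) were the right tool at the corresponding point of Lemma~\ref{lem:annulus-cover-v1}'s neighbourhood — is the crux, and I would structure the proof around that choice.
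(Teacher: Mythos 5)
Your plan starts out on the right track (cover the circle by small cells of the form $\cC\odot D_i$ centered on it, and use $r<\alpha\le 1$ uniformly on $\cC^\hsigma$ to keep everything inside $\cF^{\he1}$), but the step in which you talk yourself out of using discs contains a genuine error, and the ``fix'' you end with would not work. The point you missed is that the relation between $\rho$ and $\delta$ in~\eqref{eq:rho-ext-def} depends on the type of the fiber being extended: for a covering cell whose last fiber is a \emph{disc} $D(a)$, the $\hsigma$-extension is $D(a/\delta)$ with $\delta$ determined by $\sigma=2\pi\delta/(1-\delta^2)$, i.e.\ $\delta=\Theta(\sigma)$; the exponential normalization $\delta=e^{-\Theta(1/\sigma)}$ applies only to fibers of type $D_\circ,A$. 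Hence a disc of radius $\Theta(\sigma\alpha)$, with a small enough implied constant, centered at a point of $S(\alpha)$ has $\hsigma$-extension of radius $O(\alpha)$ (small constant), which stays inside $S^{\Omega(1)}(\alpha)$ and therefore inside $\cF^{\he1}$, and $\Theta(1/\sigma)$ such discs cover the circle. So the disc covering you first wrote down is correct, and it is essentially the paper's proof: the paper covers $S^{\Omega(1)}(\alpha)$ by $O(1)$ discs of radius $\alpha/10$ centered on $S(\alpha)$, whose $\he1$-extensions lie in $S^{e^{-\pi^2/2}}(\alpha)\subset\cF^{\he1}$, and then covers each of these by $\poly(1/\sigma)$ discs using the already-established type-$D$ case of the refinement argument; the exponential blow-up you feared never arises.

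Conversely, the replacement you propose --- a chain of $\poly(1/\sigma)$ annuli of logarithmic width $\Theta(\sigma)$ ``whose $\hsigma$-extensions are annuli of logarithmic width $O(1)$'' --- has the normalizations exactly backwards: by~\eqref{eq:rho-ext-def}, the $\hsigma$-extension of an annulus multiplies and divides its radii by $e^{\pi^2/(2\sigma)}$, so it \emph{adds} $\Theta(1/\sigma)$ to the logarithmic width. For $\alpha$ close to $1$ such an extension already leaves $\cF^{\he1}=A^{e^{-\pi^2/2}}(r,1)$, so these cells cannot satisfy the conclusion of Lemma~\ref{lem:annulus-cover-v2}. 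In short, the obstacle you identified is an artifact of applying the $D_\circ/A$ normalization to disc-type covering cells; once the correct type-$D$ normalization is used, your original disc construction (with the base handled exactly as you describe, and the $\beta$-case treated symmetrically around $S(\beta r)$) goes through, while the annulus-based alternative fails.
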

\begin{proof}
  The $\he{1}$-extension of a disc of radius $\alpha/10$ centered at a
  point of $S(\alpha)$ remains in
  $S^{e^{-\pi^2/2}}(\alpha)\subset\cF^{\he1}$. We choose a collection
  of $O(1)$ such discs to cover $S^{\Omega(1)}(\alpha)$. By what was
  already proved for discs, we can further cover each of the discs by
  $\poly(1/\sigma)$ discs whose $\hsigma$-extensions remain in
  $S^{e^{-\pi^2/2}}(\alpha)$. Thus for any point in $\cC^\hsigma$
  these extensions remain in $\cF^{\he1}$ (in type $A$ we use that
  $r<\alpha$). If $\{D_i\}$ denotes the collection of all discs
  obtained in this process then $\cC\odot D_i$ gives the required
  covering. The case of $\cC\odot S^{\Omega(1)}(\beta r)$ is similar,
  with $S(\alpha)$ replaced by $S(\beta r)$.
\end{proof}

Suppose $\cF=D_\circ(1)$. We take $E=\hsigma$. We first embed
$D_\circ(e^{-\Theta(1/\sigma)})$ in $D_\circ(1)$ so that the
$\hsigma$-extensions remains in $D_\circ(1)$. It remains to cover the
annulus $A(e^{-\Theta(1/\sigma)},1)$, or in the logarithmic scale
$(-\Theta(1/\sigma),0)$. Lemma~\ref{lem:annulus-cover-v2} allows us to
cover subannuli of logarithmic width $\Omega(1)$, and indeed
$\poly(1/\sigma)$ applications suffice to cover the annulus.

Finally suppose $\cF=A(r,1)$. We take $E=\hsigma\he{\hat\rho}$ where
$\hat\rho$ will be chosen later. By the fundamental lemma for
$\D\setminus\{0\}$ (Lemma~\ref{lem:fund-Dcirc}) applied to $r$ on the
cell $\cC^\hsigma$ one of the following holds
\begin{align}\label{eq:fund-r-v2}
  \log |r(\cC^\hsigma)| &\subset (-\infty,-\Omega_\ell(1/\hat\rho))& &\text{or} & \diam(\log|\log|r(\cC^\hsigma)||;\R)&=O_\ell(\hat\rho).
\end{align}
  
Suppose that we are in the first case of~\eqref{eq:fund-r-v2}.  Inside
$A(r,1)$ we choose the annulus $A$ (uniformly over $\cC^\hsigma$) such
that $A^\hsigma=A(r,1)$: this is possible if
$\log|r|=-\Omega(1/\sigma)$, i.e. it can be done with
$\hat\rho=\Omega_\ell(\sigma)$. It remains to cover the two annuli
components of $A(r,1)\setminus A$. Each of these has width
$O(1/\sigma)$ in the logarithmic
scale. Lemma~\ref{lem:annulus-cover-v2} allows us to cover subannuli
of width $\Omega(1)$: with $\alpha$ for the outer component and with
$\beta$ for the inner component. Clearly $\poly(1/\sigma)$
applications of the lemma suffice to cover the two components.

Suppose that we are in the second case of~\eqref{eq:fund-r-v2}. We may
choose $\hat\rho=\Omega_\ell(1)$ such that the ratio of $\log |r|$ between
any two points of $\cC^{\he1}$ is in $(9/10,10/9)$. We again
distinguish two cases: first, suppose that we can uniformly over
$\cC^\hsigma$ choose inside $A(r,1)$ the annulus $A$ such that
$A^\hsigma=A(r,1)$. In this case we can proceed as above. Otherwise,
over some point in $\cC^\hsigma$ we have $r(\vz_{1..\ell})=r_0$ with
$\log|r_0|=-O(1/\sigma)$. We now apply
Lemma~\ref{lem:annulus-cover-v2} to cover the annulus given in the
logarithmic scale by $(2/3\log|r_0|,0)$ (using $\alpha$) and the
annulus given in the logarithmic scale by
$(\log|r|,\log|r|-2/3\log|r_0|)$ (using $\beta$). Crucially, the ratio
condition on $\log|r|$ in $\cC^\hsigma$ ensures that these two annuli
remain in $A(r,1)$ and cover it uniformly over $\cC^\hsigma$. Since
the logarithmic width of each of these annuli is $O(1/\sigma)$ we see
as above that $\poly(1/\sigma)$ applications of the lemma suffice to
cover them.

\subsubsection{The real case}

Only a very minor modification is needed in order to treat the real
case. If $\cC$ is real then all the rescalings performed during the
proof are also real. Wherever we construct a collection of discs to
cover a domain $\cF$ by discs in the complex case, we now choose a
collection of discs with real centers to cover $\R_+\cF$. This ensures
that the cells that we construct are real, and the rest of the proof
remains unchanged.

\subsubsection{Uniformity over families and monomialization}

The statement regarding uniformity over families follows by inspection
of the proof. By induction the refinement maps chosen for the base can
all be chosen from a single definable family. The covering constructed
for the fiber, after a uniform rescaling, consists of discs or annuli
with a constant center and radius. The family of all such discs or
annuli is certainly definable. We will also make use of the following
remark in the sequel.

\begin{Rem}[Refinement and monomialization]\label{rem:refinement-vs-monom}
  The type of any cell $\cC_j$ obtained by refinement of $\cC^\hrho$
  is obtained from the type of $\cC$ by possibly replacing some
  $D_\circ,A$ fibers by $D$ fibers. Moreover
  $(f_j)_*:\pi_1(\cC_j)\to\pi_1(\cC)$ is the natural injection. In
  particular if $F:\cC\to\C\setminus\{0\}$ then $\valpha(f_j^*F)$ is
  obtained from $\valpha(F)$ by eliminating indices corresponding to
  fibers that were replaced by $D$ in $\cC_j$.
\end{Rem}

Remark~\ref{rem:refinement-vs-monom} follows immediately from the
proof: it suffices to note that $D$ fibers are always covered by $D$
fibers, and $D_\circ,A$ fibers are covered by a collection of $D$
fibers and possibly one additional fibers of type $D_\circ,A$; and
this additional fiber is always centered at zero, and hence homotopy
equivalent to the original fiber by the injection map.

\subsection{Monomial cells}

Let $\cC$ be a cell of length $\ell$. An \emph{admissible monomial} on
$\cC$ is a function of the form $c\cdot\vz^\valpha$ where $c\in\C$ and
$\valpha=\valpha(f)$ is the associated monomial of some function
$f\in\cO_b(\cC)$.

\begin{Def}[Monomial cell]\label{def:monom-cell}
  We will say that a cell $\cC$ is \emph{monomial} if $\cC=*$; or if
  $\cC=\cC_{1..\ell}\odot\cF$ where $\cC_{1..\ell}$ is monomial and
  the radii involved in $\cF$ are admissible monomials on
  $\cC_{1..\ell}$.
\end{Def}

Using the refinement theorem (Theorem~\ref{thm:cell-refinement}) and
the monomialization lemma (Lemma~\ref{lem:monomial}) we prove the
following proposition.

\begin{Prop}\label{prop:monomial-cover}
	Let $\cC^\hrho$ be a (real) cell and $0<\sigma<\rho$. Then there
    exists a (real) cellular cover $\{f_j:\cC_j^\hsigma\to\cC^\hrho\}$
	where each $\cC_j$ is a monomial cell and each $f_j$ is a cellular
	translate map.

	If $\cC^\hrho$ varies in a definable family $\Lambda$  then  the cover
	has size $\poly_{\Lambda}(\rho,1/\sigma)$ and the cells $\cC_j$ and
	maps $f_j$ can be chosen from a single definable family. If
	$\cC$ is algebraic of complexity $\beta$ then  the cover
	has size $\poly_{\ell}(\beta,\rho,1/\sigma)$ and
	$\cC_j,f_j$ are algebraic of complexity $\poly_\ell(\beta)$.
\end{Prop}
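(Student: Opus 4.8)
The plan is to combine the refinement theorem with the monomialization lemma in a single inductive sweep over the fiber coordinates, converting each radius into an admissible monomial at the cost of a further refinement. First I would apply the refinement theorem (Theorem~\ref{thm:cell-refinement}) to pass from $\cC^\hrho$ to a cellular cover by cells $\cC_j^{\hsigma'}$ with $\sigma'$ chosen small enough (still $1/\sigma'=\poly_\ell(\beta)$, using Remark~\ref{rem:repeated-ext} to absorb the several extension steps) that we have room to work; this reduces us to the case where the gap between the available extension and the target $\hsigma$ is as large as we need. The induction is on the length $\ell$: for $\ell=0$ there is nothing to prove, and for the inductive step we write $\cC=\cC_{1..\ell-1}\odot\cF$, apply the proposition to $\cC_{1..\ell-1}$ to assume its base is already monomial (and replace $\cF$ by the pullbacks $f_j^*\cF$), and then deal with the single new fiber.

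The heart of the argument is handling the radii $r$ (resp.\ $r_1,r_2$) defining $\cF$ over a monomial base. By the monomialization lemma (Lemma~\ref{lem:monomial}) applied to $r$ on a suitable $\hrho'$-extension, we may write $r = \vz^{\valpha(r)}\cdot U(\vz)$ with $\log U$ univalued and bounded on $\cC_{1..\ell-1}$, $|\valpha(r)|=\poly_\ell(\beta)$, and $\diam(\log U(\cC_{1..\ell-1});\C)$ small once $\rho'$ is small. Thus $U$ is bounded away from $0$ and $\infty$, and in fact $\log|U|$ has small diameter, so after a further refinement of the base (again via Theorem~\ref{thm:cell-refinement}, choosing the auxiliary extension small enough that this diameter is, say, $<1/10$) the annulus or disc defined by the radius $r$ is sandwiched between the annulus/disc defined by the \emph{admissible monomial} $c\cdot\vz^{\valpha(r)}$ for an appropriate constant $c$ of modulus comparable to $|U|$ and a slightly rescaled copy of it. Concretely, $\cF = \cF'\cdot U$ where $\cF'$ uses monomial radii, and since $|U|$ varies by a factor within $(9/10,10/9)$ say, one covers the true fiber $\cF$ by $O(1)$ translates of monomial fibers obtained from $\cF'$ by small dilations of the radii (for a fiber $D(r)$ one monomial disc $D(c\vz^\valpha)$ of slightly larger radius suffices after a translate; for $A(r_1,r_2)$ one uses the monomial annulus together with $\poly(1/\sigma)$ correction discs near the two boundary circles exactly as in Lemma~\ref{lem:annulus-cover-v1} and Lemma~\ref{lem:annulus-cover-v2}). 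In the algebraic case $c$ and the monomials are algebraic of complexity $\poly_\ell(\beta)$, and by Remark~\ref{rem:refinement-vs-monom} the covering cells inherit types that are either the same or have $D_\circ,A$ replaced by $D$, so they remain monomial; the maps are cellular translates by construction.

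The main obstacle I expect is ensuring that the covering of the fiber depends \emph{holomorphically} on the base and that the correction pieces near the boundary of an annulus fiber stay inside the required $\hsigma$-extension uniformly over the base — this is precisely the delicate point already encountered in the refinement theorem, and the resolution is the same: invoke the fundamental lemma for $\D\setminus\{0\}$ (Lemma~\ref{lem:fund-Dcirc}) on the radius $U$ (equivalently on $\log U$ via Lemma~\ref{lem:monomial}) to control its variation, then reuse Lemma~\ref{lem:annulus-cover-v1}/Lemma~\ref{lem:annulus-cover-v2} verbatim to cover the logarithmic collars of width $O(1)$ (resp.\ $O(1/\sigma)$) between the true fiber and the monomial fiber by $\poly(1/\sigma)$ discs whose extensions stay inside $(\cC_{1..\ell-1}\odot\cF)^{\he1}$. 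Composing all the refinement and correction covers (Remark~\ref{rem:cell-cover-composition}) gives a cover of size $\poly_\ell(\beta,\rho,1/\sigma)$ by monomial cells with cellular translate maps of complexity $\poly_\ell(\beta)$; uniformity over a definable family $\Lambda$ follows by inspection, since every ingredient (refinement maps, monomialization, the $O(1)$ correction families of discs/annuli with constant center and radius after rescaling) comes from a single definable family, exactly as in the uniformity discussion following Theorem~\ref{thm:cell-refinement}. The real case requires only that all rescalings and centers be chosen real, as in the real case of the refinement theorem.
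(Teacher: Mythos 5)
Your overall toolkit is the right one (refinement theorem plus monomialization lemma, which is exactly what the paper uses), but two steps in your fiber construction do not work as written. First, the ``further refinement of the base'' after you have already made the base monomial destroys the property you are trying to preserve: the refined cells are obtained from the monomial base by cellular \emph{translate} maps, and the pullback of a monomial radius $c\vz^{\valpha}$ under a translate $\vz_j\mapsto\vz_j+\phi_j(\vz_{1..j-1})$ is no longer a monomial, so the refined base cells are not monomial. Remark~\ref{rem:refinement-vs-monom}, which you cite for ``so they remain monomial,'' only describes how types and \emph{associated} monomials behave under refinement; it says nothing about the radii of the refined cells being admissible monomials. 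The extra refinement is also unnecessary: the monomialization lemma already gives $\diam(\log U;\C)=O_\Lambda(\sigma')$ when the pulled-back radius is holomorphic on the $\he{\sigma'}$-extension of the base, so you can make the unit's variation as small as you like simply by choosing the extension parameter in the inductive step (equivalently, the parameters of the single initial refinement) small enough. Second, the boundary-correction cells you propose to take ``verbatim'' from Lemmas~\ref{lem:annulus-cover-v1} and~\ref{lem:annulus-cover-v2} have fibers whose radii are constants times the \emph{true} radii $r_i=\vz^{\valpha_i}U_i$ (the rescaling in those lemmas is by $r_2$ itself), hence they still carry the unit $U_i$ and are not monomial cells; each such radius would have to be replaced by $c'\vz^{\valpha_i}$ using the small variation of $U_i$, a repair you indicate for the main fiber but not for the corrections.

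Both problems disappear if you follow the simpler route the paper takes at this point: refine $\cC^\hrho$ once (with a composite extension $\he{\hat\sigma_1}\he{\hat\sigma_2}$ chosen in advance), and then \emph{enclose} each refined cell $\cC_j$ in a single monomial cell $\tilde\cC_j$ rather than covering its fibers. Concretely, writing each radius as $\vz^{\valpha}U$, replace an inner radius by $\vz^{\valpha}\min|U|$ and an outer radius by $\vz^{\valpha}\max|U|$, the min and max taken over the $\hsigma$-extension of the already-enlarged base; then $\cC_j\subset\tilde\cC_j$, no collar is left over and no correction discs are needed, and the covering maps are just the refinement translates restricted to $\tilde\cC_j^\hsigma$. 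The only thing to verify is the containment $\tilde\cC_j^\hsigma\subset\cC_j^{\he{\hat\sigma}}$, which follows because $\diam(\Re\log U)<1$ forces the radii to change by at most a factor of $e$. With these corrections your argument becomes essentially the paper's proof; as proposed, the cells it produces are not all monomial.
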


Proposition~\ref{prop:monomial-cover} implies that we could use
monomial cells, rather than general complex cells, as our standard
models for cellular covers. In some cases this is more convenient, as
the monomial cells have a more transparent combinatorial and algebraic
structure. However for the most part we have chosen in this paper to
state our constructions for general complex cells. 

The proof of Proposition~\ref{prop:monomial-cover} will occupy the
remainder of this section. We start by applying the refinement theorem
(Theorem~\ref{thm:cell-refinement}) to $\cC$ with some
$\he{\hat\sigma}=\he{\hat\sigma_1}\he{\hat\sigma_2}$ to be chosen
later. Recall that all of the cells constructed in the refinement
theorem can be chosen from one definable family independent of
$\hat\sigma$. By the monomialization lemma (Lemma~\ref{lem:monomial}),
if $r(\vz)$ is any of the radii involved in the definition of one of
these cells $\cC_j$ then we have $r(\vz)=\vz^{\valpha(r)}U(\vz)$ where
\begin{equation}\label{eq:monomial-cell-U}
  \begin{aligned}
    \diam(\Re\log U(\cC_j^{\he{\hat\sigma_2}});\R)&< O_\Lambda(\hat\sigma_1) \\
    \diam(\Re\log U(\cC_j^{\he{\hat\sigma_2}});\R)&< \poly_\ell(\beta)\cdot\hat\sigma_1  
  \end{aligned}
\end{equation}
in the subanalytic and algebraic cases respectively. We will now
construct a monomial cell $\tilde\cC_j^\hsigma$ such that
$\cC_j\subset\tilde\cC_j\subset\tilde\cC_j^\hsigma\subset\cC^{\he{\hat\sigma}}$,
and the identity map then gives the covering of $\cC_j$ by a monomial
cell as required.

We construct $\tilde\cC_j$ by induction on $\ell$. If $\cC_j$ is of
length zero or one then it is already monomial so we may take
$\tilde\cC_j:=\cC_j$. If $\cC_j:=(\cC_j)_{1..\ell}\odot\cF$ then we
set $\tilde\cC_j:=\widetilde{(\cC_j)_{1..\ell}}\odot\tilde\cF$ where
$\tilde\cF$ is defined as follows. Suppose $\cF=A(r_1,r_2)$ and write
$r_1=\vz^{\valpha_1}U_1$ and $r_2=\vz^{\valpha_2}U_2$. Then
$\tilde\cF=A(\tilde r_1,\tilde r_2)$ for
\begin{align}
  \tilde r_1&=\vz^{\valpha_1} \min|U_1(z)| &
  \tilde r_2&=\vz^{\valpha_2} \max|U_2(z)|,
\end{align}
where the minimum and maximum are taken over
$\vz_{1..\ell}\in\widetilde{(\cC_j)_{1..\ell}^\hsigma}$. It is clear
that $\cC_j\subset\tilde\cC_j$, and what remains to be verified is
that
$\tilde\cC_j^\hsigma\subset\cC_j^{\he{\hat\sigma}}$. By~\eqref{eq:monomial-cell-U},
for an appropriate choice of $\hat\sigma_1$ we can make
$\diam(\Re\log U(\cC_j^{\he{\hat\sigma_2}});\R)<1$, which implies that
$\tilde r_1/r_1\ge1/e$ and $\tilde r_2/r_1<e$ on
$\cC_j^{\he{\hat\sigma_2}}$. Now choosing $\hat\sigma_2<\sigma$ and
also $\he{\hat\sigma}<\hsigma/e$ we have indeed
$\tilde\cC_j^\hsigma\subset\cC_j^{\he{\hat\sigma}}$.

\subsection{Clustering in fibers of proper covering maps}

In this section we consider a proper covering map
$\pi:Z\subset\cC^\hrho\times\C\to\cC^\hrho$. We choose one point in
the fiber of $\pi$ as the \emph{center}. Our goal is to group the
remaining points into clusters based on their distances from the
center, so that two points belong to the same cluster if their
distances from the center are close in a suitable sense. The key
difficulty of the construction is guaranteeing that these clusters
vary holomorphically with the basepoint $\vz\in\cC$, and in particular
that their combinatorial structure remains constant.

We treat the complex case in~\secref{sec:cover-clustering} and the
real case in~\secref{sec:cover-clustering-real}.

\subsubsection{The general (complex) setting}
\label{sec:cover-clustering}

Let $\cC^\hrho$ be a cell and let $Z\subset\cC^\hrho\times\C$ be an
analytic set such that the natural projection $\pi:Z\to\cC^\hrho$ is a
proper covering map. Let $\nu$ denote the degree of $\pi$ and set
$\hat\cC:=\cC_{\times\nu!}$. Let $\hat Z\subset\hat\cC\times\C$ be the
pullback $(R_{\nu!},\id)^*Z$. Then the sections $y_j:\hat\cC\to\C$ of
$\hat Z$ over $\hat\cC$ are univalued, and we denote their collection
by $\Sigma$. Each section $y_j$ in fact extends holomorphically to the
cell $\hat\cC^{\he{\nu!\cdot\rho}}$ by
Proposition~\ref{prop:ext-v-cover}, but this exponential (in $\nu$)
loss in the size of the extension is too large for our
purposes. Instead, we have the following lemma. Denote by
$\nu_j\le\nu$ the size of the $\pi_1(\cC)$-orbit of $y_j$ thought of
as a multivalued section over $\cC$.

\begin{Lem}\label{lem:y_j-monodromy}
  Let $y_j\in\Sigma$. Then $y_j$ is already univalued as a section
  over the cover $\hat\cC_j:=\cC_{\times\nu_j}$, and extends
  holomorphically to $\hat\cC_j^{\he{\nu_j\cdot\rho}}$.
\end{Lem}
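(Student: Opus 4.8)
The statement is essentially a fact about the monodromy action of $\pi_1(\cC)$ on the fiber of $\pi$, combined with Proposition~\ref{prop:ext-v-cover}. The plan is as follows. The monodromy of $\pi$ gives a homomorphism $\mu\colon\pi_1(\cC)\to S_\nu$ permuting the $\nu$ points of a fiber; since $\pi_1(\cC)\simeq\prod_i G_i$ is abelian, the image $\mu(\pi_1(\cC))$ is an abelian subgroup of $S_\nu$. Fix a section $y_j$ and let $\Orb(y_j)\subset\{1,\dots,\nu\}$ be its orbit, so $|\Orb(y_j)|=\nu_j$. The stabilizer $\Stab(y_j)\le\pi_1(\cC)$ has index $\nu_j$. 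The key point, which I would isolate as the main step, is: \emph{the subgroup $\nu_j\cdot\pi_1(\cC)$ (i.e.\ the image of multiplication-by-$\nu_j$ on each $\Z$-factor $G_i$) is contained in $\Stab(y_j)$}. Granting this, the covering $R_{\nu_j}\colon\cC_{\times\nu_j}\to\cC$ (from Definition~\ref{def:nu-cover}) has $(R_{\nu_j})_*\pi_1(\cC_{\times\nu_j})=\nu_j\cdot\pi_1(\cC)\subseteq\Stab(y_j)$, so the pulled-back multivalued section $R_{\nu_j}^*y_j$ has trivial monodromy and is therefore univalued on $\cC_{\times\nu_j}=\hat\cC_j$; it is holomorphic there by the removable singularity theorem where needed (as in the univaluedness discussion following Definition~\ref{def:nu-cover}).

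To prove the key step: by the orbit--stabilizer theorem applied to the $\pi_1(\cC)$-set $\Orb(y_j)$ of size $\nu_j$, for any $\gamma\in\pi_1(\cC)$ the permutation $\mu(\gamma)$ restricted to $\Orb(y_j)$ has order dividing $\nu_j$ — indeed, since the action of the cyclic group $\langle\gamma\rangle$ on $\Orb(y_j)$ is transitive when $\gamma$ generates everything but in general decomposes $\Orb(y_j)$ into $\langle\gamma\rangle$-orbits each of size dividing $\nu_j$. Hence $\mu(\gamma)^{\nu_j}$ fixes every element of $\Orb(y_j)$, in particular fixes $y_j$, so $\gamma^{\nu_j}\in\Stab(y_j)$. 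Since this holds for each generator $\gamma_i$ of each $G_i\simeq\Z$, we get $\nu_j\cdot G_i\subseteq\Stab(y_j)$ for every $i$, and thus $\nu_j\cdot\pi_1(\cC)\subseteq\Stab(y_j)$ as claimed. (For the factors $G_i$ that are trivial there is nothing to check.)

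Finally, the extension statement: $\cC$ admits a $\hrho$-extension, so by Proposition~\ref{prop:ext-v-cover}(2) the cover $\hat\cC_j=\cC_{\times\nu_j}$ admits an $\he{\nu_j\rho}$-extension, and the covering map extends to $R_{\nu_j}\colon\hat\cC_j^{\he{\nu_j\rho}}\to\cC^\hrho$. The section $y_j$ is defined and holomorphic on $\cC^\hrho$ as a multivalued function (it is a component of $\pi$ over the larger cell, since $Z$ was given inside $\cC^\hrho\times\C$), with monodromy group the image of $\pi_1(\cC^\hrho)=\pi_1(\cC)$; pulling back by $R_{\nu_j}$ kills this monodromy exactly as above, so $R_{\nu_j}^*y_j$ is univalued and holomorphic on $\hat\cC_j^{\he{\nu_j\rho}}$, extending the section on $\hat\cC_j$. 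The main obstacle is purely the group-theoretic claim that $\nu_j$ (the orbit size) suffices — that no larger exponent (such as the full $\nu!$) is needed — and this is precisely where abelianness of $\pi_1(\cC)$ is used; everything else is formal bookkeeping with covering spaces.
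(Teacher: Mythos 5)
Your overall route is the same as the paper's: reduce the lemma to the purely group-theoretic claim that $\gamma^{\nu_j}$ fixes $y_j$ for every $\gamma\in\pi_1(\cC)$, then pull back by $R_{\nu_j}$ and get the extension statement from Proposition~\ref{prop:ext-v-cover}(2); the covering-space bookkeeping and the extension part of your argument are fine. However, the justification you give for the key claim has a genuine gap. You assert, ``by the orbit--stabilizer theorem,'' that the restriction of $\mu(\gamma)$ to the orbit $\pi_1(\cC)\cdot y_j$ has order dividing $\nu_j$, because the $\langle\gamma\rangle$-orbits inside it ``each have size dividing $\nu_j$.'' That does not follow from orbit--stabilizer, and it is false for general transitive actions: take $S_5$ acting on $5$ points and $\gamma=(1\,2)(3\,4\,5)$, whose orbits have sizes $2$ and $3$, neither dividing $5$, and $\gamma^5$ fixes nothing. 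So the one step that carries the content of the lemma is asserted rather than proved.

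The missing ingredient is exactly where abelianness must be deployed (you say at the end that abelianness is ``precisely where'' it is used, but your argument never actually uses it). The correct argument, which is the paper's: since $\pi_1(\cC)$ is abelian, every $h\in\pi_1(\cC)$ maps a $\langle\gamma\rangle$-orbit to a $\langle\gamma\rangle$-orbit, because $h\cdot(\langle\gamma\rangle\cdot x)=\langle\gamma\rangle\cdot(h\cdot x)$; transitivity of $\pi_1(\cC)$ on the $\nu_j$-element orbit of $y_j$ then forces $\pi_1(\cC)$ to act transitively on the set of $\langle\gamma\rangle$-orbits, so they all have a common size $s$, and since they partition a set of size $\nu_j$ we get $s\mid\nu_j$; finally $\gamma^{s}$, hence $\gamma^{\nu_j}$, fixes $y_j$. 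Replacing your orbit--stabilizer sentence by this equal-orbit-size argument closes the gap; the rest of your write-up then goes through as stated (and the appeal to the removable singularity theorem is not even needed, since $\pi$ is an unramified proper cover, so the pulled-back section is locally holomorphic once its monodromy is killed).
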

\begin{proof}
  The group $\pi_1(\cC)$ is abelian.  It is enough to check that for
  any $g\in\pi_1(\cC)$ we have $g^{\nu_j}(y_j)=y_j$. This is
  elementary: the $\<g\>$-action induces a partition of the orbit
  $\pi_1(\cC)\cdot y_j$ of size $\nu_j$ into $\<g\>$-orbits, and
  $\pi_1(\cC)$ acts transitively on these orbits. Thus they are all of
  the same size which divides $\nu_j$.
\end{proof}

Let $y_i,y_j,y_k\in\Sigma$ be three distinct sections. Since $\pi$ is
unramified, the sections are pairwise distinct over any point of
$\hat\cC$. We let $\nu_{i,j,k}:=\lcm(\nu_i,\nu_j,\nu_k)$ and
$\hat\cC_{i,j,k}:=\cC_{\times\nu_{i,j,k}}$. We define a map
$s_{i,j,k}$ as follows,
\begin{equation}
  s_{i,j,k}:\hat\cC_{i,j,k}^{\he{\nu_{i,j,k}\cdot\rho}}\to\C\setminus\{0,1\}, \qquad s_{i,j,k}=\frac{y_i-y_j}{y_i-y_k}.
\end{equation}
By the fundamental lemma for maps into $\C\setminus\{0,1\}$
(Lemma~\ref{lem:fund-C01}) one of the following holds:
\begin{equation}\label{eq:sijk-fund}
  \begin{gathered}
    s_{i,j,k}(\hat\cC_{i,j,k})\subset B(\{0,1,\infty\},e^{-\Omega_\ell(1/(\nu^3\rho))};\C P^1) \quad\text{or}\\
    \diam(s_{i,j,k}(\hat\cC_{i,j,k});\C\setminus\{0,1\})=O_\ell(\nu^3\rho).
  \end{gathered}
\end{equation}
We remark that~\eqref{eq:sijk-fund} holds if we replace
$\hat\cC_{i,j,k}$ by $\hat\cC$, since $s_{i,j,k}$ on $\hat\cC$ factors
through $\hat\cC_{i,j,k}$.

Fix $y_i\in\Sigma$. We will cluster the remaining sections into annuli
according to their relative distances from $y_i$, which are expressed
by the quantities $s_{i,j,k}$. Since these quantities are invariant
under affine transformations of $\C$, we may assume for simplicity of
the notation that $y_i=0$. We record a useful corollary
of~\eqref{eq:sijk-fund} in this normalization.

\begin{Lem}\label{lem:yj-v-yk}
  Suppose $\rho=O_\ell(1/\nu^3)$. Let $j,k\neq i$ and write
  $R:=\log|y_j/y_k|$. One of the following holds:
  \begin{equation}\label{eq:yj-v-yk}
    \begin{gathered}
      R\rest{\hat\cC} < -\Omega_\ell(1/\nu^3\rho) \quad\text{or}\quad  R\rest{\hat\cC} > \Omega_\ell(1/\nu^3\rho), \\
    \diam(R(\hat\cC),\R) = O_\ell(\nu^3\rho) \\
    \frac{\max_{\vz\in\hat\cC} R(z)}{\min_{\vz\in\hat\cC} R(z)} < 1+O_\ell(\nu^3\rho).
    \end{gathered}
  \end{equation}
\end{Lem}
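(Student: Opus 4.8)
The plan is to apply the fundamental lemma for $\C\setminus\{0,1\}$ (Lemma~\ref{lem:fund-C01}) to the affine invariant $s:=s_{i,j,k}$ and then translate the two alternatives~\eqref{eq:sijk-fund} into statements about $R=\log|s|$ according to where the image $s(\hat\cC)$ sits inside $\C\setminus\{0,1\}$. First I would apply an affine transformation sending $y_i$ to $0$ (the $s_{i,j,k}$ are affine invariants); then $s=(y_i-y_j)/(y_i-y_k)=y_j/y_k$ and $R=\log|s|$. As noted after~\eqref{eq:sijk-fund}, that dichotomy holds with $\hat\cC$ in place of $\hat\cC_{i,j,k}$, and $\nu_{i,j,k}\mid\nu_i\nu_j\nu_k\le\nu^3$; moreover the hypothesis $\rho=O_\ell(1/\nu^3)$ lets us assume $\nu^3\rho$ is below any prescribed $\ell$-dependent constant, so in particular the quantity $\epsilon:=e^{-\Omega_\ell(1/(\nu^3\rho))}$ in~\eqref{eq:sijk-fund} satisfies $\epsilon\le\nu^3\rho$ (since $e^{-c/x}\le x$ for $x$ bounded); this is how exponentially small terms get absorbed into the scale $O_\ell(\nu^3\rho)$.

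In the first alternative of~\eqref{eq:sijk-fund}, the set $s(\hat\cC)$ is connected ($\hat\cC$ is connected), hence lies in a single one of the three disjoint balls $B(0,\epsilon;\C P^1)$, $B(1,\epsilon;\C P^1)$, $B(\infty,\epsilon;\C P^1)$. If it lies in the ball around $0$, the Fubini--Study metric being comparable to the Euclidean one there gives $|s|<O(\epsilon)$ on $\hat\cC$, hence $R<-\Omega_\ell(1/(\nu^3\rho))$; the ball around $\infty$ is symmetric under $s\mapsto 1/s$ and gives $R>\Omega_\ell(1/(\nu^3\rho))$ — either way the first alternative of~\eqref{eq:yj-v-yk}. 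If $s(\hat\cC)$ lies in the ball around $1$, then $|s-1|<O(\epsilon)$ on $\hat\cC$, so $|R|<O(\epsilon)$ and $\diam(R(\hat\cC);\R)<O(\epsilon)\le O_\ell(\nu^3\rho)$, the second alternative.

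In the second alternative, set $\delta:=\diam(s(\hat\cC);\C\setminus\{0,1\})=O_\ell(\nu^3\rho)$, which we may take below any prescribed constant. Fix neighborhoods $V_0=\{|w|<\tfrac1{100}\}$, $V_1=\{|w-1|<\tfrac1{100}\}$, $V_\infty=\{|w|>100\}$ of the three punctures, with compact complement $K$; on $K$ the hyperbolic density $\lambda_{0,1}$ and the Euclidean density are comparable, and $|w|$ is bounded away from $0$. Hence if $s(\hat\cC)\subset K$ then $\diam(R(\hat\cC);\R)=O(\delta)=O_\ell(\nu^3\rho)$, the second alternative of~\eqref{eq:yj-v-yk}. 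Otherwise $s(\hat\cC)$ meets some $V_q$, and since $\delta$ is below a suitable constant, $s(\hat\cC)$ lies inside a slightly enlarged copy of $V_q$. If $q=0$ or $q=\infty$, then $|s|<1$ (resp.\ $|s|>1$) throughout $\hat\cC$, so $R$ has constant sign, and a routine computation with~\eqref{eq:hyperb} near $0$ (resp.\ its image under the isometry $w\mapsto 1/w$ near $\infty$) shows the radial part of the hyperbolic distance between two points of $s(\hat\cC)$ is at least a constant multiple of $\bigl|\log|R_1|-\log|R_2|\bigr|$, so $|R|$ varies over $\hat\cC$ by a factor $1+O(\delta)$; thus $\tfrac{\max_{\hat\cC}R}{\min_{\hat\cC}R}<1+O_\ell(\nu^3\rho)$, the third alternative. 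If $q=1$, transporting~\eqref{eq:hyperb} by $w\mapsto 1-w$ gives $\lambda_{0,1}(w)\gtrsim\bigl(|w-1|\log(1/|w-1|)\bigr)^{-1}$ for $|w-1|<\tfrac12$, and since $t\log(1/t)$ is bounded on $(0,1)$ this density is in fact bounded below by a positive constant on all of $\{|w-1|<\tfrac12\}$; so there the hyperbolic metric dominates a multiple of the Euclidean one, and (a near-geodesic between two points of $s(\hat\cC)$ cannot profitably leave this neighborhood) $\diam_{\mathrm{Euc}}(s(\hat\cC))=O(\delta)$. As $|s|$ is bounded away from $0$ near $1$, this again gives $\diam(R(\hat\cC);\R)=O(\delta)=O_\ell(\nu^3\rho)$, the second alternative.

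The genuinely delicate point is the last case, the puncture at $1$: there $R=\log|s|$ does not ``see'' the puncture ($|s|$ stays near $1$), so the natural cusp coordinate $\log\log(1/|s-1|)$ controlling the hyperbolic geometry is not comparable to $R$, as it is near $0$ and $\infty$. The resolution is the elementary observation that, $t\log(1/t)$ being bounded, the cusp density $(|\zeta|\log(1/|\zeta|))^{-1}$ is actually bounded below on a fixed Euclidean neighborhood of $1$, so on that neighborhood the hyperbolic metric controls the Euclidean metric — hence $R$ — directly. Everything else reduces to comparing the hyperbolic, Euclidean, and logarithmic metrics on compact sets away from $\{0,1,\infty\}$ and in the three standard cusps, for which the explicit computations of~\secref{sec:explicit-consts} and~\cite{bp:poincare} suffice.
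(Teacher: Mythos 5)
Your proposal is correct and follows essentially the same route as the paper: apply the dichotomy~\eqref{eq:sijk-fund} for $s_{i,j,k}=y_j/y_k$ on $\hat\cC$ and translate it into statements about $R=\log|s_{i,j,k}|$ via explicit comparisons of the hyperbolic metric of $\C\setminus\{0,1\}$ with the Euclidean and logarithmic scales near and away from the punctures (using~\eqref{eq:hyperb} and the $\D\setminus\{0\}$-type estimate near $0$ and $\infty$, and the boundedness-below of the cusp density near $1$). The only difference is organizational --- you split first by the alternatives of~\eqref{eq:sijk-fund} while the paper splits first on whether the image meets $A(1/2,2)$ --- and your treatment of the cusp at $1$ and of geodesic confinement makes explicit what the paper leaves implicit; the conclusions in each case (including the multiplicative variation of $R$ by $1+O_\ell(\nu^3\rho)$) match the paper's.
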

\begin{proof}
  If at some point in $\hat\cC$ we have $y_j/y_k\in A(1/2,2)$ then for
  $\rho=O_\ell(1)$ we have by~\eqref{eq:sijk-fund} that
  $y_j/y_k(\hat\cC)\subset A(1/4,4)$. In this domain the
  $\log|\cdot|$-distance is bounded up to constant by the
  $\C\setminus\{0,1\}$ distance so
  $\diam(R(\hat\cC);\R)=O_\ell(\nu^3\rho)$. It remains to consider the
  case $y_j/y_k(\hat\cC)\subset D_\circ(1/2)$ (or $A(2,\infty)$ which
  is the same up to inversion). In the first case
  of~\eqref{eq:sijk-fund} we have $R=-\Omega_\ell(1/(\nu^3\rho))$. In
  the second case of~\eqref{eq:sijk-fund}, since the
  $\C\setminus\{0,1\}$ metric is equivalent to the $D_\circ(1)$ metric
  in $D_\circ(1/2)$ we have (as in the fundamental lemma for
  $D_\circ$, Lemma~\ref{lem:fund-Dcirc}) the estimate
  \begin{equation}
    \diam(\log|R(\hat\cC)|;\R) = O_\ell(\nu^3\rho).
  \end{equation}
  By $\rho=O_\ell(1/\nu^3)$ the right hand side is $O_\ell(1)$ and
  exponentiating we conclude that $R(\hat\cC)$ varies multiplicatively
  by a factor of size at most $1+O_\ell(\nu^3\rho)$.
\end{proof}

We fix a quantity $0<\gamma<1$ which we call the \emph{gap}. Pick an
arbitrary point $p\in\hat\cC$ and let
\begin{equation}
  S_i:=\{\log|y_j(p)|:y_j\in\Sigma, y_j\neq y_i\}\subset\R.
\end{equation}
We say that two points $s,s'$ in $S_i$ belong to the same cluster if
they are connected in the transitive closure of the relation
$|s-s'|<5|\log\gamma|$. We order the clusters with respect to $<$ on
$\R$. Let $m_i$ be the number of clusters in $S_i$. For
$1\le q\le m_i$ we let $I_{i,q}$ denote the minimal closed interval
containing the $2|\log\gamma|$-neighborhood of the $q$-th cluster
$S_{i,q}$. For each $q$ we arbitrarily choose $\hat y_{i,q}\in\Sigma$
to be one of the sections with $\log|\hat y_{i,q}(p)|\in I_{i,q}$, and
call it the \emph{center} of the cluster. We define $l_{i,q}$ and
$r_{i,q}$ by
\begin{equation}
  e^{I_{i,q}} = [l_{i,q}|\hat y_{i,q}(p)|,r_{i,q}|\hat y_{i,q}(p)|].
\end{equation}
Note that
\begin{align}\label{eq:cluster-edge-bounds}
  \gamma^{5\nu}<&l_{i,q}<\gamma^2, & \gamma^{-2}<&r_{i,q}<\gamma^{-5\nu}.
\end{align}
We also fix some $\delta<1$ arbitrarily close to $1$ (merely to ensure
that the boundary circles below are covered).

We define three types of fibers over $\hat\cC$ as follows:
\begin{gather}
  \cF_{i,q}=A^\delta(l_{i,q}\hat y_{i,q},r_{i,q}\hat y_{i,q}) \qquad q=1,\ldots,m_i \\
  \cF_{i,q+}= A^\delta(r_{i,q}\hat y_{i,q},l_{i,q+1}\hat y_{i,q+1}), \qquad q=1,\ldots,m_i-1 \\
  \cF_{i,0+}=D_\circ^\delta(l_{i,1}\hat y_{i,1}), \qquad \cF_{i,m+} = A^\delta(r_{i,q}\hat y_{i,q},\infty).
\end{gather}
Note that each of these fibers actually depends only on
$y_i,\hat y_{i,q}$ for $\cF_{i,q}$ and on
$y_i,\hat y_{i,q},\hat y_{i,q+1}$ for $\cF_{i,q+}$. Thus each fiber
actually arises as a pullback by the projection
$\hat\cC\to\hat\cC_{i,j,k}$ of a fiber defined over $\hat\cC_{i,j,k}$
for a suitable choice of $i,j,k$. We denote this cover by
$\hat\cC_{i,q}$ or $\hat\cC_{i,q+}$.

The key properties of these domains are summarized in the following
proposition.
\begin{Prop}\label{prop:clusters}
  Suppose $1/\rho>\poly_\ell(\nu,|\log\gamma|)$. Then the following hold
  uniformly over $\hat\cC$:
  \begin{enumerate}
  \item The fibers $\cF_{i,q},\cF_{i,q+}$ are well-defined and cover
    $\C\setminus\{0\}$.
  \item The domains $\cF_{i,q}^\gamma\setminus\cF_{i,q}$ do not contain any
    of the points $y_j$ for $q=1,\ldots,m_i$.
  \item The domains $\cF_{i,q+}^\gamma$ do not contain any of the
    points $y_j$ for $q=0,\ldots,m_i$.
  \end{enumerate}
\end{Prop}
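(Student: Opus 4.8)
The plan is to establish the three claims of Proposition~\ref{prop:clusters} by first working at the single basepoint $p$ and then propagating the estimates uniformly over $\hat\cC$ via Lemma~\ref{lem:yj-v-yk} (equivalently~\eqref{eq:sijk-fund}), choosing $\rho$ small enough at the end so that all the ``slack'' parameters $\gamma,\delta$ dominate the $O_\ell(\nu^3\rho)$-errors. Throughout we keep the normalization $y_i=0$, so that $\cF_{i,q}=A^\delta(l_{i,q}\hat y_{i,q},r_{i,q}\hat y_{i,q})$ and the quantities $l_{i,q},r_{i,q}$ obey~\eqref{eq:cluster-edge-bounds}. The logarithmic scale $z\mapsto\log|z|$ turns all the fibers into intervals on $\R$, and the whole argument is a bookkeeping exercise with these intervals plus the observation that each $\log|y_j|$ moves by at most $O_\ell(\nu^3\rho)$ across $\hat\cC$ in the ``bounded'' case of Lemma~\ref{lem:yj-v-yk}, while in the ``unbounded'' case $\log|y_j/y_k|$ is already so large ($\Omega_\ell(1/\nu^3\rho)$) that no collision is possible. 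One technical point to handle first: every fiber $\cF_{i,q},\cF_{i,q+}$ depends only on $y_i$ and one or two of the centers $\hat y_{i,q}$, so it descends to some $\hat\cC_{i,j,k}$, and hence the relevant instances of~\eqref{eq:sijk-fund} apply verbatim; this is the content of the sentence preceding the proposition and I would state it as a lemma-free remark at the start of the proof.

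For part (1), at the basepoint $p$ the defining relation of a cluster (transitive closure of $|s-s'|<5|\log\gamma|$ on $S_i=\{\log|y_j(p)|\}$) guarantees two things: within a cluster consecutive points of $S_{i,q}$ differ by less than $5|\log\gamma|$, hence the $2|\log\gamma|$-enlargement $I_{i,q}$ is a genuine interval, and between consecutive clusters there is a gap of at least $5|\log\gamma|$ in $S_i$, so after the $2|\log\gamma|$-enlargements on both sides the intervals $I_{i,q}$ and $I_{i,q+1}$ are still separated by at least $|\log\gamma|>0$ — this makes $\cF_{i,q+}$ a nonempty annulus. Thus at $p$ the intervals $e^{I_{i,q}},e^{I_{i,q+}}$ together with the inner $D_\circ(l_{i,1}\hat y_{i,1})$ and the outer $A(r_{i,m}\hat y_{i,m},\infty)$ tile $(0,\infty)$, so the $\cF$'s cover $\C\setminus\{0\}$ at $p$. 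To upgrade this to ``uniformly over $\hat\cC$'': each endpoint of each interval is $\log|\hat y_{i,q}(p)|+$ a bounded constant (in $[\log l_{i,q},\log r_{i,q}]$, of size $O_\ell(\nu|\log\gamma|)$), and by Lemma~\ref{lem:yj-v-yk} applied to the pair $(\hat y_{i,q},\hat y_{i,q+1})$ — whose log-ratio is bounded at $p$ by $O_\ell(\nu|\log\gamma|)$ hence lands in the ``bounded'' alternative once $\rho$ is small — the gap between the right edge of $\cF_{i,q}$ and the left edge of $\cF_{i,q+1}$ stays within $|\log\gamma|\pm O_\ell(\nu^3\rho)>0$ across $\hat\cC$. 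Since each fiber is an $A^\delta$-extension with $\delta$ close to $1$, the extensions overlap on the boundary circles, and the covering persists; choosing $1/\rho>\poly_\ell(\nu,|\log\gamma|)$ absorbs all errors. (The well-definedness requires only $l_{i,q}<r_{i,q}$ and the analogous edge inequalities between consecutive clusters, which is the same computation.)

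For parts (2) and (3) the point is to show that the small $\gamma$-enlargements of the cluster annuli and the $\gamma$-enlargements of the ``gap'' fibers avoid all the sections $y_j$ ($j\neq i$; note $y_i=0$ is automatically excluded). Fix such a $y_j$ lying in some cluster $q'$ at $p$, i.e. $\log|y_j(p)|\in I_{i,q'}$, and by Lemma~\ref{lem:yj-v-yk} (bounded alternative, using $|\log|y_j/\hat y_{i,q'}|(p)|=O_\ell(\nu|\log\gamma|)$ and $\rho$ small) $\log|y_j|$ stays within $O_\ell(\nu^3\rho)$ of $\log|\hat y_{i,q'}|+$const across $\hat\cC$, so it stays in (a slight enlargement of) $I_{i,q'}$. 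For (3): $\cF_{i,q+}^\gamma$ corresponds in log-scale to the interval between $I_{i,q}$ and $I_{i,q+1}$ further enlarged by $|\log\gamma|$ on each side (because $A^{\delta\gamma}$ extends the $A^\delta$-annulus by $\approx|\log\gamma|$ in log-scale), but that interval was cut out by the $2|\log\gamma|$-enlargements of the clusters while the clusters themselves were separated by $\ge5|\log\gamma|$ in $S_i$, so even after the extra $|\log\gamma|$ the $\gamma$-extended gap fiber lies strictly between the true cluster ranges and misses every $y_j$; the analogous count handles $\cF_{i,0+}$ and $\cF_{i,m+}$ using~\eqref{eq:cluster-edge-bounds}. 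For (2): $\cF_{i,q}^\gamma\setminus\cF_{i,q}$ is a pair of thin annuli of log-width $\approx|\log\gamma|$ flanking $I_{i,q}$; the sections in cluster $q$ sit (uniformly) inside a sub-interval of $I_{i,q}$ obtained by shrinking it by $2|\log\gamma|$ on each side (that is precisely how $I_{i,q}$ was defined — as the $2|\log\gamma|$-neighborhood of the cluster $S_{i,q}$), so a $|\log\gamma|$-collar around $I_{i,q}$ still leaves a $|\log\gamma|-O_\ell(\nu^3\rho)>0$ margin to the nearest $y_j$ of the same cluster; sections of other clusters are even farther, being outside $I_{i,q}$ by at least $|\log\gamma|$ and beyond. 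In every case the contradiction-free margin is $|\log\gamma|$ minus an $O_\ell(\nu^3\rho)$ error, positive once $1/\rho>\poly_\ell(\nu,|\log\gamma|)$.

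The routine parts are the interval arithmetic in log-scale and translating ``$A^{\delta\gamma}$ vs.\ $A^\delta$'' into ``extend the log-interval by $\approx|\log\gamma|$''; I would do these quickly. The one genuine subtlety — and the step I expect to require the most care — is the uniformity passage: one must be sure that for \emph{every} relevant pair of sections the correct alternative of Lemma~\ref{lem:yj-v-yk} is the ``bounded'' one (so that the sections really do move by only $O_\ell(\nu^3\rho)$), and this is exactly why the clustering was defined by a \emph{transitive closure} of a $5|\log\gamma|$-proximity relation: within a cluster the pairwise log-ratios at $p$ are $O_\ell(\nu|\log\gamma|)$, small enough that for $1/\rho>\poly_\ell(\nu,|\log\gamma|)$ we are forced into the bounded case and can rule out the unbounded one; between clusters the log-ratios are $\ge5|\log\gamma|$ which, after the $O_\ell(\nu^3\rho)$ wobble, still exceeds the $4|\log\gamma|$ actually needed to keep distinct clusters' enlargements disjoint. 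Making these inequalities line up — and recording the single polynomial $\poly_\ell(\nu,|\log\gamma|)$ that simultaneously validates all of them — is the heart of the proof; everything else follows from~\eqref{eq:sijk-fund} and the definitions.
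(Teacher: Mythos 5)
Your route is the same as the paper's (verify everything at the basepoint $p$ by interval arithmetic in logarithmic scale, then propagate over $\hat\cC$ via Lemma~\ref{lem:yj-v-yk}, absorbing errors through $1/\rho>\poly_\ell(\nu,|\log\gamma|)$), and your bookkeeping at $p$ is correct. The genuine gap is in the uniformity step, which you yourself single out as the heart of the proof: you assert that every relevant pair falls into the bounded-diameter alternative of Lemma~\ref{lem:yj-v-yk}, so that every $\log|y_j|$ and every fiber edge drifts additively by only $O_\ell(\nu^3\rho)$. This fails on two counts. First, for consecutive cluster centers your premise that $\log|\hat y_{i,q+1}(p)/\hat y_{i,q}(p)|=O_\ell(\nu|\log\gamma|)$ is false: distinct clusters are separated by at least $5|\log\gamma|$ but by no upper bound, so this log-ratio can be arbitrarily large at $p$. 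What is bounded by $10\nu|\log\gamma|$ is the \emph{threshold} $\log(r_{i,q}/l_{i,q+1})$ coming from~\eqref{eq:cluster-edge-bounds} that the ratio must stay above, which is a different quantity and the one the paper's argument actually uses. Second, even when the log-ratio at $p$ is $O_\ell(\nu|\log\gamma|)$, boundedness at a single point only excludes the first alternative of Lemma~\ref{lem:yj-v-yk}; it does not force the second. The third (multiplicative) alternative may be the operative one, in which case the additive drift is $R(p)\cdot O_\ell(\nu^3\rho)$ rather than $O_\ell(\nu^3\rho)$, and for cross-cluster pairs with large $R(p)$ it need not be small at all. As written, your uniform positivity of the gaps defining $\cF_{i,q+}$ in part (1) and your margin counts in parts (2)--(3) rest on this unsupported claim.

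The repair is precisely the three-case analysis the paper performs and flags as ``the only non-trivial case'': in the first alternative the quantity is uniformly of size $\Omega_\ell(1/(\nu^3\rho))$, which dominates every threshold of size $O_\ell(\nu|\log\gamma|)$ once $1/\rho>\poly_\ell(\nu,|\log\gamma|)$; in the second the drift $O_\ell(\nu^3\rho)$ sits below your $|\log\gamma|$-margins; and in the third one uses that the thresholds $\log r_{i,q}$ and $\log(r_{i,q}/l_{i,q+1})$ are at most $10\nu|\log\gamma|$, so a multiplicative perturbation by a factor $1+O_\ell(\nu^3\rho)$ of a quantity that at $p$ exceeds the threshold by $|\log\gamma|$ loses at most $O_\ell(\nu^4\rho|\log\gamma|)<|\log\gamma|$ and stays above it. With that case analysis inserted, the rest of your argument goes through and essentially coincides with the paper's proof; the only stylistic difference is that the paper deduces (3) from (2) by noting the sections start inside the clusters over $p$ and cannot cross the collars, whereas you verify (3) by direct interval counting, which is harmless.
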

\begin{proof}
  The only non-trivial assertion in the first statement is that
  $\cF_{i,q+}$ is well-defined, i.e. that
  $r_{i,q}\hat y_{i,q}<l_{i,q+1}\hat y_{i,q+1}$ uniformly over
  $\hat\cC$. At $p$ we have by construction
  \begin{equation}
    \log |\hat y_{i,q+1}(p)/\hat y_{i,q}(p)| > \log(r_{i,q}/l_{i,q+1})+|\log\gamma|.
  \end{equation}
  We need to prove that
  \begin{equation}
    \log |\hat y_{i,q+1}/\hat y_{i,q}| > \log(r_{i,q}/l_{i,q+1})
  \end{equation}
  uniformly over $\hat\cC$. This follows easily from
  Lemma~\ref{lem:yj-v-yk} for an appropriate choice of $\rho$. The
  only non-trivial case is the last one, which follows if one recalls
  that $\log(r_{i,q}/l_{i,q+1})<10\nu|\log\gamma|$.
  
  The third statement follows from the second: over $p$ all the points
  $y_j$ except $y_i=0$ lie in the domains $\cF_{i,q}$, and assuming
  that they never cross into the boundaries
  $\cF_{i,q}^\gamma\setminus\cF_{i,q}$ this remains true uniformly
  over $\hat\cC$. We proceed to the proof of the second statement. We
  will show that $y_j$ does not belong to the outer boundary of
  $\cF_{i,q}^\gamma\setminus\cF_{i,q}$ (the case of the inner boundary
  is similar). By construction over $p$ one of the following holds:
  \begin{align}
     \log|y_j(p)/\hat y_q(p)|&< \log r_{i,q}-2|\log\gamma|, &  \log|y_j(p)/\hat y_q(p)|&> \log r_{i,q}+3|\log\gamma|.
  \end{align}
  We must prove that one of
  \begin{align}
     \log|y_j/\hat y_q|&< \log r_{i,q}, &  \log|y_j/\hat y_q|&> \log r_{i,q}+|\log\gamma|,
  \end{align}
  holds uniformly over $\hat\cC$. This follows in the same manner as
  the first statement.
\end{proof}

Since the sections $y_j$ do not meet the $\cF_{i,q+}$ and
$\partial\cF_{i,q}$, it follows that each section $y_j\neq y_i$ lies
in a single $\cF_{i,q}$ uniformly over $\hat\cC$. We say that such a
section belongs to the cluster $\cF_{i,q}$.

\begin{Rem}\label{rem:single-cluster}
  If we focus our attention on a single cluster $\cF_{i,q}$ then it is
  convenient, up to an affine transformation over $\hat\cC_{i,q}$, to
  assume that both $y_i=0$ and $\hat y_{i,q}=1$. With $\gamma,\rho$ as
  in Proposition~\ref{prop:clusters} we then have
  $\cF_{i,q}=A^\delta(l_{i,q},r_{i,q})$. For any $y_j$ in the
  $\cF_{i,q}$ cluster our choice of $\rho$ ensures that the second
  option of~\eqref{eq:sijk-fund} holds, and we have
  $\diam(y_j(\hat\cC_{i,q}),\C\setminus\{0,1\})=O_\ell(\nu^3\rho)$.
\end{Rem}

\subsubsection{The real setting}
\label{sec:cover-clustering-real}

Suppose that $\cC^\hrho$ is a real cell and
$Z\subset\cC^\hrho\times\C$ is real, i.e. invariant under
$\vz\to\bar\vz$. In this case we would like to construct the fibers
$\cF_{i,q},\cF_{i,q+}$ to be real as well. However, the construction
above produces fibers whose centers and radii are given in terms of
the sections $y_j$, which are generally not real. We now modify this
construction to produce real fibers. We fix $p\in\R\hat\cC$.

The pullbacks $\hat\cC,\hat Z$ of $\cC,Z$ by $R_{\nu!}$ are also
real. Since $\hat Z$ is a cover, each section $y_j$ is
either always real or always non-real on $\R\hat\cC$. We denote the
former sections by $\Sigma_\R$ and the latter by $\Sigma_\C$. If
$y_j\in\Sigma$ then by the symmetry of $\hat Z$ there exists a
symmetric section $y_{\bar j}\in\Sigma$ defined by
$y_{\bar j}(\vz):=\overline{y_j(\bar\vz)}$.

\begin{Def}
  Let $y_j\in\Sigma_\C$. Making a real affine transformation we pass
  to a chart where $y_j(p)=i,y_{\bar j}(p)=-i$. Then the pair
  $y_j,y_{\bar j}$ is called \emph{admissible} if $D(1/10)$ contains
  none of the points $y_k(p)$ for $y_k\in\Sigma$.
\end{Def}
An \emph{admissible center} is a map of the form $(y_j+y_{\bar j})/2$
where $y_j\in\Sigma_\R$ or $y_j,y_{\bar j}\in\Sigma_\C$ is an
admissible pair. Every admissible center is real on $\R\hat\cC$. We
denote the set of admissible centers by $\{c_1,\ldots,c_t\}$.
\begin{Lem}\label{lem:select-admissible}
  Let $y_j\in\Sigma_\C$ and by a real affine transformation pass to a
  chart where $y_j(p)=i,y_{\bar j}(p)=-i$. Then $D(1/5)$ contains
  $c_l(p)$ for an admissible center $c_l$.
\end{Lem}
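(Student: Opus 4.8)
The plan is to argue by descent on the imaginary parts of the sections at $p$, exploiting that any disc witnessing non‑admissibility of a pair is geometrically small. Throughout one works in affine charts obtained by real affine maps, and uses repeatedly that $y_{\bar k}(p)=\overline{y_k(p)}$ (as $p\in\R\hat\cC$), so non‑real values come in conjugate pairs, and that every $y_k\in\Sigma_\R$ is itself an admissible center. First I would dispose of the easy cases: if the pair $(y_j,y_{\bar j})$ is already admissible then $c:=(y_j+y_{\bar j})/2$ satisfies $c(p)=0\in D(1/5)$ in the given chart; and if some \emph{real} section takes a value in $D(1/10)$, that section is an admissible center lying in $D(1/5)$. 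So one is reduced to the case that $D(1/10)$ contains the value of some non‑real section but of no real section.

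Next I would set up the descent. Start from $k_0:=j$; at stage $m$ one has a non‑real pair $(y_{k_m},y_{\bar k_m})$ with $y_{k_m}(p)=a_m+b_m i$, $b_m>0$, in the given chart, and one passes to its own chart via $z\mapsto (z-a_m)/b_m$, under which $D(1/10)$ pulls back to $\{|z-a_m|<b_m/10\}$. If the pair is admissible, stop. Otherwise non‑admissibility produces a section $y_l$ with $|y_l(p)-a_m|<b_m/10$: if $y_l$ is real, stop (it is the desired admissible center); otherwise take $k_{m+1}:=l$, replacing $y_l$ by $y_{\bar l}$ if necessary to keep a positive imaginary part, and continue. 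Since $a_m$ is real, both $b_{m+1}=|\Im y_{k_{m+1}}(p)|$ and $|a_{m+1}-a_m|$ are bounded by $|y_l(p)-a_m|<b_m/10$, so the $b_m$ shrink geometrically; as $b_m=|\Im y_{k_m}(p)|$ is strictly decreasing the sections $y_{k_m}$ are pairwise distinct, and finiteness of $\Sigma$ forces termination.

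Finally I would read off the conclusion in the original $(y_j,y_{\bar j})$‑chart. A telescoping estimate gives $b_m\le 10^{-m}$ (with $b_0=1$) and $|a_m|<\tfrac1{10}\sum_{i\ge0}10^{-i}=\tfrac19$ at every stage. Hence if the descent stops at an admissible pair, its center $(y_{k_m}+y_{\bar k_m})/2$ takes the value $a_m$ at $p$ with $|a_m|<1/9<1/5$; and if it stops at a real section $y_l$, then $|y_l(p)|\le|y_l(p)-a_m|+|a_m|<b_m/10+1/9<1/5$, after checking the borderline stage $m=0$ separately (where $a_0=0$ and $|y_l(p)|<1/10$ directly). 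Either way one obtains an admissible center $c_l$ with $c_l(p)\in D(1/5)$. The main point to watch — and the reason the two radii in the definition are $1/10$ versus $1/5$ — is that the geometric decay of the witnessing discs keeps $|a_m|$ below $1/9$ uniformly along the descent; the rest is routine bookkeeping with the constants.
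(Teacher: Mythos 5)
Your proof is correct and follows essentially the same route as the paper's: the paper also descends through witnesses of non-admissibility (if the pair is admissible stop; otherwise find a section in $D(1/10)$, take it if real, else rescale to make it the new center and repeat, terminating by finiteness of $\Sigma$). Your explicit telescoping estimate $|a_m|<1/9$, $b_m\le 10^{-m}$ just spells out the bookkeeping that justifies the paper's terse reduction ``it will be enough to prove the claim for $y_k$,'' i.e.\ that the inner chart's $D(1/5)$ pulls back inside the outer chart's $D(1/5)$.
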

\begin{proof}
  If $y_j,y_{\bar j}$ is admissible then the claim is
  obvious. Otherwise there exists a section $y_k(p)\in D(1/10)$. If
  $y_k\in\Sigma_\R$ we can take $c_l=y_k$. Otherwise, rescaling to
  make $y_k(p)=i$ we see that it will be enough to prove the claim for
  $y_k$. Repeating this reduction at most $\nu$ times if necessary
  finishes the proof.
\end{proof}

The motivation for the notion of admissibility comes from the
following lemma.

\begin{Lem}\label{lem:admissible-collision}
  Suppose $1/\rho=\poly_\ell(\nu)$. For every section $y_j$ and
  admissible center $c_l$ we have $c_l\neq y_j$ uniformly over
  $\hat\cC$, unless $c_l=y_j\in\Sigma_\R$.
\end{Lem}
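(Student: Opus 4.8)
The goal is to show that an admissible center $c_l$ and a section $y_j$ cannot collide over $\hat\cC$ (except in the trivial case $c_l = y_j \in \Sigma_\R$), provided $1/\rho = \poly_\ell(\nu)$. The natural strategy is to translate "$c_l \neq y_j$ uniformly" into a statement about the $s_{i,j,k}$-type invariants and invoke the fundamental lemma for $\C\setminus\{0,1\}$ (Lemma~\ref{lem:fund-C01}) together with Lemma~\ref{lem:yj-v-yk}. First I would fix the base point $p\in\R\hat\cC$ and pass, by a real affine transformation, to coordinates adapted to the admissible center: write $c_l = (y_a + y_{\bar a})/2$ with $y_a, y_{\bar a}$ an admissible pair (or $c_l = y_a \in \Sigma_\R$), normalized so that $y_a(p) = i$, $y_{\bar a}(p) = -i$ (resp. $y_a(p)$ placed conveniently in the real case). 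By admissibility, $D(1/10)$ contains none of the points $y_k(p)$; in particular $c_l(p) = 0$ but every $y_j(p)$ with $y_j \neq c_l$ satisfies $|y_j(p)| \ge 1/10$, so at the base point $p$ we have a definite gap $|c_l(p) - y_j(p)| \ge 1/10$ while $|y_a(p) - y_{\bar a}(p)| = 2$. The ratio $(c_l - y_j)/(y_a - y_{\bar a})$ is thus bounded away from $0$ at $p$, and it is bounded away from $\infty$ because the $y$'s are sections of a proper cover, hence of uniformly bounded modulus on $\hat\cC$ (indeed $c_l$, being an average of two sections, is likewise bounded).

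The key point is that this ratio is an affine-invariant rational expression in at most four of the sections $y_a, y_{\bar a}, y_j$, so it factors through some $\hat\cC_{\cdots}$ of the form $\cC_{\times\nu'}$ with $\nu' = \lcm$ of at most three of the $\nu_j$'s, hence $\nu' \le \nu^3$ or so. One should express $(c_l - y_j)/(y_a - y_{\bar a})$ in terms of the $s_{i,j,k}$: for instance $(c_l - y_j)/(y_a - y_{\bar a}) = \tfrac12\big((y_a - y_j) + (y_{\bar a} - y_j)\big)/(y_a - y_{\bar a})$, and each of $(y_a - y_j)/(y_a - y_{\bar a})$, $(y_{\bar a} - y_j)/(y_a - y_{\bar a})$ is (up to relabelling) one of the maps $s_{a, j, \bar a}$ or its relatives, which land in $\C\setminus\{0,1\}$. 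Applying Lemma~\ref{lem:fund-C01} (equivalently Lemma~\ref{lem:yj-v-yk}) with $\rho = O_\ell(1/\nu^3)$ — which is permitted since $1/\rho = \poly_\ell(\nu)$ — we conclude that each $s$ changes by at most a multiplicative/additive factor $1 + O_\ell(\nu^3\rho)$ over $\hat\cC$, unless it is driven into a tiny neighborhood of $\{0,1,\infty\}$ in $\C P^1$. Since at $p$ the combination $(c_l - y_j)/(y_a - y_{\bar a})$ is a fixed number of modulus $\asymp 1$ (between, say, $1/20$ and $10\nu$, using the uniform bounds on the sections), the "small neighborhood of $\infty$" alternative is excluded outright, and the "near $0$" alternative would force $c_l(p) - y_j(p)$ to be tiny, contradicting the admissibility gap at $p$. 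Therefore the non-degenerate alternative of the fundamental lemma holds, giving $|c_l - y_j|/|y_a - y_{\bar a}| \ge \tfrac1{40}$, say, uniformly over $\hat\cC$; since $|y_a - y_{\bar a}|$ is bounded below away from $0$ uniformly (again by the fundamental lemma applied to this difference, or directly since over $p$ it equals $2$ and varies by a bounded factor), we get $c_l \neq y_j$ uniformly over $\hat\cC$.

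The remaining case $c_l = y_a \in \Sigma_\R$, $y_j \neq c_l$, is handled identically but more simply: there is no averaging, the relevant invariant is $(y_a - y_j)/(y_a - y_k)$ for an auxiliary section $y_k$ with $|y_k(p) - y_a(p)| \asymp 1$ (such a $y_k$ exists whenever $\nu \ge 3$; for $\nu \le 2$ one checks the statement trivially), and the same fundamental-lemma dichotomy shows the ratio stays bounded away from $0$ and $\infty$, hence $y_a \neq y_j$ on $\hat\cC$. One subtlety to handle carefully: to quote $s_{i,j,k}$ and its uniform estimates one needs the relevant triple of sections to be pairwise distinct over $\hat\cC$ — this is guaranteed because $\pi$ is unramified, so $y_a, y_{\bar a}, y_j$ are pairwise distinct over every point of $\hat\cC$ (and $y_a \neq y_{\bar a}$ because $y_a \in \Sigma_\C$).

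The main obstacle, and the step requiring the most care, is the bookkeeping that makes the relevant invariant factor through a cover of complexity only $\poly_\ell(\nu)$ rather than $\nu!$: one must write $c_l - y_j$ as an affine-invariant combination of at most three sections and verify that it descends to $\cC_{\times\nu'}$ with $\nu'\mid\lcm$ of the corresponding $\nu_j$'s. This is exactly the same "only screens of size three are needed" phenomenon exploited throughout the proof of the CPT, and it is what keeps $\rho = O_\ell(1/\nu^3)$ — hence $1/\rho = \poly_\ell(\nu)$ — sufficient. Everything else is a direct application of Lemma~\ref{lem:yj-v-yk} and the uniform boundedness of the sections of a proper cover.
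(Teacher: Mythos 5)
Your strategy — encode the collision in the three-point affine invariant of $(y_j,y_a,y_{\bar a})$, apply the dichotomy~\eqref{eq:sijk-fund} (Lemma~\ref{lem:fund-C01}), and use admissibility at the base point $p$ — is the same as the paper's, but the step where you dispose of the degenerate alternative is wrong as written. You apply the dichotomy to $s_1=(y_a-y_j)/(y_a-y_{\bar a})$ (and $s_2=s_1-1$) and claim that the ``near $0$'' alternative would force $c_l(p)-y_j(p)$ to be tiny, contradicting admissibility. It would not: for $s_1$ the puncture $0$ corresponds to $y_j$ approaching $y_a$, and the puncture $1$ to $y_j$ approaching $y_{\bar a}$, while admissibility only keeps $y_j(p)$ outside $D(1/10)$ around the center $c_l(p)=0$ and says nothing about $y_j(p)$ being close to $y_a(p)=i$ or $y_{\bar a}(p)=-i$. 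So the degenerate alternative cannot be excluded at $p$. Note also that your target quantity $(c_l-y_j)/(y_a-y_{\bar a})=s_1-\tfrac12$ omits $\pm\tfrac12$, not $\{0,1\}$, so the fundamental lemma cannot be applied to it directly either. Your forward argument is repairable: in the degenerate alternative $s_1$ stays exponentially close to one of $0,1,\infty$ over all of $\hat\cC$, hence never equals the collision value $\tfrac12$, so no collision occurs in that case anyway; in the non-degenerate alternative the hyperbolic diameter bound $O_\ell(\nu^3\rho)$ together with $|s_1(p)-\tfrac12|=|y_j(p)|/2\ge 1/20$ keeps $s_1$ away from $\tfrac12$ once $1/\rho=\poly_\ell(\nu)$. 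The paper avoids this case analysis entirely by arguing by contradiction from the collision point $\tilde p$: a collision means $s_{j,a,\bar a}(\tilde p)=-1$, and since $-1$ lies at bounded distance from $\{0,1,\infty\}$ this single value already rules out the degenerate alternative; the small-diameter alternative then forces $s_{j,a,\bar a}(p)$ to be close to $-1$, i.e. $|y_j(p)|<1/10$, contradicting admissibility directly.

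Two smaller points. First, the case $c_l=y_a\in\Sigma_\R$ needs no hyperbolic input at all: $c_l$ is then itself a section of the unramified cover $\hat Z$, whose sections are pairwise distinct over every point of $\hat\cC$, which is exactly how the paper treats it; your auxiliary-section argument is both unnecessary and shaky, since a section $y_k$ with $|y_k(p)-y_a(p)|\asymp 1$ need not exist. Second, your appeal to ``uniformly bounded modulus'' of the sections via properness is not needed for the argument and is not automatic in the general setting of~\secref{sec:cover-clustering} (properness of $\pi$ does not by itself bound the sections); the paper's version of the argument never uses such a bound.
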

\begin{proof}
  If $c_l=y_k\in\Sigma_\R$ for some $k$ then the claim follows from
  the fact that $\hat Z$ is a cover whose sections are pairwise
  distinct over any point of $\hat\cC$. Assume therefore that
  $c_l=(y_k+y_{\bar k})/2$ for $y_k\in\Sigma_\C$. We make a real
  affine transform to pass to a chart where $y_k(p)=i$. Suppose that
  at some point $\tilde p\in\hat\cC$ we have $y_j=(y_k+y_{\bar k})/2$,
  i.e. $s_{j,k,\bar k}(\tilde p)=-1$. Then by~\eqref{eq:sijk-fund} we
  have $|s_{j,k,\bar k}(p)+1|=O_\ell(\nu^3\rho)$. For a suitable
  choice of $\rho$ this readily implies that $|y_j(p)|<1/10$,
  contradicting the admissibility of $c_l$.
\end{proof}

Lemma~\ref{lem:admissible-collision} implies that we can define maps
$\tilde s_{l,j,k}$ as follows
\begin{equation}
  \tilde s_{l,j,k}:\hat\cC_{l,j,k}^{\he{\nu_{l,j,k}\cdot\rho}}\to\C\setminus\{0,1\}, \qquad \tilde s_{i,j,k}=\frac{c_l-y_j}{c_l-y_k}.
\end{equation}
Fix an admissible center $c_l$. We will cluster the sections $y_j$
into annuli according to their relative distances from $c_l$ in
analogy with the complex construction. As before, we make a real
affine transformation and assume that $c_l=0$.

We define the intervals $I_{l,q}$ in the same way as in the complex
setting. However, a small variation is needed in the choice of
$\hat y_{l,q}$, which must be real on $\R\hat\cC$ to maintain the real
structure of our construction. Let $y_j$ be one of the sections with
$y_j(p)\in I_{l,q}$. We define
$\hat y_{l,q}:=\sqrt{y_jy_{\bar j}}$. Clearly $\hat y_{l,q}$ is real
on $\R\hat\cC$. To see that it is univalued on $\hat\cC$ note that
$y_j$ and $y_{\bar j}$ have the same associated monomial by symmetry,
hence $y_jy_{\bar j}$ has an even associated monomial and admits a
univalued square root.

With $\hat y_{l,q}$ chosen as above we continue the construction as in
the complex case. To verify that all the arguments remain valid we
need an analog of Lemma~\ref{lem:yj-v-yk} where either $y_j$ or $y_k$
(or both) are replaced by a cluster center
$\hat y_{l,q}:=\sqrt{y_hy_{\bar h}}$. This follows essentially from
the same lemma applied to $y_h$ and to $y_{\bar h}$, and we leave the
details for the reader. As a consequence we see that
Proposition~\ref{prop:clusters} and Remark~\ref{rem:single-cluster}
continue to hold, with the fibers $\cF_{i,q}$ and $\cF_{i,q+}$ now
real on $\R\hat\cC$.

\section{Cellular Weierstrass Preparation Theorem}
\label{sec:weierstrass}

In this section we state and prove a cellular analog of the
Weierstrass preparation theorem.

\begin{Def}\label{def:weierstrass-cell}
  Let $\gamma\in(0,1)$ and $\cC\odot\cF^\gamma$ be a cell and let
  $F\in\cO_b(\cC\odot\cF^\gamma)$. We say that $\cC\odot\cF^\gamma$ is
  a \emph{Weierstrass cell} with \emph{gap} $\gamma$ for $F$ if:
  \begin{itemize}
  \item $F$ vanishes identically on $\cC\odot\cF^\gamma$, or
  \item $F$ is non-vanishing on $\cC\odot*$ if $\cF=*$, or
  \item $F$ is non-vanishing on $\cC\odot(\cF^\gamma\setminus\cF)$ if
    $\cF=D,D_\circ,A$.
  \end{itemize}

  If $\hat\cC$ is a cell and $F\in\cO_b(\hat\cC)$ we say that a
  cellular map $f:\cC\odot\cF^\gamma\to\hat\cC$ is Weierstrass with
  gap $\gamma$ for $F$ if $\cC\odot\cF^\gamma$ is a Weierstrass cell
  with gap $\gamma$ for $f^*F$.
\end{Def}

In our applications it is convenient to use $\gamma$-extensions
$\cF^\gamma$ rather than the $\he{\cdot}$-extensions in the definition
of the gap of $\cF$, even when the base cell $\cC$ is considered with
a $\hrho$ extension $\cC^\hrho$.

\begin{Thm}[Cellular Weierstrass Preparation Theorem (WPT)]\label{thm:wpt}
  Let $\rho,\sigma>0$. Let $\cC^\hrho$ be a (real) cell and
  $F\in\cO_b(\cC^\hrho)$ a (real) function. Then there exist
  $N$ (real) Weierstrass maps
  $f_j:\cC_j^\hsigma\odot\cF_j^\gamma\to\cC^\hrho$ for $F$ with gap
  $\gamma<1$ such that $\cC\subset\cup_j f_j(\cC_j\odot\cF_j)$.

  If $\cC^\hrho,F$ vary in a definable family $\Lambda$ then one may take 
  $N=\poly_\Lambda(\rho,1/\sigma)$, $\gamma=\gamma_\Lambda<1$
and the maps $f_j$ from a
  single definable family. If $\cC^\hrho,F$ are algebraic of complexity
  $\beta$ then one may take $N=\poly_\ell(\beta,\rho,1/\sigma)$,
  $\gamma=1-1/\poly_\ell(\beta)$ and the maps $f_j$ algebraic of complexity 
  $\poly_\ell(\beta)$.
\end{Thm}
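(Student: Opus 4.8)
The plan is to prove the Cellular Weierstrass Preparation Theorem (Theorem~\ref{thm:wpt}) by reducing it to the combination of the refinement theorem (Theorem~\ref{thm:cell-refinement}), the monomialization lemma (Lemma~\ref{lem:monomial}), and the clustering construction of~\secref{sec:cover-clustering} applied to the fibers of a proper covering map built from the discriminant of $F$. The argument will proceed by induction on the length $\ell$ of $\cC$, with the case $\ell=0$ being trivial and the case $\cF=*$ reducing to the inductive hypothesis applied to the base.

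First I would dispose of the trivial case where $F$ vanishes identically on $\cC^\hrho$: then $\cC^\hrho$ is already a Weierstrass cell for $F$ with any gap, so nothing is to be done. Otherwise, write $\cC=\cC_{1..\ell}\odot\cF$; I would compute the discriminant $\cD\in\cO_b(\cC_{1..\ell}^\hrho)$ of $F$ with respect to the last variable (after first using the CPT to make $F$ polynomial-like, or working directly with the analytic discriminant), apply the CPT (Theorem~\ref{thm:cpt}) to the base cell $\cC_{1..\ell}^\hrho$ to obtain a cellular cover $\{g_i:\cC_i^{\he{\rho'}}\to\cC_{1..\ell}^\hrho\}$ compatible with $\cD$. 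On the cells $\cC_i$ where $g_i^*\cD\equiv0$ we have $\dim\cC_i<\dim\cC_{1..\ell}$, and we recurse on the smaller cell $\cC_i\odot g_i^*\cF$ with the function $g_i^*F$ (induction over dimension, as in~\secref{sec:cpt-proof-sketch}). On the cells where $g_i^*\cD$ is non-vanishing, the projection $\pi:Z:=\{g_i^*F=0\}\to\cC_i$ is a proper covering map of degree $\nu=\poly_\ell(\beta)$, so we are in the setting of~\secref{sec:cover-clustering}.

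Next, for each center $y_i\in\Sigma$ (a section of $\pi$ over the appropriate $\nu$-cover $\hat\cC$), I would invoke Proposition~\ref{prop:clusters} with a suitable gap $\gamma$ and $1/\rho>\poly_\ell(\nu,|\log\gamma|)$ to obtain the fibers $\cF_{i,q}$ (annuli around clusters), $\cF_{i,q+}$ (annuli/punctured discs between clusters), which cover $\C\setminus\{0\}$ and whose $\gamma$-extensions avoid the sections $y_j$ as specified. The fibers $\cF_{i,q+}$ give cells $\hat\cC_{i,q+}\odot\cF_{i,q+}^\gamma$ on which $g_i^*F$ is already non-vanishing on the whole $\gamma$-extension (hence a fortiori on $\cF_{i,q+}^\gamma\setminus\cF_{i,q+}$), so these are Weierstrass cells directly — except I must still recurse to handle the base, which only admits a $\he{\rho'}$-extension rather than a $\hsigma$-extension; here I apply the refinement theorem to the base to widen the extension to $\hsigma$. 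For the cells $\hat\cC_{i,q}\odot\cF_{i,q}^\gamma$ over a single cluster, I would use Remark~\ref{rem:single-cluster} to normalize $y_i=0,\hat y_{i,q}=1$ and observe that $g_i^*F$ is non-vanishing on $\cF_{i,q}^\gamma\setminus\cF_{i,q}$ by part 2 of Proposition~\ref{prop:clusters} — making these Weierstrass cells as well, once the base is refined. Taking the union over all centers $y_i$ yields a cover of $\cC$ by Weierstrass cells; composing the maps as in Remark~\ref{rem:cell-cover-composition} and tracking that $\nu,1/\rho',1/\gamma$ are all $\poly_\ell(\beta)$ gives the stated size $\poly_\ell(\beta,\rho,1/\sigma)$, complexity $\poly_\ell(\beta)$, and $\gamma=1-1/\poly_\ell(\beta)$; uniformity over definable families follows from~\secref{sec:uniform-families} since all maps come from definable families. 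The real case is handled by the real versions of all these tools, in particular~\secref{sec:cover-clustering-real} for the admissible real centers.

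The main obstacle I anticipate is the bookkeeping of the \emph{gap} parameter $\gamma$ and its interaction with the extension parameters. The definition of a Weierstrass cell uses the plain $\gamma$-extension $\cF^\gamma$ of the fiber, while the base carries a $\he{\cdot}$-extension, and Proposition~\ref{prop:clusters} requires $1/\rho>\poly_\ell(\nu,|\log\gamma|)$ — so one must choose $\gamma=1-1/\poly_\ell(\beta)$ (making $|\log\gamma|=\poly_\ell(\beta)$) and simultaneously arrange that after refining to a $\hsigma$-extension of the base, the cluster fibers still have their $\gamma$-extensions landing where Proposition~\ref{prop:clusters} guarantees they avoid the sections $y_j$. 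Threading $\rho'$ (the base extension before refinement), $\gamma$, and $\nu$ through these constraints so that everything ends up $\poly_\ell(\beta)$ — while keeping the recursion over dimension well-founded and not accumulating exponential blow-up from the $\nu$-covers (which is exactly why Lemma~\ref{lem:y_j-monodromy} and the $\nu_j\le\nu$ refinement matter) — is the delicate part; the geometric content is essentially packaged already in Proposition~\ref{prop:clusters}.
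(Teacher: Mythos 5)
Your outline matches the paper's \emph{algebraic} proof in spirit (reduce via a discriminant to a proper cover, then use the clustering of~\secref{sec:cover-clustering}), but it breaks down in the analytic case, which is exactly where the paper's argument has to diverge. For analytic $F$ there is no discriminant ``with respect to the last variable'': $F$ is not a polynomial in $\vz_{\ell+1}$, and the CPT does not make it one (it only monomializes/renders $F$ compatible). More seriously, the analytic discriminant (Lemma~\ref{lem:discriminant-analytic}) takes properness of $\{F=0\}\to\cC_{1..\ell}$ as a \emph{hypothesis}, and that properness is precisely what the WPT is meant to produce: zeros of an analytic $F$ can escape through the boundary of the fiber $\cF^\hrho$ as the base point moves (e.g.\ $F=1-\vz_1\vz_2$ on a product of discs), and the escape locus is not cut out by any function on the base, so no CPT step on the base can remove it. Your proposal is therefore circular at the step ``the projection $\pi:\{g_i^*F=0\}\to\cC_i$ is a proper covering map''; also the degree bound $\nu$ and the cluster data have no a priori uniform bound in the analytic setting. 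The paper instead establishes a uniform $(p,M)$ Laurent domination property over the definable family (Corollary~\ref{cor:uniform-indices}), defines the comparison radii $r_{jk}=\sqrt[k-j]{a_j/a_k}$ of the truncated Laurent expansion, shows via Lemma~\ref{lem:norootsfaraway} that zeros of $F$ can only lie within bounded logarithmic distance of these radii, and then runs the clustering construction on the graphs of $0$ and the $r_\alpha$ (not on the zeros of $F$). None of this is recoverable from your discriminant-based reduction.

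Even in the algebraic case your construction, as stated, does not output legitimate Weierstrass maps. You cluster around every section $y_i$ and keep all fibers $\cF_{i,q},\cF_{i,q+}$, but these cover $\C\setminus\{0\}$: the outer fiber $A^\delta(\cdot,\infty)$ is unbounded, the inner punctured disc can lie below $\delta r_1$, and many cluster annuli sit entirely outside $\cF^\hrho=A^\hrho(r_1,r_2)$ --- so the required containment $f_j(\cC_j^\hsigma\odot\cF_j^\gamma)\subset\cC^\hrho$ fails. You also never arrange for $0,r_1,r_2$ to be sections of the cover (the paper does this by applying Lemma~\ref{lem:discriminant-alg} to $F\cdot\vz_{\ell+1}(\vz_{\ell+1}-r_1)(\vz_{\ell+1}-r_2)$), which is what lets one cluster around $y_0=0$, locate $r_1,r_2$ in clusters $q_1\le q_2$, and take the \emph{single} trimmed fiber $\tilde\cF=A(l_{0,q_1}r_1,r_{0,q_2}r_2)$ --- univalued over $\cC$ itself, containing $\cF$, with $\tilde\cF^\gamma\subset\cF^\hrho$ via~\eqref{eq:cluster-edge-bounds} and $\gamma^{5\nu}<1/2$. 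The full multi-center Voronoi-style decomposition you describe belongs to the proof of the CPT, not the WPT; for the WPT one Weierstrass fiber per non-degenerate base cell suffices, and producing it requires exactly the two ingredients your sketch omits: forcing $0,r_1,r_2$ into $\Sigma$, and trimming to a single annulus between the clusters of $r_1$ and $r_2$.
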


\begin{Rem}\label{rem:wpt-cover}
  The cellular WPT is analogous to the classical WPT in the following
  sense. Let $\cC\odot\cF^\gamma$ be a Weierstrass cell for
  $F\in\cO_b(\cC\odot\cF^\gamma)$ with $\cF$ of type $D,D_\circ,A$ and
  suppose that $F$ does not vanish identically. Then
  \begin{equation}
    \pi : \cC\odot\cF^\gamma \cap \{F=0\} \to \cC
  \end{equation}
  defines a proper ramified covering map: the condition that $F$ does
  not vanish in $\cC\odot(\cF^\gamma\setminus\cF)$ guarantees that the
  zeros of $F$ cannot leave $\cF$. The zeros of $F$ in
  $\cC\odot\cF^\gamma$ therefore agree with the zeros of a
  ``Weierstrass polynomial''
  \begin{equation}
    P(\vz,w) = \prod_{\eta\in\pi^{-1}(\vz)} (w-\eta) = w^\nu+\sum_{j=0}^{\nu-1} c_j(\vz) w^j
  \end{equation}
  with $c_j(\vz)\in\cO_b(\cC)$, where boundedness follows from
  boundedness of $\cF$ and holomorphicity follows by the removable
  singularity theorem.
\end{Rem}

A key difference between the classical Weierstrass preparation
theorem and the cellular analog is that the cellular version does
not require transforming the coordinates to general position.

\begin{Ex}
  Consider the unit polydisc
  $\cP_3\subset\cP_3^{1/10}\subset\C^3_{x,y,z}$ and the function
  $F=zx-y$. Then the classical Weierstrass preparation with respect to
  the $w$-direction is impossible. On the other hand a covering of
  $\cP_3$ by Weierstrass cells with gap $1/2$ for $F$ is given by the
  cells $*\odot*\odot D(1)$ where $F\equiv0$;
  $*\odot D_\circ(1)\odot D_\circ(1)$ and
  $D_\circ(1)\odot A(2x,4)\odot D(1)$ where $F$ has no zeros; and
  $D_\circ(1)\odot D(3x)\odot D(4)$ where $F$ has a single zero in
  each fiber.
\end{Ex}

We will prove the WPT and CPT by simultaneous induction on
$\ell:=\ell(\cC)$ and $\dim\cC$. The cases $\ell=0$ and $\dim\cC=0$
are trivial. We now prove the WPT for a cell $\cC\odot\cF$ of length
$\ell+1$ assuming that the CPT and WPT are true for cells of smaller
length or equal length and smaller dimension. The case $\cF=*$ reduces
to the CPT for $\cC$ so we assume $\cF$ is $D,D_\circ,A$. The proof of
the CPT is postponed to~\secref{sec:proof-cpt}.

We will give two separate proofs: one in the algebraic case, and one
in the analytic case. This is the only part of the proof of the main
theorems where our arguments significantly diverge for these two
cases. 

\subsection{Proof in the algebraic case}
\label{sec:weierstrass-alg-proof}

\subsubsection{Algebraic discriminants}
We need the following simple lemma on discriminants.

\begin{Lem}\label{lem:discriminant-alg}
  Let $\cC$ be a cell of length $\ell+1$ and $F\in\cO_b(\cC)$, both
  algebraic of complexity $\beta$. Suppose that $F$ does not vanish
  identically on $\cC$. Then there exist:
  \begin{itemize}
  \item A polynomial $P\in\C[\vz_{1..\ell+1}]$ of complexity $\poly_\ell(\beta)$, not
    identically vanishing on $\cC$, and satisfying
    $\{F=0\}\subset\{P=0\}$.
  \item A polynomial $\cD\in\C[\vz_{1..\ell}]$ of complexity
    $\poly_\ell(\beta)$, not identically vanishing on $\cC_{1..\ell}$,
    such that the projection 
    \begin{equation}
      \pi:(\cC_{1..\ell}\times\C)\cap\{P=0\}\to\cC_{1..\ell}, \qquad \pi(\vz_{1..\ell+1})=\vz_{1..\ell}
    \end{equation}
    is proper covering map outside $\{\cD=0\}$.
  \end{itemize}
  If $F$ is real then $P,\cD$ can be chosen real as well.
\end{Lem}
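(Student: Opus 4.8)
The plan is to produce $P$ as the minimal polynomial (up to scaling) cutting out the graph of $F$, and $\cD$ as (a suitable polynomial factor of) the discriminant of $P$ with respect to the last variable. Recall from the definition of algebraicity of holomorphic functions (\secref{sec:complex-cells}) that the graph $G_F\subset\C^{\ell+1}\times\C$ is an analytic component of $(\cC\times\C)\cap X$ for an algebraic variety $X$ of degree $\le\beta$. Write $X_1,\ldots,X_s$ for the irreducible components of $X$, each of degree $\le\beta$; the graph $G_F$ lies in one of them, say $X_1$, and since $G_F$ has dimension $\ell+1$ (equal to $\dim\cC$, recalling the type of $\cC$ contains no $*$) and projects dominantly to $\cC_{1..\ell+1}$, the variety $X_1$ is a hypersurface in $\C^{\ell+1}\times\C$ dominating $\C^{\ell+1}$. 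Let $P\in\C[\vz_{1..\ell+1},w]$ be a defining polynomial of $X_1$ (or just of the Zariski closure of $G_F$), taken squarefree; its degree is $\le\beta$, hence its complexity is $\poly_\ell(\beta)$, and $\{F=0\}\subset G_F\subset\{P=0\}$. That $P$ does not vanish identically on $G_F$... wait, we want $P$ not identically vanishing on $\cC\odot\cF=\cC_{1..\ell+1}\odot\cF$ — here I must be careful about which cell is ``$\cC$'': in the statement $\cC$ has length $\ell+1$, so $P\in\C[\vz_{1..\ell+1}]$ and $\cD\in\C[\vz_{1..\ell}]$. So take $P$ to be a squarefree defining polynomial of the Zariski closure of $G_F$ viewed inside $\C^{\ell+1}$ — no, $G_F\subset\C^{\ell+1}\times\C$. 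Let me restate: with $\cC$ of length $\ell+1$ and coordinates $\vz_{1..\ell+1}$, the graph $G_F\subset\C^{\ell+1}\times\C=\C^{\ell+2}$, so the ``extra'' coordinate is $\vz_{\ell+2}=w$; $P\in\C[\vz_{1..\ell+1},w]$ but the lemma writes $P\in\C[\vz_{1..\ell+1}]$, meaning the ambient is really $\C^{\ell+1}$ and the graph of $F:\cC\to\C$ sits in $\C^{\ell+1}\times\C$ with coordinate $\vz_{\ell+1}$ playing the role of the graph variable... Actually re-reading: $F\in\cO_b(\cC)$ with $\cC$ of length $\ell+1$, and we seek $P\in\C[\vz_{1..\ell+1}]$ with $\{F=0\}\subset\{P=0\}$ — this makes sense: $P$ is a polynomial on the ambient space $\C^{\ell+1}$ of $\cC$ (not on the graph space) that vanishes on the zero locus of $F$. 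Indeed since $F$ is algebraic of complexity $\beta$, writing $G_F$ as a component of $(\cC\times\C)\cap X$ with $\deg X\le\beta$, the closure $\overline{\{F=0\}}=\overline{G_F\cap\{w=0\}}$ is contained in the algebraic set $(X\cap\{w=0\})$ projected to $\C^{\ell+1}$, or more simply: $\{F=0\}$ is contained in $\pi_w(X\cap\{w=0\})$ which is algebraic of degree $\poly_\ell(\beta)$; take $P$ a defining equation of the closure of $\{F=0\}$ extracted from this, of complexity $\poly_\ell(\beta)$, squarefree, not identically zero on $\cC$ (it isn't, since $F$ doesn't vanish identically so $\{F=0\}\ne\cC$, hence its closure is a proper subvariety).

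Next I would construct $\cD$. Set $\cC=\cC_{1..\ell}\odot\cF$. Regard $P\in\C[\vz_{1..\ell}][\vz_{\ell+1}]$ as a polynomial in the last variable with coefficients in $\C[\vz_{1..\ell}]$. After possibly multiplying $P$ by a polynomial factor depending only on $\vz_{1..\ell}$ (to clear the leading coefficient) — or rather, after replacing $P$ by its primitive part and recording the leading coefficient $L(\vz_{1..\ell})$ — let $\cD_0(\vz_{1..\ell})=\disc_{\vz_{\ell+1}}(P)$ be the discriminant, and set $\cD:=L\cdot\cD_0$, of complexity $\poly_\ell(\beta)$ by standard resultant degree bounds. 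I need $\cD$ not identically vanishing on $\cC_{1..\ell}$: this holds because $P$ is squarefree, hence has no repeated factor, hence (for $P$ genuinely of positive degree in $\vz_{\ell+1}$, which is the case since $\{P=0\}$ dominates $\cC_{1..\ell}$ with finite fibers — here one uses that $\cF$ is of type $D,D_\circ,A$ so $\{F=0\}$ meets each fiber in a discrete, hence finite, set by the argument of Remark~\ref{rem:wpt-cover}) the discriminant is a nonzero polynomial; and a nonzero polynomial on $\C^\ell$ restricted to the open set $\cC_{1..\ell}$ is not identically zero. Over the complement of $\{\cD=0\}$, the leading coefficient does not vanish and $P(\vz_{1..\ell},\cdot)$ has no multiple roots, so $\pi:(\cC_{1..\ell}\times\C)\cap\{P=0\}\to\cC_{1..\ell}$ is a local homeomorphism with constant fiber cardinality $=\deg_{\vz_{\ell+1}}P$ on each connected component; a brief argument (properness of the finite fibers plus the fact that roots cannot escape to infinity where $L\ne0$) upgrades this to a proper covering map over $\cC_{1..\ell}\setminus\{\cD=0\}$.

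For the real case, I would simply observe that if $F$ is a real holomorphic function on a real cell then $G_F$ is invariant under complex conjugation, hence its Zariski closure is defined over $\R$, so $P$ may be taken with real coefficients; the discriminant and leading coefficient of a real polynomial are real, so $\cD$ is real as well. One point needing a sentence of care: to take $P$ \emph{squarefree} with \emph{real} coefficients simultaneously, use that the squarefree part of a real polynomial is real (it is $P/\gcd(P,\partial_{\vz_{\ell+1}}P)$ up to a real constant, and $\gcd$ of real polynomials is real).

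The main obstacle I expect is not any single step in isolation but the bookkeeping that makes ``complexity $\poly_\ell(\beta)$'' rigorous: one must track that (i) extracting a defining equation of $\overline{\{F=0\}}$ from the given variety $X$ of degree $\le\beta$ stays polynomial in degree — this uses Bezout-type bounds and the fact that an irreducible component / the reduced structure of a degree-$D$ variety has a defining equation of degree $\le D$ (or a defining \emph{ideal} generated in degree $\poly(D)$), and (ii) the discriminant/resultant of polynomials of degree $d$ in $\ell+1$ variables has degree $\poly_\ell(d)$. Both are standard but must be cited or proven carefully; I would lean on the degree estimates already invoked earlier in the paper (the Bezout bound $\deg(X\cap Y)\le\deg X\cdot\deg Y$ and the composition estimate for algebraic maps stated in \secref{sec:complex-cells}). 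A secondary subtlety is ensuring $P$ genuinely involves $\vz_{\ell+1}$ with positive degree so that the discriminant construction is meaningful — this is exactly where the hypothesis that $\cF$ has type $D,D_\circ,A$ (so the fiberwise zero set of $F$ is finite, as in Remark~\ref{rem:wpt-cover}) enters, ruling out the degenerate case where $\{F=0\}$ contains a whole fiber.
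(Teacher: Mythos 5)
Your core construction coincides with the paper's proof: take $P$ to be a squarefree defining equation of a hypersurface of degree $\poly_\ell(\beta)$ containing the projection to $\C^{\ell+1}$ of the intersection of the Zariski closure of the graph of $F$ with $\{w=0\}$, take $\cD$ to be the discriminant of $P$ in $\vz_{\ell+1}$ multiplied by its leading coefficient (to prevent roots escaping to infinity and to kill vertical components), and get the real case from conjugation invariance of the graph. Two points in your write-up need repair, though neither changes this architecture.

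First, the lemma does not assume that the type of $\cC$ contains no $*$; the paper opens with the explicit (and genuinely needed) reduction that $*$-coordinates may be ignored, so that $\cC$ becomes open in its ambient space. Without that reduction, ``a defining equation of the closure of $\{F=0\}$'' can violate the requirement $P\not\equiv0$ on $\cC$: if $\cC\subset\{\vz_1=0\}$, then the hypersurface $\{\vz_1=0\}$ contains $\{F=0\}$ yet vanishes identically on $\cC$. Relatedly, the inference ``$F\not\equiv0$, so $\{F=0\}\neq\cC$, hence its Zariski closure is a proper subvariety'' is not valid on its own (zero sets of non-algebraic functions can be Zariski dense); what makes the closure proper and of controlled degree is its containment in the $\ell$-dimensional algebraic set cut out by $w=0$ on the $(\ell+1)$-dimensional closure of the graph, which you do have earlier, so this part is a matter of stating the argument in the right order, plus adding the $*$-reduction.

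Second, your treatment of the ``$P$ must involve $\vz_{\ell+1}$'' subtlety is incorrect: the lemma makes no hypothesis on the last fiber type, and even when $\cF$ is of type $D,D_\circ,A$ the fiberwise zero set of $F$ need not be finite (take $F=\vz_1$ on $D(1)\odot D(1)$, which vanishes on an entire fiber); the finiteness invoked in Remark~\ref{rem:wpt-cover} comes from the Weierstrass gap condition, which is not available here. Fortunately no such nondegeneracy is required: vertical components of $\{P=0\}$, and the degenerate case $\deg_{\vz_{\ell+1}}P=0$, are precisely what the leading-coefficient factor in $\cD$ disposes of, since outside $\{\cD=0\}$ such components contribute nothing to the fibers --- this is exactly how the paper argues, with no appeal to the fiber type.
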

\begin{proof}
  We may assume without loss of generality that the type of $\cC$ does
  not contain $*$, since any such coordinate can be ignored for the
  statement of the lemma. Then $\cC\subset\C^{\ell+1}$ is open.

  By definition the graph of $F$ is contained in some irreducible
  algebraic variety $G_F\subset\C^{\ell+1}\times\C_w$. Since $F$ is
  not identically vanishing on $\cC$, the equation $w=0$ cuts $G_F$
  properly in a subvariety of dimension $\ell$, and the projection of
  this variety to $\C^{\ell+1}$ contains $\{F=0\}$ and is contained in
  a proper algebraic hypersurface of degree $\poly_\ell(\beta)$, i.e. in a
  set $\{P=0\}$ where $P\in\C[\vz_{1..\ell+1}]$ is not identically
  vanishing (on $\cC$, since $\cC$ is open). We may without loss of
  generality that $P$ is square-free as a polynomial in
  $\C(\vz_{1..\ell})[\vz_{\ell+1}]$, and in particular has no multiple
  roots for a generic value of $\vz_{1..\ell}$. Then the classical
  discriminant $\cD'$ of $P$ satisfies the conditions of the lemma,
  except that some zeros of $P$ may still escape to infinity outside
  $\cD'=0$. To eliminate this possibility we define $\cD$ to be the
  product of $\cD'$ with the leading coefficient of $P$ with respect to
  $\vz_{\ell+1}$.

  For the final statement, if $F$ is real then its graph is invariant
  under conjugation, and the same is then also true for $G_F$. The
  polynomial $P$ is then also real by construction.
\end{proof}

\subsubsection{Proof of the algebraic WPT}

We now proceed to the proof of the WPT. Suppose $\cF=A(r_1,r_2)$ (the
cases $D(r),D_\circ(r)$ are similar). Applying the refinement theorem
(Theorem~\ref{thm:cell-refinement}), we may assume that $\rho$ is
already as small as we wish as long as
$1/\rho=\poly_\ell(1/\sigma,\beta)$.

There is no harm in replacing $F$ by the polynomial $P$ obtained from
Lemma~\ref{lem:discriminant-alg} applied to the function
$F\cdot\vz_{\ell+1}\cdot(\vz_{\ell+1}-r_1)\cdot(\vz_{\ell+1}-r_2)$. In other words we
may assume without loss of generality that $F$ is a polynomial in the
$\vz_{\ell+1}$ variable and that it vanishes when $\vz_{\ell+1}$ is
either $0$, $r_1(\vz_{1..\ell})$ or $r_2(\vz_{1..\ell})$. We let $\cD$
denote the corresponding discriminant.

We apply the CPT to $\cC$ with $\cD$ and let
$f_j:\cC^\hrho_j\to\cC^\hrho$ denote the resulting cellular cover. If
$f_j(\cC_j^\hrho)$ is contained in $\{\cD=0\}$ then since cellular maps
preserve dimension $\dim\cC_j\le\dim\{\cD=0\}\le\dim\cC-2$. In this case
we set
\begin{align}
  \hat\cC_j&:=\cC_j\odot f_j^*\cF & \hat f_j&:=(f_j,\id):\hat\cC_j^\hrho\to\cC^\hrho
\end{align}
and inductively apply the WPT to $\hat\cC_j$ and $\hat f_j^*F$. We
obtain Weierstrass maps
$f_{j,k}:\cC^\hsigma_{j,k}\odot\cF^\gamma\to\hat\cC_j^\hrho$ for $F$,
and the compositions
$f_{j,k}\circ\hat f_j:\cC^\hsigma_{j,k}\odot\cF^\gamma\to\cC^\hrho$
are Weierstrass maps for $F$ which cover $f(\cC_j)\odot\cF$.

It remains to consider the case that $f_j(\cC_j^\hrho)$ is disjoint
from $\{\cD=0\}$. In the same way as before, it will suffice to prove
the WPT for $\hat\cC_j$ and $\hat f_j^*F$ where $\hat\cC_j$ and
$\hat f_j^* F$ are defined similarly. We return now to the original
notation replacing this pair by $\cC,F$.  We note that $F$ is still
polynomial in $\vz_{\ell+1}$ (though perhaps not in the other
coordinates) and the projection
\begin{equation}
  \pi:(\cC^\hrho\times\C)\cap\{F=0\}\to\cC^\hrho, \qquad \pi(\vz_{1..\ell+1})=\vz_{1..\ell}
\end{equation}
is a proper covering map. It will suffice to prove the WPT under these
conditions. We are now in a position to use the constructions
of~\secref{sec:cover-clustering}. Note that
$\nu=\poly_\ell(\beta)$. We will choose $\gamma=1-1/\poly_\ell(\beta)$
(the precise choice will be determined later).

Recall that we may choose $\rho$ small as long as
$1/\rho=\poly_\ell(1/\sigma,\beta)$. We take
$\hrho<\hsigma\cdot\he{\hat\rho}$ where $\hat\rho$ is chosen in such a
way that Proposition~\ref{prop:clusters} holds over $\cC^\hsigma$. The
zero map is a section of $\pi$ which we denote by $y_0$. Since
$r_1,r_2$ are sections of $\pi$ they belong to certain clusters around
$y_0=0$, say with indices $q_1\le q_2$. We may also assume that
$r_1=\hat y_{0,q_1}$ and if $q_1\neq q_2$ then $r_2=\hat y_{0,q_2}$.

If $q_1=q_2=q$ then we define $\tilde\cF=\cF_{0,q}$. If $q_1<q_2$ then we define
\begin{equation}\label{eq:wpt-final-fiber-alg}
  \tilde\cF:= A(l_{0,q_1}r_1,r_{0,q_2}r_2),
\end{equation}
i.e. we take the left endpoint of the $\cF_{0,q_1}$ cluster and the
right endpoint of the $\cF_{0,q_2}$ cluster. Since $r_1,r_2$ are
univalued over $\cC$ this is actually a fiber over $\cC$, which
contains $A(r_1,r_2)$ and satisfies the Weierstrass condition with gap
$\gamma$ by Proposition~\ref{prop:clusters}. Recall that we may
assume $\hrho<1/2$. Then the inclusion
$\tilde\cF^\gamma\subset\cF^\hrho$ follows
from~\eqref{eq:cluster-edge-bounds} if we choose $\gamma$ satisfying
$\gamma^{5\nu}<1/2$ where $\nu:=\deg\pi$. Since
$\nu=\poly_\ell(\beta)$ one can indeed choose $\gamma$ satisfying this
condition with $\gamma=1-1/\poly_\ell(\beta)$.

\subsubsection{The real setting}

If $\cC$ and $F$ are real then $P$ is also real, and consequently the
coverings constructed by inductive applications of the CPT can be
taken to be real. After these reductions, the fiber constructed
in~\eqref{eq:wpt-final-fiber-alg} is clearly real as well.

\subsection{Proof in the analytic case}

Before giving the proof of the WPT in the analytic case we develop
some general results concerning the Laurent coefficients of definable
families of holomorphic functions.

\subsubsection{Laurent domination in definable families}
\label{sec:laurent-domination}

We will study the following property of the Taylor/Laurent
coefficients of a holomorphic function.

\begin{Def}[Taylor domination]\label{def:domination}
  A holomorphic function $f:D(r)\to\C$ with $r>0$ is said to possess
  the $(p,M)$ Taylor domination property\footnote{note that we use a
    slightly simplified form of the definition given in
    \protect{\cite{by:domination}}.} \cite{by:domination} if its
  Taylor expansion $f(z)=\sum a_k(z-z_0)^k$ satisfies the estimate
  \begin{equation}
    |a_k| r^k < M \max_{j=0,\ldots,p} |a_j| r^j, \qquad k=p+1,p+2,\ldots 
  \end{equation}
  Similarly, for $r_2>r_1>0$ a holomorphic function $f:A(r_1,r_2)\to\C$
  is said to possess the $(p,M)$ Laurent domination property if its
  Laurent expansion $f(z)=\sum a_k(z-z_0)^k$ satisfies the estimates
  \begin{equation}
    \begin{aligned}
      |a_k| r_2^k &< M \max_{j=-p,\ldots,p} |a_j| r_2^j, \qquad
      k=p+1,p+2,\ldots \\
      |a_k| r_1^k &< M \max_{j=-p,\ldots,p} |a_j| r_1^j, \qquad
      k=-p-1,-p-2,\ldots
    \end{aligned}  
  \end{equation}
\end{Def}

We will need the following lemma on Laurent expansions.

\begin{Lem}\label{lem:fourier-bounds}
  Let $S:=S(1)$ and $\{f_\lambda:S^{\delta^2}\to\C\}_\lambda$ a
  definable family of holomorphic functions for some
  $0<\delta<1$. Then there exists $B$ such that for every $\lambda$ we
  have
  \begin{equation}
    \norm{f_\lambda}_{S^\delta} \le B\norm{f_\lambda}_S.
  \end{equation}
  If we write $f_\lambda(z)=\sum a_k(\lambda)z^k$ then there exists
  $p\in\N$ and $m>0$ such that for all $\lambda$ we have
  \begin{equation}
    |a_j(\lambda)| > m \norm{f_\lambda}_S\qquad \text{for some } j=j(\lambda)\in\{-p,\ldots,p\}.
  \end{equation}
\end{Lem}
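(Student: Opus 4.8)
The plan is to deduce both statements from o-minimal compactness (definable choice / uniform finiteness) applied to the Laurent coefficient functionals. First, for the norm comparison: the map $\lambda\mapsto\norm{f_\lambda}_S/\norm{f_\lambda}_{S^\delta}$ is a well-defined definable function on the parameter space, taking values in $(0,1]$ (it is $\le 1$ trivially, and $>0$ since $f_\lambda$ cannot vanish identically on $S^{\delta^2}$ if it is nonzero on the interior annulus $S^\delta$ — if $f_\lambda\equiv 0$ we may simply discard that parameter). By o-minimality this definable function is bounded away from $0$, say by $1/B$, which gives $\norm{f_\lambda}_{S^\delta}\le B\norm{f_\lambda}_S$. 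One should note the degenerate case $f_\lambda\equiv0$ separately; there the inequality holds trivially for any $B$.

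For the second statement, after normalizing (replacing $f_\lambda$ by $f_\lambda/\norm{f_\lambda}_S$, so $\norm{f_\lambda}_S=1$) the Laurent coefficients are $a_k(\lambda)=\frac1{2\pi i}\oint_{S}f_\lambda(z)z^{-k-1}\d z$, which form a definable family of functions of $\lambda$, and by the Cauchy estimate on $S^{\delta^2}$ one has $|a_k(\lambda)|\le\delta^{2|k|}B$ for all $k$ (using the first part to control $\norm{f_\lambda}_{S^{\delta^2}}$ in terms of $\norm{f_\lambda}_S$; strictly one applies the first part with $\delta$ replaced by $\delta^2$ at the cost of adjusting $B$). Hence the tail coefficients decay geometrically, uniformly in $\lambda$. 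Since $\sum_k|a_k(\lambda)z^k|\ge$ something comparable to $\norm{f_\lambda}_S=1$ on $S$, there is an absolute bound $p_0$ and constant $m_0>0$, depending only on $\delta$ and $B$, such that for each $\lambda$ at least one of $|a_{-p_0}(\lambda)|,\ldots,|a_{p_0}(\lambda)|$ exceeds $m_0$: otherwise the geometric tail plus the small low-order terms would force $\norm{f_\lambda}_S<1$, a contradiction. Taking $p:=p_0$ and $m:=m_0$ finishes the proof.

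The main obstacle I anticipate is the last quantitative step — extracting a \emph{uniform} lower bound $m$ and a \emph{uniform} index range $\{-p,\dots,p\}$ from the fact that the coefficients decay geometrically with a rate independent of $\lambda$. One clean way is purely elementary: choose $p$ so large that $\sum_{|k|>p}\delta^{2|k|}B<\tfrac12$; then on $S$ we have $|f_\lambda(z)|\le\sum_{|k|\le p}|a_k(\lambda)|+\tfrac12$, so $\max_{|k|\le p}|a_k(\lambda)|\ge\frac{1-1/2}{2p+1}=:m>0$, giving the claim with these absolute $p,m$. Alternatively, one may invoke o-minimality a second time: the function $\lambda\mapsto\max_k(\text{index }k\text{ with }|a_k(\lambda)|\text{ within a factor }2\text{ of }\max_j|a_j(\lambda)|)$ is definable and integer-valued, hence bounded, yielding $p$; and $\lambda\mapsto\max_{|k|\le p}|a_k(\lambda)|$ is a definable positive function, hence bounded below by some $m>0$. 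Either route avoids any delicate analysis; the elementary geometric-series argument is the more self-contained and is the one I would write out.
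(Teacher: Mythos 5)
There is a genuine gap, and it sits at the heart of the lemma: your proof of the first inequality rests on the assertion that the definable function $\lambda\mapsto\norm{f_\lambda}_S/\norm{f_\lambda}_{S^\delta}\in(0,1]$ is ``bounded away from $0$ by o-minimality''. That is not a valid principle: a definable function with values in $(0,1]$ need not have a positive infimum (e.g.\ $g(\lambda)=\lambda$ on $(0,1]$); o-minimality only tells you that if the infimum were $0$ it would be approached along a definable curve in the parameter space, and ruling that out is exactly the content of the lemma. Indeed the inequality $\norm{f}_{S^\delta}\le B\norm{f}_S$ is \emph{false} for general families of holomorphic functions --- $f_n(z)=z^n$ has $\norm{f_n}_S=1$ but $\norm{f_n}_{S^\delta}=\delta^{-n}\to\infty$ --- so any proof must use holomorphy together with definability in a quantitative way. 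Your proposal contains no such ingredient, and the obvious attempt to repair it by curve selection plus a normal-families limit also does not go through cheaply, because after normalizing the mass of $f_\lambda$ can escape to the boundary circles of the annulus, where locally uniform convergence gives no control. The paper supplies the missing input via Lemma~\ref{lem:voorhoeve-estimate} (the Voorhoeve index of $f_\lambda$ along circles in $S^{\delta^2}$ is uniformly bounded over a definable family) combined with the Bernstein-class theorem of Roytwarf--Yomdin, which converts a bound on the variation of argument along $\partial D$ into a bound on $\log\max_{\bar D}|f|-\log\max_{S\cap D}|f|$; this is what produces the uniform constant $B$.

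Two secondary points. Your main route for the second statement (normalize $\norm{f_\lambda}_S=1$, use geometric decay of the Laurent coefficients, and a tail estimate to force one of $a_{-p},\dots,a_p$ to be large) is correct given part one and is essentially the paper's argument; however, the parenthetical ``apply the first part with $\delta$ replaced by $\delta^2$'' does not literally make sense, since the family is not defined on $S^{\delta^4}$ --- instead apply part one as stated and run the Cauchy estimates on the circles of radius $\delta^{\pm1}$, giving $|a_k(\lambda)|\le B\delta^{|k|}\norm{f_\lambda}_S$, which suffices. Your ``alternative'' route for part two (that $\lambda\mapsto\max_{|k|\le p}|a_k(\lambda)|$ is definable and positive, hence bounded below) repeats the same invalid principle and should be discarded.
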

\begin{proof}
  By Lemma~\ref{lem:voorhoeve-estimate}, the Voorhoeve index of
  $f_\lambda$ along any circle in $S^{\delta^2}$ is uniformly bounded
  over $\lambda$ and over the circle. Fix $\lambda$ and assume without
  loss of generality (up to rotation) that the maximum of $f_\lambda$
  on $S^\delta$ is attained at $\delta i$ or $\delta^{-1} i$. Let $C$
  be the circle with diameter $[\delta i,\delta^{-1}i]$ and fix some
  disc $D$ with $C\subset D$ and $\bar D\subset S^{\delta^2}$. The
  Voorhoeve index of $f_\lambda$ over $\partial D$ is uniformly
  bounded over $\lambda$, and it follows from
  \cite[Theorem~3]{ky:rolle}\footnote{The result there is stated for
    $K$ with nonempty interior, but actually holds for $K$ of the form
    $S\cap D$, see \protect{\cite[Section~4.1]{ky:rolle}}.}  that
  $B_{S\cap D,D}(f_\lambda)$ is uniformly bounded by some constant
  $B_1$, where
  \begin{equation}
    B_{K,U}(f) = \log\max_{z\in\bar U}|f(z)| - \log\max_{z\in K}|f(z)|.
  \end{equation}
  Thus
  \begin{equation}
    \norm{f_\lambda}_{S^\delta} = \max_{z\in\bar D} |f(z)| \le e^{B_1} \norm{f_\lambda}_S
  \end{equation}
  proving the first part with $B=e^{B_1}$.

  From the first part and the Cauchy estimate it follows that
  \begin{equation}
    |a_k(\lambda)| \le B \delta^{|k|} \norm{f_\lambda}_S.
  \end{equation}
  Then
  \begin{align}
    \norm{f_\lambda}_S &\le \sum_{k\in\Z} |a_k(\lambda)| =
    \sum_{k\in\{-p,\ldots,p\}} |a_k(\lambda)|+\sum_{k\not\in\{-p,\ldots,p\}} |a_k(\lambda)| \le \\
    &\sum_{k\in\{-p,\ldots,p\}} |a_k(\lambda)|+\frac{2B\delta^{p+1}}{1-\delta} \norm{f_\lambda}_S
  \end{align}
  and the result now follows with $p$ such that
  $\frac{2B\delta^{p+1}}{1-\delta}<1/2$ and $m=1/(4p+2)$.
\end{proof}

As a direct corollary of Lemma~\ref{lem:fourier-bounds} we obtain the
following result. We remark that a similar statement for families of
discs appeared (in a slightly different form and with a different
proof) in \cite{cpw:params}. The disc case also follows fairly
directly from a classical theorem of Biernacki
\cite{biernacki}. However the annulus case does not seem to follow in
a similar manner.

\begin{Cor}\label{cor:uniform-indices}
  Let $\cF=\{\cF_\lambda\}$ be a definable family of discs or annuli and let
  $f_\lambda:\cF_\lambda^\e\to\C$ be a definable family of
  holomorphic functions, for some $0<\e<1$. Then the functions
  $f_\lambda$ have the $(p,M)$ Laurent domination property in $\cF_\lambda$
  for some uniformly bounded $p,M$.
\end{Cor}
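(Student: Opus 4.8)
The plan is to reduce the statement to Lemma~\ref{lem:fourier-bounds} by a rescaling argument. Recall that Lemma~\ref{lem:fourier-bounds} is stated for the fixed annulus $S(1)$ with its $\delta^2$-extension $S^{\delta^2}(1)$; the point is that the conclusion there depends only on the definable family $\{f_\lambda\}$ and on $\delta$, producing uniform constants $B$, $p$, $m$. So first I would normalize each $\cF_\lambda$ to a standard model. If $\cF_\lambda=A(r_{1,\lambda},r_{2,\lambda})$ is an annulus, there is no single affine rescaling bringing it to $S(1)$, but the relevant quantity is the conformal modulus $\log|r_{2,\lambda}/r_{1,\lambda}|$, which by definability is bounded above and below by positive constants (o-minimality: the family of moduli is a definable, hence bounded, function of $\lambda$ away from degenerate values — and degenerate values do not occur since each $\cF_\lambda$ is a genuine annulus in a definable family over a definable parameter space; if the parameter space is not compact one shrinks it first, as is standard for such uniformity statements). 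After dividing the coordinate $z$ by $\sqrt{|r_{1,\lambda}r_{2,\lambda}|}$ we may assume $\cF_\lambda=A(\tau_\lambda^{-1},\tau_\lambda)$ with $\tau_\lambda$ ranging in a compact subinterval of $(1,\infty)$, and then $\cF_\lambda^\e=A(\tau_\lambda^{-1}\e,\tau_\lambda\e^{-1})$. Pick $\delta\in(0,1)$ close enough to $1$ that $\delta^2<\e\cdot(\inf_\lambda\tau_\lambda)$; then $S^{\delta^2}(1)=A(\delta^{-2},\delta^2)\subset\cF_\lambda^\e$ for every $\lambda$, so the restrictions $f_\lambda|_{S^{\delta^2}(1)}$ form a definable family to which Lemma~\ref{lem:fourier-bounds} applies. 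The disc case is completely analogous and in fact simpler: after an affine rescaling $\cF_\lambda=D(1)$, $\cF_\lambda^\e=D(\e^{-1})$, and one applies the disc part of the same circle of ideas (or directly Lemma~\ref{lem:fourier-bounds} restricted to the circle $S(1)\subset D(\e^{-1})$, combined with the maximum principle to control Taylor coefficients).

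The second step is to extract the Laurent/Taylor domination property from the output of Lemma~\ref{lem:fourier-bounds}. Writing $f_\lambda(z)=\sum_k a_k(\lambda)z^k$ on the normalized annulus, the first conclusion of Lemma~\ref{lem:fourier-bounds} gives $\norm{f_\lambda}_{S^\delta(1)}\le B\norm{f_\lambda}_{S(1)}$, and combined with the Cauchy estimate on the circle $S^\delta(1)$ this yields $|a_k(\lambda)|\le B\delta^{|k|}\norm{f_\lambda}_{S(1)}$ for all $k\in\Z$; actually one wants the estimate on the larger annulus $\cF_\lambda^\e$ — here one uses instead Cauchy estimates on circles inside $\cF_\lambda^\e$, or simply re-runs Lemma~\ref{lem:fourier-bounds} with $\e$ in place of $\delta^2$, to get geometric decay $|a_k(\lambda)|\le B'\eta^{|k|}\norm{f_\lambda}_{\cF_\lambda}$ for some uniform $\eta<1$, $B'$. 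The second conclusion of Lemma~\ref{lem:fourier-bounds} produces uniform $p,m$ with $\max_{|j|\le p}|a_j(\lambda)|>m\norm{f_\lambda}_{\cF_\lambda}$ (up to adjusting constants for the rescaled radii $r_1,r_2$, which only changes $|a_j|r^j$ by bounded factors since $\tau_\lambda$ is bounded). Combining: for $|k|>p$,
\[
  |a_k(\lambda)|\,\tau_\lambda^{|k|} \le B'\eta^{|k|}\tau_\lambda^{|k|}\norm{f_\lambda}_{\cF_\lambda} \le \frac{B'(\eta\tau_\lambda)^{|k|}}{m}\max_{|j|\le p}|a_j(\lambda)|,
\]
and since $\eta\tau_\lambda<1$ after enlarging $p$ if necessary (choose $p$ large enough that $\eta\sup_\lambda\tau_\lambda<1$ — note $\eta$ can be taken as close to $0$ as we like by shrinking $\e$-worth of margin, while $\tau_\lambda$ is uniformly bounded, so this is achievable), the right-hand side is at most $M\max_{|j|\le p}|a_j(\lambda)|$ with $M=B'/m$. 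Undoing the normalization multiplies all radii by $\sqrt{|r_{1,\lambda}r_{2,\lambda}|}$ uniformly, which preserves the inequalities defining the $(p,M)$ Laurent domination property. The disc case is the same computation with only nonnegative $k$ and the single radius, using the maximum principle to pass from $S(1)$ to $D(1)$.

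The main obstacle I anticipate is the uniform control of the conformal modulus of $\cF_\lambda$ over the family — this is exactly the point the paper flags when it says "the annulus case does not seem to follow" from Biernacki's theorem. If the modulus $\log|r_{2,\lambda}/r_{1,\lambda}|$ were allowed to degenerate to $0$ or $\infty$, the rescaled annuli would not sit inside a fixed $S^{\delta^2}(1)$ with uniform $\delta$, and no single application of Lemma~\ref{lem:fourier-bounds} would suffice. The resolution is that in a \emph{definable} family the modulus is a definable function of the parameter, hence (after the routine reduction to a relatively compact parameter space that underlies all the family-uniformity statements in this paper, cf.~the discussion in~\secref{sec:uniform-families}) it takes values in a compact subset of $(0,\infty)$; this is precisely where o-minimality, rather than a one-variable classical result, is needed, and it is the only genuinely new ingredient beyond Lemma~\ref{lem:fourier-bounds}. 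Everything else is bookkeeping with Cauchy estimates and the geometric series.
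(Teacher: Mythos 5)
There is a genuine gap: your whole reduction rests on the claim that in a definable family the conformal modulus $\log|r_{2,\lambda}/r_{1,\lambda}|$ is uniformly bounded away from $0$ and $\infty$, and this is false. Definability of a family does not bound a definable real-valued function of the parameter: the family $\cF_\lambda=A(\lambda,1)$, $\lambda\in(0,1)$, is definable in $\R_\an$ and its modulus tends to $\infty$ as $\lambda\to0$, and no ``shrinking to a relatively compact parameter space'' removes this (the degeneration happens in the interior of the closure of the parameter interval). Worse, annuli of unbounded modulus are exactly the case the corollary must handle: the motivating example of the paper is the hyperbola family with fibers $\simeq A(\e,1)$, and in the proof of the analytic WPT the corollary is applied to the definable family of \emph{all} fibers $\{\vz_{1..\ell}\}\times\tilde\cF$ sitting inside $(\cC\odot\cF)^\hrho$, which contains annuli of arbitrarily large modulus whenever the cell has a thin annulus fiber. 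So your normalization $\cF_\lambda=A(\tau_\lambda^{-1},\tau_\lambda)$ with $\tau_\lambda$ in a compact subinterval of $(1,\infty)$ cannot be arranged, the fixed annulus $S^{\delta^2}(1)$ need not sit inside the rescaled $\cF_\lambda^\e$, and the later steps (``$\eta\tau_\lambda<1$'', absorbing the weights $\tau_\lambda^{j}$ into constants) also silently use the bounded-modulus assumption.

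The point you are missing is that no modulus control is needed, because the two tail estimates in the definition of $(p,M)$ Laurent domination are each anchored at a single boundary circle: the positive-index condition is weighted by $r_2$ only, the negative-index one by $r_1$ only. Rescale so that the relevant boundary circle is $S(1)$, say $\cF_\lambda=A(r_1,1)$ for the positive tail. Then $S^{\delta^2}(1)=A(\e,\e^{-1})\subset\cF_\lambda^\e=A(\e r_1,\e^{-1})$ for $\delta^2=\e$, \emph{whatever} $r_1<1$ is; so the restrictions $f_\lambda|_{S^{\delta^2}(1)}$ form a definable family on a fixed collar and Lemma~\ref{lem:fourier-bounds} applies directly, giving $|a_k(\lambda)|\le B\delta^{k}\norm{f_\lambda}_{S(1)}\le (B/m)\,|a_{j}(\lambda)|$ for some $|j|\le p$ and all $k>p$, which is the outer-radius condition at $r_2=1$. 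The negative tail is handled symmetrically after rescaling the inner radius to $1$. This is where your proposal diverges from a correct argument: the corollary is not a statement about the annulus as a whole (where Biernacki-type results and your rescaling would need bounded modulus), but two one-sided statements at the two boundary circles, each reducible to the fixed circle $S(1)$.
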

\begin{proof}
  We prove the annuli case, the disc case being simpler. Since the
  claim is invariant under rescaling, we may assume without loss of
  generality that $\cF_\lambda$ is of the form $A(r_1,1)$. We need to
  prove
  \begin{equation}
    |a_k| < M \max_{j=-p,\ldots,p} |a_j| , \qquad k=p+1,p+2,\ldots
  \end{equation}
  And indeed by the Cauchy estimates and
  Lemma~\ref{lem:fourier-bounds} with $\delta^2=\e$ we have
  \begin{equation}
    |a_k(\lambda)| \le B \norm{f_\lambda}_s \delta^{k} \le \frac B m |a_j(\lambda)| \qquad \text{for
      some } j\in\{-p,\ldots,p\}.
  \end{equation}
  For $k=-p-1,\ldots$ we proceed similarly, assuming now that
  $\cF_\lambda$ is of the form $A(1,r_2)$.
\end{proof}

We record a simple consequence of the Taylor domination property.

\begin{Prop}\label{prop:residue-domination}
  Let $A:=A(r_1,r_2)$ with $0<r_1<r_2$ and $0<\delta<1$. Let
  $f:A^\delta\to\C$ be a holomorphic function with $(p,M)$ Taylor
  domination. Write
  \begin{equation}
    f(z)=\sum_{j=-p}^p a_j z^j + R_p(z).
  \end{equation}
  Then for $z\in A$ we have
  \begin{equation}\label{eq:residue-ineq}
    |R_p(z)| < \frac{2\delta}{1-\delta} M \max_{j=-p,\ldots,p} |a_j z^j|.
  \end{equation}
\end{Prop}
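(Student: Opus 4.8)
The plan is to estimate the two Laurent tails of $f$ separately, each by a geometric series of ratio $\delta$. Write $f(z)=\sum_{k\in\Z}a_kz^k$ for the Laurent expansion of $f$ on $A^\delta=A(\delta r_1,\delta^{-1}r_2)$; it converges throughout $A^\delta$, hence absolutely on $A$, and $R_p(z)=\sum_{k>p}a_kz^k+\sum_{k<-p}a_kz^k$. The $(p,M)$ domination property (Definition~\ref{def:domination}) controls the coefficients $a_k$ with $|k|>p$ at the two boundary radii $\delta r_1$ and $\delta^{-1}r_2$ of $A^\delta$, and the crux is to propagate this control to an arbitrary intermediate radius $\rho=|z|\in(r_1,r_2)$, picking up a favourable power of $\delta$ because such a $\rho$ sits at multiplicative distance at least $\delta^{-1}$ from each of $\delta r_1$ and $\delta^{-1}r_2$.

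First I would record an elementary monotonicity observation. Suppose $g(z)=\sum a_kz^k$ is holomorphic on an annulus $A(s_1,s_2)$ with the $(p,M)$ domination property there, fix $\rho\in(s_1,s_2)$, and let $j_0\in\{-p,\dots,p\}$ be an index realizing $\max_{|j|\le p}|a_j|s_2^j$. For $k>p$, writing $|a_k|\rho^k=(|a_k|s_2^k)(\rho/s_2)^k$ and using the domination inequality $|a_k|s_2^k<M|a_{j_0}|s_2^{j_0}$,
\[
  |a_k|\rho^k<M|a_{j_0}|\rho^{j_0}(\rho/s_2)^{k-j_0}\le M\Big(\max_{|j|\le p}|a_j|\rho^j\Big)(\rho/s_2)^{k-p},
\]
the last step using $0<\rho/s_2<1$ and $j_0\le p$. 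A symmetric computation at the inner radius gives $|a_k|\rho^k<M\big(\max_{|j|\le p}|a_j|\rho^j\big)(s_1/\rho)^{|k|-p}$ for $k<-p$.

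Then I would apply this with $s_1=\delta r_1$ and $s_2=\delta^{-1}r_2$: for $z\in A$ one has $\rho=|z|\in(r_1,r_2)$, so $\rho/s_2=\delta\rho/r_2<\delta$ and $s_1/\rho=\delta r_1/\rho<\delta$, whence for every $|k|>p$
\[
  |a_kz^k|=|a_k|\rho^k<M\delta^{|k|-p}\max_{|j|\le p}|a_j z^j|,
\]
using $|a_j|\rho^j=|a_jz^j|$. Summing over $k>p$ and over $k<-p$ separately, each tail is bounded by $M\big(\max_{|j|\le p}|a_jz^j|\big)\sum_{m\ge1}\delta^m=\tfrac{\delta}{1-\delta}M\max_{|j|\le p}|a_jz^j|$; adding the two contributions gives $|R_p(z)|<\tfrac{2\delta}{1-\delta}M\max_{|j|\le p}|a_jz^j|$, which is the claim.

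The only step requiring care is the monotonicity bookkeeping — replacing the exponent $k-j_0$ by $k-p$ in the direction that decreases $(\rho/s_2)^{k-j_0}$, which is valid precisely because the maximizing index satisfies $|j_0|\le p$ — together with the observation that every point of $A$ lies multiplicatively $\delta^{-1}$-away from the boundary radii of $A^\delta$. Neither presents a genuine difficulty; the constant $\tfrac{2\delta}{1-\delta}$ is exactly what two geometric series of ratio $\delta$ produce.
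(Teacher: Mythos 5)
Your proof is correct and follows essentially the same route as the paper: use the $(p,M)$ domination at the boundary radii $\delta r_1,\delta^{-1}r_2$ of $A^\delta$, propagate it to an interior radius $|z|\in(r_1,r_2)$ (gaining the factor $\delta^{|k|-p}$, justified by the monotonicity argument with the maximizing index $j_0$, which the paper leaves as "clearly holds"), and sum two geometric series of ratio $\delta$ to get the constant $\tfrac{2\delta}{1-\delta}$. No gaps; your write-up merely makes explicit the monotonicity step that the paper's proof treats as evident.
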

\begin{proof}
  The Taylor domination property in $A^\delta$ gives
  \begin{equation}
    \begin{aligned}
      |a_k| r_2^k &< \delta^{k-p} M \max_{j=-p,\ldots,p} |a_j| r_2^j , \qquad
      k=p+1,p+2,\ldots \\
      |a_k| r_1^k  &< \delta^{-k-p} M \max_{j=-p,\ldots,p} |a_j| r_1^j, \qquad
      k=-p-1,-p-2,\ldots
    \end{aligned}  
  \end{equation}
  and the same clearly holds with $r_1,r_2$ replaced by $z\in A$.
  Summing over $k=p+1,\ldots,\infty$ and $k=-p-1,\ldots,-\infty$ we
  obtain~\eqref{eq:residue-ineq}.
\end{proof}

\subsubsection{Proof of the analytic WPT}

We may assume without loss of generality that $\cC\odot\cF$ admits an
extension slightly wider than $\hrho$. Then $(\cC\odot\cF)^\hrho$ is
subanalytic, and in particular the family of all discs or annuli of
the form $\{\vz_{1..\ell}\}\times\tilde\cF$ with
$\{\vz_{1..\ell}\}\times\tilde\cF^{1/2}\subset(\cC\odot\cF)^\hrho$ is
definable. Applying Lemma~\ref{cor:uniform-indices} we find $p,M$
such that $F$ possesses the $(p,M)$ Laurent domination property on
every fiber $\{\vz_{1..\ell}\}\times\tilde\cF$ as above.

Applying the refinement theorem (Theorem~\ref{thm:cell-refinement}),
we may assume that $\rho$ is already as small as we wish as long as
$1/\rho=\poly_\ell(1/\sigma)$. Since the maps constructed in the
refinement theorem are cellular translate maps, every disc/annulus
$\{\vz_{1..\ell}\}\times\tilde\cF^{1/2}\subset(\cC\odot\cF)^\hrho$ in
a refined cell maps to a disc/annulus satisfying the same requirement
in our original cell $\cC$. Thus we may assume that after refinement
our function $F$ still possesses the $(p,M)$ Laurent domination
property on every fiber $\{\vz_{1..\ell}\}\times\tilde\cF$ as
above. Note that crucially $(p,M)$ does not depend on our choice of
$\hrho$.

Suppose $\cF=A(r_1,r_2)$ (the cases $D(r),D_\circ(r)$ are similar).
If $F$ vanishes identically on $\cC\odot\cF$ then there is nothing to
prove, so suppose otherwise. Rescaling $\cF$ by $r_2$ we may assume
that $r_2=1$ and $|r_1|<1$. We write a Laurent expansion
\begin{equation}\label{eq:F-laurent}
  F(\vz_{1..\ell+1}) = \sum_{k=-\infty}^\infty a_k(\vz_{1..\ell}) \vz_{\ell+1}^k
\end{equation}
where $a_k\in\cO_b(\cC^\hrho)$. We apply the CPT to the collection
$a_{-p},\ldots,a_p$ on $\cC^\hrho$. In the same way as
in~\secref{sec:weierstrass-alg-proof} we may reduce to the case where
every $a_k$ is either identically zero or non-vanishing on
$\cC^\hrho$. Note also that since this step only reparametrizes the
base without affecting the fiber, the $(p,M)$ Laurent domination
property still holds uniformly for every $\vz_{1..\ell}\in\cC^\hrho$.

Let $\Pi\subset\{-p,\ldots,p\}$ denote the indices $k$ such that
$a_k\neq0$. For $(j,k)\in\Pi^2,j\neq k$, we define
\begin{equation}
  r_{jk}(\vz_{1..\ell}) = \sqrt[k-j]{\frac{a_j(\vz_{1..\ell})}{a_k(\vz_{1..\ell})}},
\end{equation}
that is, $S(|r_{jk}|)$ is the circle in $\vz_{\ell+1}$ where the
$j$-th and $k$-th terms of~\eqref{eq:F-laurent} are of the same
modulus. Note that $r_{jk}$ is multivalued. We let $\Sigma$ denote the
set consisting of $1,2$ and the pairs $(j,k)\in\Pi^2$, $j\not=k$, and
set $\mu:=2+(2p+1)p\ge\#\Sigma$.  

We show that the zeros of $F$ can only occur in concentric annuli of
bounded width around the radii $r_{jk}$. It is more convenient to
state this in the logarithmic scale. We set $s_\alpha:=\log|r_\alpha|$
for every $\alpha\in\Sigma$ and note that $s_\alpha$ is single valued
on $\cC_{1..\ell}$.

\begin{Lem}\label{lem:norootsfaraway}  
  There exists a constant $B=B(M,p)>0$ such that for any
  $\vz_{1..\ell+1}\in\cC$ satisfying
  \begin{equation}\label{eq:noroots-condition}
    |\log |\vz_{\ell+1}|-s_\alpha(\vz_{1..\ell})| > B \qquad\text{for }\alpha\in\Sigma.
  \end{equation}
  we have $F(\vz_{1..\ell+1})\neq0$.
\end{Lem}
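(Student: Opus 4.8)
## Proof plan for Lemma~\ref{lem:norootsfaraway}

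\textbf{The approach.} The plan is to exploit the $(p,M)$ Laurent domination property uniformly available on all fibers, together with the definition of the radii $r_{jk}$ as the loci where pairs of the ``principal'' terms $a_j\vz_{\ell+1}^j$ balance. Fix $\vz_{1..\ell}\in\cC_{1..\ell}$; suppress it from the notation, writing $a_k$ for $a_k(\vz_{1..\ell})$ and $s_\alpha$ for $s_\alpha(\vz_{1..\ell})$, and set $w:=\vz_{\ell+1}$, $t:=\log|w|$. Among the indices $k\in\Pi$, let $j_0\in\Pi$ be an index maximizing $|a_k w^k|=|a_k|e^{kt}$ (there is at least one such index since $F\not\equiv0$ means $\Pi\neq\emptyset$). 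The claim is that if $t$ is far from \emph{all} the $s_\alpha$, then the $j_0$-th term strictly dominates the sum of all other terms in the Laurent series, whence $F(w)\neq0$.

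\textbf{Key steps.} First I would split the remainder $F(w)-\sum_{k\in\Pi\cap[-p,p]}a_kw^k$ and control it by Proposition~\ref{prop:residue-domination}: after the rescaling $r_2=1$, $|r_1|<1$, and using that $(p,M)$ Taylor/Laurent domination holds on $\cF^{1/2}$ (so with $\delta=1/2$ there), we get $|R_p(w)|<\tfrac{2\delta}{1-\delta}M\max_{|k|\le p}|a_kw^k|=2M|a_{j_0}w^{j_0}|$ for $w\in\cF$. So the ``tail'' is already controlled by a constant multiple of the dominant principal term. Second, among the \emph{principal} terms $k\in\Pi$, $k\neq j_0$: for each such $k$ the ratio $|a_kw^k|/|a_{j_0}w^{j_0}|=e^{(k-j_0)(t-s_{j_0 k})}$ since $s_{j_0k}=\log|r_{j_0k}|=\tfrac{1}{j_0-k}\log|a_k/a_{j_0}|$ by the definition of $r_{jk}$ (the branch is irrelevant as we only take absolute values). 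Since $(j_0,k)\in\Sigma$, the hypothesis~\eqref{eq:noroots-condition} gives $|t-s_{j_0k}|>B$, and since $k\neq j_0$ we have $|k-j_0|\ge1$, so this ratio is $<e^{-B}$ — \emph{provided} $t-s_{j_0k}$ has the sign making $(k-j_0)(t-s_{j_0k})$ negative. That sign is automatic by the choice of $j_0$ as maximizer: if $|a_kw^k|\le|a_{j_0}w^{j_0}|$ then $(k-j_0)(t-s_{j_0k})\le0$. Summing over the at most $2p$ other principal indices gives $\sum_{k\in\Pi,k\neq j_0}|a_kw^k|<2p\,e^{-B}|a_{j_0}w^{j_0}|$. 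Third, combining: $|F(w)-a_{j_0}w^{j_0}|\le\sum_{k\in\Pi\cap[-p,p],k\neq j_0}|a_kw^k|+|R_p(w)|<(2p\,e^{-B}+2M)|a_{j_0}w^{j_0}|$, which is strictly less than $|a_{j_0}w^{j_0}|$ once we choose $B=B(M,p)$ with $2M<1$...

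\textbf{Wait — fixing the remainder estimate.} The constant $2M$ from Proposition~\ref{prop:residue-domination} with $\delta=1/2$ is not $<1$ in general. The fix is to use a smaller $\delta$: since $\cC\odot\cF$ admits an extension slightly wider than $\hrho$, and after the $(p,M)$-domination is established on all fibers $\{\vz_{1..\ell}\}\times\tilde\cF^{1/2}$, domination on a fiber $\tilde\cF=A(e^{s_\alpha+B},e^{-s_\beta-B})$... more simply: apply domination not on $\cF$ but observe that for the specific point $w$ with $\mathrm{dist}(t,\{s_\alpha\})>B$, one may regard $w$ as lying in a subannulus $A(|w|e^{-B/2},|w|e^{B/2})\subset\cF$ on which $F$ still has the $(p,M)$-domination (after rescaling); Proposition~\ref{prop:residue-domination} applied to this subannulus with $\delta=e^{-B/2}$ gives $|R_p(w)|<\tfrac{2e^{-B/2}}{1-e^{-B/2}}M|a_{j_0}w^{j_0}|$, which tends to $0$ as $B\to\infty$. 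Then choosing $B=B(M,p)$ large enough that $2p\,e^{-B}+\tfrac{2e^{-B/2}}{1-e^{-B/2}}M<1$ completes the proof.

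\textbf{Main obstacle.} The only real subtlety is the bookkeeping around the remainder term: one must be careful that the domination property is invoked on an annulus that actually \emph{contains} the point $w$ in a collar where domination holds, and that the rescaling to normalize $r_2=1$ is compatible with this. I expect this to be routine but requires writing down the subannulus argument precisely rather than naively applying Proposition~\ref{prop:residue-domination} on all of $\cF$. Everything else is a clean comparison of exponentials driven by the maximizing index $j_0$ and the defining property of the $r_{jk}$.
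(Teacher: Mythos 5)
Your argument is correct and essentially the same as the paper's: you isolate the dominant principal term $a_{j_0}\vz_{\ell+1}^{j_0}$, use the hypothesis (with the pairs $(j_0,k)\in\Sigma$) to make every other principal term smaller by a factor $e^{-B}$, and control the tail via Proposition~\ref{prop:residue-domination} with an exponentially small $\delta$, using $1,2\in\Sigma$ to keep the relevant extended annulus inside $\cF$ where the $(p,M)$-domination holds. The only difference is cosmetic: the paper applies the proposition to $A(e^Br_1,e^{-B}r_2)$ with $\delta=e^{-B}$ (so that $A^\delta=\cF$ exactly), whereas you use the subannulus $A(|\vz_{\ell+1}|e^{-B/2},|\vz_{\ell+1}|e^{B/2})$ with $\delta=e^{-B/2}$; both yield a remainder bound tending to $0$ as $B\to\infty$ and the same choice of $B=B(M,p)$.
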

\begin{proof}
  Assume that~\eqref{eq:noroots-condition} holds for some unspecified
  constant $B$ which will be chosen later. Set
  $r:=|\vz_{\ell+1}|$. For $(j,k)\in\Pi^2,j\neq k$, we have
  \begin{equation}
    \left\vert\log \frac{|a_j \vz_{\ell+1}^j|}{|a_k 
        \vz_{\ell+1}^k|}\right\vert=
    \left|(j-k)[-\log(|r_{jk}|+\log r]\right| > B.
  \end{equation}
  In particular there exists one term $j_0\in\Pi$ such that
  $|a_{j_0} \vz_{\ell+1}^{j_0}|$ is maximal and we have
  \begin{equation}
    |a_{k} \vz_{\ell+1}^{k}|< e^{-B}|a_{j_0} \vz_{\ell+1}^{j_0}|,
    \qquad k\in\{-p,\ldots,p\}\setminus\{j_0\}.
  \end{equation}
  Set
  \begin{equation}
    R_p(\vz_{1..\ell+1}) := F(\vz_{1..\ell+1})-\sum_{k=-p}^p a_j(\vz_{1..\ell})\vz_{\ell+1}^k.
  \end{equation}
  Since $1,2\in\Sigma$ we have
  $e^B r_1(\vz_{1..\ell})<r<e^{-B}r_2(\vz_{1..\ell})$. Then
  Proposition~\ref{prop:residue-domination} implies (with
  $\delta=e^{-B}$) that
  \begin{equation}
    |R_p(\vz_{1..\ell})| < \frac{2M}{e^B-1}|a_{j_0} \vz_{\ell+1}^{j_0}|.
  \end{equation}
  In particular choosing $B$ such that
  $2pe^{-B}+\frac{2M}{e^B-1}<1$ we see that
  \begin{equation}
    |R_p(\vz_{1..\ell})|+\sum_{k\in\Pi\setminus\{j_0\}} |a_{k}
    \vz_{\ell+1}^{k}| < |a_{j_0} \vz_{\ell+1}^{j_0}| 
  \end{equation}
  so $F(\vz_{1..\ell+1})\neq0$ as claimed.
\end{proof}

Our goal will be to construct an annulus $\cA\subset\cF^\hrho$
concentric with $\cF$ which contains $\cF$ and such that
$\cA^{1/2}\setminus\cA$ remains at logarithmic distance at least $B$
from each of the $|r_\alpha|$, uniformly over $\cC^\hsigma$. In light
of Lemma~\ref{lem:norootsfaraway} this will be a Weierstrass cell with
gap $\gamma=1/2$ for $F$.

Apply the CPT to the cell $\cC^\hrho$ and the hypersurfaces
$\{r_\alpha=r_\beta\}$ for $(\alpha,\beta)\in\Sigma^2,\alpha\neq\beta$
(recall that $r_\alpha$ are multivalued so formally we raise both
sides to a power and clear denominators to obtain a bounded analytic
equation). Once again we may reduce to the case that each of these
equations is satisfied either identically or nowhere in $\cC^\hrho$.

We let $Z$ denote the union of the graphs over $\cC^\hrho$ of the zero
function, which we denote by $y_0$, and of all $r_\alpha$ for
$\alpha\in\Sigma$. By the condition above $\pi:Z\to\cC^\hrho$ is a
proper covering map. We are now in a position to use the constructions
of~\secref{sec:cover-clustering} with the value of $\gamma$ given by
$\tilde\gamma=\delta^2/2$ where $\delta:=e^{-B}$.

Recall that we may choose $\rho$ small as long as
$1/\rho=\poly_\ell(1/\sigma)$. We take
$\hrho<\hsigma\cdot\he{\hat\rho}$ where $\hat\rho$ is chosen in such a
way that Proposition~\ref{prop:clusters} holds over
$\cC^\hsigma$. Since $r_1,r_2$ are sections of $\pi$ they belong to
certain clusters around $y_0=0$, say with indices $q_1\le q_2$. We may
also assume that $r_1=\hat y_{0,q_1}$ and if $q_1\neq q_2$ then
$r_2=\hat y_{0,q_2}$.

If $q_1=q_2$ then we define $\tilde\cA=\cF_{0,q}^\delta$. If $q_1<q_2$ then we define
\begin{equation}\label{eq:wpt-final-fiber-analytic}
  \tilde\cA:= A^\delta(l_{0,q_1}r_1,r_{0,q_2}r_2),
\end{equation}
i.e. we take the left endpoint of the $\cF_{0,q_1}$ cluster and the
right endpoint of the $\cF_{0,q_2}$ cluster and apply
$\delta$-extension (recall $\delta=e^{-B}$). Since $r_1,r_2$ are
univalued over $\cC$ this is actually a fiber over
$\cC$. Proposition~\ref{prop:clusters} and our choice
$\tilde\gamma=\delta^2/2$ ensures that $\cA^{1/2}\setminus\cA$ remains
at logarithmic distance at least $B$ from each of the $|r_\alpha|$,
uniformly over $\cC^\hsigma$ as was our goal. The inclusion
$\tilde\cA^{1/2}\subset\cF^\hrho$ follows
from~\eqref{eq:cluster-edge-bounds} provided that we choose $\hrho$
small enough (note that $B$ and hence $\delta,\tilde\gamma$ did not
depend on our choice of $\hrho$).

\subsubsection{The real setting}

If $\cC$ and $F$ are real then the Laurent coefficients $a_j$ are also
real, and consequently the coverings constructed by inductive
applications of the CPT can be taken to be real. After these
reductions, the fiber constructed in~\eqref{eq:wpt-final-fiber-analytic}
is clearly real as well.

\subsubsection{Uniformity over families}

Uniformity of the number of cells over definable families follows
readily from the proof. Indeed the indices $p,M$ have already been
shown to be uniformly bounded over families. The fact that all
Weierstrass cells can be chosen from a single definable family follows
exactly as in the case of the refinement theorem
(Theorem~\ref{thm:cell-refinement}).

\subsubsection{Analytic discriminants}

Later we will also need the following lemma on analytic discriminants.

\begin{Lem}\label{lem:discriminant-analytic}
  Let $\cC^\hrho$ be a cell and let $Z\subset\cC^\hrho\times D(r)$ be
  an analytic subset, for some $r>0$, such that the projection
  $\pi:Z\to\cC^\hrho$ is proper. Then there exists
  $0\neq\cD\in\cO_b(\cC)$ such that $\pi$ is a covering map outside
  $\{\cD=0\}$. If $\cC,Z$ are real then $\cD$ can be chosen real as
  well. If $\cC^\hrho,Z$ vary in a definable family then $\cD$ can also be
  chosen to vary in a definable family.
\end{Lem}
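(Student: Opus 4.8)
The plan is to transplant the algebraic argument of Lemma~\ref{lem:discriminant-alg}, with the ambient polynomial and its classical discriminant replaced by the Weierstrass polynomial attached directly to $Z$ and by its $w$-discriminant. First I would normalize: dropping the $*$-coordinates (irrelevant to the statement), $\cC^\hrho$ becomes a connected domain in some $\C^n$, and indeed a pseudoconvex --- hence Stein --- one, since each of the operations $\odot D(r),\odot D_\circ(r),\odot A(r_1,r_2)$ adjoins a defining inequality of the shape $\{\log|h|<0\}$ with $h$ holomorphic and nonvanishing. Since $\pi$ is proper and the fibers lie in the bounded set $D(r)$, each fiber is a compact --- hence finite --- analytic subset of a disc, so $\pi$ is finite and $\pi(Z)$ is analytic (Remmert). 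In the intended applications $Z$ is a hypersurface (the zero set of a non-identically-vanishing function on a cell $\cC\odot\cF$), hence pure of dimension $n$ and dominating $\cC^\hrho$; in general one splits $Z=Z'\cup Z''$ into its $n$-dimensional components $Z'$ (each dominating $\cC^\hrho$, since $\pi$ is finite) and the remainder $Z''$, and absorbs the proper analytic subset $\pi(Z'')$ into $\{\cD=0\}$ using a global defining function on the Stein manifold $\cC^\hrho$ (Cartan). I will describe the construction for $Z=Z'$.

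Let $B\subset\cC^\hrho$ be the union of the branch locus of $\pi$ and $\pi(\Sing Z')$; it is a proper analytic subset. Over $\cC^\hrho\setminus B$ the map $\pi$ is an unramified $\nu$-sheeted covering, so the fiber locally furnishes holomorphic branches $\eta_1,\dots,\eta_\nu$ whose elementary symmetric functions $c_j$ are single-valued and holomorphic on $\cC^\hrho\setminus B$, and bounded by constants depending only on $\nu,r$ because every $\eta_i\in D(r)$. By the Riemann extension theorem they extend to $c_j\in\cO_b(\cC^\hrho)\subset\cO_b(\cC)$, and $P(\vz,w):=w^\nu+\sum_j c_j(\vz)w^j$ has, over each $\vz\notin B$, exactly the points of $\pi^{-1}(\vz)$ as its roots. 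I take $\cD':=\operatorname{disc}_w P\in\cO_b(\cC^\hrho)$: it is not identically zero, because a generic fiber consists of $\nu$ distinct points, and over $\cC\setminus\{\cD'=0\}$ (which lies inside $\cC\setminus B$) the polynomial $P(\vz,\cdot)$ has $\nu$ simple roots, all in $D(r)$, varying holomorphically by the implicit function theorem and cutting out $Z$ as a set; hence $\pi$ restricted over $\cC\setminus\{\cD'=0\}$ is a covering map. Setting $\cD:=\cD'$ (times the defining function of $\pi(Z'')$ in the non-pure case) completes the construction.

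For the real case, if $Z$ is invariant under $(\vz,w)\mapsto(\bar\vz,\bar w)$ then over each point of the real part the fiber is conjugation-symmetric, so $P$ has real coefficients there; since the $c_j$ are holomorphic this forces $c_j(\vz)=\overline{c_j(\bar\vz)}$ throughout (two holomorphic functions agreeing on the real part of a real cell coincide), so $\cD'$ is real, and $\pi(Z'')$ being conjugation-invariant its defining function may be made real by replacing $g(\vz)$ with $g(\vz)\overline{g(\bar\vz)}$. The step that genuinely demands care is uniformity over a definable family: there $\nu$ is a definable parameter taking finitely many values (o-minimality bounds the cardinalities of a definable family of finite sets), so one stratifies the parameter space and on each stratum the $c_j$ are definable --- as symmetric functions of the definable family $Z$ --- and holomorphic, whence $\cD'$, a fixed universal polynomial in them, varies in a single definable family; the remaining nuisance is producing the defining functions of $\pi(Z'')$ uniformly and, in the presence of $D_\circ$-fibers, checking that one may take them bounded on $\cC$ (using, as elsewhere, that $\bar\cC\subset\cC^\hrho$ once $D_\circ$-fibers are replaced by thin annuli and the resulting cells exhausted).
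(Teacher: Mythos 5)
Your construction is correct and is essentially the paper's own proof: the paper likewise forms the monic polynomial $P_Z(\vz,w)=\prod_{\eta\in\pi^{-1}(\vz)}(w-\eta)$, obtains bounded holomorphic coefficients on the complement of the bad locus and extends them across it (by citing the Gunning--Rossi analytic-cover theorem and its notion of negligible sets, where you instead argue by hand via finiteness of the fibers, Remmert, and the Riemann extension theorem), and takes $\cD$ to be the classical $w$-discriminant of $P_Z$, with the real and definable-family claims disposed of just as you do. Your additional splitting $Z=Z'\cup Z''$ and absorption of $\pi(Z'')$ via a defining function on the Stein domain $\cC^\hrho$ addresses the non-pure-dimensional case, which the paper's one-line proof does not discuss (its applications only involve pure-dimensional $Z$), so this is extra care rather than a divergence in method.
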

\begin{proof}
  According to \cite[Theorem~III.B.21]{gr:analytic} the map $\pi$ is
  an \emph{analytic cover} and in particular it is a $\lambda$-sheeted
  covering map, for some $\lambda\in\N$, outside a set
  $A\subset\cC^\hrho$ which is \emph{negligible} in the sense of
  \cite[Definition~III.B.2]{gr:analytic}. Then letting
  \begin{equation}
    P_Z(\vz,w) := \prod_{\eta\in\pi^{-1}(\vz)} (w-\eta)
  \end{equation}
  we obtain a monic polynomial of degree $\lambda$ with holomorphic
  bounded coefficients in $\cC^\hrho\setminus A$, which by
  \cite[Definition~III.B.2]{gr:analytic} extend to holomorphic bounded coefficients
  in $\cC^\hrho$. Then $\cD$ can be taken to be the classical
  discriminant of $P_Z$ with respect to $w$. If $\cC,Z$ are real then
  $P_Z$ and hence $\cD$ are also real. It is clear that this
  construction can be made uniform over definable families.
\end{proof}

\section{Proofs of the CPT and CPrT}

In this section we finish the proof of the CPT and CPrT for a cell
$\cC\odot\cF$. We proceed by induction on the length and dimension. By
the note following Theorem~\ref{thm:wpt}, we may already assume that
the WPT holds for $\cC\odot\cF$.

\subsection{Proof of the CPT}
\label{sec:proof-cpt}

We will prove the CPT in a slightly weakened form, replacing the
prepared maps $f_j$ by arbitrary cellular maps. We later prove that
this weaker form implies the CPrT, which in turn directly implies the
stronger form of the CPT. We describe the proof for the complex
version of the CPT, and at the end indicate the changes required for
the real version. To avoid cluttering the text, we state our proof for
the algebraic version of the CPT with polynomial estimates in the
complexity $\beta$. The subanalytic version where these polynomial
estimates are replaced by uniformity over definable families is
obtained in a completely analogous manner.

\subsubsection{Reduction to the case of one function ($M=1$)}
\label{sec:cpt-single-F}

We claim that it is enough to prove the CPT for a single function
$F$. We suppose that this is already proved and prove the result for
an arbitrary collection $F_1,\ldots,F_M$. We may suppose that none of
the $F_j$ vanish identically on $\cC$. Let $F:=F_1\cdots F_M$ and
apply the CPT to $\cC,F$ to obtain a cellular cover
$f_j:\cC_j^\hsigma\to(\cC\odot\cF)^\hrho$ compatible with $F$. Fix
some $f_j$. If $f_j(\cC_j^\hsigma)$ lies outside the zeros of $F$ then
it is already compatible with $F_1,\ldots,F_M$.  Otherwise it lies in
the zeros of $F$, and since cellular maps preserve dimension we have
$\dim\cC_j<\dim\cC\odot\cF$. By induction we obtain cellular maps
$f_{jk}:\cC_{jk}^\hsigma\to\cC_j^\hsigma$ which are compatible with
$f_j^*F_1,\ldots,f_j^*F_M$. Then the compositions $f_j\circ f_{jk}$
are compatible with $F_1,\ldots,F_M$ and cover $f_j(\cC_j)$, and taken
together this gives a cellular cover of $\cC\odot\cF$ with
$\poly_\ell(\beta,1/\sigma,\rho)$ maps as required.

\subsubsection{Reduction to large $\sigma$ and small $\rho$}
\label{sec:cpt-refine-sigma-rho}

Applying the refinement theorem (Theorem~\ref{thm:cell-refinement}) to
$\cC$ we may suppose that it already admits a
$\he{\hat\rho}$-extension for
$\hat\rho^{-1}=\poly_\ell(\rho,\beta)$. Below we will assume that
$\rho$ is already as small as we wish subject to this
asymptotic. Similarly, it is enough to prove the CPT with any
$\hat\sigma=\poly_\ell(\beta)$ since we may afterwards apply the
refinement theorem to the resulting cells to obtain cells with
$\sigma$-extensions. Below we will assume that $\sigma$ is already as
large as we wish subject to this asymptotic.

\subsubsection{Reduction to a Weierstrass cell for $F$}

We apply the WPT to cover $\cC\odot\cF$ by cells of the form
$f_j:\cC_j^\hrho\odot\cF_j^\gamma\to(\cC\odot\cF)^\hrho$ which are Weierstrass
for $F$ with gap $\gamma=1-1/\poly_\ell(\beta)$. It is enough to prove
the CPT for each of these cells separately, i.e. we may assume that
$F\in\cO_b(\cC^\hrho\odot\cF^\gamma)$ does not vanish in
$\cC^\hrho\odot(\cF^\gamma\setminus\cF)$. If $\cF$ is of type $*$ the
CPT reduces to the CPT for $\cC_{1..\ell-1}$. Suppose $\cF=A(r_1,r_2)$
(the cases $D(r),D_\circ(r)$ are similar).

\subsubsection{Reduction to a proper covering map}

Since $\cC^\hrho\odot\cF$ is a Weierstrass cell for $F$, the zero
locus of $F$ in this cell is proper ramified cover of $\cC$ under the
projection $\pi_{1..\ell}:\cC^\hrho\times\cF^\gamma\to\cC^\hrho$ (see
Remark~\ref{rem:wpt-cover}). In the analytic case, by
Lemma~\ref{lem:discriminant-analytic} there exists
$\cD\in\cO_b(\cC^\hrho)$ not identically vanishing on $\cC$ such that
the restriction of $\pi_{1..\ell}$ to the zero locus of $F$ is a
proper covering map outside $\{\cD=0\}$. In the algebraic case, by
Lemma~\ref{lem:discriminant-alg} we see that such $\cD$ can be taken
to be a polynomial of complexity $\poly_\ell(\beta)$.

We apply the CPT to $\cC$ and the function $\cD$ and let
$f_j:\cC^\hrho_j\to\cC^\hrho$ denote the resulting cellular cover. If
$f_j(\cC_j^\hrho)$ is contained in $\{\cD=0\}$ then since cellular
maps preserve dimension $\dim\cC_j\le\dim\{\cD=0\}\le\dim\cC-2$. In
this case we set
\begin{align}
  \hat\cC_j&:=\cC_j\odot f_j^*\cF & \hat f_j&:=(f_j,\id):\hat\cC_j^\hrho\to\cC^\hrho
\end{align}
and note that $f_j^*F\in\cO_b(\hat\cC_j^{\he{\hat\rho}})$ for
$\hat\rho=\poly_\ell(\rho,\beta)$. We inductively apply the CPT to
$\hat\cC_j$ with its $\hat\rho$-extension and $\hat f_j^*F$. We obtain
a cellular cover
$f_{j,k}:\cC^\hsigma_{j,k}\to\hat\cC_j^{\he{\hat\rho}}$ compatible
with $F$, and the compositions
$f_{j,k}\circ\hat f_j:\cC^\hsigma_{j,k}\to\cC^\hrho$ are compatible
with $F$ and cover $f(\cC_j)\odot\cF$.

It remains to consider the case that $f_j(\cC_j^\hrho)$ is disjoint
from $\{\cD=0\}$. In the same way as before, it will suffice to prove
the CPT for $\hat\cC_j$ and $\hat f_j^*F$. We return now to the
original notation replacing this pair by $\cC,F$ where we may now
assume that the discriminant locus is empty. We denote by $Z$ the
union of $(\cC^\hrho\odot\cF^\gamma)\cap\{F=0\}$ and the graphs of
$r_1,r_2$ and the zero function over $\cC^\hrho$. Then $\pi:=(\pi_{1..\ell})\rest Z$ is
a proper unramified covering map: for the zero locus of $F$ this is
true by non-vanishing of the discriminant; for the remaining three
graphs we only need to verify that
$0,r_1(\vz_{1..\ell}),r_2(\vz_{1..\ell})$ are pairwise disjoint and
mutually disjoint from the zeros of $F$ for every
$\vz_{1..\ell}\in\cC^\hrho$. This follows from the Weierstrass
condition since $r_1,r_2$ lie in $\cF^\gamma\setminus\cF$ (and $0$ is
never in $\cF$).

We are now in a position to use the constructions
of~\secref{sec:cover-clustering}. We denote the degree of $\pi$ by
$\nu$, and note that $\nu=\poly_\ell(\beta)$.

\subsubsection{Covering the zeros}
\label{sec:cpt-zero-cover}

By Lemma~\ref{lem:y_j-monodromy}, each section $y_j$ of $\pi$ lifts to
a univalued map $y_j:\hat\cC_j^{\nu_j\cdot\rho}\to\C$. For every
$y_j\neq r_1,r_2,0$ the image of this map lies in
$(\cC^\hrho\odot\cF^\gamma)\cap\{F=0\}$, i.e. it gives a cell
compatible with $F$. The collections of all of these cells cover the
zeros of $F$ in $\cC\odot\cF$. Note that we may also include the
analogous maps for $r_1,r_2$ as they never meet the zeros of $F$, but
this is not essential as they do not belong to $\cF$. On the other
hand we may not include the $y_0=0$ map as its image lies outside
$\cF^\gamma$.

The remaining (and far more non-trivial) task is to cover the
complement of this set of zeros.

\subsubsection{The Voronoi cells associated with $y_0=0$}
\label{sec:voronoi-cells-y0}

For the remainder of the proof we fix a point $p\in\hat\cC$.  We may
assume that $\rho$ is small enough that
Proposition~\ref{prop:clusters} holds, not only over $\cC$ but over
$\cC^\hsigma$. The zero map is a section of $\pi$ which we denote by
$y_0$. We will construct a collection of cells which we call the
\emph{Voronoi cells} of $y_0$.

Since $r_1,r_2$ are sections of $\pi$ they belong to certain clusters
around $y_0=0$, say with indices $q_1\le q_2$. For
$q=q_1,\ldots,q_2-1$ the fibers $\cF^\gamma_{0,q+}$ over
$\hat\cC_{0,q}$ contain none of the zeros $y_j$ and are contained in
$\cF^\gamma$. Therefore the maps
$\hat\cC_{0,q+}\odot\cF_{0,q+}\to\cC\odot\cF$ admit $\gamma$-extensions
compatible with $F$. These give our first Voronoi cells.

Now fix $q\in\{q_1,q_1+1,\ldots,q_2\}$ and consider the cluster
$\cF_{0,q}$. Up to an affine transformation as in
Remark~\ref{rem:single-cluster} we may assume that $\hat y_{0,q}=1$
and $\cF_{0,q}=A^\delta(l_{0,q},r_{0,q})$. Recall that
$\gamma=1-1/\poly_\ell(\beta)$, and therefore we have
$\cF_{0,q}\subset A(1/2,2)$. Recall also that
\begin{equation}\label{eq:0-cluster-y_j}
  \diam(y_j(\hat\cC),\C\setminus\{0,1\})=O_\ell(\nu^3\rho)
\end{equation}
for any of the $y_j$ belonging to $\cF_{0,q}$.

Let $\alpha>0$ be such that the discs of radius $\alpha$ centered at
$y_j(p)$ belongs to $\cF_{0,q}^\gamma(p)$. Clearly we can choose
$\alpha^{-1}=\poly_\ell(\beta)$. Let $U_p(\alpha)$ be the set obtained
from $\cF_{0,q}(p)$ by removing all of these discs. We can cover
$U_p(\alpha)$ be $\poly_\ell(\beta)$ discs $D_k$ centered at
$U_p(\alpha)$ such that $D_k^{1/2}$ does not meet the discs of
radius $\alpha/2$ centered at $y_j(p)$. These give our remaining Voronoi
cells. We claim that $\hat\cC_{0,q}\odot D^{1/2}_k$ is compatible
with $F$. Indeed, over $p$ the discs $D_k$ remain at distance
$\alpha/2$ from the zeros, and by~\eqref{eq:0-cluster-y_j} the points
$y_j$ do not move enough to meet $D_i$ for $\rho$ sufficiently
small. By the same reasoning we obtain the following fundamental
property of the Voronoi cells.

\begin{Lem}\label{lem:voronoi-y0}
  The Voronoi cells associated with $y_0$ cover every point
  $\vz_{1..\ell+1}\in\cC\odot\cF$ such that
  \begin{equation}
    \dist(\vz_{\ell+1},y_0) < (2\alpha)^{-1}\dist(\vz_{\ell+1},y_j)
  \end{equation}
  for every $y_j$.
\end{Lem}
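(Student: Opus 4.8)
The plan is to prove Lemma~\ref{lem:voronoi-y0} by unwinding the construction of the Voronoi cells in~\secref{sec:voronoi-cells-y0} and checking the covering property point by point. Suppose $\vz_{1..\ell+1}\in\cC\odot\cF$ satisfies $\dist(\vz_{\ell+1},y_0)<(2\alpha)^{-1}\dist(\vz_{\ell+1},y_j)$ for every section $y_j\neq y_0$; write $p':=\vz_{1..\ell}\in\cC$ and $w:=\vz_{\ell+1}\in\cF_{p'}$. The strategy is to locate $w$ in the cluster decomposition of the fiber over $p'$ provided by Proposition~\ref{prop:clusters}: since the fibers $\cF_{0,q}$ and $\cF_{0,q+}$ (together with the innermost punctured disc and outermost annulus) cover $\C\setminus\{0\}$ uniformly over $\hat\cC$, the point $w$ lies in exactly one of them.

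First I would dispose of the "gap" fibers $\cF_{0,q+}$ and the extremal fibers $\cF_{0,0+},\cF_{0,m+}$. If $w$ lies in one of these — which over $p'$ are exactly the regions between consecutive clusters, before the first cluster, or after the last — then by Proposition~\ref{prop:clusters}(2)--(3) the corresponding cell $\hat\cC_{0,q+}\odot\cF_{0,q+}$ (restricted to the fiber over $p'$) already contains $w$ and is one of our Voronoi cells, so we are done. The only subtlety is the innermost punctured disc $\cF_{0,0+}=D_\circ^\delta(l_{0,1}\hat y_{0,1})$: here I must use the hypothesis. Since $y_0=0$ and $w$ lies very close to $0$ relative to all other $y_j$, the contracting factor $(2\alpha)^{-1}$ forces $|w|\ll|y_j(p')|$ for every $j$; combined with the bound $\gamma^{5\nu}<l_{0,q}<\gamma^2$ from~\eqref{eq:cluster-edge-bounds}, this places $w$ inside $\cF_{0,0+}$ (or forces $w$ to lie in the first cluster, handled below). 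This is where the factor $(2\alpha)^{-1}$, chosen with $\alpha^{-1}=\poly_\ell(\beta)$, does its work.

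The remaining case is $w\in\cF_{0,q}(p')$ for some cluster index $q$. Normalizing as in Remark~\ref{rem:single-cluster} so that $\hat y_{0,q}=1$ and $\cF_{0,q}=A^\delta(l_{0,q},r_{0,q})\subset A(1/2,2)$, recall that the Voronoi cells in this cluster are $\hat\cC_{0,q}\odot D_k^{1/2}$ where the discs $D_k$ cover $U_p(\alpha)$, the set obtained from $\cF_{0,q}(p)$ by removing the radius-$\alpha$ discs about the points $y_j(p)$ belonging to this cluster. I must show $w$ is covered by some $\cC\odot D_k$. If $w$ lies within distance $\alpha$ of some $y_j(p')$ with $y_j\in\cF_{0,q}$, then by~\eqref{eq:0-cluster-y_j} the point $y_j$ varies by $O_\ell(\nu^3\rho)\ll\alpha$ over $\hat\cC$, so $w$ is within distance, say, $2\alpha$ of $y_j(p')$; but then $\dist(w,y_0)\ge\dist(w,y_j)-2\alpha\cdot(\text{small})$ is comparable to $\dist(w,y_j)$, contradicting $\dist(w,y_0)<(2\alpha)^{-1}\dist(w,y_j)$ once $\rho$ is small (here I use $|w|\asymp1$ in this normalization, so $\dist(w,y_0)=\Omega(1)$, while $\dist(w,y_j)<2\alpha\ll1$). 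Hence $w$ stays outside all these removed discs, i.e. $w\in U_{p'}(\alpha)$ after transporting the construction from $p$ to $p'$ via~\eqref{eq:0-cluster-y_j}; therefore $w$ lies in some $D_k$ (viewed over $p'$), and $\hat\cC_{0,q}\odot D_k^{1/2}$ is the desired Voronoi cell containing $\vz_{1..\ell+1}$.

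The main obstacle I anticipate is bookkeeping the normalizations: the Voronoi cells are defined with reference to the fixed base point $p$, whereas the claim concerns an arbitrary point $\vz_{1..\ell+1}$ over an arbitrary base point $p'\in\cC$, and the affine transformations used in Remark~\ref{rem:single-cluster} depend on the sections $y_i,\hat y_{0,q}$ which themselves move with the base. The resolution is exactly~\eqref{eq:0-cluster-y_j} (equivalently the second option of~\eqref{eq:sijk-fund}): the relative configuration of the $y_j$ changes by only $O_\ell(\nu^3\rho)$ over all of $\hat\cC$, so every inequality established at $p$ persists uniformly over $\cC$ with room to spare, provided $\rho$ is chosen small enough that $\nu^3\rho\ll\alpha$ — which is permissible since $\nu,\alpha^{-1}=\poly_\ell(\beta)$ and we have arranged $1/\rho=\poly_\ell(\beta,1/\sigma)$ in~\secref{sec:cpt-refine-sigma-rho}. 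Once this uniformity is in hand, the case analysis above is routine and I would not belabor the constants.
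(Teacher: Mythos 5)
Your route is the one the paper itself intends (the paper dispatches this lemma with ``by the same reasoning'': uncovered points of a cluster fiber lie in one of the removed $\alpha$-discs about the $y_j(p)$, and by~\eqref{eq:0-cluster-y_j} the sections move by $O_\ell(\nu^3\rho)\ll\alpha$, so everything persists over $\hat\cC$), but the quantitative step at the heart of your cluster case does not close as written. From ``$w$ is within $2\alpha$ of $y_j$'' and ``$\dist(w,y_0)=\Omega(1)$'' you cannot contradict the hypothesis $\dist(w,y_0)<(2\alpha)^{-1}\dist(w,y_j)$: indeed $(2\alpha)^{-1}\cdot 2\alpha=1$, while the only lower bound available for $\dist(w,y_0)=|w|$ on the cluster fiber is $|w|>\delta\, l_{0,q}$, and the paper only records $\cF_{0,q}\subset A(1/2,2)$, so ``$\Omega(1)$'' may be barely above $1/2<1$ --- no contradiction. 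The correct accounting must keep the slack at $\alpha+O_\ell(\nu^3\rho)$ rather than $2\alpha$, so that the hypothesis would force $|w|<\tfrac12+O_\ell(\nu^3\rho/\alpha)$, and must then note that $l_{0,q}>\gamma^{5\nu}$ with $\gamma=1-1/\poly_\ell(\beta)$ chosen so that $\gamma^{5\nu}$ is bounded away from $1/2$, whence $|w|\ge\delta\, l_{0,q}$ exceeds that threshold once $\rho$ is small. This is a short repair, and it is exactly the headroom the factor $2$ in $(2\alpha)^{-1}$ is there to provide, but as stated the contradiction does not follow from your bounds.

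A smaller inaccuracy: $\cF_{0,0+}$, $\cF_{0,m+}$ and the gap fibers outside $\{q_1,\dots,q_2-1\}$ are \emph{not} among the Voronoi cells of $y_0$ constructed in~\secref{sec:voronoi-cells-y0}; only the gap fibers $\cF_{0,q+}$ with $q_1\le q\le q_2-1$ and the cluster cells $\hat\cC_{0,q}\odot D_k^{1/2}$ are, since only those map into $(\cC\odot\cF)^\gamma$. So your treatment of the ``innermost punctured disc'' appeals to cells that do not exist in this construction. The right observation is that this case is vacuous: $r_1=\hat y_{0,q_1}$ and $r_2=\hat y_{0,q_2}$ are themselves sections lying in the clusters with indices $q_1,q_2$, so (uniformly over $\hat\cC$, by Proposition~\ref{prop:clusters}) every point of the fiber $A(r_1,r_2)$ already lies in one of $\cF_{0,q_1},\dots,\cF_{0,q_2}$ or $\cF_{0,q_1+},\dots,\cF_{0,(q_2-1)+}$; no use of the distance hypothesis is needed outside the clusters, and your handling of the genuine gap fibers (no zeros there, covered unconditionally) is correct.
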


\subsubsection{The Voronoi cells associated with $y_i$}

Let $y_i$ be one of the non-zero sections of $\pi$. We will construct
a collection of \emph{Voronoi cells} for $y_i$ by analogy with $y_0$:
the only additional difficulty is making sure that all cells remain in
$\cF^\gamma$. Our goal will be to construct cells with the following
property.

\begin{Lem}\label{lem:voronoi-yi}
  The Voronoi cells associated with $y_i$ cover every point
  $\vz_{1..\ell+1}\in\cC\odot\cF$ such that
  \begin{equation}\label{eq:voronoi-yi-1}
    \dist(\vz_{\ell+1},y_i) \le 2\alpha\dist(\vz_{\ell+1},y_0)
  \end{equation}
  and
  \begin{equation}\label{eq:voronoi-yi-2}
    \dist(\vz_{\ell+1},y_i) \le (2\alpha)^{-1}\dist(\vz_{\ell+1},y_j)
  \end{equation}
  for every $y_j\neq y_0,y_i$.
\end{Lem}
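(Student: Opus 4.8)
The plan is to construct the Voronoi cells of $y_i$ in close analogy with those of $y_0$ in~\secref{sec:voronoi-cells-y0}, using the clustering construction of~\secref{sec:cover-clustering} now with $y_i$ as the center. Since the whole construction — the invariants $s_{\cdot,\cdot,\cdot}$, the clusters, Proposition~\ref{prop:clusters} — is invariant under affine transformations of $\C$, I would first translate so that $y_i=0$. Running~\secref{sec:cover-clustering} with center $y_i$ then produces, over covers of complexity $\poly_\ell(\beta)$ obtained by passing to the appropriate $\nu$-covers, the between-cluster fibers $\cF_{i,q+}$ together with, inside each cluster $\cF_{i,q}$ (normalized so that its center is $1$), a family of $\poly_\ell(\beta)$ discs $D_k$ obtained exactly as for $y_0$ by deleting the $\alpha$-discs around the cluster points and covering the remainder by discs whose $\tfrac12$-extensions avoid the $\alpha/2$-discs. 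By Proposition~\ref{prop:clusters}, taken together these cells cover $\C\setminus\{y_1,\dots,y_\nu\}$ over the base and each avoids the relevant sections $y_j$ uniformly over $\hat\cC$; hence each gives a cellular map into $\cC\odot\cF$ compatible with $F$ — provided the cell, together with the relevant extension, lies inside $\cF^\gamma$.

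The one genuinely new difficulty, relative to the $y_0$ case, is precisely this containment in $\cF^\gamma$: the fiber $\cF=A(r_1,r_2)$ is radial about $y_0=0$, so the $y_0$-construction automatically stays inside $\cF^\gamma$, but it is not radial about $y_i$, and the outer fibers $\cF_{i,q+}$ (in particular $\cF_{i,m+}=A^\delta(\cdot,\infty)$) will in general escape $\cF^\gamma$. The remedy is to truncate the construction. Put $\rho_0:=\tfrac{2\alpha}{1-2\alpha}|y_i|$; a one-line triangle inequality (using $\dist(\vz_{\ell+1},y_0)=|\vz_{\ell+1}|\le|\vz_{\ell+1}-y_i|+|y_i|$) shows that any $\vz_{\ell+1}$ with $|\vz_{\ell+1}-y_i|>\rho_0$ violates~\eqref{eq:voronoi-yi-1}, so it suffices to cover the disc $D(\rho_0)$ about $y_i$. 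I would therefore cap every Voronoi fiber of $y_i$ at radius $O(\rho_0)$ about $y_i$, discarding the parts beyond that scale and choosing the cutoff radius, as in Proposition~\ref{prop:clusters}, to fall between two of $y_i$'s clusters so that the capped fibers remain Weierstrass-type uniformly over $\hat\cC$. Every retained cell then lies, together with its $\gamma$- or $\tfrac12$-extension, in a disc $D(C\alpha|y_i|)$ about $y_i$ with $C=O(1)$; and since $|r_1|\le|y_i|\le|r_2|$ while $\cF^\gamma=A(\gamma r_1,\gamma^{-1}r_2)$, choosing $\alpha$ small enough relative to $1-\gamma$ — consistent with $\alpha^{-1}=\poly_\ell(\beta)$ and $1-\gamma=1/\poly_\ell(\beta)$ — forces $D(C\alpha|y_i|)\subset\cF^\gamma$, and similarly one checks the capped fibers stay away from the $y_j$ lying in other clusters by comparing the cluster-separation scale $5|\log\gamma|$ with $\alpha^2$.

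It remains to verify the covering claim. Let $(\vz_{1..\ell},\vz_{\ell+1})\in\cC\odot\cF$ satisfy~\eqref{eq:voronoi-yi-1} and~\eqref{eq:voronoi-yi-2}. By the bound above $\vz_{\ell+1}\in D(\rho_0)$ about $y_i$, so it lies in one of the retained fibers $\cF_{i,q+}$ or discs $D_k$ — unless it falls into one of the deleted $\alpha$-discs around some cluster point $y_j$. But~\eqref{eq:voronoi-yi-2} gives $\dist(\vz_{\ell+1},y_j)\ge 2\alpha\,\dist(\vz_{\ell+1},y_i)$, and since $\dist(\vz_{\ell+1},y_i)$ is comparable — by~\eqref{eq:sijk-fund} and Remark~\ref{rem:single-cluster} — to the scale of the cluster containing $\vz_{\ell+1}$, which is the scale in which those discs have radius $\alpha$, the factor $2$ in $2\alpha$ is exactly the margin that keeps $\vz_{\ell+1}$ out of the deleted discs; hence $\vz_{\ell+1}$ is covered by a Voronoi cell of $y_i$. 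The step I expect to be the main obstacle is the bookkeeping around the truncation — pinning down the cutoff radius and the relation between $\alpha$ and $\gamma$ so that the capped fibers are simultaneously contained in $\cF^\gamma$, Weierstrass for $F$ uniformly over $\hat\cC$, and still cover everything satisfying~\eqref{eq:voronoi-yi-1}–\eqref{eq:voronoi-yi-2} — but all of this is controlled by the stability estimate~\eqref{eq:sijk-fund} once $1/\rho=\poly_\ell(\beta)$ is taken large enough, exactly as in the proof of Proposition~\ref{prop:clusters}.
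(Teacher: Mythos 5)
Your proposal is correct and follows essentially the same route as the paper: shift $y_i$ to the origin, re-run the clustering of~\secref{sec:cover-clustering} around $y_i$, truncate the radial construction at a scale proportional to $\dist(y_i,y_0)$ so that all retained fibers together with their extensions stay inside $\cF^\gamma$, and then repeat the $y_0$-style in-cluster disc construction, with the factor $2$ in $2\alpha$ absorbing the cluster widths and the $O_\ell(\nu^3\rho)$ motion of the sections. The only cosmetic difference is the cutoff: the paper caps at roughly $(1-\gamma)|y_0|$ (adding the bridging annulus $\tilde\cF_{i,q_0+}$ when needed) and then shrinks $\alpha$ so that the region of~\eqref{eq:voronoi-yi-1} fits inside, whereas you cap at $O(\alpha)\,\dist(y_i,y_0)$ and require $\alpha\lesssim 1-\gamma$ --- the same choices of $\poly_\ell(\beta)$ constants made in a different order.
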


We make an affine transformation such that $y_i=0$. Since $y_i\in\cF$,
we know that in these coordinates $D(r y_0)\subset\cF^\gamma$ for
$r\sim1-\gamma=1/\poly_\ell(\beta)$. In the notations
of~\secref{sec:cover-clustering}, let $\cF_{i,q_0}$ be the last
cluster whose outer boundary at $p$ is smaller than $ry_0(p)$. With
our choice of $\gamma$ the logarithmic width of each cluster can be
assumed to be bounded by $\log 2$. Therefore the inner boundary of
$\cF_{i,q_0+1}$ at $p$ is at least $ry_0(p)/2$. If the outer boundary
of $\cF_{i,q_0}$ is smaller than $\gamma^2ry_0(p)/2$ then we also set
\begin{equation}
  \tilde\cF_{i,q_0+}=A^\delta(r_{i,q_0}\hat y_{i,q_0},\gamma r y_0/2).
\end{equation}
See Figure~\ref{fig:cluster-near-boundary} for an illustration of this
construction. One can check in the same way as in
Proposition~\ref{prop:clusters} that when defined,
$\tilde\cF_{i,q_0+}$ is a well defined annulus over $\hat\cC_{i,q_0+}$
and $\tilde\cF_{i,q_0+}^\gamma\subset\cF^\gamma$ contains no zeros.

\begin{figure}
  \centering
  \includegraphics[width=0.6\textwidth]{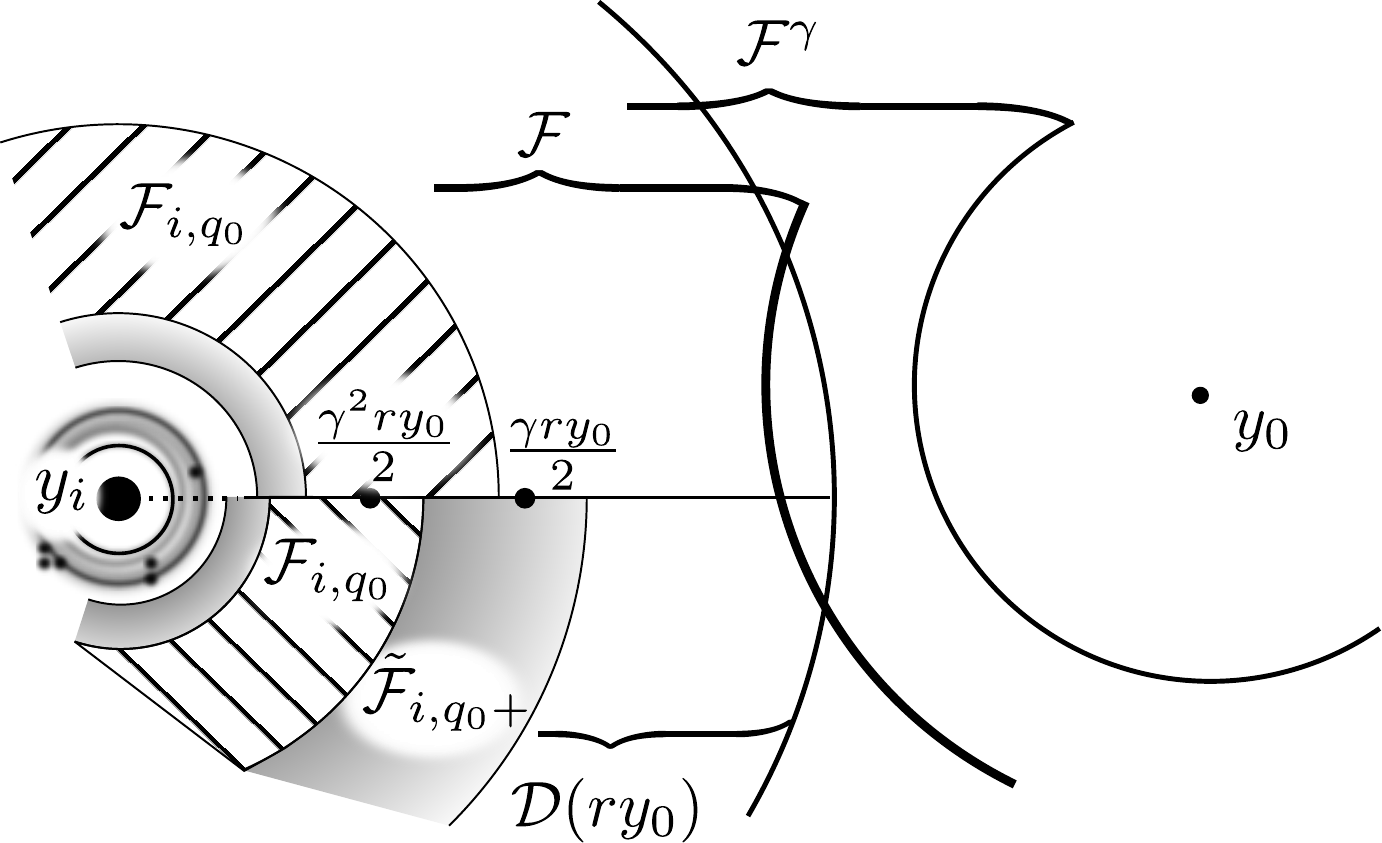}
  \caption{Construction of the $\tilde\cF_{i,q_0+}$ fiber near the
    boundary.}
  \label{fig:cluster-near-boundary}
\end{figure}

The fibers $\cF_{i,q},\cF_{i,q+}$ for $q=1,\ldots,q_0-1$, the fiber
$\cF_{i,q_0}$, and $\tilde\cF_{i,q_0+}$ if it is defined cover
$D(\gamma^2 r y_0/2)$ uniformly over $\hat\cC$. We can choose our
$\alpha=1/\poly_\ell(\beta)$ in such a way that this disc contains all
points satisfying~\eqref{eq:voronoi-yi-1}. It will be enough to
construct the Voronoi cells covering the points in these fibers which
also satisfy~\eqref{eq:voronoi-yi-2}. This is done in a manner
completely analogous to the Voronoi cells of $y_0$, except that in
place of $\cF_{0,q_1+},\ldots,\cF_{0,(q_2-1)+}$ we use
$\cF_{i,0+},\ldots,\cF_{i,(q_0-1)+}$ and $\tilde\cF_{i,q_0+}$ if it is
defined; and instead of $\cF_{0,q_1},\ldots,\cF_{0,q_2}$ we use
$\cF_{i,1},\ldots,\cF_{i,q_0-1}$.

\subsubsection{Conclusion}

We claim that the Voronoi cells for $y_0$ and the other sections
$y_j$ cover $\cC\odot\cF\setminus\{F=0\}$. Indeed, let
$\vz\in\cC\odot\cF$. If
$\dist(\vz_{\ell+1},y_0) < (2\alpha)^{-1}\dist(\vz_{\ell+1},y_j)$ for
every $y_j$ then $\vz$ is covered by the Voronoi cells of
$y_0$. Otherwise it is covered by the Voronoi cell of $y_j$ for the
section $y_j$ closest to $\vz_{\ell+1}$. Since the roots of $\{F=0\}$
were already covered in~\secref{sec:cpt-zero-cover}, this concludes
the proof of the CPT in the complex case.

\subsubsection{The real case of the CPT}

The proof of the CPT in the real case proceeds in the same manner up
to~\secref{sec:cpt-zero-cover}. At this point one should cover only
the sections $y_j$ that are real, ensuring that we indeed obtain real
cells. Since the sections $y_j$ are locally unramified, each of them
is either purely real or purely non-real on $\R_+\cC$ so we can indeed
choose those $y_j$ that are real to obtain a covering of the zeros of
$F$ in $\R_+(\cC\odot\cF)$.

The next step is the construction of the Voronoi cells. To ensure that
the constructed cells are real, one should replace the construction
of~\secref{sec:cover-clustering} by that
of~\secref{sec:cover-clustering-real} as we explain below.

The section $y_0=0$ is real and therefore forms an admissible
center. The Voronoi cells for $y_0$ are constructed as in the complex
case~\secref{sec:voronoi-cells-y0}, except that the discs $D_k$ are
now chosen with real centers such that their positive parts cover
$\R_+U_p(\alpha)$. The points $\vz_{\ell+1}$ that are left uncovered
over $p$ are those that have distance at most $\alpha$ to some section
$y_k$. If $\vz_{\ell+1}\in\R$ then by
Lemma~\ref{lem:select-admissible} such points also have distance at
most $\alpha$ to some admissible center $c_l=(y_j+y_{\bar j})/2$. By
analogy with Lemma~\ref{lem:voronoi-y0} we obtain the following lemma.

\begin{Lem}\label{lem:voronoi-y0-real}
  The Voronoi cells associated with $y_0$ cover every point
  $\vz_{1..\ell+1}\in\R_+(\cC\odot\cF)$ such that
  \begin{equation}
    \dist(\vz_{\ell+1},y_0) < (2\alpha)^{-1}\dist(\vz_{\ell+1},c_l)
  \end{equation}
  for every admissible center $c_l$.
\end{Lem}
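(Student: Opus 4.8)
The plan is to mirror the argument underlying the complex version, Lemma~\ref{lem:voronoi-y0}, substituting the admissible centers $c_l$ for the sections $y_j$ and invoking Lemma~\ref{lem:select-admissible} at the one place where this substitution is not automatic. Fix $\vz=\vz_{1..\ell+1}\in\R_+(\cC\odot\cF)$ satisfying $\dist(\vz_{\ell+1},y_0)<(2\alpha)^{-1}\dist(\vz_{\ell+1},c_l)$ for every admissible center $c_l$, set $w:=\vz_{\ell+1}>0$, and lift the base $\vz_{1..\ell}$ to a point $\hat\vz$ of the relevant cover $\hat\cC$. Since the real fibers $\cF_{0,q},\cF_{0,q+}$ cover $\C\setminus\{0\}$ uniformly over $\hat\cC$ by the real form of Proposition~\ref{prop:clusters}, the point $w$ lies in one of them over $\hat\vz$. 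If it lies in a gap fiber $\cF_{0,q+}$, then $|r_1|<w<|r_2|$ forces $q_1\le q\le q_2-1$, so this fiber is one of the Voronoi cells and $(\hat\vz,w)$ lies in its positive real part; hence $\vz$ is covered and there is nothing to prove. So we may assume $w$ lies in a cluster fiber $\cF_{0,q}$ with $q_1\le q\le q_2$; rescaling by the real function $\hat y_{0,q}$ (which we may take positive on $\R_+\hat\cC$) we normalize $\hat y_{0,q}=1$, so that $w':=w/\hat y_{0,q}(\hat\vz)$ is a positive real point of $A(1/2,2)$ (using $\gamma=1-1/\poly_\ell(\beta)$).

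Next I would localize the obstruction exactly as in the complex construction. The real discs $D_k$ attached to the cluster $\cF_{0,q}$ have positive parts covering $\R_+\cap U_p(\alpha)$, where $U_p(\alpha)$ is $\cF_{0,q}(p)$ with the balls $B(y_j(p),\alpha;\C)$ around the zeros $y_j(p)$ of $F$ in the cluster removed. If $w'$ avoids all these balls it lies in $\R_+\cap U_p(\alpha)$, hence in some $D_k$, and then $(\hat\vz,w)$ lies in the positive real part of the Voronoi cell $\hat\cC_{0,q}\odot D_k$, so $\vz$ is covered. The remaining case is $w'\in B(y_k(p),\alpha;\C)$ for some zero $y_k$ of $F$ in the cluster. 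If $y_k\in\Sigma_\R$ it is itself an admissible center $c_l$ and $w'$ lies within $\alpha$ of $c_l(p)=y_k(p)$. If $y_k\in\Sigma_\C$, then $w'$ being real gives $|\Im y_k(p)|<\alpha$, so $\Re y_k(p)=(y_k(p)+y_{\bar k}(p))/2$ lies within $\alpha$ of $w'$; applying Lemma~\ref{lem:select-admissible} in the real chart where $y_k(p)=i$ produces an admissible center $c_l$ with $c_l(p)$ within $|\Im y_k(p)|/5$ of $\Re y_k(p)$, whence $w'$ lies within a fixed multiple of $\alpha$ of $c_l(p)$.

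Finally I would pass from $p$ back to $\hat\vz$. The admissible centers are affine combinations of the sections $y_j$, which move by $O_\ell(\nu^3\rho)\ll\alpha$ over $\hat\cC$ by \eqref{eq:sijk-fund}, so $|w'-c_l(\hat\vz)|$ is still $O(\alpha)$; since $|w'|>1/2$, multiplying back by $|\hat y_{0,q}(\hat\vz)|$ yields $\dist(\vz_{\ell+1},c_l)\le 2\alpha\,\dist(\vz_{\ell+1},y_0)$, contradicting the hypothesis. I expect the only genuine work to be the bookkeeping of constants: one must fix the radii used to define $U_p(\alpha)$, the size of $\alpha$, and the smallness threshold on $\rho$ so that this chain of inequalities closes to exactly the factor $2\alpha$ claimed — precisely as in the (unstated) proof of Lemma~\ref{lem:voronoi-y0}. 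Apart from this, the real case is a cosmetic modification: every admissible center is real on $\R\hat\cC$, and Lemma~\ref{lem:select-admissible} guarantees that proximity of a real point to a zero of $F$ is witnessed by proximity to an admissible center, which is exactly what is needed to run the $y_0$-Voronoi argument in the real setting.
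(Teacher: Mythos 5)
Your argument is correct and follows essentially the same route as the paper, which only sketches this lemma ("by analogy with Lemma~\ref{lem:voronoi-y0}"): real Voronoi cells built from the real clustering with real-centered discs covering $\R_+U_p(\alpha)$, so that an uncovered real point in a cluster lies within $\sim\alpha$ of some section $y_k(p)$, and Lemma~\ref{lem:select-admissible} (trivially when $y_k\in\Sigma_\R$) upgrades this to proximity to an admissible center, uniformly over $\hat\cC$ by~\eqref{eq:sijk-fund}, giving the contrapositive of the stated bound up to the same constant bookkeeping the paper leaves implicit. No gaps beyond that bookkeeping.
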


For the remaining Voronoi cells, we construct them centered around the
admissible centers $c_l$ rather than the sections $y_j$. The
construction is analogous, replacing again the covering of
$U_p(\alpha)$ by a real covering of $\R_+U_p(\alpha)$. We similarly
obtain the following lemma.

\begin{Lem}\label{lem:voronoi-yi-real}
  The Voronoi cells associated with $c_l$ cover every point
  $\vz_{1..\ell+1}\in\R_+(\cC\odot\cF)$ such that
  \begin{equation}\label{eq:voronoi-yi-1-real}
    \dist(\vz_{\ell+1},c_l) \le 2\alpha\dist(\vz_{\ell+1},y_0)
  \end{equation}
  and
  \begin{equation}\label{eq:voronoi-yi-2-real}
    \dist(\vz_{\ell+1},c_l) \le (2\alpha)^{-1}\dist(\vz_{\ell+1},c_m)
  \end{equation}
  for every $c_m\neq y_0,y_l$.
\end{Lem}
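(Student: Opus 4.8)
\emph{Plan.} The proof should be a transcription of the complex Voronoi-cell construction for a section $y_i$ (the proof of Lemma~\ref{lem:voronoi-yi}), with the admissible center $c_l$ in the role of $y_i$ and the real clustering apparatus of~\secref{sec:cover-clustering-real} replacing that of~\secref{sec:cover-clustering}. First I would apply a real affine transformation putting $c_l$ at the origin; this preserves the reality of $\cC$ and of all radii, and carries the zero section $y_0$ to a real point which I continue to denote $y_0$. If $c_l$ lies at relative distance exceeding $2\alpha$ from every point of $\R_+\cF$, then no $\vz_{\ell+1}\in\R_+\cF$ satisfies~\eqref{eq:voronoi-yi-1-real} and the assertion is vacuous; otherwise $|y_0|=|c_l|$ is within a factor $1\pm 2\alpha$ of a point of the interval $\R_+\cF$, and the elementary estimate from the proof of Lemma~\ref{lem:voronoi-yi} (which ultimately rests on $(1-\gamma)^2\ge 0$) yields $D(r y_0)\subset\cF^\gamma$ for $r=1-\gamma=1/\poly_\ell(\beta)$, the loss coming from the factor $1\pm 2\alpha$ being absorbed by choosing $\alpha$ small relative to $1-\gamma$.

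With this in hand I would run the clustering of~\secref{sec:cover-clustering-real} around $c_l$: the maps $\tilde s_{l,j,k}$ are well defined by Lemma~\ref{lem:admissible-collision}, the cluster fibers $\cF_{l,q},\cF_{l,q+}$ and the real cluster centers $\hat y_{l,q}=\sqrt{y_h y_{\bar h}}$ are real on $\R\hat\cC$, and Proposition~\ref{prop:clusters} and Remark~\ref{rem:single-cluster} continue to hold. Exactly as in the complex case I would let $\cF_{l,q_0}$ be the last cluster contained in $D(r y_0)$, introduce the boundary fiber $\tilde\cF_{l,q_0+}=A^\delta(r_{l,q_0}\hat y_{l,q_0},\gamma r y_0/2)$ when needed and check, by the argument proving Proposition~\ref{prop:clusters}, that it is a genuine real annulus over $\hat\cC_{l,q_0+}$ whose $\gamma$-extension lies in $\cF^\gamma$ and contains no zero of $F$. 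Inside each cluster $\cF_{l,q}$, $q=1,\dots,q_0$, I would then excise the discs of radius $\alpha$ about the (generically non-real) sections $y_j(p)$ and cover the positive real part of the remainder by $\poly_\ell(\beta)$ discs with real centers whose half-extensions stay inside $\cF^\gamma$ and avoid the discs of radius $\alpha/2$ about the $y_j(p)$; since the sections vary over $\hat\cC$ by only $O_\ell(\nu^3\rho)$ (Remark~\ref{rem:single-cluster}), these discs stay disjoint from $\{F=0\}$ uniformly over $\hat\cC$ once $\rho$ is small enough, and they are real cells by construction. Together with the fibers for $q<q_0$, $\cF_{l,q_0}$ and $\tilde\cF_{l,q_0+}$, these are the Voronoi cells of $c_l$.

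For the covering property, a point $\vz_{\ell+1}\in\R_+\cF$ satisfying~\eqref{eq:voronoi-yi-1-real} lies in $D(\gamma^2 r y_0/2)$, hence in one of the fibers $\cF_{l,q},\cF_{l,q+}$ ($q<q_0$), $\cF_{l,q_0}$ or $\tilde\cF_{l,q_0+}$; within the cluster it is caught by one of the real discs unless it lies in an excised $\alpha$-disc about a section $y_j$, in which case Lemma~\ref{lem:select-admissible} puts it within $O_\ell(\alpha)$ of an admissible center $c_m$ — which forces a violation of~\eqref{eq:voronoi-yi-2-real} when $c_m\neq c_l$, while when $c_m=c_l$ one checks, using the admissibility of $c_l$ and Lemma~\ref{lem:admissible-collision} (so that $y_j$ cannot cross into $c_l$), that $\vz_{\ell+1}$ already lies in one of the innermost Voronoi fibers of $c_l$. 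Hence every point satisfying both~\eqref{eq:voronoi-yi-1-real} and~\eqref{eq:voronoi-yi-2-real} is covered. The part I expect to be the genuine obstacle, and the reason one cannot simply quote the complex Lemma~\ref{lem:voronoi-yi}, is keeping every constructed cell real while the zero sections $y_j$ — around which discs must be removed to preserve compatibility with $F$ — and the natural cluster centers are generically non-real; this is precisely what admissible centers and the real form of Proposition~\ref{prop:clusters} established in~\secref{sec:cover-clustering-real} are designed to handle, after which the remaining verification is a routine transcription of the complex argument.
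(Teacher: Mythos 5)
Your construction is essentially the paper's own proof, which at this point just says ``the construction is analogous'': run the complex Voronoi argument for $y_i$ with the admissible center $c_l$ in its place, use the real clustering of~\secref{sec:cover-clustering-real} so all fibers (including $\tilde\cF_{l,q_0+}$) are real, replace the covering of $U_p(\alpha)$ by a covering of $\R_+U_p(\alpha)$ by discs with real centers, and use Lemma~\ref{lem:select-admissible} to turn $\alpha$-proximity of an uncovered real point to a section into $O_\ell(\alpha)$-proximity to an admissible center $c_m$, violating~\eqref{eq:voronoi-yi-2-real}; your extra care about $c_l$ possibly lying slightly outside $\cF$ (absorbing the $1\pm2\alpha$ factor into the choice of $\alpha$ versus $1-\gamma$) is a point the paper glosses over and is handled correctly. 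One remark: your fallback for the subcase $c_m=c_l$ (that $\vz_{\ell+1}$ would then lie in an ``innermost Voronoi fiber'') is not justified as stated, but the subcase is vacuous and should simply be ruled out --- on the real locus $c_l=\Re y_h$, so each member of the conjugate pair defining $c_l$ lies at distance $|\Im y_h|=|y_h-c_l|$, i.e.\ at cluster scale, from every real point and never meets an excised $\alpha$-disc, while for any other section the admissible center produced by Lemma~\ref{lem:select-admissible} lies within $O_\ell(\alpha)$ of $\vz_{\ell+1}$ and hence at cluster-scale distance from $c_l$ (and from $y_0$), so $c_m\neq c_l,y_0$ always.
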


The proof is then concluded in the same manner.

\subsection{Proof of the CPrT}
\label{sec:proof-cprt}

In this section we will prove the CPrT using the WPT and CPT. We will
need a simple remark on the structure of the maps constructed in these
two theorems.

\begin{Rem}\label{rem:wpt-cpt-translates}
  The maps constructed in the CPT can be assumed to be translates in
  the final variable, as the reader may easily verify by examining the
  inductive proof. Similarly, the maps constructed in the WPT can be
  assumed to be translates in the final two variables: the CPT and
  refinement theorem (Theorem~\ref{thm:cell-refinement}) are applied
  in the base to give a translate in the next-to-last variable,
  whereas in the last variable the map is the identity.
\end{Rem}

The main inductive step for the proof of the CPrT is contained in the
following lemma.

\begin{Lem}\label{lem:cprt-step}
  Let $f:\cC^\hrho\to\hat\cC$ be a (real) cellular map. Then there
  exists a (real) cellular cover $\{g_j:\cC_j^\hrho\to\cC^\hrho\}$ such that each $f\circ g_j$ is prepared
  \emph{in the last variable}.

  If $\cC^\hrho,\hat\cC,f$ vary in a definable family $\Lambda$ then the size of the
  cover is $\poly_\Lambda(\rho)$, and the maps $g_j$
  can be chosen from a single definable family. If $\cC^\hrho,\hat\cC,f$
  are algebraic of complexity $\beta$ then the cover has size
  $\poly_\ell(\beta,\rho)$ and complexity $\poly_\ell(\beta)$.
\end{Lem}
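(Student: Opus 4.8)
\textbf{Proof plan for Lemma~\ref{lem:cprt-step}.}
The plan is to prove this lemma by induction on the length $\ell$ of $\cC$, reducing the "preparation in the last variable" problem to a Weierstrass-type statement that the WPT already delivers. Write $\cC=\cC_{1..\ell-1}\odot\cF$ and recall that the last component of $f$ has the form $\vw_\ell=\phi_\ell(\vz_{1..\ell})$, a monic polynomial of positive degree $q$ in $\vz_\ell$ whose coefficients lie in $\cO_b(\cC_{1..\ell-1})$. We want to cover $\cC$ by cellular maps $g_j$ (which by Remark~\ref{rem:wpt-cpt-translates} we may take to be translates in the last variable) so that after composition, $\phi_\ell\circ g_j$ takes the prepared form $\vz_\ell^{q_j}+\tilde\phi_j(\vz_{1..\ell-1})$. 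The key observation is that this is exactly a monomialization statement for the function $\phi_\ell$ (or rather its pieces) on the fiber $\cF$: if $\cF$ is of type $*$ there is nothing to do, while if $\cF$ is of type $D,D_\circ,A$ we need the zero set of $\partial_{\vz_\ell}\phi_\ell$ (and of $\phi_\ell$ itself near the boundary) to be confined to a single point in each fiber, after passing to a suitable $\nu$-cover.

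First I would apply the WPT (Theorem~\ref{thm:wpt}) to the cell $\cC^\hrho$ and the function $F:=\phi_\ell\cdot\partial_{\vz_\ell}\phi_\ell$ (or, to be safe, the product of $\phi_\ell$, $\partial_{\vz_\ell}\phi_\ell$, and the relevant boundary radii of $\cF$), obtaining a cellular cover by Weierstrass cells $\cC_k^\hsigma\odot\cF_k^\gamma$. On each such Weierstrass cell, the zero set of $F$ in the $\vz_\ell$-fiber is a proper ramified cover of the base of bounded degree; passing to the appropriate $\nu$-cover as in~\secref{sec:nu-cover} and using Lemma~\ref{lem:y_j-monodromy}, the critical points and the boundary-relevant points of $\phi_\ell$ become univalued sections. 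Composing with a cellular translate that moves one such section to the origin and rescaling, I would arrange that $\phi_\ell$ becomes a monic polynomial in $\vz_\ell$ all of whose roots are confined to a punctured disc/annulus away from zero — but here is where the actual preparation happens: I want $\phi_\ell$ to become $\vz_\ell^{q}$ times a unit, or a pure power $\vz_\ell^{q_j}$ plus a base function. For fibers of type $D$, Weierstrass preparation in the classical sense (the roots are inside $\cF$, none on $\partial\cF^\gamma$) lets me factor $\phi_\ell=P(\vz)$ with $P$ a genuine Weierstrass polynomial; a further translate by the "center of mass" $-\tfrac1q\sum(\text{roots})$ kills the next-to-leading term, and for the prepared form I would then need to invoke that after the WPT reduction the discriminant locus is clear, so $P$ has a single root of multiplicity $q$ (this is forced once we have subdivided enough via the CPT applied to the pairwise differences of roots — a standard trick). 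For fibers of type $D_\circ,A$ one further uses the monomialization lemma (Lemma~\ref{lem:monomial}) to write $\phi_\ell$'s unit part as bounded.

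The main obstacle I anticipate is the interplay between \emph{preparation} (getting the exact form $\vz_\ell^{q_j}+\tilde\phi_j$) and mere \emph{monomialization} (getting $\vz^\valpha U$): the CPT/WPT machinery naturally produces the latter, and pushing to the former requires that all $q$ roots of $\phi_\ell$ in each fiber collapse to a single point after the cellular covering. This forces an auxiliary application of the CPT to the base cell with the collection of pairwise-difference functions $\{(\text{root}_a-\text{root}_b)\}$ among the sections, so that on each resulting cell these differences are monomialized and hence all the roots cluster into one Voronoi-type group of diameter controllable by $\rho$; choosing $\rho$ small enough then lets a single translate absorb them. Verifying that this can be done while keeping the number of cells $\poly_\ell(\beta,\rho)$ and complexity $\poly_\ell(\beta)$ — using that $q=\deg_{\vz_\ell}\phi_\ell\le\poly_\ell(\beta)$ and that all intermediate $\nu$-covers and CPT applications respect these bounds — is the bookkeeping heart of the argument. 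The real case follows as usual by choosing all centers and translates real on $\R_+\cC$, using~\secref{sec:cover-clustering-real} in place of~\secref{sec:cover-clustering} where clustering is invoked. The full CPrT (Theorem~\ref{thm:cprt}) then follows by applying Lemma~\ref{lem:cprt-step} repeatedly, working from the last variable inward: once $f$ is prepared in $\vz_\ell$, the induced cellular map $f_{1..\ell-1}$ on the bases has length $\ell-1$, and composing covers as in Remark~\ref{rem:cell-cover-composition} yields a cover on which $f$ is prepared in all variables.
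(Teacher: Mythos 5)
Your preliminary reduction (covering the critical locus $\Sigma=\{\partial_{\vz_{\ell+1}}P=0\}$ via the CPT and using that the covering maps are translates in the last variable to keep the derivative nonvanishing after composition) matches the paper's first step. But the heart of your argument has a genuine gap: the passage from monomialization to preparation. You propose to apply the CPT/WPT to $P$ and its $\vz_\ell$-derivative, cluster the roots via pairwise differences, and then claim that ``choosing $\rho$ small enough lets a single translate absorb them.'' This cannot work. A cellular map acts in the last variable by $\vz_\ell\mapsto$ (monic polynomial in $\vz_\ell$), and in particular a translate $\vz_\ell\mapsto\vz_\ell+a$ only shifts the argument: it cannot change the mutual configuration of the roots of $P$ in the fiber. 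The prepared form $\vz_\ell^{q}+\tilde\phi(\vz_{1..\ell-1})$ is extremely rigid --- its roots are the $q$-th roots of $-\tilde\phi$, equally spaced on a circle --- so no amount of base subdivision, shrinking of $\rho$, or clustering of roots into a small disc turns a general monic $P$ into this shape after precomposition with a translate. Smallness of the root cluster is simply not the relevant obstruction, and the CPT applied to pairwise differences of root sections only makes those differences identically zero or nowhere zero; it does not collapse distinct roots.

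The idea you are missing is the paper's graph trick, which changes \emph{which} variable plays the role of the last coordinate. Let $N\ge\norm{P}_{\cC^\hrho}$ and enlarge the cell to $\tilde\cC:=\cC_{1..\ell}\odot D(N)\odot\cF$ with a new coordinate $w$, and consider the hypersurface $\Gamma=\{w=P(\vz_{1..\ell};\vz_{\ell+1})\}$. Apply the WPT to $\tilde\cC^\hrho$ and $w-P$ (in the $\vz_{\ell+1}$-direction). On each Weierstrass cell, $\Gamma$ is either a graph over the base (type $*$ fiber, since $P$ is monic) or an unramified cover of degree $\le\mu$ (here the earlier removal of $\Sigma$ is used); its sections $\zeta_{\alpha,j}(\vz_{1..\ell},w)$ become univalued after pulling back by the $\nu(\alpha,j)$-cover in the $w$-variable (Lemma~\ref{lem:y_j-monodromy}), and since the WPT maps are translates in $w$ (Remark~\ref{rem:wpt-cpt-translates}) the composed maps have next-to-last coordinate $w^{\nu(\alpha,j)}+\phi_{\alpha,j}(\vz_{1..\ell})$. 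Taking $f_\beta:=((\psi_\beta)_{1..\ell},\zeta_\beta)$ as the covering maps, the identity $w^{\nu}+\phi_\beta=P((\psi_\beta)_{1..\ell},\zeta_\beta)=(f\circ f_\beta)_{\ell+1}$ holds \emph{by construction}, i.e.\ $f\circ f_\beta$ is prepared in the last variable, and surjectivity onto $\cC$ follows because $\Gamma\cap\tilde\cC$ projects onto $\cC$ by the choice of $N$. In short, preparation is achieved by parametrizing the graph of $P$ and letting the value $w$ of $P$ serve as the new last variable, not by normalizing $P$ in the old one; without this (or an equivalent device) your plan stalls exactly at the step you flagged as the ``main obstacle.''
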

\begin{proof}
  We describe the proof for the complex case, and at the end indicate
  the changes required for the real version. Uniformity over families
  is obtained by a straightforward generalization of the argument.

  Let $\cC:=\cC_{1..\ell}\odot\cF$. Recall that the last coordinate of
  $f$ has the form $P(\vz_{1..\ell};\vz_{\ell+1})$ where $P$ is a
  monic polynomial in $\vz_{\ell+1}$ (say of degree $\mu$) with
  coefficients holomorphic in $\cC_{1..\ell}^\hrho$. If $\cF=*$ then
  we can just replace $\vz_{\ell+1}$ by zero in the expression above,
  so $f$ itself is already prepared. So assume $\cF$ is of type
  $D,D_\circ,A$.

  Let $\Sigma\subset\cC$ denote the set critical points of $P$ with respect to
  $\vz_{\ell+1}$, i.e.
  \begin{equation}
    \Sigma := \left\{ \pd{P}{\vz_{\ell+1}}(\vz_{1..\ell+1})=0 \right\}.
  \end{equation}
  Note that since $P$ is a monic polynomial of positive degree in
  $\vz_{\ell+1}$ the hypersurface $\Sigma$ has zero dimensional fibers
  over $\cC_{1..\ell}$.
  
  We apply the CPT to $\cC$ and $\Sigma$ to obtain a cellular cover
  $\{g_j:\cC_j^\hrho\to\cC^\hrho\}$ compatible with $\Sigma$. If the
  image of $g_j$ is contained in $\Sigma$ then, since the fibers of
  $\Sigma$ over $\cC_{1..\ell}$ are zero dimensional, $\cC_j$ has type
  ending with $*$ and in this case $f\circ g_j$ is already prepared as
  noted above. It remains to consider the case that the image of $g_j$
  is disjoint from $\Sigma$. Recall from
  Remark~\ref{rem:wpt-cpt-translates} that $g_j$ is a translate in its
  last variable $\vw_{\ell+1}$. Then we have
  \begin{equation}
    \pd{}{\vw_{\ell+1}}(P\circ g_j) = \pd{P}{\vz_{\ell+1}}(g_j(\vw_{1..\ell+1}))\neq0
  \end{equation}
  since the image of $g_j$ is disjoint from $\Sigma$. It will now be
  enough to prove the lemma with $f$ replaced by each $f\circ g_j$ as
  above. In other words we may assume without loss of generality that
  $\Sigma=\emptyset$.

  Recall that $P$ is bounded in absolute value by some constant $N$ on
  $\cC^\hrho$. Write $D:=D(N)$ and consider the cell
  \begin{equation}
    \tilde\cC:=\cC_{1..\ell}\odot D\odot\cF
  \end{equation}
  with coordinates $\vz_{1..\ell},w,\vz_{\ell+1}$, which also admits a
  $\hrho$-extension. Let $\Gamma\subset\tilde\cC^\hrho$ be the hypersurface
  given by
  \begin{equation}
    \Gamma := \{ w=P(\vz_{1..\ell};\vz_{\ell+1}) \}.
  \end{equation}
  We apply the WPT to $\tilde\cC^\hrho$ and $\Gamma$ to obtain a
  cellular cover
  $\{g_\alpha:\cC_\alpha^{\he{\rho/\mu}}\odot\cF_\alpha^\gamma\to\tilde\cC^\hrho\}$
  of $\tilde\cC$ by Weierstrass cells for $\Gamma$ (i.e. for the
  function $w-P(\vz_{1..\ell};\vz_{\ell+1})$).

  Suppose first that
  $g_\alpha(\cC_\alpha\odot\cF_\alpha)\subset\Gamma$. Then since
  $\Gamma$ has zero-dimensional fibers over $\cC_{1..\ell}\odot D$
  (because $P$ is monic) we have $\cF_\alpha=*$ and
  \begin{equation}\label{eq:psi-subord}
    \psi_\alpha(\vz_{1..\ell},w,*) := g_\alpha(\vz_{1..\ell},w,*) = (\cdots, w+\phi_\alpha(\vz_{1..\ell}),\zeta_\alpha(\vz_{1..\ell},w))
  \end{equation}
  where $\psi_\alpha:\cC_\alpha\odot*\to\Gamma$ is a cellular
  map admitting a $\he{\rho/\mu}$-extension. Here we used the fact that
  $g_\alpha$ is a translate in its next to last variable. For later
  purposes we set $\nu(\alpha)=1$.

  Suppose now that
  $g_\alpha(\cC_\alpha\odot\cF_\alpha)\not\subset\Gamma$. Then by the
  definition of a Weierstrass cell it follows that $g_\alpha^*\Gamma$
  forms a cover of $\cC_\alpha$. Since we are assuming
  that $\Sigma=0$, this is a covering map of degree
  $\nu(\alpha)\le\mu$. Then $g_\alpha^*\Gamma$ admits $\nu(\alpha)$
  multivalued sections. Each section
  $s_{\alpha,j}:\cC_\alpha\odot*\to g_\alpha^*\Gamma$ takes
  the form
  \begin{equation}
    s_{\alpha,j}(\vz_{1..\ell},w,*) = (\vz_{1..\ell},w,\hat\zeta_{\alpha,j}(\vz_{1..\ell},w))
  \end{equation}
  where $\hat\zeta$ is ramified of order at most
  $\nu(\alpha,j)\le\nu(\alpha)$ (see
  Lemma~\ref{lem:y_j-monodromy}). We note that in the algebraic case,
  $s_{\alpha,j}$ is algebraic of degree $\poly_\ell(\beta)$ since it
  is the section of a hypersurface of complexity
  $\poly_\ell(\beta)$.

  We set
  $\cC_{\alpha,j}\odot* := (\cC_\alpha\odot*)_{\times\nu(\alpha,j)}$.
  Pulling back $s_{\alpha,j}$ by
  $R_{\nu(\alpha,j)}:\cC_{\alpha,j}\odot*\to\cC_\alpha\odot*$ we
  obtain a univalued cellular map
  $s_{\alpha,j}\circ R_{\nu(\alpha,j)}:\cC_{\alpha,j}\odot*\to g_\alpha^*\Gamma$
  and finally composing with $g_\alpha$ we obtain a cellular map
  \begin{equation}
    \psi_{\alpha,j}:=g_\alpha\circ s_{\alpha,j}\circ R_{\nu(\alpha,j)}:\cC_{\alpha,j}\odot*\to\Gamma
  \end{equation}
  of the form
  \begin{equation}\label{eq:psi-cover}
    \psi_{\alpha,j}(\vz_{1..\ell},w,*) = (\cdots,w^{\nu(\alpha,j)}+\phi_{\alpha,j}(\vz_{1..\ell}),\zeta_{\alpha,j}(\vz_{1..\ell},w)).
  \end{equation}
  Here again we used the fact that $g_\alpha$ is a translate in its
  next to last variable. By Lemma~\ref{prop:ext-v-cover} the map
  $\psi_{\alpha,j}$ admits a $\hrho$-extension.

  Let $\beta$ be one of the indices $\alpha$ or $(\alpha,j)$ for the
  maps $\psi$ constructed
  in~\eqref{eq:psi-subord},\eqref{eq:psi-cover}. Set
  $f_\beta:=((\psi_\beta)_{1..\ell},\zeta_\beta):\cC_\beta^\hrho\to\cC^\hrho$. Then
  $f\circ f_\beta$ is prepared in its final variable: indeed, since
  $\psi_\beta$ maps into $\Gamma$ we have
  \begin{equation}
    w^{\nu(\alpha)}+\phi_\beta(\vz_{1..\ell})=P((\psi_\beta)_{1..\ell}(\vz_{1..\ell}),\zeta_\beta(\vz_{1..\ell},w))
    = (f\circ f_\beta)_{\ell+1}
  \end{equation}
  for every $(\vz_{1..\ell},w)\in\cC_\beta^\hrho$. It remains to show
  that the union over $\beta$ of $f_\beta(\cC_\beta)$ covers
  $\cC$. But this is clear, since $\psi_\beta(\cC_\beta\odot*)$ covers
  $\tilde\cC\cap\Gamma$ by construction and the projection of
  $\tilde\cC\cap\Gamma$ to $\vz_{1..\ell+1}$ equals $\cC$ by our
  choice of $D$.

  We now consider the real case. We proceed in the same manner up to
  the construction of the sections $\psi_{\alpha,j}$. Note that some
  of these sections may not be real. However, since the critical locus
  $\Sigma$ is empty it follows that on $\R_+\cC_{\alpha,j}$ the
  section $\psi_{\alpha,j}$ is either always real or always
  non-real. Since we are only interested in covering $\R_+\Gamma$ we
  may replace the full set of sections $\psi_{\alpha,j}$ by those
  sections that are real on $\R_+\cC_{\alpha,j}$. With this
  modification we obtain a real cover of $\cC$ as required.
\end{proof}

We are now ready to finish the proof of the CPrT by induction: suppose
that the CPrT is already proved for cells of length $\ell$, and we
will prove it for a cell $\cC:=\cC_{1..\ell}\odot\cF$. By
Lemma~\ref{lem:cprt-step} we may assume that the map $f$ is already
prepared in its final variable. By the inductive hypothesis we can
find a cellular cover $g_j:\cC_j^\hrho\to\cC_{1..\ell}^\hrho$ such
that $f_{1..\ell}\circ g_j$ is prepared. Then
$\hat g_j:=g_j\odot\id:(\cC_j\odot(g_j^*\cF))^\hrho\to\cC^\hrho$ is a
cellular cover for $\cC$ with $f\circ\hat g_j$ is prepared as
required.

\section{Analysis on complex cells}

In this section we prove some basic estimates for holomorphic
functions in complex cells. We also introduce the notion of a
\emph{quadric} cell which allows for some finer estimates and is used
in the construction of smooth parametrizations
in~\secref{sec:smooth-param}.

We fix the notation for the remainder of this section. We let
$\cC=\cF_1\odot\cdots\odot\cF_\ell$ denote a complex cell. As a matter
of normalization, we assume that $\cF_j$ is of the form
$*,D(1),D_\circ(1)$ or $A(r_j,1)$. Every cell is isomorphic to a cell
of this form by an appropriate rescaling map. We say that such a cell
$\cC$ is \emph{normalized}. We fix $0<\delta<1$ and assume that $\cC$
admits a $\delta$-extension.

\subsection{Logarithmic derivatives}

The following proposition is a cellular analog of the classical fact
that a logarithmic derivative of a holomorphic function admits at most
first order poles.

\begin{Prop}\label{prop:log-derivative-bound}
  Let $0<\delta<1/8$ and let $f\in\cO_b(\cC^\delta)$ be
  non-vanishing. Then for $\vz\in\cC$ we have
  \begin{equation}
    \abs{\frac{1}{f}\frac{\partial f}{\partial \vz_i}} \le O_f(\abs{\vz_i}^{-1})
    \qquad \text{for } i=1,\ldots,\ell.
  \end{equation}
   If $\cC,f$ are algebraic of complexity $\beta$ then
  \begin{equation}
    \abs{\frac{1}{f}\frac{\partial f}{\partial \vz_i}}\le \poly_\ell(\beta)\cdot \abs{\vz_i}^{-1}
    \qquad \text{for } i=1,\ldots,\ell.
  \end{equation}
\end{Prop}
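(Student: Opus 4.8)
The plan is to reduce to the one-variable case via the monomialization machinery already developed. First I would observe that it suffices to bound $\partial_{\vz_i}f / f$ coordinate-by-coordinate, and that by the normalization assumption each fiber $\cF_j$ is of the form $*, D(1), D_\circ(1)$ or $A(r_j,1)$ with $\cC$ admitting a $\delta$-extension. Fix $i$ and a point $p = \vz_{1..i-1} \in \cC_{1..i-1}$. Restricting $f$ to the fiber through $p$, we get a nonvanishing bounded holomorphic function of $\vz_i$ alone on a one-dimensional cell $\cF$ admitting a $\delta$-extension $\cF^\delta$. Thus the whole problem localizes to: for $g \in \cO_b(\cF^\delta)$ nonvanishing, bound $|g'/g|$ on $\cF$ by $O(|\vz_i|^{-1})$, with the constant controlled by $\poly_\ell(\beta)$ in the algebraic case. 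The key point is that the monomialization lemma (Lemma~\ref{lem:monomial}), applied in the form of its one-dimensional version Lemma~\ref{lem:monom-dim1}, gives $g = \vz_i^{\alpha} U(\vz_i)$ where $\alpha = \alpha(g)$ satisfies $|\alpha| = \poly(\beta)$ and $\log U$ is univalued and bounded on $\cF$ (with $\diam(\log U(\cF);\C) < \poly(\beta)\cdot\rho$); here I need to translate the $\delta$-extension hypothesis into a $\hrho$-extension, using $\delta < 1/8$ to guarantee $\rho = O(1)$, say $\rho \le 2\pi$ or smaller, via the formulas~\eqref{eq:rho-ext-def}.

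Next I would compute $g'/g = \alpha/\vz_i + U'/U$. The first term contributes exactly $|\alpha|\cdot|\vz_i|^{-1}$, which is of the desired form. For the second term, since $\log U$ is univalued and bounded on $\cF$, and $U$ admits a holomorphic extension to $\cF^\delta$ (being nonvanishing there, with $\log U$ bounded), the Cauchy estimate on a disc of radius $\sim |\vz_i|$ around $\vz_i$ — which fits inside $\cF^\delta$ because of the $\delta$-extension and the normalization $\delta < 1/8$ — gives $|(\log U)'(\vz_i)| = |U'/U| \le C \cdot \|\log U\|_{\cF^\delta} \cdot |\vz_i|^{-1}$. More precisely: for a fiber of type $D(1)$ or $D_\circ(1)$, the disc $B(\vz_i, c|\vz_i|)$ for a suitable absolute constant $c < 1$ stays inside $D^\delta$ as long as $\delta$ is small; for type $A(r_j,1)$ one uses $c|\vz_i| < \min(|\vz_i| - r_j/\delta, 1/\delta - |\vz_i|)$, again valid for $\delta < 1/8$ after the standard elementary estimate (here one may need to split into the cases $|\vz_i|$ close to the inner radius versus the outer radius, but in each sub-case a disc of radius $\Omega(|\vz_i|)$ fits). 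The bound on $\|\log U\|$ is controlled: $O_f(1)$ in the analytic case and $\poly_\ell(\beta)$ in the algebraic case, using the $\diam$ estimate of Lemma~\ref{lem:monom-dim1} together with a normalization fixing $\log U$ at one point.

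The main obstacle I anticipate is the geometric bookkeeping showing that, uniformly over all the fiber types and over all points $\vz_i \in \cF$, a Euclidean disc of radius comparable to $|\vz_i|$ centered at $\vz_i$ is contained in $\cF^\delta$ (or at least in $\cF^{\delta'}$ for some $\delta'$ still small), so that Cauchy's estimate yields the clean $|\vz_i|^{-1}$ scaling rather than something worse near the boundary of an annulus. The annulus case near the inner radius is the delicate one: if $|\vz_i|$ is very close to $r_j$, one needs the disc of radius $\sim |\vz_i| - r_j$ rather than $\sim |\vz_i|$ — but since $\vz_i \in \cF = A(r_j,1)$ means $|\vz_i| > |r_j|$ while $\cF^\delta$ reaches down to $\delta|r_j| < |r_j|$, the gap $|\vz_i| - \delta|r_j| \ge (1-\delta)|r_j|$ is already comparable to $|r_j| \asymp |\vz_i|$ when $|\vz_i| \le 2|r_j|$, and comparable to $|\vz_i|$ when $|\vz_i| \ge 2|r_j|$; so in fact the disc of radius $\tfrac{1-\delta}{2}\,\text{(appropriate quantity)}$ always works. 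Once this elementary case analysis is dispatched, assembling $|g'/g| \le (|\alpha| + C\|\log U\|_{\cF^\delta})\,|\vz_i|^{-1}$ and tracking that both $|\alpha|$ and $\|\log U\|$ are $\poly_\ell(\beta)$ in the algebraic case (by Lemma~\ref{lem:monomial}) completes the proof. Finally I would remark that the constant depends only on $\ell$ (and $\beta$) in the algebraic case because the monomialization lemma's constants do, and the geometric constant $c$ above is absolute.
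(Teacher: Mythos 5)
There is a genuine gap, and it sits exactly where the multivariable difficulty of the statement lives. Your reduction claims that, after fixing $p=\vz_{1..i-1}$ (and implicitly the later coordinates as well), $f$ restricts to a nonvanishing bounded function of $\vz_i$ alone ``on a one-dimensional cell $\cF$ admitting a $\delta$-extension''. This is unjustified for $i<\ell$: the radii $r_j(\vz_{1..j-1})$ of the later fibers depend on $\vz_i$, so the set of admissible $\vz_i$ with $\vz_{i+1..\ell}$ held fixed is the sublevel/superlevel region $\{\vz_i\in\cF_i^\delta(p): \vz_j\in\cF_j^\delta(\vz_{1..j-1})\ \forall j>i\}$, which is in general neither a disc nor an annulus centered at the origin and need not contain a Euclidean disc of radius comparable to $\abs{\vz_i}$ around the given point. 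Showing that it does would require precisely a bound on $\abs{\tfrac1{r_j}\pd{r_j}{\vz_i}}$ for $j>i$ — i.e.\ an instance of the proposition itself applied to the radii — so your plan is circular at this step, and the subsequent application of the one-dimensional monomialization lemma (Lemma~\ref{lem:monom-dim1}) on ``$\cF^\delta$'' has no domain to act on. Your careful boundary analysis for annuli addresses the easy part (a genuine fiber in the last variable, where the radii do not move); it does not touch the cross-dependence, which is the actual content of the claim.

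The paper's proof confronts this head-on: it monomializes $f$ over the whole cell via the multivariate Lemma~\ref{lem:monomial}, writes the unit as $g=\log U$ bounded on $\cC^{1/4}$, expands $g=\sum_\vsigma g_\vsigma(\vz^{[\vsigma]})$ using Corollary~\ref{cor:laurent-disc-decmp} with $g_\vsigma$ holomorphic on a polydisc, and then differentiates through the normalized monomials. The dangerous terms are exactly $\abs{\vz_j^{-1}\pd{r_j}{\vz_i}}=\abs{\tfrac{r_j}{\vz_j}}\cdot\abs{\tfrac1{r_j}\pd{r_j}{\vz_i}}$ for $j>i$, and these are bounded by $K\abs{\vz_i}^{-1}$ by induction on $\ell$, i.e.\ by applying the proposition to the radii on the base cell. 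If you want to salvage your slicing strategy, you would have to build in this same induction on the radii before you can even assert that the one-variable restriction has a controlled domain; at that point you have essentially reproduced the paper's argument, so the slicing buys nothing. (Your one-dimensional Cauchy-estimate bookkeeping is fine as far as it goes, modulo the minor point that $\log U$ must be bounded on the slightly larger domain where the Cauchy discs live, which can be arranged by shrinking the extension.)
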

\begin{proof}
  By the monomialization lemma (Lemma~\ref{lem:monomial}) we have
  $\log f=\sum_k m_k\log \vz_k+\log U$ where $g=\log U$ is holomorphic
  and bounded in $\cC^{1/4}$. The derivative of the first term with
  respect $\vz_i$ is bounded by $|m_i\vz_i|^{-1}$, and in the
  algebraic case $|m_i|=\poly_\ell(\beta)$. It remains to give a
  similar estimate for $\pd{g}{\vz_i}$. Since we only care about the
  derivatives we may, after a translate, assume that the image of $g$
  contains $0$.  Then by the monomialization lemma in the algebraic
  case $\norm{g}_{C^{1/4}}\le\poly_\ell(\beta)$.
  
  Write a decomposition $g=\sum_{\vsigma} g_\vsigma(\vz^{[\vsigma]})$
  as in Corollary~\ref{cor:laurent-disc-decmp},
  \begin{equation}
    g_\vsigma\in\cO_b(\cP^{1/2}), \qquad \norm{g_\vsigma}_{\cP^{1/2}} = O_\ell(\norm{g}_{\cC^{1/4}}).
  \end{equation}
  We estimate the derivative by  
  \begin{equation}
    \abs{\pd{g_\vsigma}{\vz_i}}\le
    \sum_{j=1}^\ell \abs{(g_\vsigma)'_j(\vz^{[\vsigma]}) \pd{(\vz_j^{[\vsigma_j]})}{\vz_i}}.
  \end{equation} 
  By the Cauchy formula for $g_\vsigma$ in $\cP^{1/2}$ we see that
  $\norm{(g_\vsigma)'_j}_\cP=O_\ell(\norm{g_\vsigma}_{\cP^{1/2}})$,
  which is $\poly_\ell(\beta)$ in the algebraic case.

  For $\vsigma_j=1$, the derivative
  $\pd{(\vz_j^{[\vsigma_j]})}{\vz_i}$ is $0$ if $i\neq j$ and $1$ if
  $i=j$. For $\vsigma_j=-1$, the derivative
  $\pd{(\vz_j^{[\vsigma_j]})}{\vz_i}$ is $0$ for $i>j$, $-r_i/\vz_i^2$
  for $i=j$ and $\vz_j^{-1}\pd{r_j}{\vz_i}$ for $i<j$. But
  \begin{align}
    \abs{-\frac{r_i}{\vz_i^2}}&=\abs{\frac{r_i}{\vz_i}\cdot\frac1{\vz_i}}\le|\vz_i|^{-1}, &
    \abs{\vz_j^{-1}\pd{r_j}{\vz_i}}&=\abs{\frac{r_j}{\vz_j}\cdot\frac1{r_j} \pd{r_j}{\vz_i}}\le K|\vz_i|^{-1}
  \end{align}
  where the final inequality follows by induction on $\ell$ and
  $K=\poly_\ell(\beta)$ in the algebraic case. From the above we
  conclude that $\abs{\pd{g}{\vz_i}}<K'|\vz_i|^{-1}$ for an appropriate
  constant $K'$ as claimed.
\end{proof}

\subsection{Quadric cells}
\label{sec:quadric-cells}

We say that the $\cC$ cell is \emph{quadric} if the radii $r_j$ (for
$\cF_j$ of annulus type) have a univalued square root, $r_j=\rho_j^2$
on $\cC_{1..j-1}$. This can also be stated by saying that the
associated monomial of $r_j$ has even degrees. We note that if $\cC$
is quadric then $\cC^\delta$ is quadric as well. We denote by
$\Qua\cC$ the \emph{positive quadrant} defined by replacing each fiber
$\cF_j=A(r_j,1)$ in the definition of $\cC$ by $A(\rho_j,1)$. We
denote by $\Qua^\delta\cC$ the cell obtained by taking
$\tilde\cF_j=A(\rho_j,\delta^{-1})$. Recall the notations of
Definition~\ref{def:nu-cover}.

\begin{Prop}\label{prop:quadric-nu-cover}
  Let $\cC$ be a cell admitting a $\delta$-extension.  Set
  $\vnu:=(2^{\ell-1},\ldots,1)$. Then $\cC_{\times\vnu}$ is quadric
  and admits a $\delta^{1/2^{\ell-1}}$-extension.
\end{Prop}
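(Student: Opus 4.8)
The plan is to induct on the length $\ell$. For $\ell\le 1$ there is nothing to prove: a cell of length $\le 1$ has no ``interior'' annulus radius to take a square root of (the only fiber, if of type $A(r,1)$, is already normalized and $\vnu=(1)$), so it is trivially quadric. Now suppose the statement holds for cells of length $\ell$, and let $\cC=\cC_{1..\ell}\odot\cF$ have length $\ell+1$.

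First I would unwind the definition of the $\vnu$-cover from Definition~\ref{def:nu-cover} with $\vnu=(2^\ell,2^{\ell-1},\ldots,2,1)$. Observe that the truncation $\vnu_{1..\ell}=(2^\ell,\ldots,2)=2\cdot(2^{\ell-1},\ldots,1)$; since taking a $\nu$-cover with a constant scalar $\nu$ commutes with composition (a $2$-cover of a $(2^{\ell-1},\ldots,1)$-cover), the base $(\cC_{1..\ell})_{\times\vnu_{1..\ell}}$ is the $2$-fold cover of $(\cC_{1..\ell})_{\times(2^{\ell-1},\ldots,1)}$. By the inductive hypothesis the latter is quadric and admits a $\delta^{1/2^{\ell-1}}$-extension; then a further $2$-cover, by Proposition~\ref{prop:ext-v-cover}(1), is quadric (every annulus radius, already a square on the intermediate cell, pulls back under $z\mapsto z^2$ to a radius that is a square of a \emph{univalued} function — in fact the pullback of its square root composed with the covering, which remains univalued because the monodromy is killed) and admits a $(\delta^{1/2^{\ell-1}})^{1/2}=\delta^{1/2^\ell}$-extension. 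So the base $\cC_{1..\ell}^{\times\vnu_{1..\ell}}$ of $\cC_{\times\vnu}$ is quadric with a $\delta^{1/2^\ell}$-extension.

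Next I would handle the new outer fiber. By Definition~\ref{def:nu-cover}, the fiber of $\cC_{\times\vnu}$ over the new base is $R_{\vnu_{1..\ell}}^*(\cF_{\times\vnu_{\ell+1}})=R_{\vnu_{1..\ell}}^*(\cF_{\times 1})=R_{\vnu_{1..\ell}}^*\cF$, i.e. the fiber is unchanged as a symbol but its radius $r$ (if $\cF=A(r,1)$) is replaced by the pullback $R_{\vnu_{1..\ell}}^*r$. The key point: this pullback radius has an even associated monomial, hence a univalued square root. Indeed, $\vnu_{1..\ell}$ has all coordinates even (each is $2^k$ with $k\ge 1$), so by the homomorphism property of $f\mapsto\vz^{\valpha(f)}$ (the remark following Definition~\ref{def:assoc-monom}), pulling back along $R_{\vnu_{1..\ell}}$ multiplies $\valpha(r)$ coordinate-wise by $\vnu_{1..\ell}$, producing an even monomial; and since $r=\vz^{\valpha(r)}U$ with $\log U$ univalued, $R_{\vnu_{1..\ell}}^*r$ is the square of $\vz^{\valpha(r)\vnu_{1..\ell}/2}\cdot U\circ R_{\vnu_{1..\ell}}^{1/2}$, the latter factor being univalued because $\valpha(U)=0$. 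Thus the new fiber is quadric over the quadric base, so $\cC_{\times\vnu}$ is quadric. Finally, the $\delta^{1/2^\ell}$-extendability of the whole cover follows from Proposition~\ref{prop:ext-v-cover}(1) applied with $\nu=$ the single scalar, or more carefully by reconciling the componentwise exponents: the base was extended to exponent $\delta^{1/2^\ell}$, and $\vnu_{\ell+1}=1$ leaves the outer fiber's extension exponent unchanged except for the uniform $1/2^\ell$ already imposed by the base, giving $\cC_{\times\vnu}$ a $\delta^{1/2^\ell}$-extension as claimed (with $\ell+1$ in place of $\ell$: exponent $\delta^{1/2^{(\ell+1)-1}}$).

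The main obstacle I anticipate is the bookkeeping of \emph{which} scalar power of $\delta$ survives at each coordinate. Proposition~\ref{prop:ext-v-cover} is stated for a single scalar $\nu$, whereas here $\vnu$ has genuinely different exponents in different coordinates, so I would need to either iterate the scalar statement carefully (peeling off one $2$-cover at a time from the top, tracking that each peeling multiplies the required extension exponent by $1/2$ only on the coordinates below the current one) or prove a mild vector-valued refinement of Proposition~\ref{prop:ext-v-cover}. Either way the arithmetic is routine but must be done honestly; the conceptual content — that evenness of the covering exponents forces even associated monomials and hence univalued square roots — is exactly the homomorphism property already recorded after Definition~\ref{def:assoc-monom}.
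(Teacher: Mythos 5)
Your argument is correct and rests on exactly the same mechanism as the paper's one-line proof: under the cover the associated-monomial degrees of each radius are multiplied by the base exponents and divided by the fiber exponent, and the dyadic choice $\vnu_j=2^{\ell-j}$ leaves every resulting degree even (the paper simply records the resulting degrees $2^{j-k}\alpha_k$, $k<j$, for all $j$ at once, and, just as you do, dismisses the extension bookkeeping as a routine exercise). One small correction to your scalar $2$-cover step: by Definition~\ref{def:nu-cover} the new annulus radius is $R_2^*\bigl((r'_j)^{1/2}\bigr)$, the pullback of the \emph{square root} of the intermediate radius, not the pullback of $r'_j$ itself; quadricity still holds because its associated monomial equals $\valpha(r'_j)$, which is even by your inductive hypothesis, so the conclusion is unaffected.
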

\begin{proof}
  If $(\alpha_1,\ldots,\alpha_{j-1})$ are the degrees of the
  associated monomial of $r_j$ then the associated monomial of the
  $j$-th fiber of $\cC_{\times\vnu}$ has degrees
  $2^{j-\ell}(2^{\ell-1}\alpha_1,\ldots,2^{\ell-j+1}\alpha_{j-1})$,
  all of which are even as claimed. The existence of a
  $\delta^{1/2^{\ell-1}}$-extension is a simple exercise.
\end{proof}

Suppose $\cC$ is a quadric cell and $\vsigma\in\{-1,1\}^\ell$, with
negative entries allowed only for indices $j$ such that $\cF^j$ is an
annulus. We define the inversion map $I_\vsigma:\cC\to\cC$ by the
identity on coordinates $\vz_j$ with $\vsigma=1$ and by $r_j/\vz_j$ on
coordinates $\vz_j$ with $\vsigma=-1$. Using inversions we can prove
the following.

\begin{Prop}\label{prop:quadric-cover}
  Let $\cC$ be a (real) cell admitting a $\delta$-extension. There
  exists a collection of (real) quadric normalized cells $\cC_j$ and
  (real) cellular maps $\{f_j:\cC_j^\delta\to\cC^\delta\}$ such that
  $f_j(\Qua\cC_j)$ covers $\cC$ (and $f_j(\R_+\Qua\cC_j)$ covers
  $\R_+\cC$). If $\cC$ is algebraic of complexity $\beta$ then
  $\cC_j,f_j$ are algebraic of complexity $\poly_\ell(\beta)$.
\end{Prop}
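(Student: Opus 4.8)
\textbf{Proof proposal for Proposition~\ref{prop:quadric-cover}.}

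The plan is to combine the $\vnu$-cover construction of Proposition~\ref{prop:quadric-nu-cover} with the inversion maps $I_\vsigma$ to reconstitute the full cell from its positive quadrant. First I would pass to $\hat\cC:=\cC_{\times\vnu}$ with $\vnu=(2^{\ell-1},\ldots,1)$; by Proposition~\ref{prop:quadric-nu-cover} this cell is quadric and admits a $\delta^{1/2^{\ell-1}}$-extension, and the covering map $R_\vnu$ is cellular, algebraic of complexity $\poly_\ell(\beta,\nu)=\poly_\ell(\beta)$ (since $\nu=2^{\ell-1}$ is an absolute constant once $\ell$ is fixed), and extends to the $\delta^{1/2^{\ell-1}}$-extensions by part (1) of Proposition~\ref{prop:ext-v-cover}. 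Strictly speaking we should first normalize $\cC$ by the rescaling isomorphism so that each fiber is $*,D(1),D_\circ(1)$ or $A(r_j,1)$; this rescaling is cellular (in fact a composition of a translate and a scaling) and algebraic of complexity $\poly_\ell(\beta)$, so we may and do assume $\cC$ is normalized from the start. The point of the $\vnu$-cover is that $R_\vnu$ does \emph{not} in general cover $\cC$: on a $D_\circ$ or $A$ fiber with $\vnu_j$ even, the image $R_{\vnu_j}(z)=z^{\vnu_j}$ only sweeps out the full punctured disc/annulus as $z$ ranges over the whole fiber, but the \emph{positive} part $\R_+\Qua\hat\cC_j$ maps only onto the positive reals. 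This is exactly the place where the inversions enter.

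The key observation is that once $\hat\cC$ is quadric, for each sign vector $\vsigma\in\{-1,1\}^\ell$ (with $\vsigma_j=-1$ allowed only when $\cF_j$ is an annulus) the inversion $I_\vsigma:\hat\cC\to\hat\cC$ is a well-defined cellular \emph{biholomorphism} — well-defined precisely because $r_j=\rho_j^2$ has a univalued square root, so $r_j/\vz_j$ is a bounded holomorphic function with the correct modulus bounds, and monic of degree $1$ in $\vz_j$ is not literally true for $r_j/\vz_j$, so here one should instead regard $I_\vsigma$ not as a cellular map in the sense of Definition~\ref{def:cell-maps} but simply as an analytic automorphism of the cell that permutes the ``quadrants''; what matters is that $I_\vsigma(\Qua\hat\cC)$ as $\vsigma$ ranges over all admissible signs covers $\hat\cC$, and that the composition $R_\vnu\circ I_\vsigma$, when restricted to $\Qua\hat\cC$, can be re-expressed as an honest cellular map whose image is the relevant part of $\cC$. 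Concretely: on a fiber $A(r_j,1)$ with $\vnu_j$ even, writing $r_j=\rho_j^2$, the map $z\mapsto z^{\vnu_j}$ on the positive part of $A(\rho_j,1)$ covers the positive reals in $A(r_j,1)$ and, composed with the inversion, covers the other side; iterating over all fibers and all sign choices, the $2^\alpha$-fold ($\alpha=$ number of annulus fibers) collection of maps $f_\vsigma:=R_\vnu\circ I_\vsigma$ restricted to $\Qua\hat\cC$ has images covering all of $\cC$. Each $f_\vsigma$ is algebraic of complexity $\poly_\ell(\beta)$ and extends to the $\delta$-extensions after the usual adjustment of the extension parameter (we have $\delta^{1/2^{\ell-1}}<\delta$ so this only improves matters, and $R_\vnu$ takes the $\delta^{1/2^{\ell-1}}$-extension of $\hat\cC$ into $\cC^\delta$).

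I expect the main technical obstacle to be the bookkeeping required to present $f_\vsigma=R_\vnu\circ I_\vsigma\rest{\Qua\hat\cC}$ as a genuine cellular map $\cC_j^\delta\to\cC^\delta$ in the precise sense of Definition~\ref{def:cell-maps}, i.e. exhibiting it with components $\vw_k=\phi_k(\vz_{1..k})$ monic of positive degree in $\vz_k$. The inversion $z\mapsto r_j/z$ is not of this form, so the correct move is to \emph{absorb} the inversions into a re-choice of the quadric cell $\cC_j$: rather than inverting the map, one uses that $\Qua\hat\cC$ together with a suitable relabelling of which endpoint of each annulus is the ``small'' one produces a new normalized quadric cell mapping cellularly onto the target. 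Equivalently, one defines $\cC_\vsigma$ to be the normalized cell obtained from $\Qua\hat\cC$ by swapping inner and outer radii on the fibers where $\vsigma_j=-1$, and checks that $R_\vnu$ precomposed with this relabelling is cellular and has the claimed image. The real case adds no difficulty: all the rescalings, the $\vnu$-cover, and the radius-swaps preserve realness, and restricting to positive real parts throughout gives that $f_j(\R_+\Qua\cC_j)$ covers $\R_+\cC$ since $R_\vnu$ maps positive reals onto positive reals and, combined over all sign choices, sweeps out every component of $\R\cC$. The complexity bounds $\poly_\ell(\beta)$ follow since $\nu=2^{\ell-1}$ is bounded in terms of $\ell$ alone and composition of algebraic maps multiplies complexities polynomially.
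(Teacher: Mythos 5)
Your overall strategy --- pass to the $\vnu$-cover of Proposition~\ref{prop:quadric-nu-cover} to make the cell quadric, then use the inversions $I_\vsigma$ to recover the whole cell from its positive quadrant --- is exactly the paper's strategy, and your worry about literal cellularity of $R_\vnu\circ I_\vsigma$ is not the real issue (the paper itself simply allows compositions with rescaling and inversion maps, as is made explicit later in the proof of Theorem~\ref{thm:sectorial-param}). However, there are two concrete gaps in your write-up. First, the extension bookkeeping goes the wrong way: for $0<\delta<1$ one has $\delta^{1/2^{\ell-1}}>\delta$, not $<\delta$, so passing to $\cC_{\times\vnu}$ \emph{weakens} the extension rather than improving it. As written, your cells $\cC_j$ only admit $\delta^{1/2^{\ell-1}}$-extensions, so the maps are not defined on $\cC_j^\delta$ and the statement's requirement $f_j:\cC_j^\delta\to\cC^\delta$ fails. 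The paper avoids this by first invoking the refinement theorem (Theorem~\ref{thm:cell-refinement}) to assume $\cC$ admits a $\delta^{2^\ell}$-extension, so that after the $\vnu$-cover one still has a $\delta^2$-extension, with room to spare for the next point.

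Second, the covering claim is false as stated: on an annulus fiber of the quadric cell, $\Qua\hat\cC$ contributes $A(\rho_j,1)$ and its inversion contributes $A(r_j,\rho_j)$, so the union over all $\vsigma$ of $I_\vsigma(\Qua\hat\cC)$ is $\hat\cC$ minus the equators $\{|\vz_j|=|\rho_j|\}$, and since these equators are exactly the preimages under $R_\vnu$ of the corresponding circles downstairs, the images of $R_\vnu\circ I_\vsigma\rest{\Qua\hat\cC}$ miss a nonempty subset of $\cC$. The paper's fix is to perform the inversion trick not on $\hat\cC$ itself but on the slightly enlarged cell $\tilde\cC$ obtained by replacing each fiber $A(r_j,1)$ by $A(\delta r_j,1)$ (still quadric, since $\delta r_j=(\sqrt\delta\,\rho_j)^2$); then the quadrants and their inversions overlap across the old equators and do cover $\cC$. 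This enlargement consumes extension room --- $\tilde\cC$ admits only a $\delta$-extension when $\hat\cC$ admits a $\delta^2$-extension --- which is the second reason the initial refinement step is needed. Incidentally, your remark that $R_\vnu$ ``does not in general cover $\cC$'' conflates the radial quadrant $\Qua$ with the positive real part $\R_+$: the map $R_\vnu$ is surjective onto $\cC$ (and onto $\R_+\cC$ from $\R_+\hat\cC$); the only reason the inversions are needed is that the \emph{quadrant} $\Qua\hat\cC$ sees just the outer half of each annulus fiber.
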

\begin{proof}
  By the refinement theorem (Theorem~\ref{thm:cell-refinement}) we may
  assume that $\cC$ admits a $\delta^{2^\ell}$-extension. Applying
  Proposition~\ref{prop:quadric-nu-cover} we see that
  $\cC_{\times\vnu}$ admits a $\delta^2$-extension, is quadric, and
  after rescaling may also be assumed to be normalized. It will
  suffice to prove the claim for $\cC_{\times\vnu}$, so henceforth we
  replace $\cC$ by $\cC_{\times\vnu}$.

  We essentially want to use the collection of all the inversion maps
  $I_\vsigma$ to cover $\cC$ by $I_\vsigma(\Qua\cC)$ and $\R_+\cC$ by
  $I_\vsigma(\R_+\Qua\cC)$. The minor technical issue is that this
  does not cover the equators $\{\vz_j=\rho_j\}$. To avoid this
  problem we use a slightly larger cell $\tilde\cC$. Namely, we
  replace each fiber $\cF_j=A(r_j,1)$ by
  $\tilde\cF_j=A(\delta r_j,1)$. Since $\cC$ admits a
  $\delta^2$-extension $\tilde\cC$ admits a $\delta$-extension. We can
  now cover $\cC$ by the union of $I_\vsigma(\Qua\tilde\cC)$ (and similarly
  for the real part).
\end{proof}

The following proposition is our principal motivation for introducing
the notion of a quadric cell.

\begin{Lem}\label{lem:QC-derivative-bound}
  Let $\delta<1/4$ and let $\cC^\delta$ be a quadric cell and
  $f\in\O_b(\cC^\delta)$. Then
  \begin{equation}
    \norm{\pd f{\vz_j}}_{\Qua\cC} \le O_\ell(\norm{f}_{\cC^\delta}\cdot\delta)
    \qquad \text{for }j=1,\ldots,\ell.
  \end{equation} 
\end{Lem}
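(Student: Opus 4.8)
I would fix the index $j$; the case $\cF_j=*$ is vacuous (the coordinate $\vz_j$ is constant on $\cC$), so assume $\cF_j$ is of type $D,D_\circ$ or $A$. It suffices to bound $\bigl|\pd f{\vz_j}\bigr|$ at an arbitrary point $\vz\in\Qua\cC$, and for this I would freeze all coordinates except $\vz_j$ and view $f$ as a holomorphic function of $\vz_j$ on the appropriate fiber of $\cC^\delta$, which — since the cell is normalized — is one of $D(\delta^{-1})$, $D_\circ(\delta^{-1})$, $A(\delta r_j,\delta^{-1})$ with $r_j=r_j(\vz_{1..j-1})$, $|r_j|<1$. Writing the Laurent expansion $f=\sum_{n\in\Z}a_n\vz_j^n$ and differentiating term by term (legitimate, since the closure of the corresponding fiber of $\Qua\cC$ is a compact subset of the open annulus/disc), the task becomes bounding $\sum_{n\neq 0}|n|\,|a_n|\,|\vz_j|^{n-1}$ on $\Qua\cC$.

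\textbf{Coefficient estimates and the role of the quadric condition.} The Cauchy estimates on circles $|\vz_j|=s$, letting $s$ tend to the outer (resp. inner) boundary radius, give $|a_n|\le\norm f_{\cC^\delta}\,\delta^{n}$ for $n\ge0$ and $|a_n|\le\norm f_{\cC^\delta}(\delta|r_j|)^{|n|}$ for $n<0$; in the $D,D_\circ$ cases the negative coefficients vanish outright (removable singularity theorem). For the nonnegative part, $|\vz_j|\le1$ on $\Qua\cC$ makes the sum $\sum_{n\ge1}n|a_n|\,|\vz_j|^{n-1}\le\norm f_{\cC^\delta}\sum_{n\ge1}n\delta^{n}=O(\delta\norm f_{\cC^\delta})$ immediate. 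The genuine point — and the whole reason the notion of a quadric cell is introduced — is the negative part: differentiating a term $a_n\vz_j^n$ with $n=-m<0$ produces the factor $|\vz_j|^{-m-1}$, which near the inner boundary of $A(r_j,1)$ blows up like $|r_j|^{-m-1}$ and would swamp the $\delta$-gain. This is exactly where I would invoke that on $\Qua\cC$ one has $|\vz_j|\ge|\rho_j|=|r_j|^{1/2}$, so that
\[
  |a_n|\,|\vz_j|^{n-1}\le \norm f_{\cC^\delta}\,(\delta|r_j|)^{m}\,|r_j|^{-(m+1)/2}=\norm f_{\cC^\delta}\,\delta^{m}\,(|r_j|^{1/2})^{m-1}\le \norm f_{\cC^\delta}\,\delta^{m},
\]
the last step using $|r_j|^{1/2}<1$ and $m\ge1$. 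The square root in $\rho_j$ is precisely calibrated to cancel the $|r_j|^{-m}$ against $(\delta|r_j|)^m$.

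\textbf{Conclusion.} Summing, the negative part is also $\le\norm f_{\cC^\delta}\sum_{m\ge1}m\delta^{m}$, so altogether $\bigl|\pd f{\vz_j}(\vz)\bigr|\le 2\norm f_{\cC^\delta}\,\delta/(1-\delta)^2\le 4\delta\,\norm f_{\cC^\delta}$ using $\delta<1/4$; taking the supremum over $\vz\in\Qua\cC$ gives the claim (indeed with an absolute constant, which a fortiori is $O_\ell(1)$). The only mild obstacle is bookkeeping the three fiber types uniformly, but the $D$ and $D_\circ$ cases are strictly easier than $A$ once one notes the removable singularity reduction, so no real difficulty remains.
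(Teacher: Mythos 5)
Your handling of the direct dependence on $\vz_j$ within its own fiber is fine, but the opening reduction is where the argument breaks. A complex cell is not a product: for every $k>j$ the radii of the fiber $\cF_k$ are holomorphic functions $r_k(\vz_{1..k-1})$ which in general depend on $\vz_j$. So if you freeze $\vz_{j+1},\ldots,\vz_\ell$ and let $\vz_j$ range over the $\delta$-extended $j$-th fiber, the point leaves $\cC^\delta$: the constraints $\delta|r_k(\vz_{1..k-1})|<|\vz_k|$ fail as soon as $r_k$ grows, and nothing controls how much $r_k$ varies with $\vz_j$. Already for $\cC=D_\circ(1)\odot A(\vz_1,1)$, at a point where $|\vz_2|$ barely exceeds $|\vz_1|$ the set of admissible $\vz_1$ with $\vz_2$ frozen is a thin annulus, not $D_\circ(\delta^{-1})$. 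Hence the restriction of $f$ to your slice is only defined on a possibly tiny portion of $D(\delta^{-1})$, $D_\circ(\delta^{-1})$ or $A(\delta r_j,\delta^{-1})$, and the one-variable Laurent expansion and Cauchy estimates that drive your whole bound have no domain to live on. Your argument is correct exactly when no later radius depends on $\vz_j$ --- e.g.\ for $j=\ell$ --- and there your use of $|\vz_j|\ge|r_j|^{1/2}$ on $\Qua\cC$ to tame the negative Laurent tail is indeed the right use of the quadric hypothesis, the same mechanism the paper exploits.

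What your proof misses is precisely the indirect dependence of $f$ on $\vz_j$ through the radii $r_k$, $k>j$, which is the actual difficulty of the lemma. The paper works on a fixed polydisc instead of a varying slice: it decomposes $f=\sum_\vsigma f_\vsigma(\vz^{[\vsigma]})$ via Corollary~\ref{cor:laurent-disc-decmp} with $f_\vsigma$ holomorphic on $\cP^{2\delta}$, gets $\norm{(f_\vsigma)'_k}_\cP=O_\ell(\delta)$ from Cauchy there, and then the chain rule produces, besides the direct term, the contributions $(f_\vsigma)'_k\cdot\frac1{\vz_k}\pd{r_k}{\vz_j}$ for $k>j$ with $\vsigma_k=-1$. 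These are bounded by writing $\frac1{\vz_k}\pd{r_k}{\vz_j}=2\,\pd{\sqrt{r_k}}{\vz_j}\cdot\frac{\sqrt{r_k}}{\vz_k}$, using $|\sqrt{r_k}/\vz_k|\le1$ on $\Qua\cC$ and an induction on $\ell$ that applies the very same lemma to $\sqrt{r_k}\in\cO_b(\cC_{1..k-1}^\delta)$; this is also why the constant is genuinely $O_\ell(1)$ rather than absolute, contrary to your closing remark. To repair your proof you would need an argument of this kind (or some other way to handle the fact that the admissible range of $\vz_j$ in your slice depends on the frozen coordinates); the one-variable Cauchy estimate alone cannot see these terms.
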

\begin{proof}
  We assume without loss of generality that $\norm{f}_{\cC^\delta}=1$.
  By Corollary~\ref{cor:laurent-disc-decmp},
  $f=\sum_\vsigma f_\vsigma(\vz^{[\vsigma]})$ with
  $\norm{f_\vsigma}_{\cP^{2\delta}}=O_\ell(1)$. It will suffice to prove
  the claim for each of these summands, so fix
  $\vsigma\in\{-1,1\}^\ell$.

  If $\vsigma_j=1$ then
  \begin{equation}
    \pd{}{\vz_j} f_\vsigma(\vz^{[\vsigma]}) = (f_\vsigma)'_j(\vz^{[\vsigma]}) +
    \sum_{k>j,\vsigma_k=-1} (f_\vsigma)'_k(\vz^{[\vsigma]})\cdot \frac{1}{\vz_k}\cdot\pd{r_k}{\vz_j}.
  \end{equation}
  By the Cauchy formula
  \begin{equation}
    \norm{(f_\vsigma)'_j}_\cP=O_\ell(\delta \norm{f_\vsigma}_{\cP^{2\delta}})=O_\ell(\delta).
  \end{equation}
  Also, since $\rho_k=\sqrt{r_k}$ is holomorphic of norm at most $1$
  in $\cC_{1..k-1}^\delta$ we have
  \begin{equation}
    \norm{\frac{1}{\vz_k} \pd{r_k}{\vz_j}}_{\Qua\cC} = \norm{ 2 \pd{\sqrt{r_k}}{\vz_j} \frac{\sqrt{r_k}}{\vz_k}}_{\Qua\cC}\le
    O_\ell(\delta)
  \end{equation}
  where we used induction over $\ell$ and the fact that
  $\abs{\sqrt{r_k}/\vz_k}<1$ in $\Qua\cC$.  Combining these estimates we see
  that
  $\norm{\pd{}{\vz_j} f_\vsigma(\vz^{[\vsigma]})}_{\Qua\cC}=O_\ell(\delta)$.

  The case $\vsigma_j=-1$ is similar. We have
  \begin{equation}
    \pd{}{\vz_j} f_\vsigma(\vz^{[\vsigma]}) = -(f_\vsigma)'_j(\vz^{[\vsigma]})\cdot\frac{r_j}{\vz_j^2} +
    \sum_{k>j,\vsigma_k=-1} (f_\vsigma)'_k(\vz^{[\vsigma]})\cdot \frac{1}{\vz_k}\cdot\pd{r_k}{\vz_j},
  \end{equation}
  which can be estimated in the same manner noting that
  $\norm{\frac{r_j}{\vz_j^2}}_{\Qua\cC}\le 1$ by definition.
\end{proof}

\subsection{Straightening the positive quadrant}
\label{sec:quadrant-straighten}

Let $\cC$ be a normalized quadric cell of length $\ell$ admitting a
$\delta\le1/100$ extension. To simplify the notations we assume that
$\cC$ contains no fibers of type $D$ (we can replace each fiber of
type $D$ by fibers of type $D_\circ$ and $*$). To further simplify the
notation we also assume that $\cC$ contains no fibers of type $*$ (if
it does then one should add additional single coordinates to $B$
defined below in order to preserve the cellular structure of the map).

Let $B:=(0,1)^{\times\ell}$.  We define a map $h:B\to\R_+\cC^\delta$
inductively by
\begin{equation}\label{eq:h-def}
  h_i(\vu) = \vu_i+R_i(\vu_{1..i-1}), \qquad R_i:=\sqrt{r_i\circ h_{1..i-1}}.
\end{equation}
Here $r_i$ denotes the inner radius of $\cF_i$ if it is of type $A$,
and $0$ if it is of type $D_\circ$. Since $\cC$ is normalized it
follows that $\R_+\Qua\cC\subset h(B)\subset \cC^\delta$. We think of
$h$ as a straightening of the positive quadrant. Our goal will be to
show that $h$ can be analytically continued to a polysector
\begin{equation}\label{eq:polysector}
  \sec_\ell(\e) := \{|\Arg u_i|<\e, |u_i|<2 : i=1,\ldots,\ell\}
\end{equation}
for some positive $\e>0$. This will be used
in~\secref{sec:smooth-param} to construct parametrizations of
$\R_+\cC$ with control on derivatives.

\begin{Prop}\label{prop:h-extension}
  There exists $\e>0$ such that $h$ can be analytically extended to
  $\sec_\ell(\e)$ and we have
  \begin{equation}\label{eq:h-domains}
    h(\sec_\ell(\e))\subset\Qua^{1/4}\cC\cap\{|\Arg\vz_j|\le\pi/4:j=1,\ldots,\ell\}.
  \end{equation}
  Moreover for any $j$ such that $\cF_j$ is an annulus we have
  \begin{equation}\label{eq:R_j-C1}
    \norm{\frac{\vu_i}{R_j} \pd{R_j}{\vu_i}}_{\sec_\ell(\e)} \le M_\ell,\quad i=1,\ldots,\ell
  \end{equation}
  for some $M_\ell>0$, i.e. $\log R_j$ is a $C^1$-bounded function of
  $\log\vu$. If $\cC$ is algebraic of complexity $\beta$ then
  $\e^{-1},M_\ell=\poly_\ell(\beta)$.
\end{Prop}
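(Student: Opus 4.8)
The strategy is to prove by induction on $\ell$ that $h$ extends to $\sec_\ell(\e_\ell)$ for a suitable $\e_\ell>0$, with the two displayed conclusions, keeping track in the algebraic case of a polynomial bound $\e_\ell^{-1}=\poly_\ell(\beta)$. The base case $\ell=0$ is vacuous. For the inductive step, suppose $h_{1..\ell-1}$ has already been extended to $\sec_{\ell-1}(\e_{\ell-1})$ with $h_{1..\ell-1}(\sec_{\ell-1}(\e_{\ell-1}))\subset\Qua^{1/4}\cC_{1..\ell-1}\cap\{|\Arg\vz_j|\le\pi/4\}$ and with the $C^1$-bounds \eqref{eq:R_j-C1} for $j<\ell$. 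Then $r_\ell\circ h_{1..\ell-1}$ is a bounded holomorphic function on $\sec_{\ell-1}(\e_{\ell-1})$; since $\cC$ is quadric, $\sqrt{r_\ell}$ is univalued on $\cC_{1..\ell-1}^\delta$, hence $R_\ell=\sqrt{r_\ell\circ h_{1..\ell-1}}$ is a well-defined bounded holomorphic function there, and we set $h_\ell(\vu)=\vu_\ell+R_\ell(\vu_{1..\ell-1})$ on $\sec_\ell(\e_\ell)$ for $\e_\ell\le\e_{\ell-1}$ to be chosen.

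\textbf{Key steps.} First I would control $R_\ell$ itself. Since $h_{1..\ell-1}$ maps into $\Qua^{1/4}\cC_{1..\ell-1}$ and $\cC$ admits a $\delta$-extension with $\delta\le 1/100$, the inner radius $r_\ell$ is bounded away from the outer radius $1$ over $\Qua^{1/4}\cC$; in particular $|R_\ell|<1$ there. The genuinely important point is to bound $\Arg R_\ell$. This follows from the fact that $\log R_\ell=\tfrac12\log(r_\ell\circ h_{1..\ell-1})$ and that over the positive real points $\R_+\cC_{1..\ell-1}$ the radius $r_\ell$ is real and positive, so $\Arg R_\ell=0$ on $(0,1)^{\ell-1}$; by continuity and the boundedness of $R_\ell$ (applying the Cauchy estimates / a compactness argument, or better the monomialization lemma below) one gets $|\Arg R_\ell|<\pi/4$ on a possibly smaller polysector. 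Since $|u_\ell|<2$ and $|\Arg u_\ell|<\e_\ell$, the sum $h_\ell=u_\ell+R_\ell$ has $|h_\ell|<3$ and can be forced into $\{|\Arg\vz_\ell|\le\pi/4\}$ by taking $\e_\ell$ small and, if $r_\ell\ne 0$ (annulus case), using that $R_\ell$ is bounded below on $\sec_{\ell-1}(\e_{\ell-1})$ — here I would again invoke the quadric structure together with the fact that $r_\ell$ is compatible/non-vanishing to get $|R_\ell|\ge c_\ell>0$. Combining, $h_\ell$ lands in the $\{|\Arg\vz_\ell|\le\pi/4\}$ sector and in $\Qua^{1/4}\cC$ after shrinking $\e_\ell$.

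\textbf{The $C^1$ bound and the main obstacle.} For \eqref{eq:R_j-C1} with $j=\ell$ (the case $j<\ell$ being inherited), write $\log R_\ell=\tfrac12\log r_\ell\circ h_{1..\ell-1}$ and differentiate:
\begin{equation}
  \frac{\vu_i}{R_\ell}\pd{R_\ell}{\vu_i} = \frac{\vu_i}{2}\sum_{k=1}^{\ell-1}\Big(\frac1{r_\ell}\pd{r_\ell}{\vz_k}\Big)(h_{1..\ell-1})\cdot\pd{h_k}{\vu_i}.
\end{equation}
Here $\tfrac1{r_\ell}\pd{r_\ell}{\vz_k}$ is a logarithmic derivative of a non-vanishing bounded holomorphic function on a cell admitting a $\delta$-extension, so by Proposition~\ref{prop:log-derivative-bound} it is $O_\ell(|\vz_k|^{-1})$ (and $\poly_\ell(\beta)|\vz_k|^{-1}$ in the algebraic case); evaluated at $\vz=h_{1..\ell-1}(\vu)$ it is $O_\ell(|h_k(\vu)|^{-1})$. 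Meanwhile $\pd{h_k}{\vu_i}=\delta_{ik}+\pd{R_k}{\vu_i}$, and by the inductive $C^1$-bound \eqref{eq:R_j-C1} we have $|\pd{R_k}{\vu_i}|\le M_{\ell-1}|R_k|/|\vu_i|\le M_{\ell-1}\cdot|h_k|/|\vu_i|$ up to constants (since $|R_k|\le|h_k|+|\vu_k|$ and $|h_k|\asymp|R_k|$ on the annulus fibers). Thus $|\vu_i\,\pd{h_k}{\vu_i}|=O_{M_{\ell-1},\ell}(|h_k|)$, and the product $|\vu_i|\cdot|h_k|^{-1}\cdot|\vu_i\pd{h_k}{\vu_i}|\cdot|h_k|^{-1}$ telescopes to a bound $O_\ell(1)$ independent of $\vu$, giving $M_\ell=O_\ell(M_{\ell-1})$ and hence $M_\ell=\poly_\ell(\beta)$ in the algebraic case by unwinding the recursion. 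The main obstacle I anticipate is the circularity in these estimates — we need $h_{1..\ell-1}$ to land in the \emph{open} region $\Qua^{1/4}\cC$ (not just its closure) to apply Proposition~\ref{prop:log-derivative-bound}, while the extension of $h$ to a sector is exactly what we are trying to establish; the resolution is to be careful about the order of quantifiers, first extending $h$ to a sector mapping into $\cC^\delta$ using only the boundedness of the $R_i$, and only \emph{then} shrinking $\e$ to push the image into $\Qua^{1/4}\cC$, at which point the log-derivative estimate becomes available and can be fed back into the inductive $C^1$ bound. A clean way to execute this is to use the monomialization lemma (Lemma~\ref{lem:monomial}) on $r_\ell$ to write $r_\ell=\vz^{\valpha}U$ with $\log U$ bounded on $\cC_{1..\ell-1}$ of diameter $\poly_\ell(\beta)$, which directly yields both the argument bound on $R_\ell$ and the effective $C^1$ control, bypassing the need for repeated Cauchy estimates.
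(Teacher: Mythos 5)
Your computational core coincides with the paper's proof (induction on $\ell$, the chain rule for $\log R_\ell$ combined with Proposition~\ref{prop:log-derivative-bound}, and the comparison $|\vu_i|,|R_i|\le|\vz_i|$), but the step on which everything hinges — a \emph{uniform} bound $|\Arg R_j|\le\pi/4$ on the sector — is not justified by the arguments you offer. Continuity plus boundedness of $R_\ell$ cannot give such a bound on "a possibly smaller polysector": the sector $\sec_{\ell-1}(\e)$ is not relatively compact near the vertices $\vu_i\to0$, so there is no compactness argument available, and in the algebraic case you must produce $\e^{-1}=\poly_\ell(\beta)$, which no soft argument yields. The monomialization route fares no better as stated: writing $r_\ell=\vz^{\valpha}U$, Lemma~\ref{lem:monomial} only bounds $\diam(\Im\log U)$ by $O(1)$ or $\poly_\ell(\beta)$ — not by anything small — and the term $\valpha\cdot\Arg\vz$ carries a factor $|\valpha|=\poly_\ell(\beta)$; so neither summand of $\Arg R_\ell$ is small without a further quantitative Lipschitz estimate, which brings you back to exactly the derivative computation you were trying to bypass.

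The mechanism that actually works, and that the paper uses, is to run the induction in the opposite order to your first paragraph: the inductive hypothesis \eqref{eq:R_j-C1} for $R_1,\ldots,R_{\ell-1}$ says $\log R_j$ is Lipschitz with constant $M_{\ell-1}$ as a function of $\log\vu$; since $R_j$ is real and positive on the positive real locus and every point of $\sec_{\ell-1}(\e)$ lies at logarithmic distance at most $\e$ from that locus, one gets $|\Arg R_j|\le M_{\ell-1}\e$, hence $|\Arg\vu_i|,|\Arg R_i|\le\pi/4$ once $\e<\pi/(4M_{\ell-1})$ — this is what gives \eqref{eq:u_i-vs-z_i} and the containment of $h_{1..\ell-1}(\sec_{\ell-1}(\e))$ in $\Qua^{1/4}\cC_{1..\ell-1}$. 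With that in hand the chain-rule estimate yields \eqref{eq:R_j-C1} for $R_\ell$ (note this step needs no information about $\Arg R_\ell$, only about indices $<\ell$, so there is no circularity to resolve); only \emph{then} does one deduce $|\Arg R_\ell|\le\pi/4$ by the same Lipschitz-plus-reality argument after shrinking $\e$, and conclude \eqref{eq:h-domains} for the $\ell$-th coordinate by the elementary geometric inclusion \eqref{eq:R_l-domain-cond}. Two smaller points: your claim $|h_k|\asymp|R_k|$ on annulus fibers is false in general ($|\vu_k|$ may dominate); only the direction $|R_k|\le|h_k|$ is needed, and it is precisely \eqref{eq:u_i-vs-z_i}, again a consequence of the argument bounds. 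And the quantity you "telescope" at the end of your estimate has extra factors of $|\vu_i|$ and $|h_k|^{-1}$; the clean bookkeeping is $|\vu_i|\cdot K/|\vz_i|\le K$ for the diagonal term and $|\vu_i\partial R_k/\partial\vu_i|\cdot K/|\vz_k|\le KM_{\ell-1}$ for each $k>i$, giving $M_\ell\le K(1+\ell M_{\ell-1})$ and hence $M_\ell=\poly_\ell(\beta)$ in the algebraic case.
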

\begin{proof}
  By induction on $\ell$ we may assume that $h_{1..\ell-1}$ extends to
  $\sec_{\ell-1}(\e)$ for a sufficiently small $\e$, that
  \begin{equation}\label{eq:h-domains-ind}
    h_{1..\ell-1}(\sec_{\ell-1}(\e))\subset\Qua^{1/4}\cC_{1..\ell-1}\cap\{|\Arg\vz_j|\le\pi/4:j=1,\ldots,\ell-1\},
  \end{equation}
  and that \eqref{eq:R_j-C1} already holds for $R_{1..\ell-1}$ with an
  appropriate constant $M_{\ell-1}$ (as a shorthand notation we write
  $R_{1..\ell-1}$ to mean only those $R_j$ which are defined,
  i.e. such that $\cF_j$ is an annulus). Our first goal is to
  prove~\eqref{eq:R_j-C1} for $R_\ell$ assuming $\cF_\ell$ is an
  annulus.

  Since $\log R_{1..\ell-1}$ is a $C^1$-bounded function of
  $\log\vu_{1..\ell-2}$ we may, taking $\e<\pi/(4M_{\ell-1})$,
  suppose that on $\sec_{\ell-1}(\e)$ we have
  \begin{equation}\label{eq:R_i-arg-bound}
    |\Arg \vu_i|, |\Arg R_i| \le \pi/4, \qquad i=1,\ldots,\ell-1.
  \end{equation}
  Indeed $R_i$ is real on $\R\sec_{i-1}(\e)$, and since every point of
  $\sec_{i-1}(\e)$ lies at logarithmic distance at most $\e$ from this
  real part, it follows that $\log R_i$ and in particular $\Arg R_i$
  is at distance at most $M_{\ell-1}\cdot\e$ from zero. We note that
  this implies
  \begin{equation}\label{eq:u_i-vs-z_i}
    |\vu_i|,|R_i| \le |\vu_i+R_i|=|\vz_i|, \qquad i=1,\ldots,\ell-1.
  \end{equation}
  Now computing the left hand side of~\eqref{eq:R_j-C1} with $j=\ell$
  we have
  \begin{equation}\label{eq:R_ell-der}
    \frac{\vu_i}{R_\ell} \pd{R_\ell}{\vu_i} =
    \vu_i \left(\frac1{\sqrt{r_\ell}}\pd{\sqrt r_\ell}{\vz_i}\circ h_{1..\ell-1}\right)
    + \vu_i\sum_{i<j<\ell} \pd{R_j}{\vu_i} \left(\frac1{\sqrt{r_\ell}}\pd{\sqrt{r_\ell}}{\vz_j}\circ h_{1..\ell-1}\right).
  \end{equation}
  By~\eqref{eq:h-domains-ind} the image
  $h_{1..\ell-1}(\sec_{\ell-1}(\e))$ is contained in
  $\cC_{1..\ell-1}^{1/4}$. Applying
  Proposition~\ref{prop:log-derivative-bound} and
  using~\eqref{eq:u_i-vs-z_i} we have for some $K>0$
  \begin{gather}
    \abs{\vu_i \big(\frac1{\sqrt{r_\ell}}\pd{\sqrt r_\ell}{\vz_i}\circ h_{1..\ell-1}\big)}\le \abs{\frac{K\vu_i}{\vz_i}}\le K, \\
    \abs{\frac1{\sqrt{r_\ell}}\pd{\sqrt{r_\ell}}{\vz_j}\circ h_{1..\ell-1}}\le \frac{K}{|\vz_j|} \le \frac{K}{|R_j|}.
  \end{gather}
  Plugging into~\eqref{eq:R_ell-der} and using the
  inductive~\eqref{eq:R_j-C1} we get
  \begin{equation}
    \norm{\frac{\vu_i}{R_\ell} \pd{R_\ell}{\vu_i}}_{\sec_{\ell-2}(\e)}\le K\left(1+\sum_{i<j<\ell} \abs{\frac{\vu_i}{R_j}\pd{R_j}{\vu_i}}\right)
    \le K\big(1+(\ell-i-1) M_{\ell-1}\big)
  \end{equation}
  as claimed. In the algebraic case $K=\poly_\ell(\beta)$ and by
  induction we have indeed $M_\ell=\poly_\ell(\beta)$.

  We now pass to proving that $h$ can be analytically extended to
  $\sec_\ell(\e)$, satisfying~\eqref{eq:h-domains}. By assumption
  $h_{1..\ell-1}$ is already extended and
  satisfies~\eqref{eq:h-domains-ind}. If $\cF_\ell$ is an annulus (the
  $D_\circ$ case is similar but easier) what remains to be verified is
  that
  \begin{equation}\label{eq:R_l-domain-cond}
    R_l(\vu_{1..\ell-1}) +\{ |\Arg\vu_\ell|<\e,|\vu_\ell|<2 \} \subset \{ |R_\ell|<|\vz_\ell|<4 \}
  \end{equation}
  for $\vu_{1..\ell-1}\in \sec_{\ell-1}(\e)$. Having
  established~\eqref{eq:R_j-C1} for $R_\ell$, we can now assume
  that~\eqref{eq:R_i-arg-bound} holds for $R_\ell$ as well (for a
  sufficiently small $\e$). Then~\eqref{eq:R_l-domain-cond} follows by
  an elementary geometric argument illustrated in
  Figure~\ref{fig:SellEpsilon}.
  \begin{figure}
    \centering
    \includegraphics[width=0.6\textwidth]{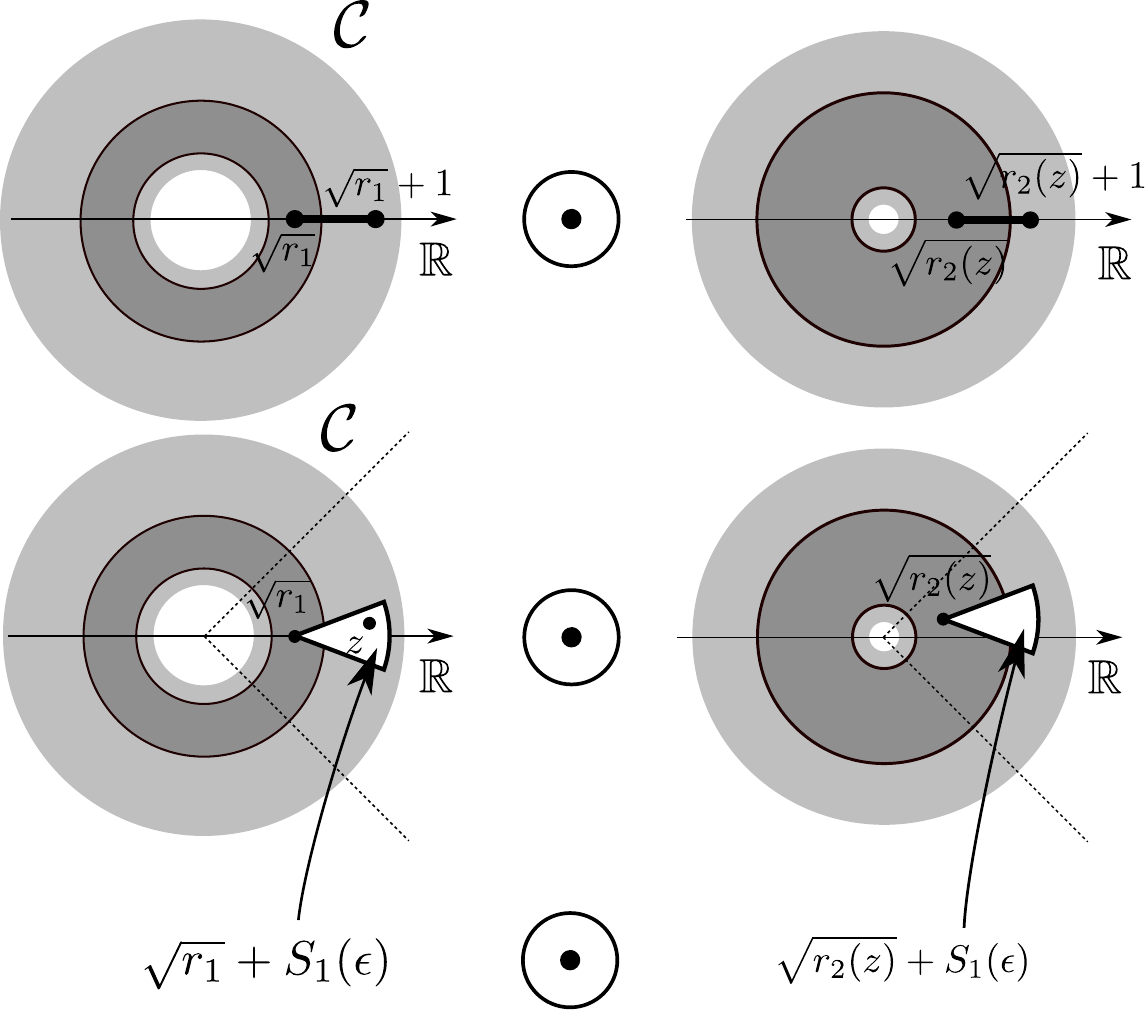}
    \caption{Analytic continuation of $h$ to $\sec_\ell(\e)$.}
    \label{fig:SellEpsilon}
  \end{figure}
\end{proof}

\begin{Cor}\label{cor:h-jacobian-bound}
  Let $\sec_\ell(\e)$ be as in Proposition~\ref{prop:h-extension}. Then
  the Jacobian of $h$ satisfies
  \begin{equation}
    \norm{\left[\pd{h_i}{\vu_j}\right] - I_{\ell\times\ell}}_{\sec_\ell(\e)} \le O_\ell(\delta).
  \end{equation}
\end{Cor}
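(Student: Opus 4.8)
The plan is to combine the explicit lower‑triangular shape of the Jacobian with the quadric derivative estimate of Lemma~\ref{lem:QC-derivative-bound}. Since $h_i(\vu)=\vu_i+R_i(\vu_{1..i-1})$ and $R_i$ does not involve $\vu_i,\ldots,\vu_\ell$, the matrix $J:=[\partial h_i/\partial\vu_j]$ is lower triangular with every diagonal entry equal to $1$, so $J-I_{\ell\times\ell}$ is strictly lower triangular with entries $(\partial R_i/\partial\vu_j)_{j<i}$. It therefore suffices to prove $\norm{\partial R_i/\partial\vu_j}_{\sec_\ell(\e)}=O_\ell(\delta)$ for each pair $j<i$; and when $\cF_i$ is of type $D_\circ$ we have $R_i\equiv0$, so we may assume $\cF_i$ is an annulus.

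I would prove this by induction on $i$, noting that the truncated cells $\cC_{1..i-1}$ satisfy the same hypotheses and that Proposition~\ref{prop:h-extension} applies to them with the same $\e$. Using that $\cC$ is quadric, write $R_i=\rho_i\circ h_{1..i-1}$ with $\rho_i=\sqrt{r_i}$ a univalued holomorphic function on $\cC_{1..i-1}^\delta$; by the normalization convention $0<|r_i|<1$ on $\cC_{1..i-1}^\delta$, so $\norm{\rho_i}_{\cC_{1..i-1}^\delta}\le1$. The chain rule gives
\[
  \frac{\partial R_i}{\partial\vu_j}=\sum_{k=1}^{i-1}\Big(\frac{\partial\rho_i}{\partial\vz_k}\circ h_{1..i-1}\Big)\cdot\frac{\partial h_k}{\partial\vu_j},
\]
where $\partial h_k/\partial\vu_j=\delta_{kj}+\partial R_k/\partial\vu_j$ has sup‑norm $1+O_\ell(\delta)=O_\ell(1)$ on $\sec_\ell(\e)$ by the inductive hypothesis.

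It remains to bound $\partial\rho_i/\partial\vz_k$ on the image $h_{1..i-1}(\sec_{i-1}(\e))$, which by Proposition~\ref{prop:h-extension} lies in $\Qua^{1/4}\cC_{1..i-1}$. Rescaling $\cC_{1..i-1}^{1/4}$ to normalized form, it becomes a quadric cell whose positive quadrant is (the rescaled copy of) $\Qua^{1/4}\cC_{1..i-1}$ and which, since $\delta$‑extensions compose multiplicatively, admits a $4\delta$‑extension equal to the rescaled copy of $\cC_{1..i-1}^\delta$; here $4\delta<1/4$ by the hypothesis $\delta\le1/100$, and $\rho_i$ has norm $\le1$ on this extension. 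Lemma~\ref{lem:QC-derivative-bound} then yields $|\partial\rho_i/\partial\vz_k|=O_\ell(\delta)$ on $h_{1..i-1}(\sec_{i-1}(\e))$. Substituting into the chain‑rule identity, each of the at most $\ell$ terms is $O_\ell(\delta)\cdot O_\ell(1)=O_\ell(\delta)$, whence $\norm{\partial R_i/\partial\vu_j}_{\sec_\ell(\e)}=O_\ell(\delta)$, closing the induction. Finally, since $J-I_{\ell\times\ell}$ has all of its at most $\ell^2$ entries of modulus $O_\ell(\delta)$ on $\sec_\ell(\e)$, its operator norm there is $O_\ell(\delta)$.

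The one step requiring care is the extension bookkeeping in the invocation of Lemma~\ref{lem:QC-derivative-bound}: one must check that $h_{1..i-1}$ lands inside the positive quadrant of a normalized quadric cell that still admits an extension of size $O(\delta)$ smaller than $1/4$, on which the radius $\rho_i$ is bounded by $1$. This is precisely where the factor $\delta$ in the final bound comes from — the extra room in the extension — and it is essential that one uses the quadric estimate (Lemma~\ref{lem:QC-derivative-bound}) rather than the coarser logarithmic‑derivative bound (Proposition~\ref{prop:log-derivative-bound}), which has no $\delta$‑gain and was enough only for Proposition~\ref{prop:h-extension}.
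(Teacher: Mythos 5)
Your proof is correct and follows essentially the same route as the paper: induction on the length, the chain rule applied to $R_i=\sqrt{r_i}\circ h_{1..i-1}$, the containment $h_{1..i-1}(\sec_{i-1}(\e))\subset\Qua^{1/4}\cC_{1..i-1}$ from Proposition~\ref{prop:h-extension}, and Lemma~\ref{lem:QC-derivative-bound} combined with $\norm{\sqrt{r_i}}_{\cC_{1..i-1}^\delta}\le 1$. Your additional rescaling step, reducing the needed bound on $\Qua^{1/4}\cC_{1..i-1}$ to the statement of Lemma~\ref{lem:QC-derivative-bound} for a normalized quadric cell with a $4\delta$-extension, merely makes explicit a bookkeeping detail that the paper's proof passes over silently.
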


\begin{proof}
  By induction on $\ell$ we may assume that the Jacobian of
  $h_{1..\ell-1}$ already satisfies the condition. It remains to
  produce a bound for each $\pd{h_\ell}{\vu_j}$ with $j<\ell$. We have
  \begin{equation}
    \pd{h_\ell}{\vu_j} = \pd{\big(\sqrt{r_\ell}\circ h_{1..\ell-1})}{\vu_j},
  \end{equation}
  and since the norm of the Jacobian of $h_{1..\ell-1}$ is $O_\ell(1)$
  it remains to show that the derivatives
  $\pd{\sqrt r_\ell}{\vz_k}\circ h_{1..\ell-1}$ are bounded by $O_\ell(\delta)$
  for $k=1,\ldots,\ell-1$. Since
  $h_{1..\ell-1}(\sec_{\ell-1}(\e))\subset\Qua^{1/4}\cC_{1..\ell-1}$ and $\sqrt{r_\ell}$ is
  bounded by $1$ in $\cC^\delta_{1..\ell-1}$, this follows from
  Lemma~\ref{lem:QC-derivative-bound}.
\end{proof}

The following is a direct corollary of
Lemma~\ref{lem:QC-derivative-bound} and
Corollary~\ref{cor:h-jacobian-bound}.

\begin{Cor}\label{cor:Phi-C1-norm}
  Let $\sec_\ell(\e)$ be as in Proposition~\ref{prop:h-extension}. Let
  $F\in\cO_b(\cC^\delta)$ and set $\Phi:=F\circ h$. Then
  \begin{equation}
    \norm{\pd{\Phi}{\vu_i}}_{\sec_\ell(\e)}< O_\ell(\norm{F}\cdot\delta) \qquad i=1,\ldots,\ell.
  \end{equation}
\end{Cor}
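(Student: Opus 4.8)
The statement to prove is Corollary~\ref{cor:Phi-C1-norm}: for $F\in\cO_b(\cC^\delta)$ and $\Phi:=F\circ h$, the partial derivatives $\partial\Phi/\partial\vu_i$ are bounded by $O_\ell(\norm{F}\cdot\delta)$ on $\sec_\ell(\e)$. The plan is to apply the chain rule and combine the two ingredients that are explicitly flagged: Lemma~\ref{lem:QC-derivative-bound}, which bounds the partials $\partial F/\partial\vz_j$ of a bounded holomorphic function on a quadric cell by $O_\ell(\norm{F}_{\cC^\delta}\cdot\delta)$ on $\Qua\cC$, and Corollary~\ref{cor:h-jacobian-bound}, which says the Jacobian of $h$ differs from the identity by a matrix of norm $O_\ell(\delta)$ on $\sec_\ell(\e)$.

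The key steps, in order: first, by Proposition~\ref{prop:h-extension} we know $h(\sec_\ell(\e))\subset\Qua^{1/4}\cC\subset\Qua\cC$ (indeed it lands in a slightly smaller domain, but $\Qua\cC$ suffices), so $F\circ h$ makes sense on $\sec_\ell(\e)$ and the values of $\partial F/\partial\vz_j$ we will invoke are all taken at points of $\Qua\cC$. Second, write the chain rule
\begin{equation}
  \pd{\Phi}{\vu_i} = \sum_{j=1}^\ell \left(\pd{F}{\vz_j}\circ h\right)\cdot\pd{h_j}{\vu_i}.
\end{equation}
Third, estimate each factor: by Lemma~\ref{lem:QC-derivative-bound} (applicable since $\cC$ is quadric and $\delta<1/4$) we have $\norm{\partial F/\partial\vz_j}_{\Qua\cC}=O_\ell(\norm{F}\cdot\delta)$, hence $\abs{(\partial F/\partial\vz_j)\circ h}=O_\ell(\norm{F}\cdot\delta)$ on $\sec_\ell(\e)$; and by Corollary~\ref{cor:h-jacobian-bound}, $\abs{\partial h_j/\partial\vu_i}\le 1+O_\ell(\delta)=O_\ell(1)$. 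Fourth, sum the $\ell$ terms: $\abs{\partial\Phi/\partial\vu_i}\le \ell\cdot O_\ell(\norm{F}\cdot\delta)\cdot O_\ell(1)=O_\ell(\norm{F}\cdot\delta)$, absorbing the factor $\ell$ into the $O_\ell$ constant.

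There is no real obstacle here — this is precisely the "direct corollary" the text advertises, and the only things to be a little careful about are bookkeeping: making sure the image of $h$ really lies in the domain where Lemma~\ref{lem:QC-derivative-bound} applies (it does, via $\Qua^{1/4}\cC\subset\Qua\cC$), and checking that the hypothesis $\delta<1/4$ of that lemma is consistent with the standing assumption $\delta<1$ of this section (one may simply note that if $\delta\ge1/4$ the bound $O_\ell(\norm F\cdot\delta)$ is trivially achieved by the Cauchy estimate on a slightly shrunk polysector, so we lose nothing by assuming $\delta$ small). If one prefers to avoid even that remark, one can instead invoke that $h(\sec_\ell(\e))\subset\Qua^{1/4}\cC$ and apply Lemma~\ref{lem:QC-derivative-bound} with the cell $\Qua\cC$ in place of $\cC$ and extension parameter $1/4<1/4$... — more cleanly, just quote the lemma as stated with its quadric cell $\cC$ and its extension $\cC^\delta$, which is exactly the setting we are in. The whole argument is two or three lines.

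\begin{proof}
  By Proposition~\ref{prop:h-extension} the map $h$ extends
  holomorphically to $\sec_\ell(\e)$ with
  $h(\sec_\ell(\e))\subset\Qua^{1/4}\cC\subset\Qua\cC$, so
  $\Phi=F\circ h$ is holomorphic on $\sec_\ell(\e)$ and the chain rule
  gives
  \begin{equation}
    \pd{\Phi}{\vu_i} = \sum_{j=1}^\ell \left(\pd{F}{\vz_j}\circ h\right)\cdot\pd{h_j}{\vu_i},
    \qquad i=1,\ldots,\ell.
  \end{equation}
  Since $\cC$ is quadric and $\delta<1/4$, Lemma~\ref{lem:QC-derivative-bound}
  yields $\norm{\partial F/\partial\vz_j}_{\Qua\cC}=O_\ell(\norm{F}\cdot\delta)$,
  and as $h$ maps $\sec_\ell(\e)$ into $\Qua\cC$ we get
  $\abs{(\partial F/\partial\vz_j)\circ h}=O_\ell(\norm{F}\cdot\delta)$ on
  $\sec_\ell(\e)$. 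By Corollary~\ref{cor:h-jacobian-bound} we have
  $\abs{\partial h_j/\partial\vu_i}\le 1+O_\ell(\delta)=O_\ell(1)$ on
  $\sec_\ell(\e)$. Multiplying and summing the $\ell$ terms,
  \begin{equation}
    \norm{\pd{\Phi}{\vu_i}}_{\sec_\ell(\e)} \le \ell\cdot O_\ell(\norm{F}\cdot\delta)\cdot O_\ell(1)
    = O_\ell(\norm{F}\cdot\delta),
  \end{equation}
  the factor $\ell$ being absorbed into the asymptotic constant.
\end{proof}
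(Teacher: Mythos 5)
Your overall structure (chain rule, Lemma~\ref{lem:QC-derivative-bound} for the $\vz$-derivatives of $F$, Corollary~\ref{cor:h-jacobian-bound} for the Jacobian of $h$) is exactly what the paper intends, but there is a concrete error in the bridging step: the inclusion $\Qua^{1/4}\cC\subset\Qua\cC$ is backwards. By definition $\Qua^{\delta}\cC$ \emph{enlarges} the outer radii to $\delta^{-1}$ (so $\Qua^{1/4}\cC$ has outer radius $4$), hence $\Qua\cC\subset\Qua^{1/4}\cC$ and not the reverse. The image of $h$ genuinely leaves $\Qua\cC$: since $h_i(\vu)=\vu_i+R_i$ with $|\vu_i|<2$ and $|R_i|\le1$, points of $h(\sec_\ell(\e))$ (and even of $h(B)$) can have $|\vz_i|$ close to $3$, whereas $\Qua\cC$ has outer radius $1$; indeed Proposition~\ref{prop:h-extension} only asserts $h(B)\subset\cC^\delta$ and $h(\sec_\ell(\e))\subset\Qua^{1/4}\cC$. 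Consequently Lemma~\ref{lem:QC-derivative-bound} \emph{as stated} does not bound $(\partial F/\partial\vz_j)\circ h$ on $\sec_\ell(\e)$, and your attempt to ``just quote the lemma as stated'' papers over precisely this mismatch.

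The gap is fixable, and the fix is what the paper implicitly uses (note that already in the proof of Corollary~\ref{cor:h-jacobian-bound} the lemma is invoked at points of $\Qua^{1/4}\cC_{1..\ell-1}$): in this section the standing assumption is $\delta\le 1/100$, and under that assumption the proof of Lemma~\ref{lem:QC-derivative-bound} goes through verbatim with $\Qua\cC$ replaced by $\Qua^{1/4}\cC$. Indeed, in the decomposition $F=\sum_\vsigma f_\vsigma(\vz^{[\vsigma]})$ of Corollary~\ref{cor:laurent-disc-decmp} the functions $f_\vsigma$ are bounded on $\cP^{2\delta}$, a polydisc of radius at least $50$, while on $\Qua^{1/4}\cC$ the normalized monomials satisfy $|\vz^{[\vsigma]}|\le 4$ coordinatewise, so the Cauchy estimate still yields $\norm{(f_\vsigma)'_j}=O_\ell(\delta\norm{F})$ there; and the inequalities $|\sqrt{r_k}/\vz_k|<1$ and $|r_j/\vz_j^2|\le 1$ persist on $\Qua^{1/4}\cC$ because the inner radii $\rho_k$ of the quadrant are unchanged. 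With that extension in hand, your chain-rule computation gives the claimed bound. (Your side remark about the case $\delta\ge1/4$ is unnecessary for the same reason: $\delta\le1/100$ is assumed throughout~\secref{sec:quadrant-straighten}, so the hypothesis of Lemma~\ref{lem:QC-derivative-bound} is automatic.)
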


\section{Smooth parametrization results}
\label{sec:smooth-param}

In this section we use complex cells to prove the various refinements
of the Yomdin-Gromov algebraic lemma described
in~\secref{sec:intro-smooth}. All of these results follow fairly
directly from a parametrization result involving functions with a
holomorphic continuation to a complex sector, which we describe first.

\subsection{Sectorial parametrizations of subanalytic sets}
\label{sec:sectorial-param}

We say that $B\subset\R^\ell$ is a cube if it is a direct product of
intervals $(0,1)$ and singletons $\{0\}$. We will say that a map
$h:B\to\R^\ell$ is \emph{cellular} if $h_j$ depends only on
$\vx_{1..j}$. The \emph{sectorial cube} $B(\e)\subset\C^\ell$
corresponding to $B$ is the direct product where we replace each
interval $(0,1)$ by the sector $\sec(\e):=\{|\Arg z|<\e,|z|<2\}$. We
define cellular maps $h:B(\e)\to\C^\ell$ similarly.

\begin{Thm}\label{thm:sectorial-param}
  Let $A\subset[0,1]^\ell$ be subanalytic. There exists a collection
  of cubes $B_1,\ldots,B_N$ and injective cellular maps
  $\phi_j:B_j\to A$ such that $A=\cup_j\phi_j(B_j)$. Moreover, there
  exists $\e>0$ such that each $\phi_j$ extends to a holomorphic map
  $\phi_j:B_j(\e)\to\cP_\ell^{1/2}$ with $C^1$-norm $O_\ell(1)$.

  If $A$ is semialgebraic of complexity $\beta$ then we have
  $N,1/\e=\poly_\ell(\beta)$.
\end{Thm}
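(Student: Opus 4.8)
The plan is to deduce Theorem~\ref{thm:sectorial-param} from the Cellular Parametrization Theorem (Theorem~\ref{thm:cpt}) combined with the analysis of the straightening map $h$ carried out in~\secref{sec:quadrant-straighten}. First I would apply the CPT to a suitable complex cell: starting from the unit polycube $A\subset[0,1]^\ell$, use Corollary~\ref{cor:cpt-semialg} (or its subanalytic counterpart Corollary~\ref{cor:cpt-subanalytic}) to obtain real prepared cellular maps $g_j:\cC_j^{\he\sigma}\to\cP_n^{\he\rho}$ with $g_j(\R_+\cC_j^{\he\sigma})\subset A$ and $\bigcup_j g_j(\R_+\cC_j)=A$, where in the semialgebraic case the number of maps is $\poly_\ell(\beta)$ and their complexity is $\poly_\ell(\beta)$. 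Here I should run the CPT with the ambient set being (a bounded semialgebraic set containing) $A$, using Remark~\ref{rem:cpt-semi-extra}; this gives us control of $A$ by images of positive real parts of complex cells.

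Next I would reduce the cells $\cC_j$ to \emph{quadric normalized} cells using Proposition~\ref{prop:quadric-cover}: precompose with the cellular maps $f_k:\cC_{j,k}^\delta\to\cC_j^\delta$ whose images $f_k(\R_+\Qua\cC_{j,k})$ cover $\R_+\cC_j$, where $\delta$ can be taken $\le1/100$ (shrinking $\sigma$, i.e. enlarging the extension, costs only $\poly_\ell(\beta)$ more cells by the refinement theorem, Theorem~\ref{thm:cell-refinement}). After this reduction I may assume each cell in hand is a normalized quadric cell $\cC$ admitting a $\delta$-extension with $\delta\le1/100$, that the target-covering map $g$ from $\R_+\Qua\cC$ into $A$ is real cellular, and additionally (eliminating $*$-fibers and replacing $D$-fibers by $D_\circ,*$ as in~\secref{sec:quadrant-straighten}) that the type consists only of $D_\circ$ and $A$ fibers, at the cost of adding extra coordinates to the source cube.

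Now the heart of the argument: apply Proposition~\ref{prop:h-extension} to get the straightening map $h:B\to\R_+\cC^\delta$ with $\R_+\Qua\cC\subset h(B)$, which extends holomorphically to a polysector $\sec_\ell(\e)$ with $\e^{-1}=\poly_\ell(\beta)$ in the algebraic case, and satisfies $h(\sec_\ell(\e))\subset\Qua^{1/4}\cC$. Composing, set $\phi:=g\circ h:B\to A$. Since the components of $g$ are monic polynomials (prepared) with coefficients in $\cO_b(\cC_{1..j})$, hence in $\cO_b(\cC^\delta)$, Corollary~\ref{cor:Phi-C1-norm} applied coordinate-wise to $F=$ each component of $g$ gives $\norm{\partial\phi/\partial\vu_i}_{\sec_\ell(\e)}=O_\ell(1)$; together with boundedness of $g$ on $\Qua^{1/4}\cC$ this yields the $C^1$-norm bound $O_\ell(1)$ and the inclusion $\phi(B(\e))=\phi(\sec_\ell(\e))\subset\cP_\ell^{1/2}$ (after the harmless rescaling placing the image in the half-extension polydisc, using that prepared cellular maps map $\R_+$-cells diffeomorphically onto cylindrical cells and adjusting constants). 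Identifying $B(\e)$ with $\sec_\ell(\e)$ and noting injectivity of $\phi$ on $B$ follows from injectivity of $g$ on $\R_+\cC$ (prepared real cellular maps are real-analytic diffeomorphisms onto their images, per~\secref{}) and of $h$ on $B$ (its Jacobian is $I+O_\ell(\delta)$ by Corollary~\ref{cor:h-jacobian-bound}), we are done; counting the cells produced through the chain of reductions gives $N=\poly_\ell(\beta)$.

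The main obstacle I anticipate is bookkeeping the extensions/normalizations carefully enough that the hypotheses of Proposition~\ref{prop:h-extension} and Corollary~\ref{cor:Phi-C1-norm} (normalized quadric cell, $\delta\le1/100$ resp. $\delta<1/4$, no $*$ or $D$ fibers) are genuinely met after the CPT output, and matching the two notions of ``sector'': the CPT gives complex cells with $\he{\cdot}$-extensions whereas the quadric/straightening machinery wants plain $\delta$-extensions, so I would insert Remark~\ref{rem:repeated-ext} / Fact~\ref{fact:boundary-length} style conversions and re-apply the refinement theorem as needed — each such step must be checked to preserve the $\poly_\ell(\beta)$ complexity and cell-count bounds. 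The subanalytic (non-algebraic) case is then obtained by replacing every ``$\poly_\ell(\beta)$'' by ``uniform over the definable family'', which goes through verbatim since each cited result has a definable-family version.
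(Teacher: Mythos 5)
Your proposal follows essentially the same route as the paper's proof: apply Corollary~\ref{cor:cpt-semialg} (resp.\ Corollary~\ref{cor:cpt-subanalytic}) with the extensions chosen so that, after Proposition~\ref{prop:quadric-cover}, one has normalized quadric cells $\cC_j$ with $\delta=1/100$ and injective real maps $f_j:\cC_j^\delta\to\cP_\ell^{1/2}$ covering $A$ by $f_j(\R_+\Qua\cC_j)$, and then compose with the straightening map $h$ of~\secref{sec:quadrant-straighten}, getting the $C^1$ bound from Corollary~\ref{cor:Phi-C1-norm}; this is exactly the paper's argument. One minor point: injectivity of $h$ on the real cube $B$ is immediate from its triangular form $h_i(\vu)=\vu_i+R_i(\vu_{1..i-1})$ (which is how the paper argues), and this is preferable to your appeal to Corollary~\ref{cor:h-jacobian-bound}, since a Jacobian close to the identity on the sector does not by itself yield global injectivity.
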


\begin{proof}
  Set $\delta=1/100$ as in~\secref{sec:quadrant-straighten}.  We begin
  by applying Corollary~\ref{cor:cpt-semialg} in the semialgebraic
  case or Corollary~\ref{cor:cpt-subanalytic} in the subanalytic case
  with $\hrho<1/2$ and $\hsigma<\delta$, followed by
  Proposition~\ref{prop:quadric-cover} to construct a collection of
  quadric normalized cells $\cC_j$ and real maps
  $f_j:\cC_j^\delta\to\cP_\ell^{1/2}$ such that
  $f_j(\R_+\cC^\delta)\subset A$ and $A=\cup_j f_j(\R_+\Qua\cC_j)$. By
  construction each $f_j$ is a composition of a prepared cellular map,
  a rescaling map,   a covering map and an inversion map. Since each of these maps is
  injective on the positive real part we see that $f_j$ is injective
  on $\R_+\cC_j$.
  
  Let $h_j:B_j(\e)\to\cC^\delta$ be the straightening map
  constructed for $\cC_j$ in~\secref{sec:quadrant-straighten} and set
  $\phi_j:=f_j\circ h_j$. The map $h_j$ is cellular and injective by
  construction, so $\phi_j$ is cellular and injective on $B_j$.  We
  have
  \begin{equation}
    A= \bigcup_j f_j(\R_+\Qua\cC_j) \subset \bigcup_j \phi_j(B_j) \subset A
  \end{equation}
  Finally, by Corollary~\ref{cor:Phi-C1-norm} the $C^1$-norm of
  $\phi_j$ is bounded by $O_\ell(1)$.
\end{proof}
 
Later we will use the Cauchy estimate to pass from the sectorial
parametrizations of Theorem~\ref{thm:sectorial-param} to
parametrizations with bounded higher-order derivatives. Toward this
end the following simple combinatorial lemma will be useful.

\begin{Prop}\label{prop:Phi-decomp}
  Let $B(\e)$ be a sectorial cube and let $\Phi:B(\e)\to\C$ have
  $C^1$-norm $1$. There is a decomposition
  \begin{equation}
    \Phi(\vu) = \sum_{I\subset\{1,\ldots,\ell\}} \Phi_I(\vu)
  \end{equation}
  where $\Phi_I:B(\e)\to\C$ depends on $\vu_i:i\in I$ only, vanishes on
  $\cup_{i\in I}\{\vu_i=0\}$ and has $C^1$-norm bounded by
  $O_\ell(1)$. In particular for every $I\neq\emptyset$ we have
  \begin{equation}
    |\Phi_I(\vu)|\le O_\ell(\min_{i\in I}|\vu_i|), \qquad \text{for
    }\vu\in B(\e).
  \end{equation}
\end{Prop}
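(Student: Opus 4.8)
The decomposition is the standard inclusion-exclusion telescoping used to split a multivariate function into pieces depending on prescribed subsets of the variables, each vanishing on the corresponding coordinate hyperplanes. First I would introduce, for each subset $I\subset\{1,\ldots,\ell\}$, the partial restriction operator $\rho_I$ which sets $\vu_j=0$ for all $j\notin I$; more precisely, for $\vu\in B(\e)$ let $\rho_I(\vu)$ be the point with $i$-th coordinate equal to $\vu_i$ if $i\in I$ and $0$ otherwise. (Here I use that each $B_j$ is a product of intervals $(0,1)$ and singletons $\{0\}$, and that $0$ lies in the closure of the sector $\sec(\e)$, so $\rho_I(\vu)\in\overline{B(\e)}$ and $\Phi$ extends continuously to the relevant faces — or, more cleanly, one shrinks $\e$ and works on a slightly smaller closed polysector on which $\Phi$ is still holomorphic and $C^1$-bounded.) Then define
\begin{equation}
  \Phi_I(\vu) := \sum_{J\subset I} (-1)^{|I|-|J|}\, \Phi(\rho_J(\vu)).
\end{equation}
By the standard Möbius/inclusion-exclusion identity over the Boolean lattice one has $\sum_{I}\Phi_I(\vu)=\Phi(\vu)$ (all terms with $J\subsetneq\{1,\ldots,\ell\}$ cancel, since $\sum_{J\subset I\subset\{1,\ldots,\ell\}}(-1)^{|I|-|J|}=0$ unless $J$ is the full set). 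It is immediate from the formula that $\Phi_I$ depends only on the coordinates $\vu_i$ with $i\in I$, and that $\Phi_I$ vanishes whenever some $\vu_{i_0}=0$ with $i_0\in I$: in that case $\rho_J(\vu)=\rho_{J\setminus\{i_0\}}(\vu)$ for every $J\subset I$, and pairing up $J$ with $J\triangle\{i_0\}$ makes the alternating sum telescope to zero.

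Next I would estimate the $C^1$-norm of each $\Phi_I$. Since $\Phi_I$ is a finite sum of at most $2^{|I|}\le 2^\ell$ terms, each of the form $\vu\mapsto\Phi(\rho_J(\vu))$, and each such term is a composition of $\Phi$ with a coordinate projection (which is $1$-Lipschitz and has all partials $0$ or $1$), the chain rule gives $\|\Phi(\rho_J(\cdot))\|_{C^1}\le\|\Phi\|_{C^1}=1$; summing, $\|\Phi_I\|_{C^1}\le 2^\ell=O_\ell(1)$. Finally, for $I\neq\emptyset$ the vanishing of $\Phi_I$ on each face $\{\vu_{i}=0\}$, $i\in I$, combined with the $C^1$-bound, yields the pointwise estimate: fixing $i_0\in I$ achieving the minimum, integrate $\partial\Phi_I/\partial\vu_{i_0}$ along the segment from the face $\{\vu_{i_0}=0\}$ to $\vu$ (this segment stays in the convex set $B(\e)$ since $\sec(\e)$ is star-shaped about $0$, or one uses a radial path in the $i_0$-coordinate inside $\sec(\e)$), giving $|\Phi_I(\vu)|\le\|\partial\Phi_I/\partial\vu_{i_0}\|_{\infty}\cdot|\vu_{i_0}|\le O_\ell(1)\cdot\min_{i\in I}|\vu_i|$.

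The only genuinely delicate point is the boundary behavior: $\Phi$ is a priori given only on the open sectorial cube $B(\e)$, whereas $\rho_J(\vu)$ lands on lower faces. The plan is to handle this by first replacing $\e$ by $\e/2$ and noting that $\Phi$, being holomorphic with bounded $C^1$-norm on $B(\e)$, extends continuously (indeed $C^1$) to the closure of $B(\e/2)$ in the faces $\{\vu_i=0\}$ — each such face is approached from within $\sec(\e)$, whose closure contains $0$. Equivalently, one can avoid the faces altogether by replacing $\rho_J$ with the ``evaluate at a small positive $\eta$'' operator and letting $\eta\to 0$, using the uniform $C^1$-bound to pass to the limit. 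Apart from this bookkeeping the argument is purely combinatorial, so I do not expect any serious obstacle.
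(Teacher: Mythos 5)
Your inclusion--exclusion formula $\Phi_I=\sum_{J\subset I}(-1)^{|I|-|J|}\,\Phi\circ\rho_J$ is precisely the closed form of the paper's own decomposition, which is obtained by reverse induction via the split $\Phi=(\Phi-R)+R$ with $R(\vu)=\Phi(\vu_1,\ldots,\vu_{j-1},0,\vu_{j+1},\ldots,\vu_\ell)$; the cancellation on $\{\vu_{i_0}=0\}$, the dependence on $\vu_i$, $i\in I$, the $2^\ell=O_\ell(1)$ bound on the $C^1$-norm, and the radial integration giving $|\Phi_I(\vu)|\le O_\ell(1)\min_{i\in I}|\vu_i|$ all match the paper's argument. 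Your extra care in extending $\Phi$ (and its derivatives) to the faces $\{\vu_i=0\}$, which the paper uses implicitly when evaluating there, is sound, so the proposal is correct and essentially the same proof.
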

\begin{proof}
  We proceed by reverse induction on $j$, defined as the minimal
  $i\in\{1,\ldots,\ell\}$ such that $\Phi$ depends on $\vu_i$ and does
  not vanish identically on $\{\vu_i=0\}$. If the condition defining
  $j$ is empty then we may take $\Phi=\Phi_I$ where $I$ denotes the
  set of indices that $\Phi$ depends on. Otherwise write
  \begin{equation}
    \Phi=(\Phi-R)+R, \qquad R(\vu) = \Phi(\vu_1,\ldots,\vu_{j-1},0,\vu_{j+1},\ldots,\vu_\ell).
  \end{equation}
  Since the first summand vanishes on $\{\vu_j=0\}$ and the second
  summand does not depend on $\vu_j$ we may finish for each of them by
  induction (the $C^1$-norm of the summands are majorated by $2$ and
  $1$ respectively).
\end{proof}

\subsection{The algebraic reparametrization lemmas}

We give more detailed statements of the algebraic lemmas in the
$C^r$-smooth and mild contexts, and show that these statements imply
the statements appearing in~\secref{sec:intro-smooth}.  Our result in
the $C^r$ case is as follows.

\begin{Thm}\label{thm:Cr}
  Let $X\subset[0,1]^\ell$ be subanalytic. There exists $A>0$, cubes
  $B_1,\ldots,B_C$ such that for any $r\in\N$ there exist subanalytic
  injective cellular maps $\phi_j:B_j\to X$ whose images cover $X$,
  with
  \begin{equation}\label{eq:Cr-norm-bound}
    \norm{D^\valpha \phi_j} \le \valpha!(A r)^{|\valpha|}, \quad
    \text{for } |\valpha|\le r.
  \end{equation}
  If $X$ is semialgebraic of complexity $\beta$ then
  $A,C=\poly_\ell(\beta)$ and the maps $\phi_j$ are semialgebraic of
  complexity $\poly_\ell(\beta,r)$.
\end{Thm}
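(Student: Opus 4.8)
The plan is to derive Theorem~\ref{thm:Cr} directly from the sectorial parametrization theorem (Theorem~\ref{thm:sectorial-param}) by applying the Cauchy estimates, so that the cubes $B_1,\ldots,B_C$ and the constant $A$ are literally those produced by Theorem~\ref{thm:sectorial-param} and are therefore independent of $r$. First I would invoke Theorem~\ref{thm:sectorial-param} for $X$ to obtain cubes $B_1,\ldots,B_C$, injective cellular maps $\phi_j:B_j\to X$ with $X=\cup_j\phi_j(B_j)$, and $\e>0$ such that each $\phi_j$ extends holomorphically to $B_j(\e)$ with $C^1$-norm $O_\ell(1)$; in the semialgebraic case $C,1/\e=\poly_\ell(\beta)$ and the $\phi_j$ are semialgebraic of complexity $\poly_\ell(\beta)$. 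These are the $B_j$ and (up to a constant factor) the $A$ in the statement; note crucially that $\e$ depends only on $X$ (on $\beta$ in the algebraic case) and not on $r$.

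The heart of the argument is the passage from a $C^1$ bound on a holomorphic function on the polysector $B_j(\e)$ to bounds on all derivatives with the required $r$-dependence. Here I would apply Proposition~\ref{prop:Phi-decomp} to each coordinate $\Phi$ of $\phi_j$ (which has $C^1$-norm $O_\ell(1)$): this writes $\Phi=\sum_{I}\Phi_I$ where $\Phi_I$ depends only on the variables $\vu_i$, $i\in I$, vanishes on $\cup_{i\in I}\{\vu_i=0\}$, and has $C^1$-norm $O_\ell(1)$. For a fixed real point $\vu\in B_j$, each $\Phi_I$ is holomorphic on a polydisc around $\vu$ of polyradius comparable to $\min_{i\in I}|\vu_i|$ inside $B_j(\e)$ (this is exactly why the sector opening $\e$, rather than a full disc, suffices: the polysector contains such a polydisc in the $I$-variables), and on that polydisc $|\Phi_I|=O_\ell(\min_{i\in I}|\vu_i|)$ by Proposition~\ref{prop:Phi-decomp}. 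The Cauchy estimate on this polydisc then gives, for a multi-index $\valpha$ supported on $I$,
\begin{equation}
  |D^\valpha\Phi_I(\vu)| \le \frac{|\valpha|!}{\valpha!}\cdot O_\ell(1)\cdot \prod_{i\in I}(\const\cdot\min_{i'\in I}|\vu_{i'}|)^{-\valpha_i}\cdot\min_{i'\in I}|\vu_{i'}| \le \valpha!\,(A_0/\e)^{|\valpha|},
\end{equation}
where one absorbs the extra factor $|\valpha|!/\valpha!\le \ell^{|\valpha|}$ and the $\min|\vu_{i'}|$ in the numerator against one of the $|\vu_{i'}|^{-1}$ factors in the denominator, and $A_0$ is a constant depending only on $\ell$ and the $C^1$-bound. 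Summing over the $O_\ell(1)$ sets $I$ and coordinates, and then rescaling $\vu\mapsto\vu$ (no rescaling is actually needed: one simply notes $(A_0/\e)^{|\valpha|}=(A)^{|\valpha|}$ with $A:=A_0/\e$ fixed), yields $\norm{D^\valpha\phi_j}\le\valpha!\,A^{|\valpha|}$ for all $\valpha$, in particular~\eqref{eq:Cr-norm-bound} holds for all $r$ with the single constant $A$. Thus the same maps $\phi_j$ work for every $r$, which is precisely the content of the theorem.

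What remains is bookkeeping. The subanalyticity of the $\phi_j$ is inherited from Theorem~\ref{thm:sectorial-param}, as is the semialgebraic complexity bound $\poly_\ell(\beta)$ in the algebraic case; since the maps $\phi_j$ do not depend on $r$ their complexity is $\poly_\ell(\beta)$ outright (the stated bound $\poly_\ell(\beta,r)$ is then automatic). The constant $A$ is $O_\ell(1)/\e$, which is $O_X(1)$ in the subanalytic case and $\poly_\ell(\beta)$ in the algebraic case since $1/\e=\poly_\ell(\beta)$; likewise $C=\poly_\ell(\beta)$. The main obstacle is the Cauchy-estimate step: one must be careful that the polydisc chosen at $\vu$ for $\Phi_I$ genuinely lies inside the polysector $B_j(\e)$ — this requires that the sector opening $\e$ be bounded below independently of $r$, which is exactly why it is essential that $\e$ comes from Theorem~\ref{thm:sectorial-param} before any $r$ enters, and why the vanishing of $\Phi_I$ on the coordinate hyperplanes (Proposition~\ref{prop:Phi-decomp}) is needed to kill the $|\vu_i|^{-1}$ singularities that would otherwise blow up near $\partial B_j$. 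Everything else is a routine combination of the cited results.
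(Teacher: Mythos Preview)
Your Cauchy-estimate step contains a genuine gap. On the polydisc of polyradius $\sim\e\min_{i\in I}|\vu_i|$ around $\vu$, the bound $|\Phi_I|\le O_\ell(\min_{i\in I}|\vu_i|)$ from Proposition~\ref{prop:Phi-decomp} gives
\[
  |D^\valpha\Phi_I(\vu)| \;\le\; \valpha!\,\frac{O_\ell(1)\,\min_{i\in I}|\vu_i|}{\bigl(\e\min_{i\in I}|\vu_i|\bigr)^{|\valpha|}}
  \;=\; \valpha!\,O_\ell(1)\,\e^{-|\valpha|}\,\bigl(\min_{i\in I}|\vu_i|\bigr)^{\,1-|\valpha|}.
\]
The single factor of $\min|\vu_i|$ in the numerator cancels only \emph{one} of the $|\valpha|$ factors in the denominator; for $|\valpha|\ge 2$ the right-hand side blows up as $\vu$ approaches the boundary of $B_j$. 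A $C^1$-bound on a holomorphic function in a sector simply does not control higher derivatives uniformly: take $\Phi(z)=z^{3/2}$ on $\sec(\e)$, which has bounded $C^1$-norm but $\Phi''(z)=\tfrac34 z^{-1/2}$ unbounded near $0$. Nothing in Theorem~\ref{thm:sectorial-param} prevents such behaviour (the straightening map $h$ in~\eqref{eq:h-def} involves square roots).

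The paper's proof repairs exactly this point by precomposing with the ``flattening'' bijection $P_r(\vw)=(\vw_1^r,\ldots,\vw_\ell^r)$. Then $\Phi_I\circ P_r$ is holomorphic on the narrower sector $\sec_\ell(\e/r)$ and satisfies $|\Phi_I\circ P_r|\le O_\ell(\min_{i\in I}|\vw_i|^r)$, supplying $r$ factors of $\vw_{\min}$ in the numerator---just enough to cancel the denominator for $|\valpha|\le r$. The price is that the polydisc radius is now $\sim(\e/r)\vw_{\min}$, which produces the factor $(r/\e)^{|\valpha|}=(Ar)^{|\valpha|}$ in the bound~\eqref{eq:Cr-norm-bound}. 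This is also why the maps $\phi_j=\phi_j^{\text{sect}}\circ P_r$ genuinely depend on $r$ and why their semialgebraic complexity is $\poly_\ell(\beta,r)$ rather than $\poly_\ell(\beta)$. Your reading of the theorem as asserting $r$-independent maps with a uniform bound $A^{|\valpha|}$ for all $\valpha$ would amount to an $(A,0)$-mild parametrization, which is strictly stronger than what is claimed or proved.
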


The cellular structure and injectivity of the parametrizing maps
implies that the result of Theorem~\ref{thm:Cr} is automatically
uniform over families. Indeed, suppose $X\subset\R^{n+m}$ is a bounded
subanalytic set viewed as a family $\{X_p\}$ of subsets of
$\R^m$. Construct $\phi_1,\ldots,\phi_C$ as above. For any $p\in\R^n$
and any $j=1,\ldots,C$, the fiber $(\phi_j)_{1..n}^{-1}(p)$ is either
empty or a cube $B_{p,j}\subset\R^m$. The $\phi_j$ of the latter type
restrict to parametrizing maps $\phi_{p,j}$ for $X_p$ with the same
$C^r$-norm bound~\eqref{eq:Cr-norm-bound} (now in $m$ variables). One
can subdivide the domains of the $\phi_{p,j}$ into smaller cubes to
obtain a parametrization with unit $C^r$ norms as follows. Let
$\mu:=\dim X_p$. Subdividing $B_{p,j}$ into $N^\mu$ subcubes $B_{p,j,k}$
of side length $1/N$ and rescaling back to the unit cube gives maps
$\phi_{p,j,k}:B_{p,j,k}\to\R^m$ with
\begin{equation}
  \frac{\norm{D^\valpha \phi_{p,j,k}}}{\valpha!} \le \left(\frac{A r}N\right)^{|\valpha|}, \quad
  \text{for } |\valpha|\le r.
\end{equation}
Taking $N=Ar$ one obtains unit norms above. This gives the $C^r$
statements of~\secref{sec:intro-smooth}.

Our result in the mild case is as follows.

\begin{Thm}\label{thm:mild}
  Let $X\subset[0,1]^\ell$ be subanalytic. There exists $A<\infty$,
  cubes $B_1,\ldots,B_C$ and $(A,2)$-mild injective cellular maps
  $\phi_j:B_j\to X$ whose images cover $X$.

  If $X$ is semialgebraic of complexity $\beta$ then one may take
  $A,C=\poly_\ell(\beta)$.
\end{Thm}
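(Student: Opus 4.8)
The mild statement follows from the same sectorial parametrization (Theorem~\ref{thm:sectorial-param}) that was used for the $C^r$ case, combined with Cauchy estimates on a complex polysector. The plan is to take the collection of cubes $B_1,\ldots,B_N$ and holomorphic extensions $\phi_j:B_j(\e)\to\cP_\ell^{1/2}$ produced by Theorem~\ref{thm:sectorial-param} (in the semialgebraic case $N,1/\e=\poly_\ell(\beta)$), and then bound all higher derivatives of the real-analytic restriction $\phi_j|_{B_j}$ directly from the fact that $\phi_j$ is holomorphic and bounded on the sector $B_j(\e)$. The only issue is that the sector $\sec(\e)=\{|\Arg z|<\e, |z|<2\}$ degenerates near $z=0$, so a naive Cauchy estimate on a disc of fixed radius around a point $u\in(0,1)$ fails as $u\to0$; this is exactly why the ``mild'' exponent $K=2$ (rather than $K=1$) appears.

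\textbf{Key steps.} First I would fix one map $\phi=\phi_j$ and a point $\vu\in B=(0,1)^\ell$. For each coordinate direction $i$, the point $\vu$ lies in $\sec(\e)$ at ``angular room'' $\e$ and ``radial position'' $\vu_i$, so the largest polydisc centered at $\vu$ contained in $B(\e)$ has polyradius comparable to $(c\e\,\vu_1,\ldots,c\e\,\vu_\ell)$ for an absolute constant $c>0$. Applying the multivariate Cauchy estimate on this polydisc to the bounded holomorphic function $\phi$ (with $\norm{\phi}_{B(\e)}=O_\ell(1)$ from Theorem~\ref{thm:sectorial-param}) gives
\begin{equation}
  \abs{D^\valpha\phi(\vu)} \le \valpha!\, O_\ell(1)\cdot\prod_{i=1}^\ell (c\e\,\vu_i)^{-\valpha_i} \le \valpha!\,(A_0/\e)^{|\valpha|}\prod_{i=1}^\ell \vu_i^{-\valpha_i}
\end{equation}
for a suitable $A_0=O_\ell(1)$. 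This already shows $\phi$ is analytic on $B$, but the factor $\prod \vu_i^{-\valpha_i}$ blows up at the boundary faces of $B$. The second step is to absorb this singular factor by a change of variables: precompose $\phi$ with the coordinatewise substitution $\vu_i=\vt_i^2$ (or, to be safe about covering, with $\vu_i\mapsto \vt_i^{2}$ on a subdivision of $B$ into finitely many subcubes on which one reparametrizes). Under $\psi(\vt):=\phi(\vt_1^2,\ldots,\vt_\ell^2)$ one computes, via Faà di Bruno / the chain rule and the bound above, that $\abs{D^\valpha\psi(\vt)}\le \valpha!\,(A|\valpha|^2)^{|\valpha|}$: each differentiation in $\vt_i$ either lowers a power of $\vu_i=\vt_i^2$ (killing one of the dangerous negative powers at the cost of a linear-in-$|\valpha|$ combinatorial factor) or leaves it, and the total number of terms is $\poly(|\valpha|)^{|\valpha|}$, so the net contribution is $(A|\valpha|^2)^{|\valpha|}$ — precisely the $(A,2)$-mild bound of Definition~\ref{def:mild}. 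One must check that these substituted maps are still injective and cellular (the substitution $\vt_i\mapsto\vt_i^2$ is injective and triangular, hence cellular, and composing with the cellular injective $\phi_j$ preserves both properties), that their images still cover $X$ (the substitution is surjective onto $B_j$), and that in the semialgebraic case the resulting $A,C$ remain $\poly_\ell(\beta)$ — which is clear since $N,1/\e=\poly_\ell(\beta)$ and the combinatorial constants from the chain rule depend only on $\ell$.

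\textbf{Main obstacle.} The essential point — and the only place requiring genuine care — is the bookkeeping in the second step: verifying that after the substitution $\vu_i=\vt_i^2$ the $\prod_i\vu_i^{-\valpha_i}$ singularity is exactly cancelled with only a $(\text{const}\cdot|\valpha|^2)^{|\valpha|}$ loss, rather than something worse. Concretely one needs an estimate of the form: if $g$ is analytic near $0$ in one variable with $\abs{g^{(k)}(u)}\le k!\,B^k u^{-k}$ for $u\in(0,1)$, then $\tilde g(t):=g(t^2)$ satisfies $\abs{\tilde g^{(m)}(t)}\le m!\,(B' m)^{m}$ for $t\in(0,1)$; this is the one-variable heart of the matter and is proved by writing $\tilde g^{(m)}(t)$ as a sum over $k\le m$ of $g^{(k)}(t^2)$ times a polynomial in $t$ of degree $2k-m$ with bounded coefficients, and estimating term by term. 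Extending this to the multivariate cellular setting is then purely formal (apply it coordinate by coordinate, using that the derivatives in different directions commute and the bound above is a product over $i$). I expect no difficulty with the subanalytic (non-effective) case, which follows the identical argument with ``$\poly_\ell(\beta)$'' replaced by ``$O_X$'' throughout.
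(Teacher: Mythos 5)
Your first step (the sectorial parametrization of Theorem~\ref{thm:sectorial-param} plus Cauchy estimates on polydiscs adapted to the sector) is fine and is indeed how the paper proceeds, but the second step --- that precomposing with the fixed-power substitution $\vu_i=t_i^2$ upgrades the bound $\abs{D^\valpha\phi(\vu)}\le\valpha!\,(A_0/\e)^{|\valpha|}\prod_i\vu_i^{-\valpha_i}$ to an $(A,2)$-mild bound --- is false, and so is the one-variable lemma you isolate as the heart of the matter. Counterexample: $g(u)=\sin(\log u)$ is bounded and holomorphic on the sector $\{|\Arg z|<\e,\ |z|<2\}$ and satisfies $\abs{g^{(k)}(u)}\le k!\,B^k u^{-k}$ on $(0,1)$, yet $g(t^2)=\sin(2\log t)$ already has unbounded \emph{first} derivative as $t\to0$, so it is not mild (indeed not even $C^1$-bounded). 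Mere boundedness on a sector only yields Cauchy bounds carrying the singular factor $\prod_i\vu_i^{-\valpha_i}$, and no fixed polynomial reparametrization can absorb negative powers of \emph{all} orders: even using the boundary vanishing that is actually available (each piece $\Phi_I$ of the decomposition in Proposition~\ref{prop:Phi-decomp} satisfies $\abs{\Phi_I(\vu)}\le O_\ell(\min_{i\in I}\abs{\vu_i})$), the example $g(u)=u\sin(\log u)$ (bounded $C^1$-norm on the sector, vanishing at $0$) gives $g(t^2)=t^2\sin(2\log t)$ with unbounded third derivative. A power substitution of exponent $r$ controls only the first $r$ derivatives --- that is exactly the paper's proof of the $C^r$ statement (Theorem~\ref{thm:Cr}, with $P_r(\vw)=(\vw_1^r,\ldots,\vw_\ell^r)$) --- and cannot give mildness, which requires all orders simultaneously.

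What the paper does instead is (a) decompose $\Phi=\sum_I\Phi_I$ via Proposition~\ref{prop:Phi-decomp}, so each summand vanishes on the faces $\{\vu_i=0\}$, $i\in I$, and (b) precompose with the \emph{infinite-order} flattening $\Exp(\vw)_i=\exp(1-1/\vw_i)$ rather than a power map. Then $\Phi_I\circ\Exp$ is holomorphic on $\Exp^{-1}B(\e)$, which contains the polydiscs $\{\abs{\xi_i-\vw_i}\le\tfrac12\e\vw_i^2\}$, and crucially $\abs{\Phi_I\circ\Exp(\xi)}\le O_\ell(1)\exp(-\max_{i\in I}\vw_i^{-1})$, a decay faster than any power of $\vw_{\min}$. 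The Cauchy estimate on this polydisc then gives $\valpha!\,(2/\e)^{|\valpha|}\vw_{\min}^{-2|\valpha|}e^{-1/\vw_{\min}}$, and maximizing over $\vw_{\min}$ (maximum at $\vw_{\min}=1/(2|\valpha|)$) yields $\valpha!\,(O_\ell(1/\e)\,|\valpha|^2)^{|\valpha|}$, i.e. the $(A,2)$-mild bound with $A=O_\ell(1/\e)=\poly_\ell(\beta)$ in the algebraic case; the exponent $K=2$ comes from the radius $\sim\e\vw_i^2$ of the admissible polydisc in the flattened coordinates, not from squaring the variable. Your remarks on injectivity, cellularity, covering and complexity do carry over verbatim once $t_i^2$ is replaced by $\exp(1-1/\vw_i)$ and the decomposition step is inserted.
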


As in the case of Theorem~\ref{thm:Cr}, the result of
Theorem~\ref{thm:mild} is automatically uniform over families due to
the cellular structure and the injectivity of the maps.

\subsection{Proofs of the smooth parametrization results}
\label{sec:param-proofs}

Let $X$ be as in Theorem~\ref{thm:Cr} or Theorem~\ref{thm:mild}. By
Theorem~\ref{thm:sectorial-param} there exists a collection of cubes
$B_1,\ldots,B_C$ and injective cellular maps $\phi_j:B_j\to X$ such
that $X=\cup_j\phi_j(B_j)$, and each $\phi_j$ extends to a holomorphic
map $\phi_j:B_j(\e)\to\C^\ell$ with $C^1$-norm $O_\ell(1)$. Moreover
in the algebraic case $C,1/\e=\poly_\ell(\beta)$. We will always
assume $\e<1$. It will be enough to prove the parametrization results
for each $\phi_j(B_j)$ separately, so we fix a pair
$(B,\phi):=(B_j,\phi_j)$ and proceed to consider the problem of
parametrizing $\phi(B)$.

Let $\Phi$ denote one of the coordinates of $\phi$ and recall that
$\Phi$ has $C^1$-norm $O_\ell(1)$. To control higher derivatives we
will consider the map $\tilde\phi:=\phi\circ S$ where $S$ is a
``flattening'' bijection $S:B\to B$. We will see that an appropriate
choice of $S$ (one for the $C^r$ version and another for the mild
version) allows one to control the higher derivatives of $\Phi\circ S$
using the Cauchy estimates.

To simplify the notation, we will suppose below that $B$ is of the
form $(0,1)^\ell$ (in general the direct product may also contain
copies of $*$, but these obviously have no impact on parametrization
questions). We will take $\Phi$ to be any function of $C^1$-norm $O_\ell(1)$
in $B(\e)$.

\subsubsection{Proof of the $C^r$-parametrization result}
\label{sec:params-proofs-cR}

Consider the bijection (see Figure~\ref{fig:Cr})
\begin{equation}
P_r:B\to B, \qquad P_r(\vw_{1..\ell})=(\vw_1^r,\ldots,\vw_\ell^r).
\end{equation}
In the notations of~\secref{sec:param-proofs}, Theorem~\ref{thm:Cr}
follows from the following lemma.

\begin{Lem}
  The function $\Phi\circ P_r$ has bounded $C^r$-norm,
  \begin{equation}
    \abs{(\Phi\circ P_r)^{(\valpha)}(\vw)}\le  \valpha! \left(O_\ell(r/\e)\right)^{|\valpha|}
    \qquad\text{for }|\valpha|\le r.
  \end{equation}
\end{Lem}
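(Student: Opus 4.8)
The statement says that after precomposing with the power substitution $P_r(\vw)=(\vw_1^r,\ldots,\vw_\ell^r)$, the $C^r$-norm of $\Phi\circ P_r$ on $B$ is bounded by $\valpha!\,(O_\ell(r/\e))^{|\valpha|}$ for $|\valpha|\le r$. The key point is that $\Phi$ is holomorphic on the sectorial cube $B(\e)=\sec(\e)^\ell$ with $\|\Phi\|_{C^1}=O_\ell(1)$, so Cauchy estimates convert analyticity on a sector into derivative bounds, and the substitution $\vw_i\mapsto\vw_i^r$ has exactly the effect of ``widening'' a small polydisc around a real point $\vw\in(0,1)^\ell$ into a region that still sits inside $B(\e)$ after applying $P_r$. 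I would first record the elementary geometric fact: for $\vw\in(0,1)$ real, the disc $D(\vw_i,\, c\e\vw_i/r)$ (for a suitable absolute constant $c$, using $\e<1$) maps under $z\mapsto z^r$ into the sector $\sec(\e)$ — indeed if $|\zeta-\vw_i|\le c\e\vw_i/r$ then $|\Arg\zeta|\le \arcsin(c\e/r)\le \e/r$ and $|\zeta|\le \vw_i(1+c\e/r)\le 2$, whence $|\Arg \zeta^r|\le \e$ and $|\zeta^r|\le 2$. Hence the polydisc $\prod_i D(\vw_i, c\e\vw_i/r)$ is mapped by $P_r$ into $B(\e)$, where $\Phi$ is holomorphic and bounded by $\|\Phi\|\le O_\ell(1)$.

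\textbf{Main step (Cauchy estimate).} On that polydisc $G:=\Phi\circ P_r$ is holomorphic and $|G|\le O_\ell(1)$, so the multivariate Cauchy inequality gives, at the center $\vw$,
\begin{equation}
  |G^{(\valpha)}(\vw)| \le \valpha!\,\|G\|_G \prod_{i=1}^\ell \left(\frac{r}{c\e\,\vw_i}\right)^{\valpha_i}.
\end{equation}
This has an unwanted $\prod \vw_i^{-\valpha_i}$; the resolution is that $P_r$ introduces a compensating factor. Concretely, I would not apply Cauchy directly to $\Phi\circ P_r$ but argue on the structure: writing $G(\vw)=\Phi(\vw_1^r,\ldots,\vw_\ell^r)$ and bounding via the polydisc of radius $c\e\vw_i/r$ gives exactly $\prod(r/(c\e\vw_i))^{\valpha_i}$, which is \emph{not} bounded as $\vw_i\to 0$. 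The fix uses the extra structure furnished by Proposition~\ref{prop:Phi-decomp}: decompose $\Phi=\sum_{I}\Phi_I$ where $\Phi_I$ depends only on $\vu_i,i\in I$, has $C^1$-norm $O_\ell(1)$, and crucially $|\Phi_I(\vu)|\le O_\ell(\min_{i\in I}|\vu_i|)$. Then $\Phi_I\circ P_r$ is bounded on the polydisc $\prod D(\vw_i,c\e\vw_i/r)$ by $O_\ell(\min_{i\in I}|\vw_i^r|)\le O_\ell(\prod_{i\in I}|\vw_i|^{\,r\cdot[i=i_0]})$; more usefully, on the slightly larger polydisc we have $|\Phi_I|\le O_\ell(2^r\min_{i\in I}\vw_i^r)$, and applying Cauchy to $\Phi_I\circ P_r$ with respect to a single chosen variable $i_0\in I$ on a disc of radius $c\e\vw_{i_0}/r$ kills the pole in that variable — the factor $\vw_{i_0}^r$ in the numerator cancels the $(r/(c\e\vw_{i_0}))^{\valpha_{i_0}}$ for $\valpha_{i_0}\le r$ up to the claimed $(O_\ell(r/\e))^{\valpha_{i_0}}$, while for the remaining variables $i\in I\setminus\{i_0\}$ one uses that $\vw_i\in(0,1)$ so $\vw_i^{-\valpha_i}$ is replaced by also extracting a $\vw_i^r$ from the bound — one must be a little careful to distribute the $\min$ correctly, choosing $i_0$ to be the index realizing the minimum. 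Summing over the $2^\ell$ index sets $I$ and over the finitely many coordinates of $\phi$ gives the stated bound.

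\textbf{Anticipated obstacle.} The genuinely delicate bookkeeping is making the cancellation of the negative powers $\vw_i^{-\valpha_i}$ rigorous: Cauchy on a disc of radius $\sim\e\vw_i/r$ around $\vw_i$ produces a factor $(r/(\e\vw_i))^{\valpha_i}$, and one must harvest a matching $\vw_i^{\valpha_i}$ (or $\vw_i^r$, noting $\valpha_i\le r$ and $\vw_i<1$) from the size bound on $\Phi_I\circ P_r$. For $I=\emptyset$ (i.e.\ $\Phi$ constant) there is nothing to prove; for $|I|=1$ the single-variable argument above works cleanly; the subtlety is only that a product of several small $\vw_i$'s can be bounded below by a single $\min$, so one cannot cancel all negative powers simultaneously from the term $\Phi_I$ alone. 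The clean way around this is to note $\min_{i\in I}\vw_i^r \ge \prod_{i\in I}\vw_i^{r}$ only when $|I|=1$; in general one instead observes $\vw_i\in(0,1)\Rightarrow \vw_i^{-\valpha_i}\le \vw_i^{-r}$, and then uses $|\Phi_I\circ P_r|\le O_\ell(\vw_{i_0}^{r})$ \emph{together with} the trivial bound $|\Phi_I\circ P_r|\le O_\ell(1)$ applied variable-by-variable: for each $i\in I$ with $\valpha_i\ge 1$ one spends the size $\le O_\ell(2^r \vw_i^r)$ in that variable's Cauchy estimate, but since $|\Phi_I|$ only supplies \emph{one} factor of $\min_i\vw_i$, one instead enlarges the polydisc radius in the non-minimal directions to $c\e/r$ (an \emph{absolute} radius, legitimate since $\vw_i<1$ so $D(\vw_i,c\e/r)\subset\{|z|<2,|\Arg z|<\e/r\}$ still), which removes the $\vw_i^{-\valpha_i}$ at the cost of only $(r/(c\e))^{\valpha_i}$. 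Thus the correct choice is: radius $c\e\vw_i/r$ is unnecessary — radius $c\e/r$ (absolute) suffices for \emph{all} coordinates, because $\vw_i<1$, and then no poles arise at all. With that simplification the proof is immediate: $P_r\big(\prod_i D(\vw_i,c\e/r)\big)\subset B(\e)$ for $\vw\in(0,1)^\ell$ (since $|\zeta|\le 1+c\e/r<2$ and $|\Arg\zeta^r|\le r\arcsin(c\e/r)\le c\e$), so Cauchy on this polydisc of absolute radius $c\e/r$ gives $|G^{(\valpha)}(\vw)|\le \valpha!\,\|\Phi\|_{B(\e)}\,(r/(c\e))^{|\valpha|}=\valpha!\,(O_\ell(r/\e))^{|\valpha|}$, which is the claim. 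I would present exactly this clean version, invoking Proposition~\ref{prop:Phi-decomp} only if needed to handle the case where $\Phi$ genuinely vanishes on coordinate hyperplanes (not required for the $C^r$ bound but used in the mild case).
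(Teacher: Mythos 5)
Your final ``clean version'' rests on a false geometric claim, and that is where the proof breaks. You assert that for every $\vw_i\in(0,1)$ the disc $D(\vw_i,c\e/r)$ of \emph{absolute} radius $c\e/r$ lies in $\{|z|<2,\ |\Arg z|<\e/r\}$, so that $P_r$ maps the polydisc $\prod_i D(\vw_i,c\e/r)$ into $B(\e)$ and Cauchy can be applied with no poles. This fails precisely where the sectorial geometry matters: if $\vw_i\le c\e/r$ the disc contains $0$ and points of argument arbitrarily close to $\pi$, which leave the sector $\sec(\e/r)$ entirely (and after the $r$-th power leave $\sec(\e)$), so $\Phi\circ P_r$ need not even be defined there, let alone bounded. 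The bound $|\Arg\zeta|\le\arcsin(\text{radius}/\vw_i)$ only works when the radius is proportional to $\vw_i$; a sector pinches at the origin, so near $\vw_i=0$ you cannot trade the $\vw_i$-proportional radius for an absolute one. For the same reason your parenthetical claim that Proposition~\ref{prop:Phi-decomp} is ``not required for the $C^r$ bound'' is wrong: without it the Cauchy estimate on the polydisc of radii $\sim\e\vw_i/r$ leaves uncancelled factors $\prod_i\vw_i^{-\valpha_i}$ which blow up as $\vw_i\to0$.

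The repair is exactly the route you sketched in your ``Main step'' and then abandoned, and it is the paper's argument. Keep the polydisc $D_\vw=\{|\xi_i-\vw_i|\le(\sin\tfrac\e r)\vw_i\}$, decompose $\Phi=\sum_I\Phi_I$ by Proposition~\ref{prop:Phi-decomp}, and observe that $(\Phi_I\circ P_r)^{(\valpha)}=0$ unless $\supp\valpha\subset I$, so only those summands matter. Your bookkeeping worry --- that a single factor $\min_{i\in I}\vw_i^r$ cannot cancel several negative powers --- dissolves once you use this support restriction: for $i\in\supp\valpha\subset I$ one has $\vw_i\ge\vw_{\min}:=\min_{i\in I}\vw_i$, hence $\prod_i\vw_i^{-\valpha_i}\le\vw_{\min}^{-|\valpha|}$, while on the skeleton of $D_\vw$ one has $|\Phi_I\circ P_r|\le O_\ell(1)\,\vw_{\min}^r$ (using $(1+\sin\tfrac\e r)^r<e$). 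The Cauchy estimate then gives
\begin{equation}
  \abs{(\Phi_I\circ P_r)^{(\valpha)}(\vw)}
  \le O_\ell(1)\,\valpha!\,\Bigl(\sin\tfrac{\e}{r}\Bigr)^{-|\valpha|}\vw_{\min}^{\,r-|\valpha|}
  \le \valpha!\,\bigl(O_\ell(r/\e)\bigr)^{|\valpha|}
\end{equation}
for $|\valpha|\le r$, since then $\vw_{\min}^{\,r-|\valpha|}\le1$. Summing over the $2^\ell$ sets $I$ finishes the proof; no absolute-radius shortcut is available or needed.
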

\begin{proof}
  Since $\Phi$ extends holomorphically to $B(\e)$, the
  composition $\Phi\circ P_r$ extends holomorphically to
  \begin{equation}
    \Omega:=B(\e/r)\cap\{|\zeta_i|<2^{1/r}, i=1,\ldots,\ell\}.
  \end{equation}
  In particular for every $\vw\in B$ it contains the polydisc
  $D_\vw=\{\abs{\xi_i-\vw_i}\le (\sin\frac{\e}{r}) \vw_i\}$. Denote by
  $\cS(D_\vw)$ its skeleton.

  Using Proposition~\ref{prop:Phi-decomp} we see that 
  \begin{equation}
    (\Phi\circ P_r)^{(\valpha)}(\vw)
    =\sum_I (\Phi_I\circ P_r)^{(\valpha)}(\vw)
    =\sum_{\supp{\valpha}\subset I} (\Phi_I\circ P_r)^{(\valpha)}(\vw),
  \end{equation}
  so it is enough to consider $(\Phi_I\circ P_r)^{(\valpha)}(\vw)$
  only for those summands with $\supp{\valpha}\subset I$. Applying the
  Cauchy formula we have
  \begin{multline}\label{eq:Cauchy-Cr}
    \abs{(\Phi_I\circ P_r)^{(\valpha)}(\vw)} =
    \abs{(2\pi i)^{-\ell}\valpha!\int_{\cS(D_\vw)}
      \frac{\Phi_I\circ P_r(\xi)}{\prod (\xi_i-\vw_i)^{\valpha_i+1} }d\xi_1\dots d\xi_\ell}\\
    \le\valpha!\prod 
       \left(\vw_i\sin\frac{\e}{r}\right)^{-\valpha_i}\max_{\xi\in\cS(D_\vw)}\abs{\Phi_I\circ
       P_r(\xi)}.
  \end{multline}
  Denote $\vw_{\min}:=\min_{i\in I} \vw_i$. By
  Proposition~\ref{prop:Phi-decomp} we have
  \begin{equation}
    \abs{\Phi_I\circ P_r(\xi)}\le O_\ell(1)\cdot \min_{i\in I}\abs{\xi_i^r}
    \le O_\ell(1)\cdot \abs{\vw_{\min}^r},
  \end{equation}
  where in the last inequality we used
  $\left(1+\sin\tfrac{\epsilon}{r}\right)^r<e$. Then~\eqref{eq:Cauchy-Cr}
  implies
  \begin{equation}
    \abs{(\Phi_I\circ P_r)^{(\valpha)}(\vw)}\le 
    O_\ell(1) \left(\sin\frac{\epsilon}{r}\right)^{-\abs{\valpha}}\valpha!
    \vw_{\min}^{r-\abs{\valpha}}\le \valpha! \left(O_\ell(r/\e) \)^{|\valpha|}
  \end{equation}
  as long as $|\valpha|\le r$.
\end{proof}

\begin{figure}
  \centering
  \includegraphics[width=\textwidth]{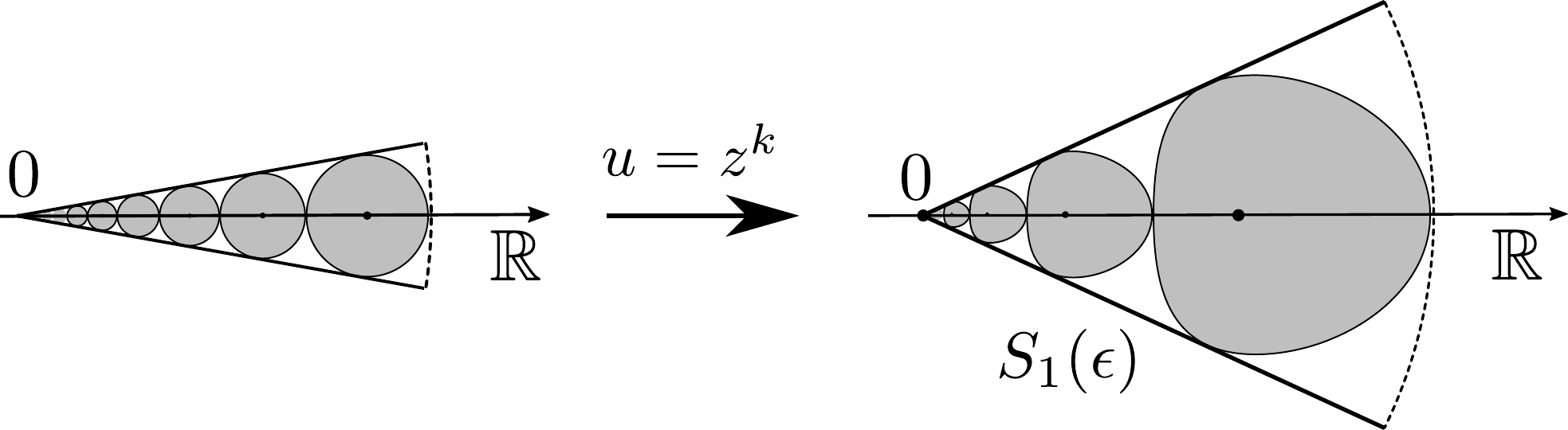}
  \caption{Reparametrization of $B(\e)$ in the $C^r$ case.}
  \label{fig:Cr}
\end{figure}

\subsubsection{Proof of the mild parametrization result}

Consider the bijection (see Figure~\ref{fig:mild})
\begin{equation}
  \Exp:B\to B, \qquad
  \Exp(\vw_{1..\ell})=\left(\exp\left(1-\frac1{\vw_1}\right),\dots,\exp\left(1-\frac1{\vw_\ell}\right)\right).
\end{equation}
In the notation of~\secref{sec:param-proofs}, Theorem~\ref{thm:mild}
follows from the following lemma.

\begin{Lem}
  The function $\Phi\circ\Exp$ is $(A,2)$-mild,
  \begin{equation}
    \abs{(\Phi\circ\Exp)^{(\valpha)}(\vw)}\le  \valpha!(A|\valpha|^2)^{|\valpha|}
    \qquad\text{for } \valpha\in\N^\ell
  \end{equation}
  with $A=O_\ell(1/\e)$.
\end{Lem}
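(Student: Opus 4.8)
The setup is that $\Phi$ extends holomorphically to the sectorial cube $B(\e)$ with $C^1$-norm $O_\ell(1)$, and we precompose with the coordinate-wise map $\Exp$, which sends $(0,1)$ into $(0,1)$ via $\vw\mapsto\exp(1-1/\vw)$. The plan is to show that $\Phi\circ\Exp$ extends holomorphically to a domain which, around each real point $\vw\in B$, contains a polydisc of radius comparable to $|\valpha|^{-2}\prod$ (powers of) $\vw_i$, and then apply the Cauchy estimates exactly as in the $C^r$ case. The key analytic fact is that the single-variable map $w\mapsto\exp(1-1/w)$ maps a disc around a real point $w_0\in(0,1)$ of radius $c\,w_0^2$ (for a small absolute constant $c$) into the sector $\sec(\e)$ — indeed $1-1/w$ has derivative $1/w^2$ at $w_0$, so a disc of radius $\sim w_0^2$ maps to a set of diameter $\sim1$ around the real axis, and exponentiating keeps the argument bounded by $\e$ provided $c$ is chosen like $\e$. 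This is the reason the ``$2$'' appears in $(A,2)$-mildness: we lose a factor $w_0^2$ (hence $|\valpha|^{-2}$ after optimizing over the radius) rather than just $w_0$.

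First I would fix $\vw\in B$ and, for a parameter $\eta>0$ to be optimized, consider the polydisc $D_\vw(\eta)$ with polyradii $\eta\,\vw_i^2/i$ (the harmless factor $1/i$ or any decreasing factor can be inserted to keep nesting under control). Using the elementary estimate above — a disc of radius $O(\e)\vw_i^2$ about $\vw_i$ maps under $w\mapsto\exp(1-1/w)$ into $\sec(\e)$ — I would verify that $\Exp(D_\vw(\eta))\subset B(\e)$ once $\eta=O_\ell(\e)$, so that $\Phi\circ\Exp$ is holomorphic on $D_\vw(\eta)$ and bounded there in absolute value by $O_\ell(1)$ (since $\Phi$ is bounded on $B(\e)$). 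Then, as in the $C^r$ proof, I would invoke Proposition~\ref{prop:Phi-decomp} to write $\Phi=\sum_I\Phi_I$ with each $\Phi_I$ depending only on the variables in $I$, vanishing on $\cup_{i\in I}\{\vu_i=0\}$, and of $C^1$-norm $O_\ell(1)$; only the summands with $\supp\valpha\subset I$ contribute to $(\Phi\circ\Exp)^{(\valpha)}$.

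For a fixed $I$ with $\supp\valpha\subset I$, the Cauchy integral formula over the skeleton $\cS(D_\vw(\eta))$ gives
\begin{equation}
  \abs{(\Phi_I\circ\Exp)^{(\valpha)}(\vw)} \le \valpha!\,\prod_{i\in\supp\valpha}\Bigl(\frac{\eta\,\vw_i^2}{i}\Bigr)^{-\valpha_i}\,\max_{\xi\in\cS(D_\vw(\eta))}\abs{\Phi_I\circ\Exp(\xi)}.
\end{equation}
Since $\Phi_I$ vanishes on $\cup_{i\in I}\{\vu_i=0\}$ and has bounded $C^1$-norm, we have $\abs{\Phi_I\circ\Exp(\xi)}\le O_\ell(1)\min_{i\in I}\abs{\exp(1-1/\xi_i)}\le O_\ell(1)\min_{i\in\supp\valpha}\abs{\exp(1-1/\xi_i)}$, and on the skeleton $\abs{\exp(1-1/\xi_i)}\le e^{O(1)}\exp(1-1/\vw_i)$ because $\xi_i$ stays within $O(\vw_i^2)$ of $\vw_i$ (so $\Re(1/\xi_i)\ge 1/\vw_i - O(1)$). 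Thus the bound becomes
\begin{equation}
  \abs{(\Phi_I\circ\Exp)^{(\valpha)}(\vw)} \le O_\ell(1)\,\valpha!\,\eta^{-|\valpha|}\,\prod_{i\in\supp\valpha} i^{\valpha_i}\,\vw_{i,\min}^{-2|\valpha|}\cdot\exp\bigl(1-1/\vw_{i,\min}\bigr)
\end{equation}
where $\vw_{i,\min}=\min_{i\in I}\vw_i$; one checks $t\mapsto t^{-2k}e^{1-1/t}$ is bounded on $(0,1)$ by $(2k/e)^{2k}e\le (Ck)^{2k}$, so the $\vw$-dependence disappears at the cost of a factor $(O(1)\,|\valpha|)^{2|\valpha|}$. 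Absorbing $\eta^{-1}=O_\ell(1/\e)$ and the $\prod i^{\valpha_i}\le \ell^{|\valpha|}$ into a single constant $A=O_\ell(1/\e)$, summing over the finitely many $I$, and noting the number of relevant $I$ is $\le 2^\ell=O_\ell(1)$, gives $\abs{(\Phi\circ\Exp)^{(\valpha)}(\vw)}\le\valpha!\,(A|\valpha|^2)^{|\valpha|}$ as claimed. The main obstacle, and the only place real care is needed, is the uniform control of $\exp(1-1/\xi_i)$ on the skeleton of the $\vw^2$-scaled polydisc — i.e. verifying that shrinking the disc radii to order $\vw_i^2$ (rather than $\vw_i$) is exactly enough to keep both the argument inside $\sec(\e)$ and the modulus comparable to its value at the center; everything else is a routine repetition of the $C^r$ argument with the exponent $2$ in place of $1$.
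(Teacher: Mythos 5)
Your proposal is correct and follows essentially the same route as the paper's proof: the same polydisc of polyradius $\Theta(\e\,\vw_i^2)$ around $\vw$ inside $\Exp^{-1}(B(\e))$, the same reduction via Proposition~\ref{prop:Phi-decomp} to summands $\Phi_I$ with $\supp\valpha\subset I$, the same Cauchy estimate over the skeleton with the bound $\abs{\Phi_I\circ\Exp(\xi)}\le O_\ell(1)\exp(1-1/\vw_{\min})$, and the same final maximization of $t^{-2\abs{\valpha}}e^{-1/t}$ at $t=1/(2\abs{\valpha})$ producing the $\abs{\valpha}^{2\abs{\valpha}}$ factor. The cosmetic extra factor $1/i$ in your polyradii is unnecessary but harmless.
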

\begin{proof}
  Since $\Phi$ extends holomorphically to $B(\e)$, the
  composition $\Phi\circ\Exp$ extends holomorphically to the domain
  $\Omega:=\Exp^{-1}B(\e)$ and it is easy to check that for any $\vw\in B$,
  \begin{equation}
    D_\vw:=\{\abs{\xi_i-\vw_i}\le \frac 1 2\e \vw_i^2\}\subset\Omega.
  \end{equation}
  Denote by $\cS(D_\vw):=\{\abs{\xi_i-\vw_i}= \frac 1 2\e \vw_i^2\}$
  the skeleton of $D_\vw$.

  Using Proposition~\ref{prop:Phi-decomp} in the same way as
  in~\secref{sec:params-proofs-cR} we see that it is enough to
  consider $(\Phi_I\circ\Exp)^{(\valpha)}(\vw)$ only for those
  summands with $\supp{\valpha}\subset I$. Applying the Cauchy formula
  we have
  \begin{multline}\label{eq:Cauchy-mild}
    \abs{(\Phi_I\circ\Exp)^{(\valpha)}(\vw)} =
    \abs{(2\pi i)^{-\ell}\valpha!\int_{\cS(D_\vw)}
      \frac{\Phi_I\circ\Exp(\xi)}{\prod (\xi_i-\vw_i)^{\valpha_i+1} }d\xi_1\dots d\xi_\ell}\\
    \le\valpha!\prod 
    \left(\vw_i^2 \frac\e2\right)^{-\valpha_i}\max_{\xi\in\cS(D_\vw)}\abs{\Phi_I\circ
      \Exp(\xi)}.
  \end{multline}
  By Proposition~\ref{prop:Phi-decomp} we have 
  \begin{multline}
    \abs{\Phi_I\circ\Exp(\xi)}\le
    O_\ell(1) \min_{i\in I}\abs{\exp\left(1-\xi_i^{-1}\right)} \\
    \le O_\ell(1) \exp(-\max_{i\in I}\Re \xi_i^{-1}) 
    \le O_\ell(1) \exp(-\max_{i\in I} \vw_i^{-1}),
  \end{multline}
  where in the last inequality we used the fact that for $w<\e^{-1}$,
  the mapping $\xi\mapsto1/\xi$ maps the circle
  $\{\abs{\xi-w}\le \frac 1 2\e w^2\}$ inside the circle
  $\{\abs{\xi-w^{-1}}<2\}$.
  
  Denote $\vw_{\min}=\min_{i\in I} \vw_i$. Then~\eqref{eq:Cauchy-mild}
  implies
  \begin{equation}
    \begin{aligned}
      \abs{(\Phi_I\circ\Exp)^{(\valpha)}(\vw)}&\le 
      \valpha! O_\ell(1) (2/\e)^{|\valpha|} \vw_{\min}^{-2\abs{\valpha}}e^{-\frac{1}{\vw_{\min}}} \\
      &\le \valpha! O_\ell\left((8/\e)^{|\valpha|}\right) {\abs{\valpha}}^{2\abs{\valpha}}e^{-2\abs{\valpha}}
    \end{aligned}
  \end{equation}
  where the final estimate is a simple maximization exercise (with the
  maximum attained at $\vw_{\min}=\frac{1}{2\abs{\valpha}}$).
\end{proof}

\begin{figure}
  \centering
  \includegraphics[width=\textwidth]{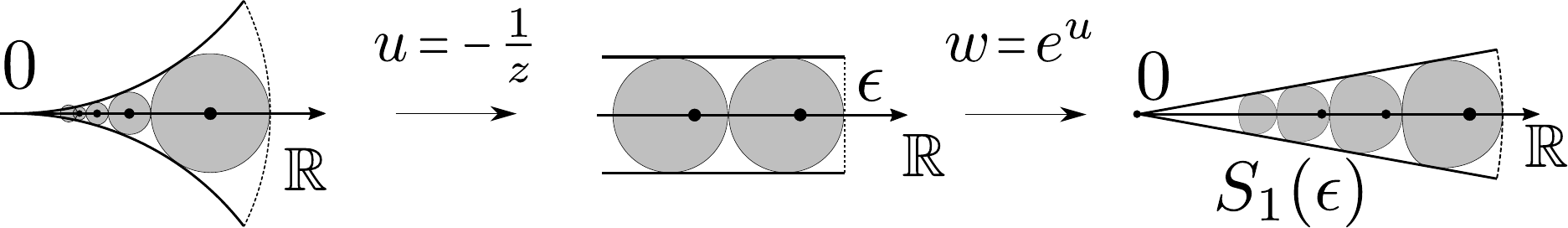}
  \caption{Reparametrization of $B(\e)$ in the mild case.}
  \label{fig:mild}
\end{figure}

\subsection{Uniform parametrizations in $\R_\an$}
\label{sec:sec:uniform-param-Ran}

In this section we give a proof of
Proposition~\ref{prop:hyp-param}. We begin by proving
Lemma~\ref{lem:log-length}.

\begin{proof}[Proof of Lemma~\ref{lem:log-length}]
  It is enough to prove that $\log\xi$ has derivative bounded by
  $4\pi p^2$ in $[-1,1]$. Note that $\log\xi$ is well defined and
  $p$-valent in $D(2)$. Let $r:=|(\log\xi)'(z_0)|$ for some
  $z_0\in D(1)$. By the $p$-valent analog of the Koebe 1/4-theorem
  $\log\xi(z_0+D(1))$ contains a disc of radius at least
  $r/(4p)$, see \cite[Theorem~5.1]{hayman:book}. On the
  other hand, it cannot contain a disc of radius larger than
  $\pi p$ since then $\xi=e^{\log\xi}$ would not be
  $p$-valent. Thus $r\le 4\pi p^2$ as claimed.
\end{proof}

We now proceed to the proof of Proposition~\ref{prop:hyp-param}. To
avoid confusion we will denote the order of derivatives in the
statement of the proposition by $k$ instead of $r$, which we reserve
here to denote a radius. As a first step we will reduce to the case of
a single complex cell. Let
$G_F\subset[0,1]_{e,t}^2\times[0,1]_{x,y}^2$ denote the graph of
$F(e,t)=F_e(t)$. We also denote $x(e,t)=x_e(t)$ and $y(e,t)=y_e(t)$.
According to Theorem~\ref{cor:cpt-subanalytic} and
Remark~\ref{rem:cpt-semi-extra} there exists a collection of prepared
cellular maps $f_j:\cC_j^{1/4}\to\cP_4^{1/2}$ such that
$f_j(\R_+\cC_j)$ covers $G_F$ and
$f_j(\R_+\cC_j^{1/2})\subset G_F$. By Remark~\ref{rem:cpt-semi-extra}
we may also assume that $f_j$ is compatible with the function $e$. Let
$\cC$ denote one of the cells $\cC_j$ and $f:=f_j$. It will be enough
to prove that
\begin{equation}\label{eq:log-length}
  \log [f(\R_+\cC)\cap\{e=e_0\}]
\end{equation}
has bounded Euclidean length independent of $e_0\neq0$.

We first note that since $\cC$ maps into a graph, its type must end
with $**$. We proceed to consider the first two fiber types. If the
type of $\cC$ begins with $*$ then $f(\R_+\cC)$ meets only one line
$e=e_0$ and the length of~\eqref{eq:log-length} is bounded by the
(finite) log-length of the hyperbola $xy=e_0$ in $[0,1]^2$. Similarly
if the type of $\cC$ begins with $D$ or $A$ then since $f_j$ is
compatible with $e$ we conclude that $f_j(\R_+\cC)$ meets only lines
$e=e_0$ with $e_0$ bounded away from $0$, and the same argument
holds. The only non-trivial case is therefore when the type of $\cC$
begins with $D_\circ$ and $f$ maps this punctured disc to a disc
centered at $e=0$.

Let $\cC$ be of the form $\cC=D_\circ(R)\odot \cF\odot*\odot *$. If
$\cF=*$, then $f(\R_+\cC)$ meets, for every $e_0$, only one point in
$xy=e_0$. Otherwise we may write $f$ in the form
\begin{equation}
  (e,t,x,y) = f(\e,\tau,*,*) = 
  (\e^m,\tau^n+\phi(\e),\xi(\e,\tau),\eta(\epsilon,\tau)).
\end{equation}
If $\cF=D(r)$, then the functions $\xi(\e,\tau),\eta(\epsilon,\tau)$
are $p$-valent in $\tau\in D^{1/2}(r)$, for some $p$ uniform over
$\e\in D_\circ(R)$ by subanalyticity of $f$ restricted to $\cC^{1/2}$
(see Proposition~\ref{prop:cell-subanalytic}). The claim then follows
from Lemma~\ref{lem:log-length}. If $\cF=D_\circ(r)$, then by
removable singularity theorem both $\xi(\e,\tau)$ and
$\eta(\epsilon,\tau)$ can be extended to $D_\circ(R)\odot D(r)$, with
extensions satisfying the same relation $\xi\cdot\eta=\e^m$ for
$\e\in D_\circ(R)$. In particular, both $\xi(\e,\tau)$ and
$\eta(\epsilon,\tau)$ do not vanish, and we can proceed as above.

The remaining case is $\cF=A(r_1,r_2)$. We now make three
reductions. First, we replace $f$ with
\begin{equation}
(e,t,x,y) = g(\e,\tau,*,*) =\left(\e^m, \tau^n/r_2^n(\e),\xi(\e,\tau), 
\eta(\e,\tau)\right)
\end{equation}
defined on the same cell. This amounts to an affine transformation of
the coordinate $t$. Evidently, the image of $\xi,\tau$ is
unchanged, and the constant $M_k$ in \eqref{eq:derivatives bounded} is
replaced by $\abs{r_2^{nk}(\e)}M_k$, which is again uniformly bounded
in $\e$. Second, we replace $\cC$ with
$\tilde{\cC}=D_\circ(R)\odot A(r,1)\odot*\odot*$ for
$r:=r_1(\e)/r_2(\e)$, and replace $g$ with the composition
\begin{equation}
(e,t,x,y) = \tilde{g}(\e,\tau,*,*)=g\circ\pi 
=\left(\e^m, \tau^n,\xi(\e,\tau r_2(\e)), \eta(\e,\tau r_2(\e))\right),
\end{equation}
where $\pi:\tilde{\cC} \to\cC$ is the mapping
$(\e,\tau,*,*)\to (\e,\tau r_2(\e),*,*)$. This only amounts to a
reparametrization of the set $g(\cC)$ leaving both the image of
$\xi,\eta$ and the bounds \eqref{eq:derivatives bounded} unchanged.
Finally by a similar rescaling of $\e$ we may assume that $R=1$.
Without loss of generality we return to our original notation,
assuming now that $\cC=D_\circ(1)\odot A(r,1)\odot*\odot*$ and
$f(\e,\tau,*,*)=\left(\e^m, \tau^n,\xi(\e,\tau), \eta(\e,\tau)\right)$.

Using Corollary~\ref{cor:laurent-disc-decmp}, decompose $\eta$ as
\begin{equation}
\eta(\e,\tau)=a(\e,\tau)+b(\e,\tfrac{r}{\tau}),
\end{equation}
where both $a,b$ are holomorphic and bounded in a neighborhood of
$\cP_2^{1/2}$. We have
$\partial_t =\frac{\tau^{1-n}}{n} \partial_\tau$ and a straightforward
induction shows that if $\partial_t^k\eta$ is bounded then
$\partial_\tau^k\eta$ is bounded as well. Since $\partial_\tau^k a$ is
bounded uniformly in $\e$ by the Cauchy estimates for $a$, we conclude
that $\partial_\tau^k b(\e, \tfrac{r}{\tau})$ is also bounded
uniformly in $\e$. We claim that this is impossible unless
$b\equiv b(\e)$ or $r\sim 1$. This finishes the proof: in the former
case $\eta$ itself extends as a holomorphic function to $\cP_2^{1/2}$
and we return to the case of functions on discs treated above using
Lemma~\ref{lem:log-length}; in the latter case, we may (for instance
following the proof of the refinement theorem,
Theorem~\ref{thm:cell-refinement}) cover $D_\circ(1)\odot A(r,1)$ by
finitely many cells of the type $D_\circ(1)\odot D(1)$ returning again
to the disc case.

It remains to prove the claim above. Let $v=\tfrac{r}{\tau}$, so
$b(\e,v)$ is holomorphic and bounded in a neighborhood of
$\cP^{1/2}$. Suppose
\begin{align}
  r(\e)&=\e^m(c+\dots),& &m>0,c\neq0 \\
  b(\e, v)&=\sum_{\ell\ge0} \e^\ell b_\ell(v),& &b_\ell\in\cO_b(\D^{1/2}),
\end{align}
and let $\ell_0$ be the first index such that $b_{\ell_0}$ is non-constant.
A simple computation using $\partial_\tau = -\frac{v^2}{r}\partial_v$ gives 
\begin{equation}
  \partial_\tau^k b(\e, v)=\sum_{\ell\ge\ell_0-mk} \e^\ell \tilde{b}_{\ell,k}(v),
\end{equation}
where
\begin{equation}
  \tilde b_{\ell_0-mk,k} = \left(-\tfrac{v^2}{c}\partial_v\right)^k b_{\ell_0}(v)\not\equiv0
\end{equation}
and the final inequality follows since $b_{\ell_0}$ is non-constant,
and the operator $(v^2\partial_v)^k$ does not kill any non-constant
holomorphic functions as can be seen by examining its action on Taylor
expansions. For $k$ such that $\ell_0-mk<0$ we see that
$\partial_\tau^kb(\e, v)$ has a pole in $\e=0$ for any $v$ such that
$\tilde b_{\ell_0-mk,k}(v)\neq0$ and is therefore unbounded as
$\e\to0$, contradicting our assumption.

\subsection{Proof of Theorem~\ref{thm:analytic-entropy}}
\label{sec:analytic-entropy-proof}

We indicate the proof with the notations of \cite{bly}. Applying the
refined algebraic lemma to the graph of the polynomial map
$P:[0,1]^l\to[0,1]^l$ in \cite[Section~2.1.2]{bly} one obtains the
estimate $C_{r,l,m}=\poly_{m,l}(r)$. This implies that the sequence
$M_k:=k^{k^2}$ is $(l,m)$-admissible for any $l,m\in\N$. Then
\cite[Corollary~B]{bly} applies to any map $f$ which satisfies
$\norm{f}_r < C r^{r^2}$ for some constant $C$ and every $r\ge0$. In
particular, it applies to analytic maps. We also have
\begin{equation}
  G_M(t) = \sup_k \{ k\log k<t \} = \Theta\left(\frac{t}{\log t}\right).
\end{equation}
Thus~\eqref{eq:analytic-entropy} follows from the conclusion of
\cite[Corollary~B]{bly}.

\appendix

\section{Applications in dynamics (by Yosef Yomdin)}
\label{appendix:yomdin}

\subsection{Non-dynamical parametrization results}
\label{Sec:Non.Dyn}

In this section we present a non-dynamical result, based on the
refined algebraic lemma. This result reduces the problem of bounding
dynamical local entropy (considered in~\secref{sec:Dyn.Appl} below) to
a ``combinatorial'' analysis of the complexity of long compositions of
smooth functions. Set $Q:=[0,1]$. Our basic unit of complexity is
defined as follows.

\begin{Def}[$k$-complexity unit ($k$-cu)]
  A $k$-complexity unit is a $C^k$ map $\phi:Q^l\to\R^m$ with
  $\norm{\phi}_k\le1$. We say that $\{\phi_q:Q^l\to\R^m\}$ is a $k$-cu
  cover of a set $A\subset\R^m$ if $A$ is contained in the union of
  the images of $\phi_q$.
\end{Def}

\begin{Rem}\label{rem:alg-lemma-closed}
  In this appendix it is more convenient to use $Q=[0,1]$ in place of
  $(0,1)$. We remark that that in the formulation of the refined
  algebraic lemma (Lemma\ref{thm:refined-alg-lemma}), if the
  semialgebraic set $X$ is closed then one can use parametrizing maps
  $\phi_j:Q^\mu\to X$ with unit $C^k$-norms in $Q^\mu$. To see this,
  apply the refined algebraic lemma to obtain a $C^{k+1}$
  parametrization of $X$ by maps $\phi_j:(0,1)^\mu\to X$ and note that
  each such map extends to a $C^k$ map on $Q^\mu$ by elementary
  calculus, and $\phi_j(Q^\mu)\subset X$ since $X$ is closed.
\end{Rem}

Theorem~\ref{thm:non.dyn} below can be considered as a kind of
``Taylor formula'' for $C^k$-parametrizations: it measures the
complexity of the graph of a $C^k$-map in terms of a covering by
$k$-cu's; exactly as in the usual Taylor formula, the estimates depend
only on the $k$-th derivative. A result of this type appears (in a
weaker form) in \cite[Theorem~2.1]{yomdin:entropy}. In
\cite{gromov:gy} it is improved and split into ``Main lemma'' and
``Main corollary''. Following \cite{gromov:gy}, we split below
Theorem~2.1 of \cite{yomdin:entropy} into Theorem~\ref{thm:non.dyn}
and Corollary~\ref{cor:non.dyn}, and incorporate other improvements
due to Gromov. Below we denote by $G_g$ the graph of a map $g$.

\begin{Thm}\label{thm:non.dyn}
  Let $g:Q^l\to\R^m$ be a $C^k$-mapping such that
  $\max_{x\in Q^l}\norm{d^kg}\le M.$ Then $G_g\cap Q^{l+m}$ admits a
  $k$-cu cover of the form $\{(\phi_q,g\circ\phi_q)\}$ of size
  $\poly_{l,m}(k)M^{l/k}$.
\end{Thm}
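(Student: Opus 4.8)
The plan is to reduce Theorem~\ref{thm:non.dyn} to the refined algebraic lemma (Theorem~\ref{thm:refined-alg-lemma}) by a rescaling argument in the spirit of Yomdin and Gromov: subdivide the cube $Q^l$ into small subcubes of side $\sim M^{-1/k}$, on each of which the graph of $g$ is nearly flat, and then apply the algebraic lemma to the (rescaled) flat piece. First I would introduce the scaling parameter $N := \lceil M^{1/k}\rceil$ and partition $Q^l$ into $N^l$ closed subcubes $Q_\nu$ of side $1/N$. On each $Q_\nu$, write the affine bijection $A_\nu: Q^l \to Q_\nu$ (so $A_\nu$ has each coordinate of the form $t \mapsto (t + c_\nu)/N$), and consider the rescaled map $g_\nu := g\circ A_\nu : Q^l \to \R^m$. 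Since $A_\nu$ contracts by $1/N$ in each variable, $\norm{d^k g_\nu} \le M/N^k \le 1$ by the choice of $N$. Lower-order derivatives of $g_\nu$ are controlled by interpolating against the $C^0$-bound (using that $g$ maps into $Q^m$, so $\norm{g_\nu}\le1$, together with a standard Landau--Kolmogorov type interpolation inequality on the unit cube) — so after possibly rescaling by a further universal constant depending only on $l,m$, the graph $G_{g_\nu}$ is a $C^k$ submanifold of $Q^{l+m}$ with unit $C^k$-norm parametrization $\id\times g_\nu$. The key point, exactly as in the classical argument, is that we have traded the large bound $M$ for a factor of $N^l = \poly(M^{l/k})$ subcubes.

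The next step is to apply the refined algebraic lemma, or rather the smooth parametrization machinery it rests on, to each piece. Here there is a subtlety: $g_\nu$ is $C^k$ but not semialgebraic, so Theorem~\ref{thm:refined-alg-lemma} does not literally apply to its graph. Following Gromov's treatment (the ``Main Lemma''), the resolution is that what one actually needs is a $k$-cu cover of $G_{g_\nu}$ given that $G_{g_\nu}$ already has bounded $C^k$-norm — and for this one only needs to reparametrize the domain cube $Q^l$ itself, which is semialgebraic of bounded complexity. More precisely, I would invoke the construction underlying Theorem~\ref{thm:Cr} (or the $C^r$-parametrization result of~\secref{sec:param-proofs}) applied to the identity map on $Q^l$: since the $C^k$-norm of $\id \times g_\nu$ is already $O_{l,m}(1)$, composing a unit-norm reparametrization $\phi_q : Q^l \to Q^l$ of the domain with $g_\nu$ does not worsen the norm by more than the chain-rule factors, which are bounded because $g_\nu$ itself is $C^k$-bounded. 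This yields $\poly_{l,m}(k)$ maps $\phi_{\nu,q}$ with $(\phi_{\nu,q}, g_\nu\circ\phi_{\nu,q})$ a $k$-cu cover of $G_{g_\nu}\cap Q^{l+m}$. Pulling back through $A_\nu$ (and noting $A_\nu$ only contracts, so composition preserves the unit $C^k$-bound) gives $k$-cu's $(\psi_{\nu,q}, g\circ\psi_{\nu,q})$ with $\psi_{\nu,q} = A_\nu\circ\phi_{\nu,q}$.

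Finally I would assemble the count: there are $N^l = O(M^{l/k})$ subcubes $\nu$ (using $N \le 2M^{1/k}$ when $M\ge1$, and handling $M<1$ trivially with a single chart), and $\poly_{l,m}(k)$ charts per subcube, for a total of $\poly_{l,m}(k)\cdot M^{l/k}$ charts, each of the required form $(\phi_q, g\circ\phi_q)$ with $\norm{\phi_q}_k\le1$. The main obstacle I anticipate is making precise the claim that a bounded-$C^k$-norm (non-algebraic) graph admits a $\poly(k)$-sized $k$-cu cover purely by reparametrizing the semialgebraic domain: one must verify that the reparametrizing maps produced by the $C^r$-version of the algebraic lemma can be chosen independently of $g_\nu$ (depending only on $l$ and the complexity of $Q^l$), and that the chain rule estimate $\norm{g_\nu\circ\phi_q}_k \le C_{l,m}\norm{g_\nu}_k\norm{\phi_q}_k^k$ holds with the polynomial-in-$k$ dependence hidden in the number of charts rather than in the norm — this is precisely the content of Gromov's reorganization of Yomdin's Theorem~2.1, and the place where the refinement $C(n,\mu,r,\beta)=\poly_n(\beta)r^\mu$ of Theorem~\ref{thm:refined-alg-lemma} enters to give the $\poly_{l,m}(k)$ (rather than merely finite) bound per subcube.
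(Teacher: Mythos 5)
There is a genuine gap, and it sits exactly where you flagged the "subtlety". Your plan needs all intermediate derivatives of the rescaled map $g_\nu$ to be bounded, but the hypothesis of Theorem~\ref{thm:non.dyn} controls \emph{only} $d^kg$ — this is deliberate (the paper points this out when explaining why Theorem~\ref{thm:non.dyn} is separated from Corollary~\ref{cor:non.dyn}). Your fix via a Landau--Kolmogorov interpolation against the $C^0$-norm fails twice. First, there is no $C^0$-bound on a whole subcube: only the portion of the graph inside $Q^{l+m}$ is relevant, and the set $\{x\in Q_\nu: g(x)\in Q^m\}$ is $g$-dependent and not semialgebraic (take $g(x)=Cx$ with $C$ enormous: $d^kg\equiv0$, yet $\norm{g}_{C^0(Q_\nu)}$ is huge and the graph meets $Q^{l+m}$ only over a tiny, non-cubical piece). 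Second, even where a $C^0$-bound holds, the constants in such interpolation inequalities necessarily grow with $k$ (a degree-$(k-1)$ polynomial bounded by $1$ has $d^kp\equiv0$ but $\norm{d^jp}\sim k^{2j}$ by Markov), so "rescaling by a universal constant depending only on $l,m$" is impossible. Relatedly, applying the $C^r$-parametrization machinery to the \emph{identity} on $Q^l$ produces charts $\phi_q$ that carry no information about $g_\nu$; the chain rule then makes $g_\nu\circ\phi_q$ a $k$-cu only if $g_\nu$ already has unit $C^k$-norm — which is precisely what you were unable to establish. So the composition step does not close.

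The paper's proof supplies the missing idea: approximate by the Taylor polynomial and apply the refined algebraic lemma to \emph{that}. One subdivides at the finer scale $\gamma=(kM^{1/k})^{-1}$ (the extra factor $k$ is essential, see below), so that $\theta_j:=g\circ\eta_j$ satisfies $\norm{d^k\theta_j}\le k^{-k}$; one replaces $\theta_j$ by its degree-$(k-1)$ Taylor polynomial $P_j$, whose graph over the semialgebraic sublevel set $S_j=\{\norm{P_j}\le 3/2\}$ is semialgebraic of complexity $\poly_l(k)$ — this is the object to which Theorem~\ref{thm:refined-alg-lemma} is applied, yielding $\poly_{l,m}(k)$ maps $\psi_{ij}$ with both $\psi_{ij}$ and $P_j\circ\psi_{ij}$ of unit $C^k$-norm (and $\eta_j(S_j)$ covers the relevant region, resolving the "where is the graph in $Q^{l+m}$" issue that your scheme never addresses). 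Finally one transfers from $P_j$ back to $\theta_j$ by Fa\`a di Bruno: the Taylor remainder bounds $\norm{d^s(\theta_j-P_j)}\le 1/((k-s)!\,k^k)$ together with the Bell-number bound $B_s<s^s$ give $\norm{(\theta_j\circ\psi_{ij})^{(s)}-(P_j\circ\psi_{ij})^{(s)}}\le1$, and a final factor-$2^l$ subdivision finishes. Note that your coarser scale $N\sim M^{1/k}$, which only gives $\norm{d^kg_\nu}\le1$, would not suffice even on this route: the $k^{-k}$ is needed to absorb the $s^s$ from Fa\`a di Bruno. So the count $\poly_{l,m}(k)M^{l/k}$ in the theorem comes from $(kM^{1/k})^l\cdot\poly_{l,m}(k)\cdot2^l$, not from $M^{l/k}$ subcubes alone.
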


\begin{proof}
  Let $S:=\{x\in Q^l:\norm{g(x)}\le1\}$. We subdivide $Q^l$ into
  sub-cubes $Q^l_j$ of size $\gamma=(kM^{1/k})^{-1}$ and parametrize
  each $Q^l_j$ by an affine mapping $\eta_j:Q^l\to Q^l_j$. The number
  $N_1$ of the sub-cubes is $k^lM^{l/k}$. Since the $k$-th derivative
  under this parametrization is multiplied by $\gamma^k$, we get for
  $\theta_j:=g\circ \eta_j$ the bound $\norm{d^k\theta_j}\le k^{-k}$.

  Let $P_j$ be the Taylor polynomial of $\theta_j$ of degree $k-1$ at
  the center of $Q^l$. By the Taylor remainder formula we have
  \begin{equation}\label{eq:Taylor.appr.norm}
    \norm{d^l\theta_j-d^lP_j}\leq \frac{1}{(k-l)!k^k}, \qquad l=0,\ldots,k.
  \end{equation}
  Put $S_j=\{x\in Q^l: \ ||P_j(x)||\le \frac{3}{2}\}.$
  By~\eqref{eq:Taylor.appr.norm} we have $|\theta_j-P_j|< \frac{1}{2},$
  and hence the sets $\eta_j(S_j)$ for $j=1,\ldots,N_1$ cover $S$.

  In the next step, which is central for all the construction, we
  apply the refined algebraic lemma (see
  Remark~\ref{rem:alg-lemma-closed}) to the graph of $P_j$ over $S_j$
  with smoothness order $k$. We get $k$-cu's $\psi_{ij}:Q^l\to S_j$
  for $i=1,\ldots,N_2$ with the images of $\psi_{ij}$ covering $S_j$,
  such that $P_j\circ \psi_{ij}$ are also $k$-cu's. Moreover
  $N_2\le\poly_{l,m}(k)$. Now we estimate the derivatives
  $(\theta_j\circ \psi_{ij})^{(s)}$ for $s=1,\ldots,k$, comparing them
  with the derivatives of $P_j\circ \psi_{ij}$ via the Fa\`a di Bruno
  formula:
  \begin{equation}\label{eq:Faa.di.Bruno1}
    (F\circ G)^{(s)}= \sum_{l=1}^s B_s^l(G',G'',\ldots,G^{(s-l+1)})\cdot F^{(l)}\circ G,
  \end{equation}
  with $B^l_s$ the so-called Bell polynomials, satisfying in
  particular
  \begin{equation}
    B_s:=\sum_{l=1}^s B^l_s(1,\ldots,1)<s^s.
  \end{equation}
  This formula is linear with respect to $F$, and applying it to
  $F=\theta_j-P_j$ and $G=\psi_{ij}$ we get for $s=1,\ldots,k$,
  \begin{multline}
    |(\theta_j\circ \psi_{ij})^{(s)}- (P_j\circ \psi_{ij})^{(s)}|= |((\theta_j-P_j)\circ \psi_{ij})^{(s)}|\leq \\
    \sum_{l=1}^s B_s^l(\psi_{ij}',\psi_{ij}'',\ldots,\psi_{ij}^{(k-l+1)})\cdot |(\theta_j-P_j)^{(l)}\circ \psi_{ij}| \le 
    \frac{B_s}{k^k}\le 1,
  \end{multline}
  by the bounds~\eqref{eq:Taylor.appr.norm}, and because $\psi_{ij}$
  are $k$-cu's. Since $P_j\circ \psi_{ij}$ is also a $k$-cu we
  conclude that $|(\theta_j\circ \psi_{ij})^{(s)}|\le2$. Therefore,
  another subdivision of $Q^l$ into sub-cubes of size $\frac{1}{2}$
  and the corresponding affine parametrizations of these sub-cubes
  reduces the derivative bounds to $1$. We denote by
  $\{\psi_{i,j,p}\}$ the collection of
  $N_1N_2 2^l=\poly_{l,m}(k)M^{l/k}$ maps obtained in this way. Set
  $\phi_{i,j,p}:=\eta_j\circ\psi_{i,j,p}$. Then $\phi_{i,j,p}$ is a
  $k$-cu since $\psi_{i,j,p}$ is, and
  $g\circ\phi_{i,j,p}=\theta_j\circ\psi_{i,j,k}$ is a $k$-cu as shown
  above.
\end{proof}

Next we modify Theorem \ref{thm:non.dyn} to make it suitable to apply
to $n$-fold iterations
$f^{\circ n}=f\circ f \circ \ldots \circ f$. The main reason to
separate Theorem~\ref{thm:non.dyn} from Corollary~\ref{cor:non.dyn}
below is that in the latter it is not enough to assume that
$||d^kg||\le M$ only for the highest derivative: we need
$||d^sg||\le M$ for all $s=1,\ldots,k$.

\begin{Cor}\label{cor:non.dyn}
  Let $g:Q^l\to\R^m$ be a $C^k$-mapping such that
  $\max_{x\in B'}\norm{d^sg}\le M$ for $s=1,\ldots,k$. Let
  $\sigma:Q^l\to Q^l$ be a $k$-cu and put $h=g\circ\sigma$.  Then
  $G_h\cap Q^{l+m}$ admits a $k$-cu cover of the form
  $\{(\phi_q,h\circ\phi_q)\}$ of size $\poly_{l,m}(k)M^{l/k}$.
\end{Cor}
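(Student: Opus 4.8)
The plan is to deduce Corollary~\ref{cor:non.dyn} from Theorem~\ref{thm:non.dyn} applied to the composite map $h=g\circ\sigma$. The whole content lies in producing an admissible bound on the top-order derivative $\norm{d^k h}$; once this is in hand the size estimate of Theorem~\ref{thm:non.dyn} yields the conclusion directly, since applying it to $h$ gives a $k$-cu cover of $G_h\cap Q^{l+m}$ of exactly the required form $\{(\phi_q,h\circ\phi_q)\}$.

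First I would bound $\norm{d^k h}$. Restricting to lines and applying the Fa\`a di Bruno formula~\eqref{eq:Faa.di.Bruno1} with $F=g$ and $G=\sigma$ gives $d^k h=\sum_{s=1}^k B_k^s(\sigma',\ldots,\sigma^{(k-s+1)})\cdot(d^sg)\circ\sigma$. Since $\sigma$ is a $k$-cu one has $\norm{\sigma^{(i)}}\le C_1(l)^i\,i!$ for $1\le i\le k$, where $C_1(l)$ depends on $l$ only and absorbs the combinatorial factor relating the factorial-normalized bound $\norm{D^\valpha\sigma}\le\valpha!$ to the operator norm of the $i$-th total derivative. Because $B_k^s$ is isobaric of weight $k$, every monomial of $B_k^s(\sigma',\ldots,\sigma^{(k-s+1)})$ is a product $\prod_j\sigma^{(i_j)}$ with $\sum_j i_j=k$, hence of norm at most $\prod_j C_1(l)^{i_j}i_j!=C_1(l)^k\prod_j i_j!\le C_1(l)^k\,k!$; the point is that the factorials combine into a single $k!$ rather than compounding. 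The total number of such monomials, summed over $s=1,\ldots,k$, is the Bell number $B_k<k^k$, and by hypothesis $\norm{(d^sg)\circ\sigma}\le M$ for every $s\le k$ — which is precisely why Corollary~\ref{cor:non.dyn}, unlike Theorem~\ref{thm:non.dyn}, must assume a bound on all derivatives of $g$ rather than only the $k$-th. Combining,
\begin{equation*}
  \norm{d^k h}\le C_1(l)^k\,k!\,k^k\,M=:M'.
\end{equation*}

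Next I would apply Theorem~\ref{thm:non.dyn} to $h$ with this value of $M'$, obtaining a $k$-cu cover of $G_h\cap Q^{l+m}$ of the form $\{(\phi_q,h\circ\phi_q)\}$ of size $\poly_{l,m}(k)\cdot(M')^{l/k}$. It remains only to simplify the size: since $(M')^{l/k}=C_1(l)^l\,(k!)^{l/k}\,k^l\,M^{l/k}$ and $(k!)^{l/k}\le k^l$, we get $(M')^{l/k}\le C_1(l)^l\,k^{2l}\,M^{l/k}=\poly_{l,m}(k)\cdot M^{l/k}$, and absorbing this factor into the polynomial already present gives the asserted bound $\poly_{l,m}(k)\cdot M^{l/k}$.

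The only delicate step — and it is bookkeeping rather than a conceptual difficulty — is the estimate on the Bell polynomials: one must use that the arguments $\sigma^{(i)}$ enter only through products of total order $k$, so the factorials accumulate to $k!$ and no worse, and then that $(k!)^{l/k}\le k^l$ stays polynomial in $k$. A cruder bound of the form $\norm{d^kh}\le(\max_i\norm{\sigma^{(i)}})^{O(k)}M$ would contribute an extra factor of order $(k!)^{l}$ and destroy the conclusion.
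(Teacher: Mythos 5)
Your proof is correct and follows essentially the same route as the paper: a single application of the Fa\`a di Bruno formula~\eqref{eq:Faa.di.Bruno1} to bound $\norm{d^k h}$ by roughly $k^kM$ (using the bounds on \emph{all} derivatives of $g$, which is exactly why the Corollary needs the stronger hypothesis), followed by an application of Theorem~\ref{thm:non.dyn} to $h$ and absorption of the extra $(k^kM/M)^{l/k}=k^l$-type factor into $\poly_{l,m}(k)$. Your extra bookkeeping on the factorial normalization of the $k$-cu derivatives and the isobaric structure of the Bell polynomials is only a more careful version of the estimate the paper states directly, so nothing further is needed.
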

\begin{proof}
  By the Fa\`a di Bruno formula~\eqref{eq:Faa.di.Bruno1} we have for
  the $k$-th derivative of $h$
  \begin{equation}\label{eq:Faa.di.Bruno2}
    |h^{(k)}|=|(g\circ \sigma)^{(k)}|= |\sum_{l=1}^k B_k^l(\sigma',\sigma'',\ldots,\sigma^{(k-l+1)})\cdot g^{(l)}\circ \sigma| < k^kM.
  \end{equation}
  Now we apply Theorem~\ref{thm:non.dyn} to $h$, and get the required
  covering by $k$-cu's.
\end{proof}

\subsection{Applications to volume growth and entropy}
\label{sec:Dyn.Appl}

Let $Y$ be a compact $m$-dimensional $C^\infty$-smooth Riemannian
manifold, and let $f:Y\to Y$ be a smooth (at least $C^{1+\mu}$, with
$\mu>0$) mapping. The topological entropy $h(f)$ was defined
in~\secref{sec:intro-dynamics}. Below we consider another
``entropy-type'' dynamical invariant called the \emph{volume growth}.

\subsubsection{Volume growth}
\label{appendix:sec:volume-growth}

For $\sigma^l:Q^l\to Y$ a $C^\infty$-mapping, and for $\cS\subset Q^l$
a measurable subset, let $v(\sigma^l,\cS)$ denote the $l$-dimensional
volume of the image of $\sigma^l(\cS)$ in $Y$. Put
$v(f,\sigma^l,\cS,n)=v(f^{\circ n}\circ \sigma^l,\cS)$, and put
\begin{align}\label{eq:vol.growth}
  v(f,\sigma^l,\cS)&= \limsup_{n\to \infty} \ \frac{1}{n} \log v(f,\sigma^l,\cS,n), &
  v(f,\sigma^l)&=v(f,\sigma^l, Q^l).
\end{align}
Finally, we put
\begin{align}
  v_l(f)&=\sup_{\sigma^l}v(f,\sigma^l), & v(f)&=\max_{l} v_l(f)
\end{align}
where the supremum is taken over all $C^\infty$ maps
$\sigma^l:Q^l\to Y$. An important inequality obtained by Newhouse in
\cite{newhouse:entropy-volume} connects the volume growth to the
topological entropy,
\begin{equation}\label{eq:Newhouse}
  h(f)\le v(f).
\end{equation}
The inverse inequality is not always true, but the question of its
validity is important in many dynamical applications. In particular,
as it was explained in~\secref{sec:intro-yomdin-thm}, this inverse
inequality implies Shub's entropy conjecture. In order to analyze its
validity we define a $\delta$-local volume growth at $x\in Y$ as
\begin{equation}
  v_l(f,\delta,x)=\sup_{\sigma^l} \ v(f,\sigma^l,B^n_{\delta}(x))
  = \sup_{\sigma^l} \limsup_{n\to \infty} \ \frac{1}{n} \log v(f,\sigma^l,B^n_{\delta}(x),n)
\end{equation}
where $B^n_{\delta}(x)$ is the $\delta$-ball centered at $x$ in the $n$-th
iterated metric $d_n$. The quantity $v_l(f,\delta,x)$ measures the
exponential rate of volume growth of the part of the images of
$f^{\circ n}\circ \sigma^l$ which always stay at a distance at most
$\delta$ from the orbit of $f^{\circ n}(x)$ for $n\to\infty$. Finally,
we put
\begin{align}
  v_l(f,\delta)&=\sup_{x\in Y} v_l(f,\delta,x), & v^0_l(f)&=\lim_{\delta\to 0} v_l(f,\delta).
\end{align}
The $\delta$-tail entropy can be also be defined in an analogous
manner setting $h^*(f,\delta,x)=h(f,B^n_{\delta}(x))$ and
$h^*(f,\delta):=\sup_{x\in Y} h^*(f,\delta,x)$. There is a version of
Newhouse's inequality~\eqref{eq:Newhouse} for these invariants.

\subsubsection{Local volume growth and entropy}

The following partial inverse to~\eqref{eq:Newhouse} is
almost immediate:
\begin{equation}
  v_l(f)\le h(f,\delta)+v_l(f,\delta).  
\end{equation}
Indeed, $h(f,\delta)$ counts the minimal number of the balls
$B^n_{\delta}(x)$ covering $Y$, while $v_l(f,\delta)$ bounds the
volume growth on each of these balls. As $\delta\to 0$ we obtain the
bound
\begin{equation}
  v_l(f)\le h(f)+v^0_l(f).  
\end{equation}
Thus the inverse to the Newhouse inequality is satisfied if
$v^0_l(f)=0$. Our first goal is to establish the following result.

\begin{Thm}[\protect{\cite{yomdin:entropy}}]\label{thm:yomdin-Cinf-volume}
  If $f:Y\to Y$ is $C^\infty$-smooth then $v_l^0(f)=0$.
\end{Thm}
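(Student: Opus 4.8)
The plan is to reduce the statement to the combinatorial content of Corollary~\ref{cor:non.dyn} applied to long iterates of $f$. First I would fix $l$ and a $C^\infty$-map $\sigma^l\colon Q^l\to Y$ whose image we wish to track under $f^{\circ n}$. Since $f$ is $C^\infty$ and $Y$ is compact, for every fixed smoothness order $k$ there is a constant $M=M(f,k)$ such that $\norm{d^sf}\le M$ on $Y$ for all $s=1,\ldots,k$; crucially $M$ grows with $k$ but is finite for each $k$. The key device is to cover $Y$ by finitely many coordinate charts so that each iterate $f\colon Y\to Y$ is represented (in suitable local coordinates) by a $C^k$-map $Q^m\to\R^m$ with $s$-th derivatives bounded by $M$ for $s\le k$, and $\sigma^l$ by a $k$-cu after an initial affine subdivision.

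Next I would iterate Corollary~\ref{cor:non.dyn}. Starting from the $k$-cu $\sigma^l$, one application with $g=f$ produces a $k$-cu cover $\{(\phi_q,f\circ\phi_q)\}$ of $G_{f\circ\sigma^l}$ of size $\poly_{l,m}(k)\,M^{l/k}$, and in particular the maps $\phi_q^{(1)}:=f\circ\phi_q$ are themselves $k$-cu's parametrizing $f(\sigma^l(Q^l))$. Feeding each $\phi_q^{(1)}$ back into Corollary~\ref{cor:non.dyn} with $g=f$ and $\sigma=\phi_q^{(1)}$, and repeating $n$ times, yields a $k$-cu cover of the image of $f^{\circ n}\circ\sigma^l$ by at most
\begin{equation}
  \big(\poly_{l,m}(k)\,M^{l/k}\big)^{n}
\end{equation}
charts, each of which is an $l$-dimensional $k$-cu, hence has $l$-dimensional volume $O_{l,m}(1)$ in $Y$. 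Therefore
\begin{equation}
  v(f,\sigma^l,Q^l,n)=v(f^{\circ n}\circ\sigma^l,Q^l)\le O_{l,m}(1)\cdot\big(\poly_{l,m}(k)\,M^{l/k}\big)^{n},
\end{equation}
so that, taking $\tfrac1n\log$ and $\limsup_{n\to\infty}$,
\begin{equation}
  v(f,\sigma^l)\le \log\poly_{l,m}(k)+\frac{l}{k}\log M(f,k).
\end{equation}
This bound is uniform over all $\sigma^l$, hence bounds $v_l(f)$.

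To localize, one runs the same argument but starting from the restriction $\sigma^l\rest{B^n_\delta(x)}$: the point is that tracking the $\delta$-dynamical ball only forces the covering charts at each stage to remain within a $\delta$-neighborhood of the orbit, and by compactness of $Y$ such a neighborhood can be covered by $O_\delta(1)$ charts in which $f$ has the same derivative bounds; the per-step multiplicative count $\poly_{l,m}(k)M^{l/k}$ is unchanged, while the polynomial prefactor now depends on $\delta$ but not on $n$. This gives
\begin{equation}
  v_l(f,\delta)\le \log\poly_{l,m}(k)+\frac{l}{k}\log M(f,k)
\end{equation}
for every $k$, and letting first $\delta\to0$ (which does not affect the right-hand side) and then $k\to\infty$: since $f$ is $C^\infty$ one has $\frac1k\log M(f,k)\to 0$ — indeed $M(f,k)$ may be taken as, say, $\max_{s\le k}\norm{d^sf}$, which is finite for each $k$, and $\frac{l}{k}\log M(f,k)\to0$ as $k\to\infty$ — while $\frac1?$ of the polynomial term also tends to $0$ after we note the polynomial prefactor contributes $0$ to the exponential rate. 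Hence $v_l^0(f)=\lim_{\delta\to0}v_l(f,\delta)=0$.

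The main obstacle, and the place that needs care, is the passage from the intrinsic geometry of $Y$ to the model situation $Q^l\to Q^m$ of Corollary~\ref{cor:non.dyn}: one must arrange a finite atlas of $Y$ in which each single application of $f$ is expressed as a $C^k$-map between cubes with the stated uniform derivative bounds, handle the bookkeeping of the extra $O(1)$ (or $O_\delta(1)$) multiplicative factors coming from splitting an image across several charts, and check that these factors are independent of the iteration length $n$ so that they disappear upon taking $\tfrac1n\log$. Once that is set up, the heart of the argument is simply the inequality $v_l(f,\delta)\le\log\poly_{l,m}(k)+\tfrac{l}{k}\log M(f,k)$ combined with $C^\infty$-smoothness to send $k\to\infty$; the dependence of $C_{r,l,m}$ on $r$ being \emph{polynomial} (the refined algebraic lemma, Theorem~\ref{thm:refined-alg-lemma}) is what makes the $k\to\infty$ limit of the first term contribute nothing to the exponential rate.
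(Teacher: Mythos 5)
Your iteration scheme breaks at the very first global step. Corollary~\ref{cor:non.dyn} only covers $G_h\cap Q^{l+m}$, i.e.\ the portion of the image lying in a \emph{single} unit cube of the target; when you feed $f\circ\phi_q$ back in, the image of that piece under the next application of $f$ in general spreads over many unit cubes (for a linear Anosov map of the torus the image of a unit segment crosses on the order of $\norm{df}$ fundamental cubes), and each of those cubes has to be treated separately. This produces a multiplicative, entropy-type factor at \emph{every} iteration which your bookkeeping drops and which does not disappear under $\tfrac1n\log$. Indeed your intermediate claim $v(f,\sigma^l)\le\log\poly_{l,m}(k)+\tfrac{l}{k}\log M(f,k)$ is false as stated: for a hyperbolic toral automorphism one has $M(f,k)=\norm{df}=\lambda$ for all $k$, so the right-hand side is $\log\poly_{l,m}(k)+\tfrac{l}{k}\log\lambda$, which for large $\lambda$ and moderate $k$ is smaller than the true volume growth $\log\lambda$. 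The paper never makes such a global claim: it localizes to the dynamical ball $B^n_\delta(x)$ \emph{first}, and after the $\delta$-rescaling the constraint ``stay in the unit cube'', built into Corollary~\ref{cor:non.dyn}, is exactly the constraint ``stay $\delta$-close to the orbit of $x$'', so no splitting factor ever appears in the induction of Proposition~\ref{prop:bound.hk}.

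Even granting that bound, the two limits at the end fail. First, for a general $C^\infty$ map $\tfrac1k\log M(f,k)$ need \emph{not} tend to $0$: the norms $\norm{d^kf}$ of a $C^\infty$ map can grow arbitrarily fast in $k$ (already for analytic $f$ they grow like $k!$, so $\tfrac1k\log M(f,k)\sim\log k\to\infty$). This is precisely why Proposition~\ref{prop:bound.hk} conjugates by $\eta(x)=\delta x$ with $\delta\le M^{-1}$: then $d^s\bar f=\delta^{s-1}d^sf$ has norm $\le1$ for $s\ge2$ while $\norm{d\bar f}\le L$, so only the first derivative $L$ — independent of $k$ — enters the exponent $\tfrac{l\log L}{k}$. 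Localization and smallness of $\delta$ are thus used essentially, and the estimate is \emph{not} uniform in $\delta$ as you assert; the requirement $\delta\le M^{-1}$ (and later $\delta\le 1/M_q$) is how $\delta\to0$ enters. Second, the additive term $\log\poly_{l,m}(k)$ is a per-iteration factor and hence a genuine contribution to the exponential rate; it does not vanish as $k\to\infty$, and it is not a harmless ``prefactor''. The paper removes it by applying the local estimate to the iterate $f^{\circ q}$ and using $h_{l,k}(f^{\circ q},\delta,x)=q\,h_{l,k}(f,\delta,x)$ (Corollary~\ref{cor:bound.hk1}), taking $q\to\infty$ jointly with $\delta\to0$. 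Without the $\delta$-rescaling and the iterate trick, your final passage to the limit does not yield $v_l^0(f)=0$.
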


\subsubsection{Local $C^k$-complexity growth}

From now on we concentrate on bounding from above the local volume
growth $v_l(f,\delta,x)$ for a given $x\in Y$. For this purpose we
define yet another entropy-like invariant $h_{l,k}(f,\delta,x)$, which
measures the ``local $C^k$-complexity growth'' in a
$\delta$-neighborhood of the orbit of $x$
(cf. \cite{gromov:gy,yomdin:entropy-analytic}):
\begin{equation}\label{Ck.local.comp}
  h_{l,k}(f,\delta,x)=\sup_{\sigma^l}\limsup_{n\to \infty}\frac{1}{n}\log N_k(f,\sigma^l,\delta,x,n),
\end{equation}
where $\sigma^l:Q^l\to Y$ varies over all $C^k$-smooth mappings and
$N_k(f,\sigma^l,\delta,x,n)$ is the minimal number of $k$-cu's
$\psi_j:Q^l\to Q^l$, whose images cover
$(\sigma^l)^{-1}(B^n_{\delta}(x))$, and for which
$f^{\circ n}\circ \sigma^l \circ \psi_j$ are $k$-cu's.

An almost immediate fact is that
\begin{equation}\label{eq:vol.Ck.ineq}
  v_l(f,\delta,x)\le h_{l,k}(f,\delta,x).
\end{equation}
Indeed, since $f^{\circ n}\circ \sigma^l \circ \psi_j$ are $k$-cu's,
the $l$-volume of their image does not exceed a certain constant
depending only on $l,m$ and hence
\begin{equation}
  v_l(f,\sigma^l,B^n_{\delta}(x),n)\le O_m(N_k(f,\sigma^l,\delta,x,n))
\end{equation}
As $n\to \infty$ in~\eqref{eq:vol.growth} and~\eqref{Ck.local.comp}
the asymptotic constant disappears. Thus it remains to bound from
above $h_{l,k}(f,\delta,x)$.

Below the derivatives of $f:Y\to Y$, and their norms, are defined via
the local coordinate charts on $Y$.

\begin{Prop}\label{prop:bound.hk}
  Assume that for certain positive constants $L,M$ the inequalities
  \begin{align}\label{eq:deriv.bd}
    \norm{df}&\le L,& \max_{s=2,\ldots,k}\norm{d^sf}^{\frac{1}{s-1}}&\le M
  \end{align}
  are satisfied. Then for each $x\in Y$ and for each
  $\delta\le M^{-1}$ we have
  \begin{equation}\label{eq:bound.hk1}
    h_{l,k}(f,\delta,x)\le \frac{l\log L}{k} + \log\poly_m(k).
  \end{equation}
\end{Prop}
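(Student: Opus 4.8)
The plan is to prove Proposition~\ref{prop:bound.hk} by iterating Corollary~\ref{cor:non.dyn} along the orbit of $x$, controlling at each step both the number of $k$-cu's needed and the size of the derivatives that propagate to the next step. First I would fix a point $x\in Y$, a $C^k$ map $\sigma^l:Q^l\to Y$, and $n\in\N$, and aim to estimate $N_k(f,\sigma^l,\delta,x,n)$. I would choose a finite atlas of $Y$ by coordinate charts of bounded geometry, so that working in local coordinates costs only an $O_m(1)$ factor at each stage. Write $x_i:=f^{\circ i}(x)$ for $i=0,\ldots,n$. The idea is to build, inductively on $i$, a collection of $k$-cu's $\sigma_i:Q^l\to Y$ whose images cover the part of $\sigma^l(Q^l)$ that (under $f^{\circ i}$) stays within $\delta$ of the orbit, and such that $f^{\circ(n-i)}\circ\sigma_i$ will ultimately be forced to be a $k$-cu after the remaining $n-i$ steps.

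The key step is the inductive passage from $i$ to $i+1$. Suppose $\sigma_i:Q^l\to Y$ is a $k$-cu whose image lies in the $\delta$-ball $B_\delta(x_i)$. In the chart around $x_i$ (rescaled to size $\delta$), the map $f$ restricted to this ball has derivatives bounded by $\norm{df}\le L$ and $\norm{d^sf}\le M^{s-1}$ for $s=2,\ldots,k$; rescaling the source by $\delta$ and the target by $\delta$ as well, and using $\delta\le M^{-1}$, one checks that in these normalized coordinates $f$ becomes a map $g$ with $\norm{d^s g}\le O_m(L)$ for all $s=1,\ldots,k$ — the point of the hypothesis $\delta\le M^{-1}$ is precisely that the higher derivatives $\delta^{s-1}M^{s-1}\le1$ are absorbed, leaving only the linear rate $L$. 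Then I would apply Corollary~\ref{cor:non.dyn} to $g$ and $\sigma=\sigma_i$ (the rescaled version), obtaining a $k$-cu cover of the graph of $g\circ\sigma_i$ of size $\poly_{l,m}(k)\cdot(O_m(L))^{l/k}=\poly_m(k)\,L^{l/k}$, consisting of maps $(\phi_q,(g\circ\sigma_i)\circ\phi_q)$ with $\phi_q$ a $k$-cu and $(g\circ\sigma_i)\circ\phi_q$ a $k$-cu. Composing $\sigma_i\circ\phi_q$ (and undoing the rescaling) gives the next-generation maps $\sigma_{i+1}$, each a $k$-cu whose image lies in $B_\delta(x_{i+1})$, and such that $f\circ\sigma_i\circ\phi_q$ is again a $k$-cu in the normalized chart. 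One then discards any branch whose image leaves the relevant $\delta$-ball, since such branches do not contribute to $(\sigma^l)^{-1}(B^n_\delta(x))$.

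After $n$ steps this yields a family of $k$-cu's $\psi_j=\phi_{q_1}\circ\cdots\circ\phi_{q_n}:Q^l\to Q^l$ (suitably composed with the affine subdivision maps of Corollary~\ref{cor:non.dyn}) whose images cover $(\sigma^l)^{-1}(B^n_\delta(x))$, with $f^{\circ n}\circ\sigma^l\circ\psi_j$ a $k$-cu at each stage by construction, and total count at most $\big(\poly_m(k)\,L^{l/k}\big)^n$ — here I must be careful that the implied constant in $\poly_m(k)$ is uniform over the finitely many charts and over all iterates, which it is because the atlas is finite and the normalized derivative bounds at each stage are the same $O_m(L)$. Taking $\frac1n\log$ of this count and letting $n\to\infty$ gives $h_{l,k}(f,\delta,x)\le \frac{l\log L}{k}+\log\poly_m(k)$, which is exactly~\eqref{eq:bound.hk1}. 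Taking the supremum over $\sigma^l$ at the end is harmless since the bound does not depend on $\sigma^l$.

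The main obstacle I expect is the bookkeeping in the inductive composition: one must verify that the $C^k$-norm bounds are genuinely preserved under the repeated composition $\sigma_{i+1}=(\text{affine})\circ\sigma_i\circ\phi_q$, i.e. that applying Corollary~\ref{cor:non.dyn} at each stage really does return \emph{unit} $k$-cu's (not just $k$-cu's with a slowly growing norm), so that the per-step multiplicative factor $\poly_m(k)L^{l/k}$ does not secretly hide an additional $i$-dependence. This is handled by the final affine subdivision inside the proof of Theorem~\ref{thm:non.dyn}/Corollary~\ref{cor:non.dyn}, which renormalizes the derivatives back to $1$ at the cost of a further $\poly_m(k)$ factor in the count — but I would state this renormalization step explicitly and track the constants, since this is the crux that makes the geometric (rather than super-exponential) growth of the count possible, and ultimately yields the clean bound $v_l^0(f)=0$ in Theorem~\ref{thm:yomdin-Cinf-volume} via~\eqref{eq:vol.Ck.ineq} upon letting $k\to\infty$.
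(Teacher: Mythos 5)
Your proposal is correct and follows essentially the same route as the paper: charts along the orbit, $\delta$-rescaling (conjugation by $\eta(x)=\delta x$) which under $\delta\le M^{-1}$ kills all derivatives of order $\ge2$ while leaving the bound $L$ on the first derivative, then an induction on $n$ applying Corollary~\ref{cor:non.dyn} at each step to get the per-step factor $\poly_m(k)L^{l/k}$, and finally $\tfrac1n\log$ as $n\to\infty$. Your closing concern about renormalizing to unit $k$-cu's at each stage is exactly the point handled by the final affine subdivision inside Theorem~\ref{thm:non.dyn}, as you note, so there is no gap.
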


\begin{proof}
  Using the local coordinate charts at the points
  $x,f(x),f^{\circ 2}(x),\ldots$ of the orbit of $x$ under $f$, we
  replace iterations of $f$ with compositions of a non-autonomous
  sequence $F$ of mappings $f_i:B^m_1\to\R^m$ with $f_i(0)=0$, where
  $B^m_1$ is the unit ball in $\R^m$, centered at the origin.

  Fix $\delta\le M^{-1}$, and consider the concentric ball
  $B^m_\delta\subset B^m_1.$ Let $\eta: B^m_1\to B^m_\delta$ be a
  linear contraction $\eta(x)=\delta x$. We consider instead of the
  sequence $F$ of mappings $f_i$ a sequence $\bar F$ of
  $\delta$-rescaled mappings
  $\bar f_i=\eta^{-1}\circ f_i\circ \eta:B^m_1\to {\mathbb R}^m.$ For
  the derivatives we have $d^s\bar f_i=\delta^{s-1}d^s f_i$, and by
  the assumptions we get
  \begin{equation}
    ||d\bar f_i||\le L, \ \max_{s=2,\ldots,k}||d^s \bar f_i||\le 1.
  \end{equation}
  Thus $\delta$-rescaling ``kills'' all the derivatives of $f_i$,
  starting with the second one. The first derivative (having a
  dynamical meaning) does not change.

  We have to estimate the number
  $\bar N_{k,n}:=N_k(\bar F,\sigma^l,1,0,n),$ which, by an evident
  change of notations, is the minimal number of $k$-cu's
  $\psi_j:Q^l\to Q^l$, whose images cover
  $(\sigma^l)^{-1}(\bar B^n_{1}(0))$, and for which
  $\bar f_n\circ \bar f_{n-1}\circ \ldots \circ \bar f_1 \circ \sigma^l \circ \psi_j$
  are $k$-cu's.

  We will prove via induction by $n$ that
  \begin{equation}\label{eq:induction.ineq}
    \bar N_{k,n}\leq L^{\frac{nl}{k}}C^n
  \end{equation}
  where $C=\poly_m(k)$ is the constant from
  Corollary~\ref{cor:non.dyn}.  Assume that~\eqref{eq:induction.ineq}
  is valid for $n$. To prove it for $n+1$ we use
  Corollary~\ref{cor:non.dyn}.  We apply it to $g=\bar f_{n+1}$ and to
  each $k$-cu $\nu$ of the form
  \begin{equation}
    \nu=\bar f_n\circ \bar f_{n-1}\circ \ldots \circ \bar f_1 \circ \sigma^l \circ \psi_j,
  \end{equation}
  obtained in the $n$-th step. For each $\nu$ we get at most
  $N=CL^{\frac{l}{k}}$ new $k$-cu's $\phi_q: Q^l\to Q^l$ whose images
  cover $(\sigma^l)^{-1}(\bar B^{n+1}_{1}(0))$, and for which the
  compositions $\bar f_{n+1}\circ \nu\circ \phi_q$ are also
  $k$-cu's. By the induction assumption, the number $\bar N_{k,n}$ of
  the $k$-cu's $\nu$ is at most $L^{\frac{nl}{k}}C^n$. Therefore
  \begin{equation}
    \bar N_{k,n+1}\le \bar N_{k,n}N\le \bar N_{k,n}CL^{\frac{l}{k}}\le L^{\frac{(n+1)l}{k}}C^{n+1}.  
  \end{equation}
  This completes the induction, proving~\eqref{eq:induction.ineq} for
  $n\in\N$. Taking $\log$ and dividing by $n$ finishes the proof.
\end{proof}

To finish the proof of Theorem~\ref{thm:yomdin-Cinf-volume} we should
eliminate the extra term $\log\poly_m(k)$
in~\eqref{eq:bound.hk1}. We do this by replacing $f$ by its iterate
$f^{\circ q}$ for some sufficiently large $q$.

Let $f$ be as in Proposition~\ref{prop:bound.hk}. For each $q\in\N$
the first derivative of $f^{\circ q}$ is bounded by $L^q$. Denote by
$M_q$ a constant satisfying
\begin{equation}
  \max_{s=2,\ldots,k}||d^sf^q||^{\frac{1}{s-1}}\le M_q.
\end{equation}
The constants $M_q$ can be estimated from $L,M$ using Fa\`a di Bruno
formula (see, e.g. \cite{bly}). We do not provide here an explicit
expression, since below, for analytic $f$, we get it in a much easier
way. Using the equality
$h_{l,k}(f^{\circ q},\delta,x)=qh_{l,k}(f,\delta,x)$ and applying
Proposition~\ref{prop:bound.hk} to $f^{\circ q}$ we obtain the
following result, which contains Theorem~\ref{thm:yomdin-Cinf-volume}.

\begin{Cor}\label{cor:bound.hk1}
  For each $\delta\le1/M_q$ we have
  \begin{equation}\label{eq:bound.hk2}
    h_{l,k}(f,\delta,x)\le \frac{l\log L}{k} + \frac{\log\poly_m(k)}{q}.    
  \end{equation}
  In particular, for $q\to\infty$ and $\delta\to0$,
  \begin{align}\label{eq:lim.delta}
    \lim_{\delta\to0} h_{l,k}(f,\delta,x)&\le \frac{l\log L}{k}, & v^0_l(x)&=0.
  \end{align}
\end{Cor}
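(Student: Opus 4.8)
The plan is to derive~\eqref{eq:bound.hk2} by combining Proposition~\ref{prop:bound.hk}, applied to the iterate $f^{\circ q}$ rather than to $f$ itself, with the scaling identity $h_{l,k}(f^{\circ q},\delta,x)=q\,h_{l,k}(f,\delta,x)$ recorded above. First I would observe that the chain rule and submultiplicativity of the operator norm give $\norm{df^{\circ q}}\le L^q$, while $M_q$ is by definition a (finite, since $f^{\circ q}$ is $C^k$ and $Y$ is compact) constant with $\max_{s=2,\ldots,k}\norm{d^sf^{\circ q}}^{1/(s-1)}\le M_q$. Hence $f^{\circ q}$ satisfies the hypotheses~\eqref{eq:deriv.bd} of Proposition~\ref{prop:bound.hk} with $(L,M)$ replaced by $(L^q,M_q)$, so that for every $\delta\le 1/M_q$
\begin{equation*}
  h_{l,k}(f^{\circ q},\delta,x)\le\frac{l\log L^q}{k}+\log\poly_m(k)=\frac{q\,l\log L}{k}+\log\poly_m(k).
\end{equation*}
Dividing by $q$ and applying the scaling identity gives precisely~\eqref{eq:bound.hk2}.

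For the limiting assertions I would pass to the limit in the order $\delta\to0$, then $q\to\infty$, then (in the $C^\infty$ case) $k\to\infty$. Fix $k$ and $q$: the threshold $1/M_q$ in~\eqref{eq:bound.hk2} is independent of $\delta$, so the bound holds for all sufficiently small $\delta>0$, and since $h_{l,k}(f,\delta,x)$ is non-decreasing in $\delta$ the limit $\lim_{\delta\to0}h_{l,k}(f,\delta,x)$ exists and is at most $\frac{l\log L}{k}+\frac{\log\poly_m(k)}{q}$. As this holds for every $q\in\N$, letting $q\to\infty$ eliminates the second term and yields the first inequality of~\eqref{eq:lim.delta}. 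Finally, combining it with~\eqref{eq:vol.Ck.ineq}, i.e. $v_l(f,\delta,x)\le h_{l,k}(f,\delta,x)$, gives $v_l^0(x)=\lim_{\delta\to0}v_l(f,\delta,x)\le\frac{l\log L}{k}$; when $f$ is $C^\infty$ this is valid for every $k\in\N$, and letting $k\to\infty$ forces $v_l^0(x)=0$.

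The argument is essentially bookkeeping, since all the real content sits in Proposition~\ref{prop:bound.hk} (which rests on the refined algebraic lemma through Corollary~\ref{cor:non.dyn}) and in the iterate identity $h_{l,k}(f^{\circ q},\cdot)=q\,h_{l,k}(f,\cdot)$. The one point that genuinely requires care — and the place I would expect a slip — is the order of the limits: because $M_q$ and the implicit $\poly_m(k)$ both grow with $q$ and $k$, the admissible range $\delta\le 1/M_q$ shrinks as $q,k\to\infty$, so one must first send $\delta\to0$ for each fixed pair $(q,k)$ and only afterwards let $q\to\infty$ and $k\to\infty$; taking the limits in the reverse order would leave the estimate unjustified. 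The remaining ingredients — monotonicity of $h_{l,k}$ in $\delta$, finiteness of $M_q$, and the identity $\log L^q=q\log L$ — are routine.
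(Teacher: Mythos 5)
Your argument is correct and is essentially the paper's proof: apply Proposition~\ref{prop:bound.hk} to $f^{\circ q}$ with constants $(L^q,M_q)$ and use the scaling identity $h_{l,k}(f^{\circ q},\delta,x)=q\,h_{l,k}(f,\delta,x)$, then take the limits $\delta\to0$, $q\to\infty$, and (for $v_l^0(x)=0$) $k\to\infty$ in that order. Your remark on the correct order of limits, given that $M_q$ grows with $q$, is exactly the point the paper's phrasing relies on.
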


\subsubsection{Volume growth for analytic maps}

Now we show, following \cite{bly} how the
Corollary~\ref{cor:bound.hk1} can be used to provide an explicit bound
for the local volume growth and local entropy. Setting
$q(k)=O_m(1)\cdot \frac{k \log k}{l\log L}$ we immediately obtain the
following.

\begin{Cor}\label{cor:bound.hk2}
  For each $\delta \leq \delta(k)= \frac{1}{M_{q(k)}}$ we have
  \begin{equation}\label{eq:bound.hk3}
    h_{l,k}(f,\delta,x)\le \frac{2l\log L}{k}.
  \end{equation}
\end{Cor}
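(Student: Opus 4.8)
The plan is to obtain Corollary~\ref{cor:bound.hk2} as an immediate consequence of Corollary~\ref{cor:bound.hk1}, by substituting the explicit choice $q=q(k)$ into the estimate~\eqref{eq:bound.hk2} and checking that this choice makes the error term no larger than the main term. Recall that~\eqref{eq:bound.hk2} asserts that for $\delta\le 1/M_q$ one has
\begin{equation*}
  h_{l,k}(f,\delta,x)\le\frac{l\log L}{k}+\frac{\log\poly_m(k)}{q}.
\end{equation*}
Since $\poly_m(k)$ denotes a fixed polynomial with positive coefficients in $k$, depending only on $m$, its logarithm satisfies $\log\poly_m(k)\le c_m\log k$ for a suitable constant $c_m=c_m(m)$ and all $k\ge 2$ (the case $k=1$, where the asserted bound is trivial or vacuous, we exclude). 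This $c_m$ is exactly the constant hidden in the shorthand $q(k)=O_m(1)\cdot\frac{k\log k}{l\log L}$ written just before the corollary: first I would set $q(k):=\bigl\lceil c_m\,\tfrac{k\log k}{l\log L}\bigr\rceil\in\N$, assuming as we may that $L>1$ so that $l\log L>0$.

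With this choice one computes
\begin{equation*}
  \frac{\log\poly_m(k)}{q(k)}\le\frac{c_m\log k}{c_m\,k\log k/(l\log L)}=\frac{l\log L}{k},
\end{equation*}
and feeding this back into~\eqref{eq:bound.hk2} gives $h_{l,k}(f,\delta,x)\le\frac{2l\log L}{k}$ for every $\delta\le 1/M_{q(k)}=\delta(k)$, which is precisely~\eqref{eq:bound.hk3}. Here $M_{q(k)}$ is the same higher-derivative bound for the iterate $f^{\circ q(k)}$ that enters the hypothesis of Corollary~\ref{cor:bound.hk1}, so the admissible scale $\delta(k)$ requires no new estimate.

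The whole argument is a one-line computation once the asymptotic constants are tracked, so I do not expect a serious obstacle. The only points requiring a word of care are the passage from the symbolic $O_m(\cdot)$ in the definition of $q(k)$ to an honest positive integer (rounding up), and the harmless exclusion of the degenerate parameter ranges $k=1$ and $\log L\le 0$, in which the asserted inequality either holds trivially or is not meant to be applied.
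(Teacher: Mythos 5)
Your proposal is correct and is essentially the paper's own argument: the paper deduces Corollary~\ref{cor:bound.hk2} from Corollary~\ref{cor:bound.hk1} precisely by choosing $q(k)=O_m(1)\cdot\frac{k\log k}{l\log L}$ so that the term $\frac{\log\poly_m(k)}{q(k)}$ is absorbed into $\frac{l\log L}{k}$. Your extra remarks on rounding $q(k)$ to an integer and excluding degenerate parameter ranges are harmless bookkeeping that the paper leaves implicit.
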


For $C^\infty$ functions $f$ Corollary~\ref{cor:bound.hk2} reduces the
problem of estimating the asymptotic behavior of $h_{l,k}(f,\delta)$
as $\delta\to0$ to the problem of estimating the growth of $M_{q(k)}$,
i.e. the growth of the high-order derivatives of $f^{\circ q(k)}$. We
find it explicitly for analytic $f$, referring the reader to
\cite{bly}, where a much more general setting is presented.

Let $Y\subset {\mathbb R}^p$ be a compact $m$-dimensional real
analytic submanifold, and let $f:Y\to Y$ be a real analytic
mapping. We assume that $f$ is extendible to a complex analytic
mapping $\tilde f:U\to {\mathbb C}^p$ in a neighborhood $U$ of $Y$ in
${\mathbb C}^p$ of size $\rho$. More accurately, for each point
$x\in Y$ the complex polydisc at $x$ of radius $\rho$ is contained in
$U$. We assume that the norm $||\tilde f||$ is bounded by $D$ in $U$,
and the norm of its first derivative $||d\tilde f||$ is bounded there
by $L.$ The following theorem immediately implies the statement about
volume growth in Theorem~\ref{thm:analytic-entropy} since the volume
of the image of a $k$-cu is universally bounded; the statement about
tail entropy follows similarly but we omit the details.

\begin{Thm}\label{thm:anal.tail.ent}
  Let $f$ be as above and $l=1,\ldots,m$. For every $\delta>0$ there
  exists $k\in\N$ such that for every $x\in Y$ we have
  \begin{equation}
    h_{l,k}(f,\delta,x)\le O_m(\log L)\cdot \frac{\log|\log\delta|}{|\log \delta|}.
  \end{equation}
  In particular,
  \begin{equation}
    v_l(f,\delta)\le O_m(\log L)\cdot \frac{\log|\log\delta|}{|\log \delta|}.
  \end{equation}
\end{Thm}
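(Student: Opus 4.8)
The plan is to combine Corollary~\ref{cor:bound.hk2} with an explicit estimate for the quantity $M_q$ governing the high-order derivatives of the iterate $f^{\circ q}$, exploiting the analytic hypothesis to get a much cleaner bound than is possible in the $C^\infty$ setting. First I would recall that by hypothesis $\tilde f$ extends holomorphically to a polydisc of radius $\rho$ around each point of $Y$ with $\|\tilde f\|_U\le D$ and $\|d\tilde f\|_U\le L$. The iterate $\tilde f^{\circ q}$ is then holomorphic on a polydisc of some radius $\rho'=\rho'(\rho,D)>0$ around each point of $Y$ (since the image stays in $U$), with $\|d\tilde f^{\circ q}\|\le L^q$ there. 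The Cauchy estimates applied on this fixed polydisc immediately give $\|d^s \tilde f^{\circ q}\| \le s!\,(\rho')^{-s}\, L^q\cdot\mathrm{const}$, so that we may take
\begin{equation*}
  M_q = O_{m,\rho,D}(1)\cdot L^{q/1} = C_0^q
\end{equation*}
for a constant $C_0=C_0(m,\rho,D,L)$; more precisely $M_q$ can be taken of the form $(C_0 L)^{q}$ since $\max_{s=2,\ldots,k}\|d^s\tilde f^{\circ q}\|^{1/(s-1)}$ is bounded by a geometric-type expression in $q$ dominated by $C_0^q$ for a suitable $C_0$. The key point is that $M_q$ grows only \emph{exponentially} in $q$ — this is exactly the analytic input, and it is far stronger than the (super-exponential) behavior one would get for a general $C^\infty$ map via Fa\`a di Bruno.

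Next I would plug this into Corollary~\ref{cor:bound.hk2}. That corollary tells us that with $q(k)=O_m(1)\cdot \frac{k\log k}{l\log L}$ we have $h_{l,k}(f,\delta,x)\le \frac{2l\log L}{k}$ provided $\delta\le\delta(k)=1/M_{q(k)}$. With $M_q=C_0^q$ we get
\begin{equation*}
  \log(1/\delta(k)) = q(k)\log C_0 = O_m(1)\cdot\frac{\log C_0}{l\log L}\cdot k\log k = \Theta_{m}(k\log k),
\end{equation*}
where the implied constant depends on $L$ and $C_0$. Inverting this relation: given $\delta>0$ small, choose $k=k(\delta)$ to be the largest integer with $M_{q(k)}\le 1/\delta$, i.e. $k\log k \asymp \log(1/\delta)=|\log\delta|$. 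Then $k\asymp |\log\delta|/\log|\log\delta|$, and hence
\begin{equation*}
  h_{l,k}(f,\delta,x)\le \frac{2l\log L}{k} = O_m(\log L)\cdot\frac{\log|\log\delta|}{|\log\delta|},
\end{equation*}
which is the first displayed inequality. The second inequality, $v_l(f,\delta)\le O_m(\log L)\cdot\frac{\log|\log\delta|}{|\log\delta|}$, then follows at once from~\eqref{eq:vol.Ck.ineq} (which gives $v_l(f,\delta,x)\le h_{l,k}(f,\delta,x)$ for every $k$) by taking the supremum over $x\in Y$.

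The main obstacle — really the only non-routine point — is the derivation of the exponential bound $M_q\le C_0^q$. One must check carefully that the domain of holomorphic extension of $\tilde f^{\circ q}$ does not shrink to zero as $q\to\infty$: this uses compactness of $Y$ and the fact that $f(Y)\subseteq Y$ together with the uniform size $\rho$ of the complex neighborhood $U$, so that there is a fixed $\rho'>0$ with $\tilde f^{\circ q}$ holomorphic and bounded by $D$ on the $\rho'$-polydisc at every point of $Y$, uniformly in $q$. Once this uniform domain is secured, the Cauchy estimates do all the work and the dependence on $q$ is purely through the factor $L^q$ bounding the first derivative of the iterate, giving $M_q$ of the stated exponential type. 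Everything else is bookkeeping: substituting into Corollary~\ref{cor:bound.hk2} and solving the transcendental relation $k\log k\asymp|\log\delta|$ for $k$. The statement about the tail entropy $h^*(f,\delta)$ is entirely analogous, using the version of Newhouse's inequality and the $h^*$-analogue of~\eqref{eq:vol.Ck.ineq} mentioned in~\secref{appendix:sec:volume-growth}, and I would simply remark that it follows by the same argument, as the paper does.
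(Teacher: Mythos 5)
There is a genuine gap, and it sits exactly at the point you yourself single out as ``the only non-routine point''. Your claim that $\tilde f^{\circ q}$ is holomorphic and bounded by $D$ on a polydisc of a \emph{fixed} radius $\rho'=\rho'(\rho,D)>0$ around every point of $Y$, uniformly in $q$, is false in general. The hypothesis $f(Y)\subseteq Y$ only controls the real orbit; a complex point at distance $r$ from $Y$ is moved by $\tilde f$ to distance up to about $Lr$ from $Y$, so to keep all $q$ intermediate iterates inside the neighborhood $U$ of size $\rho$ you must start at distance about $\rho/L^q$ -- the domain of analyticity of $\tilde f^{\circ q}$ shrinks exponentially in $q$. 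Indeed, if your uniform claim held, the Cauchy estimate on the fixed polydisc would give $\norm{d\tilde f^{\circ q}}\le D/\rho'$ on $Y$ uniformly in $q$, which contradicts the exponential growth of $\norm{df^{\circ n}}$ for any map with genuine expansion -- precisely the case of interest when $\log L>0$. Consequently your asserted bound $M_q\le C_0^q$ is not justified by the argument you give.

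The paper's proof repairs exactly this point: it works on the shrinking neighborhood $U_q$ of size $\rho/L^q$, where $\tilde f^{\circ q}$ is still bounded by $D$, and applies the Cauchy formula there to get $\norm{d^s\tilde f^{\circ q}}\le D\,s!\,(L^q/\rho)^s$. The saving grace is that $M_q$ is defined through the normalized roots $\norm{d^s f^{\circ q}}^{1/(s-1)}$, so the factor $(L^q/\rho)^s$ collapses to a single $L^q/\rho$ and one still gets $M_{q(k)}=2^{O_m(k\log k)}$ -- of the same exponential type you wanted, but for a different reason. From that point on your bookkeeping agrees with the paper: plugging into Corollary~\ref{cor:bound.hk2}, solving $k\log k\asymp|\log\delta|$ to get $k=\Omega_m\(|\log\delta|/\log|\log\delta|\)$, and passing to $v_l(f,\delta)$ via~\eqref{eq:vol.Ck.ineq} is exactly the paper's end-game. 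So the final computation stands, but the key analytic input must be derived from the shrinking-domain Cauchy estimate, not from a uniform complex neighborhood, which does not exist for expanding analytic maps.
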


\begin{proof}
  Since, by the assumptions, $||d\tilde f||\le L$, this complex
  mapping can expand distances at most $L$ times. Therefore, the
  $q$-th iterate $f^{\circ q}$ is extendible to at least a complex
  neighborhood $U_q$ of $Y$ of the size $\frac{\rho}{L^q}$, and it is
  bounded there by $D$. Applying Cauchy formula, we conclude that for
  each $x\in Y$ we have
  \begin{equation}\label{eq:compl.bound}
    \norm{d^s\tilde f^{\circ q}} \le D s! \left(\frac{L^q}{\rho}\right)^s, \qquad s=0,1,2,\ldots    
  \end{equation}
  Therefore
  \begin{equation}
    \norm{d^s\tilde f^{\circ q}}^{1/s}\le D^{1/s}\cdot s\cdot \frac{L^q}{\rho}, \qquad s=0,1,2,\ldots
  \end{equation}
  In particular, for $s=0,1,2,\ldots,k$ we have
  \begin{equation}
    \norm{d^s\tilde f^{\circ q}}^{1/s} \le \frac{DkL^q}{\rho}
    = 2^{O_m(k\log k)}
  \end{equation}
  for $k$ sufficiently big. Therefore $M_{q(k)}= 2^{O_m(k\log k)}$
  (note that the difference between the $1/s$ and $1/(s-1)$ power is
  absorbed into the asymptotic constant), and
  Corollary~\ref{cor:bound.hk2} applies with
  $\delta=2^{-O_m(k\log k)}$. Solving for fixed $\delta$, we see
  that Corollary~\ref{cor:bound.hk2} applies with
  \begin{equation}
    k=\Omega_m\left(\frac{|\log\delta|}{\log|\log\delta|}\right).
  \end{equation}
  With this $k$, Corollary~\ref{cor:bound.hk2} gives
  \begin{equation}
    h_{l,k}(f,\delta,x)\le \frac{2l\log L}{k}=
    O_m(\log L)\cdot \frac{\log|\log\delta|}{|\log \delta|}.
  \end{equation}
  as claimed.
\end{proof}

\section{Applications in diophantine geometry}

For $p\in\P^\ell(\Q)$ we define $H(p)$ to be $\max_i |\vp_i|$ where
$\vp\in\Z^{\ell+1}$ is a projective representative of $p$ with
$\gcd(\vp_0,\ldots,\vp_\ell)=1$. For $\vx\in\A^\ell(\Q)$ we define its
height to be the height of $\iota(\vx)$ for the standard embedding
\begin{equation}
  \iota:\A^\ell\to\P^\ell,\qquad \iota(\vx_{1..\ell}) = (1:\vx_1:\cdots:\vx_\ell).
\end{equation}
For a set $X\subset\P^\ell(\R)$ we denote
\begin{equation}
 X(\Q,H) := \{ p\in X\cap \P(\Q)^\ell : H(p)\le H\},
\end{equation}
and similarly for $X\subset\R^\ell$.

In \cite{bombieri-pila} Bombieri and Pila introduced a method for
studying the quantity $\#X(\Q,H)$ as a function of $H$ when $X$ is the
graph of a $C^r$ (or $C^\infty$) smooth function $f:[0,1]\to\R^2$. It
turns out that two very different asymptotic behaviors are obtained
depending on whether the graph of $f$ belongs to an algebraic plane
curve. The Yomdin-Gromov algebraic lemma has been used in both of
these directions, to generalize from graphs of functions to more
general sets. In~\secref{sec:bp-complex} we give a complex-cellular
analog of the Bombieri-Pila determinant method. We then present an
application in the algebraic context in~\secref{sec:alg-density} and
in the transcendental context in~\secref{sec:trans-density}.

\subsection{The Bombieri-Pila method for complex cells}
\label{sec:bp-complex}

The Bombieri-Pila method is based on a clever estimate for certain
\emph{interpolation determinants} of a collection of functions, which
is obtained using Taylor expansions. In this section we develop a
parallel theory over complex cells, replacing Taylor expansions by
Laurent expansions.

\subsubsection{Interpolation determinants}

Let $\mu\in\N$. Let $\vf:=(f_j:U\to\R)_{j=1,\ldots,\mu}$ be a tuple of
functions and $\vp:=(p_1,\ldots,p_\mu)\in U^\mu$ a tuple of
points. For minor technical simplifications we will assume that
$f_1\equiv 1$.

We define the \emph{interpolation determinant}
\begin{equation}
  \Delta(\vf;\vp) := \det(f_i(p_j))_{1\le i,j\le\mu}.
\end{equation}
Bombieri-Pila \cite{bombieri-pila} give an estimate for
$\Delta(\vf;\vp)$ assuming that $\vf$ have bounded $C^r$ norm (for
sufficiently large $r$) and $\vp$ are contained in a sufficiently
small ball. In our complex analytic analog the small ball will be
replaced by a cell $\cC$ with a sufficiently wide $\vdelta$-extension,
$\vf$ will be replaced by a tuple holomorphic functions on
$\cC^\vdelta$ and the $C^r$-norm will be replaced by the maximum norm
in $\cC^\vdelta$.

Let $\cC$ be a complex cell and let $m\le\dim\cC$ denote the number of
$D$-fibers. Set
\begin{equation}
  E_m:=\frac{m}{m+1}(m!)^{1/m}.
\end{equation}
Fix $0<\delta<1/2$, and let $\vdelta$ be given by $\delta$ in the
$D$-coordinates and $\delta^{E_m\mu^{1+1/m}}$ in the $A,D_\circ$
coordinates. We suppose that $\cC$ admits a $\vdelta$-extension.
  
\begin{Lem}\label{lem:bp-upper-complex}
  With $\cC$ as above, suppose
  $f_1,\ldots,f_\mu\in\cO_b(\cC^\vdelta)$ with
  $\diam(f_i(\cC^\vdelta),\C)\le M$ and $p_1,\ldots,p_\mu\in\cC$. Then
  \begin{equation}
    |\Delta(\vf;\vp)| \le M^\mu \mu^{O_\ell(\mu)} \delta^{E_m\mu^{1+1/m}-O_\ell(\mu)}.
  \end{equation}
  When $m=0$ the bound above holds\footnote{In fact one can easily
    derive better estimates in this case but this is not needed in
    this paper.} if we replace $m=0$ by $m=1$.
\end{Lem}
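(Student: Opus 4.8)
The plan is to mimic the Bombieri–Pila interpolation determinant estimate, with Taylor expansions replaced by the normalized Laurent expansions of Proposition~\ref{prop:norm-Laurent}. First I would expand each $f_i$ on $\cC^\vdelta$ as $f_i(\vz) = \sum_{\valpha\in\Z^\ell} c_{i,\valpha}\,\vz^{[\valpha]}$, where the monomials $\vz^{[\valpha]}$ are $\cC$-normalized, so $|\vz^{[\valpha]}|\le 1$ on $\cC$, and by the last claim of Proposition~\ref{prop:norm-Laurent} the coefficients satisfy $|c_{i,\valpha}|\le \vdelta^{\pos\valpha}\,\norm{f_i}_{\cC^\vdelta}$. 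Replacing $f_i$ by $f_i$ minus its constant term changes no determinant (recall $f_1\equiv1$, and subtracting a constant from a row is harmless after the first), so I may assume each $\norm{f_i}_{\cC^\vdelta}\le M$ with the constant terms removed, i.e. only indices $\valpha\ne 0$ occur (for $i\ge 2$; the first row stays $(1,\ldots,1)$). Then $\Delta(\vf;\vp)$ is, by multilinearity of the determinant in the monomial expansions of rows $2,\ldots,\mu$, a sum over tuples $(\valpha^{(2)},\ldots,\valpha^{(\mu)})$ of terms $\big(\prod_{i\ge2} c_{i,\valpha^{(i)}}\big)\cdot \det\big( (\vz_j^{(j\text{-th pt})})^{[\valpha^{(i)}]}\big)$. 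The determinant factor is bounded by $\mu!$ in absolute value since all entries have modulus $\le1$, and vanishes whenever two of the $\valpha^{(i)}$ coincide. So the total is bounded by $M^\mu\cdot\mu!\cdot \sum' \prod_{i\ge2}\vdelta^{\pos\valpha^{(i)}}$, the sum being over $(\mu-1)$-tuples of \emph{distinct} nonzero exponents.

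The core estimate is therefore the combinatorial one: bounding $\sum'\prod_i \vdelta^{\pos\valpha^{(i)}}$ over $(\mu-1)$-tuples of distinct nonzero exponents. Here the split in $\vdelta$ is used: in the $m$ coordinates of type $D$, only $\valpha_k\ge 0$ appear and the weight is $\delta^{\valpha_k}$; in the remaining $\ell-m$ coordinates of types $A,D_\circ$, both signs appear and the weight is $\delta^{E_m\mu^{1+1/m}\cdot|\valpha_k|}$ (so that coordinate is extremely heavily suppressed once $\delta$ is small). The dominant contribution comes from choosing the $\mu-1$ exponents to be the $\mu-1$ monomials of smallest possible $\ell^1$-norm — essentially supported only on the $D$-coordinates, where the weight decays slowest. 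Counting lattice points in $\Z_{\ge0}^m$ by total degree, there are $\binom{d+m-1}{m}\sim d^m/m!$ of them of degree $\le d$, so to accommodate $\mu$ distinct ones one needs degree up to $d\asymp (m!\,\mu)^{1/m}$, and the product of the corresponding $\delta$-weights is $\delta^{\,\sum \text{(degrees)}}$ with $\sum\text{degrees}\sim \frac{m}{m+1}(m!)^{1/m}\mu^{1+1/m} = E_m\mu^{1+1/m}$; this is precisely where $E_m$ and the exponent $\mu^{1+1/m}$ enter. The contribution of any exponent with a nonzero entry in an $A,D_\circ$-coordinate is negligible because of the $\mu^{1+1/m}$ factor in $\vdelta$ there. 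Carrying this out, the sum is $\mu^{O_\ell(\mu)}\,\delta^{E_m\mu^{1+1/m}-O_\ell(\mu)}$, the $\mu^{O_\ell(\mu)}$ absorbing the number of tuples of bounded-degree exponents and the $\delta^{-O_\ell(\mu)}$ absorbing lower-order corrections to the geometric-series tails and to the degree count. Combining with the $M^\mu\mu!$ prefactor and $\mu!=\mu^{O(\mu)}$ gives the claimed bound.

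The main obstacle I anticipate is making the combinatorial/lattice-point optimization precise enough to extract the constant $E_m$ with the stated error term $O_\ell(\mu)$ in the exponent of $\delta$: one must (a) show the $D$-supported exponents dominate, i.e. that diverting any coordinate to the $A,D_\circ$-directions costs a factor $\delta^{\Omega(\mu^{1+1/m})}$ that cannot be recouped; (b) choose the optimal degree cutoff $d$ and sum the degrees of the $\mu-1$ smallest monomials, controlling the discrepancy between $\sum_{|\valpha|\le d}|\valpha|$ and the continuous integral $\int |x|\,dx$ over the simplex; and (c) handle the geometric-series tails $\sum_{\valpha_k> d}\delta^{\valpha_k}$ uniformly — these contribute only lower-order terms since $\delta<1/2$. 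The degenerate case $m=0$ is handled by the footnote: there the only $D_\circ,A$-coordinates are present, every nonzero exponent contributes a weight at most $\delta^{E_1\mu^{2}}$ wait — more simply, running the argument with the convention $m=1$ in the formula for $\vdelta$ gives a valid (though not sharp) bound because with $m=0$ the product $\prod\vdelta^{\pos\valpha^{(i)}}$ over $\mu-1$ distinct nonzero exponents is already bounded by $\delta^{(\text{something})\cdot\mu^2}$, comfortably exceeding $\delta^{E_1\mu^{2}}$. Once the combinatorial lemma is in hand the rest is bookkeeping.
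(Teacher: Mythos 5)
Your proposal is correct and follows essentially the same route as the paper: normalize using the diameter bound and the row $f_1\equiv1$, expand in the normalized Laurent series of Proposition~\ref{prop:norm-Laurent} with $|c_{i,\valpha}|\le\vdelta^{\pos\valpha}M$, expand the determinant multilinearly, discard terms with a repeated monomial, observe that any exponent touching an $A,D_\circ$-coordinate already carries weight $\delta^{E_m\mu^{1+1/m}}$, and optimize the degree sum over distinct $D$-supported monomials exactly as in your lattice count (the paper's $L_m(k)$ computation giving $E_m\mu^{1+1/m}-O(\mu)$). The only organizational difference is that the paper truncates each expansion at $|\valpha|<E_m\mu^{1+1/m}$ and carries a remainder $R_i$ with $\norm{R_i}_{\cC}=O_\ell(\mu^{2\ell})\,\delta^{E_m\mu^{1+1/m}}$, which keeps the multilinear expansion finite and thereby avoids the infinite sum over exponent tuples whose tail/counting bookkeeping you flag in your obstacle (c).
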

\begin{proof}
  Since $f_1\equiv1$ the interpolation determinant is unchanged if we
  replace each $f_j$ by $f_j-\const$ for $j\ge2$. In this way we may
  assume without loss of generality that
  $\norm{f_i}_{\cC^\vdelta}\le M$. We may also assume by rescaling
  that $M=1$.

  Write a Laurent expansion
  \begin{equation}\label{eq:bp-f_i-exp}
    f_i(\vz) = \left(\sum_{\valpha\in\Z^\ell,|\valpha|<E_m \mu^{1+1/m}} c_{i,\valpha} \vz^{[\valpha]}\right)+R_i(\vz)
  \end{equation}
  and note that by Proposition~\ref{prop:norm-Laurent} we have
  $|c_{i,\valpha}|<\vdelta^{\pos\valpha}$ and
  \begin{equation}\label{eq:bp-R_i-estimate}
    \norm{R_i}_{\cC} \le \sum_{|\valpha|\ge E_m\mu^{1+1/m}} \vdelta^{\pos\valpha} \le 
    \sum_{k\ge E_m\mu^{1+1/m}} (2k+1)^\ell \delta^k = O_\ell(\mu^{2\ell}) \delta^{E_m \mu^{1+1/m}}
  \end{equation}
  where the final two estimates are elementary and left to the
  reader. We expand the determinant $\Delta(\vf,\vp)$ multilinearly
  into $\mu^{O_\ell(\mu)}$ determinants $\Delta_I$ where each $f_i$ is
  replaced by one of the summands in~\eqref{eq:bp-f_i-exp}. We may
  consider only those $\Delta_I$ where each normalized monomial
  appears at most once: otherwise the determinant has two identical
  columns. We claim that each such $\Delta_I$ is majorated in $\cC$ by
  $\mu^{O_\ell(\mu)}\delta^{E_m \mu^{1+1/m}}$.

  Consider first the case that $\Delta_I$ contains a residue $R_i$ or
  one of the coefficients $c_{i,\valpha}$ where $\valpha$ contains a
  non-vanishing $A,D_\circ$ coordinate. Expand $\Delta_I$ into $\mu!$
  summands. Each summand contains either a term $R_i(p_j)$ or
  $c_{i,|\valpha|}$ as above, and the estimate then follows
  from~\eqref{eq:bp-R_i-estimate} or from our assumption on
  $\vdelta$.

  The remaining determinants $\Delta_I$ involve only the $m$ variables
  of type $D$, which we assume for simplicity of the notation are
  given by $\vz_{1..m}$. We expand $\Delta_I$ into a $\mu!$
  summands. Each summand is a product
  \begin{equation}
    c_{1,\valpha^1}\vz^{[\valpha^1]}\cdots c_{\mu,\valpha^\mu}\vz^{[\valpha^\mu]}, \qquad \valpha^1,\ldots,\valpha^\mu\in\N^m
  \end{equation}
  with distinct $\valpha^j$. By the estimate on the Laurent
  coefficients this summand is bounded by
  $\delta^{\sum_{i,j}\valpha_i^j}$. Let $L_m(k)$ denote the number of
  monomials of degree $k$ in $m$ variables. The minimal term
  $\delta^q$ will clearly be obtained if we choose $L_m(0)$ of the
  $\valpha^j$s of degree 0, $L_m(1)$ of degree $1$ and so on. Let
  $\nu$ be the largest integer satisfying
  $\sum_{k=0}^\nu L_m(k) \le \mu$. Then
  $q\ge\sum_{k=0}^\nu L_m(k)\cdot k$. Simple computations give
  \begin{equation}
    \begin{gathered}
      L_m(k) = \frac{k^{m-1}}{(m-1)!}+O_m(k^{m-2}) \qquad  \sum_{j=0}^k L_m(j) = \frac{k^m}{m!} + O_m(k^{m-1}) \\
      \mu =\frac{\nu^m}{m!}+O(\nu^{m-1}) \qquad  \nu = (m!\mu)^{1/m}+O(1)        
    \end{gathered}
  \end{equation}
  and finally $q=E_m\mu^{1+1/m}-O(\mu)$ as claimed.
\end{proof}

Lemma~\ref{lem:bp-upper-complex} gives essentially the same estimate
as the classical Bombieri-Pila method, \emph{but with dimension $m$
  instead of $\dim\cC$}. The key point is that for $\vdelta$
corresponding to $\hrho$-extensions with $\rho\ll1$ the
$D_\circ,A$-coordinates of $\vdelta$ have order $2^{-1/\rho}$
(compared with order $\rho$ for the $D$ coordinates), which allows us
to produce cellular covers satisfying the conditions of
Lemma~\ref{lem:bp-upper-complex}. Roughly one may say that the
punctured discs and annuli that we produce are so thin that they may
be ignored for the purposes of the Bombieri-Pila method. More
accurately we have the following version of the refinement theorem
(Theorem~\ref{thm:cell-refinement}).

\begin{Lem}\label{lem:refinement-special}
  Let $\cC^{1/2}$ be a (real) complex cell. Let $\mu\in\N$ and
  $0<\delta<1$. Then there exists a (real) cellular cover
  $\{f_j:\cC^{\vdelta_j}_j\to\cC^{1/2}\}$ of size
  \begin{equation}
    N =
    \begin{cases}
      \poly_\ell(\mu\log(1/\delta))\cdot\delta^{-2\dim\cC} & \text{for complex covers} \\
      \poly_\ell(\mu\log(1/\delta))\cdot\delta^{-\dim\cC} & \text{for real covers}
    \end{cases}
  \end{equation}
  where each $f_j$ is a cellular translate map, and $\vdelta_j$ is
  defined as in Lemma~\ref{lem:bp-upper-complex} (for $\cC_j$).

  If $\cC^{1/2}$ varies in a definable family (and $\delta$ varies under the
  condition $0<\delta<1$) then the cells $\cC_j$ and maps $f_j$ can
  also be chosen from a single definable family. If $\cC$ is algebraic
  of complexity $\beta$ then $\cC_j,f_j$ are algebraic of complexity
  $\poly_\ell(\beta)$.
\end{Lem}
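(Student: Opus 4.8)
The plan is to obtain Lemma~\ref{lem:refinement-special} from the refinement theorem (Theorem~\ref{thm:cell-refinement}) together with an explicit grid-covering of the $D$-fibers. The asymmetry to exploit is that the $\vdelta_j$ prescribed in Lemma~\ref{lem:bp-upper-complex} is \emph{enormously} wide in the $A,D_\circ$ coordinates: by~\eqref{eq:rho-ext-def} the value $\delta^{E_m\mu^{1+1/m}}$ there corresponds to a $\hrho$-extension with $\rho=\Theta(1/(E_m\mu^{1+1/m}|\log\delta|))$, i.e. to $1/\rho=\poly_\ell(\mu\log(1/\delta))$, whereas the $D$ coordinates require only the modest width $\delta$, i.e. $\rho=\Theta(\delta)$. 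One cannot afford to reach the $D$-width through Theorem~\ref{thm:cell-refinement}, since its bound $\poly_\ell(1/\sigma)$ carries an unspecified degree and $1/\sigma=\Theta(1/\delta)$ would only give $\poly_\ell(1/\delta)$ cells; instead the punctured discs and annuli are refined via Theorem~\ref{thm:cell-refinement} (cheaply), and then each $D$-fiber is subdivided by hand into $O(\delta^{-2})$ sub-discs (resp. $O(\delta^{-1})$ in the real case).

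A routine case analysis reduces to $\delta$ below an absolute constant (for $\delta$ larger, either every prescribed extension is already no wider than the $1/2$-extension and the trivial cover works, or $1/\rho_0$ below is $O_\ell(\mu^2)$ and Theorem~\ref{thm:cell-refinement} applies directly with $O_\ell(1)$ extra subdivision of the $D$-fibers); henceforth assume $\delta$ small. Set $E^\ast_\ell(\mu):=\max_{1\le m\le\ell}E_m\mu^{1+1/m}$; this is $\poly_\ell(\mu)$, and since $E_{m_j}\mu^{1+1/m_j}\le E^\ast_\ell(\mu)$, a cell admitting a $\delta^{E^\ast_\ell(\mu)}$-extension in its $A,D_\circ$ coordinates automatically admits the narrower extension $\vdelta_j$ required by the Lemma (the note following Definition~\ref{def:cell-ext}). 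It thus suffices, with the stated bounds, to construct a cellular translate cover $\{f_j:\cC_j^{\vdelta^\ast}\to\cC^{1/2}\}$ where $\vdelta^\ast$ is $\delta$ on the $D$ coordinates and $\delta^{E^\ast_\ell(\mu)}$ on the $A,D_\circ$ coordinates.

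Next I would apply Theorem~\ref{thm:cell-refinement} (its real version in the real case) to refine $\cC$ (which admits a $\he{\rho_1}$-extension for an absolute constant $\rho_1$, since $\cC^{1/2}$ is a cell) into cells $\cC'_k$ admitting a $\he{\rho_0}$-extension, with $\rho_0$ chosen just small enough that the $\he{\rho_0}$-extension of any $A$ or $D_\circ$ fiber has parameter at most $\delta^{E^\ast_\ell(\mu)}$ (and $\rho_0<\rho_1$, which $\delta$ small guarantees); by~\eqref{eq:rho-ext-def} this forces $1/\rho_0=\Theta(E^\ast_\ell(\mu)|\log\delta|)=\poly_\ell(\mu\log(1/\delta))$, so Theorem~\ref{thm:cell-refinement} yields only $\poly_\ell(\rho_1,1/\rho_0)=\poly_\ell(\mu\log(1/\delta))$ cells. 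By Remark~\ref{rem:refinement-vs-monom} each $\cC'_k$ is obtained from $\cC$ by turning some $A,D_\circ$ fibers into $D$ fibers; in particular $\dim\cC'_k=\dim\cC$, the number $m'_k$ of $D$-fibers of $\cC'_k$ is at most $\dim\cC$, and every surviving $A$ or $D_\circ$ fiber of $\cC'_k$ is centered at zero and already carries the $\delta^{E^\ast_\ell(\mu)}$-extension.

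Finally I would upgrade, for each $\cC'_k$, its $D$-fibers one at a time from a $\he{\rho_0}$-extension (parameter of order $\rho_0$, far too narrow) to a $\delta$-extension: as in the proof of Theorem~\ref{thm:cell-refinement}, rescale the current $D$-fiber to $D(1)$ and cover it by $O(\delta^{-2})$ discs of radius $\delta$ centered in $D(1)$ (in the real case cover $\R_+D(1)=(0,1)$ by $O(\delta^{-1})$ real-centered discs); since $\rho_0$ is tiny, the $\delta$-extension of each such disc is a disc of radius $1$ contained in the $\he{\rho_0}$-extension $D(\Theta(1/\rho_0))$ of $D(1)$, so the radii of the remaining fibers still extend holomorphically there. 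After the rescaling these discs have constant center and radius, so the associated maps are cellular translate maps from a single definable family, of complexity $\poly_\ell(\beta)$ in the algebraic case. Running this over the $m'_k\le\dim\cC$ $D$-fibers multiplies the cell count by a factor $O_\ell(\delta^{-2\dim\cC})$ (resp. $O_\ell(\delta^{-\dim\cC})$), and composing with the maps furnished by Theorem~\ref{thm:cell-refinement} (Remark~\ref{rem:cell-cover-composition}) produces a cover of total size $\poly_\ell(\mu\log(1/\delta))\cdot\delta^{-2\dim\cC}$ (resp. $\poly_\ell(\mu\log(1/\delta))\cdot\delta^{-\dim\cC}$); uniformity over definable families and the algebraic complexity bound are inherited from Theorem~\ref{thm:cell-refinement} together with the explicit constant-center, constant-radius nature of the hand-made discs. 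The main obstacle is exactly this accounting: one must refrain from passing the $D$-fibers through Theorem~\ref{thm:cell-refinement} and instead produce the explicit $O(\delta^{-2})$-per-fiber grid, all the while checking via Remark~\ref{rem:refinement-vs-monom} that the number of $D$-fibers needing this treatment never exceeds $\dim\cC$.
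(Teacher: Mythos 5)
Your proposal is correct and follows essentially the same route as the paper's proof: first apply the refinement theorem with $1/\rho=\Theta\big(\max_m E_m\mu^{1+1/m}\,\log(1/\delta)\big)=\poly_\ell(\mu\log(1/\delta))$ so that all $A,D_\circ$ coordinates already carry the required (very narrow) extension, and then subdivide each of the at most $\dim\cC$ resulting $D$-fibers by an explicit grid of $\delta^{-2}$ (resp.\ $\delta^{-1}$ real) discs of radius $\delta$. The only differences are presentational (your reduction to small $\delta$ and the explicit appeal to Remark~\ref{rem:refinement-vs-monom}), not mathematical.
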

\begin{proof}
  We begin by constructing, using the refinement theorem
  (Theorem~\ref{thm:cell-refinement}), a cellular cover
  $\{g_j:\cC^\hrho_j\to\cC^{1/2}\}$ such that
  $2^{-1/\rho}=O_\ell(\delta^{E_m\mu^{1+1/m}})$ holds for every
  $m=1,\ldots,\dim\cC$. The size of this cover is
  $\poly_\ell(\mu\log(1/\delta))$. We now fix some $g=g_j$. It will be
  enough to construct a cellular cover
  $\{f_j:\cC^{\vdelta_j}_j\to\cC_j^{\hrho}\}$ as in the conclusion of
  the lemma. Since each $D_\circ,A$ coordinate already has the desired
  extension, we need only refine the $D$ fibers to achieve a
  $\delta$-extension. This is elementary: after rescaling each
  $D$-fiber to $D(1)$ we cover $D(1)$ (resp. $(-1,1)$) by
  $\delta^{-2}$ (resp. $\delta^{-1}$) discs (resp. real discs) of
  radius $\delta$.
\end{proof}

\subsubsection{Polynomial interpolation determinants}

Let $\Lambda\subset\N^\ell$ be a finite set and set
$d:=\max_{\valpha\in\Lambda}|\valpha|$ and $\mu:=\#\Lambda$. For minor
technical simplifications we assume ${\mathbf 0}\in\Delta$. Let
$\vf:=(\vf_1,\ldots,\vf_\ell)$ be a collection of functions with a common
domain $U$ and $\vp:=(\vp_1,\ldots,\vp_\mu)\in U^\mu$ a collection of
points. We define the \emph{$\Lambda$-interpolation determinant}
\begin{equation}
  \Delta^\Lambda(\vf;\vp) := \Delta(\vg;\vp), \qquad \vg:=(\vf^\valpha:\valpha\in\Lambda).
\end{equation}
Basic linear algebra shows that for $S\subset U$ the set
$\vf(S)\subset\C^\ell$ belongs to a hypersurface $\{P=0\}$ with
$\supp P\subset\Lambda$ if and only if $\Delta^\Lambda(\vf;\vp)$
vanishes for any $\vp\subset S$ of size $\mu$.

The following is due to Bombieri and Pila \cite{bombieri-pila}.

\begin{Lem}\label{lem:bp-lower}
  Let $H\in\N$ and suppose $H(\vf(\vp_j))\le H$ for any
  $j=1,\ldots,\mu$. Then $\Delta^\Lambda(\vf;\vp)$ either vanishes or
  satisfies
  \begin{equation}
    |\Delta^\Lambda(\vf;\vp)| \ge H^{-\mu d}.
  \end{equation}
\end{Lem}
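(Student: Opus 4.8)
This is the classical Bombieri--Pila lower bound, and the proof is a short clearing-of-denominators argument. Write $\Delta^\Lambda(\vf;\vp) = \det\big( \vf(\vp_j)^\valpha \big)$, a $\mu\times\mu$ determinant with rows indexed by $\valpha\in\Lambda$ and columns by $j=1,\ldots,\mu$. For each $j$ the point $x^{(j)}:=\vf(\vp_j)$ lies in $\A^\ell(\Q)$ with $H(x^{(j)})\le H$, so by the definition of height there is an integer vector $(b^{(j)};a^{(j)}_1,\ldots,a^{(j)}_\ell)\in\Z^{\ell+1}$ with $\gcd$ equal to $1$, with $x^{(j)}_i = a^{(j)}_i/b^{(j)}$, and with $|b^{(j)}|,|a^{(j)}_i|\le H$ for all $i$.

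First I would observe that for any $\valpha\in\Lambda$ one has
\begin{equation}
  (b^{(j)})^{d}\cdot (x^{(j)})^{\valpha} = (b^{(j)})^{d-|\valpha|}\prod_{i=1}^\ell (a^{(j)}_i)^{\valpha_i} \in\Z,
\end{equation}
since $d=\max_{\valpha\in\Lambda}|\valpha|$ ensures $d-|\valpha|\ge0$. Multiplying the $j$-th column of the interpolation matrix by $(b^{(j)})^d$ therefore produces a matrix with integer entries, whose determinant equals $\big(\prod_{j=1}^\mu (b^{(j)})^d\big)\cdot\Delta^\Lambda(\vf;\vp)$.

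Next, if $\Delta^\Lambda(\vf;\vp)\neq0$ then this integer is a nonzero integer, hence has absolute value at least $1$; rearranging gives
\begin{equation}
  |\Delta^\Lambda(\vf;\vp)| \ge \prod_{j=1}^\mu |b^{(j)}|^{-d} \ge H^{-\mu d},
\end{equation}
using $|b^{(j)}|\le H$ for each $j$. This completes the argument.

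There is essentially no obstacle here: the only point requiring a little care is matching the normalization of the height $H(\cdot)$ (a representative with coprime coordinates, all bounded by $H$) so that the common denominator $b^{(j)}$ of $\vf(\vp_j)$ can be taken with $|b^{(j)}|\le H$. The hypothesis $\mathbf{0}\in\Lambda$ (so that one of the monomials is constant) plays no role in this lemma and can be ignored for the lower bound.
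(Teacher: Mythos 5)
Your proof is correct and is essentially the same argument as the paper's: clear a common denominator of size at most $H$ from each column (the paper calls it $Q_j$, you use the integer $b^{(j)}$ from the coprime projective representative), observe the rescaled determinant is an integer, and conclude that a nonzero value is at least $\prod_j |b^{(j)}|^{-d}\ge H^{-\mu d}$. Your side remarks about the height normalization and the irrelevance of $\mathbf{0}\in\Lambda$ are also accurate.
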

\begin{proof}
  Let $0<Q_j\le H$ denote the common denominator of $\vf(\vp_j)$. The
  column corresponding to $\vp_j$ in $\Delta^\Lambda(\vf;\vp)$
  consists of rational numbers with common denominator dividing
  $Q_j^d$. Factoring this common denominator from each column we
  obtain a matrix with integer entries whose determinant is either
  vanishing or an integer. In the latter case
  \begin{equation}
    |\Delta^\Lambda(\vf;\vp)| \ge \prod_{j=1}^\mu Q_j^{-d} \ge H^{-\mu d}.
  \end{equation}
\end{proof}

\subsection{Rational points on algebraic hypersurfaces}
\label{sec:alg-density}

\subsubsection{Previous work}
\label{sec:alg-density-history}

Let $X\subset\P^2(\C)$ be an irreducible algebraic curve of degree
$d$. Let $f:I\to X$ be a $C^r$-smooth function with $\norm{f}_r\le1$
and write $X_f:=f(I)$. Pila~\cite{pila:density-Q} proves, using a
generalization of the method of \cite{bombieri-pila}, that for a
sufficiently large $r=r(d)$ one has $\#X_f(\Q,H)=O_{\e,d}(H^{2/d+\e})$
for any $\e>0$.

If one allows $r=r(d,H)$, for instance if $f$ is $C^\infty$ smooth,
then the Bombieri-Pila method can be pushed further to replace
$H^{2/d+\e}$ by $H^{2/d}\log^kH$ for some $k>0$. The study of
$\#X(\Q,H)$ is therefore directly related to the $C^r$ parametrization
complexity of $X$ in the sense of the Yomdin-Gromov algebraic
lemma. Using an explicit parametrization construction Pila
\cite{pila:pems} proved that
$\#X(\Q,H)\le(6d)^{10}4^dH^{2/d}(\log H)^5$.

In \cite{heath-brown:density} Heath-Brown has developed a
non-Archimedean version of the Bombieri-Pila method and used it to
derive the estimate $\#X(\Q,H)=O_{\e,d}(H^{2/d+\e})$. This was later
improved (in arbitrary dimension $\ell$) by Salberger
\cite{salberger:density} to $O_{\ell,d}(H^{2/d}\log H)$ and very
recently by Walsh \cite{walsh:boundd-rational} to
$O_{\ell,d}(H^{2/d})$.

For a general irreducible variety $X\subset\P(\C)^\ell$ of dimension
$m$, Broberg \cite[Theorem~1]{broberg:note} (generalizing a result of
Heath-Brown \cite{heath-brown:density}) has proved that $X(\Q,H)$ is
contained in $O_{\ell,d,\e}(H^{(m+1)d^{-1/m}+\e})$ hypersurfaces
$\{P_i=0\}$, which have degrees $O_{\ell,d,\e}(1)$ and which do not
contain $X$. Intersecting $X$ with each of the hypersurfaces one
obtains varieties of dimension $m-1$, and could potentially proceed by
induction to eventually obtain estimates on $\#X(\Q,H)$. Marmon
\cite{marmon} has recovered this result using the Bombieri-Pila method
by appealing to the Yomdin-Gromov algebraic lemma.

\subsubsection{Proof of Theorem~\ref{thm:improved-marmon}}
\label{sec:alg-density-proof}

In this section we will prove Theorem~\ref{thm:improved-marmon} using
the complex-cellular version of the Bombieri-Pila method. We remark
that a result similar to Theorem~\ref{thm:improved-marmon} could
probably also be proved by following the argument of \cite{marmon} and
replacing the Yomdin-Gromov algebraic lemma by our refined
version.

We will work in the affine setting. Let $\iota_j$ denote the $j$-th
standard chart $\vx_{1..\ell}\to(\cdots:\vx_j:1:\vx_{j+1}:\cdots)$. If
$\vz=(\vz_0:\cdots:\vz_{\ell})\in\P^\ell(\C)$ with $|\vz_j|$ maximal
then in the $\iota_j$-chart $\vz$ corresponds to a point
$\vx\in\bar\cP_\ell$, and moreover $H(\vz)=H(\vx)$. We assume without
loss of generality that $j=0$. Below $\vx$ denotes the affine
coordinates in this chart. It will suffice to count the points of
height $H$ in $\bar\cP_\ell\cap X_0$ where $X_0:=\iota_0^{-1}(X)$.

We fix some degree-lexicographic monomial ordering on
$\C[\vx_{1..\ell}]$ and let $\Lambda:=\N^\ell\setminus\LT(I)$ where
$I\subset\C[\vx_{1..\ell}]$ denotes the ideal of $X_0$ and $\LT(I)$ the
set of leading exponents of $I$ with respect to the fixed
ordering. Set $\Lambda(k):=\Lambda\cap\{|\valpha|\le k\}$. Then
$\mu(k):=\#\Lambda(k)$ is the Hilbert function of $X_0$. This function
eventually equals the Hilbert polynomial of $I$, and by Nesterenko's
estimate \cite{nesterenko:hilbert} we have
\begin{equation}
  \mu(k) \ge \binom{k+m+1}{m+1} - \binom{k+m+1-d}{m+1} = dk^m/m!+\poly_m(d)k^{m-1}.
\end{equation}
We will apply the Bombieri-Pila method with $\Lambda(k)$-interpolation
determinants.

Let $\cC^\vdelta$ be a complex cell as in
Lemma~\ref{lem:bp-upper-complex} and
$f:\cC^\vdelta\to\cP_\ell^{1/2}\cap X_0$
be a cellular map. Let $\vp\subset\cC$ be a $\mu$-tuple of points
satisfying $H(f(\vp_j))\le H$ for $j=1,\ldots,\mu$. Then by
Lemmas~\ref{lem:bp-upper-complex} and~\ref{lem:bp-lower} we have
\begin{equation}\label{eq:bp-comparison-alg}
  H^{-\mu k} \le |\Delta^{\Lambda(k)}(f;\vp)| \le 2^\mu \mu^{O_\ell(\mu)} \delta^{E_m\mu^{1+1/m}-O_\ell(\mu)}
\end{equation}
unless $\Delta^{\Lambda(k)}(f;\vp)=0$. Note that in the right hand
side of~\eqref{eq:bp-comparison-alg}, Lemma~\ref{lem:bp-upper-complex}
gives the estimate with $m$ equal to the number of disc fibers in
$\cC$ (which is at most $\dim\cC$), and this implies the same estimate
with $m=\dim X_0$. Thus unless $\Delta^{\Lambda(k)}(f_{1..\ell};\vp)=0$ we have,
for $k>\poly_m(d)$,
\begin{multline}
  -\log\delta <
  \frac{O_\ell(\log\mu) + k\log H }{E_m\mu^{1/m}+O_\ell(1)} < \\
  \frac{k\log H + O_\ell(\log k)}{E_m\mu^{1/m}+O_\ell(1)} <
  \frac{m+1}{m} \cdot\frac{k\log H+O_\ell(\log k)}{kd^{1/m} (1+\poly_m(d)/k)^{1/m}} < \\
  O_\ell(1)+\frac{m+1}{m}\cdot\frac{\log H}{d^{1/m}} (1+\poly_\ell(d)/k)
\end{multline}
where we used $\mu(k)=\poly_\ell(k)$. Therefore, for
\begin{equation}
  \delta = O_\ell(1)\cdot H^{-\frac{m+1}m d^{-1/m}(1+\poly_\ell(d)/k)}
\end{equation}
we have $\Delta^{\Lambda(k)}(f_{1..\ell};\vp)=0$ for any $\vp$ as
above, and $[f(\cC)](\Q,H)$ is therefore contained in a hypersurface
$\{P=0\}$ satisfying $\supp P\subset\Lambda(k)$. In particular $P$ is
of degree at most $k$ and does not vanish identically on $X_0$.

We use the CPT to construct a real cellular cover of
$\R(\cP_\ell^{1/2}\cap X_0)$ of size $\poly_\ell(d)$ admitting
$1/2$-extensions, and then apply Lemma~\ref{lem:refinement-special} to
further refine each of the cells to satisfy the conditions of
Lemma~\ref{lem:bp-upper-complex}. In this way we obtain
\begin{multline}
  N = \poly_\ell(d) \cdot \delta^{-m}\poly_\ell(\mu\log(1/\delta)) =\\
  \poly(d,k,\log H)\cdot H^{(m+1) d^{-1/m}(1+\poly_\ell(d)/k)}
\end{multline}
real cells, and applying the construction above to each of them
produces the required collection of hypersurfaces.

\subsection{Rational points on transcendental sets}
\label{sec:trans-density}

\subsubsection{Log-sets in diophantine applications of the Pila-Wilkie
theorem}
\label{sec:log-sets-pila}

In some of the most remarkable applications of the Pila-Wilkie
theorem, particularly those related to modular curves and more general
Shimura varieties, it is necessary to count rational points on sets
that are definable in $\R_{\an,\exp}$ but \emph{not} in $\R_\an$. We
briefly recall the most standard example coming from the universal
covers of modular curves.

Recall that the universal covering map
$j:\H\to\SL_2(\Z)\backslash \H\simeq\C$ can be factored as a
composition $j(\tau)=J(q)$ where $q=e^{2\pi i\tau}$ and
$J(q):D_\circ(1)\to\C$ is meromorphic in a neighborhood of $q=0$. We
extend $j$ and $J$ as functions of several variables coordinate-wise,
and denote by $\Omega\subset\H^n$ the (coordinate-wise) standard
fundamental domain for $j$. In \cite{pila:andre-oort} a key step is
the application of the Pila-Wilkie theorem to sets of the form
$Y:=\Omega\cap j^{-1}(X)$ where $X\subset\C^n$ is an algebraic
variety. This set is not (in general) subanalytic: it is of the form
$Y=\Omega\cap \log A$ where $\log(\cdot)=(2\pi i)^{-1}\log_e(\cdot)$
and $A:=D(1/2)\cap J^{-1}(X)$. Note that since $J$ is meromorphic in
the closure of $D(1/2)$ the set $A$ is subanalytic, so that $Y$ is a
log-set.

\subsubsection{An interpolation result for logarithms of complex cells}

Let $\cC^{\vdelta/2}$ be a complex cell and let $n:=\dim\cC$ and $m$
denote the number of fibers of type $D$. Let $\vf:=(f_0,\ldots,f_n)$
be a tuple of non-vanishing holomorphic functions on
$\cC^{\vdelta/2}$. We write $\vx_{0..n}=\log\vf_{0..n}$ and denote
$X:=\vx(\cC)$.

Recall that $\pi_1(\cC)\simeq\Z^{n-m}$, so we may take
$\valpha(\vf_j)\in\Z^{n-m}$ for $j=0,\ldots,n$. Then there exist $m+1$
vectors $\vgamma^0,\ldots,\vgamma^m\in\Z^{n+1}$ linearly independent
over $\Z$ and satisfying $\sum_{j=0}^n \vgamma^k_j\valpha(\vf_j)=0$
for every $k=0,\ldots,m$. We write
$\vg_k = \prod_j \vf_j^{\vgamma^k_j}$. Then $\vg_0,\ldots,\vg_m$ are
non-vanishing holomorphic functions on $\cC^{\vdelta/2}$ with trivial
associated monomials. Then the logarithms
$\vy_{0..m}:=\log\vg_{0..m}$, which are $\Z$-linear combinations of
the $\vx$-variables, are holomorphic univalued functions in
$\cC^{\vdelta/2}$.

\begin{Prop}\label{prop:bp-logs}
  Let $k,H\in\N$ and $\delta>0$. Suppose that $\cC^\vdelta$
  satisfies the conditions of Lemma~\ref{lem:bp-upper-complex} with
  $\delta$ and $\mu\sim k^{m+1}$. If $k=\Omega_\ell(1)$ and
  \begin{equation}\label{eq:bp-logs-cond}
    \log\delta < -O_\vf(k^{-1/m}\log H)
  \end{equation}
  then $X(\Q,H)$ is contained in an algebraic hypersurface
  $\{P(\vy)=0\}$ of degree at most $k$ in $\C^{n+1}$.

  If $\cC^\vdelta,\vf$ vary over a definable family such that the type of
  $\cC$ and the associated monomials of $\vf_{0..n}$ are fixed then
  the asymptotic constants can be taken uniformly over the family.
\end{Prop}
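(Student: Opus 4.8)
The plan is to combine the complex-cellular Bombieri--Pila machinery already developed in this appendix. The statement asserts that if the $\vdelta$-extension of $\cC$ is wide enough (relative to $k$ and $\log H$), then the logarithmic image $X=\vx(\cC)$ has all its rational points of height $\le H$ lying on a single hypersurface of degree $\le k$ in the \emph{reduced} coordinates $\vy_{0..m}$. The key observation is that although $\vx_{0..n}$ are multivalued, the functions $\vy_{0..m}=\log\vg_{0..m}$ are univalued holomorphic functions on $\cC^{\vdelta/2}$ with bounded diameter of image, by construction (the associated monomials of $\vg_k$ are trivial, so the monomialization lemma, Lemma~\ref{lem:monomial}, bounds $\diam(\vy_k(\cC^{\vdelta/2});\C)$). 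Moreover, a rational point in $X(\Q,H)$ corresponds to a point $p\in\cC$ where $\vf(p)$ --- and hence $\vg(p)$, being a product of integer powers of the $\vf_j$ --- has height controlled: since $\vgamma^k\in\Z^{n+1}$ is fixed with entries bounded in terms of $\vf$, one gets $H(\vg(p))\le H^{O_\vf(1)}$. So the $\vy$-coordinates of points in $X(\Q,H)$, when exponentiated back via $\vg_k=e^{\vy_k}$, have bounded height.

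First I would set up the $\Lambda$-interpolation determinant for the tuple $\vg:=(\vg_0,\ldots,\vg_m)$ with $\Lambda$ the full set of monomials of degree $\le k$ in $m+1$ variables, so $\mu=\#\Lambda\sim k^{m+1}/( (m+1)!)$. (One normalizes so that $\vg_0\equiv1$ after dividing through; since $f_1\equiv1$ is the technical assumption in Lemma~\ref{lem:bp-upper-complex}, I would arrange that one of the $\vg_k$ is the constant $1$, or equivalently that $\mathbf{0}\in\Lambda$, exactly as in the polynomial-interpolation-determinant subsection.) Basic linear algebra (already quoted after the definition of $\Delta^\Lambda$) says: if $\Delta^\Lambda(\vg;\vp)=0$ for every $\mu$-tuple $\vp\subset\cC$ consisting of points with $\vg(\vp_j)\in\Q^{m+1}$ of bounded height, then $\vy(\cC\cap\{\text{those points}\})$ lies in a hypersurface $\{P(\vy)=0\}$ with $\supp P\subset\Lambda$, i.e. $\deg P\le k$. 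To force the determinant to vanish, I would apply the upper bound Lemma~\ref{lem:bp-upper-complex} (with the $\vg_k$ as the holomorphic functions $f_i$ on $\cC^\vdelta$, using $M=\diam(\vg_k(\cC^\vdelta);\C)=O_\vf(1)$ from monomialization) against the lower bound Lemma~\ref{lem:bp-lower} (with $H$ replaced by $H^{O_\vf(1)}$ and $d$ replaced by $k$, since the monomials in $\vg$ have degree $\le k$). This gives the dichotomy
\begin{equation}
  H^{-\mu\cdot k\cdot O_\vf(1)} \le |\Delta^\Lambda(\vg;\vp)| \le M^\mu\mu^{O_\ell(\mu)}\delta^{E_m\mu^{1+1/m}-O_\ell(\mu)}
\end{equation}
unless $\Delta^\Lambda(\vg;\vp)=0$. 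Taking logarithms and using $\mu\sim k^{m+1}$ so that $\mu^{1/m}\sim k^{(m+1)/m}$, the right-hand side is smaller than the left-hand side precisely when
\begin{equation}
  -\log\delta > \frac{k\log H\cdot O_\vf(1)+O_\ell(\log k)}{E_m\mu^{1/m}+O_\ell(1)} = O_\vf\!\left(k^{-1/m}\log H\right),
\end{equation}
which is exactly the hypothesis~\eqref{eq:bp-logs-cond} (the $k$ in the numerator cancels one power of $k^{1/m}\cdot k$ in the denominator, leaving $k^{-1/m}$; the constant absorbs the $m$-dependence). Hence under~\eqref{eq:bp-logs-cond} all such interpolation determinants vanish, and $X(\Q,H)\subset\{P(\vy)=0\}$ with $\deg P\le k$ and, crucially, $P\not\equiv0$ --- the last point because $\mathbf{0}\in\Lambda$ forces the determinant with $\vp$ a single generic point to be nonzero, so $\Lambda$ is not the full exponent set of the ideal, i.e. some monomial of degree $\le k$ is not identically zero on $X$.

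The main obstacle, and the place requiring care rather than routine bookkeeping, is the height bookkeeping in the change of variables from $\vf$ to $\vg$: one must verify that the integer matrix $(\vgamma^k_j)$ --- chosen as a $\Z$-basis of the rank-$(m+1)$ lattice of relations among the $\valpha(\vf_j)\in\Z^{n-m}$ --- can be taken with entries bounded in terms of $\vf$ (uniformly over the family, since the type of $\cC$ and the associated monomials $\valpha(\vf_j)$ are assumed fixed). This follows from the fact that the $|\valpha(\vf_j)|$ are bounded (Lemma~\ref{lem:monomial}), so the relation lattice has bounded covolume and admits a reduced basis with bounded entries; then $H(\vg(p))=H\bigl(\prod_j\vf_j(p)^{\vgamma^k_j}\bigr)\le H(\vf(p))^{O_\vf(1)}\le H^{O_\vf(1)}$, which is what feeds the exponent $O_\vf(1)$ into the estimate above. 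The uniformity claim over definable families then follows because every constant appearing --- the $M=O_\vf(1)$ diameter bound from monomialization, the lattice-basis bound, and the asymptotic constants in Lemmas~\ref{lem:bp-upper-complex} and~\ref{lem:bp-lower} --- is uniform once the type of $\cC$ and the associated monomials are fixed, exactly as asserted. Everything else (the elementary tail estimate for Laurent coefficients, the arithmetic with $E_m$ and $\mu^{1/m}$) is identical to the algebraic case in~\secref{sec:alg-density-proof} and I would simply refer to it.
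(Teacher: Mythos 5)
There is a genuine gap, and it is conceptual rather than computational: you have misidentified which quantities are rational at the points being counted. By construction $X:=\vx(\cC)$ with $\vx_{0..n}=\log\vf_{0..n}$, so a point of $X(\Q,H)$ is a point $p\in\cC$ at which the \emph{logarithms} $\vx(p)$ are rational of height at most $H$; it does not mean that $\vf(p)$, nor $\vg(p)=\prod_j\vf_j(p)^{\vgamma^k_j}$, is rational of controlled height. Indeed at such a point $\vg_k(p)=e^{\vy_k(p)}$ is the exponential of a rational number and is (generically, e.g.\ by Lindemann) transcendental. Your proof runs the determinant method on the tuple $\vg$ and invokes the lower bound Lemma~\ref{lem:bp-lower}, whose common-denominator argument requires the evaluated monomials $\vg^\valpha(\vp_j)$ to be rational; that hypothesis simply fails for the points you must treat, so the dichotomy ``$|\Delta^\Lambda(\vg;\vp)|\ge H^{-O_\vf(\mu k)}$ or $\Delta^\Lambda(\vg;\vp)=0$'' is unavailable and the whole comparison with the upper bound collapses. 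A secondary but equally real defect is the conclusion: vanishing of all $\Lambda$-interpolation determinants of $\vg$ would place the \emph{$\vg$-values} on a hypersurface of degree $k$ in the $\vg$-coordinates, i.e.\ a polynomial-exponential relation in $\vy$, not the degree-$k$ algebraic hypersurface $\{P(\vy)=0\}$ in $\C^{n+1}$ that the proposition asserts.

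The intended (and correct) argument stays entirely in the logarithmic coordinates: the functions fed into the determinant are the monomials $\vy^\valpha$, $|\valpha|\le k$, in the \emph{univalued} holomorphic functions $\vy_{0..m}=\log\vg_{0..m}$ on $\cC^{\vdelta/2}$. Rationality is then automatic: since $\vy$ are fixed $\Z$-linear combinations of $\vx$, a point of $X(\Q,H)$ has $\vy(p)$ rational of height $O_\vf(\operatorname{poly}(H))$, so Lemma~\ref{lem:bp-lower} applies with $d=k$, while Lemma~\ref{lem:bp-upper-complex} applies because the monomialization lemma bounds $\diam(\vy_j(\cC^{\vdelta}))$ by $M=O_\vf(1)$. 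Comparing the two bounds exactly as you do (your asymptotic arithmetic with $\mu\sim k^{m+1}$, $E_m\mu^{1/m}\sim k^{1+1/m}$ is fine) forces $\Delta^{\Lambda(k)}(\vy;\vp)=0$ under~\eqref{eq:bp-logs-cond}, and the linear-algebra fact quoted after the definition of $\Delta^\Lambda$ then yields a nonzero $P$ of degree $\le k$ in the $\vy$-variables containing $X(\Q,H)$. Note also that your ``main obstacle'' --- reduced lattice bases to control $H(\vg(p))$ in terms of $H(\vf(p))$ --- is an artifact of the same misreading: the only height comparison needed is $H(\vy)\le\operatorname{poly}(H(\vx))$ for fixed integer combinations, and over families the $\vgamma^k$ can simply be fixed because the associated monomials $\valpha(\vf_j)$ are fixed by hypothesis.
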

\begin{proof}
  Since $\vy$ are fixed $\Z$-linear combinations of $\vx$ we have
  $H(\vy)=O_\vf(H(\vx))$. Thus it will be enough to prove the
  statement for $Y(\Q,H)$ instead of $X(\Q,H)$ where $Y:=\vy(\cC)$.
  
  By the monomialization lemma (Lemma~\ref{lem:monomial}) the diameter
  of $\vy_j(\cC^\vdelta)$ is bounded in $\C$ for $j=0,\ldots,m$ by
  some quantity $M=O_\vf(1)$ which can be taken uniform over definable
  families. We are thus in position to apply
  Lemma~\ref{lem:bp-upper-complex} with
  \begin{equation}
    \Lambda(k)=\{\valpha\in\N^{m+1}:|\valpha|\le k\}, \qquad \mu(k)\sim k^{m+1}.
  \end{equation}
  Let $\vp\subset\cC$ be a $\mu$-tuple of points satisfying
  $H(\vy(\vp_j))\le H$ for $j=1,\ldots,\mu$. Then by
  Lemmas~\ref{lem:bp-upper-complex} and~\ref{lem:bp-lower} we have
  \begin{equation}
     H^{-\mu k} \le |\Delta^{\Lambda(k)}(\vy;\vp)| \le M^\mu \mu^{O_\ell(\mu)} \delta^{E_m\mu^{1+1/m}-O_\ell(\mu)}
  \end{equation}
  unless $\Delta^{\Lambda(k)}(\vx;\vp)=0$. In the former case we have
  \begin{equation}
    \log\delta > \frac{-O_\vf(k\log H)}{k^{1+1/m}-O_\ell(1)}=-O_\vf(k^{-1/m}\log H).
  \end{equation}
  Therefore, for $\delta$ satisfying~\eqref{eq:bp-logs-cond} we have
  $\Delta^{\Lambda(k)}(\vy;\vp)=0$ for any $\vp$ as above, and
  $Y(\Q,H)$ is indeed contained in a hypersurface of degree at most
  $k$ in the $\vy$-variables.
\end{proof}

\begin{Rem}[Logarithms in families]\label{rem:logs-in-family}
  We remark that if $g_\lambda:\cC_\lambda^{1/2}\to\C\setminus\{0\}$
  is a definable family of functions with trivial associated monomials
  then $\log g_\lambda:\cC_\lambda\to\C$ is not necessarily definable
  (in $\R_\an$) as illustrated by the example $g_\lambda\equiv\lambda$
  for $\lambda\in(0,1]$. However, if we define
  $\tilde g_\lambda=g_\lambda/\norm{g_\lambda}_{\cC_\lambda}$ then
  $\log\tilde g_\lambda$ is definable. Indeed by the monomialization
  lemma (Lemma~\ref{lem:monomial}) we know that
  $w_\lambda=\log \tilde g_\lambda(\cC)\subset D(M)$ for some
  uniformly bounded $M$. Then the graph of a (univalued) branch of
  $\log\tilde g_\lambda$ is definable, being one of the components of
  the definable set $e^{w_\lambda}=\tilde g_\lambda(\vz)$ with
  $w\in D(M)$ and $\vz\in\cC_\lambda$ (note that the exponential here
  is \emph{restricted} to a compact set).
\end{Rem}

\subsubsection{Proof of Proposition~\ref{prop:log-set-pw}}
\label{sec:log-set-pw-proof}

We apply Corollary~\ref{cor:cpt-subanalytic} to the subanalytic set
given by the total space $X\subset[0,1]^K$ of the family $\{X_\lambda\}$,
where we order the parameter variables before the fiber variables. We
obtain real cellular maps $f_j:\cC^{1/2}_j\to\cP_K^{1/2}$ such that
$f_j(\R_+\cC_j^{1/2})\subset X$ and $\cup_j f_j(\R_+\cC_j)=X$. We may
also require by Remark~\ref{rem:cpt-semi-extra} that each $f_j$ is
compatible with the fiber variables $\vx_{1..\ell}$. It will be enough
to prove the statement for each $f_j(\R_+\cC_j)$ separately, so fix
$\cC=\cC_j$ and $f=f_j$ and assume $X=f(\R_+\cC)$.

We view $\cC$ as a subanalytic family of cells $\cC_\lambda$ of length
$\ell$. If the type of $\cC_\lambda$ has dimension strictly less then
$n$ then we can take $Y=X$, so assume it has dimension $n$. Below we
let $\vf_{0..n}$ denote the $f$-pullback of some fixed $n+1$-tuple of
the variables $\vx_{1..\ell}$. Note that the associated monomials
$\valpha(\vf_j)$ of $\vf_j$ on $\cC_\lambda$ are independent of
$\lambda$ (it is obtained from the associated monomial of $\vf_j$ on
$\cC$ by eliminating the $\lambda$ variables).

Let $\cR=\{f_{\lambda,\theta}:\cC_{\lambda,\theta}\to\cC_\lambda\}$
denote the refinement family consisting of the refinement cells of
$\cC_\lambda$ as in Lemma~\ref{lem:refinement-special}. We split $\cR$
into subfamilies $\cR(\cT)$ consisting of the cells
$\cC_{\lambda,\theta}$ of type $\cT$. According to
Remark~\ref{rem:refinement-vs-monom} the types $\cT$ are those
obtained from the type of $\cC_\lambda$ by possibly replacing some
$D_\circ,A$ fibers by $D$. Fix some $\cT$ and let $m$ denote the
number of fibers of type $D$. The associated monomial of
$f^*_{\lambda,\theta}\vf_j$ is constant over $\cR(\cT)$: it is
obtained from $\valpha(\vf_j)$ by eliminating those indices that
correspond to fibers that were replaced by $D$ in $\cT$.

We can thus apply Proposition~\ref{prop:bp-logs} to
$f_{\lambda,\theta}^*\vf_{0..n}$ to deduce that if the cell
$\cC_{\lambda,\theta}$ admits $\vdelta/2$ extension and if
$\delta = H^{-O_X(k^{-1/m})}$ then
\begin{equation}
  [\vx_{\lambda,\theta}(\cC_{\lambda,\theta})](\Q,H)\subset\{P(\vy_{\lambda,\theta})=0\}
\end{equation}
for some polynomial $P(\vy_{\lambda,\theta})$ of degree $k$, where
$\vy_{\lambda,\theta}$ are some \emph{fixed} $\Z$-linear combinations
of $\vx_{\lambda,\theta}:=\log (f^*_{\lambda,\theta}\vf_{0..n})$ which
are holomorphic in $\cC_{\lambda,\theta}^{\vdelta/2}$. Note that
$\vy_{\lambda,\theta}$ does \emph{not} necessarily depend definably on
the parameters. However, the normalized $\tilde\vy_{\lambda,\theta}$
defined by replacing $\vx_{\lambda,\theta}$ with their normalized
versions $\tilde\vx_{\lambda,\theta}$ is definable according to
Remark~\ref{rem:logs-in-family}. For each fixed value of the
parameters we have
$\tilde\vy_{\lambda,\theta}=\vy_{\lambda,\theta}+\const$, and in
particular the hypersurface $\{P(\vy_{\lambda,\theta})=0\}$ can be
rewritten as a hypersurface
$\{\tilde P(\tilde\vy_{\lambda,\theta})=0\}$ in the
$\tilde\vy_{\lambda,\theta}$-variables.

We now define a family $\tilde Y_{\lambda,\theta,c}$ with the
parameter $\theta$ of $\cR$ and the parameter $c$ encoding the
coefficients of an arbitrary non-zero polynomial $P_c$ of degree $k$,
\begin{equation}
  \tilde Y_{\lambda,\theta,c} := \R (f_{\lambda,\theta}(\cC_{\lambda,\theta})\cap \{P_c(\tilde\vy_{\lambda,\theta})=0\}).
\end{equation}
For any $\lambda$ we can choose, according to
Lemma~\ref{lem:refinement-special} a collection of
\begin{equation}
  N=\poly_\ell(k,\delta^{-1})=\poly_\ell(k)\cdot H^{O_X(k^{-1/n})}
\end{equation}
cells $\cC_{\lambda,\theta_j}$ which admit $\delta$-extensions,
satisfy the conditions of Lemma~\ref{lem:bp-upper-complex}, and cover
$\cC_\lambda$. For each of them there exists a polynomial
$P_{c_j}(\tilde\vy_{\lambda,\theta_j})$ whose zeros contain
$[\vx_{\lambda,\theta}(\cC_{\lambda,\theta})](\Q,H)$. The union over
$j=1,\ldots,N$ of $\log \tilde Y_{\lambda,\theta_j,c_j}$ thus contains
$(\log X_\lambda)(\Q,H)$.

Recall that $\tilde Y$ was constructed for some choice of $n+1$ of the
coordinates $\vx_{1..\ell}$. We now repeat this construction to obtain
$\tilde Y^S$ for each such choice $S$. We define a family
$\{\hat Y_{\lambda,\mu}\}$, with the parameter $\mu$ encoding the
parameters $(\theta_S,c_S)$ for each choice of $S$, and the
fiber given by
\begin{equation}
  \hat Y_{\lambda,\mu} = \bigcap_S \tilde Y^S_{\lambda,\theta_S,c_S}.
\end{equation}
As before, for every $\lambda$ there is a choice of
$N=\poly_\ell(k) H^{O_X(k^{-1/n})}$ parameters $\mu_j$ such that the
union of $(\log \hat Y_{\lambda,\mu_j})(\Q,H)$ contains
$(\log X_\lambda)(\Q,H)$.

If $\hat Y_{\lambda,\mu}$ has dimension $n$ then $X_\lambda$ satisfies
a polynomial equation of degree $k$ in $\log\vf_{0..n}$ for every
$n+1$ tuple of coordinates $\vf_{0..n}$ among $\vx_{1..\ell}$ and in
this case $\log X_\lambda$ is contained in an algebraic variety of
dimension $n$. Since $X_\lambda$ has pure dimension $n$ this implies
that $(\log X_\lambda)^\trans=\emptyset$. Thus if we define $Y$ by
removing from $\hat Y$ any fibers of dimension $n$, the conditions of
the proposition are satisfied.

\newpage

\section{List of notations and definitions}
\label{sec:notations}

The following table lists some of the main notations and definitions
used in this paper. We do not include the notations used in the two
appendices.  \vspace{1cm}

\begin{tabular}{|c|p{0.5\textwidth}|c|}
  \hline
  Notation & Meaning & Definition \\ \hline

  $\norm{\cdot}_U$ & Maximum norm on $U$ & \secref{sec:intro-algebraic-lemma}
  \\
  $\norm{\cdot}_r$ & Smooth $r$-norm & Equation~\eqref{eq:r-norm-def}
  \\
  $(A,K)-mild$ & Mild parametrization & Definition~\ref{def:mild}
  \\
  $H(\cdot)$ & Height of rational point & \secref{sec:intro-rational}
  \\
  $X(\Q,H)$ & Points of height $H$ in $X$ &
  \\
  $X^\alg,X^\trans$ & Algebraic and transcendental parts of $X$ &
                                                                  \secref{sec:intro-pw}
  \\
  Log-set & & Definition~\ref{def:log-set}
  \\
  $D,D_\circ,A,*$ & Basic fibers types &
                                                        Equation~\eqref{eq:basic-fibers}
  \\
  $\cF^\delta,\cF^\hrho$ & Extensions of basic fibers &
                                                        Equations~\eqref{eq:fiber-delta-ext}
                                                        and~\eqref{eq:rho-ext-def}
  \\
  $\cX\odot\cF$ & Fibered dot-product & Equation~\eqref{eq:odot-def}
  \\
  $\cO(U),\cO_b(U)$ & Holomorphic (bounded) functions on $U$ &
                                                             \secref{sec:cells-general-setting}
  \\
  $\cC$ & Complex cell & Definition~\ref{def:cells}
  \\
  $\cC^\vdelta,\cC^{\he\vrho}$ & Extensions of cells &
                                                       Definition~\ref{def:cell-ext}
  \\
  $\R\cC,\R_+\cC$ & (Positive) real part of a cell &
                                                     Definition~\ref{def:real-cells}
  \\
  $f:\cC\to\hat\cC$ & (Prepared) cellular map & Definition~\ref{def:cell-maps}
  \\
  Compatible & & Definition~\ref{def:compatible}
  \\
  $\valpha(f)$ & Associated monomial of $f$ &
                                              Definition~\ref{def:assoc-monom}
  \\
  $\dist(\cdot,\cdot;X)$ & Distance in $X$ &
                                             \secref{sec:cell-topology-geometry}
  \\
  $\diam(A;X)$ & The diameter of $A$ in $X$ &
  \\
  $B(x,r;X)$ & Ball of radius $r$ around $x$ &
  \\
  $\cC_{\times\nu},R_\nu$ & $\nu$-cover of $\cC$ &
                                             Definition~\ref{def:nu-cover}
  \\
  $\cC_{\times\vnu,\vsigma},R_{\vnu,\vsigma}$ & Signed covers &
                                                             \secref{sec:nu-cover}
  \\
  $\vz^{[\valpha]}$ & Normalized monomial & \secref{sec:laurent}
  \\
  $\cP_\ell$ & Unit polydisc in $\C^\ell$ &
  \\
  $\cS(\cC)$ & Skeleton of $\cC$ & Definition~\ref{def:skeleton}
  \\
  $V_\Gamma(f)$ & Voorhoeve index of $f$ along $\Gamma$ &
                                                          \secref{sec:proof-monom}
  \\
  Monomial cell & & Definition\ref{def:monom-cell}
  \\
  $\cF_{i,q},\cF_{i,q+}$ & Clusters around $y_i$ &
                                                   Proposition~\ref{prop:clusters}
  \\
  Weierstrass cell & & Definition~\ref{def:weierstrass-cell}
  \\
  $(p,M)$ domination & Laurent domination property &
                                                     Definition~\ref{def:domination}
  \\
  $\Qua\cC$ & Positive quadrant of $\cC$ & \secref{sec:quadric-cells}
  \\
  $I_\vsigma$ & Inversion map on $\cC$ &
  \\
  $\sec_\ell(\e)$ & Polysector in $\cC^\ell$ &
                                               Equation~\eqref{eq:polysector}
  \\
  $B(\e)$ & Sectorial extension of cube & \secref{sec:sectorial-param}
  \\
  \hline                                         
\end{tabular}

\newpage

\bibliographystyle{plain}
\bibliography{nrefs}

\begin{thebibliography}{10}

\bibitem{akm:entropy}
R.~L. Adler, A.~G. Konheim, and M.~H. McAndrew.
\newblock Topological entropy.
\newblock {\em Trans. Amer. Math. Soc.}, 114:309--319, 1965.

\bibitem{by:domination}
Dmitry Batenkov and Yosef Yomdin.
\newblock Taylor domination, difference equations, and {B}autin ideals.
\newblock In {\em Difference equations, discrete dynamical systems and
  applications}, volume 180 of {\em Springer Proc. Math. Stat.}, pages
  303--319. Springer, Berlin, 2016.

\bibitem{bp:poincare}
A.~F. Beardon and Ch. Pommerenke.
\newblock The {P}oincar\'e metric of plane domains.
\newblock {\em J. London Math. Soc. (2)}, 18(3):475--483, 1978.

\bibitem{biernacki}
Miecislas {Biernacki}.
\newblock {Sur les fonctions multivalentes d'ordre $p$.}
\newblock {\em {C. R. Acad. Sci., Paris}}, 203:449--451, 1936.

\bibitem{bm:subanalytic}
Edward Bierstone and Pierre~D. Milman.
\newblock Semianalytic and subanalytic sets.
\newblock {\em Inst. Hautes \'Etudes Sci. Publ. Math.}, (67):5--42, 1988.

\bibitem{me:analytic-interpolation}
Gal Binyamini and Dmitry Novikov.
\newblock The {P}ila-{W}ilkie theorem for subanalytic families: a complex
  analytic approach.
\newblock {\em Compositio Mathematica}, 153(10):2171--2194, 2017.

\bibitem{me:rest-wilkie}
Gal Binyamini and Dmitry Novikov.
\newblock Wilkie's conjecture for restricted elementary functions.
\newblock {\em Ann. of Math. (2)}, 186(1):237--275, 2017.

\bibitem{me:inf16}
Gal Binyamini, Dmitry Novikov, and Sergei Yakovenko.
\newblock On the number of zeros of {A}belian integrals.
\newblock {\em Invent. Math.}, 181(2):227--289, 2010.

\bibitem{bombieri-pila}
E.~Bombieri and J.~Pila.
\newblock The number of integral points on arcs and ovals.
\newblock {\em Duke Math. J.}, 59(2):337--357, 1989.

\bibitem{bowen:entropy-group}
Rufus Bowen.
\newblock Entropy for group endomorphisms and homogeneous spaces.
\newblock {\em Trans. Amer. Math. Soc.}, 153:401--414, 1971.

\bibitem{bowen:entropy-expansive}
Rufus Bowen.
\newblock Entropy-expansive maps.
\newblock {\em Trans. Amer. Math. Soc.}, 164:323--331, 1972.

\bibitem{broberg:note}
Niklas Broberg.
\newblock A note on a paper by {R}. {H}eath-{B}rown: ``{T}he density of
  rational points on curves and surfaces'' [{A}nn. of {M}ath. (2) {\bf 155}
  (2002), no. 2, 553--595; mr1906595].
\newblock {\em J. Reine Angew. Math.}, 571:159--178, 2004.

\bibitem{burguet:alg-lemma}
David Burguet.
\newblock A proof of {Y}omdin-{G}romov's algebraic lemma.
\newblock {\em Israel J. Math.}, 168:291--316, 2008.

\bibitem{bly}
David Burguet, Gang Liao, and Jiagang Yang.
\newblock Asymptotic {$h$}-expansiveness rate of {$C^\infty$} maps.
\newblock {\em Proc. Lond. Math. Soc. (3)}, 111(2):381--419, 2015.

\bibitem{butler}
Lee~A. Butler.
\newblock Some cases of {W}ilkie's conjecture.
\newblock {\em Bull. Lond. Math. Soc.}, 44(4):642--660, 2012.

\bibitem{buzzi}
J\'er\^ome Buzzi.
\newblock Intrinsic ergodicity of smooth interval maps.
\newblock {\em Israel J. Math.}, 100:125--161, 1997.

\bibitem{cpw:params}
Raf Cluckers, Jonathan Pila, and Alex Wilkie.
\newblock Uniform parameterization of subanalytic sets and diophantine
  applications, 2016.

\bibitem{dvdd:subanalytic}
J.~Denef and L.~van~den Dries.
\newblock {$p$}-adic and real subanalytic sets.
\newblock {\em Ann. of Math. (2)}, 128(1):79--138, 1988.

\bibitem{yf:doubling}
Omer Friedland and Yosef Yomdin.
\newblock Doubling coverings of algebraic hypersurfaces.
\newblock {\em Pure Appl. Funct. Anal.}, 2(2):221--241, 2017.

\bibitem{fm:compact}
William Fulton and Robert MacPherson.
\newblock A compactification of configuration spaces.
\newblock {\em Ann. of Math. (2)}, 139(1):183--225, 1994.

\bibitem{gromov:gy}
M.~Gromov.
\newblock Entropy, homology and semialgebraic geometry.
\newblock {\em Ast\'erisque}, (145-146):5, 225--240, 1987.
\newblock S{\'e}minaire Bourbaki, Vol. 1985/86.

\bibitem{gr:analytic}
Robert~C. Gunning and Hugo Rossi.
\newblock {\em Analytic functions of several complex variables}.
\newblock AMS Chelsea Publishing, Providence, RI, 2009.
\newblock Reprint of the 1965 original.

\bibitem{hs:variations}
Robert Hardt and Dennis Sullivan.
\newblock Variation of the {G}reen function on {R}iemann surfaces and
  {W}hitney's holomorphic stratification conjecture.
\newblock {\em Inst. Hautes \'Etudes Sci. Publ. Math.}, (68):115--137 (1989),
  1988.

\bibitem{hayman:book}
W.~K. Hayman.
\newblock {\em Multivalent functions}, volume 110 of {\em Cambridge Tracts in
  Mathematics}.
\newblock Cambridge University Press, Cambridge, second edition, 1994.

\bibitem{heath-brown:density}
D.~R. Heath-Brown.
\newblock The density of rational points on curves and surfaces.
\newblock {\em Ann. of Math. (2)}, 155(2):553--595, 2002.

\bibitem{jmt:mild}
G.~O. Jones, D.~J. Miller, and M.~E.~M. Thomas.
\newblock Mildness and the density of rational points on certain transcendental
  curves.
\newblock {\em Notre Dame J. Form. Log.}, 52(1):67--74, 2011.

\bibitem{jt:pfaff-surfaces}
G.~O. Jones and M.~E.~M. Thomas.
\newblock The density of algebraic points on certain {P}faffian surfaces.
\newblock {\em Q. J. Math.}, 63(3):637--651, 2012.

\bibitem{katok:fuchsian-groups}
Svetlana Katok.
\newblock {\em Fuchsian groups}.
\newblock Chicago Lectures in Mathematics. University of Chicago Press,
  Chicago, IL, 1992.

\bibitem{ky:rolle}
A.~Khovanskii and S.~Yakovenko.
\newblock Generalized {R}olle theorem in {${\bf R}^n$} and {${\bf C}$}.
\newblock {\em J. Dynam. Control Systems}, 2(1):103--123, 1996.

\bibitem{lion-rolin}
J.-M. Lion and J.-P. Rolin.
\newblock Th\'eor\`eme de pr\'eparation pour les fonctions
  logarithmico-exponentielles.
\newblock {\em Ann. Inst. Fourier (Grenoble)}, 47(3):859--884, 1997.

\bibitem{marmon}
O.~Marmon.
\newblock A generalization of the {B}ombieri-{P}ila determinant method.
\newblock {\em Zap. Nauchn. Sem. S.-Peterburg. Otdel. Mat. Inst. Steklov.
  (POMI)}, 377(Issledovaniya po Teorii Chisel. 10):63--77, 242, 2010.

\bibitem{mz:torsion}
D.~Masser and U.~Zannier.
\newblock Torsion anomalous points and families of elliptic curves.
\newblock {\em Amer. J. Math.}, 132(6):1677--1691, 2010.

\bibitem{milnor:dynamics}
John Milnor.
\newblock {\em Dynamics in one complex variable}, volume 160 of {\em Annals of
  Mathematics Studies}.
\newblock Princeton University Press, Princeton, NJ, third edition, 2006.

\bibitem{misiurewicz:cond-entropy}
Micha\l Misiurewicz.
\newblock Topological conditional entropy.
\newblock {\em Studia Math.}, 55(2):175--200, 1976.

\bibitem{nesterenko:hilbert}
Yu.~V. Nesterenko.
\newblock Estimates for the characteristic function of a prime ideal.
\newblock {\em Mat. Sb. (N.S.)}, 123(165)(1):11--34, 1984.

\bibitem{newhouse:entropy-volume}
Sheldon~E. Newhouse.
\newblock Entropy and volume.
\newblock {\em Ergodic Theory Dynam. Systems}, 8$^*$(Charles Conley Memorial
  Issue):283--299, 1988.

\bibitem{parusinski:preparation}
Adam Parusi\'nski.
\newblock Lipschitz stratification of subanalytic sets.
\newblock {\em Ann. Sci. \'Ecole Norm. Sup. (4)}, 27(6):661--696, 1994.

\bibitem{pila:density-Q}
J.~Pila.
\newblock Geometric postulation of a smooth function and the number of rational
  points.
\newblock {\em Duke Math. J.}, 63(2):449--463, 1991.

\bibitem{pila-wilkie}
J.~Pila and A.~J. Wilkie.
\newblock The rational points of a definable set.
\newblock {\em Duke Math. J.}, 133(3):591--616, 2006.

\bibitem{pila:subanalytic-dilation}
Jonathan Pila.
\newblock Integer points on the dilation of a subanalytic surface.
\newblock {\em Q. J. Math.}, 55(2):207--223, 2004.

\bibitem{pila:mild}
Jonathan Pila.
\newblock Mild parameterization and the rational points of a {P}faff curve.
\newblock {\em Comment. Math. Univ. St. Pauli}, 55(1):1--8, 2006.

\bibitem{pila:pems}
Jonathan Pila.
\newblock Note on the rational points of a {P}faff curve.
\newblock {\em Proc. Edinb. Math. Soc. (2)}, 49(2):391--397, 2006.

\bibitem{pila:pfaff}
Jonathan Pila.
\newblock The density of rational points on a {P}faff curve.
\newblock {\em Ann. Fac. Sci. Toulouse Math. (6)}, 16(3):635--645, 2007.

\bibitem{pila:exp-alg-surface}
Jonathan Pila.
\newblock Counting rational points on a certain exponential-algebraic surface.
\newblock {\em Ann. Inst. Fourier (Grenoble)}, 60(2):489--514, 2010.

\bibitem{pila:andre-oort}
Jonathan Pila.
\newblock O-minimality and the {A}ndr\'e-{O}ort conjecture for {$\mathbb C^n$}.
\newblock {\em Ann. of Math. (2)}, 173(3):1779--1840, 2011.

\bibitem{pila-zannier}
Jonathan Pila and Umberto Zannier.
\newblock Rational points in periodic analytic sets and the {M}anin-{M}umford
  conjecture.
\newblock {\em Atti Accad. Naz. Lincei Cl. Sci. Fis. Mat. Natur. Rend. Lincei
  (9) Mat. Appl.}, 19(2):149--162, 2008.

\bibitem{salberger:density}
Per Salberger.
\newblock On the density of rational and integral points on algebraic
  varieties.
\newblock {\em J. Reine Angew. Math.}, 606:123--147, 2007.

\bibitem{scanlon:survey}
Thomas Scanlon.
\newblock Counting special points: logic, {D}iophantine geometry, and
  transcendence theory.
\newblock {\em Bull. Amer. Math. Soc. (N.S.)}, 49(1):51--71, 2012.

\bibitem{thomas:mild}
Margaret E.~M. Thomas.
\newblock An o-minimal structure without mild parameterization.
\newblock {\em Ann. Pure Appl. Logic}, 162(6):409--418, 2011.

\bibitem{vdd:book}
Lou van~den Dries.
\newblock {\em Tame topology and o-minimal structures}, volume 248 of {\em
  London Mathematical Society Lecture Note Series}.
\newblock Cambridge University Press, Cambridge, 1998.

\bibitem{walsh:boundd-rational}
Miguel~N. Walsh.
\newblock Bounded rational points on curves.
\newblock {\em Int. Math. Res. Not. IMRN}, (14):5644--5658, 2015.

\bibitem{yomdin:gy}
Y.~Yomdin.
\newblock {$C\sp k$}-resolution of semialgebraic mappings. {A}ddendum to:
  ``{V}olume growth and entropy''.
\newblock {\em Israel J. Math.}, 57(3):301--317, 1987.

\bibitem{yomdin:entropy}
Y.~Yomdin.
\newblock Volume growth and entropy.
\newblock {\em Israel J. Math.}, 57(3):285--300, 1987.

\bibitem{yomdin:entropy-analytic}
Y.~Yomdin.
\newblock Local complexity growth for iterations of real analytic mappings and
  semicontinuity moduli of the entropy.
\newblock {\em Ergodic Theory Dynam. Systems}, 11(3):583--602, 1991.

\bibitem{yomdin:param-dim2}
Y.~Yomdin.
\newblock Analytic reparametrization of semi-algebraic sets.
\newblock {\em J. Complexity}, 24(1):54--76, 2008.

\bibitem{zannier:book}
Umberto Zannier.
\newblock {\em Some problems of unlikely intersections in arithmetic and
  geometry}, volume 181 of {\em Annals of Mathematics Studies}.
\newblock Princeton University Press, Princeton, NJ, 2012.
\newblock With appendixes by David Masser.

\end{thebibliography}

\end{document}